\theoremstyle{plain} % イタリック体
\newtheorem{theorem}{\indent\sc Theorem}[section] % 見出しはスモールキャップ
\newtheorem{lemma}[theorem]{\indent\sc Lemma}
\newtheorem{corollary}[theorem]{\indent\sc Corollary}
\newtheorem{proposition}[theorem]{\indent\sc Proposition}
\newtheorem{claim}[theorem]{\indent\sc Claim}
\theoremstyle{definition} % ローマン体に変更
\newtheorem{definition}[theorem]{\indent\sc Definition}
\newtheorem{remark}[theorem]{\indent\sc Remark}
\newtheorem{example}[theorem]{\indent\sc Example}
\def\address#1#2{\begingroup
\noindent\parbox[t]{7.8cm}{%
\small{\scshape\ignorespaces#1}\par\vskip1ex
\noindent\small{\itshape E-mail address}%
\/: #2\par\vskip4ex}\hfill%
\endgroup}%
\title{\uppercase{Ricci curvature and convergence of Lipschitz functions}} %論文タイトル大文字
\author{
\textsc{Shouhei Honda} %著者名
}
\date{} % 日付けは記入しない
\begin{document}

\maketitle
%%%%%%%%%%%%%%%%%%% 脚注 %%%%%%%%%%%%%%%%%%%%%%%%%%%%%%
\footnote{ %2000MSC
2000 \textit{Mathematics Subject Classification}.
Primary 53C20; Secondary 53C43.
}
\footnote{ %keywords 
\textit{Key words and phrases}. Gromov-Hausdorff convergence, geometric measure theory,
Ricci curvature, Lipschitz functions, harmonic functions.
Supported by GCOE `Fostering top leaders in mathematics', Kyoto University.
}

\begin{abstract}
We give a definition of convergence of differential of Lipschitz functions with respect to measured Gromov-Hausdorff topology. As their applications, we give a characterization of harmonic functions with polynomial growth on asymptotic cones of manifolds with nonnegative Ricci curvature and Euclidean volume growth, and distributional Laplacian comparison theorem on limit spaces of Riemannian manifolds.
\end{abstract}
\tableofcontents
%%%%%%%%%%%%%
\section{Introduction} %Introductionに見出し番号を付ける場合は*をとる
Let $\{(M_i, m_i)\}$ be a sequence of pointed $n$-dimensional complete Riemannian manifolds $(n \ge 2)$ with $\mathrm{Ric}_{M_i} \ge -(n-1)$ and $(Y, y, \upsilon)$ a pointed proper metric space (i.e. every bounded subset of $Y$ is relatively compact) with Radon measure $\upsilon$  on $Y$ satisfying 
$(M_i, m_i, \underline{\mathrm{vol}})$ converges to $(Y, y, \upsilon)$
in the sense of measured Gromov-Hasdorff topology.
Here $\underline{\mathrm{vol}}$ is the renormalized Riemannian volume of $(M_i, m_i)$: $\underline{\mathrm{vol}}= \mathrm{vol}/\mathrm{vol}\,B_1(m_i)$.
We fix $R>0$, a sequence of Lipschitz functions $f_i$ on $B_R(m_i)=\{w \in M_i; \overline{w, m_i}< R\}$ and a Lipschitz function 
$f_{\infty}$ on $B_R(y)$ satisfying $\sup_{i} \mathbf{Lip}f_i < \infty$.
Here $\overline{w, m_i}$ is the distance between $w$ and $m_i$, $\mathbf{Lip}f_i$ is the Lipschitz constant of $f_i$.
Then we say that \textit{$f_i$ converges to $f_{\infty}$} if $f_i(x_i) \rightarrow f_{\infty}(x_{\infty})$ for every $x_i \in B_R(m_i)$ and $x_{\infty} \in B_R(y)$ satisfying that $x_i$ converges to $x_{\infty}$. 
See section $2$ for these precise definitions.
Assume $\{f_i\}$ converges to $f_{\infty}$ below.

The purpose of this paper is to give a definition: \textit{differential $df_i$ of $f_i$ converges to differential $df_{\infty}$ of $f_{\infty}$} in this setting.
To give the definition below,
we shall recall celebrated works for limit spaces of Riemannian manifolds by Cheeger-Colding.
By \cite{ch2} and \cite{ch-co3}, we can construct the cotangent bundle $T^*Y$ of $Y$,  a fiber $T^*_wY$ is a finite dimensional real vector space with canonical inner product $\langle \cdot, \cdot \rangle (w)$ for a.e. $w \in Y$.
Moreover,  every Lipschitz function $g$ on $B_R(y)$ have canonical differential section: $dg(w) \in T^*_wY$ for a.e. $w \in B_R(y)$.
See section $4$ in \cite{ch2} and section $6$ in \cite{ch-co3} for the details.

We shall give a definition of convergence of differential of Lipschitz functions (see Definition \ref{Lip}):
\begin{definition}[Convergence of differential of Lipschitz functions]\label{063}
We say that \textit{$df_i$ converges to $df_{\infty}$ on $B_R(y)$} if for every $\epsilon > 0$,  $x_{\infty} \in B_R(y)$ $z_{\infty} \in Y$, $x_i \in B_R(m_i)$ and $z_i \in M_i$ satisfying that $x_i$ converges to $x_{\infty}$ and that $z_i$ converges to $z_{\infty}$, there exists $r>0$ such that 
\[\limsup_{i \rightarrow \infty}\left|\frac{1}{\underline{\mathrm{vol}}\,B_t(x_i)}\int_{B_t(x_i)}\langle dr_{z_i}, df_i\rangle d\underline{\mathrm{vol}}-\frac{1}{\upsilon(B_t(x_{\infty}))}\int_{B_t(x_{\infty})}\langle dr_{z_{\infty}}, df_{\infty}\rangle d\upsilon \right|< \epsilon\]
and 
\[\limsup_{i \rightarrow \infty}\frac{1}{\underline{\mathrm{vol}}\,B_t(x_i)}\int_{B_t(x_i)}|df_i|^2d\underline{\mathrm{vol}}\le \frac{1}{\upsilon(B_t(x_{\infty}))}\int_{B_t(x_{\infty})}|df_{\infty}|^2d\upsilon + \epsilon\]
for every $0 < t < r$. 
\end{definition}
If $df_i$ converges to $df_{\infty}$ on $B_R(y)$, then we denote it by $(f_i, df_i) \rightarrow (f_{\infty}, df_{\infty})$ on $B_R(y)$.
Assume $(f_i, df_i) \rightarrow (f_{\infty}, df_{\infty})$ and $(g_i, dg_i) \rightarrow (g_{\infty}, dg_{\infty})$ on $B_R(y)$ below.

In the paper, we will study several properties of the convergence and give their applications.
For example, we will give the following in section $4$:
\begin{theorem}\label{473}
We have 
\[\lim_{i \rightarrow \infty}\int_{B_R(m_i)}F_i(\langle df_i, dg_i\rangle)d\underline{\mathrm{vol}}=\int_{B_R(y)}F_{\infty}(\langle df_{\infty}, dg_{\infty}\rangle )d\upsilon\]
for every sequence of continuous functions $\{F_i\}_{i=1, 2, _{\cdots}, \infty}$ on $\mathbf{R}^k$ satisfying that 
$F_i$ converges to $F_{\infty}$ uniformly on each compact subsets of $\mathbf{R}$.
Especially, if $f_{\infty}=g_{\infty}$, then we have
\[\lim_{i \rightarrow \infty}\frac{1}{\underline{\mathrm{vol}}\,B_R(m_i)}\int_{B_R(m_i)}F_i(|df_i-dg_i|)d\underline{\mathrm{vol}}=F_{\infty}(0).\]
\end{theorem}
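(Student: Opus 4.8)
The plan is to reduce the whole statement to a single $L^{2}$-convergence fact and then to invoke the standard calculus of $L^{p}$-convergence with respect to measured Gromov--Hausdorff topology. Set $C:=(\sup_{i}\mathbf{Lip}f_{i})(\sup_{i}\mathbf{Lip}g_{i})<\infty$, so that $|\langle df_{i},dg_{i}\rangle|\le C$ $\underline{\mathrm{vol}}$-a.e. and $|\langle df_{\infty},dg_{\infty}\rangle|\le C$ $\upsilon$-a.e. Since $F_{i}\to F_{\infty}$ uniformly on $[-C,C]$ and $\underline{\mathrm{vol}}\,B_{R}(m_{i})\to\upsilon(B_{R}(y))$ (recall $\upsilon(\partial B_{R}(y))=0$ on limit spaces), the difference $\int_{B_{R}(m_{i})}F_{i}(\langle df_{i},dg_{i}\rangle)\,d\underline{\mathrm{vol}}-\int_{B_{R}(m_{i})}F_{\infty}(\langle df_{i},dg_{i}\rangle)\,d\underline{\mathrm{vol}}$ tends to $0$, so it suffices to treat a fixed $F=F_{\infty}\in C([-C,C])$. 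The whole theorem then follows from the \emph{Main Claim}: $\langle df_{i},dg_{i}\rangle$ converges to $\langle df_{\infty},dg_{\infty}\rangle$ in the strong $L^{2}$-sense on $B_{R}$. Indeed, composition of a uniformly bounded strongly $L^{2}$-convergent sequence with a fixed continuous function converges again strongly in $L^{2}$ (it is convergence in measure for bounded sequences), and testing against the constant $1$ together with $\underline{\mathrm{vol}}\,B_{R}(m_{i})\to\upsilon(B_{R}(y))$ gives $\int F(\langle df_{i},dg_{i}\rangle)\,d\underline{\mathrm{vol}}\to\int F(\langle df_{\infty},dg_{\infty}\rangle)\,d\upsilon$. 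For the special case $f_{\infty}=g_{\infty}$ one applies the Main Claim to $\langle df_{i},df_{i}\rangle$, $\langle dg_{i},dg_{i}\rangle$ and $\langle df_{i},dg_{i}\rangle$; then $|df_{i}-dg_{i}|^{2}\to 0$ strongly in $L^{2}$, hence $|df_{i}-dg_{i}|\to0$ in measure, which forces $\frac{1}{\underline{\mathrm{vol}}\,B_{R}(m_{i})}\int F_{i}(|df_{i}-dg_{i}|)\,d\underline{\mathrm{vol}}\to F_{\infty}(0)$ by continuity of $F_{\infty}$ at $0$ and uniform convergence of $F_{i}$ near $0$.

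To prove the Main Claim I first extract the weak convergence implicit in Definition~\ref{063}: a Vitali-type covering by balls on which the defining inequalities hold, using $\upsilon(\partial B_{t}(x))=0$ and volume convergence, upgrades the ``shrinking ball'' inequalities to the global statements that $\langle dr_{z_{i}},dh_{i}\rangle\to\langle dr_{z_{\infty}},dh_{\infty}\rangle$ weakly in $L^{2}(B_{R})$ for every Lipschitz $h$ with $(h_{i},dh_{i})\to(h_{\infty},dh_{\infty})$ and every $r_{z_{i}}\to r_{z_{\infty}}$ (for $h$ a distance function one uses the analogous convergence for distance functions, a property available from the preceding discussion), together with $\limsup_{i}\frac{1}{\underline{\mathrm{vol}}\,B_{s}(x_{i})}\int_{B_{s}(x_{i})}|dh_{i}|^{2}\,d\underline{\mathrm{vol}}\le\frac{1}{\upsilon(B_{s}(x_{\infty}))}\int_{B_{s}(x_{\infty})}|dh_{\infty}|^{2}\,d\upsilon$ for all $s$. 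Next I bring in the Cheeger--Colding structure theory: for $\upsilon$-a.e. $w\in B_{R}(y)$, writing $k=\dim_{w}Y$, and for every $\delta>0$ there are points $z_{1}^{\infty},\dots,z_{k}^{\infty}$ and $s_{0}>0$ such that for $s<s_{0}$ the covectors $dr_{z_{1}^{\infty}},\dots,dr_{z_{k}^{\infty}}$ form a $\delta$-almost orthonormal frame of $T^{*}Y$ over $B_{s}(w)$ in the integral sense, so that $|dh_{\infty}|^{2}$ agrees with $\sum_{a}\langle dr_{z_{a}^{\infty}},dh_{\infty}\rangle^{2}$ up to an $L^{1}(B_{s}(w))$-error of size $O(\delta)$ for $h\in\{f_{\infty},g_{\infty}\}$; and, by the Cheeger--Colding transfer of the $\epsilon$-splitting property (through harmonic regularizations of $r_{z_{a}^{\infty}}$ and gradient estimates), the same almost orthonormality holds up to $O(\delta)$ for $dr_{z_{1}^{i}},\dots,dr_{z_{k}^{i}}$ over $B_{s}(w^{i})\subset M_{i}$ once $i$ is large, where $z_{a}^{i}\to z_{a}^{\infty}$, $w^{i}\to w$.

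The core is a no-energy-loss step: $\langle dr_{z_{a}^{i}},df_{i}\rangle\to\langle dr_{z_{a}^{\infty}},df_{\infty}\rangle$ \emph{strongly} in $L^{2}(B_{s}(w^{i}))$, and likewise with $g$. Weak convergence is in hand, so by lower semicontinuity of the $L^{2}$-norm it suffices to prove $\limsup_{i}\sum_{a}\int_{B_{s}(w^{i})}\langle dr_{z_{a}^{i}},df_{i}\rangle^{2}\,d\underline{\mathrm{vol}}\le\sum_{a}\int_{B_{s}(w)}\langle dr_{z_{a}^{\infty}},df_{\infty}\rangle^{2}\,d\upsilon$. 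Almost orthonormality over $B_{s}(w^{i})$ gives the pointwise Bessel inequality $\sum_{a}\langle dr_{z_{a}^{i}},df_{i}\rangle^{2}\le|df_{i}|^{2}+\big(\sup_{j}\mathbf{Lip}f_{j}\big)^{2}\sum_{a\ne b}|\langle dr_{z_{a}^{i}},dr_{z_{b}^{i}}\rangle|$, whose integral over $B_{s}(w^{i})$ is $\le\int_{B_{s}(w^{i})}|df_{i}|^{2}\,d\underline{\mathrm{vol}}+O(\delta)$; taking $\limsup_{i}$, applying the energy bound of Definition~\ref{063} and then the frame identity in the limit yields the displayed inequality up to $O(\delta)$, and since $\delta$ is arbitrary (adjusting $s_{0}$ and then $i_{0}$) this, with weak lower semicontinuity, forces genuine strong $L^{2}$-convergence of each $\langle dr_{z_{a}^{i}},df_{i}\rangle$. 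I expect this energy-matching — and specifically the transfer of almost orthonormality of distance differentials between $M_{i}$ and $Y$ — to be the main obstacle; Definition~\ref{063} alone, being quadratic in the function, gives only weak convergence, and the upgrade genuinely needs the Cheeger--Colding machinery.

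Finally I assemble the Main Claim. On $B_{s}(w^{i})$ write $\langle df_{i},dg_{i}\rangle=\sum_{a,b}(G_{i}^{-1})^{ab}\langle dr_{z_{a}^{i}},df_{i}\rangle\langle dr_{z_{b}^{i}},dg_{i}\rangle+\langle\Pi_{i}^{\perp}df_{i},\Pi_{i}^{\perp}dg_{i}\rangle$, where $G_{i}$ is the Gram matrix of $\{dr_{z_{a}^{i}}\}$ and $\Pi_{i}^{\perp}$ projects off their span. The energy-matching above forces $\|\Pi_{i}^{\perp}df_{i}\|_{L^{2}(B_{s}(w^{i}))}^{2}$ and $\|\Pi_{i}^{\perp}dg_{i}\|_{L^{2}(B_{s}(w^{i}))}^{2}$ to be $O(\delta)$, and a routine linear-algebra estimate (valid off a set of relative measure $O(\delta)$, with crude bounds elsewhere) gives $(G_{i}^{-1})^{ab}=\delta_{ab}+O(\delta)$ in $L^{1}(B_{s}(w^{i}))$; hence $\langle df_{i},dg_{i}\rangle=\sum_{a}\langle dr_{z_{a}^{i}},df_{i}\rangle\langle dr_{z_{a}^{i}},dg_{i}\rangle+O(\sqrt{\delta})$ in $L^{1}(B_{s}(w^{i}))$. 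Each summand is a product of a uniformly bounded, strongly $L^{2}$-convergent sequence, so it converges in $L^{1}(B_{s}(w^{i}))$ to $\langle dr_{z_{a}^{\infty}},df_{\infty}\rangle\langle dr_{z_{a}^{\infty}},dg_{\infty}\rangle$, whose sum equals $\langle df_{\infty},dg_{\infty}\rangle$ up to an $O(\delta)$ error in $L^{1}(B_{s}(w))$ by the limit frame identity. Thus $\langle df_{i},dg_{i}\rangle\to\langle df_{\infty},dg_{\infty}\rangle$ in $L^{1}(B_{s}(w^{i}))$, hence, by the uniform bound $C$, up to an $O(\sqrt{\delta})$ slack in $L^{2}(B_{s}(w^{i}))$. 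Covering $B_{R}(y)$ up to arbitrarily small $\upsilon$-measure by finitely many such balls, summing, bounding the leftover set by $C$, and letting $\delta\to0$ gives the Main Claim, and with it the theorem.
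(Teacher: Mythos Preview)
Your proof is correct and follows essentially the same route as the paper: both arguments pick, at $\upsilon$-a.e.\ point, an almost-orthonormal frame of distance-function differentials $\{dr_{z_a}\}$ (from the Cheeger--Colding rectifiability, the paper's Theorem~\ref{7}), transfer it to $M_i$ via the stability of $\langle dr_{z_a^i}, dr_{z_b^i}\rangle$ (the paper's Lemma~\ref{17}), and then run the same energy-matching computation --- expanding $|df_i-\sum a_l\,dr_{z_l(i)}|^2$ and bounding the three pieces by the two halves of Definition~\ref{063} --- to force $df_i$ to be $L^2$-close to a combination of $dr_{z_l(i)}$'s on small balls. The only difference is packaging: the paper formulates everything through its \emph{infinitesimal constant convergence property} (Theorem~\ref{10105} for the Main Claim, then Propositions~\ref{10101} and~\ref{10103} to compose with $F_i$ and globalize), whereas you phrase the same estimates in the language of weak/strong $L^2$-convergence along the measured Gromov--Hausdorff sequence and patch by Vitali covering.
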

See Proposition \ref{10101} and Theorem \ref{10105} for the proof.
We will also give the following in the section:
\begin{theorem}\label{har2}
Let $h_i$ be a harmonic function on $B_R(m_i)$ and $h_{\infty}$ a Lipschitz function on $B_R(y)$ satisfying that
$\sup_i \mathbf{Lip}h_i<\infty$ and that $h_i$ converges to $h_{\infty}$ on $B_R(y)$.
Then $h_{\infty}$ is harmonic function on $B_R(y)$, $(h_i, dh_i) \rightarrow (h_{\infty}, dh_{\infty})$ on $B_R(y)$.
\end{theorem}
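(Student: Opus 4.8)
The plan is to prove the convergence of the differentials first in a weak $L^2$-sense, using only compactness together with the harmonicity of the $h_i$, and then to upgrade it to the convergence of Definition \ref{063} once one knows that no energy is lost in the limit; the assertion that $h_\infty$ is harmonic will drop out along the way. I begin with two a priori facts. Since $h_i$ is harmonic with $\sup_i \mathbf{Lip}\,h_i =: L < \infty$, testing $\Delta h_i = 0$ against $\eta^2 h_i$ for a cut-off $\eta$ gives the Caccioppoli bound $\int_{B_s(m_i)}|\nabla h_i|^2\,d\underline{\mathrm{vol}} \le C(n,s,R,L)$ for every $s<R$; and since $h_i(m_i) \to h_\infty(y)$ while $\mathbf{Lip}\,h_i \le L$, the $h_i$ are uniformly bounded on each $B_s(m_i)$ and converge to $h_\infty$ uniformly on compact subsets of $B_R(y)$, hence so do the $h_i^2$.

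Next I identify the weak $L^2$-limit of $dh_i$ and deduce that $h_\infty$ is harmonic. By the Caccioppoli bound $\{dh_i\}$ is bounded in $L^2$ on compact subsets, so by the $L^2$-weak precompactness attached to the convergence, every subsequence of $dh_i$ has a further subsequence converging weakly in $L^2_{\mathrm{loc}}$ to some cotangent field; by closedness of the differential under measured Gromov-Hausdorff convergence that limit must be $dh_\infty$, so $dh_i \to dh_\infty$ weakly in $L^2_{\mathrm{loc}}$ along the whole sequence. Given a Lipschitz $\psi_\infty$ with compact support in $B_R(y)$, take (from the basic approximation theory of the convergence) Lipschitz functions $\psi_i$ on $B_R(m_i)$ with compact support, $\sup_i\mathbf{Lip}\,\psi_i < \infty$ and $(\psi_i, d\psi_i) \to (\psi_\infty, d\psi_\infty)$; then $d\psi_i \to d\psi_\infty$ strongly in $L^2_{\mathrm{loc}}$ (by Definition \ref{063} together with lower semicontinuity of the energy), so the weak-strong pairing gives
\[\int_{B_R(y)}\langle dh_\infty, d\psi_\infty\rangle\,d\upsilon = \lim_{i\to\infty}\int_{B_R(m_i)}\langle dh_i, d\psi_i\rangle\,d\underline{\mathrm{vol}} = 0,\]
the last equality because $h_i$ is harmonic and $\psi_i$ is compactly supported. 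Letting $\psi_\infty$ vary shows that $h_\infty$ is harmonic on $B_R(y)$.

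The main step — and the point I expect to be the real obstacle — is to show that no energy concentrates. Fix $x_\infty \in B_R(y)$, $x_i \to x_\infty$, and $0 < t < t'$ with $\overline{B_{t'}(x_\infty)} \subset B_R(y)$; recall that distance functions have gradient of norm one almost everywhere, so $\upsilon(\partial B_s(x_\infty)) = 0$ and $\mathrm{vol}(\partial B_s(m_i)) = 0$ for every $s$. Using Cheeger-Colding cut-off functions $\phi_i$ on $M_i$ with $\phi_i \equiv 1$ on $B_t(m_i)$, $\mathrm{supp}\,\phi_i \subset B_{t'}(m_i)$, $\sup_i\mathbf{Lip}\,\phi_i < \infty$ and $\sup_i\|\Delta\phi_i\|_{L^2} < \infty$, chosen so that $\phi_i \to \phi_\infty$, $\Delta\phi_i \to \Delta\phi_\infty$ in $L^2_{\mathrm{loc}}$, and $|\nabla\phi_i|^2\,d\underline{\mathrm{vol}} \rightharpoonup |d\phi_\infty|^2\,d\upsilon$ weakly (a localized form of Theorem \ref{473}), harmonicity of $h_i$ yields
\[\int_{B_R(m_i)}\phi_i^2\,|\nabla h_i|^2\,d\underline{\mathrm{vol}} = \frac{1}{2}\int_{B_R(m_i)}h_i^2\,\Delta(\phi_i^2)\,d\underline{\mathrm{vol}},\qquad \Delta(\phi_i^2) = 2\phi_i\Delta\phi_i + 2|\nabla\phi_i|^2.\]
Passing to the limit, using $h_i^2 \to h_\infty^2$ uniformly on $B_{t'}(x_\infty)$ and the weak convergence of $\Delta(\phi_i^2)\,d\underline{\mathrm{vol}}$ to $\Delta(\phi_\infty^2)\,d\upsilon$, the right-hand side tends to $\tfrac12\int_{B_R(y)}h_\infty^2\,\Delta(\phi_\infty^2)\,d\upsilon$, which equals $\int_{B_R(y)}\phi_\infty^2\,|dh_\infty|^2\,d\upsilon$ since $h_\infty$ is harmonic (so $\Delta(h_\infty^2) = 2|dh_\infty|^2$ on $Y$). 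Hence $\limsup_i\int_{B_t(m_i)}|\nabla h_i|^2\,d\underline{\mathrm{vol}} \le \int_{B_{t'}(x_\infty)}|dh_\infty|^2\,d\upsilon$; letting $t' \downarrow t$ and combining with the lower semicontinuity $\liminf_i\int_{B_t(m_i)}|\nabla h_i|^2\,d\underline{\mathrm{vol}} \ge \int_{B_t(x_\infty)}|dh_\infty|^2\,d\upsilon$ gives
\[\lim_{i\to\infty}\int_{B_t(m_i)}|\nabla h_i|^2\,d\underline{\mathrm{vol}} = \int_{B_t(x_\infty)}|dh_\infty|^2\,d\upsilon\]
for every $t$ with $\overline{B_t(x_\infty)} \subset B_R(y)$ and every $x_\infty$. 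Combined with $\underline{\mathrm{vol}}\,B_t(x_i) \to \upsilon(B_t(x_\infty))$ and the weak convergence $dh_i \to dh_\infty$, this upgrades to $dh_i \to dh_\infty$ strongly in $L^2_{\mathrm{loc}}$.

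It remains to verify Definition \ref{063}. Its second condition is immediate, with equality in the limit, from the convergence of the averaged energies on small balls just established. For the first, recall that the distance functions satisfy $(r_{z_i}, dr_{z_i}) \to (r_{z_\infty}, dr_{z_\infty})$ with $|\nabla r_{z_i}| \equiv 1$ and $|dr_{z_\infty}| = 1$ $\upsilon$-a.e., so $dr_{z_i} \to dr_{z_\infty}$ strongly in $L^2_{\mathrm{loc}}$ as well; strong convergence of both factors forces convergence of the pointwise inner products $\langle dr_{z_i}, dh_i\rangle \to \langle dr_{z_\infty}, dh_\infty\rangle$ in $L^1_{\mathrm{loc}}$, whence
\[\frac{1}{\underline{\mathrm{vol}}\,B_t(x_i)}\int_{B_t(x_i)}\langle dr_{z_i}, dh_i\rangle\,d\underline{\mathrm{vol}} \longrightarrow \frac{1}{\upsilon(B_t(x_\infty))}\int_{B_t(x_\infty)}\langle dr_{z_\infty}, dh_\infty\rangle\,d\upsilon\]
for every sufficiently small $t$, which is exactly the first condition (one may take $r$ to be the distance from $x_\infty$ to the complement of $B_R(y)$). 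Therefore $(h_i, dh_i) \to (h_\infty, dh_\infty)$ on $B_R(y)$. The genuinely delicate ingredient is the no-concentration step: it is the only place where harmonicity is essential, through the identity $\Delta(h^2) = 2|\nabla h|^2$ used on both $M_i$ and $Y$, and it relies on having Cheeger-Colding cut-off functions whose Laplacians converge under measured Gromov-Hausdorff convergence.
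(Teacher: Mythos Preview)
Your strategy is natural but relies on machinery the paper does not provide. In step~1 you invoke ``$L^2$-weak precompactness'' of $dh_i$ and ``closedness of the differential under measured Gromov--Hausdorff convergence''; neither notion is defined or proved here --- the only convergence of differentials available is Definition~\ref{063} itself, and establishing that $dh_i$ satisfies it is precisely the goal. In step~3 you assume Cheeger--Colding cut-offs with $\Delta\phi_i \to \Delta\phi_\infty$ in $L^2_{\mathrm{loc}}$; the standard construction (\cite[Theorem~8.16]{ch1}) only gives $|\Delta\phi_i| \le C(n,t,t')$, and nothing in the paper forces the Laplacians to converge, so the term $\int h_i^2\phi_i\Delta\phi_i$ cannot be passed to the limit. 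You also use the lower semicontinuity $\liminf_i\int_{B_t}|\nabla h_i|^2 \ge \int_{B_t}|dh_\infty|^2$ to match your upper bound; this is a statement about $\Gamma$-convergence of Cheeger energies along the sequence, and it is not proved in the paper either.

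The paper's route avoids all of this by staying with scalar quantities. Since $\Delta h_i = 0$, the Bochner-plus-cutoff argument of Remark~\ref{cc} gives $\sup_i\int_{B_r}|\mathrm{Hess}_{h_i}|^2\,d\underline{\mathrm{vol}} < \infty$ on every interior ball; Proposition~\ref{Hesss} then uses this Hessian bound together with the radial-derivative formula of Theorem~\ref{14} to show that $\langle dr_{z_i}, dh_i\rangle$ has the infinitesimal convergence property to $\langle dr_z, dh_\infty\rangle$ --- this handles the first clause of Definition~\ref{063} directly, without any weak-limit language. For the energy clause the paper does \emph{not} use $\Delta(h^2) = 2|\nabla h|^2$; it uses instead the elementary minimization property $\int|d(h_i+k)|^2 \ge \int|dh_i|^2$ for compactly supported $k$ (Claim~\ref{1f} in Proposition~\ref{energy}), together with the Approximation Theorem~\ref{app} to build competitors $f_i^\epsilon$ that agree with $h_i$ on an annulus near $\partial B_R$ and approximate $h_\infty$ in $H^{1,2}$. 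These combine in Corollary~\ref{suffconv} to give $(h_i, dh_i) \to (h_\infty, dh_\infty)$, and only \emph{then} is harmonicity of $h_\infty$ deduced (Corollary~\ref{convergence}). Your $\Delta(h^2)$ heuristic does become a rigorous argument in later frameworks where $L^2$-strong/weak convergence of vector fields along varying spaces is developed, but those tools postdate this paper.
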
 
We remark that the harmonicity of $h_{\infty}$ in Theorem \ref{har2} is given already in \cite{di2} by Ding.
We will give an alternative proof of it in section $4$ (see Corollary \ref{har}).

The organization of this paper is as follows:

In the next section, we will give several important notions and propeties for metric spaces and manifolds to understand this paper.
Most of statements in the section do not have the proof,
we will give a reference for them only.

In section $3$, we will give results of rectifiability for limit spaces of Riemannian manifolds (Theorem \ref{7} and Theorem \ref{24}).
It is important that we can take functions which give a rectitfiability of limit spaces, by \textit{distance functions} in these theorem.
As a corollary, we will give an explicit geometric formula of radial derivative for Lipschitz functions (Theorem \ref{14}).
These results are used in section $4$ essentially.
In \cite{ho5}, we will also give a geometric application of results in this section $3$ to limit spaces of Riemannian manifolds
with Ricci curvature bounded below.

In section $4$, we will give a definition of convergences of $L^{\infty}$-functions associated to measured Gromov-Hausdorff convergence
and give the definition of convergence of differential of Lipschitz functions again via the definition of convergence of $L^{\infty}$-functions.
After that, we will give several properties of the convergence. 
Main properties of them are Theorem \ref{10105}, Theorem \ref{app} and Corollary \ref{suffconv}.

In section $5$, as an application of results in section $4$, we will study harmonic functions on asymptotic cones of manifolds with nonnegative Ricci curvature and Euclidean volume growth via Colding-Minicozzi big theory (\cite{co-mi1, co-mi2, co-mi3, co-mi4, co-mi5, co-mi6}).
See Definition \ref{asymp} for the definition of asymptotic cones.
It is important that we can replace most of statements for harmonic functions on manifolds in \cite{co-mi2} with one on asymptotic cones via Ding's important works \cite{di1, di2} and Theorem \ref{10105}.
For instance, we will prove that the space of harmonic functions with polynomial growth of a fixed rate is finite dimensional vector space (Theorem \ref{023}).
We can regard it as \textit{asymptotic cones version} of finite dimensionality conjecture on manifolds by Yau (see for instance Conjecture $0.1$ in \cite{co-mi1}).
We remark that most of important essential ideas to prove these statements given in \cite{co-mi2, co-mi6}.
Roughly speaking, we can get these results by ``taking limit of most of results in \cite{co-mi2} via Theorem \ref{10105}''.
As an application of them to manifolds, we will prove the following Liouville type theorem:
\begin{theorem}
Let $M$ be an $n$-dimensional $(n \ge 3)$ complete Riemannian manifold with nonnegative Ricci curvature and Euclidean volume growth.
Then, there exists unique $d_1 \ge 1$ satisfying the following properties:
\begin{enumerate}
\item For every asymptotic cone $M_{\infty}$ of $M$ and $0 < d < d_1$, we have
\[H^d(M_{\infty})=\{\mathrm{Constant \ functions}\}.\]
Here $H^d(M_{\infty})$ is the linear space of harmonic functions on $M_{\infty}$ with order of growth at most $d$.
\item There exists an asymptotic cone $M_{\infty}$ of $M$ such that 
\[H^{d_1}(M_{\infty}) \neq \{\mathrm{Constant \ functions}\}.\]
\item For every  $0 < d < d_1$, we have
\[H^d(M)=\{\mathrm{Constant \ functions}\}.\]
\end{enumerate}
\end{theorem}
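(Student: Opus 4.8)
The plan is to deduce everything from the "asymptotic cone" theory developed in section 5, in particular Theorem~\ref{023} (finite dimensionality of $H^d(M_\infty)$) together with Ding's results \cite{di1,di2} relating harmonic functions on $M$ to those on its asymptotic cones, and Theorem~\ref{10105} (the convergence result for differentials). First I would set up the candidate $d_1$. For a fixed asymptotic cone $M_\infty$, the function $d \mapsto \dim H^d(M_\infty)$ is nondecreasing, integer-valued, equals $1$ for $d$ small (since any harmonic function of slow enough growth on a metric cone is constant — this should follow from the sublinear growth/Liouville input behind Theorem~\ref{har2} and the cone structure), and is finite for every $d$ by Theorem~\ref{023}. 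Hence there is a first value $d_1(M_\infty)$ at which it jumps above $1$. The key point is that $d_1(M_\infty)$ does not depend on the choice of asymptotic cone: this is where one uses Ding's theorem that a harmonic function on $M$ of polynomial growth order $d$ produces, along any sequence of rescalings, a nonconstant harmonic function of growth order $\le d$ on the corresponding asymptotic cone, and conversely that the "frequency" or growth exponents are controlled uniformly; combined with Colding--Minicozzi's monotonicity of frequency this forces the jump value to be a global invariant of $M$. Set $d_1$ to be this common value; uniqueness is then automatic from the definition as a threshold.

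With $d_1$ in hand, parts (1) and (2) are essentially the definition: (1) says $H^d(M_\infty)=\{\text{constants}\}$ for $0<d<d_1$, which holds for every $M_\infty$ since $d_1$ was chosen as the infimum of jump points over all cones and we just argued this infimum is attained uniformly; (2) says the threshold is realized, i.e. there exists a cone with $H^{d_1}(M_\infty)\neq\{\text{constants}\}$ — here I would take a sequence $d_1^{(k)}\downarrow d_1$ with witnesses $h_k \in H^{d_1^{(k)}}(M_\infty^{(k)})$ nonconstant, normalize, and pass to a limit of the cones (cones of cones are cones, using properness and the compactness of the relevant Gromov--Hausdorff moduli) applying Theorem~\ref{har2} to extract a nonconstant harmonic limit function whose growth order is $\le d_1$; a lower semicontinuity of growth order under this limiting gives exactly order $d_1$.

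For part (3), the implication is: if $H^d(M)\neq\{\text{constants}\}$ for some $0<d<d_1$, then there is a nonconstant $u$ on $M$ with polynomial growth order $d$. By Ding's construction, rescaling $u$ along a sequence realizing an asymptotic cone $M_\infty$ and using Theorem~\ref{har2} (harmonicity passes to the limit, and $(h_i,dh_i)\to(h_\infty,dh_\infty)$) yields a nonconstant harmonic function on $M_\infty$ of growth order $\le d < d_1$, contradicting part (1). The nonconstancy of the limit is the delicate point and is precisely where Theorem~\ref{10105} (equivalently Theorem~\ref{473}) enters: the $L^2$-norms of the gradients converge, so a normalization that keeps $\|du\|$ bounded below on an annulus survives in the limit and prevents collapse to a constant. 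Hence (3) holds, and the three statements together with the construction of $d_1$ complete the proof.

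The main obstacle I anticipate is establishing that $d_1$ is genuinely independent of the asymptotic cone and is attained — i.e. welding together Ding's correspondence \cite{di1,di2}, the Colding--Minicozzi frequency monotonicity, and a "cone of a cone is a cone" compactness argument into a clean statement that the set of growth exponents is the same for $M$ and for all of its asymptotic cones (and closed, so the threshold is achieved). The convergence machinery of section~4, especially Theorem~\ref{10105} guaranteeing $L^2$-convergence of differentials, is what makes the nonconstancy assertions in (2) and (3) rigorous rather than merely formal; once that is granted, the bookkeeping is routine.
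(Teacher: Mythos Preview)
Your proposal has a genuine misstep in how you set up $d_1$. You claim that $d_1(M_\infty)$ is independent of the asymptotic cone, and spend the bulk of your outline trying to justify this via a Ding-type correspondence between $M$ and its cones. But that correspondence relates $M$ to each cone individually; it does not compare two different cones to each other, and in fact there is no reason (and the paper makes no such claim) that the threshold should be the same for all asymptotic cones. The theorem only asserts that $d_1$ is the \emph{infimum} of the individual thresholds and that this infimum is attained. Your argument for attainment (take $d_1^{(k)}\downarrow d_1$, normalize witnesses, pass to a limiting cone and a limiting harmonic function) is plausible but you have not explained why the limit function is nonconstant; growth-order alone does not give lower semicontinuity in the direction you need.

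The paper's route is much cleaner because it passes through spectral theory of the cross-section. By Theorem~\ref{023}, $\dim H^d(C(X)) = \dim E_{d(d+n-2)}(X)$, so the threshold for $C(X)$ is exactly the $d$ with $d(d+n-2)=\lambda_1(X)$. One then sets $\lambda_1=\inf_{X\in\mathcal{M}_M}\lambda_1(X)$ and defines $d_1$ by $d_1(d_1+n-2)=\lambda_1$; the bound $d_1\ge 1$ is Ding's inequality $\lambda_1(X)\ge n-1$. Parts (1) and (2) then follow immediately from compactness of $\mathcal{M}_M$ together with Theorem~\ref{10001}, which gives continuity of $\lambda_k$ under Gromov--Hausdorff convergence of cross-sections --- no limiting of harmonic functions is needed at this stage.

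For part (3) your idea is correct and is exactly what the paper does, but the work is packaged in Theorem~\ref{com} (applied with $k=l=1$): from a nonconstant $u\in H^d(M)$ one produces, for any $\epsilon>0$, an asymptotic cone with $\dim H^{d+\epsilon}(M_\infty)\ge 2$. The nonconstancy of the limit is secured there not by a bare gradient-normalization as you suggest, but by the Colding--Minicozzi doubling lemma for the Dirichlet energies $f_i(r)$ combined with the frequency bounds (Propositions~\ref{008}, \ref{110}, \ref{1234567890}) and Li--Schoen's gradient estimate; this is the substantive analytic input you are glossing over.
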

See Corollary \ref{Liouville} for the proof.

In section $6$, as another application of results in section $4$, we will give (distributional) Laplacian comparison theorem on limit spaces of Riemannian manifolds by using several results in \cite{ho}.
See Theorem \ref{26}.
This formulation is given in \cite{KS1} by Kuwae-Shioya on weighted Alexandrov spaces.
Roughly speaking, this Laplacian comparison theorem implies that limit spaces of Riemannian manifolds have ``definite lower bound of Ricci curvature in some sense.''
In fact, we can get a stability result of lower bound of Ricci curvature with respect to Gromov-Hausdorff topology (Corollary \ref{87593}).
The corollary is well known in the setting of metric measure spaces. See for instance \cite{lo, lo-vi, oh, St1, St2, Vi1, Vi2}.
We will give an alternative proof of it via the Laplacian comparison theorem.

In section $7$, we will give  proofs of several propositions used in previous sections. 

\textbf{Acknowledgments.}
The author would like to express his deep gratitude to Professor Kenji Fukaya and Professor Tobias Holck Colding for warm encouragement and their numerous suggestions and advice.
He is grateful to Professor Takashi Shioya for his suggestion about Theorem \ref{26} and giving many valuable suggestions.
This work was done during the stay at MIT,
he also thanks to them and all members of Informal Geometry Seminar in MIT for warm hospitality and for giving nice environment.
\section{Preliminaries}
Our aim in this section is to introduce important notions and properties for metric spaces and manifolds to understand statements in this paper.
\subsection{Metric measure spaces}
For a positive number $\epsilon >0$, we use following notation:
\[a=b \pm \epsilon \Longleftrightarrow |a-b|<\epsilon.\]
We denote by $ \Psi (\epsilon_1, \epsilon _2 , _{\cdots} ,\epsilon_k  ; c_1, c_2, _{\cdots}, c_l )$ (more simply, $\Psi$)
some positive function on $\mathbf{R}_{>0}^k \times \mathbf{R}^l $ satisfying 
\[\lim_{\epsilon_1, \epsilon_2, _{\cdots} ,\epsilon_k \to 0}\Psi (\epsilon_1, \epsilon_2, _{\cdots} ,\epsilon_k  ; c_1, c_2 , _{\cdots} ,c_l)=0\] 
for each fixed real numbers $c_1, c_2, _{\cdots} ,c_l$.
We often denote by $C(c_1, c_2, _{\cdots} ,c_l)$ some (positive) constant depending only on fixed real numbers $c_1, c_2, _{\cdots} ,c_l$.

For a metric space $Z$, a point $z \in Z$ and a positive number $r>0$, we use the following notation:
\[B_r(z) = \{x \in Z; \overline{z,x} < r\}, \overline{B}_r(z)=\{x \in Z; \overline{z,x} \le r\},
\partial B_r(z)=\{x \in Z; \overline{z,x} = r\}.\]
Here $\overline{y, x}$ is the distance between $y$ and $x$,
we often denote the distance by $d_Z(y, x)$.
For $r < R$, we put $A_{r, R}(z) = \overline{B}_R(z) \setminus B_r(z)$.
For every $A \subset Z$, we also put $B_r(A)=\{x \in Z; \overline{A,w}<r\}$ and 
$\overline{B}_r(A) = \{x \in Z; \overline{A, x} \le r\}$.
For  an open subset $U$ of $Z$ and $\eta > 0$, we put $U_{\eta}=\{w \in U; B_{\eta}(w) \subset U\}$.
It is easy to check that $U_{\eta}$ is closed subset of $Z$.
For $z \in Z$, we define $1$-Lipschitz function $r_z$ on $Z$ by $r_z(w)=\overline{z,w}$. 

For a Lipschitz function $f$ on $Z$ and a point $z \in Z$, we will use the following notations:
\begin{enumerate}
\item If $z$ is not an isolated point in $Z$, then we put
\[lipf(z)= \liminf_{r \rightarrow 0}\left( \sup _{x \in B_r(z) \setminus \{z\}}\frac{|f(x)-f(z)|}{\overline{x,z}}\right),\]
if $z$ is an isolated point in $Z$, then we put $lipf(z)=0$.
\item If $z$ is not an isolated point in $Z$, then we put
\[\mathrm{Lip}f(z)= \limsup_{r \rightarrow 0}\left( \sup _{x \in B_r(z) \setminus \{z\}}\frac{|f(x)-f(z)|}{\overline{x,z}}\right),\]
if $z$ is an isolated point in $Z$, then we put $\mathrm{Lip}f(z)=0$.
\item If $Z$ is not single point, then we put
\[\mathbf{Lip}f= \sup_{w_1 \neq w_2} \frac{|f(w_1)-f(w_2)|}{\overline{w_1, w_2}} < \infty,\]
if $Z$ is a single point, then we put $\mathbf{Lip}f=0$.
\end{enumerate}

We shall remark that for every subset $A \subset Z$ and Lipschitz function $f$ on $A$, there exists a Lipschitz function $f^*$ on $Z$ such that 
$f^*|_A=f$ and $\mathbf{Lip}f^*=\mathbf{Lip}f$.
In fact, if we define a function $f^*$ on $Z$ by $f^*(z)=\inf_{a \in A}(f(a) + \mathbf{Lip}f \overline{z, a})$, then it is easy to check that $f^*|_A=f$ and $\mathbf{Lip}f^*=\mathbf{Lip}f$.

For a Borel subset $A$ of $Z$,  an extended real valued Borel function $f$ on $A$ and an extended nonnegative real valued Borel function $g$ on $A$, we say that $g$ is an upper gradient for $f$ if for every $a_1, a_2 \in A$ and continuous rectifiable curve $\gamma: [0, l] \rightarrow A$ parametrized by arclength with $\gamma(0)=a_1, \gamma(l)=a_2$, we have 
\[|f(a_1)-f(a_2)|\le \int_{0}^lg(\gamma(s))ds.\]
For an open subset $U \subset Z$ and a Lipschitz function $f$ on $U$, $lipf$ is an upper gradient for $f$ on $U$.
See  \cite[Proposition $1. 11$]{ch2}.
\

We say that $Z$ is \textit{proper} if every bounded subbsets of $Z$ are relatively compact.
We also say that $Z$ is a \textit{geodesic space} if for every $x_1, x_2 \in Z$, 
there exists an isometric embedding $\gamma$ from $[0, \overline{x_1, x_2}]$ to $Z$ such that 
$\gamma(0)=x_1, \gamma(\overline{x_1, x_2})=x_2$. We say that $\gamma$ is a \textit{minimal geodesic from} $x_1$ \textit{to} $x_2$.
For a proper geodesic space $W$ and $w \in W$, we put $C_w = \{z \in W;$ For every $x \in W \setminus \{z\},$ we have 
$\overline{w, z}+ \overline{z,x}>\overline{w, x} \}$ (if $W$ is a single point, then we put $C_w=\emptyset$).
We call $C_w$ \textit{cut locus of $W$ at $w$}.
\

For a proper metric space $Z$ and a Borel measure $\upsilon$ on $Z$, we say that $\upsilon$ \textit{is Radon measure} if $\upsilon(K) < \infty$ for every compact set $K$,
\[\upsilon(A) = \sup_{K \subset A: \mathrm{compact}}\upsilon(K) = \inf_{A \subset O: \mathrm{open}}\upsilon(O)\]
for every Borel subset $A$ of $Z$.  
Then we say that a pair $(Z, \upsilon)$ is a \textit{metric measure space} in this paper.
For a metric measure space $(Z, \upsilon)$, a point $z \in Z$ and $k \in \mathbf{R}_{\ge 0}$, we say that $\upsilon$ \textit{is Ahlfors $k$-regular at $z$} if there exist $r>0$ and $C \ge 1$ such that $C^{-1}\le \upsilon(B_t(z))/t^k \le C$ for every $0 < t < r$.
We shall introduce the notion of \textit{$\upsilon$-rectifiability} for metric measure spaces by Cheeger-Colding.
See \cite[Definition $5. 3$]{ch-co3} and \cite[Theorem $5. 7$]{ch-co3}.
For metric spaces $X_1, X_2$, $0 < \delta < 1$ and a bijection map $f$ from $X_1$ to $X_2$, 
we say that \textit{$f$ gives $(1 \pm \delta)$-bi-Lipschitz equivalent to $X_2$} if $f$ and $f^{-1}$ are $(1+\delta)$-Lipschitz map. 
\begin{definition}[Rectifiability for metric measure spaces]
For a metric measure space $(Z, \upsilon)$ and a Borel subset $A \subset Z$, we say that \textit{$A$ is $\upsilon$-rectifiable} if there exists a positive integer $m$, a collection of Borel subset $\{C_{k,i}\}_{1 \le k \le m, i \in \mathbf{N}}$ of $A$ and a collection of bi-Lipschitz embedding map $\{ \phi_{k,i}: C_{k,i} \rightarrow \mathbf{R}^k \}$ satisfying the following properties:
\begin{enumerate}
\item $\upsilon(A \setminus \bigcup_{k,i}C_{k,i})=0$
\item $\upsilon$ is Ahlfors $k$-regular at each $x \in C_{k,i}$.
\item For every $k$, $x \in \bigcup_{i \in \mathbf{N}}C_{k,i}$ and $0 < \delta < 1$, there exists $C_{k,i}$ such that $x \in C_{k,i}$ and that 
the map $\phi_{k,i}$ gives $(1 \pm \delta)$-bi-Lipschitz equivalent to the image $\phi_{k,i}(C_{k,i})$.
\end{enumerate}
\end{definition}
We shall recall the definition of Sobolev spaces on metric measure spaces (see \cite{ch1} and \cite{he-ko2}).
We fix a metric measure space $(Z, \upsilon)$ satisfying that $Z$ is a geodesic space and that $(Z, \upsilon)$ satisfies doubling condition below: For every $r > 0$, there exists $K=K(r)\ge 1$ such that $0 < \upsilon(B_{2s}(x))\le 2^K\upsilon(B_s(x))$ for every $x \in Z$ and $0<s <r$.
We fix an open set $U \subset Z$. 
For functions $f, g \in L^2(U)$,
we say that \textit{$g$ is a generalized upper gradient for $f$} if there exists sequences of extended real valued functions $f_i$ on $U$ and 
upper gradient $g_i$ for $f_i$ on $U$ such that $f_i \rightarrow f$ and $g_i \rightarrow g$ in $L^2(U)$.
Let $H_{1,2}(U)$ be the subspace of $L^2(U)$ consisting functions $f$ satisfying that there exists a generalized upper gradient $g$ for $f$ on $U$.
By \cite[Theorem $2.10$]{ch2}, for every $f \in H_{1, 2}(U)$, there exists unique $g_f \in L^2(U)$ satisfying that 
 $|g_f|_{L^2(U)}\le |g|_{L^2(U)}$ for every generalized upper gradient $g$ for $f$. 
We define a norm $|\cdot|_{1,2}$ on $H_{1,2}(U)$ by $|f|_{1,2}=|f|_{L^2(U)}+|g_f|_{L^2(U)}$.
We call $(H_{1,2}(U), |\cdot|_{1,2})$ \textit{the Sobolev space}.
We put $K(U)= \{k \in H_{1,2}(U);$ There exists $\eta >0$ such that $\upsilon(\{k \neq 0\} \cap (U \setminus U_{\eta}))=0 \}$.
\

We recall the definition of ($2$-)harmonic function on metric measure spaces by Cheeger.
For a Borel function $f$ on $U$, we say that \textit{$f$ is harmonic on $U$} if $f|_V \in H_{1,2}(V)$ for every bounded subset $V \subset U$ and 
$|g_{f+k}|_{L^2(V)}\ge |g_f|_{L^2(V)}$ for every $k \in K(U)$.
\

We shall recall the definition of \textit{weak Poincar\'{e} inequality of type $(1,2)$} for metric measure spaces.
We say that $(Z, \upsilon)$ satisfies \textit{a weak Poincar\'{e} inequality of type $(1, 2)$} if 
for every $R>0$, there exist $\tau \ge 1$ and $C \ge 1$ such that 
\[\frac{1}{\upsilon(B_r(x))}\int_{B_r(x)}\left|f-\frac{1}{\upsilon(B_r(x))}\int_{B_r(x)}fd\upsilon \right|d\upsilon \le Cr \sqrt{\frac{1}{\upsilon(B_{\tau r}(x))}\int_{B_{\tau r}(x)}g_f^2d\upsilon }\]
for every $x \in Z$, $0 < r < R$ and $f \in H_{1,2}(B_{\tau r}(x))$.
We remark that if  $(Z, \upsilon)$ satisfies a weak Poincar\'{e} inequality of type $(1, 2)$, then for every $R>0$, there exist
$C_1 \ge 1$ such that 
\[\frac{1}{\upsilon(B_r(x))}\int_{B_r(x)}\left|f-\frac{1}{\upsilon(B_r(x))}\int_{B_r(x)}fd\upsilon \right|d\upsilon \le C_1r \sqrt{\frac{1}{\upsilon(B_{r}(x))}\int_{B_{r}(x)}g_f^2d\upsilon}\]
for every $x \in Z$, $0 < r < R$ and $f \in H_{1,2}(B_r(x))$.
See for instance $(4.4)$ in \cite{ch2} or \cite{ha-ko}.
\

We shall give a short review of important results about differentiability of Lipschitz functions on metric measure spaces by Cheeger.
We assume that $(Z, \upsilon)$ satisfies weak Poincar\'{e} inequality of type $(1,2)$ below.
Then, by section $4$ in \cite{ch2}, we can construct the cotangent bundle $T^*Z$ of $Z$.
See  \cite[Definition $4.42$]{ch2} for the construction.
We will give several fundamental properties of the cotangent bundle only:
\begin{enumerate}
\item $T^*Z$ is a topological space.
\item There exists a Borel map $\pi:T^*Z \rightarrow Z$ such that $\upsilon(Z \setminus \pi (T^*Z))=0$.
\item For every $w \in \pi(T^*Z)$, $\pi^{-1}(w)$ is finite dimensional real vector space with canonical norm $|\cdot|(w)$.
\item For every open set $U \subset Z$ and $f \in H_{1, 2}(U)$, there exists a Borel set $V \subset U$ and a Borel map $df$ (called  differential section of $f$) from $V$ to $T^*Z$ such that $\upsilon(U \setminus V)=0$
and that $\pi \circ df(w)=w$, $|df|(w)=g_f(w)$ for every $w \in V$.
Moreover, if $f$ is Lipschitz, then $|df|(w)=\mathrm{Lip}f(w)=lipf(w)$.
\item For every open set $U \subset Z$ and Lipschitz functions $f_1, f_2$ on $U$, Leibnitz rule hold:
\[d(f_1f_2)(w)=f_2(w)df_1(w)+f_1(w)df_2(w)\]
for a.e. $w \in U$.
\end{enumerate} 
See section $4$ and $5$ in \cite{ch2} for the details.
\

In addition, we assume that $Z$ is $\upsilon$-rectifiable below.
Then, by section $6$ in \cite{ch-co3}, for a.e. $w \in Z$, each norms $|\cdot|(w)$ defines the inner product $\langle \cdot, \cdot \rangle (w)$, i.e. $|v|(w)= \sqrt{\langle v, v\rangle (w)}$ for every $v \in \pi^{-1}(w)$. We call $\{\langle \cdot, \cdot \rangle (w)\}_{w \in Y}$ \textit{Riemannian metric of $Y$} and denote it by $\langle \cdot, \cdot \rangle$.
Moreover, the following bilinear form
\[\int_Z \langle df_1, df_2 \rangle d\upsilon \]
on $H_{1, 2}(Z)$ is closable (see \cite[Theorem $6. 25$]{ch-co3}).
Therefore this bilinear form determines a canonical (positive definite) self-adjoint operator $\Delta_Z$ on $L^2(Z)$.
We call $\Delta_Z$ \textit{Laplace operator of $(Z, \upsilon)$ or Laplacian of $(Z, \upsilon)$}
Moreover, if $Z$ is compact, then $(1 + \Delta_Z)^{-1}$ is compact operator (see \cite[Theorem $6. 27$]{ch-co3}).
\subsection{Gromov-Hausdorff convergence}
For compact metric spaces $X_1, X_2$, we define \textit{Gromov-Hausdorff distance between $X_1$ and $X_2$} by
\begin{align*}
d_{GH}(X_1, X_2)&=\inf \{d_H^W(\phi_1(X_1), \phi_2(X_2));\textrm{There exist a metric space } W \ \textrm{and} \\
&\textrm{isometric embeddings 
}\phi_i \ \textrm{from } X_i \ \textrm{to } W (i=1, 2)\}.
\end{align*}
Here $d_H^W$ is the Hausdroff distance and the infimum above runs over all $W, \phi_i$ satisfying conditions above.
We remark that $d_{GH}$ is a distance on the set of isometry class of compact metric spaces.
On the other hand, for compact metric spaces $X_1$, $X_2$, a positive number $\epsilon >0$ and a map $\phi$ from $X_1$ to $X_2$, we say that $\phi$ is an 
\textit{$\epsilon$-Gromov-Hausdorff approximation} if $B_{\epsilon}(\mathrm{Image}\phi)=X_1$ and $|\overline{x,y}-\overline{\phi(x), \phi(y)}|<\epsilon$ for every $x, y \in X_1$.
It is easy to check that if $d_{GH}(X_1, X_2)\le \epsilon$, then there exists an $3\epsilon$-Gromov-Hausdorff approximation from $X_1$ to $X_2$
and that if there exists an $\epsilon$-Gromov-Hausdorff approximation from $X_1$ to $X_2$, then $d_{GH}(X_1, X_2) \le 9\epsilon$.
For a sequence of compact metric spaces $X_i$, we say that $X_i$ \textit{converges to $X_{\infty}$} if $d_{GH}(X_i, X_{\infty})$ converges to $0$.
Then we denote it by $X_i \rightarrow X_{\infty}$.
Similarly, for pointed compact metric spaces $(X_1, x_1), (X_2, x_2)$, we can define the \textit{pointed} Gromov-Hausdorff distance $d_{GH}((X_1, x_1), (X_2, x_2))$.
\

Moreover, for a sequence of pointed proper geodesic spaces, $(Z_i, z_i)$, we say that 
$(Z_i, z_i)$ converges to $(Z_{\infty}, z_{\infty})$ if there exist  sequences of positive numbers
$\epsilon_i$, $R_i$ and a (Borel) map $\phi_i$ from $(B_{R_i}(z_i), z_i)$ to $(B_{R_i}(z_{\infty}), z_{\infty})$ such that
$\epsilon_i \rightarrow 0$, $R_i \rightarrow \infty$ as $i \rightarrow \infty$,
$B_{R_i}(z_{\infty}) \subset B_{\epsilon_i}(\mathrm{Image} \phi_i)$ and 
$|\overline{x_1, x_2}-\overline{\phi_i(x_1), \phi_i(x_2)}|\le \epsilon_i$ for every $x_1, x_2 \in B_{R_i}(x_i)$.
We denote it by $(Z_i, z_i) \stackrel{(\phi_1, R_i, \epsilon_i)}{\rightarrow} (Z_{\infty}, z_{\infty})$, or
more simply $(Z_i, z_i) \rightarrow (Z_{\infty}, z_{\infty})$.
For every $x_{\infty} \in Z_{\infty}$ and $x_i \in Z_i$, we say that $x_i$ converges to $x_{\infty}$ if
$\overline{\phi_i(x_i), x_{\infty}} \rightarrow 0$. 
Then, we denote it by $x_i \rightarrow x_{\infty}$.
\

Let $(Z_i ,z_i) \rightarrow (Z_{\infty}, z_{\infty})$. 
For a sequence of sets $A_i \subset Z_i$ satisfying that there exists $R>0$ such that $A_i \subset B_R(z_i)$ for every $i$, we say that \textit{$A_i$ is included by $A_{\infty}$ asymptotically} if 
for every $\epsilon > 0$, there exists $i_0$ such that for every $i \ge i_0$, $\phi_i(A_i) \subset B_{\epsilon}(A_{\infty})$.
Then we denote it by $\limsup_{i \rightarrow \infty}A_i \subset A_{\infty}$.
(If $A_{\infty} = \emptyset$, then  $\limsup_{i \rightarrow \infty}A_i \subset A_{\infty}$ implies $A_i= \emptyset$ for every sufficiently large $i$.)
Similarly, 
we also say that \textit{$A_{\infty}$ is included by $A_{i}$ asymptotically} if for every $\epsilon > 0$,  there exists $i_0$ such that for every $i \ge i_0$, $A_{\infty} \subset A_{\epsilon}(\phi_i(A_i))$.
Then we denote it by $A_{\infty} \subset \liminf_{i \rightarrow \infty}A_i$.
Let $C_{\infty} \subset \liminf_{i \rightarrow \infty} C_i$.
For a sequence of Lipschitz function $f_i$ on $C_i$ satisfying $\sup_i \mathbf{Lip}f_i < \infty$,  we say that \textit{$f_{\infty}$ is a restriction of $f_i$ asymptotically} if 
for every $w \in C_{\infty}$, subsequence $\{n(i)\}$ of $\mathbf{N}$ and $w_{n(i)} \in C_{n(i)}$ satisfying $\overline{\phi_{n(i)}(w_{n(i)}), w} \rightarrow 0$, we have
\[\lim_{i \rightarrow \infty}f_{n(i)}(w_{n(i)})=f_{\infty}(w).\]
Let $\limsup_{i \rightarrow \infty} D_i \subset D_{\infty}$ and $D_{\infty}$ be compact.
For  a sequence of Lipschitz function $g_i$ on $D_i$ satisfying  $\sup_i \mathbf{Lip}g_i < \infty$,  we say that \textit{$g_{\infty}$ is an extension of $g_i$ asymptotically} if 
for every $w \in D_{\infty}$, subsequence $\{n(i)\}$ of $\mathbf{N}$ and $w_{n(i)} \in D_{n(i)}$ satisfying $\overline{\phi_{n(i)}(w_{n(i)}), w}
\rightarrow 0$, we have
\[\lim_{i \rightarrow \infty}g_{n(i)}(w_{n(i)})=g_{\infty}(w).\]

For a sequence of compact set $K_i \subset Z_i$, we say that 
$(Z_i, z_i, K_i)$ converges to $(Z_{\infty}, z_{\infty}, K_{\infty})$ if there exists $\tau_i > 0$ such that $\tau_i \rightarrow 0$, 
$\phi_i(K_i) \subset B_{\epsilon_i+ \tau_i}(K_{\infty})$ and $K_{\infty} \subset B_{\epsilon_i+ \tau_i}(\phi_i(K_i))$.
Then we denote it by $(Z_i, z_i, K_i) \stackrel{(\phi_1, R_i, \epsilon_i)}{\rightarrow} (Z_{\infty}, z_{\infty}, K_{\infty})$ or,
more simply, $(Z_i, z_i, K_i) \rightarrow (Z_{\infty}, z_{\infty}, K_{\infty})$ or $K_i \rightarrow K_{\infty}$.
It is easy to check that  $(Z_i, z_i, K_i) \rightarrow (Z_{\infty}, z_{\infty}, K_{\infty})$ holds if and only if 
$\limsup_{i \rightarrow \infty}K_i  \subset K_{\infty}$  and $K_{\infty} \subset \liminf_{i \rightarrow \infty}K_i$ hold.
\

Let $(Z_i, z_i, K_i) \rightarrow (Z_{\infty}, z_{\infty}, K_{\infty})$. 
For a sequence of Lipschitz functions, $f_i^1, f_i^2, _{\cdots}, f_i^k$ on $K_i$ satisfying $\sup_{i, l}(\mathbf{Lip}f_i^l + |f_i^l|_{L^{\infty}}) < \infty$, we say that 
$(Z_i, z_i, K_i, f_i^1, _{\cdots}, f_i^k)$ converges to $(Z_{\infty}, z_{\infty}, K_{\infty}, f_{\infty}^1, _{\cdots}, f_{\infty}^k)$ if 
\[\lim_{i \rightarrow \infty}f_i^l(x_i)=f_{\infty}^l(x_{\infty})\]
for every $x_i \in K_i$ and $x_{\infty} \in K_{\infty}$ satisfying  $x_i \rightarrow x_{\infty}$.
It is easy to check that this condition holds if and only if $f_{\infty}^l$ is an extension (or a restriction) of $\{f_i^l\}$ asymptotically for every $l$.
We denote it by $(Z_i, z_i, K_i, f_i^1, _{\cdots}, f_i^k) \rightarrow (Z_{\infty}, z_{\infty}, K_{\infty}, f_{\infty}^1, _{\cdots}, f_{\infty}^k)$, or more simply, $f_i^l \rightarrow f_{\infty}^l$ for every $l$.
Then we can also check that 
\[\lim_{i \rightarrow \infty}|f_i^l-f_{\infty}^l\circ \phi_i|_{L^{\infty}(K_i)}=0\]
easily. 
\begin{example}\label{d1}
Let $(Z_i, z_i) \rightarrow (Z_{\infty}, z_{\infty})$. Then it is easy to check that $\limsup_{i \rightarrow \infty}\overline{B}_R(z_i)
\subset B_R(z_{\infty})$ and $\overline{B}_R(z_{\infty}) \subset \liminf_{i \rightarrow \infty}B_R(z_i)$.
\end{example}
\begin{example}\label{d2}
Let $(Z_i, z_i) \rightarrow (Z_{\infty}, z_{\infty})$. Then for every $A \subset Z_{\infty}$ and $\tau_i \rightarrow 0$, we have $\limsup_{i \rightarrow \infty}B_{\tau_i}((\phi_i)^{-1}(A_i)) \subset A$ and $A \subset \liminf_{i \rightarrow \infty}(\phi_i)^{-1}(A_i)$. 
\end{example}
It is not difficult to check the following proposition:
\begin{proposition}
Let $(Z_i, z_i) \rightarrow (Z_{\infty}, z_{\infty})$, $A_i^1, A_i^2$ bounded subsets of $Z_i$.
Then we have the following:
\begin{enumerate}
\item If $\limsup_{i \rightarrow \infty}A_i^j \subset A_{\infty}^j$ for $j = 1, 2$, then  $\limsup_{i \rightarrow \infty}(A_i^1 \cup A_i^2)  \subset A_{\infty}^1 \cup A_{\infty}^2$.
\item If $A_{\infty}^j \subset \liminf_{i \rightarrow \infty}A_i^j$ for $j =1, 2$, then $\liminf_{i \rightarrow \infty}(A_i^1 \cup A_i^2) \subset A_{\infty}^1 \cup A_{\infty}^2$.
\item If $X, Y \subset Z_{\infty}$ satisfies $\limsup_{i \rightarrow \infty}A_i^1 \subset X$, 
$\limsup_{i \rightarrow \infty}A_i^1 \subset Y$ and $X \cup Y \subset  \liminf_{i \rightarrow \infty}A_i^1$, then $\overline{X}=\overline{Y}$.
Here, $\overline{X}$ is the closure of $X$ in $Z_{\infty}$.
\end{enumerate}
\end{proposition}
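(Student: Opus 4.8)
The plan is to obtain all three assertions directly from the definitions of the two asymptotic inclusions, using only the set identities $\phi_i(A\cup B)=\phi_i(A)\cup\phi_i(B)$ and $B_r(A\cup B)=B_r(A)\cup B_r(B)$ together with the triangle inequality. For (1), fix $\epsilon>0$ and choose $i_0$ so that $\phi_i(A_i^j)\subset B_\epsilon(A_\infty^j)$ for all $i\ge i_0$ and $j=1,2$; then
\[
\phi_i(A_i^1\cup A_i^2)=\phi_i(A_i^1)\cup\phi_i(A_i^2)\subset B_\epsilon(A_\infty^1)\cup B_\epsilon(A_\infty^2)=B_\epsilon(A_\infty^1\cup A_\infty^2),
\]
which is precisely the defining condition for $\limsup_{i\rightarrow\infty}(A_i^1\cup A_i^2)\subset A_\infty^1\cup A_\infty^2$.

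Assertion (2) is the $\liminf$-analogue of the very same computation: given $\epsilon>0$ one picks $i_0$ with $A_\infty^j\subset B_\epsilon(\phi_i(A_i^j))$ for all $i\ge i_0$ and $j=1,2$, and, since both $\phi_i(\cdot)$ and $B_\epsilon(\cdot)$ commute with finite unions, the condition governing $A_i^1\cup A_i^2$ reduces to the conjunction of the corresponding conditions for $A_i^1$ and $A_i^2$. No new idea beyond (1) is involved.

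Assertion (3) carries the only (mild) content; I would prove it by showing $X\subset\overline{Y}$ and $Y\subset\overline{X}$, which together force $\overline{X}=\overline{Y}$. Fix $\epsilon>0$. From $X\cup Y\subset\liminf_{i\rightarrow\infty}A_i^1$ pick $i_0$ with $X\subset X\cup Y\subset B_\epsilon(\phi_i(A_i^1))$ for all $i\ge i_0$, and, enlarging $i_0$ by means of $\limsup_{i\rightarrow\infty}A_i^1\subset Y$, arrange also $\phi_i(A_i^1)\subset B_\epsilon(Y)$ for $i\ge i_0$. Given $x\in X$ and $i\ge i_0$, pick $a\in A_i^1$ with $\overline{\phi_i(a),x}<\epsilon$; since $\phi_i(a)\in B_\epsilon(Y)$ we get $\overline{Y,x}\le\overline{Y,\phi_i(a)}+\overline{\phi_i(a),x}<2\epsilon$, so letting $\epsilon\rightarrow0$ gives $x\in\overline{Y}$. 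Thus $X\subset\overline{Y}$ and $\overline{X}\subset\overline{Y}$. Interchanging the roles of $X$ and $Y$ — feeding $\limsup_{i\rightarrow\infty}A_i^1\subset X$ into the containment step and $Y\subset X\cup Y\subset\liminf_{i\rightarrow\infty}A_i^1$ into the covering step — yields $\overline{Y}\subset\overline{X}$.

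I do not expect a genuine obstacle anywhere: (1) and (2) are a mechanical distribution of the maps $\phi_i$ and of metric $\epsilon$-neighbourhoods over a finite union, and (3) is a single use of the triangle inequality. The only point deserving care is the bookkeeping in (3): one must pair $X\cup Y\subset\liminf_{i\rightarrow\infty}A_i^1$ with the ``$x$ (resp.\ $y$) is $\epsilon$-close to $\phi_i(A_i^1)$'' step, and $\limsup_{i\rightarrow\infty}A_i^1\subset Y$ (resp.\ $\subset X$) with the ``$\phi_i(A_i^1)$ is $\epsilon$-contained in $Y$ (resp.\ $X$)'' step, and must fix a single $i_0$ valid for both before sending $\epsilon\rightarrow0$.
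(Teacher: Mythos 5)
Your proof is correct and is the straightforward verification from the definitions, which is exactly what the paper intends when it states the proposition without proof as ``not difficult to check.'' One small remark: the conclusion of assertion (2) as printed, $\liminf_{i\to\infty}(A_i^1\cup A_i^2)\subset A_\infty^1\cup A_\infty^2$, is not a defined notion in the paper (only $A_\infty\subset\liminf_{i\to\infty}A_i$ is given a meaning), and you correctly prove the evidently intended statement $A_\infty^1\cup A_\infty^2\subset\liminf_{i\to\infty}(A_i^1\cup A_i^2)$.
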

We shall give a proof of the next proposition:
\begin{proposition}\label{compact2}
Let $(Z_i, z_i)$ be a sequence of proper geodesic spaces, $\Lambda$ a set  and $\{A_i^{\lambda}\}_{\lambda \in \Lambda}$ a collection of bounded subsets of $Z_i$. 
We assume that $(Z_i, z_i)$ converges to $(Z_{\infty}, z_{\infty})$, $A_{\infty}^{\lambda}$ is compact for every $\lambda \in \Lambda$ and that $\limsup_{i \rightarrow \infty}A_i^{\lambda} \subset A_{\infty}^{\lambda}$ for every $\lambda \in \Lambda$.
Then, $\limsup_{i \rightarrow \infty}\bigcap_{\lambda \in \Lambda}A_i^{\lambda} \subset \bigcap_{\lambda \in \Lambda}A_{\infty}^{\lambda}$.
\end{proposition}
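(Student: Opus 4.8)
The plan is to argue by contradiction, extracting a convergent subsequence of suitable sample points and using the compactness of each $A_\infty^\lambda$ together with the properness of $Z_\infty$. First I would dispose of the trivial case $\Lambda=\emptyset$ (where $\bigcap_\lambda A_i^\lambda=Z_i$) and fix once and for all some $\lambda_0\in\Lambda$. Since the hypothesis $\limsup_{i\rightarrow\infty}A_i^{\lambda_0}\subset A_\infty^{\lambda_0}$ already presupposes that $\{A_i^{\lambda_0}\}_i$ is uniformly bounded, there is $R_0>0$ with $A_i^{\lambda_0}\subset B_{R_0}(z_i)$ for all $i$, hence $\bigcap_\lambda A_i^\lambda\subset B_{R_0}(z_i)$ for all $i$; so the sequence $\bigcap_\lambda A_i^\lambda$ is of the type for which $\limsup_{i\rightarrow\infty}(\cdot)\subset(\cdot)$ is defined. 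Fix maps $(\phi_i,R_i,\epsilon_i)$ realizing $(Z_i,z_i)\rightarrow(Z_\infty,z_\infty)$.

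Next I would negate the conclusion. Unwinding the definition of $\limsup_{i\rightarrow\infty}\bigcap_\lambda A_i^\lambda\subset\bigcap_\lambda A_\infty^\lambda$, failure means there is $\epsilon>0$ and an infinite set of indices (which I relabel as all of $\mathbf{N}$) with $\phi_i\left(\bigcap_\lambda A_i^\lambda\right)\not\subset B_\epsilon\left(\bigcap_\lambda A_\infty^\lambda\right)$; in particular $\bigcap_\lambda A_i^\lambda\neq\emptyset$, so one may choose $w_i\in\bigcap_\lambda A_i^\lambda$ with $\overline{\phi_i(w_i),p}\ge\epsilon$ for every $p\in\bigcap_\lambda A_\infty^\lambda$ (when $\bigcap_\lambda A_\infty^\lambda=\emptyset$, interpret this simply as picking any $w_i\in\bigcap_\lambda A_i^\lambda$). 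Because $\phi_i(z_i)=z_\infty$ and $\overline{w_i,z_i}<R_0$, one gets $\overline{\phi_i(w_i),z_\infty}<R_0+\epsilon_i\le R_0+1$ for large $i$, and since $Z_\infty$ is proper the ball $\overline{B}_{R_0+1}(z_\infty)$ is compact; passing to a further subsequence I may assume $\phi_i(w_i)\rightarrow w_\infty$ for some $w_\infty\in Z_\infty$, i.e.\ $w_i\rightarrow w_\infty$ in the sense of convergence of points.

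The key step is then to show $w_\infty\in A_\infty^\lambda$ for every $\lambda\in\Lambda$. Fix $\lambda$ and $\delta>0$; by $\limsup_{i\rightarrow\infty}A_i^\lambda\subset A_\infty^\lambda$ there is $i_0$ with $\phi_i(A_i^\lambda)\subset B_\delta(A_\infty^\lambda)$ for all $i\ge i_0$, and since $w_i\in A_i^\lambda$ this gives $\overline{\phi_i(w_i),A_\infty^\lambda}<\delta$; letting $i\rightarrow\infty$ yields $\overline{w_\infty,A_\infty^\lambda}\le\delta$, and as $\delta>0$ was arbitrary, $\overline{w_\infty,A_\infty^\lambda}=0$. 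Since $A_\infty^\lambda$ is compact, hence closed, we get $w_\infty\in A_\infty^\lambda$. Therefore $w_\infty\in\bigcap_\lambda A_\infty^\lambda$, which is in particular nonempty, and $\overline{\phi_i(w_i),w_\infty}\rightarrow0$ contradicts $\overline{\phi_i(w_i),w_\infty}\ge\epsilon$. This contradiction proves the proposition.

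I do not expect a serious obstacle here: the argument is essentially careful bookkeeping with the definitions. The only points needing attention are the translation between the $\epsilon$-formulation of $\limsup_{i\rightarrow\infty}(\cdot)\subset(\cdot)$ and the sequential picture in terms of convergence of points, the (harmless) handling of the case $\bigcap_\lambda A_\infty^\lambda=\emptyset$, and making sure that $\bigcap_\lambda A_i^\lambda$ is uniformly bounded so that the subsequence $\{\phi_i(w_i)\}$ lives in a fixed compact subset of $Z_\infty$; the latter is exactly where properness of $Z_\infty$ enters.
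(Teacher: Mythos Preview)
Your proof is correct and follows essentially the same approach as the paper's: argue by contradiction, extract a convergent subsequence of sample points using properness of $Z_\infty$, and use closedness of each $A_\infty^\lambda$ to place the limit in the intersection. You are simply more explicit about the uniform boundedness, the edge case $\Lambda=\emptyset$, and the case $\bigcap_\lambda A_\infty^\lambda=\emptyset$, all of which the paper glosses over.
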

\begin{proof}
The proof is done by a contradiction.
We assume that the assertion is false.
Then, there exists $\tau > 0$ such that for every $i$, there exist $N_i \ge i$ and $w_i \in  \phi_{N_i}(\bigcap_{\lambda \in \Lambda}A_{N_i}^{\lambda})
\setminus B_{\tau}(\bigcap_{\lambda \in \Lambda}A_{\infty}^{\lambda})$.
Without loss of generality, we can assume  that there exists $w_{\infty} \in Z_{\infty}$ such that $w_i \rightarrow w_{\infty}$.
By the assumption, we have $w_{\infty} \in \overline{A_{\infty}^{\lambda}}=A_{\infty}^{\lambda}$ for every $\lambda \in \Lambda$.
Thus, $w_{\infty} \in \bigcap_{\lambda \in \Lambda}A_{\infty}^{\lambda}$. Especially we have $w_i \in B_{\tau}(\bigcap_{\lambda \in \Lambda}A_{\infty}^{\lambda})$ for every sufficiently large $i$.
This is a contradiction.
\end{proof}
We shall consider convergence of a sequence of complement of open balls:
\begin{proposition}\label{compact3}
Let $(Z_i, z_i)$ be a sequence of proper geodesic spaces and $A_i$ a bounded subset of $Z_i$. 
We assume that $(Z_i, z_i)$ converges to $(Z_{\infty}, z_{\infty})$,  $A_{\infty}$ is compact and that $\limsup_{i \rightarrow \infty}A_i \subset A_{\infty}$.
Then for every $r > 0$ and $x_i \rightarrow x_{\infty} \in Z_{\infty}$, we have 
$\limsup_{i \rightarrow \infty}(A_i \setminus B_r(x_i)) \subset A_{\infty} \setminus B_r(x_{\infty})$.
\end{proposition}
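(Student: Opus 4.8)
The plan is to mimic the contradiction scheme used in the proof of Proposition \ref{compact2}. Fix $R > 0$ with $A_i \subset B_R(z_i)$ for every $i$, together with sequences $\epsilon_i \to 0$ and maps $\phi_i$ realizing the convergence $(Z_i, z_i) \to (Z_{\infty}, z_{\infty})$. Suppose the assertion is false. Then there is $\tau > 0$ such that for every $i$ we can choose $N_i \ge i$ and a point $w_i \in A_{N_i} \setminus B_r(x_{N_i})$ whose image satisfies $\phi_{N_i}(w_i) \notin B_{\tau}(A_{\infty} \setminus B_r(x_{\infty}))$. The goal is to extract a limit point of the $w_i$ and show it lies in $A_{\infty} \setminus B_r(x_{\infty})$, which contradicts the choice of $w_i$.

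First I would check that the sequence $\{\phi_{N_i}(w_i)\}$ stays in a fixed bounded region of $Z_{\infty}$: since $w_i \in A_{N_i} \subset B_R(z_{N_i})$ and $\phi_{N_i}$ is almost distance preserving while $z_{N_i} \to z_{\infty}$, the points $\phi_{N_i}(w_i)$ lie in $\overline{B}_{R+1}(z_{\infty})$ for all large $i$. As $Z_{\infty}$ is proper, after passing to a subsequence we may assume $w_i \to w_{\infty}$ for some $w_{\infty} \in Z_{\infty}$.

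Next there are two properties of $w_{\infty}$ to verify. First, applying $\limsup_{i \rightarrow \infty} A_i \subset A_{\infty}$ along the subsequence $\{N_i\}$, the points $\phi_{N_i}(w_i)$ eventually lie in every $\delta$-neighborhood of $A_{\infty}$; letting $i \to \infty$ and using that $A_{\infty}$ is compact, hence closed, gives $w_{\infty} \in A_{\infty}$. Second, since $x_i \to x_{\infty}$ and $w_i \to w_{\infty}$, distances converge, so $\overline{x_{\infty}, w_{\infty}} = \lim_{i \to \infty} \overline{x_{N_i}, w_i} \ge r$ because each $w_i$ lies outside the open ball $B_r(x_{N_i})$. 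Combining these, $w_{\infty} \in A_{\infty} \setminus B_r(x_{\infty})$, hence $\phi_{N_i}(w_i) \in B_{\tau}(A_{\infty} \setminus B_r(x_{\infty}))$ for all large $i$ — the desired contradiction.

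I do not expect a serious obstacle here; the points requiring care are purely in the bookkeeping. One is keeping track of the subsequence $\{N_i\}$ so that $\limsup_{i \rightarrow \infty} A_i \subset A_{\infty}$ may still legitimately be invoked. The other is the observation that the strict condition $\overline{x_{N_i}, w_i} \ge r$ only passes to $\overline{x_{\infty}, w_{\infty}} \ge r$ in the limit — which is exactly what is needed, precisely because $B_r(x_{\infty})$ is an open ball, so ``belonging to $A_{\infty} \setminus B_r(x_{\infty})$'' is a closed condition. The convergence of distances used above is immediate from the defining estimate $|\overline{a,b} - \overline{\phi_i(a), \phi_i(b)}| \le \epsilon_i$ together with $\phi_i(x_i) \to x_{\infty}$ and $\phi_i(w_i) \to w_{\infty}$.
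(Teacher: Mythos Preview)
Your argument is correct and follows essentially the same contradiction scheme as the paper's proof: extract a limit $w_{\infty}$ of the offending points, use $\limsup A_i \subset A_{\infty}$ together with compactness of $A_{\infty}$ to get $w_{\infty} \in A_{\infty}$, and use convergence of distances to get $\overline{x_{\infty}, w_{\infty}} \ge r$, arriving at the same contradiction. The only cosmetic difference is that the paper names the points in $Z_{\infty}$ (writing $w_i \in \phi_{N_i}(A_{N_i} \setminus B_r(x_{N_i}))$ and then pulling back to $\alpha_i \in A_{N_i}$), whereas you keep $w_i \in Z_{N_i}$ and work with $\phi_{N_i}(w_i)$; the content is identical.
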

\begin{proof}
We assume that the assertion is false.
Then there exists $\tau >0$ such that for every $i$, there exist $N_i \ge i$ and $w_i \in \phi_{N_i}(A_{N_i} \setminus B_r(x_{N_i}))
\setminus B_{\tau}(A_{\infty} \setminus B_r(x_{\infty}))$. 
Without loss of generality, we can assume that there exists $w_{\infty} \in Z_{\infty}$ such that $w_i \rightarrow w_{\infty}$.
By the assumption, we have $w_{\infty} \in \overline{A_{\infty}}=A_{\infty}$.
We take $\alpha_i \in A_{N_i} \setminus B_r(x_{N_i})$ satisfying $w_i = \phi_{N_i}(\alpha_i)$.
Then, since $\overline{\alpha_i, x_{N_i}} \ge r$, we have $\overline{w_{\infty}, x_{\infty}} \ge r$.
Therefore, $w_{\infty} \in A_{\infty} \setminus B_r(x_{\infty})$.
Thus, we have $w_i \in B_{\tau}(A_{\infty} \setminus B_r(x_{\infty}))$ for every sufficiently large $i$.
This is a contradiction.
\end{proof}
\begin{example}\label{3f}
Let $(Z_i, z_i) \rightarrow (Z_{\infty}, z_{\infty})$.
Then, for every $r>0$, we have $\limsup_{i \rightarrow \infty}\partial B_r(z_i) \subset \partial B_r(z_{\infty})$.
\end{example}
The proof of next proposition is done by a contradiction similar to the proof of Proposition \ref{compact2} or \ref{compact3}.
\begin{proposition}\label{compact4}
Let $(Z_i, z_i)$ be a sequence of proper geodesic spaces and $\eta_i$ a positive numbers. 
We assume that $(Z_i, z_i)$ converges to $(Z_{\infty}, z_{\infty})$ and $\eta_i \rightarrow \eta_{\infty}$.
Then for every $r > 0$, we have 
$\limsup_{i \rightarrow \infty}(B_r(z_i))_{\eta_i} \subset (B_r(z_{\infty}))_{\eta_{\infty}}$.
\end{proposition}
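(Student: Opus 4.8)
The plan is to argue by contradiction, exactly in the style of Propositions \ref{compact2} and \ref{compact3}. Suppose the assertion fails. Then there is $\tau > 0$ such that for every $i$ there exist $N_i \ge i$ and a point $w_i \in \phi_{N_i}((B_r(z_{N_i}))_{\eta_{N_i}}) \setminus B_{\tau}((B_r(z_{\infty}))_{\eta_{\infty}})$. Passing to a subsequence we may assume $w_i \to w_{\infty}$ for some $w_{\infty} \in Z_{\infty}$. Choose $\alpha_i \in (B_r(z_{N_i}))_{\eta_{N_i}}$ with $\phi_{N_i}(\alpha_i) = w_i$; then $\alpha_i \to w_{\infty}$, $z_{N_i} \to z_{\infty}$, $\eta_{N_i} \to \eta_{\infty}$, and $B_{\eta_{N_i}}(\alpha_i) \subset B_r(z_{N_i})$ for every $i$. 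It is enough to prove $w_{\infty} \in (B_r(z_{\infty}))_{\eta_{\infty}}$: granting this, $w_i \to w_{\infty}$ forces $w_i \in B_{\tau}((B_r(z_{\infty}))_{\eta_{\infty}})$ for all large $i$, which is the desired contradiction.

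So the task reduces to showing the strict inclusion $B_{\eta_{\infty}}(w_{\infty}) \subset B_r(z_{\infty})$; note this in particular yields $w_{\infty} \in B_r(z_{\infty})$ by taking the point $w_{\infty}$ itself, since $\eta_{\infty} > 0$. Fix $v_{\infty} \in B_{\eta_{\infty}}(w_{\infty})$ and put $\delta := \eta_{\infty} - \overline{v_{\infty}, w_{\infty}} > 0$. By the definition of pointed Gromov--Hausdorff convergence, choose $v_i \in Z_{N_i}$ with $v_i \to v_{\infty}$. Since $\overline{v_i, \alpha_i} \to \overline{v_{\infty}, w_{\infty}} = \eta_{\infty} - \delta$ and $\eta_{N_i} \to \eta_{\infty}$, for all large $i$ one has $\overline{v_i, \alpha_i} < \eta_{N_i}$, hence $v_i \in B_{\eta_{N_i}}(\alpha_i) \subset B_r(z_{N_i})$ and $\overline{v_i, z_{N_i}} < r$; letting $i \to \infty$ gives $\overline{v_{\infty}, z_{\infty}} \le r$.

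The heart of the matter is to upgrade this to the strict bound $\overline{v_{\infty}, z_{\infty}} < r$. The strictness of $\overline{v_{\infty}, w_{\infty}} < \eta_{\infty}$ should provide enough room: for all large $i$ one in fact has $B_{\delta/2}(v_i) \subset B_{\eta_{N_i}}(\alpha_i) \subset B_r(z_{N_i})$. I would then exploit the hypothesis that $Z_{N_i}$ is geodesic --- prolonging beyond $v_i$ a shortest path issuing from $z_{N_i}$, and tracking the $1$-Lipschitz function $\overline{\,\cdot\,, z_{N_i}}$ along it --- to convert the $\delta/2$ of slack around $v_i$ into a uniform gap of the form $\overline{v_i, z_{N_i}} \le r - \delta/2$, which passes to the limit and gives $\overline{v_{\infty}, z_{\infty}} \le r - \delta/2 < r$; the degenerate case $B_r(z_{N_i}) = Z_{N_i}$, where $(B_r(z_{N_i}))_{\eta_{N_i}} = Z_{N_i}$, would be handled separately using the convergence facts of Example \ref{d1}. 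I expect this prolongation argument --- extracting a uniform strict gap, and in particular controlling $v_i$ that sit close to the $r$-sphere about $z_{N_i}$ --- to be the main obstacle: the estimate of the previous paragraph yields only the non-strict bound $\overline{v_{\infty}, z_{\infty}} \le r$, and it is precisely the geodesic structure of the $Z_{N_i}$ that is meant to rule out the borderline configurations.
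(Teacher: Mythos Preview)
Your contradiction setup and the reduction to the strict inequality $\overline{v_\infty,z_\infty}<r$ are exactly what the paper has in mind (it omits the argument, saying only that it is ``done by a contradiction similar to the proof of Proposition~\ref{compact2} or~\ref{compact3}''), and you have correctly isolated the one place where the analogy with those proofs is not automatic.

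However, the obstacle you flag cannot be removed: the proposition is \emph{false} as stated. Take $Z_i=[0,\,1-1/i]$, $Z_\infty=[0,1]$, all with basepoint $0$, and $r=1$, $\eta_i=\eta_\infty=1/2$. Then $B_1(0)=Z_i$ for each $i$, so $(B_1(0))_{1/2}=Z_i=[0,\,1-1/i]$; but in $Z_\infty$ one has $B_1(0)=[0,1)$ and $([0,1))_{1/2}=[0,1/2]$, and plainly $\limsup_i[0,\,1-1/i]\not\subset[0,1/2]$. This lives precisely in your ``degenerate case'' $B_r(z_{N_i})=Z_{N_i}$, so the separate handling via Example~\ref{d1} cannot succeed; the prolongation idea in the non-degenerate case is also shaky in principle, since minimal geodesics in a proper geodesic space need not extend past their endpoints. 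The statement \emph{does} hold under the extra hypothesis $\partial B_r(z_\infty)\subset\overline{Z_\infty\setminus\overline{B}_r(z_\infty)}$: if $\overline{v_\infty,z_\infty}=r$ one picks $u_\infty$ arbitrarily close to $v_\infty$ with $\overline{u_\infty,z_\infty}>r$, lifts it to $u_i\in Z_{N_i}$ to obtain $u_i\notin B_r(z_{N_i})\supset B_{\eta_{N_i}}(\alpha_i)$ and hence $\overline{u_\infty,w_\infty}\ge\eta_\infty$ in the limit, and then letting $u_\infty\to v_\infty$ yields the desired contradiction $\overline{v_\infty,w_\infty}\ge\eta_\infty$.
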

We will give the following fundamental result by Gromov for precompactness of Gromov-Hausdorff topology.
See \cite{gr} for the proof.
\begin{proposition}\label{090}
Let $\{(Z_i, z_i)\}_i$ be a sequence of pointed proper geodesic spaces.
We assume that for every $\epsilon>0$ and $R \ge 1$, there exists $N$ such that 
for every $i$, there exists a finite covering $\{B_{\epsilon}(x_j)\}_{j=1, _{\cdots}, N}$ of $B_R(z_i)$.
Then, there exist a subsequence $\{(Z_{n(i)}, z_{n(i)})\}$ and a pointed proper geodesic space $(Z_{\infty}, z_{\infty})$ such that 
$(Z_{n(i)}, z_{n(i)})$ converges to $(Z_{\infty}, z_{\infty})$.
\end{proposition}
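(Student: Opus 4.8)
The plan is to bootstrap the pointed statement from the classical Gromov-Hausdorff precompactness theorem for \emph{compact} metric spaces and then to assemble a single limit space out of the limits of larger and larger balls.

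First I would record the compact version: if $\{X_j\}$ is a sequence of compact metric spaces such that for every $\epsilon>0$ there is an $N(\epsilon)$ with each $X_j$ coverable by at most $N(\epsilon)$ balls of radius $\epsilon$, then a subsequence converges in $d_{GH}$. This is the usual diagonal argument over $m\in\mathbf{N}$: choose in $X_j$ a finite $(1/m)$-net $S_j^m$ with $\#S_j^m\le N(1/m)$, arranged so that $S_j^m\subset S_j^{m+1}$; after passing to a subsequence one may assume $\#S_j^m$ is independent of $j$ and that, under a fixed enumeration, all mutual distances of the points of $S_j^m$ converge as $j\to\infty$, compatibly in $m$. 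These countable metric data define, after completion, a totally bounded complete---hence compact---space $X_\infty$ with $d_{GH}(X_j,X_\infty)\to 0$ along the subsequence; equivalently, the subsequence is $d_{GH}$-Cauchy and one invokes completeness of the space of isometry classes of compact metric spaces.

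Next, for each $R\in\mathbf{N}$ the ball $\overline{B}_R(z_i)$ is compact by properness, and the hypothesis says precisely that $\{\overline{B}_R(z_i)\}_i$ is uniformly totally bounded in the above sense. Applying the compact version and diagonalizing over $R$, I obtain one subsequence $\{n(i)\}$ with pointed convergence $(\overline{B}_R(z_{n(i)}),z_{n(i)})\to(Y_R,y_R)$, $Y_R$ compact, for every $R\in\mathbf{N}$. The compatibility point is that a minimal geodesic of $Z_i$ joining two points of $\overline{B}_R(z_i)$ stays inside $\overline{B}_{2R}(z_i)$, so $\overline{B}_R(z_i)$ carries its own metric as a subspace of $\overline{B}_{2R}(z_i)$; passing to the limit gives canonical base-point-preserving isometric embeddings $Y_R\hookrightarrow Y_{2R}$, compatible with each other. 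I then set $Z_\infty=\bigcup_k Y_{2^k}$ (the increasing union along these embeddings), $z_\infty=y_1$. Every bounded subset of $Z_\infty$ lies in a fixed $Y_{2^k}$, hence has compact closure, so $Z_\infty$ is proper and in particular complete. It is geodesic because it is a pointed limit of the geodesic spaces $Z_{n(i)}$: exact midpoints in $Z_{n(i)}$ yield approximate midpoints in $Z_\infty$, and properness upgrades these to genuine midpoints, forcing geodesicity. Finally, to produce the data $(\phi_i,R_i,\epsilon_i)$ of the definition in Section~$2$, I combine the $R$-by-$R$ approximations: for each $R$ there are $\epsilon^R_i$-Gromov-Hausdorff approximations $\overline{B}_R(z_{n(i)})\to\overline{B}_R(z_\infty)\subset Z_\infty$ with $\epsilon^R_i\to 0$ in $i$; choosing $R_i\to\infty$ slowly enough (and passing to a further subsequence) makes $\epsilon^{R_i}_i\to 0$, which witnesses $(Z_{n(i)},z_{n(i)})\to(Z_\infty,z_\infty)$.

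The main obstacle is bookkeeping: constructing the nested $(1/m)$-nets and running the diagonal extraction so that all finite metric data converge compatibly, together with the second ``two-parameter'' diagonalization in $R$ and $i$ needed to extract the growing radii $R_i$ and vanishing errors $\epsilon_i$ from the individual ball-convergences. The one genuinely geometric input, as opposed to a pure limiting argument, is that closed balls of a geodesic space need not be geodesic for the induced metric; this is why one embeds $\overline{B}_R$ into $\overline{B}_{2R}$ before passing to the limit and recovers geodesicity of $Z_\infty$ from the ambient spaces $Z_i$ at the very end.
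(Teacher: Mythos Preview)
Your sketch is correct and is exactly the standard argument for Gromov's precompactness theorem; the paper itself does not supply a proof but simply refers to \cite{gr}. One small point worth tightening is the ``canonical'' compatibility of the embeddings $Y_R\hookrightarrow Y_{2R}\hookrightarrow Y_{4R}$: the cleanest way to guarantee it is to build all the limit balls from a single system of nested $(1/m)$-nets chosen simultaneously across all radii $R\in\mathbf{N}$ (so that the limit of the $R$-nets literally sits inside the limit of the $2R$-nets), rather than taking separate GH limits and then identifying them after the fact.
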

We will give a result of precompactness for a sequence of compact sets;
\begin{proposition}\label{compact}
Let $(Z_i, z_i)$ be a sequence of proper geodesic spaces and $K_i$ a sequence of compact subset of $Z_i$. 
We assume that $(Z_i, z_i)$ converges to $(Z_{\infty}, z_{\infty})$ and that there exists $R>0$ such that $K_i \subset B_R(z_i)$ for every $i$.
Then, there exist a subsequence $\{n(i)\}$ and a compact subset $K_{\infty}$ of $Z_{\infty}$ such that $(Z_{n(i)}, z_{n(i)}, K_{n(i)})$
converges to $(Z_{\infty}, z_{\infty}, K_{\infty})$.
\end{proposition}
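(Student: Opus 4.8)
The plan is to push the sets $K_i$ forward into the fixed limit space $Z_\infty$ by the approximation maps $\phi_i$, reduce matters to the Blaschke selection theorem for compact subsets of a compact metric space, and then translate a Hausdorff-convergent subsequence back into the required notion of convergence of pointed sets, absorbing the errors $\epsilon_i$.

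First I would treat the degenerate case: if $K_i=\emptyset$ for infinitely many $i$, pass to that subsequence and put $K_\infty=\emptyset$; then $\limsup_{i}K_{n(i)}\subset K_\infty$ and $K_\infty\subset\liminf_{i}K_{n(i)}$ hold by the conventions for the empty set recorded in Section 2. So I may assume $K_i\neq\emptyset$ for all $i$. Since $\epsilon_i\to0$ and $R_i\to\infty$, for all large $i$ the map $\phi_i$ is defined on $B_R(z_i)\supset K_i$ and $\phi_i(K_i)\subset \overline{B}_{R+1}(z_\infty)=:L$, which is compact because $Z_\infty$ is proper. Set $\widetilde K_i:=\overline{\phi_i(K_i)}\subset L$; passing to the closure is needed because $\phi_i$ is only a Borel map, so $\phi_i(K_i)$ itself need not be closed. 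Each $\widetilde K_i$ is then a nonempty compact subset of $L$.

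Next, by the Blaschke selection theorem the collection of nonempty compact subsets of $L$, equipped with the Hausdorff distance $d_H^{Z_\infty}$, is a compact metric space. Hence there are a subsequence $\{n(i)\}$ and a nonempty compact set $K_\infty\subset L\subset Z_\infty$ with $d_H^{Z_\infty}\big(\widetilde K_{n(i)},K_\infty\big)\to0$.

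It remains to check $(Z_{n(i)},z_{n(i)},K_{n(i)})\to(Z_\infty,z_\infty,K_\infty)$. For $\limsup_{i}K_{n(i)}\subset K_\infty$: given $\epsilon>0$, for $i$ large we have $d_H^{Z_\infty}(\widetilde K_{n(i)},K_\infty)<\epsilon$, hence $\phi_{n(i)}(K_{n(i)})\subset\widetilde K_{n(i)}\subset B_\epsilon(K_\infty)$. For $K_\infty\subset\liminf_{i}K_{n(i)}$: given $\epsilon>0$, for $i$ large $K_\infty\subset B_{\epsilon/2}(\widetilde K_{n(i)})$, and since $\widetilde K_{n(i)}=\overline{\phi_{n(i)}(K_{n(i)})}\subset B_{\epsilon/2}(\phi_{n(i)}(K_{n(i)}))$ we obtain $K_\infty\subset B_\epsilon(\phi_{n(i)}(K_{n(i)}))$. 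By the equivalence recorded in Section 2 between these two inclusions and the $\tau_i$-form of convergence of compact sets, this is precisely the assertion. The only point needing attention — the ``main obstacle'' — is that $\phi_i$ is merely a Borel $\epsilon_i$-approximation and not an isometry, so images of compact sets need not be compact and distances are preserved only up to $\epsilon_i$; both issues are handled by working with the closures $\widetilde K_i$ inside the compact ball $L$ and letting the $\epsilon_i\to0$ be swallowed by the Hausdorff estimates, leaving nothing beyond bookkeeping.
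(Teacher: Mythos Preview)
Your proof is correct and takes a genuinely different route from the paper's. The paper proceeds by choosing, for each $k$, a finite $k^{-1}$-net $x_1(i,k),\dots,x_{N_k}(i,k)$ of $B_R(z_i)$ (implicitly of $K_i$), running a diagonal argument so that each $\phi_{n(i)}(x_j(n(i),k))$ is Cauchy, and then defining $K_\infty$ as the closure of all the limit points; the verification of convergence is left as ``easy to check.'' You instead push the whole sets forward, take closures $\widetilde K_i=\overline{\phi_i(K_i)}$ inside the compact ball $\overline{B}_{R+1}(z_\infty)$, and invoke the Blaschke selection theorem to extract a Hausdorff-convergent subsequence. Your approach is cleaner and the verification of $\limsup K_{n(i)}\subset K_\infty$ and $K_\infty\subset\liminf K_{n(i)}$ is immediate from the Hausdorff convergence; the only cost is that Blaschke selection is quoted as a black box, whereas the paper's argument is self-contained (effectively reproving Blaschke via nets). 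Your treatment of the empty case and your observation that one must pass to closures because $\phi_i$ is only Borel are both careful points that the paper's sketch glosses over.
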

\begin{proof}
By the assumption, for every $k$, there exists $N_k$ such that for every $i$, there exists $x_1(i, k), _{\cdots}, x_{N_k}(i, k)
\in B_R(z_i)$
such that 
$K_i \subset B_R(z_i) \subset \bigcup_{j=1}^{N_k}B_{k^{-1}}(x_j(i,k)).$
Since $Z_{\infty}$ is proper, by diagonal argument, there exists  a subsequence $\{n(i)\}$
such that $\{\phi_{n(i)}(x_j(n(i), k))\}$ is Cauchy sequence for every $j, k$.
We put $x_j(k)=\lim_{i \rightarrow \infty}\phi_{n(i)}(x_j(n(i), k))$ and 
$K_{\infty}= \overline{\{x_j(k)\}}$.
It is easy to check that $(Z_{n(i)}, z_{n(i)}, K_{n(i)})$ converges to $(Z_{\infty}, z_{\infty}, K_{\infty})$.
\end{proof}
We will give a result of precompactness for a sequence of Lipschitz functions.
\begin{proposition}\label{Lips}
Let $(Z_i, z_i)$ be a sequence of proper geodesic spaces, $K_i$ a sequence of compact subset of $Z_i$ and
$f_i$ a sequence of Lipschitz function on $K_i$.
We assume that $(Z_i, z_i, K_i)$ converges to $(Z_{\infty}, z_{\infty}, K_{\infty})$ and that 
$\sup_i(\mathbf{Lip}f_i + |f_i|_{L^{\infty}}) < \infty$.
Then there exist a Lipschitz function $f_{\infty}$ on $K_{\infty}$ and a subsequence $\{n(i)\}$ such that 
$(Z_{n(i)}, z_{n(i)}, K_{n(i)}, f_{n(i)})$ converges to $(Z_{\infty}, z_{\infty},  K_{\infty}, f_{\infty})$.
\end{proposition}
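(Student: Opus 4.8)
The plan is to run an Arzel\`a--Ascoli argument adapted to the Gromov--Hausdorff setting, using the defining property $K_{\infty} \subset \liminf_{i \to \infty} K_i$ together with a diagonal extraction. Write $L = \sup_i \mathbf{Lip}f_i$ and $A = \sup_i |f_i|_{L^{\infty}}$, both finite by hypothesis. Since $K_{\infty}$ is compact, fix a countable dense subset $\{p_j\}_{j \in \mathbf{N}}$ of $K_{\infty}$. Because $(Z_i, z_i, K_i) \to (Z_{\infty}, z_{\infty}, K_{\infty})$ gives $K_{\infty} \subset \liminf_{i \to \infty} K_i$, for each $j$ we may choose points $p_j^i \in K_i$ with $\overline{\phi_i(p_j^i), p_j} \to 0$ as $i \to \infty$. (Here we use that $K_i \subset B_R(z_i)$ for a fixed $R$ while $R_i \to \infty$, so for large $i$ everything lies in the domain of $\phi_i$.)

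First I would extract the subsequence. For each fixed $j$ the sequence $\{f_i(p_j^i)\}_i$ lies in $[-A, A]$, so a diagonal argument produces a subsequence $\{n(i)\}$ along which $\lim_{i \to \infty} f_{n(i)}(p_j^{n(i)})$ exists for every $j$; denote this limit by $\tilde{f}(p_j)$. Next I would check that $\tilde{f}$ is $L$-Lipschitz on $\{p_j\}$: for indices $j, k$,
\[\left|\tilde{f}(p_j) - \tilde{f}(p_k)\right| = \lim_{i \to \infty}\left|f_{n(i)}(p_j^{n(i)}) - f_{n(i)}(p_k^{n(i)})\right| \le L \lim_{i \to \infty} \overline{p_j^{n(i)}, p_k^{n(i)}} = L\,\overline{p_j, p_k},\]
where the last equality uses that $\phi_{n(i)}$ is an $\epsilon_{n(i)}$-Gromov--Hausdorff approximation with $\epsilon_{n(i)} \to 0$, so $\overline{p_j^{n(i)}, p_k^{n(i)}} = \overline{\phi_{n(i)}(p_j^{n(i)}), \phi_{n(i)}(p_k^{n(i)})} \pm \epsilon_{n(i)} \to \overline{p_j, p_k}$. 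Hence $\tilde{f}$ is uniformly continuous on the dense subset $\{p_j\}$ of the compact set $K_{\infty}$ and extends to a function $f_{\infty}$ on $K_{\infty}$; applying the Lipschitz extension remark of Section $2$ to $\tilde{f}$ (extend to $Z_{\infty}$ with the same Lipschitz constant, then restrict), we get that $f_{\infty}$ is $L$-Lipschitz on $K_{\infty}$.

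Finally I would verify that $(Z_{n(i)}, z_{n(i)}, K_{n(i)}, f_{n(i)})$ converges to $(Z_{\infty}, z_{\infty}, K_{\infty}, f_{\infty})$, i.e.\ that $f_{n(i)}(x_{n(i)}) \to f_{\infty}(x_{\infty})$ whenever $x_{n(i)} \in K_{n(i)}$ and $x_{n(i)} \to x_{\infty} \in K_{\infty}$. Given $\delta > 0$, pick $p_j$ with $\overline{p_j, x_{\infty}} < \delta$ and estimate
\[\left|f_{n(i)}(x_{n(i)}) - f_{\infty}(x_{\infty})\right| \le \left|f_{n(i)}(x_{n(i)}) - f_{n(i)}(p_j^{n(i)})\right| + \left|f_{n(i)}(p_j^{n(i)}) - f_{\infty}(p_j)\right| + \left|f_{\infty}(p_j) - f_{\infty}(x_{\infty})\right|.\]
The first term is at most $L\,\overline{x_{n(i)}, p_j^{n(i)}}$, which converges to $L\,\overline{x_{\infty}, p_j} < L\delta$ (again because $\phi_{n(i)}$ nearly preserves distances and $\phi_{n(i)}(x_{n(i)}) \to x_{\infty}$, $\phi_{n(i)}(p_j^{n(i)}) \to p_j$); the middle term tends to $0$ by the choice of $\tilde{f}$; the last term is at most $L\delta$. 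Letting $i \to \infty$ and then $\delta \to 0$ gives the claim. The main point requiring care is the bookkeeping with the approximation maps $\phi_i$, which are defined only on $B_{R_i}(z_i)$ and preserve distances only up to $\epsilon_i$; once one is comfortable with this, the argument is routine, and as a byproduct one sees that the limit $f_{\infty}$ is independent of the auxiliary choices of $\{p_j\}$ and of the approximants $p_j^i$.
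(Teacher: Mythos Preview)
Your proof is correct and follows essentially the same approach as the paper: take a countable dense subset of $K_{\infty}$, lift each point to $K_i$, use a diagonal extraction on the bounded sequences of values, check the limit is $L$-Lipschitz on the dense set, extend, and verify convergence. The paper's proof is terser (it says ``it is easy to check'' for the last two steps), but you have simply written out those details.
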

\begin{proof}
We take a countable dense subset $\{x_j\}$ of $K_{\infty}$.
For every $x_j$, we take $x_j(i) \in K_i$ satisfying that $x_j(i)$ converges to $x_j$.
Then, there exists a subsequence $\{n(i)\}$ of $\mathbf{N}$ such that 
the sequence $\{f_{n(i)}(x_j(n(i)))\}$ is Cauchy sequence.
We define a function $F_{\infty}$ on $\{x_j\}$ by 
\[ F_{\infty}(x_j)= \lim_{i \rightarrow \infty}f_{n(i)}(x_j(n(i))).\]
It is easy to check that the function $F_{\infty}$ is $\sup_i \mathbf{Lip}f_i$-Lipschitz function.
Therefore there exists unique Lipschitz function $f_{\infty}$ on $K_{\infty}$ such that $F_{\infty}(x_j)=f_{\infty}(x_j)$.
It is easy to check that $f_{\infty}$ satisfies the assertion.
\end{proof}
We shall give a fundamental covering lemma (for proper metric spaces).
See chapter $1$ in \cite{Si} for the proof.
\begin{proposition}\label{cov}
Let $X$ be a proper metric space, $A$ a subset of $X$, $\Lambda$ a set, $\{x_{\lambda}\}_{\lambda \in \Lambda}$ a collection of points in $X$ and 
$\{r_{\lambda}\}_{\lambda \in \Lambda}$ a collection of positive numbers.
We assume that for every $x \in A$ and $\epsilon > 0$, there exists $\lambda \in \Lambda$ such that $x \in \overline{B}_{r_{\lambda}}(x_{\lambda})$ and $\mathrm{diam}\overline{B}_{r_{\lambda}}(x_{\lambda}) < \epsilon$.
Then, there exists a countable subset $\Lambda_1 \subset \Lambda$ satisfying the following properties:
\begin{enumerate}
\item $\{\overline{B}_{r_{\lambda_1}}(x_{\lambda_1})\}_{\lambda_1 \in \Lambda_1}$ are pairwise disjoint collection.
\item For every finite subset $\Lambda_2 \subset \Lambda_1$, we have 
\[A \setminus \bigcup_{\lambda_2 \in \Lambda_2}\overline{B}_{r_{\lambda_2}}(x_{\lambda_2}) \subset \bigcup_{\lambda \in \Lambda_1 \setminus \Lambda_2}\overline{B}_{5r_{\lambda}}(x_{\lambda}).\]
\end{enumerate} 
\end{proposition}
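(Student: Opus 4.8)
This is the classical Vitali--Besicovitch $5r$-covering lemma (cf. chapter~1 of \cite{Si}), carried over from balls of bounded radius in $\mathbf{R}^n$ to a proper metric space. The plan is to run the usual greedy selection, but organizing the balls by dyadic scales of their \emph{diameter} rather than their radius --- the hypothesis controls $\mathrm{diam}\,\overline{B}_{r_\lambda}(x_\lambda)$, and in a general metric space $r_\lambda$ does not control the diameter --- processing coarse scales before fine ones, with one extra stage at the end for the degenerate balls that happen to be single points.

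First I would reduce to the case $\mathrm{diam}\,\overline{B}_{r_\lambda}(x_\lambda)\le 1$ for all $\lambda$, by passing to $\Lambda'=\{\lambda:\mathrm{diam}\,\overline{B}_{r_\lambda}(x_\lambda)\le 1\}$; the fineness hypothesis survives (apply it with $\min\{\epsilon,1\}$ in place of $\epsilon$), and since the conclusion is a statement about the resulting $\Lambda_1\subset\Lambda'\subset\Lambda$, nothing is lost. Then I would split the indices into $\mathcal{F}_j=\{\lambda: 2^{-j}<\mathrm{diam}\,\overline{B}_{r_\lambda}(x_\lambda)\le 2^{-j+1}\}$, $j\ge 0$, together with $\mathcal{F}_\infty=\{\lambda:\mathrm{diam}\,\overline{B}_{r_\lambda}(x_\lambda)=0\}$ (the $\overline{B}_{r_\lambda}(x_\lambda)$ that are single points, necessarily at isolated points of $X$), and recursively choose, via Zorn's lemma, $\mathcal{G}_j\subset\mathcal{F}_j$ maximal such that $\{\overline{B}_{r_\mu}(x_\mu)\}_{\mu\in\mathcal{G}_0\cup\cdots\cup\mathcal{G}_j}$ is pairwise disjoint, and finally $\mathcal{G}_\infty\subset\mathcal{F}_\infty$ maximal such that $\{\overline{B}_{r_\mu}(x_\mu)\}_{\mu\in\mathcal{G}_\infty\cup\bigcup_j\mathcal{G}_j}$ is pairwise disjoint. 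Put $\Lambda_1=\mathcal{G}_\infty\cup\bigcup_j\mathcal{G}_j$; property (1) is then immediate. For countability, note that distinct $\mu,\mu'\in\mathcal{G}_j$ ($j<\infty$) have $r_\mu,r_{\mu'}>2^{-j-1}$ and disjoint closed balls, so $\overline{x_\mu,x_{\mu'}}>2^{-j}$; hence the centers of $\mathcal{G}_j$ form a $2^{-j}$-separated set, and since $X$ is a countable union of compact sets (each carrying finitely many $2^{-j}$-separated points) $\mathcal{G}_j$ is countable, while $\mathcal{G}_\infty$ is countable because a proper, hence separable, space has countably many isolated points.

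The main step is property (2), which I would establish as follows. Given a finite $\Lambda_2\subset\Lambda_1$ and $x\in A$ outside the closed set $\bigcup_{\lambda_2\in\Lambda_2}\overline{B}_{r_{\lambda_2}}(x_{\lambda_2})$, pick $\epsilon_0>0$ with $B_{\epsilon_0}(x)$ disjoint from that set, and use fineness to get $\lambda$ with $x\in\overline{B}_{r_\lambda}(x_\lambda)$ and $\mathrm{diam}\,\overline{B}_{r_\lambda}(x_\lambda)<\min\{\epsilon_0,1\}$, so that $\overline{B}_{r_\lambda}(x_\lambda)\subset B_{\epsilon_0}(x)$. If $\lambda\in\Lambda_1$ take $\mu=\lambda$; otherwise maximality at the scale of $\lambda$ --- or, when $\lambda\in\mathcal{F}_\infty$, the observation that $x$ already lies in some ball of $\bigcup_k\mathcal{G}_k$, or else maximality of $\mathcal{G}_\infty$ --- yields $\mu\in\Lambda_1$ whose ball meets $\overline{B}_{r_\lambda}(x_\lambda)$ with $\mathrm{diam}\,\overline{B}_{r_\mu}(x_\mu)\ge\tfrac{1}{2}\mathrm{diam}\,\overline{B}_{r_\lambda}(x_\lambda)$; this last inequality is exactly what processing coarse scales first provides. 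Then for $z\in\overline{B}_{r_\lambda}(x_\lambda)\cap\overline{B}_{r_\mu}(x_\mu)$ and any $y\in\overline{B}_{r_\lambda}(x_\lambda)$,
\[
\overline{y,x_\mu}\le\overline{y,z}+\overline{z,x_\mu}\le\mathrm{diam}\,\overline{B}_{r_\lambda}(x_\lambda)+r_\mu\le 2\,\mathrm{diam}\,\overline{B}_{r_\mu}(x_\mu)+r_\mu\le 5r_\mu ,
\]
so $\overline{B}_{r_\lambda}(x_\lambda)\subset\overline{B}_{5r_\mu}(x_\mu)$, and in particular $x\in\overline{B}_{5r_\mu}(x_\mu)$; moreover $\mu\notin\Lambda_2$, since $\overline{B}_{r_\mu}(x_\mu)$ meets $\overline{B}_{r_\lambda}(x_\lambda)\subset B_{\epsilon_0}(x)$, which avoids every $\Lambda_2$-ball. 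This gives the required inclusion in (2).

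I expect the Zorn/recursion bookkeeping and the elementary topology (finite unions of closed balls are closed) to be routine; the points requiring care are, first, the decision to index by diameter and run scales from coarse to fine --- without it the $5r$-enlargement of a chosen ball need not contain a finer ball it meets --- and, second, the degenerate single-point balls, which necessitate the separate stage $\mathcal{G}_\infty$ and are the only place the form ``$\mathrm{diam}<\epsilon$'' of the hypothesis, as opposed to a radius bound, genuinely matters.
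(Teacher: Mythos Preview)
The paper does not prove this proposition; it simply refers to chapter~1 of \cite{Si} for the standard Vitali $5r$-covering lemma. Your argument is a correct, self-contained version of that standard proof, with the sensible adaptation of stratifying by $\mathrm{diam}\,\overline{B}_{r_\lambda}(x_\lambda)$ rather than by $r_\lambda$ --- appropriate here since the hypothesis bounds diameters, and in a general metric space the radius need not control the diameter. One harmless numerical slip: from disjointness of the closed balls in $\mathcal{G}_j$ you can only conclude $\overline{x_\mu,x_{\mu'}}>\max(r_\mu,r_{\mu'})>2^{-j-1}$, not $>2^{-j}$, but this is immaterial for the countability argument.
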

We shall recall the definition of measured Gromov-Hausdorff convergence by Fukaya, first.
Let $(Z, z_i) \rightarrow (Z_{\infty}, z_{\infty})$.
For a sequence of Radon measure $\upsilon_i$ on $Z_i$, we say that
$(Z_i, z_i, \upsilon_i)$ \textit{converges to $(Z_{\infty}, z_{\infty}, \upsilon_{\infty})$ in the sense of measured Gromov-Hausdorff topology} if  
\[\lim_{i \rightarrow \infty}\upsilon_i(B_r(x_i))= \upsilon_{\infty}(B_r(x_{\infty}))\]
for every $r>0$, $x_{\infty} \in Z_{\infty}$ and sequence $x_i \in Z_i$ satisfying $x_i \rightarrow x_{\infty}$.
Then we denote it by $(Z_i, z_i, \upsilon_i) \rightarrow (Z_{\infty}, z_{\infty}, \upsilon_{\infty})$.
We introduce a following fundamental result for precompactness of measured Gromov-Hausdorff topology.
See  \cite[Theorem $1.6$]{ch-co1} or \cite{fu}.
\begin{proposition}\label{measure}
Let $\{(Z_i, z_i, \upsilon_i)\}_i$ be a sequence of pointed proper geodesic spaces with Radon measure $\upsilon_i$.
We assume that $\upsilon_i(B_1(z_i))=1$ and that for every $R > 0$ there exists $K=K(R)\ge1$ such that 
$\upsilon_i(B_{2r}(x_i)) \le 2^K\upsilon_i(B_r(x_i))$
for every $0 < r < R$, $i \in \mathbf{N}$
and $x_i \in Z_i$.
Then, there exist a subsequence $\{(Z_{n(i)}, z_{n(i)}, \upsilon_i)\}$ and a pointed proper geodesic space with Radon measure $(Z_{\infty}, z_{\infty}, \upsilon_{\infty})$ such that 
$(Z_{n(i)}, z_{n(i)}, \upsilon_i)$ converges to $(Z_{\infty}, z_{\infty}, \upsilon_{\infty})$.
\end{proposition}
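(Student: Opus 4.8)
The plan is the classical one: first extract a Gromov--Hausdorff limit of the metric spaces, then build the limit measure on it by a weak compactness argument, and finally reconcile the two notions of convergence. To begin, observe that the doubling hypothesis forces $\{(Z_i,z_i)\}$ to be uniformly totally bounded on bounded sets. Fixing $R\ge 1$ and $\epsilon>0$, iterating the doubling inequality gives both $\upsilon_i(B_{R+1}(z_i))\le C(R)$ (iterating up from radius $1$, using $\upsilon_i(B_1(z_i))=1$) and $\upsilon_i(B_{\epsilon/2}(x))\ge c(R,\epsilon)$ for every $x\in B_R(z_i)$ (iterating down from radius $R+1$, using $\upsilon_i(B_{R+1}(x))\ge\upsilon_i(B_1(z_i))=1$); hence a maximal $(\epsilon/2)$-separated subset of $B_R(z_i)$ has at most $C(R)/c(R,\epsilon)$ points, so $B_R(z_i)$ admits a covering by that many $\epsilon$-balls, uniformly in $i$. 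Proposition \ref{090} then yields a subsequence, still written $(Z_i,z_i)$, converging to a pointed proper geodesic space $(Z_\infty,z_\infty)$; fix approximations $\phi_i\colon B_{R_i}(z_i)\to B_{R_i}(z_\infty)$ with $R_i\to\infty$, $\epsilon_i\to 0$.

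Next I would construct $\upsilon_\infty$. The uniform bound $\upsilon_i(B_R(x_i))\le C(R)$ just obtained shows that the push-forwards $\mu_i:=(\phi_i)_*\upsilon_i$ — Borel, hence Radon, measures on $B_{R_i}(z_\infty)$ since $Z_\infty$ is proper, thus locally compact and separable — have mass on each fixed ball bounded uniformly in $i$. Sets of Radon measures on a locally compact separable metric space with uniformly bounded mass on compacta are sequentially precompact in the vague topology, so after passing to a further subsequence $\mu_i\to\upsilon_\infty$ vaguely for some Radon measure $\upsilon_\infty$ on $Z_\infty$. Now let $x_i\to x_\infty$. The defining properties of $\phi_i$ give, for every $\delta>0$ and all large $i$, the inclusions $\phi_i^{-1}(\overline{B}_{r-\delta}(x_\infty))\subset B_r(x_i)\subset\phi_i^{-1}(B_{r+\delta}(x_\infty))$, that is $\mu_i(\overline{B}_{r-\delta}(x_\infty))\le\upsilon_i(B_r(x_i))\le\mu_i(B_{r+\delta}(x_\infty))$. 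Combining this with vague convergence ($\liminf_i\mu_i(U)\ge\upsilon_\infty(U)$ for $U$ open, $\limsup_i\mu_i(K)\le\upsilon_\infty(K)$ for $K$ compact) and letting $\delta\to 0$ yields
\[\upsilon_\infty(B_r(x_\infty))\le\liminf_{i\to\infty}\upsilon_i(B_r(x_i))\le\limsup_{i\to\infty}\upsilon_i(B_r(x_i))\le\upsilon_\infty(\overline{B}_r(x_\infty)).\]

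It remains to close the gap between the outer terms, i.e.\ to prove $\upsilon_\infty(\partial B_r(x))=0$ for every $x\in Z_\infty$ and $r>0$. First, $\upsilon_\infty$ inherits the uniform doubling property: every point of $Z_\infty$ is a limit of some $x_i$, and for such an $x_\infty$ all but countably many radii $s$ satisfy $\upsilon_\infty(\partial B_s(x_\infty))=\upsilon_\infty(\partial B_{2s}(x_\infty))=0$, whence the displayed squeeze gives $\upsilon_i(B_s(x_i))\to\upsilon_\infty(B_s(x_\infty))$ and likewise at $2s$, so the doubling inequality for $\upsilon_i$ passes to the limit at such $s$; monotone continuity of $s\mapsto\upsilon_\infty(B_s(x_\infty))$ then upgrades it to all $s$. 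Granting uniform doubling of $\upsilon_\infty$, if some sphere $\partial B_{r_0}(x_0)$ had positive measure, Radon regularity would produce a positive-measure compact $K\subset\partial B_{r_0}(x_0)$ and a Lebesgue density point $p$ of $K$ (Lebesgue differentiation being available for doubling measures); choosing $q$ on a minimal geodesic from $x_0$ to $p$ with $\overline{q,p}=\rho/2$, one checks $B_{\rho/4}(q)\subset B_\rho(p)\setminus K$ and $B_\rho(p)\subset B_{2\rho}(q)$, so three applications of doubling give $\upsilon_\infty(B_\rho(p)\setminus K)\ge\upsilon_\infty(B_{\rho/4}(q))\ge c\,\upsilon_\infty(B_\rho(p))$ with $c>0$ independent of small $\rho$, contradicting density of $K$ at $p$. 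Hence $\upsilon_\infty(\overline{B}_r(x_\infty))=\upsilon_\infty(B_r(x_\infty))$, the squeeze collapses, and $\upsilon_i(B_r(x_i))\to\upsilon_\infty(B_r(x_\infty))$, which is exactly the required convergence.

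The main obstacle is this last paragraph. Vague compactness only delivers convergence of $\upsilon_i(B_s(x_i))$ at the continuity radii of the limit, i.e.\ at all but countably many $s$; promoting this to convergence at \emph{every} radius and \emph{every} point is where the hypothesis that the doubling constants are uniform over \emph{all} basepoints (not just a fixed center) is used essentially, through the nullity of metric spheres. The reductions in the first two paragraphs are routine.
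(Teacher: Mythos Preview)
Your proof is correct. The paper does not give its own argument for this proposition; it simply records it as a known precompactness result with references to Cheeger--Colding \cite[Theorem~1.6]{ch-co1} and Fukaya \cite{fu}. Your route---Gromov precompactness from uniform total boundedness, vague compactness of the push-forward measures, and then closing the squeeze by showing $\upsilon_\infty(\partial B_r(x))=0$ via doubling plus a Lebesgue-density contradiction---is one of the standard ways to establish this, and each step checks out (in particular, the geodesic trick producing $B_{\rho/4}(q)\subset B_\rho(p)\setminus K$ with $B_\rho(p)\subset B_{2\rho}(q)$ gives exactly the uniform lower bound on $\upsilon_\infty(B_\rho(p)\setminus K)/\upsilon_\infty(B_\rho(p))$ needed to contradict density).
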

Next, we will give a relation between the measure of limit set and the limit of measures of sets: 
\begin{proposition}\label{sup}
Let $\{(Z_i, z_i, \upsilon_i)\}_i$ be a sequence of pointed proper geodesic spaces with Radon measure $\upsilon_i$ and $A_i$ a Borel subset of $Z_i$. 
We assume that $\upsilon_i(B_1(z_i))=1$, $A_{\infty}$ is compact, 
$(Z_i, z_i, \upsilon_i) \rightarrow (Z_{\infty}, z_{\infty}, \upsilon_{\infty})$,
$\limsup_{i \rightarrow \infty}A_i \subset A_{\infty}$ and that for every $R>0$
there exist $K=K(R)\ge1$ such that 
$\upsilon_i(B_{2r}(x_i)) \le 2^K\upsilon_i(B_r(x_i))$ 
for every $0 < r < R$, $i \in \mathbf{N}$ and $x_i \in Z_i$. 
Then we have 
\[\limsup_{i \rightarrow \infty}\upsilon_i(A_i) \le \upsilon_{\infty}(A_{\infty}).\]
\end{proposition}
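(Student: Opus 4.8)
The plan is a direct argument (in contrast with the contradiction arguments used for the previous propositions). Fix $\epsilon>0$. By outer regularity of the Radon measure $\upsilon_\infty$ and compactness of $A_\infty$, choose a bounded open set $O\supset A_\infty$ with $\upsilon_\infty(O)<\upsilon_\infty(A_\infty)+\epsilon$; since $\overline O$ is compact, $\upsilon_\infty(O)<\infty$. First I would record that $\upsilon_\infty$ is itself doubling: for $x_i\to x_\infty$ and $0<2r<R$, passing to the limit in $\upsilon_i(B_{2r}(x_i))\le 2^{K}\upsilon_i(B_r(x_i))$ gives $\upsilon_\infty(B_{2r}(x_\infty))\le 2^{K}\upsilon_\infty(B_r(x_\infty))$, and iterating also bounds $\upsilon_\infty(\overline B_{5r}(x))$ by $2^{3K}\upsilon_\infty(\overline B_r(x))$.

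The core of the proof is to decompose $A_\infty$ using the Vitali-type Proposition~\ref{cov}. Apply it to the fine cover of $A_\infty$ by closed balls $\overline B_r(x)$, $x\in A_\infty$, with $\overline B_r(x)\subset O$ and $\upsilon_\infty(\partial B_r(x))=0$ (discarding the at most countably many bad radii for each center). This yields pairwise disjoint balls $\{\overline B_{r_k}(x_k)\}_{k\in\mathbf N}$, all inside $O$, with $A_\infty\setminus\bigcup_{k\in J}\overline B_{r_k}(x_k)\subset\bigcup_{k\notin J}\overline B_{5r_k}(x_k)$ for every finite $J\subset\mathbf N$. Disjointness gives $\sum_k\upsilon_\infty(\overline B_{r_k}(x_k))\le\upsilon_\infty(O)<\infty$, so for a suitable $K_0$ the tail has $\sum_{k>K_0}\upsilon_\infty(\overline B_{r_k}(x_k))<\epsilon$, whence $\upsilon_\infty\big(\bigcup_{k>K_0}\overline B_{5r_k}(x_k)\big)\le 2^{3K}\epsilon$ by the doubling bound. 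Put $F=\bigcup_{k\le K_0}\overline B_{r_k}(x_k)$ and $H=A_\infty\setminus\bigcup_{k\le K_0}B_{r_k}(x_k)$. Then $F\subset O$ is a finite disjoint union of closed balls, $H$ is compact (the complement in the compact $A_\infty$ of an open set), the null-boundary choice gives $\upsilon_\infty(H)\le 2^{3K}\epsilon$, and $A_\infty\subset F\cup H$. Finally cover the small set $H$ by finitely many balls $B_\rho(h_1),\dots,B_\rho(h_L)$, $h_j\in H$, chosen so that $\{B_{\rho/2}(h_j)\}$ is disjoint (hence, by doubling, $\{\overline B_\rho(h_j)\}$ has overlap multiplicity $\le C(K)$) and so that $\bigcup_j\overline B_\rho(h_j)$ lies in an open set of measure $<\upsilon_\infty(H)+\epsilon$; then $\sum_j\upsilon_\infty(\overline B_\rho(h_j))\le C(K)(2^{3K}+1)\epsilon$.

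Now transfer everything to the approximating spaces. Pick $x_k(i)\to x_k$, $h_j(i)\to h_j$, and use $\limsup_iA_i\subset A_\infty$: there is $\eta_i\to 0$ with $\phi_i(A_i)\subset B_{\eta_i}(F)\cup B_{\eta_i}(H)$ for large $i$. A short computation with the defining inequalities of the $\epsilon_i$-Gromov--Hausdorff approximations $\phi_i$ shows that then every $w\in A_i$ lies in $B_{r_k+\mu_i}(x_k(i))$ for some $k\le K_0$ or in $B_{\rho+\mu_i}(h_j(i))$ for some $j\le L$, with $\mu_i\to 0$. Hence $\upsilon_i(A_i)\le\sum_{k\le K_0}\upsilon_i(B_{r_k+\mu_i}(x_k(i)))+\sum_{j\le L}\upsilon_i(B_{\rho+\mu_i}(h_j(i)))$. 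Taking $\limsup_i$, using the definition of measured Gromov--Hausdorff convergence together with continuity of $\upsilon_\infty$ from above to absorb the $\mu_i$, and using disjointness of the $\overline B_{r_k}(x_k)$ to turn the first sum into $\upsilon_\infty(F)\le\upsilon_\infty(O)$, we obtain $\limsup_i\upsilon_i(A_i)\le\upsilon_\infty(A_\infty)+\epsilon+C(K)(2^{3K}+1)\epsilon$. Letting $\epsilon\to 0$ completes the proof.

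The step I expect to be the crux is obtaining the sharp constant $1$ in front of $\upsilon_\infty(A_\infty)$: covering $A_\infty$ naively by finitely many small balls and summing their limit masses only yields $\limsup_i\upsilon_i(A_i)\le C(K)\upsilon_\infty(A_\infty)$, because doubling controls ball overlaps only up to a fixed multiplicative factor. The Vitali decomposition is precisely what supplies a \emph{disjoint} family carrying the bulk of the mass (constant $1$), while the doubling bound is what confines both the Vitali tail and the re-covering of $H$ to contributions of size $O(\epsilon)$.
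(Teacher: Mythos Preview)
Your proposal is correct and follows essentially the same approach as the paper: apply the Vitali-type covering lemma (Proposition~\ref{cov}) to $A_\infty$, use doubling to control the tail $\bigcup_{k>K_0}\overline B_{5r_k}(x_k)$ by $O(\epsilon)$, and transfer the resulting finite family of balls to $Z_i$ via measured Gromov--Hausdorff convergence. The paper streamlines one step you take: rather than splitting into $F$ and a compact remainder $H$ that must be re-covered with controlled multiplicity, it simply inflates all balls to $B_{(1+\epsilon)r_k}(x_k)$ and $B_{5(1+\epsilon)r_k}(x_k)$, extracts a finite open subcover of $A_\infty$ by compactness, and notes that the total measure of this subcover is within $\Psi(\epsilon;K)$ of $\upsilon_\infty(A_\infty)$; this avoids your separate $H$-covering and the multiplicity bookkeeping, but both arguments yield the same estimate.
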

\begin{proof}
By Proposition \ref{cov}, there exists a pairwise disjoint collection $\{\overline{B}_{r_j}(x_j)\}_{j \in \mathbf{N}}$ such that 
$x_j \in A_{\infty}$, $0 < r_j << 1$ and 
$A_{\infty}\setminus \bigcup _{i=1}^N\overline{B}_{r_i}(x_i) \subset \bigcup_{i=N+1}^{\infty}\overline{B}_{5r_i}(x_i)$
for every $N$.
We fix $\epsilon >0$.
We take $N$ satisfying 
$\sum_{i=N+1}^{\infty}\upsilon_{\infty}(B_{r_i}(x_i)) < \epsilon.$
By the assumption, we have 
$\sum_{i=N+1}^{\infty}\upsilon(B_{5r_i}(x_i)) < 2^{5K(1)}\epsilon.$
We consider an open covering $\{B_{s_i}(y_i)\} = \{B_{(1+\epsilon)r_i}(x_i)\}_{i=1, _{\cdots}, N} \cup \{B_{5(1+\epsilon)r_i}(x_i)\}_{i\ge N+1}$ of $A_{\infty}$.
By compactness of $A_{\infty}$, there exists a finite subcollection  $\{B_{t_i}(z_i)\}_{i=1, _{\cdots}, l}$ of $\{B_{s_i}(y_i)\}$,
such that $A_{\infty} \subset \bigcup_{i=1}^lB_{t_i}(z_i)$ and 
$|\upsilon_{\infty}(A_{\infty})-\sum_{i=1}^l\upsilon_{\infty}(B_{t_i}(z_i))|< \Psi(\epsilon;K).$
There exists $\tau_0>0$ such that $\tau_0 << \min \{t_j; 1 \le j \le l\}$ and that $B_{\tau_0}(A_{\infty}) \subset \bigcup_{i=1}^lB_{t_i}(z_i)$.
We take $\tau > 0$ and a sequence $z_i(j) \in Z_j$ satisfying that $\tau < \tau_0$ and 
that $z_i(j) \rightarrow z_i$.
Then since $\phi_i(A_i) \subset B_{\tau_0}(A_{\infty})$ for every sufficiently large $i$, it is easy to check that $A_i \subset \bigcup_{j=1}^lB_{t_j+\tau}(z_j(i))$ for every sufficiently large $i$.
Therefore we have 
$\upsilon_i(A_i)\le \sum_{j=1}^l\upsilon_i(B_{t_j+\tau}(z_j(i))).$
Thus,
\[\limsup_{i \rightarrow \infty}\upsilon_{\infty}(A_i) \le \sum_{j=1}^l\upsilon_{\infty}(B_{t_j+\tau}(z_j)).\]
By letting $\tau \rightarrow 0$ and $\epsilon \rightarrow 0$, we have the assertion. 
\end{proof}
\begin{proposition}\label{99990}
Let $\{(Z_i, z_i, \upsilon_i)\}_i$ be a sequence of pointed proper geodesic spaces with Radon measure $\upsilon_i$.
We assume that $\upsilon_i(B_1(z_i))=1$, $\mathrm{diam}Z_{\infty} > 0$, 
$(Z_i, z_i, \upsilon_i) \stackrel{(\phi_i, R_i, \epsilon_i)}{\rightarrow} (Z_{\infty}, z_{\infty}, \upsilon_{\infty})$
and that for every $R>0$, there exist $K=K(R) \ge1$ such that 
$\upsilon_i(B_{2r}(x_i)) \le 2^K\upsilon_i(B_r(x_i))$
for every $0 < r < R$, $i \in \mathbf{N}$ and $x_i \in Z_i$.
Then, we have 
\[\lim_{i \rightarrow \infty}\sup_{x_i \in B_{R}(z_i), 0 < r < R}\left|\upsilon_{i}(B_r(x_i))-\upsilon_{\infty}(B_r(\phi_i(x_i)))\right|=0\]
for every $R \ge 1$.
\end{proposition}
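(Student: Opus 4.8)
The plan is to argue by contradiction. Assuming the conclusion fails for some $R\ge 1$, I can find $\delta>0$, a subsequence which I relabel as $\{i\}$, points $x_i\in B_R(z_i)$ and radii $r_i\in(0,R)$ with $|\upsilon_i(B_{r_i}(x_i))-\upsilon_\infty(B_{r_i}(\phi_i(x_i)))|\ge\delta$ for all $i$. Since $\phi_i(x_i)$ eventually lies in the compact set $\overline{B}_{R+1}(z_\infty)$ (properness of $Z_\infty$) and $\{r_i\}\subset(0,R]$ is bounded, after passing to a further subsequence I may assume $x_i\to x_\infty$ for some $x_\infty\in\overline{B}_R(z_\infty)$ and $r_i\to r_\infty\in[0,R]$. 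It then suffices to show $\limsup_i|\upsilon_i(B_{r_i}(x_i))-\upsilon_\infty(B_{r_i}(\phi_i(x_i)))|=0$.

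The key auxiliary facts concern the limit measure. Passing to the limit in the doubling inequality along sequences $w_i\to w_\infty$, using the definition of measured Gromov-Hausdorff convergence, shows $\upsilon_\infty(B_{2r}(w))\le 2^{K}\upsilon_\infty(B_r(w))$ for all $w\in Z_\infty$ and $0<r<R$, where $K=K(R)$; so $\upsilon_\infty$ is a doubling measure on the proper geodesic space $Z_\infty$. Because $\mathrm{diam}\,Z_\infty>0$ and $Z_\infty$ is geodesic (hence has no isolated points), I then obtain $\upsilon_\infty(\{w\})=0$ and $\upsilon_\infty(\partial B_t(w))=0$ for every $w\in Z_\infty$ and $t>0$. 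I would prove the spherical statement directly from doubling: given $y\in\partial B_t(w)$, a geodesic from $w$ to $y$, and the point $y_\rho$ on it with $\overline{w,y_\rho}=t-\rho$ (for small $\rho>0$), one has $B_{\rho/2}(y_\rho)\subset B_{2\rho}(y)\setminus\partial B_t(w)$ and $B_{2\rho}(y)\subset B_{4\rho}(y_\rho)$, so doubling gives $\upsilon_\infty(\partial B_t(w)\cap B_{2\rho}(y))\le(1-2^{-3K})\upsilon_\infty(B_{2\rho}(y))$; thus no point of $\partial B_t(w)$ is a Lebesgue density point of $\partial B_t(w)$, forcing $\upsilon_\infty(\partial B_t(w))=0$ (the atom statement follows the same way with $\{w\}$ in place of $\partial B_t(w)$).

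For the contradiction I squeeze both quantities between measures of concentric balls. When $r_\infty>0$, fix $\epsilon$ with $0<\epsilon<r_\infty$; for $i$ large $|r_i-r_\infty|+\overline{\phi_i(x_i),x_\infty}<\epsilon$, so $B_{r_\infty-\epsilon}(x_i)\subset B_{r_i}(x_i)\subset B_{r_\infty+\epsilon}(x_i)$ and $B_{r_\infty-\epsilon}(x_\infty)\subset B_{r_i}(\phi_i(x_i))\subset B_{r_\infty+\epsilon}(x_\infty)$. Applying measured Gromov-Hausdorff convergence at the fixed radii $r_\infty\pm\epsilon$ (with $x_i\to x_\infty$) to the first chain and monotonicity of $\upsilon_\infty$ to the second,
\[
\limsup_{i\to\infty}\left|\upsilon_i(B_{r_i}(x_i))-\upsilon_\infty(B_{r_i}(\phi_i(x_i)))\right|\le\upsilon_\infty(B_{r_\infty+\epsilon}(x_\infty))-\upsilon_\infty(B_{r_\infty-\epsilon}(x_\infty)),
\]
and letting $\epsilon\downarrow 0$ the right side decreases to $\upsilon_\infty(\overline{B}_{r_\infty}(x_\infty))-\upsilon_\infty(B_{r_\infty}(x_\infty))=\upsilon_\infty(\partial B_{r_\infty}(x_\infty))=0$. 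When $r_\infty=0$, I bound both quantities directly by $\upsilon_\infty(B_{2\epsilon}(x_\infty))$ for $i$ large and let $\epsilon\downarrow 0$, using $\upsilon_\infty(\{x_\infty\})=0$. Either way the $\limsup$ is $0$, contradicting $\delta>0$.

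The subsequence extraction and the inclusion-chasing are routine; the one substantive point, and the step I expect to require the most care, is the vanishing of $\upsilon_\infty$ on metric spheres — without it the two averages could legitimately differ by $\upsilon_\infty(\partial B_{r_\infty}(x_\infty))$. The crucial observations there are that $\upsilon_\infty$ inherits the doubling constant $K=K(R)$ from the $\upsilon_i$, and that the geodesic hypothesis is exactly what lets one slide a ball of comparable measure off any metric sphere.
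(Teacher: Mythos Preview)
Your proof is correct and takes a genuinely different route from the paper's. The paper argues directly and quantitatively: it first invokes an annulus estimate (\cite[Lemma~3.3]{co-mi5}) to show that, uniformly in $i$ (including $i=\infty$), balls of small radius have small measure; it then discretizes both the set of centers and the set of radii by finite $\epsilon r_1$-nets, uses the definition of measured Gromov--Hausdorff convergence at the finitely many discretized pairs, and controls the discretization error again by the annulus estimate applied to $\upsilon_\infty(B_{s+5\epsilon r_1}(z))-\upsilon_\infty(B_{s-5\epsilon r_1}(z))$. Your approach instead extracts a limit pair $(x_\infty,r_\infty)$ by compactness, passes the doubling constant to $\upsilon_\infty$, and proves $\upsilon_\infty(\partial B_{r_\infty}(x_\infty))=0$ by a Lebesgue density argument using the geodesic structure; the sandwich then closes the contradiction. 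Your argument is more self-contained (no citation to the annulus lemma) and isolates the conceptual point that spheres are $\upsilon_\infty$-null; the paper's argument, on the other hand, yields along the way the uniform smallness of small balls, which is a useful quantitative byproduct. Both essentially exploit the same phenomenon---that in a doubling geodesic space of positive diameter one can slide a comparable-mass ball off any sphere---but package it differently.
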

\begin{proof}
By the assumption, it is easy to check that $\mathrm{rad}Z_{\infty}>0$.
Here $\mathrm{rad}X=\inf _{x_2 \in X}(\sup _{x_1 \in X}\overline{x_1, x_2})$ for metric space $X$.
We put $K=K(100R)$.
We take $0<\tau << \mathrm{rad} Z_{\infty}$.
Then, by the definition,  there exists $N$ satisfying that
for every $N \le i \le \infty$ and $w \in Z_{i}$, there exists $\hat{w} \in Z_{i}$ such that 
$\overline{w, \hat{w}}=\tau$.
Since $B_{\hat{\tau}}(w) \subset B_{\tau + \hat{\tau}}(\hat{w})\setminus B_{\tau-\hat{\tau}}(\hat{w})$, by \cite[Lemma $3. 3$]{co-mi5} (or  \cite[Proposition $6.12$]{ch1}), 
there exists $0 < \hat{\tau}<<\tau$ such that
for every $N \le i \le \infty$, $w \in Z_i$ and $0 < t < \hat{\tau}$, we have
\[\upsilon_i(B_t(w))\le \Psi(t;K, R)\upsilon_i(B_{10{\tau}}(w)).\]
Therefore, for every $\epsilon >0$, there exists
$N_1 \in \mathbf{N}$ and $0 < r_1 << \min \{R, \hat{\tau}, \epsilon, 1\}$
such that for every $N_1 \le i \le \infty$,  $0 < s <r_1$ and  $z \in B_{R}(z_i)$, we have
$\upsilon_i(B_{s}(z))\le \epsilon.$
We take $\{x_j\}_{j=1, _{\cdots}, l} \subset B_{R}(z_{\infty})$ and $\{t_j\}_{j=1, _{\cdots}, \hat{l}} \subset
[0, R]$ satisfying
$B_{\hat{R}}(z_{\infty}) \subset \bigcup_{j=1}^lB_{\epsilon r_1}(x_j)$ and
 $[0, R] \subset \bigcup _{j=1}^{\hat{l}}B_{\epsilon r_1}(t_j).$
We take $x_j(i) \in B_{R}(z_i)$ satisfying that $x_j(i) \rightarrow x_j$.
There exists $N_2 \ge N_1$ such that  
$|\upsilon_i(B_{t_{\hat{j}}}(x_j(i)))-\upsilon_{\infty}(B_{t_{\hat{j}}}(x_j))|<\epsilon .$
for every $i \ge N_2$, $j=1, _{\cdots}, l$ and $\hat{j}=1, _{\cdots}, \hat{l}$.
Then, for every $z \in B_{R}(z_{\infty})$ and $s \in [r_1, R]$,
we take $j \in \{1, _{\cdots}, l\}$ and $\hat{j} \in \{1, _{\cdots}, \hat{l}\}$ satisfying 
$\overline{z, z_j} < \epsilon r_1$ and $|s-t_{\hat{j}}|<\epsilon r_1$.
Then by \cite[Lemma $3. 3$]{co-mi5},
\begin{align}
|\upsilon_{\infty}(B_s(z))-\upsilon_{\infty}(B_{t_{\hat{j}}}(x_j))|&\le \upsilon_{\infty}(B_{s+5\epsilon r_1}(z))-
\upsilon_{\infty}(B_{s-5\epsilon r_1}(z)) \\
&\le \Psi (\epsilon; K, R, \tau)\upsilon_{\infty}(B_{R}(z_{\infty})).
\end{align}
On the other hand, for a sequence $z(i) \in B_{R}(z_i)$ satisfying $z(i) \rightarrow z$,
\begin{align}
|\upsilon_i(B_s(z(i)))-\upsilon_i(B_{t_{\hat{j}}}(x_j(i)))|&\le
\upsilon_i(B_{s+10\epsilon r_1}(z(i)))-\upsilon_i(B_{s-10\epsilon r_1}(z(i))) \\
&\le \Psi(\epsilon; K, R, \tau)\upsilon_i(B_R(z_i)) \\
&\le \Psi(\epsilon; K, R, \tau)\upsilon_{\infty}(B_R(z_{\infty}))
\end{align}
for every $i \ge N_2$.
Thus, we have 
\[|\upsilon_i(B_s(z(i)))-\upsilon_{\infty}(B_s(z))|<\Psi(\epsilon ;K, R, \tau)\upsilon_{\infty}(B_R(z_{\infty})).\]
Therefore, we have the assertion.
\end{proof}
We remark that an assumption $\mathrm{diam}Z_{\infty}>0$ of Proposition \ref{99990} is necessary.
For example, consider a sequence $\mathbf{S}^n(r) \rightarrow \{p\}$ as $r \rightarrow 0$.
Here $\mathbf{S}^n(r)=\{x \in \mathbf{R}^{n+1}; |x|=r\}$.
\begin{proposition}\label{lap}
Let $\{(Z_i, z_i, \upsilon_i)\}_i$ be a sequence of pointed proper geodesic spaces with Radon measure $\upsilon_i$.
We assume that $\upsilon_i(B_1(z_i))=1$,
$(Z_i, z_i, \upsilon_i) \stackrel{(\phi_i, R_i, \epsilon_i)}{\rightarrow} (Z_{\infty}, z_{\infty}, \upsilon_{\infty})$
and that for every $R>0$, there exist $K=K(R) \ge1$ such that 
$\upsilon_i(B_{2r}(x_i)) \le 2^K\upsilon_i(B_r(x_i))$
for every $0 < r < R$, $i \in \mathbf{N}$
and $x_i \in Z_i$ 
Then  we have
\[\lim_{i \rightarrow \infty}\int_{Z_i}f \circ \phi_id\upsilon_i=\int_{Z_{\infty}}fd\upsilon_{\infty}\]
for every $f \in C_c^0(Z_{\infty})$.
\end{proposition}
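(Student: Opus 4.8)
The plan is to prove weak convergence of the pushforward measures $(\phi_i)_*\upsilon_i$ to $\upsilon_\infty$; writing $f=\max(f,0)-\max(-f,0)$ we may assume $f\ge 0$, and the case $\mathrm{diam}\,Z_\infty=0$ is immediate (then $B_{R_i}(z_i)$ has diameter $\le\epsilon_i$, so $\upsilon_i(B_{R_i}(z_i))=\upsilon_i(B_r(z_i))\to\upsilon_\infty(\{z_\infty\})$ for any fixed $r$), so assume $\mathrm{diam}\,Z_\infty>0$. First I would isolate an elementary consequence of the definition of the $\epsilon_i$-Gromov--Hausdorff approximations $\phi_i$: if $x_i\to x_\infty$ in $Z_\infty$ and $r>0$, then for large $i$ one has $B_{r-\delta_i}(x_i)\subset\phi_i^{-1}(B_r(x_\infty))\subset\phi_i^{-1}(\overline B_r(x_\infty))\subset B_{r+\delta_i}(x_i)$ with $\delta_i\to 0$; combining this with the mGH hypothesis $\upsilon_i(B_s(x_i))\to\upsilon_\infty(B_s(x_\infty))$ for each fixed $s$ and letting $s\to r$ gives $\liminf_i\upsilon_i(\phi_i^{-1}(B_r(x_\infty)))\ge\upsilon_\infty(B_r(x_\infty))$ and $\limsup_i\upsilon_i(\phi_i^{-1}(\overline B_r(x_\infty)))\le\upsilon_\infty(\overline B_r(x_\infty))$, hence $\upsilon_i(\phi_i^{-1}(B_r(x_\infty)))\to\upsilon_\infty(B_r(x_\infty))$ whenever $\upsilon_\infty(\partial B_r(x_\infty))=0$ --- which holds for all but countably many $r$, the spheres about $x_\infty$ being pairwise disjoint. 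I also note that $\upsilon_\infty$ inherits the uniform doubling property of the $\upsilon_i$ on passing to the limit, and that $\sup_i\upsilon_i(B_\rho(z_i))<\infty$ for each $\rho$.

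Next, fix $f\ge 0$ in $C_c^0(Z_\infty)$ with $\mathrm{supp}\,f\subset B_{\bar R}(z_\infty)$ and $\epsilon>0$, and choose $\rho\in(0,1)$ so small that $f$ oscillates by less than $\epsilon$ on every $\rho$-ball. Apply the covering lemma, Proposition \ref{cov}, to the compact set $\overline B_{\bar R}(z_\infty)$ with the family of closed balls of radius less than $\rho$, centre in $\overline B_{\bar R}(z_\infty)$, and $\upsilon_\infty$-null boundary sphere; this produces a countable pairwise disjoint subfamily $\{\overline B_{r_k}(x_k)\}_k$ with $\upsilon_\infty(\partial B_{r_k}(x_k))=0$ and $\overline B_{\bar R}(z_\infty)\setminus\bigcup_{k\le N}\overline B_{r_k}(x_k)\subset\bigcup_{k>N}\overline B_{5r_k}(x_k)$ for every $N$. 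As these balls are pairwise disjoint inside a fixed bounded set, the doubling of $\upsilon_\infty$ makes $\sum_k\upsilon_\infty(\overline B_{5r_k}(x_k))$ finite; choosing $N$ with $\sum_{k>N}\upsilon_\infty(\overline B_{5r_k}(x_k))<\epsilon$ and using that the $\partial B_{r_k}(x_k)$ are $\upsilon_\infty$-null, the compact set $E:=\overline B_{\bar R}(z_\infty)\setminus\bigcup_{k\le N}B_{r_k}(x_k)$ satisfies $\upsilon_\infty(E)<\epsilon$.

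Finally, using $\mathrm{supp}\,f\subset B_{\bar R}(z_\infty)$ and the oscillation bound, I would sandwich $f\circ\phi_i$ pointwise on $B_{R_i}(z_i)$ between the step functions $\sum_{k\le N}(f(x_k)-\epsilon)_+\mathbf{1}_{\phi_i^{-1}(B_{r_k}(x_k))}$ and $\sum_{k\le N}(f(x_k)+\epsilon)\mathbf{1}_{\phi_i^{-1}(B_{r_k}(x_k))}+|f|_{L^\infty}\mathbf{1}_{\phi_i^{-1}(E)}$, integrate against $\upsilon_i$, and take $i\to\infty$. By the first paragraph $\upsilon_i(\phi_i^{-1}(B_{r_k}(x_k)))\to\upsilon_\infty(B_{r_k}(x_k))$ for each $k$ (the $r_k$ being good radii); and since $\limsup_i\phi_i^{-1}(E)\subset E$ --- proved from the definitions by a contradiction argument exactly as in Proposition \ref{compact3} --- Proposition \ref{sup} gives $\limsup_i\upsilon_i(\phi_i^{-1}(E))\le\upsilon_\infty(E)<\epsilon$. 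Hence both $\limsup_i\int_{Z_i}f\circ\phi_i\,d\upsilon_i$ and $\liminf_i\int_{Z_i}f\circ\phi_i\,d\upsilon_i$ lie within $C\epsilon$ of $\sum_{k\le N}f(x_k)\upsilon_\infty(B_{r_k}(x_k))$ for a constant $C$ independent of $i$, while $\bigl|\int_{Z_\infty}f\,d\upsilon_\infty-\sum_{k\le N}f(x_k)\upsilon_\infty(B_{r_k}(x_k))\bigr|\le|f|_{L^\infty}\upsilon_\infty(E)+\epsilon\,\upsilon_\infty(\overline B_{\bar R}(z_\infty))<C'\epsilon$; letting $\epsilon\to 0$ finishes, and the general case follows by the decomposition $f=\max(f,0)-\max(-f,0)$. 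The step I expect to be the main obstacle is precisely this passage from convergence of measures of balls to convergence of integrals: it is what forces the Vitali-type covering, the care needed to avoid the countably many radii where $\upsilon_\infty$ charges a sphere (so that open and closed balls, hence $\liminf$ and $\limsup$, agree), and the handling of the leftover set $E$ through Proposition \ref{sup} rather than by a naive estimate.
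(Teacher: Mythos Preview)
Your proof is correct and follows the same overall strategy as the paper: a Vitali-type covering of the support, approximation of $f$ by step functions on the covering balls, convergence of ball measures, and control of the leftover set via Proposition~\ref{sup}. The one technical difference is that you integrate over preimages $\phi_i^{-1}(B_{r_k}(x_k))$ of balls in $Z_\infty$, whereas the paper integrates over balls $B_{r_j}(x_j(i))$ in $Z_i$ with centers $x_j(i)\to x_j$. Because of this choice you need the sandwich $B_{r-\delta_i}(x_i)\subset\phi_i^{-1}(B_r(x_\infty))\subset B_{r+\delta_i}(x_i)$ and hence must restrict to radii with $\upsilon_\infty(\partial B_r)=0$ to pin down the limit; the paper sidesteps this entirely, since the measured Gromov--Hausdorff definition already guarantees $\upsilon_i(B_{r_j}(x_j(i)))\to\upsilon_\infty(B_{r_j}(x_j))$ for \emph{every} radius. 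Your route is slightly longer but perfectly sound; the paper's is a bit more direct. (Minor remark: the asymptotic inclusion $\limsup_i\phi_i^{-1}(E)\subset E$ you invoke is immediate from $\phi_i(\phi_i^{-1}(E))\subset E$---this is Example~\ref{d2} rather than Proposition~\ref{compact3}, though the contradiction argument you sketch also works.)
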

\begin{proof}
We put $A=\mathrm{supp} f$ and fix $\epsilon >0$.
We take $R \ge1$ satisfying $A \subset B_R(z_{\infty})$ and put $K=K(100R)$.
For every $x \in Z_{\infty}$, we take $r(x) >0$ satisfying that for every $w \in B_{r(x)}(x)$, we have $f(w)=f(x) \pm \epsilon$.
By Proposition \ref{cov}, there exists a pairwise disjoint collection $\{\overline{B}_{r_i}(x_i)\}_i$ such that 
$x_i \in A$, $0<r_i<< \min \{r(x_i), \epsilon\}$ and  
$K \setminus \bigcup_{i=1}^N\overline{B}_{r_i}(x_i) \subset \bigcup_{i=N+1}^{\infty}\overline{B}_{5r_i}(x_i)$
for every $N$.
We take $N$ satisfying 
$\sum_{i=N+1}^{\infty}\upsilon_{\infty}(B_{r_i}(x_i)) < \epsilon.$
By the assumption, we have
$\sum_{i=N+1}^{\infty}\upsilon_{\infty}(B_{5r_i}(x_i)) < \Psi(\epsilon; K).$
We take $x_j(i) \in Z_i$ satisfying that $x_j(i) \rightarrow x_j$.
Then we have
\[ \int_{Z_i}f \circ \phi_i d\upsilon_i =\sum_{j=1}^N\int_{B_{r_j}(x_j(i))}f \circ \phi_i d\upsilon_i \pm \left|\int_{Z_i \setminus \bigcup
_{j=1}^NB_{r_j}(x_j(i))}f \circ \phi_i d\upsilon_i\right|.\]
We also have
\begin{align}
\left|\int_{Z_i \setminus \bigcup_{j=1}^NB_{r_j}(x_j(i))}f \circ \phi_i d\upsilon_i\right| &=\left|\int_{\phi_i^{-1}(A) \setminus \bigcup
_{j=1}^NB_{r_j}(x_j(i))}f \circ \phi_i d\upsilon_i\right| \\
&\le \sup |f| \upsilon_i(\phi_i^{-1}(A) \setminus \bigcup
_{j=1}^NB_{r_j}(x_j(i)) \\
&\le \sup |f| \upsilon_i(\overline{\phi_i^{-1}(A)} \setminus \bigcup
_{j=1}^NB_{r_j}(x_j(i))).
\end{align}
By Proposition \ref{sup}, we have 
\begin{align}
\limsup_{i \rightarrow \infty}\upsilon_i(\overline{\phi_i^{-1}(A)} \setminus \bigcup_{j=1}^NB_{r_j}(x_j(i)))
&\le \upsilon_{\infty}(A \setminus \bigcup
_{j=1}^NB_{r_j}(x_j)) \\
& \le \sum_{i=N+1}^{\infty}\upsilon_{\infty}(\overline{B}_{5r_j}(x_j)) \le \Psi(\epsilon;K).
\end{align}
Therefore for every sufficiently large $i$, we have 
\begin{align}
\int_{Z_i}f\circ \phi_i d\upsilon_i &= \sum_{j=1}^N(f(x_j)\pm \epsilon)\upsilon_i(B_{r_j}(x_j(i))) \pm \Psi(\epsilon;  K, \sup |f|) \\
& = \sum_{j=1}^N(f(x_j)\pm \epsilon)\upsilon_{\infty}(B_{r_j}(x_j))\pm \Psi(\epsilon;  K, \sup |f|, R) \\
& = \int_{\bigcup_{j=1}^NB_{r_j}(x_j)}fd\upsilon_{\infty} \pm \Psi(\epsilon;  K, \sup |f|) \\
& = \int_{Z_i}fd\upsilon_{\infty} \pm \left(\int_{A \setminus \bigcup_{j=1}^NB_{r_j}(x_j)} |f|d\upsilon_{\infty}+\Psi(\epsilon;  K, \sup |f|) \right)\\
& = \int_{Z_i}fd\upsilon_{\infty} \pm \Psi(\epsilon;  K, \sup |f|).
\end{align}
Therefore we have the assertion.
\end{proof}
In section $4$, we will generalize Proposition \ref{lap}.
See Proposition \ref{10103}.
\subsection{Riemannian manifolds and its limit space}
First, we shall introduce a very powerful gradient estimates for harmonic functions on manifolds by Cheng-Yau.
This estimate is used in this paper many times. We fix $n \ge 2$.
\begin{theorem}[Cheng-Yau, \cite{ch-yau}]\label{gradient1}
Let $K$ be a real number, $R$ a positive number, $M$ a complete $n$-dimensional Riemannian manifold with $\mathrm{Ric}_M \ge K(n-1)$, $m$ a point in $M$ and $f$ a nonnegative valued harmonic function on $B_R(m)$.
Then, we have
\[|\nabla f|(x)\le C(n)f(x)\frac{R(R|K(n-1)|+1)}{R^2-\overline{m,x}^2}\]
for every $x \in B_R(m)$.
\end{theorem}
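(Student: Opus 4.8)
The plan is to run the classical Cheng--Yau argument: since $f>0$ on $B_R(m)$ we may pass to $u=\log f$, derive a Bochner-type differential inequality for $w=|\nabla u|^2$, and feed it into the maximum principle against a cutoff built from the distance function $r=r_m$. First I would record that harmonicity of $f$ gives $\Delta u=\Delta f/f-|\nabla f|^2/f^2=-|\nabla u|^2=-w$ (geometer's Laplacian). Bochner's formula
\[\tfrac12\Delta w=|\mathrm{Hess}\,u|^2+\langle\nabla u,\nabla(\Delta u)\rangle+\mathrm{Ric}_M(\nabla u,\nabla u),\]
combined with the trace inequality $|\mathrm{Hess}\,u|^2\ge(\Delta u)^2/n=w^2/n$, the identity $\nabla(\Delta u)=-\nabla w$, and the hypothesis $\mathrm{Ric}_M\ge K(n-1)$, yields the fundamental inequality
\[\tfrac12\Delta w\ \ge\ \tfrac1n w^2-\langle\nabla u,\nabla w\rangle+K(n-1)w \qquad\text{on }B_R(m).\]

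Next I would localize. Put $\phi=(R^2-r^2)^2$ with $r=r_m=\overline{m,\cdot}$, and set $G=\phi\,w$. Interior elliptic regularity makes $w$ bounded on each $\overline{B}_\rho(m)$ with $\rho<R$, while $\phi\to0$ as $r\to R$; hence $G$ extends continuously by $0$ to $\overline{B}_R(m)$ and attains its maximum at some $x_0\in B_R(m)$. If $G(x_0)\le0$ the estimate is trivial, so assume $w(x_0)>0$, in particular $x_0\neq m$. To legitimize differentiating $r$ at $x_0$ I would invoke Calabi's trick, replacing $m$ by a point slightly interior on a minimizing geodesic from $m$ to $x_0$ so that the new distance function is smooth near $x_0$ and dominates $r_m$ with equality at $x_0$; the Laplacian comparison theorem for $\mathrm{Ric}_M\ge K(n-1)$ then bounds $\Delta r$ near $x_0$ (in the barrier sense), which controls $\Delta\phi$ and $|\nabla\phi|^2/\phi$ in terms of $n$, $K$, $R$, $r(x_0)$. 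At the interior maximum one has $\nabla G(x_0)=0$, i.e. $\nabla w=-(w/\phi)\nabla\phi$ there, and $\Delta G(x_0)\le0$. Expanding $\Delta G=\phi\,\Delta w+2\langle\nabla\phi,\nabla w\rangle+w\,\Delta\phi$, substituting the Bochner inequality for $\Delta w$ together with $\nabla w=-(w/\phi)\nabla\phi$ and the comparison bounds for the $\phi$-terms, and multiplying through by $\phi$, one is left with a quadratic inequality for $G(x_0)$ whose resolution gives $G(x_0)\le C(n)\,R^2\bigl(R|K(n-1)|+1\bigr)^2$.

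Finally, since $w=|\nabla f|^2/f^2$ and $G=\phi w$ is maximized at $x_0$, for every $x\in B_R(m)$ we get $|\nabla f|^2(x)=f(x)^2\,w(x)\le f(x)^2\,G(x_0)/\phi(x)\le C(n)f(x)^2 R^2\bigl(R|K(n-1)|+1\bigr)^2/(R^2-\overline{m,x}^2)^2$, and taking square roots yields the stated estimate (absorbing the factor $(n-1)$ into $C(n)$ when comparing with the sharp form involving $\sqrt{|K(n-1)|}$). The main obstacle is not conceptual but technical: ensuring the supremum of $G$ is attained at an interior point, handling the possible non-smoothness of $r$ at $x_0$ via the Calabi barrier argument, and the careful bookkeeping of the constants coming from the Laplacian comparison term when solving the quadratic inequality.
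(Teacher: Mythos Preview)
Your outline is the classical Cheng--Yau argument and is correct (with the standard observation that one may assume $f>0$ by the strong maximum principle, else $f\equiv 0$ and the estimate is vacuous). Note, however, that the paper does not supply its own proof of this theorem: it is quoted as a known result with a citation to \cite{ch-yau}, so there is no paper proof to compare against. Your write-up is exactly the original approach of Cheng and Yau.
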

Next theorem is a fundamental result for the study of Gromov-Hausdorff convergence of Riemannian manifolds:
\begin{theorem}[Bishop-Gromov, \cite{gr}]\label{BG}
Let $K$ be a real number, $M$ a complete $n$-dimensional Riemannian manifold with $\mathrm{Ric}_M \ge K(n-1)$ and $m$ a point in $M$.
Then we have 
\[\frac{\mathrm{vol}\,B_r(m)}{\mathrm{vol}\,B_r(\underline{p})}\ge \frac{\mathrm{vol}\,B_s(m)}{\mathrm{vol}\,B_s(\underline{p})}\]
for every $0 < r < s$.
Here, $\underline{p}$ is a point in the $n$-dimensional space form $\underline{M}_K^n$ whose sectional curvature is equal to $K$.
\end{theorem}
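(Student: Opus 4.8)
This is the classical Bishop--Gromov volume comparison, and the plan is to recall the standard proof, which runs through geodesic polar coordinates and the mean curvature (Riccati) comparison coming from $\mathrm{Ric}_M \ge K(n-1)$, followed by an elementary fact about ratios of integrals.

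First I fix $m \in M$ and, identifying the unit sphere of $T_mM$ with $S^{n-1}$, let $c(\theta) \in (0,\infty]$ be the distance to the cut locus along the geodesic $t \mapsto \exp_m(t\theta)$. On $\{(t,\theta) : 0 < t < c(\theta)\}$ the Riemannian volume of $M$ pulls back under $\exp_m$ to $\mathcal{A}(t,\theta)\,dt\,d\theta$, with $d\theta$ the standard measure on $S^{n-1}$ and $\mathcal{A}$ smooth, positive, and satisfying $\mathcal{A}(t,\theta) = t^{n-1}\big(1 + O(t^2)\big)$ as $t \to 0$. Let $\mathrm{sn}_K$ solve $\mathrm{sn}_K'' + K\,\mathrm{sn}_K = 0$ with $\mathrm{sn}_K(0) = 0$, $\mathrm{sn}_K'(0) = 1$, and set $\mathcal{A}_K(t) = \mathrm{sn}_K(t)^{n-1}$, so the volume density of $\underline{M}_K^n$ around $\underline{p}$ is $\mathcal{A}_K(t)\,dt\,d\theta$ on its injectivity domain (all $t > 0$ if $K \le 0$; $0 < t < \pi/\sqrt{K}$ if $K > 0$, in which case $\mathrm{diam}\,M \le \pi/\sqrt{K}$ by Bonnet--Myers, so it suffices to treat $r, s < \pi/\sqrt{K}$). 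I write $\mu(t,\theta) := \partial_t \log \mathcal{A}(t,\theta)$, the mean curvature of $\partial B_t(m)$ at $\exp_m(t\theta)$ (equivalently $\Delta r_m$ there), and $\mu_K(t) := \partial_t \log \mathcal{A}_K(t) = (n-1)\mathrm{sn}_K'(t)/\mathrm{sn}_K(t)$.

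Next comes the Riccati comparison. Along a fixed unit-speed radial geodesic the shape operator $S_t$ of the level sets of $r_m$ obeys the matrix Riccati equation, whose trace gives $\partial_t \mu + \mathrm{tr}(S_t^2) = -\mathrm{Ric}(\partial_t,\partial_t)$; Cauchy--Schwarz yields $\mathrm{tr}(S_t^2) \ge \mu^2/(n-1)$ and the hypothesis gives $\mathrm{Ric}(\partial_t,\partial_t) \ge K(n-1)$, hence $\partial_t \mu + \mu^2/(n-1) \le -K(n-1)$, while $\mu_K$ satisfies the same relation with equality. Since $\mu(t,\theta) - \mu_K(t) \to 0$ as $t \to 0$ (both are $(n-1)/t + O(t)$), a standard scalar ODE comparison gives $\mu(t,\theta) \le \mu_K(t)$ for all $0 < t < c(\theta)$, i.e. the Laplacian comparison $\Delta r_m \le \mu_K(r_m)$. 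Equivalently $\partial_t \log\big(\mathcal{A}(t,\theta)/\mathcal{A}_K(t)\big) \le 0$, so $t \mapsto \mathcal{A}(t,\theta)/\mathcal{A}_K(t)$ is non-increasing on $(0, c(\theta))$ for each $\theta$. Extending $\mathcal{A}(t,\theta)$ by $0$ for $t \ge c(\theta)$ keeps this ratio non-increasing and changes no volume integral, since $\exp_m$ maps $\{t < c(\theta)\}$ bijectively onto $M$ minus the cut locus, a null set. Finally I invoke the elementary lemma: if $\varphi,\psi:(0,\infty)\to[0,\infty)$ are measurable with $\psi > 0$ on an initial interval and $\varphi/\psi$ non-increasing there, then $r \mapsto \int_0^r\varphi \big/ \int_0^r\psi$ is non-increasing, which one checks by differentiating and using $\int_0^r\psi(r)\psi(t)\big(\varphi(r)/\psi(r) - \varphi(t)/\psi(t)\big)\,dt \le 0$. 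Applied with $\varphi = \mathcal{A}(\cdot,\theta)$, $\psi = \mathcal{A}_K$ it makes $r \mapsto \int_0^r \mathcal{A}(t,\theta)\,dt \big/ \int_0^r \mathcal{A}_K(t)\,dt$ non-increasing for each $\theta$, and since
\[
\frac{\mathrm{vol}\,B_r(m)}{\mathrm{vol}\,B_r(\underline{p})} = \frac{1}{|S^{n-1}|}\int_{S^{n-1}}\frac{\int_0^r \mathcal{A}(t,\theta)\,dt}{\int_0^r \mathcal{A}_K(t)\,dt}\,d\theta
\]
is an average of non-increasing functions of $r$, it too is non-increasing, which is the asserted inequality for $0 < r < s$.

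The step I expect to demand the most care is the passage across the cut locus: one must verify that extending $\mathcal{A}$ by zero is harmless for the volume integrals and preserves monotonicity of the ratio, and one must handle the singular $t \to 0$ asymptotics so that the ODE comparison of $\mu$ against $\mu_K$ is seeded correctly. The Riccati computation and the integral lemma are then routine.
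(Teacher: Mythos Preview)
Your argument is the standard proof of Bishop--Gromov and is correct; the only point worth a remark is that the ODE comparison for $\mu$ versus $\mu_K$ is slightly delicate because both blow up like $(n-1)/t$ at $t=0$, but this is handled cleanly (as you implicitly do) by passing to $\phi = \mathcal{A}^{1/(n-1)}$, which satisfies $\phi'' + K\phi \le 0$ with $\phi(0)=0$, $\phi'(0)=1$, and then invoking Sturm comparison against $\mathrm{sn}_K$.

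As for comparison with the paper: there is nothing to compare. The paper does not prove this theorem at all; it is stated in the preliminaries (Section~2.3) with only the citation to Gromov's book, consistent with the paper's announcement that ``most of statements in the section do not have the proof, we will give a reference for them only.'' Your write-up is thus strictly more than what the paper supplies.
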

As a corollary of Theorem \ref{BG}, if a sequence of pointed $n$-dimensional complete Riemannian manifolds with renormalized volume $\{(M_i, m_i, \underline{\mathrm{vol}})\}$ 
 satisfy $\mathrm{Ric}_{M_i}\ge K(n-1)$, then the sequence satisfies the assumption of Proposition \ref{measure}.
Here renormalized volume means 
\[\underline{\mathrm{vol}}=\frac{\mathrm{vol}}{\mathrm{vol}\,B_1(m_i)}.\]
For a real number $K$ and a pointed proper geodesic space $(Y, y)$, in this paper, we say that $(Y, y)$ \textit{is $(n, K)$-Ricci limit space} if 
there exist a sequence of real numbers $\{K_i\}$ and a sequence of pointed $n$-dimensional complete Riemannian manifolds $\{(M_i, m_i)\}$ with $\mathrm{Ric}_{M_i}\ge K_i(n-1)$ such that $K_i \rightarrow K$ and $(M_i, m_i) \rightarrow (Y,y)$.
Then, we often call $(Y, y)$ a Ricci limit space of $\{(M_i, m_i)\}$.
Similarly, for a pointed proper geodesic space with Radon measure $(Y, y, \upsilon)$, we also say that 
$(Y, y, \upsilon)$ is $(n, K)$-Ricci limit space if 
there exist a sequence of real numbers $\{K_i\}$ and a sequence of pointed $n$-dimensional complete Riemannian manifolds $\{(M_i, m_i)\}$ with $\mathrm{Ric}_{M_i}\ge K_i(n-1)$ such that $K_i \rightarrow K$ and $(M_i, m_i, \underline{\mathrm{vol}}) \rightarrow (Y,y, \upsilon)$.
More simply, for $(n, -1)$-Ricci limit space $(Y, y)$ (or $(Y, y, \upsilon)$), we say that $(Y, y)$ is Ricci limit space.
See section $4.1$ in \cite{lo}.
We shall fix a Ricci limit space $(Y, y, \upsilon)$ in this subsection and give a very short review of structure theory of Ricci limit spaces developed by Cheeger-Colding below. See \cite{ch-co1, ch-co2, ch-co3} for the details.
\

We shall give an important notion called \textit{tangent cone} to study Ricci limit spaces:
For pointed proper geodesic spaces $(Z, z)$ and $(X, x)$, we say that \textit{$(Z, z)$ is a tangent cone of $X$ at $x$} if 
there exists a sequence of positive numbers $\{r_i\}$ such that $r_i \rightarrow 0$ and $(X, x, r_i^{-1}d_X) \rightarrow (Z, z)$. 
For $k \ge 1$, we put $\mathcal{R}_k(Y)=\{x \in Y;$ All tangent cones at $x$ are isometric to $\mathbf{R}^k\}$ and call it \textit{$k$-dimensional regular set}.
More simply, we shall denote it by $\mathcal{R}_k$.
We also put $\mathcal{R}=\bigcup_{1\le k \le n}\mathcal{R}_k$ and call it \textit{regular set}.
Next theorem is an important properties for Ricci limit spaces:
\begin{theorem}[Cheeger-Colding, \cite{ch-co1}]\label{regular}
We have $\upsilon(Y \setminus \mathcal{R})=0$.
\end{theorem}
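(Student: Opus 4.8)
The plan is to follow the structure theory of Cheeger--Colding \cite{ch-co1}, run as an induction on $n$, whose two technical engines are their almost splitting theorem (if a sequence of manifolds with $\mathrm{Ric}\ge-(n-1)\delta_i$, $\delta_i\to0$, converges to a space containing a line, the limit splits off an isometric $\mathbf{R}$-factor) and their ``volume cone implies metric cone'' rigidity. First I would record the reduction to rescalings: by Theorem~\ref{BG} the rescaled pointed metric measure spaces $(Y,x,s^{-1}d_Y,\upsilon(B_s(x))^{-1}\upsilon)$ form a precompact family (Propositions~\ref{090} and~\ref{measure}), and any subsequential limit as $s\to0$ — a tangent cone at $x$ — is again a Ricci limit space, in fact an $(n,0)$-Ricci limit space, since the metric $s^{-1}d$ on a manifold with $\mathrm{Ric}\ge-(n-1)$ has $\mathrm{Ric}\ge-(n-1)s^2\to0$. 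Thus every tangent cone of $Y$ carries a renormalized limit measure and all the almost rigidity theorems apply to it. The base case $n=1$ is immediate: a one-dimensional Ricci limit space is an interval, a circle, a line, or a point, and in the last (degenerate, $\mathrm{diam}\,Y=0$) case one excludes this trivial collapse or reads the statement with the convention $\mathbf{R}^0=\{\mathrm{pt}\}$.

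Second, I would prove that for $\upsilon$-a.e.\ $x\in Y$ every tangent cone at $x$ is a metric cone $C(Z_x)$ with vertex $o_x$ over a compact geodesic space $Z_x$. The starting point is the Bishop--Gromov monotonicity of Theorem~\ref{BG}, which passes to the limit as a monotonicity in $r$ of $\upsilon(B_r(x))/V_{-1}(r)$, with $V_{-1}(r)$ the volume of an $r$-ball in the $n$-dimensional space of constant curvature $-1$; a measure-differentiation argument for the renormalized limit measures — the delicate point in the possibly collapsed case — then shows that for $\upsilon$-a.e.\ $x$ the rescaled volume ratios of $(Y,x,\upsilon)$ are asymptotically scale-invariant, which is exactly the hypothesis of the volume-cone-implies-metric-cone theorem; its conclusion is the asserted cone structure, and $Z_x$ inherits the Ricci-limit structure of ``one lower dimension''.

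Third, I would bootstrap Euclidean factors. If $C(Z)$ is a metric cone and $p\ne o$, the radial line through $p$ furnishes, via the almost splitting theorem, an isometric splitting of every tangent cone at $p$ as $\mathbf{R}\times C(Z')$, so each non-vertex point of a cone gains one Euclidean factor. Applying the inductive hypothesis to $Z_x$ gives that $\upsilon_{Z_x}$-a.e.\ point of $Z_x$ is regular, and a coarea/Fubini argument along the conical measure of $C(Z_x)$ transfers this to $\upsilon_x$-a.e.\ non-vertex point of $C(Z_x)$; since the vertex is $\upsilon_x$-null, $\upsilon_x$-a.e.\ point of the tangent cone $C(Z_x)$ is regular. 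Combined with the second step and a covering argument with the doubling measure $\upsilon$, one gets that $\upsilon$-a.e.\ $x\in Y$ admits a Euclidean tangent cone; iterating the ``cone or split'' dichotomy — which terminates since the number of Euclidean factors is at most $n$ — and invoking the volume-rigidity case of Bishop--Gromov to identify the maximally split cone with $\mathbf{R}^k$, one upgrades this to: $\upsilon$-a.e.\ $x$ has \emph{every} tangent cone isometric to one fixed $\mathbf{R}^{k(x)}$, i.e.\ $x\in\mathcal{R}$.

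The step I expect to be the main obstacle is the measure-theoretic bookkeeping in the collapsed case: one must control the $\upsilon$-measure, not just the Hausdorff dimension, of the ``conical'' and lower strata, and this needs the full Cheeger--Colding theory of renormalized limit measures — that these behave well under tangent-cone rescaling, obey a Bishop--Gromov-type inequality, and push forward correctly under the cone splitting — together with a careful check that each cross-section $Z_x$ genuinely inherits doubling, a weak Poincar\'{e} inequality and the Ricci-limit property in ``dimension $n-1$'', so that the induction closes. A secondary subtlety is that distinct tangent cones at the same $x$ could a priori have distinct dimensions; this is harmless because the argument produces, for $\upsilon$-a.e.\ $x$, a single rank $k(x)$ realised by \emph{all} tangent cones, which is precisely what membership in $\mathcal{R}$ requires.
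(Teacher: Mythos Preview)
The paper does not give a proof of this theorem; it is quoted from Cheeger--Colding \cite{ch-co1} as a structural input, so there is no ``paper's own proof'' to compare against. What you have written is a sketch of the original Cheeger--Colding argument, and most of the ingredients you list (almost splitting, volume cone $\Rightarrow$ metric cone, Bishop--Gromov differentiation, iterated tangent cones) are indeed the right ones.

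There is, however, a genuine structural misstep in your outline: you frame the argument as an induction on $n$ that invokes an ``inductive hypothesis on $Z_x$'', the cross-section of a tangent cone. This does not work as stated, because at this stage one does \emph{not} know that $Z_x$ is itself a Ricci limit space of $(n-1)$-dimensional manifolds --- so there is no inductive hypothesis to apply. (In fact, establishing that cross-sections inherit a Ricci-limit-type structure is one of the contributions of \emph{this} paper, e.g.\ Corollaries~\ref{25} and~\ref{27}, and it relies on Theorem~\ref{regular} as an input.) The actual Cheeger--Colding proof avoids this circularity entirely: the iteration is not on the ambient dimension via cross-sections, but on \emph{iterated tangent cones of $Y$ itself}. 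The key observation is that if $(T_xY,o_x)$ is a tangent cone of $Y$ at $x$, then any tangent cone of $T_xY$ at a point $p\neq o_x$ is \emph{also} a tangent cone of $Y$ at $x$ (by a diagonal rescaling), and by the radial-line/splitting step it has one more Euclidean factor than $T_xY$. Repeating this at most $n$ times produces a tangent cone isometric to some $\mathbf{R}^k$; the measure-differentiation of the Bishop--Gromov ratio then pins down $k=k(x)$ uniquely and forces \emph{all} tangent cones at $x$ to be $\mathbf{R}^{k(x)}$. Your final paragraph gestures at this iterated-tangent-cone picture, but it is entangled with the spurious cross-section induction; drop the latter and the argument closes.
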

For $\delta, r>0$ and $ 0 < \alpha < 1$, we put $(\mathcal{R}_k)_{\delta, r}=\{x \in Y; d_{GH}((\overline{B}_s(x), x), (\overline{B}_s(0_k), 0_k))\le \delta s$
for every $0 < s \le r\}$ and $(\mathcal{R}_{k;\alpha})_r=\{x \in Y; d_{GH}((\overline{B}_s(x), x), (\overline{B}_s(0_k), 0_k))\le s^{1+\alpha}$ for every $0 < s \le r\}$.
Here $0_k \in \mathbf{R}^k$.
By the definition, we remark that these set are closed.
It is easy to check that $\bigcap_{\delta > 0}(\bigcup_{r>0}(\mathcal{R}_k)_{\delta, r} )= \mathcal{R}_k$.
We also put $\mathcal{R}_{k;\alpha}= \bigcup_{r>0}(\mathcal{R}_{k;\alpha})_r$.
By \cite[Theorem $3. 23$]{ch-co1} and  \cite[Theorem $4.6$]{ch-co1}, there exists $0< \alpha(n)<1$ such that  $\upsilon(\mathcal{R}_{k} \setminus \mathcal{R}_{k;\alpha(n)})=0$, $\upsilon$ is Ahlfors $k$-regular at each point in $\mathcal{R}_{k;\alpha(n)}$ for every $k$.
Next, we shall introduce an important result for rectifiability and Poincar\'{e} inequality on Ricci limit spaces:
\begin{theorem}[Cheeegr-Colding, \cite{ch-co3}]\label{rectifiability}
$Y$ is $\upsilon$-rectifiable, $(Y, \upsilon)$ satisfies weak $(1, 2)$-Poincar\'{e} inequality.
\end{theorem}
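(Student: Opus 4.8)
The statement is that of Cheeger--Colding \cite{ch-co3}; I sketch the proof I would give, treating the two assertions separately. For the rectifiability, recall from the facts quoted just above that there is $\alpha(n)\in(0,1)$ with $\upsilon(\mathcal{R}_k\setminus\mathcal{R}_{k;\alpha(n)})=0$ for every $k$, that $\upsilon$ is Ahlfors $k$-regular at each point of $\mathcal{R}_{k;\alpha(n)}$, and that $\upsilon(Y\setminus\mathcal{R})=0$ by Theorem \ref{regular}; hence $\upsilon$-a.e.\ point of $Y$ lies in some $(\mathcal{R}_{k;\alpha(n)})_r$. So it suffices, for each $k$ and each $\delta\in(0,1)$, to cover every $(\mathcal{R}_{k;\alpha(n)})_r$ up to a $\upsilon$-null set by countably many Borel pieces $C$, each equipped with a map into $\mathbf{R}^k$ that is $(1\pm\delta)$-bi-Lipschitz onto its image; condition (2) in the definition of $\upsilon$-rectifiability is then exactly the quoted Ahlfors regularity.

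To build such maps, fix $x\in(\mathcal{R}_{k;\alpha(n)})_r$. Since $\overline{B}_s(x)$ is $s^{1+\alpha(n)}$-Gromov--Hausdorff close to $\overline{B}_s(0_k)$ for all $s\le r$, the almost splitting theorem of Cheeger--Colding \cite{ch-co1}, which rests on the Cheng--Yau estimate (Theorem \ref{gradient1}) and the relative volume comparison (Theorem \ref{BG}), produces on $B_s(x)$, for $s$ controlled by the degree of closeness, harmonic functions $b^1,\dots,b^k$ whose differentials form, in an integral-average sense, an almost orthonormal $k$-frame: $\frac{1}{\upsilon(B_s(x))}\int_{B_s(x)}\bigl|\langle db^a,db^c\rangle-\delta^{ac}\bigr|\,d\upsilon$ is small and $\mathbf{Lip}\,b^a\le 1+\Psi$. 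Put $\phi=(b^1,\dots,b^k)\colon B_s(x)\to\mathbf{R}^k$. Using $|db^a|(w)=\mathrm{Lip}\,b^a(w)=lipb^a(w)$ together with this integral bound and the segment inequality of \cite{ch-co1}, the set of $w$ at which $\phi$ fails to be infinitesimally $(1\pm\delta)$-bi-Lipschitz has $\upsilon$-measure at most $\Psi(\,\cdot\,)\,\upsilon(B_s(x))$; on its complement, chaining the infinitesimal estimate along minimal geodesics upgrades it to a genuine $(1\pm\delta)$-bi-Lipschitz estimate at a definite scale.

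I would then run the covering lemma (Proposition \ref{cov}) over the family of balls carrying such a $\phi$, and combine it with a Lebesgue-density/Egorov argument to discard the measure-controlled bad sets; iterating over $\delta=1/m$ and over a countable exhaustion produces the collection $\{C_{k,i},\phi_{k,i}\}$ demanded by conditions (1) and (3), proving $\upsilon$-rectifiability. For the weak $(1,2)$-Poincar\'e inequality, I would start from the fact that each $M_i$ satisfies the segment inequality of Cheeger--Colding with constant depending only on $n$ and the scale, hence a weak $(1,1)$-Poincar\'e inequality uniform in $i$; passing to the measured Gromov--Hausdorff limit (via convergence of integrals of the type in Proposition \ref{lap}, together with lower semicontinuity of minimal upper gradients) transfers it to $(Y,\upsilon)$, and a weak $(1,1)$-inequality implies the weak $(1,2)$-inequality at once by Cauchy--Schwarz. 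The main obstacle is the construction in the second paragraph: converting purely metric, Gromov--Hausdorff closeness to $\mathbf{R}^k$ into an \emph{actual} bi-Lipschitz chart with constants as near $1$ as one wishes, which forces one through the integral Hessian/almost-rigidity estimates for the functions $b^a$ and a careful excision of the set where those estimates degenerate.
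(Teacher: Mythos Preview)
The paper does not prove this theorem: it is stated in the preliminaries section (Section~2.3) as a cited result of Cheeger--Colding, with the remark that ``most of statements in the section do not have the proof, we will give a reference for them only.'' So there is no in-paper proof to compare against.

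That said, your sketch is broadly faithful to the actual Cheeger--Colding argument in \cite{ch-co3}, and also to the refined constructions this paper carries out in Section~3 (see especially Lemma~3.4 and Theorem~3.7). There the author builds bi-Lipschitz charts on large-measure subsets of small balls around points of $(\mathcal{R}_k)_{\delta,r}$ exactly by producing harmonic functions $\hat{\mathbf{b}}^i_j$ with small $L^2$ Hessian and almost-orthonormal gradients (the claims in the proof of Lemma~3.4), then using a maximal-function/covering argument (Lemma~3.1) and the almost-splitting picture to pass from integral closeness to the $(1\pm\delta)$-bi-Lipschitz conclusion. The paper's twist is that the eventual chart components are \emph{distance functions} $r_x, r_{x^2_{k,i}},\dots,r_{x^k_{k,i}}$ rather than the harmonic $b^a$'s themselves; this is achieved by showing $\hat{\mathbf{b}}^i_j=\hat{r}_{x^i_j}+\mathrm{const}\pm\Psi$ on the good set (Claim~3.9). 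For the Poincar\'e part, the paper explicitly remarks just after the theorem that Cheeger--Colding in fact prove the stronger segment inequality on limit spaces, which is the mechanism you invoke.

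One small caution on your sketch: the step ``chaining the infinitesimal estimate along minimal geodesics upgrades it to a genuine $(1\pm\delta)$-bi-Lipschitz estimate'' is the delicate point, and in practice (both in \cite{ch-co3} and in this paper's Lemma~3.4) it is handled not by a literal chaining but by showing that on the good set the map $(\hat{\mathbf{b}}_1,\dots,\hat{\mathbf{b}}_k,\phi)$ to $\mathbf{R}^k\times Z$ is a $\Psi t$-Gromov--Hausdorff approximation at \emph{every} scale $t$ below the initial one (Claim~3.10), and then observing that on $(\mathcal{R}_k)_{\delta,r}$ the transverse factor $Z$ must collapse. This is what converts scale-by-scale almost-isometry into honest bi-Lipschitz control between pairs of points.
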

More strongly, they proved that \textit{segment inequality} on Ricci limit spaces holds.
(We do not give the definition here. See  \cite[Theorem $2. 15$]{ch-co3}.)
Therefore we can construct the cotangent bundle $T^*Y$ of $Y$.
Finally, for cut loci on Ricci limit spaces, we also remark that $\upsilon(C_x)=0$ for every $x \in Y$.
See \cite[Theorem $3.2$]{ho}.
These results above are used in section $3$, essentially.
\section{Rectifiability on limit spaces}
In this section, we shall study a rectifiability of Ricci limit spaces.
These results given in this section are used in section $4$, essentially. 
\subsection{Radial rectifiability}
The main result in this subsection is Theorem \ref{7}.
\begin{lemma}\label{1}
Let $Z$ be a proper geodesic space, $z$ a point in $Z$, 
$s, \delta$ positive numbers, $\upsilon$ a Radon measure on $Z$ and
$F$ a nonnegative valued Borel function on $B_s(m)$.
We assume that there exists $K \ge 1$ such that for every
$w \in B_s(z)$ and $0<t\le s$, we have $0<\upsilon(B_{2t}(w))\le 2^K\upsilon(B_t(w))$,
\[\frac{1}{\upsilon(B_s(z))}\int_{B_s(z)}Fd\upsilon \le \delta.\]
Then, there exists a compact set $K \subset \overline{B}_{s/10^2}(z)$ such that 
$\upsilon (K)/\upsilon (B_{s/10^2}(z)) \ge 1- \Psi(\delta;K)$ and that 
for every $x \in K$ and $0 < t \le s/10^2$, 
\[\frac{1}{\upsilon(B_t(x))}\int_{B_t(x)}Fd\upsilon \le \Psi(\delta;K).\]
\end{lemma}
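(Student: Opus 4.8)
The plan is to run a weak-type $(1,1)$ maximal-function estimate for the restricted Hardy--Littlewood operator, using only the doubling hypothesis, and then to extract the desired compact set by inner regularity of the Radon measure $\upsilon$. Throughout, write $\rho = s/10^2$ and note that every ball $B_t(x)$ with $x \in \overline{B}_\rho(z)$ and $0 < t \le \rho$, as well as each of its dilates $B_{5t}(x)$ and $B_{8t}(x)$, is contained in $B_{6\rho}(z) \subset B_s(z)$; hence the integral of $F$ and the doubling inequality are available for all balls that will occur. For $x \in \overline{B}_\rho(z)$ set
\[
MF(x) := \sup_{0 < t \le \rho}\frac{1}{\upsilon(B_t(x))}\int_{B_t(x)}F\,d\upsilon ,
\qquad E_\lambda := \{x \in \overline{B}_\rho(z) : MF(x) > \lambda\}\ (\lambda > 0).
\]

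First I would prove $\upsilon(E_\lambda) \le 2^{10K}\delta\lambda^{-1}\upsilon(B_\rho(z))$. For each $x \in E_\lambda$ pick $0 < t_x \le \rho$ with $\upsilon(B_{t_x}(x))^{-1}\int_{B_{t_x}(x)}F\,d\upsilon > \lambda$, and apply a standard $5r$-covering argument (cf. Proposition \ref{cov}) to $\{\overline{B}_{t_x}(x)\}_{x \in E_\lambda}$ to obtain a countable pairwise disjoint subfamily $\{\overline{B}_{t_{x_j}}(x_j)\}_j$ with $E_\lambda \subset \bigcup_j \overline{B}_{5t_{x_j}}(x_j)$. Iterating the doubling inequality three times (legitimate since $8t_{x_j}\le 8\rho < s$) gives $\upsilon(B_{5t_{x_j}}(x_j)) \le \upsilon(B_{8t_{x_j}}(x_j)) \le 2^{3K}\upsilon(B_{t_{x_j}}(x_j))$, so by disjointness and $\bigcup_j B_{t_{x_j}}(x_j) \subset B_s(z)$,
\[
\upsilon(E_\lambda) \le \sum_j \upsilon(B_{5t_{x_j}}(x_j)) \le \frac{2^{3K}}{\lambda}\sum_j \int_{B_{t_{x_j}}(x_j)}F\,d\upsilon \le \frac{2^{3K}}{\lambda}\int_{B_s(z)}F\,d\upsilon \le \frac{2^{3K}\delta}{\lambda}\upsilon(B_s(z)).
\]
A bounded chain of doublings from $B_\rho(z)$ up to $B_s(z) \subset B_{2^7\rho}(z)$ (valid since $2^6\rho < s$) yields $\upsilon(B_s(z)) \le 2^{7K}\upsilon(B_\rho(z))$, whence the claimed estimate.

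Now take $\lambda = \sqrt{\delta}$, so $\upsilon(E_{\sqrt\delta}) \le 2^{10K}\sqrt{\delta}\,\upsilon(B_\rho(z))$, and let $G := \overline{B}_\rho(z)\setminus E_{\sqrt\delta}$; by construction every $x \in G$ satisfies $\upsilon(B_t(x))^{-1}\int_{B_t(x)}F\,d\upsilon \le \sqrt\delta$ for all $0 < t \le \rho$, and $\upsilon(G) \ge (1 - 2^{10K}\sqrt\delta)\,\upsilon(B_\rho(z))$. Since $\upsilon$ is Radon and $0 < \upsilon(B_\rho(z)) < \infty$, inner regularity gives a compact $K \subset G \subset \overline{B}_\rho(z)$ with $\upsilon(K) \ge \upsilon(G) - \sqrt\delta\,\upsilon(B_\rho(z)) \ge \bigl(1 - (2^{10K}+1)\sqrt\delta\bigr)\upsilon(B_\rho(z))$. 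Putting $\Psi(\delta;K) := (2^{10K}+1)\sqrt\delta$, which tends to $0$ as $\delta \to 0$ for fixed $K$, this $K$ has all the stated properties.

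The one genuinely nontrivial ingredient is the weak $(1,1)$ estimate for $MF$, and the only care needed there is bookkeeping: checking that every ball and dilate that appears stays inside $B_s(z)$ so the doubling hypothesis and the $L^1$-bound on $F$ apply, and tracking the powers of $2^K$ so that the final error is honestly of the form $\Psi(\delta;K)$. Everything else --- the Chebyshev-type passage from $E_{\sqrt\delta}$ to $G$ and the inner-regularity extraction of $K$ --- is routine.
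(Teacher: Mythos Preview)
Your proof is correct, but it takes a different route from the paper's. You run the standard Hardy--Littlewood maximal-function argument: define $MF$, prove the weak $(1,1)$ bound via a $5r$-covering, choose the threshold $\lambda=\sqrt\delta$, and pass to a compact subset by inner regularity. The paper instead builds the bad set by hand, scale by scale: at each dyadic level $s/10^{l+1}$ it selects a maximal separated subset of the points where the average of $F$ at that scale exceeds $C$, removes their neighbourhoods, and iterates; it then shows the resulting family of balls is pairwise disjoint and bounds its total measure, taking $C=\sqrt\delta$ at the end. Both arguments land on the same $\sqrt\delta$ threshold and the same $\Psi(\delta;K)$ shape.

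Two minor remarks on your write-up. First, the covering lemma you cite as Proposition~\ref{cov} is actually the Vitali-type statement (it assumes balls of arbitrarily small diameter through each point); what you need here is the \emph{basic} $5r$-covering lemma for a family of balls of bounded radius, which is simpler and equally standard, but is not literally Proposition~\ref{cov}. Second, your appeal to inner regularity for $K\subset G$ tacitly assumes $G$ is Borel; since the measurability of $MF$ is not entirely obvious in this generality, it is cleaner to note that the weak $(1,1)$ bound controls the \emph{outer} measure of $E_{\sqrt\delta}$, then use outer regularity of the Radon measure to find an open $U\supset E_{\sqrt\delta}$ with $\upsilon(U)$ close to that bound, so that $\overline{B}_\rho(z)\setminus U$ is automatically compact (properness) with the required measure. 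Neither point affects the substance of your argument.
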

\begin{proof}
Without loss of generality, we can assume that $F$ is a nonnegative valued Borel function on $Z$ by
$F \equiv 0$ on $Z \setminus B_s(z)$.
We fix $C > 0$. We put $A_1(C) = \{w \in B_s(z); \int_{B_{s/10^2}(w)}F d \upsilon \ge C\upsilon(B_{s/10^2}(w))\}$ and 
take $x_1^1, _{\cdots}, x_{k_1}^1 \in A_1(C)$ which are an $s/10$-maximal separated subset of $A_1(C)$.
We also put $A_2(C) = \{w \in B_s(m) \setminus \bigcup_{i=1}^{k_1}B_s(x_i^1); \int_{B_{s/10^3}(w)}F d \upsilon \ge C\upsilon(B_{s/10^3}(w))\}$ and take $x_1^2, _{\cdots}, x_{k_2}^2 \in A_2(C)$ which are $s/10^2$-maximal separated subset of $A_1(C)$.
By iterating this argument, we put $A_l(C) = \{w \in B_s(m) \setminus \bigcup_{1 \le j \le l-1, \ 1 \le i \le k_j}B_{s/10^{l-2}}(x_i^{l-1}); \int_{B_{s/10^{l+1}}(w)}F d \mathrm{vol} \ge C\upsilon(B_{s/10^{l+1}}(w))\}$
and take $x_1^l, _{\cdots}, x_{k_l}^l \in A_l(C)$ which are  $s/10^l$-maximal separated subset of $A_l(C)$.
\begin{claim}\label{1.1}The collection $\{\overline{B}_{s/10^{l+1}}(x_i^l)\}$ are pairwise disjoint.
\end{claim}
We take $w \in \overline{B}_{s/10^{\hat{l}+1}}(x_{\hat{i}}^{\hat{l}}) \cap \overline{B}_{s/10^{l+1}}(x_i^l)$.
We assume that $l < \hat{l}$. Then, by the definition, we have $x_{\hat{i}}^{\hat{l}} \in M \setminus \bigcup_{j=1}^{k_l}
B_{s/10^{l-1}}(x_j^l)$.
Especially, we have $\overline{x_{\hat{i}}^{\hat{l}}, x_i^l} \ge s/10^{l-1}$.
Therefore, we have $\overline{B}_{s/10^{\hat{l}+1}}(x_{\hat{i}}^{\hat{l}}) \cap \overline{B}_{s/10^{l+1}}(x_i^l) = \emptyset$.
This is a contradiction.
Therefore, we have $l=\hat{l}$. By the definition, we have $i=\hat{i}$.
Thus, we have Claim \ref{1.1}.
\

It is easy to check the following claim.
\begin{claim}\label{1.2}
We have $\bigcup_{i \in \mathbf{N}}A_i(C) \subset \bigcup_{l \in \mathbf{N}, 1 \le i \le k_l}\overline{B}_{s/10^{l-2}}(x_i^l)$
\end{claim}
We have 
\begin{align}
\sum_{l \in \mathbf{N}, 1 \le i \le k_l} \int_{B_{\frac{s}{10^{l+1}}}(x_i^l)}Fd \upsilon &\ge 
C\sum_{l \in \mathbf{N}, 1 \le i \le k_l}\upsilon (B_{\frac{s}{10^{l+1}}}(x_i^l)) \\
&\ge C C(n)\sum_{l \in \mathbf{N}, 1 \le i \le k_l}\upsilon (B_{\frac{s}{10^{l-2}}}(x_i^l))
\ge C C(n)\upsilon (\bigcup_{l \in \mathbf{N}, 1 \le i \le k_l}B_{\frac{s}{10^{l-2}}}(x_i^l)).
\end{align}
On the other hand, 
\begin{align}
\sum_{l \in \mathbf{N}, 1 \le i \le k_l} \int_{B_{\frac{s}{10^{l+1}}}(x_i^l)}Fd \upsilon &=
\int _{\bigcup_{l \in \mathbf{N}, 1 \le i \le k_l}B_{\frac{s}{10^{l+1}}}(x_i^l)}F d \upsilon \\
&\le \int_{B_s(z)}Fd \upsilon \le C(n) \upsilon (B_s(z)) \delta.
\end{align}
Therefore, we have 
\[\frac{\upsilon(\bigcup_{l \in \mathbf{N}, 1 \le i \le k_l}B_{\frac{s}{10^{l-2}}}(x_i^l))}{\upsilon (B_s(m))} \le 
\frac{\delta}{C}C(n).\]
By taking $C=\sqrt{\delta}, K= \overline{B}_{s/10^2}(z) \setminus \bigcup_{l \in \mathbf{N}, 1 \le i \le k_l}B_{\frac{s}{10^{l-2}}}(x_i^l)$,
we have the assertion.
\end{proof}

\begin{definition}\label{2}
let $(Y, y, \upsilon)$ be a Ricci limit space, $k$ an integer satisfying $k \le n$ and 
$r, \delta$ positive numbers satisfying $r<1$ and $\delta < 1$. Let $(\mathcal{R}_k)_{\delta, r}^y$,
denote the set of points, $w \in Y$ such that for every $0 < s \le r$, there exists a map 
$\Phi$ from $\overline{B}_s(w)$ to $\mathbf{R}^k$ such that $\pi _1 \circ \Phi = r_y$ 
and that $\Phi$ gives an $\delta s$-Gromov-Hausdorff approximation to $\overline{B}_s(\Phi(w))$. 
Here, $\pi _1$ is the projection from $\mathbf{R}^k = \mathbf{R}\times \mathbf{R}^{k-1}$ to $\mathbf{R}$.
\end{definition}

\begin{lemma}\label{3}
We have
\[\bigcap_{\delta > 0}\left(\bigcup_{r>0}\left( (\mathcal{R}_k)_{\delta, r}^x \setminus C_x\right) \right) = \mathcal{R}_k
\setminus C_x.\]
\end{lemma}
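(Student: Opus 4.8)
The plan is to establish the two inclusions separately. The inclusion ``$\subseteq$'' is purely formal and reduces to the identity $\bigcap_{\delta>0}\left(\bigcup_{r>0}(\mathcal{R}_k)_{\delta,r}\right)=\mathcal{R}_k$ recalled above, while ``$\supseteq$'' carries all the content: the point is that near a point $w\in\mathcal{R}_k$ which is \emph{not} in the cut locus of $x$, the distance function $r_x$ behaves, at small scales, like a linear coordinate of $\mathbf{R}^k$, so it can be built into a Gromov--Hausdorff approximation. I would obtain this from a rescaling argument combined with an elementary rigidity fact about $1$-Lipschitz functions on $\mathbf{R}^k$, and the main obstacle is precisely to isolate that rigidity step cleanly so that the Cheeger--Colding almost-splitting theorem is not needed.

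\textbf{Proof of ``$\subseteq$''.} Let $w$ lie in the left-hand side. Then $w\notin C_x$, and for each $\delta>0$ there is $r=r(\delta)>0$ with $w\in(\mathcal{R}_k)^x_{\delta,r}$. For $0<s\le r$ choose $\Phi$ as in Definition \ref{2}. Ignoring the normalization $\pi_1\circ\Phi=r_x$, the map $\Phi$ is a $\delta s$-Gromov--Hausdorff approximation from $(\overline{B}_s(w),w)$ onto $\overline{B}_s(\Phi(w))$, and the latter is isometric to $(\overline{B}_s(0_k),0_k)$ by the translation sending $\Phi(w)$ to $0_k$; hence $d_{GH}((\overline{B}_s(w),w),(\overline{B}_s(0_k),0_k))\le 9\delta s$ for all $0<s\le r$, i.e.\ $w\in(\mathcal{R}_k)_{9\delta,r}$. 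Since $\delta>0$ is arbitrary, $w\in\bigcap_{\delta'>0}(\bigcup_{r>0}(\mathcal{R}_k)_{\delta',r})=\mathcal{R}_k$, so $w\in\mathcal{R}_k\setminus C_x$.

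\textbf{Proof of ``$\supseteq$''.} Fix $w\in\mathcal{R}_k\setminus C_x$; one may assume $w\ne x$, since for $w=x$ the radial normalization $\pi_1\circ\Phi=r_x\ge 0$ is incompatible with $\Phi$ being a Gromov--Hausdorff approximation (that degenerate case must simply be excluded). Because $w\notin C_x$, there is $x'\ne w$ with $\overline{x,w}+\overline{w,x'}=\overline{x,x'}$, so concatenating minimal geodesics $x\to w$ and $w\to x'$ gives a unit-speed minimal geodesic $\gamma\colon[0,L]\to Y$ with $\gamma(0)=x$, $\gamma(\ell)=w$, $\ell:=\overline{x,w}>0$ and $L-\ell\ge\overline{w,x'}>0$. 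Arguing by contradiction, suppose $w$ is not in the left-hand side; then there are $\delta_0>0$ and $s_i\downarrow 0$ so that for every $i$ no map as in Definition \ref{2} with parameter $\delta_0 s_i$ exists on $\overline{B}_{s_i}(w)$. I would then rescale: the spaces $(Y,w,s_i^{-1}d_Y)$ satisfy a uniform doubling condition, so by Gromov precompactness and $w\in\mathcal{R}_k$ a subsequence converges to $(\mathbf{R}^k,0_k)$; the rescaled geodesics $t\mapsto\gamma(\ell+s_i t)$ are minimal on $[-T_i,T_i]$ with $T_i=\min\{\ell,L-\ell\}/s_i\to\infty$, hence (after a further subsequence) converge locally uniformly to a line $t\mapsto te$ in $\mathbf{R}^k$, $|e|=1$; and the $1$-Lipschitz functions $f_i:=s_i^{-1}(r_x-\ell)$ satisfy $f_i(\gamma(\ell+s_i t))=t$ (since $r_x(\gamma(u))=u$), so by Proposition \ref{Lips}, applied on balls of each fixed radius and diagonalized, a further subsequence converges to a $1$-Lipschitz $f_\infty$ on $\mathbf{R}^k$ with $f_\infty(te)=t$ for all $t\in\mathbf{R}$.

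The crucial elementary point is that such an $f_\infty$ is forced to equal $\langle\,\cdot\,,e\rangle$: for any $z$ and $t$ one has $t-|z-te|\le f_\infty(z)\le t+|z-te|$, and letting $t\to+\infty$ in the lower bound and $t\to-\infty$ in the upper bound, both sides tend to $\langle z,e\rangle$. Consequently, after an orthogonal change of coordinates on $\mathbf{R}^k$ carrying $e$ to $e_1$, the first coordinate of the Gromov--Hausdorff approximation $\psi_i\colon\overline{B}_{s_i}(w)\to\overline{B}_{s_i}(0_k)$ coming from the convergence above satisfies $\pi_1\circ\psi_i=r_x-\ell$ up to an error that is $o(s_i)$, uniformly on $\overline{B}_{s_i}(w)$. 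Replacing the first coordinate of $\psi_i$ by $r_x-\ell$ exactly (a perturbation of size $o(s_i)$) and translating the target by $(\ell,0,\dots,0)$ produces, for all large $i$, a map as in Definition \ref{2} with parameter $\delta_0 s_i$ on $\overline{B}_{s_i}(w)$, contradicting the choice of $s_i$. I expect the rigidity step of this paragraph to be the heart of the matter: it is exactly what lets one avoid invoking the almost-splitting theorem, using only that tangent cones at points of $\mathcal{R}_k$ are Euclidean together with the fact that $r_x$ is affine along a minimal geodesic through $w$.
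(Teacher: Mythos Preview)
Your proof is correct and takes a genuinely different route from the paper's. For the nontrivial inclusion $\supseteq$, the paper invokes the Cheeger--Colding almost-splitting theorem directly: since $w\notin C_x$ lies on a geodesic segment extending past $w$, for each small $s$ the ball $\overline{B}_s(w)$ is $\Psi(\delta;n)s$-close to a product $\mathbf{R}\times W_s$ in which the $\mathbf{R}$-coordinate is exactly $r_x-\overline{x,w}$; then a separate cancellation lemma (\cite[Claim~4.4]{ho2}) is used to show that, because $\overline{B}_s(w)$ is also $\delta s$-close to $\overline{B}_s(0_k)$, the cross-section $W_s$ must be $\Psi(\delta;n)s$-close to $\mathbf{R}^{k-1}$. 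Combining the two yields the map $\Phi$.

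By contrast, you avoid both the almost-splitting theorem and the auxiliary lemma: you argue by contradiction, blow up at $w$, use $w\in\mathcal{R}_k$ to identify the tangent cone as $\mathbf{R}^k$, and then observe the elementary Busemann-type rigidity that a $1$-Lipschitz function on $\mathbf{R}^k$ equal to $t\mapsto t$ along a line must be the corresponding linear form. The paper's argument is quantitative (the error is an explicit $\Psi(\delta;n)$), whereas yours, being a compactness--contradiction argument, is not; on the other hand your argument is more self-contained and would go through in any setting where one only knows that tangent cones at the point are Euclidean and that the point is interior to a minimal geodesic from $x$, without the full Cheeger--Colding structure theory. Your observation that the case $w=x$ is degenerate and must be excluded is correct; the paper's proof also implicitly assumes $w\ne x$, and since the single point $\{x\}$ has measure zero this has no effect on the applications.
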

\begin{proof}
It is easy to check that 
\[\bigcap_{\delta > 0}\left(\bigcup_{r>0}\left((\mathcal{R}_k)_{\delta, r}^x \setminus C_x\right)\right) \subset \mathcal{R}_k
\setminus C_x.\]
We take $w \in \mathcal{R}_k \setminus C_x$.
By the definition, for every $\delta > 0$, there exists $r > 0$ such that for every $0 < s < r$, there exists an 
$\delta s$-Gromov-Hausdorff approximation from $(\overline{B}_s(0_k), 0_k)$ to $(\overline{B}_s(w), w)$.
Here, $\overline{B}_s(0_k) \subset \mathbf{R}^k$.
On the other hand, by splitting theorem (see \cite[Theorem $9. 27$]{ch1}), there exist a pointed proper geodesic space $(W_s, w_s)$ and a map $\hat{\Phi}$
from $(\overline{B}_s(w), w)$ to $(\overline{B}_s(0, w_s), (0, w_s))$ such that 
$\pi_{\mathbf{R}} \circ \hat{\Phi} = r_x - \overline{x, w}$ and that $\hat{\Phi}$ gives an $\delta s$-Gromov-Hausdorff approximation. 
Here, $\overline{B}_s(0, w_s) \subset \mathbf{R} \times W_s$ with the product metric $\sqrt{d_{\mathbf{R}}^2 + d_{W_s}^2}$, 
$\pi_{\mathbf{R}}$ is the projection from $\mathbf{R} \times W_s$ to 
$\mathbf{R}$.
By rescaling $s^{-1}d_{\mathbf{R}^k}$ and \cite[Claim $4.4$]{ho2}, 
there exists an $\Psi (\delta; n)s$-Gromov-Hausdorff approximation $f$ from $(\overline{B}_s(w_s), w_s)$ to 
$(\overline{B}_s(0_{k-1}), 0_{k-1})$.
We define a map $g$ from $\overline{B}_s(w)$ to $\mathbf{R}^k$ by $g(z) = (\overline{x,z}, f \circ \hat{\Phi})$.
Let $\pi_s$ be the canonical retraction from $\mathbf{R}^k$ to $\overline{B}_s(g(w))$.
We put $\hat{g}= \pi_s \circ g$.
Then, it is easy to check that $\hat{g}$ gives a $\Psi(\delta; n)s$-Gromov-Hausdorff approximation to $(\overline{B}_s(\hat{g}(w)), \overline{g}(w))$. 
Since $\delta$ is arbitrary, we have the assertion.
\end{proof}
For every proper geodesic space $X$, a point $x \in X$ and a positive number $\tau > 0$, we put
\[\mathcal{D}_x^{\tau} = \{w \in X; \mathrm{\ There\ exists\ } \alpha \in X \mathrm{\ such \ that\ } \overline{\alpha, w} \ge \tau \mathrm{\ and\ } \overline{x, w} + \overline{w, \alpha} = \overline{x, \alpha}\}.\]
It is easy to check that $\mathcal{D}_x^{\tau}$ is a closed set.
By the definition, we have
\[\bigcup_{ \tau > 0} \mathcal{D}_x^{\tau} = X \setminus C_x.\]
\begin{lemma}\label{4}
Let $(Y, y, \upsilon)$ be a Ricci limit space, $k$ an integer satisfying $k \le n$, 
$\delta, r$ positive numbers satisfying $\delta < 1, r < 1$, $x$ a point in $Y$ and 
$w$ a point in $(\mathcal{R}_k)_{\delta, r}^x \cap \mathrm{Leb}((\mathcal{R}_k)_{\delta, r}) \setminus (C_x \cup \{x\})$.
Then, there exists $\eta(w) > 0$ satisfying the following property:
For every $0 < s \le \eta(w)$, there exist a compact set 
$L \subset \overline{B}_s(w) \cap (\mathcal{R}_k)_{\delta, r}$ and points 
$x_2, x_3, _{\cdots}, x_k \in Y$ such that $\upsilon (L)/\upsilon (B_s(w)) \ge 1 - \Psi(\delta;n)$ and that the map $\Phi = (r_x, r_{x_2}, _{\cdots}, r_{x_k})$ from $L$ to $\mathbf{R}^k$, 
gives $(1 \pm \Psi(\delta; n))$-bi-Lipschitz equivalent to the image $\Phi(L)$.
\end{lemma}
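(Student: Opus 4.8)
The plan is to manufacture the points $x_2,\dots,x_k$ out of the almost-Euclidean structure of $w$, to show that the covectors $dr_x,dr_{x_2},\dots,dr_{x_k}$ are almost-orthonormal in $L^2$-average on a small ball centred at $w$, and then to convert this averaged statement into a genuine $(1\pm\Psi(\delta;n))$-bi-Lipschitz statement on a large compact set by combining the covering/density estimate of Lemma \ref{1} with the segment inequality on Ricci limit spaces. Throughout, Bishop--Gromov (Theorem \ref{BG}) supplies the doubling constant $C(n)$ at bounded scales, so that ``$\Psi(\cdot\,;K)$'' in Lemma \ref{1} becomes ``$\Psi(\cdot\,;n)$'', and $\upsilon(C_x)=0$ together with Theorem \ref{regular} let me discard various null sets freely.

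First I would fix $0<s\le\eta(w)$, with $\eta(w)$ chosen smaller than a fixed fraction of $\delta r$ and smaller than a Lebesgue-density radius of $w$ in $(\mathcal R_k)_{\delta,r}$. Using Definition \ref{2} (applied with $x$ in place of $y$) I pick, at a scale $\rho\le r$ comparable to $s$ up to a factor depending only on $\delta$, a Gromov--Hausdorff approximation $\Phi_0\colon\overline B_\rho(w)\to\mathbf R^k$ with $\pi_1\circ\Phi_0=r_x$ and $\Phi_0(w)=(\overline{x,w},0_{k-1})$, and then points $x_j$ ($2\le j\le k$) together with approximate antipodes $\alpha_j$ so that $\Phi_0(x_j)$ and $\Phi_0(\alpha_j)$ lie near $(\overline{x,w},\pm(\rho/2)\mathbf e_{j-1})$; here $w\notin C_x\cup\{x\}$ is what makes an approximate antipode $\alpha_1$ of $x$ along the geodesic through $w$ available, so that the first coordinate $r_x$ plays the same role as the $r_{x_j}$. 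The excesses $\overline{x_j,w}+\overline{w,\alpha_j}-\overline{x_j,\alpha_j}$ are $\Psi(\delta)$-small at scale $\rho$, and since $w\in(\mathcal R_k)_{\delta,r}$ the almost-Euclidean structure around $w$ persists at every scale $\le r$; feeding this into the Cheeger--Colding almost-splitting and excess estimates (scale-invariant, using only $\mathrm{Ric}\ge-(n-1)$ and that $r$ is small) gives, after affine normalization, that each $r_{x_j}$ is $L^2$-close on $B_{100s}(w)$ to a coordinate function with $L^2$-small Hessian, so that, with $x_1:=x$,
\[\frac{1}{\upsilon(B_{100s}(w))}\int_{B_{100s}(w)}\ \sum_{i,j=1}^{k}\bigl|\langle dr_{x_i},dr_{x_j}\rangle-\delta_{ij}\bigr|\,d\upsilon\ \le\ \Psi(\delta;n).\]
Making this inequality hold genuinely \emph{on the ball $B_{100s}(w)$} — i.e.\ choosing $\rho$ so that the almost-splitting control descends to $B_{100s}(w)$ with an error still of the form $\Psi(\delta;n)$, and not one degraded by a power of the Bishop--Gromov volume ratio $\upsilon(B_\rho(w))/\upsilon(B_{100s}(w))$ — is the technically most delicate point, and is where the persistence of the almost-Euclidean structure across all scales $\le r$ (the improvement-of-splitting phenomenon of Cheeger--Colding) is essential.

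Next I would put $F=\sum_{i,j=1}^{k}|\langle dr_{x_i},dr_{x_j}\rangle-\delta_{ij}|$, a bounded nonnegative Borel function (each $|dr_{x_i}|\le1$). By the displayed inequality and Bishop--Gromov, Lemma \ref{1} applied on $B_{100s}(w)$ produces a compact set $K_0\subset\overline B_s(w)$ with $\upsilon(K_0)\ge(1-\Psi(\delta;n))\upsilon(B_s(w))$ and
\[\frac{1}{\upsilon(B_t(z))}\int_{B_t(z)}F\,d\upsilon\le\Psi(\delta;n)\qquad\text{for all }z\in K_0,\ 0<t\le s.\]
Intersecting $K_0$ with the closed set $(\mathcal R_k)_{\delta,r}$, whose complement meets $B_{100s}(w)$ in relative measure $\le\Psi(\delta;n)$ once $s\le\eta(w)$ (Lebesgue-density of $w$, then Theorem \ref{BG} to pass from $B_{100s}(w)$ to $B_s(w)$), and removing from it the null set $C_x$ and the null set where the fibre of $T^*Y$ is not $k$-dimensional, I obtain $L'\subset\overline B_s(w)\cap(\mathcal R_k)_{\delta,r}$ with the same measure lower bound, on which the differential identities of Cheeger's theory hold pointwise. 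I then set $L=\overline{L'}$; it remains inside the closed set $\overline B_s(w)\cap(\mathcal R_k)_{\delta,r}$ and keeps the measure bound, and since $\Phi=(r_x,r_{x_2},\dots,r_{x_k})$ is continuous it suffices to prove the bi-Lipschitz bound for pairs in the dense subset $L'$.

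For $z\in L'$ the covectors $dr_{x_1}(z),\dots,dr_{x_k}(z)$ span the $k$-dimensional fibre $\pi^{-1}(z)$ and, because $F$ has average $\le\Psi(\delta;n)$ on every $B_t(z)$ with $t\le s$, form a $(1\pm\Psi(\delta;n))$-orthonormal basis of it; hence $\Phi$ is $(1+\Psi(\delta;n))$-Lipschitz on $L'$. For the reverse inequality take $z,z'\in L'$ and a minimal geodesic $\gamma\colon[0,\ell]\to Y$ from $z$ to $z'$. Since a Lipschitz function is differentiated along almost every minimal geodesic by its differential section, $r_{x_i}(z')-r_{x_i}(z)=\int_0^\ell\langle dr_{x_i}(\gamma(t)),\gamma'(t)\rangle\,dt$, and by the segment inequality on Ricci limit spaces $\gamma$ spends all but a $\Psi(\delta;n)$-fraction of its length in the set $G=\{z:\frac1{\upsilon(B_t(z))}\int_{B_t(z)}F\le\Psi(\delta;n)\text{ for all small }t\}$. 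On $G$ the $dr_{x_i}$ are $(1\pm\Psi(\delta;n))$-orthonormal, so $\sum_i\langle dr_{x_i}(\gamma(t)),\gamma'(t)\rangle^2=1\pm\Psi(\delta;n)$, and because each $r_{x_i}$ has $L^2$-small Hessian there the $\mathbf R^k$-valued map $t\mapsto(\langle dr_{x_i}(\gamma(t)),\gamma'(t)\rangle)_i$ is almost constant; using the crude bound $\le\sqrt k$ on the exceptional part of $[0,\ell]$ then gives $|\Phi(z')-\Phi(z)|=(1\pm\Psi(\delta;n))\overline{z,z'}$, which is the assertion. The two steps I expect to be hard are, as noted, the scale-faithful averaged orthonormality of $\{dr_{x_i}\}$ on $B_{100s}(w)$, and this last co-Lipschitz estimate, where the segment inequality together with the multi-scale control coming from Lemma \ref{1} is exactly what keeps the small bad portion of $\gamma$ from destroying the lower bound.
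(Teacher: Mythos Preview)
Your co-Lipschitz argument has a genuine gap. You claim that ``each $r_{x_i}$ has $L^2$-small Hessian'' and use this to conclude that $t\mapsto(\langle dr_{x_i}(\gamma(t)),\gamma'(t)\rangle)_i$ is almost constant along a minimal geodesic $\gamma$; but no Hessian bound for the distance functions themselves has been established (on the limit space this is not even clearly meaningful at this stage of the paper---it is the \emph{harmonic replacements} that carry Hessian control). Without it, the pointwise almost-orthonormality along most of $\gamma$ only says the vector $(\langle dr_{x_i},\gamma'\rangle)_i$ has norm near $1$, not that it is nearly constant, and a rotating unit vector can integrate to something of norm far below $\ell$. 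The segment inequality, moreover, controls generic geodesics between generic pairs of endpoints, not the specific geodesic between an arbitrary pair $z,z'\in L$, so you cannot conclude the desired lower bound for \emph{all} pairs in $L$.

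The paper takes a different route that sidesteps this entirely. It works on approximating manifolds $(M_i,m_i)$: after rescaling by $s_0=\sqrt{\delta}\,s$, the splitting-theorem machinery produces harmonic functions $\hat{\mathbf b}_j^i$ with $L^2$-small Hessian which are $L^\infty$- and $W^{1,2}$-close to $\hat r_{x_j^i}$. Applying Lemma \ref{1} to the sum of these error terms yields a compact set $K_i$ on which, at \emph{every} scale $0<t\le 10^{-5}$, one has (i) $\hat{\mathbf b}_j^i=\hat r_{x_j^i}+\mathrm{const}\pm\Psi(\delta;n)t$ (Claim \ref{101}, via Poincar\'e and a volume argument), and (ii) $(\hat{\mathbf b}_1^i,\dots,\hat{\mathbf b}_k^i,\phi)$ is a $\Psi(\delta;n)t$-GH approximation to a ball in $\mathbf R^k\times Z_t$ (Claim \ref{102}, quoting \cite[Theorem 3.3]{ch-co3}). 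Passing to the limit and intersecting with $(\mathcal R_k)_{\delta,r}$ forces $\mathrm{diam}\,Z_\infty\le\Psi(\delta;n)t$, so $\Phi=(\hat r_x,\hat r_{x_2^\infty},\dots,\hat r_{x_k^\infty})$ is itself a $\Psi(\delta;n)t$-GH approximation on each $\hat{\overline B}_t(\alpha)$. The bi-Lipschitz bound is then immediate: for $\alpha\neq\beta$ in the good set, take $t=\overline{\alpha,\beta}^{\,s_0^{-1}d_Y}$ and read off $|\Phi(\alpha)-\Phi(\beta)|=(1\pm\Psi(\delta;n))\,\overline{\alpha,\beta}$ directly from the GH property---no integration along geodesics is needed.
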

\begin{proof}
There exists $0 < \tau < r$ such that $\upsilon(B_s(w) \cap (\mathcal{R}_k)_{\delta, r})/\upsilon(B_s(w)) \ge 1-\delta$ for every $0 < s < \tau$ and $w \in \mathcal{D}_x^{\tau} \setminus B_{\tau}(x)$. 
Let $(M_i, m_i, \underline{\mathrm{vol}}) \rightarrow (Y, y, \upsilon)$.
We take $x_i, w_i \in M_i$ satisfying $w_i \rightarrow w$, $x_i \rightarrow x$.
We fix $0 < s << \min \{\delta, \tau\}$.
Then, for every sufficiently large $i$, there exists an $\delta s$-Gromov-Hausdorff approximation 
$\Phi^i = (\Phi_1^i, _{\cdots}, \Phi_k^i)$ from $(\overline{B}_{s}(w_i), w_i)$ to $(\overline{B}_{s}(0_k), 0_k)$
such that $\Phi_1^i = r_{x_i} - r_{x_i}(w_i)$.
We put $s_0 = \sqrt{\delta}s$.
For convenience, we shall use the following notations for rescaled metrics $s_0^{-1}d_{M_i}, s_0^{-1}d_{Y}$:
$\hat{\mathrm{vol}}=\mathrm{vol}^{s_0^{-1}d_{M_i}}$, $\hat{r}_w(\alpha)=s_0^{-1}r_w(\alpha)$,
$\hat{B}_t(\alpha)= B_t^{s_0^{-1}d_{M_i}}(\alpha)=B_{s_0t}(\alpha)$, $\hat{\upsilon}=\upsilon/\upsilon(B_{s_0}(y))$,
$\hat{g}=s_0^{-1}g$ for a Lipschitz function $g$ and so on.
We also denote the differential section of $g$ as rescaled manifolds $(M_i, s_0^{-1}d_{M_i})$ by $\hat{d}g : M_i \rightarrow T^*M_i$ 
and denote the Riemannian metric of $(M_i, s_0^{-1}d_{M_i})$ by $\langle \cdot, \cdot \rangle_{s_0}=s_0^{-2}\langle \cdot, \cdot \rangle$.
We remark that $(M_i, m_i, s_0^{-1}d_{M_i}, \underline{\mathrm{vol}}^{s_0^{-1}d_{M_i}}) \rightarrow (Y, y, s_0^{-1}d_{Y}, \hat{\upsilon})$.
The following claim follows from the proof of splitting theorem (see for instance \cite[Lemma $9. 8$]{ch1}, \cite[Lemma $9. 10$]{ch1} and \cite[Lemma $9. 13$]{ch1}).
\begin{claim}
For every sufficiently large $i$, there exist harmonic functions $\hat{\mathbf{b}}^i_j$ on $\hat{B}_{100^2}(w_i) (j=1, _{\cdots}, k)$, and 
points $x_j^i \in \hat{B}_{\sqrt{\delta}^{-1}}(w_i), (j=2, _{\cdots}, k))$ such that
$|\hat{\mathbf{b}}_j^i-\hat{r}_{x^i_j}|_{L^{\infty}(\hat{B}_{100^2}(w_i))} \le \Psi(\delta;n)$,
\[\frac{1}{\hat{\mathrm{vol}}\, \hat{B}_{100^2}(w_i)}\int_{\hat{B}_{100^2}(w_i)}
|\hat{d}\hat{\mathbf{b}}_j^i-\hat{d}
\hat{r}_{x_j^i}|_{s_0}^2d \hat{\mathrm{vol}} \le \Psi(\delta ; n),\]
\[\frac{1}{\hat{\mathrm{vol}}\,\hat{B}_{100^2}(w_i)}\int_{\hat{B}_{100^2}(w_i)}
|\langle \hat{d}\hat{\mathbf{b}}_j^i, 
\hat{\mathbf{b}}_l^i\rangle_{s_0}|d \hat{\mathrm{vol}} = \delta_{jl} \pm \Psi(\delta ; n)\]
and
\[\frac{1}{\hat{\mathrm{vol}}\,\hat{B}_{100^2}(w_i)}\int_{\hat{B}_{100^2}(w_i)}
|\mathrm{Hess}_{\hat{\mathbf{b}}_j^i}|_{s_0}^2d \hat{\mathrm{vol}} \le \Psi(\delta; n)\]
for $1 \le j, l \le k$.
Here $x=x_i^1$.
\end{claim}
We define a nonnegative Borel function $F_i$ on $\hat{B}_{100^2}(w_i)$ by
\[F_i =\sum_{l=1}^k\hat{\mathrm{Lip}}(\hat{\mathbf{b}}_l^i-\hat{r}_{x_l^i})^2
+\sum_{l \neq j}|\langle \hat{d}\hat{\mathbf{b}}_l^i, \hat{d}\hat{\mathbf{b}}_j^i\rangle_{s_0}|
+\sum_{l=1}^k(|\mathrm{Hess}_{\hat{\mathbf{b}}_l^i}|_{s_0})^2.\]
By Lemma \ref{1}, there exists a compact set $K_i \subset \hat{\overline{B}}_{100}(w_i)$ such that 
$\hat{\mathrm{vol}}\, K_i/\hat{\mathrm{vol}}\, \hat{B}_{100}(w_i) \ge 1- \Psi (\delta; n)$
and that for every $\alpha \in K_i$ and $0 < t < 100$, we have
\[\frac{1}{\hat{\mathrm{vol}}\,\hat{B}_t(\alpha)} \int _{\hat{B}_t(\alpha)}F_id \hat{\mathrm{vol}} \le \Psi (\delta;n).\]
\begin{claim}\label{101}
For every sufficiently large $i$, $\alpha \in K_i \cap \hat{B}_{50}(w_i)$ and $0 < t < 50$, there exists a 
constant $C_j^i (j=1, _{\cdots}, k)$ such that 
$\hat{\mathbf{b}}_j^i=\hat{r}_{x_j^i}+ C_j^i \pm \Psi(\delta;n)t$ on $\hat{B}_t(\alpha)$ for $j=1, _{\cdots}, k$.
\end{claim}
The proof is as follows.
By Poincar\'{e} inequality, we have 
\begin{align}
&\frac{1}{\hat{\mathrm{vol}}\,\hat{B}_t(\alpha)}\int_{\hat{B}_t(\alpha)}
\left|(\hat{\mathbf{b}}_j^i-\hat{r}_{x_j^i})- \frac{1}{\hat{\mathrm{vol}}\,\hat{B}_t
(\alpha)}\int_{\hat{B}_t(\alpha)}
(\hat{\mathbf{b}}_j^i-\hat{r}_{x_j^i})d\hat{\mathrm{vol}}\right|d \hat{\mathrm{vol}} \\
&\le t C(n) \sqrt{\frac{1}{\hat{\mathrm{vol}}\,\hat{B}_t(\alpha)}
\int _{\hat{B}_t(\alpha)} (\hat{\mathrm{Lip}}
(\hat{\mathbf{b}}_1^i-\hat{r}_{x_i}))^2d \hat{\mathrm{vol}}} \\
&\le t \Psi(\delta; n).
\end{align}
For $C>0$, Let $A_j(C)$, denote the set of points $\beta \in \hat{B}_t(\alpha)$, such that 
\[\left|(\hat{\mathbf{b}}_j^i(\beta)-\hat{r}_{x_j^i}(\beta))-\frac{1}{\hat{\mathrm{vol}}\,\hat{B}_t(\alpha)}
\int_{\hat{B}_t(\alpha)}(\hat{\mathbf{b}}_j^i-\hat{r}_{x_j^i})d
\hat{\mathrm{vol}}\right|\ge C.\]
Then, we have
\begin{align}
\Psi (\delta; n)t &\ge \frac{1}{\hat{\mathrm{vol}}\,\hat{B}_t(\alpha)}\int _{\hat{B}_t(\alpha)}\left|(\hat{\mathbf{b}}_j^i-\hat{r}_{x_j^i})-\frac{1}{\hat{\mathrm{vol}}\,\hat{B}_t(\alpha)}
\int_{\hat{B}_t(\alpha)}(\hat{\mathbf{b}}_j^i-\hat{r}_{x_j^i})d
\mathrm{vol}\right|d \hat{\mathrm{vol}} \\
&\ge \frac{1}{\hat{\mathrm{vol}}\,\hat{B}_t(\alpha)} \int 
_{A_j(C)}\left|(\hat{\mathbf{b}}_j^i-\hat{r}_{x_j^i})-\frac{1}{\hat{\mathrm{vol}}\,\hat{B}_t(\alpha)}
\int_{\hat{B}_t(\alpha)}(\hat{\mathbf{b}}_j^i-\hat{r}_{x_j^i})d
\hat{\mathrm{vol}}\right|d \hat{\mathrm{vol}} \\
&\ge C\frac{\hat{\mathrm{vol}}\,A_j(C)}{\hat{\mathrm{vol}}\,\hat{B}_t(\alpha)}.
\end{align}
Therefore, for above $\Psi(\delta;n)$, if we put $C= \sqrt{\Psi(\delta;n)}t$, then we have
\[\frac{\hat{\mathrm{vol}}\,A_j(C)}{\hat{\mathrm{vol}}\,\hat{B}_t(\alpha)}\le \sqrt{\Psi(\delta;n)}.\]
Here, we assume that $\hat{B}_{\epsilon t}(\beta) \subset A_j(C)$ for some $\beta \in \hat{B}_t(\alpha)$ and $\epsilon > 0$.
Then, by Bishop-Gromov volume comparison theorem, we have 
\[C(n) \epsilon^n \le \frac{\hat{\mathrm{vol}}\,B_{\epsilon t}(\beta)}{\hat{\mathrm{vol}}\,\hat{B}_t(\alpha)} 
\le \frac{\hat{\mathrm{vol}}\,A_j(C)}{\hat{\mathrm{vol}}\,\hat{B}_t(\alpha)} \le \sqrt{\Psi(\delta;n)}.\]
Therefore, for $C(n)$ above, if we take $\epsilon = \left(2C(n)^{-1}\sqrt{\Psi(\delta;n)}\right)^{1/n}$, then we have a contradiction.
We put $\epsilon=\left(2C(n)^{-1}\sqrt{\Psi(\delta;n)}\right)^{1/n}$.
We take $\beta \in \hat{B}_t(\alpha)$.
We also take $\hat{\beta} \in \hat{B}_{(1-\epsilon)t}(\alpha)$ satisfying $\hat{r}_{\beta}(\hat{\beta}) < \epsilon t$.
Then, there exists $\gamma \in \hat{B}_{\epsilon t}(\hat{\beta}) \setminus A_j(C)$.
Thus, we have $\gamma \in \hat{B}_t(\alpha)$.
By the definition of $A_j(C)$, we have 
\[\hat{\mathbf{b}}^i_j(\gamma) = \hat{r}_{x_j^i}(\gamma) + 
\frac{1}{\hat{\mathrm{vol}}\,\hat{B}_{100}(\alpha)}\int_{\hat{B}_{100}(\alpha)}
(\hat{\mathbf{b}}^i_j-\hat{r}_{x_j^i})d \hat{\mathrm{vol}} \pm \sqrt{\Psi(\delta;n)}t.\]
By Cheng-Yau's gradient estimate, we have $|\hat{\nabla} \hat{\mathbf{b}}^i_j|_{s_0}\le C(n)$.
Thus, we have 
\[\hat{\mathbf{b}}^i_j(\beta)= \hat{r}_{x_j^i}(\beta) + 
\frac{1}{\hat{\mathrm{vol}}\,\hat{B}_{100}(\alpha)}\int_{\hat{B}_{100}(\alpha)}
(\hat{\mathbf{b}}^i_j-\hat{r}_{x_j^i})d \hat{\mathrm{vol}} \pm \Psi(\epsilon;n)t.\]
Therefore we have Claim \ref{101}.
\

By an argument similar to the proof of \cite[Theorem $3.3$]{ch-co3}, we have the following:
\begin{claim}\label{102}
For every sufficiently large $i$,
$\alpha \in K_i \cap \hat{B}_{50}(w_i)$ and 
$0<t\le10^{-5}$, there exist a compact set $Z_t \subset M_i$, a point $z_t \in Z_t$ and a map $\phi$ from 
$(\hat{\overline{B}}_t(\alpha), \alpha)$ to $(\hat{\overline{B}}_t(z_t), z_t)$ such that the map $\Phi=(\hat{\mathbf{b}}^i_1, _{\cdots}, \hat{\mathbf{b}}^i_k, \phi)$ from 
$\hat{\overline{B}}_t(\alpha)$ to $\hat{\overline{B}}_{t + \Psi (\delta;n)t)}(\Phi(\alpha)) \subset (\mathbf{R}^k \times Z_t, \sqrt{d_{\mathbf{R}^k}^2 + ({s_0}^{-1}d_{M_i})^{2}})$, gives $\Psi(\delta;n)t$-Gromov-Hausdorff
approximation.
\end{claim}
We put $\hat{K}_i = K_i \cap \hat{\overline{B}}_{40}(w_i)$.
Then, we have $\hat{\mathrm{vol}}\,K_i / 
\hat{\mathrm{vol}}\,\hat{B}_{40}(w_i) \ge 1 - \Psi(\delta; n).$
By Proposition \ref{compact}, without loss of generality, we can assume that 
there exist a compact set $K_{\infty} \subset \hat{\overline{B}}_{40}(w)$ 
and points $x_j^{\infty} \in Y (2 \le j \le k)$ such that 
$x_j^i \rightarrow x_j^{\infty}$ and $K_i \rightarrow K_{\infty}$.
By Proposition \ref{sup}, we have $\hat{\upsilon}(K_{\infty})/ \hat{\upsilon}(\hat{B}_{40}(w)) \ge 1- \Psi(\delta;n).$

On the other hand, by Proposition \ref{090}, Claim \ref{101} and \ref{102}, 
for every $\alpha \in K_{\infty}$ and $0< t \le 10^{-5}$, there exist a compact metric space $Z_{\infty}$, 
a point $z_{\infty} \in Z_{\infty}$ and a map $\phi$ from 
$(\hat{\overline{B}}_t(\alpha), \alpha)$ to $(\overline{B}_t(z_{\infty}), z_{\infty})$
such that the map $\hat{\phi}=(\hat{r}_{x}, \hat{r}_{x_2^{\infty}}, _{\cdots}, \hat{r}_{x_k^{\infty}}, \phi)$  from $\hat{\overline{B}}_t(\alpha)$ to 
$\hat{\overline{B}}_{t + \Psi(\delta;n)t}(\hat{\phi}(\alpha))$, 
gives an
$\Psi(\delta; n)t$-Gromov-Hausdorff approximation.
\

We put $\hat{K_{\infty}} = K_{\infty} \cap (\mathcal{R}_k)_{\delta, r} \cap 
\overline{B}_{10^{-10}s_0}(w)$.
Then, we have 
$\upsilon(\hat{K}_{\infty})/\upsilon(\overline{B}_{10^{-10}s_0}(w)) \ge 1- \Psi (\delta;n)$.
On the other hand, for every $\alpha \in \hat{K}_{\infty}$ and $0<t\le10^{-5}$, if we take $\phi, Z_{\infty}, z_{\infty}$ as above, then, since $\alpha \in (\mathcal{R}_k)_{\delta, r}$,
we have $\mathrm{diam}Z_{\infty} \le \Psi(\delta;n)t$.
Especially, the map $f=(\hat{r}_x, \hat{r}_{x_2^{\infty}}, _{\cdots}, \hat{r}_{x_k^{\infty}})$ from $\hat{\overline{B}}_t(\alpha)$ to 
$\overline{B}_{t+\Psi(\delta;n)t}(f(\alpha))$, 
gives an $\Psi(\delta;n)t$-Gromov-Hausdorff approximation.
Especially, for every $\alpha, \beta \in \hat{K}_{\infty}$ satisfying $\alpha \neq \beta$, if we put $0 < t = 
\hat{r}_{\alpha}(\beta) \le 10^{-5}$, then we have
\begin{align*}
\sqrt{(\overline{x, \alpha}^{s_0^{-1}d_Y}-\overline{x, \beta}^{s_0^{-1}d_Y})^2+ \sum_{l=2}^k(\overline{x_l^{\infty}, \alpha}^{s_0^{-1}d_Y}-\overline{x_l^{\infty}, \beta}^{s_0^{-1}d_Y})^2}
&= \overline{\alpha, \beta}^{s_0^{-1}d_Y} \pm \Psi(\delta;n)t \\
&= (1 \pm \Psi(\delta;n))\overline{\alpha, \beta}^{s_0^{-1}d_Y}.
\end{align*} 
Therefore, we have the assertion.
\end{proof}

\begin{lemma}\label{5}
Let $(Y, y, \upsilon)$ be a Ricci limit space and $x$ a point in $Y$.
Then, there exist a collection of compact subsets $\{C_{k, i}^x\}_{1 \le k \le n, i \in \mathbf{N}}$ of $Y$ and a collection of points $\{x_{k, i}^l\}_{2 \le l \le k \le n, i \in \mathbf{N}} \in Y$ 
satisfying the following properties:
\begin{enumerate}
\item $\bigcup_{i \in \mathbf{N}}C_{k, i}^x \subset \mathcal{R}_k$ for every $k$.
\item $\upsilon(\mathcal{R}_k \setminus \bigcup_{i \in \mathbf{N}}C_{k, i}^x) = 0$ for every $k$.
\item For every $z \in \bigcup_{i \in \mathbf{N}}C_{k, i}^x$ and $0 < \delta < 1$, there exists $C_{k, i}^x$ such that $z \in C_{k, i}^x$ 
and the map $\Phi_{k, i}^x = (r_x, r_{x_{k, i}^2}, _{\cdots}, r_{x_{k, i}^k})$ from $C_{k, i}^x$ to $\mathbf{R}^k$ gives
$(1 \pm \delta)$-bi-Lipschitz equivalent to the image $\Phi_{k, i}^x(C_{k, i}^x)$.
\end{enumerate}
\end{lemma}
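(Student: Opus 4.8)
The plan is to obtain Lemma~\ref{5} by assembling the local charts produced in Lemma~\ref{4} into a global countable family, treating one value of $k\in\{1,\dots,n\}$ at a time and then unioning the resulting subfamilies over $k$. Throughout, $x$ is the fixed point of the statement; Lemma~\ref{4} already hands us charts whose first coordinate is $r_x$, which is exactly the form $\Phi^x_{k,i}=(r_x,r_{x^2_{k,i}},\dots,r_{x^k_{k,i}})$ demanded in condition~(3).

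First I would fix $k$ and identify a set of admissible ``centres'' of full $\upsilon$-measure in $\mathcal R_k$. Using $\upsilon(Y\setminus\mathcal R)=0$ (Theorem~\ref{regular}), $\upsilon(C_x)=0$, $\upsilon(\mathcal R_k\setminus\mathcal R_{k;\alpha(n)})=0$ and the doubling property (Bishop--Gromov), the Lebesgue density theorem gives that $\upsilon$-a.e.\ $w\in\mathcal R_k$ is a $\upsilon$-density point of $(\mathcal R_{k;\alpha(n)})_\rho$ for some $\rho>0$. Since $s^{1+\alpha(n)}\le\delta s$ once $s\le\delta^{1/\alpha(n)}$, one has $(\mathcal R_{k;\alpha(n)})_\rho\subset(\mathcal R_k)_{\delta,r}$ for $r\le\min\{\rho,\delta^{1/\alpha(n)}\}$, so such a $w$ is automatically a density point of $(\mathcal R_k)_{\delta,r}$; combining this with Lemma~\ref{3} (which gives, for \emph{every} $w\in\mathcal R_k\setminus C_x$ and every $\delta$, a radius with $w\in(\mathcal R_k)^x_{\delta,r}$) we get: for $\upsilon$-a.e.\ $w\in\mathcal R_k$ and every $\delta>0$ there is $r>0$ with $w\in(\mathcal R_k)^x_{\delta,r}\cap\mathrm{Leb}((\mathcal R_k)_{\delta,r})\setminus(C_x\cup\{x\})$, so Lemma~\ref{4} applies at $w$ at level $\delta$. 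Moreover, shrinking the radius $s$ in Lemma~\ref{4} (depending on $w$ and $\delta$) so that $\upsilon(B_s(w)\setminus(\mathcal R_{k;\alpha(n)})_\rho)$ is an arbitrarily small fraction of $\upsilon(B_s(w))$ and replacing its set $L$ by the compact set $L\cap(\mathcal R_{k;\alpha(n)})_\rho\subset\mathcal R_k$ (a bi-Lipschitz map stays bi-Lipschitz on a subset), I obtain the following local data: for each such $w$ and all sufficiently small $s>0$ there is a compact $L(w,s)\subset\overline B_s(w)\cap\mathcal R_k$ with $\upsilon(L(w,s))\ge(1-\theta_\delta)\upsilon(B_s(w))$, where $\theta_\delta\to0$ as $\delta\to0$, together with points $x_2,\dots,x_k\in Y$ making $(r_x,r_{x_2},\dots,r_{x_k})$ a $(1\pm\Psi(\delta;n))$-bi-Lipschitz map on $L(w,s)$.

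Next I would globalise. For each $j\in\mathbf N$ fix $\delta_j$ small enough that $\Psi(\delta_j;n)<1/j$ and $\theta_{\delta_j}$ is below a fixed constant strictly less than $1$. Working inside each ball $B_p(y)$ (where $\upsilon$ is finite), apply the covering lemma (Proposition~\ref{cov}) to the balls $\overline B_s(w)$ above to extract a pairwise disjoint subfamily whose $5$-fold enlargements cover $\mathcal R_k\cap B_{p-1}(y)$ up to a $\upsilon$-null set; combining the density-point estimate at the chosen centres with the disjointness, the portion of $\mathcal R_k$ not covered by the corresponding sets $L(w_i,s_i)$ has $\upsilon$-measure at most a fixed fraction strictly less than $1$ of the measure of the region just treated, so iterating the covering step makes the uncovered measure decay geometrically to $0$. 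Taking the union over $p$ and re-indexing, this yields, for each $j$, a countable family $\mathcal F_j$ of compact subsets of $\mathcal R_k$, each carrying a distance chart of the required form that is $(1\pm1/j)$-bi-Lipschitz, with $\upsilon(\mathcal R_k\setminus\bigcup\mathcal F_j)=0$. Set $G:=\bigcap_{j}\bigcup\mathcal F_j$; then $\upsilon(\mathcal R_k\setminus G)=0$, and since $G$ is Borel with finite measure on bounded sets, inner regularity of $\upsilon$ provides compact sets $K_1,K_2,\dots\subset G$ with $\upsilon(G\setminus\bigcup_\ell K_\ell)=0$. Finally I would take as the collection of the lemma all sets of the form $K_\ell\cap D$ with $D\in\mathcal F_j$ and $j,\ell\in\mathbf N$, each equipped with the chart (and centres $x^2,\dots,x^k$) of the corresponding $D$. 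These are countably many compact subsets of $\mathcal R_k$; their union equals $\bigcup_\ell K_\ell$ (since $K_\ell\subset G\subset\bigcup\mathcal F_j$ for every $j$), which has full $\upsilon$-measure in $\mathcal R_k$, giving conditions~(1) and~(2); and if $z$ lies in the union, then $z\in K_{\ell_0}\subset G$, so $z\in\bigcup\mathcal F_j$ for \emph{every} $j$, hence for any prescribed $\delta$ we choose $j>1/\delta$, pick $D\in\mathcal F_j$ with $z\in D$, and observe that $K_{\ell_0}\cap D$ belongs to our collection, contains $z$ and carries a $(1\pm1/j)$-bi-Lipschitz, hence $(1\pm\delta)$-bi-Lipschitz, chart --- which is condition~(3). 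Unioning over $k=1,\dots,n$ finishes.

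The hard part is not Lemma~\ref{4} itself, which is already available, but the bookkeeping around it: first, upgrading Lemma~\ref{4}'s charts, which live on subsets of the almost-Euclidean sets $(\mathcal R_k)_{\delta,r}$, to charts on genuine pieces of the regular set $\mathcal R_k$ --- this is where the intersection with $(\mathcal R_{k;\alpha(n)})_\rho$ at $\upsilon$-density points is needed; and second, arranging condition~(3), which asks for charts of \emph{arbitrarily} small distortion through \emph{every} point of the final union, not merely through $\upsilon$-a.e.\ point --- this is why one passes to $G=\bigcap_j\bigcup\mathcal F_j$ and then to a $\sigma$-compact exhaustion of $G$ before cutting the charts down. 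The Vitali iteration driving the uncovered measure to zero is routine given Proposition~\ref{cov} and Bishop--Gromov doubling.
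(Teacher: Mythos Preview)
Your approach is essentially the paper's: apply Lemma~\ref{4} at density points, use the $5r$-covering lemma to assemble countable chart families at each distortion level, then intersect over all levels (your $G=\bigcap_j\bigcup\mathcal F_j$, the paper's $E(i,N,R)=\hat L(i,N,R)\cap\bigcap_{N_0}\bigcup_{N\ge N_0,j}\hat L(j,N,R)$) and pass to compact pieces to secure condition~(3). One caution on your iteration at fixed $\delta_j$: to get genuine geometric decay of the uncovered mass you must, in the second and later rounds, also shrink radii so that each ball carries at least a fixed fraction of its measure inside the current uncovered set (Lebesgue density again), since otherwise $\sum_i\upsilon(B_{s_i})$ is only bounded by $\upsilon(B_p(y))$ rather than by the uncovered measure; the paper avoids this by simply taking, for each $N_0$, the union of the charts produced at \emph{all} levels $N\ge N_0$, whose uncovered fraction $\Psi(N^{-1};n)$ already tends to~$0$.
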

\begin{proof}
We put 
\[A_k = \bigcap _{m_1 \in \mathbf{N}}\left(\bigcup _{m_2 \in \mathbf{N}}
(\mathcal{R}_k)_{1/m_1, 1/m_2}^x \cap \mathrm{Leb}((\mathcal{R}_k)_{1/m_1, 1/m_2}) \setminus (C_x \cup \{x\})\right).\]
\begin{claim}\label{a1}We have $A_k \subset \mathcal{R}_k$ and  $\upsilon (\mathcal{R}_k \setminus A_k) = 0$.
\end{claim}
The proof is as follows.
For 
\[B_k= \bigcap _{m_1 \in \mathbf{N}}\left(\bigcup_{m_2 \in \mathbf{N}}(\mathcal{R}_k)_{1/m_1, 1/m_2}^x
\cap (\mathcal{R}_k)_{1/m_1, 1/m_2} \setminus (C_x \cup \{x\})\right),\]
by Proposition \ref{130}, we have, $A_k \subset B_k$, $\upsilon(B_k \setminus A_k)=0$.
On the other hand, by Lemma \ref{3}, we have $B_k = \mathcal{R}_k \setminus \{C_x \cup \{x\} \}$.
Since $\upsilon (C_x)=0$, we have Claim \ref{a1}. 
\

For every $z \in A_k$ and $N \in \mathbf{N}$, we take $m_2=m_2(z, N)$ satisfying
$z \in (\mathcal{R}_k)_{1/N, 1/m_2}^x \cap \mathrm{Leb}((\mathcal{R}_k)_{1/N, 1/m_2}) \setminus (C_x \cup \{x\})$.
By Lemma \ref{4}, there exists $\eta(z, N) >0$ such that for every $0 < s \le \eta(z, N)$, there exist a compact set
$L(z, s, N) \subset \overline{B}_s(z) \cap (\mathcal{R}_k)_{1/N, 1/m_2}$
and points $x_2(z, s, N), _{\cdots}, x_k(z, s, N) \in Y$ such that 
$\upsilon(L(z, s, N))/\upsilon(\overline{B}_s(z)) \ge 1- \Psi(N^{-1}; n)$ and that 
the map $\Phi_{z, s, N}(w)=(\overline{x, w}, \overline{x_2(z, s, N), w}, _{\cdots}, \overline{x_k(z, s, N),w})$
 from $L(z, s, N)$ to $\mathbf{R}^k$, gives  $(1\pm \Psi(N^{-1};n))$-bi-Lipschitz equivalent to the image $\Phi_{z, s, N}(L(z, s, N))$.
We fix $R > 1$.
By Lemma \ref{cov}, there exists pairwise disjoint collection $\{\overline{B}_{s_i^{N. R}}(z_i^{N, R})\}_{i \in \mathbf{N}}$ such that
$z_i^{N, R} \in A_k \cap \overline{B}_R(y)$, $0<s_i^{N, R} \le \eta(z_i^{N, R}, N)/100$ and that 
$A_k \cap \overline{B}_R(y) \setminus \bigcup_{i=1}^m\overline{B}_{s_i^{N, R}}(z_i^{N, R}) \subset
\bigcup_{i=m+1}^{\infty}\overline{B}_{5s_i^{N, R}}(z_i^{N, R})$ for every $m$.
We put $\hat{L}(i, N, R) = L(z_i^{N, R}, 5s_i^{N, R}, N) \cap A_k \cap \overline{B}_R(y) \subset A_k \cap \overline{B}_R(y)$.
\begin{claim}\label{103}
$\upsilon(A_k \cap \overline{B}_R(y) \setminus \bigcup_{N \ge N_0, i \in \mathbf{N}}\hat{L}(i, N, R)) = 0$
for every $N_0 \in \mathbf{N}$.
\end{claim}
Because, for every $N \ge N_0$, we have
\begin{align}
&\upsilon \left( A_k \cap \overline{B}_R(y) \setminus \bigcup _{i \in \mathbf{N}}\hat{L}(i, N, R)\right) \\
&\le \upsilon \left( \bigcup_{i \in \mathbf{N}}\left( \overline{B}_{5s_i^{N, R}}(z_i^{N, R}) \cap A_k \cap \overline{B}_R(y)\right)
\setminus \bigcup_{i \in \mathbf{N}}\left(L(z_i^{N, R}, 5s_i^{N, R}, N) \cap A_k \cap \overline{B}_R(y)\right)\right) \\
&\le \sum_{i \in \mathbf{N}}\upsilon(\overline{B}_{5s_i^{N, R}}(z_i^{N, R}) \setminus L(z_i ^{N, R}, 5s_i^{N, R}, N)) \\
&\le \Psi (N^{-1};n)\sum_{i \in \mathbf{N}}\upsilon (\overline{B}_{5s_i^{N, R}}(z_i^{N,R})) \\
&\le \Psi( N^{-1};n) \sum_{i \in \mathbf{N}}\upsilon(B_{s_i^{N,R}}(z_i^{N, R})) \\
&\le \Psi (N^{-1};n)\upsilon(B_{2R}(y)).
\end{align}
Therefore, by letting $N \rightarrow \infty$, we have Claim \ref{103}.

\

By Claim \ref{103}, we have $\upsilon(A_k \cap \overline{B}_R(y) \setminus \bigcap_{N_0}(\bigcup_{N \ge N_0, i \in \mathbf{N}}\hat{L}(i, N, R))) = 0$.
We put $E(i, N, R) = \hat{L}(i, N, R) \cap \bigcap _{N_0 \in \mathbf{N}}(\bigcup_{N \ge N_0, j \in \mathbf{N}}\hat{L}(j, N, R))$.
Then, we have $\upsilon(A_k \cap \overline{B}_R(y) \setminus \bigcup_{i, N \in \mathbf{N}}E(i, N, R))=0$.
For every $z \in \bigcup_{i, N \in \mathbf{N}}E(i, N, R)$ and $0 < \delta < 1$, we take $i, N \in \mathbf{N}$ satisfying
$z \in E(i, N, R)$.
We also take $N_0 \in \mathbf{N}$ satisfying $N_0^{-1}<<\delta$.
Then there exist $\hat{N} \ge N_0$ and $\hat{i} \in \mathbf{N}$ such that
$z \in \hat{L}(\hat{i}, \hat{N}, R)$.
Then, the map $\phi(w) = (\overline{x, w}, \overline{x_2(z_{\hat{i}}^{\hat{N}, R}, s_{\hat{i}}^{\hat{N}, R}), w}, _{\cdots}, \overline{x_k(z_{\hat{i}}^{\hat{N}, R}, s_{\hat{i}}^{\hat{N}, R}), w})$ from $L(z_{\hat{i}}^{\hat{N}, R}, s_{\hat{i}}^{\hat{N}, R}, \hat{N})$ to $\mathbf{R}^k$, gives 
$\Psi (N^{-1}, n)$-bi-Lipschitz equivalent to the image.
Especially, the map gives $(1 \pm \delta)$-bi-Lipschitz equivalent to the image.
We remark that $\hat{L}(\hat{i}, \hat{N}, R) \subset L(z_{\hat{i}}^{\hat{N}, R}, s_{\hat{i}}^{\hat{N}, R}, \hat{N})$ and 
$z \in \hat{L}(\hat{i}, \hat{N}, R) \cap \bigcap_{l \in \mathbf{N}}(\bigcup_{j \ge l, p \in \mathbf{N}}\hat{L}(p, j, R)) =
E(\hat{i}, \hat{N}, R)$.
Therefore, if we put $x_2(i, N, R) = x_2(z_i^{N, R}, s_i^{N, R}, R), _{\cdots}, x_k(i, N, R) = x_k(z_i^{N, R}, s_i^{N, R}, R)$, then
we have the following claim:
\begin{claim}\label{104}
For every $z \in \bigcup_{i, N \in \mathbf{N}}E(i, N, R)$ and $0 < \delta < 1$, there exists $E(i, N, R)$ such that
$z \in E(i, N, R)$ and that the map $\phi(w)= (\overline{x, w}, \overline{x_2(i, N, R), w}, _{\cdots}, \overline{x_k(i, N, R), w})$ from $E(i, N, R)$ to $\mathbf{R}^k$, gives $(1 \pm \delta)$-bi-Lipschitz equivalent to the image.
\end{claim}
By Claim \ref{104}, it is easy to check the assertion.
\end{proof}

\begin{lemma}\label{6}
With same notaion as in Lemma \ref{5}, let $\{\mathcal{D}_{k, i, j}^x\}_{j \in \mathbf{N}}$ be a collection of Borel subsets of $C_{k, i}^x$ satisfying 
 $\upsilon(C_{k, i}^x \setminus \bigcup_{j \in \mathbf{N}}\mathcal{D}_{k, i, j}^x)=0$.
Then, there exists a collection of Borel subsets $\{\mathcal{E}_{k, i, j}^x\}$ such that 
$\mathcal{E}_{k, i, j}^x \subset \mathcal{D}_{k, i, j}^x$, 
$\upsilon (\mathcal{D}_{k, i, j}^x \setminus \mathcal{E}_{k, i, j}^x) = 0$ and that for every $k$, 
$z \in \bigcup_{i, j \in \mathbf{N}}\mathcal{E}_{k, i, j}^x$ and $0 < \delta < 1$, 
there exists $\mathcal{E}_{k, i, j}^x$ such that $z \in \mathcal{E}_{k, i, j}^x$ and 
that the map $\Phi_{k, i, j}^x = (r_x, r_{x_{k, i}^2}, _{\cdots}, r_{x_{k, i}^k})$ from 
$\mathcal{E}_{k, i, j}^x$ to $\mathbf{R}^k$ gives $(1 \pm \delta)$-bi-Lipschitz equivalent to the image 
$\Phi_{k, i, j}^x(\mathcal{E}_{k, i, j}^x)$.
\end{lemma}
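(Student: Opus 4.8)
The plan is to delete one $\upsilon$-null set from each $\mathcal{D}_{k,i,j}^x$ and then check that the bi-Lipschitz property supplied by Lemma \ref{5} descends to the refined pieces. Note that the coordinate map $\Phi_{k,i,j}^x$ appearing in the conclusion only involves the points $x_{k,i}^l$ attached to the index $(k,i)$, so it is literally the restriction of $\Phi_{k,i}^x$ to $\mathcal{E}_{k,i,j}^x \subset C_{k,i}^x$; no new construction of coordinate functions is needed. All that must be arranged is that every point of the refined union still lies, for each $\delta$, inside one of the original $C_{k,i}^x$ on which $\Phi_{k,i}^x$ is $(1\pm\delta)$-bi-Lipschitz, and that $z$ then also belongs to one of the pieces $\mathcal{D}_{k,i,j}^x$ sitting inside that particular $C_{k,i}^x$.

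Concretely, first I would set
\[N=\bigcup_{1\le k\le n}\ \bigcup_{i\in\mathbf{N}}\Bigl(C_{k,i}^x\setminus\bigcup_{j\in\mathbf{N}}\mathcal{D}_{k,i,j}^x\Bigr),\]
which is a Borel set with $\upsilon(N)=0$ by the hypothesis $\upsilon(C_{k,i}^x\setminus\bigcup_j\mathcal{D}_{k,i,j}^x)=0$ together with countability of the union, and then define $\mathcal{E}_{k,i,j}^x=\mathcal{D}_{k,i,j}^x\setminus N$. These are Borel, satisfy $\mathcal{E}_{k,i,j}^x\subset\mathcal{D}_{k,i,j}^x$, and $\upsilon(\mathcal{D}_{k,i,j}^x\setminus\mathcal{E}_{k,i,j}^x)\le\upsilon(N)=0$, so the first two required properties hold at once.

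For the remaining property, fix $k$, a point $z\in\bigcup_{i,j\in\mathbf{N}}\mathcal{E}_{k,i,j}^x$, and $0<\delta<1$. Then $z\in\mathcal{E}_{k,i_0,j_0}^x$ for some $i_0,j_0$, so in particular $z\in C_{k,i_0}^x\subset\bigcup_i C_{k,i}^x$ and $z\notin N$. By Lemma \ref{5}(3) applied to $z$ and $\delta$ there is an index $i_1$ with $z\in C_{k,i_1}^x$ and with $\Phi_{k,i_1}^x$ giving $(1\pm\delta)$-bi-Lipschitz equivalent to $\Phi_{k,i_1}^x(C_{k,i_1}^x)$. Since $z\notin N$ while $z\in C_{k,i_1}^x$, we must have $z\in\bigcup_j\mathcal{D}_{k,i_1,j}^x$, hence $z\in\mathcal{D}_{k,i_1,j_1}^x$ for some $j_1$, and therefore $z\in\mathcal{D}_{k,i_1,j_1}^x\setminus N=\mathcal{E}_{k,i_1,j_1}^x$. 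Because $\Phi_{k,i_1,j_1}^x=(r_x,r_{x_{k,i_1}^2},\dots,r_{x_{k,i_1}^k})$ is exactly the restriction of $\Phi_{k,i_1}^x$ to the subset $\mathcal{E}_{k,i_1,j_1}^x\subset C_{k,i_1}^x$, and the restriction of a $(1\pm\delta)$-bi-Lipschitz bijection to a subset is again $(1\pm\delta)$-bi-Lipschitz onto its image (both $\Phi$ and $\Phi^{-1}$ stay $(1+\delta)$-Lipschitz), the desired conclusion follows.

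I do not expect a genuine obstacle here: the only points requiring care are that the deleted set $N$ is a \emph{countable} union of $\upsilon$-null Borel sets, hence still $\upsilon$-null and Borel, and that restricting the coordinate map to a subset preserves the $(1\pm\delta)$-bi-Lipschitz estimate. The statement is essentially a bookkeeping step that repackages the output of Lemma \ref{5} so that it can be intersected with an arbitrary measurable subdivision $\{\mathcal{D}_{k,i,j}^x\}_j$ of the $C_{k,i}^x$; all the geometric content was already established in Lemma \ref{5} and Lemma \ref{4}.
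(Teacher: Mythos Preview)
Your proof is correct. Both approaches work, but they differ in how they define the null set to be removed. The paper introduces, for each $M\in\mathbf{N}$, the index set $\mathcal{B}_M=\{i:\Phi_{k,i}^x$ is $(1\pm M^{-1})$-bi-Lipschitz on $C_{k,i}^x\}$ and then sets $\mathcal{E}_{k,i,j}^x=\mathcal{D}_{k,i,j}^x\cap\bigcap_M\bigl(\bigcup_{l\in\mathcal{B}_M,\,m}\mathcal{D}_{k,l,m}^x\bigr)$, so the bi-Lipschitz scale is built directly into the definition of the refined pieces. Your construction is more economical: you remove only the single null set $N$ recording where the $\mathcal{D}$'s fail to cover the $C$'s, and then appeal to Lemma~\ref{5}(3) to locate the $(1\pm\delta)$-good chart $C_{k,i_1}^x$; the key observation that $z\notin N$ forces $z\in\bigcup_j\mathcal{D}_{k,i_1,j}^x$ once $z\in C_{k,i_1}^x$ is exactly what makes this work. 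Your route is shorter and makes the dependence on Lemma~\ref{5} more transparent; the paper's route has the minor advantage that the resulting $\mathcal{E}_{k,i,j}^x$ carry the bi-Lipschitz information in their definition rather than through an external appeal, but for the purposes of the lemma the two are equivalent.
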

\begin{proof}
We fix $1\le k \le n$.
For every $M \in \mathbf{N}$, we put 
$\mathcal{B}_M=\{i  \in \mathbf{N}; $ the map $\phi = (r_x, r_{x_{k, i}^2}, 
_{\cdots}, r_{x_{k, i}^k})$ from $C_{k, i}^x$ to $\mathbf{R}^k$, gives $(1 \pm M^{-1})$-bi-Lipschitz equivalent to the image $\}$
and $\mathcal{E}_{k, i, j}^x= \mathcal{D}_{k,i,j}^x \cap \bigcap_{M \in \mathbf{N}}(\bigcup_{i \in \mathcal{B}_M, j  \in \mathbf{N}}\mathcal{D}_{k,i,j}^x).$
\begin{claim}\label{101010101}$\upsilon(\mathcal{D}^x_{k, i, j} \setminus \mathcal{E}_{k, i, j}^x)=0.$
\end{claim}
Because, by Lemma \ref{5}, we have
$\bigcup_{i \in \mathbf{N}}C_{k, i}^x \subset \bigcap_{M \in \mathbf{N}}(\bigcup_{i \in \mathcal{B}_M}C_{k, i}^x)).$
On the other hand,
it is easy to check that 
$\bigcap_{M \in \mathbf{N}}(\bigcup_{i \in \mathcal{B}_M}C_{k, i}^x)) \subset 
\bigcup_{i \in \mathbf{N}}C_{k, i}^x .$
Therefore, we have $\bigcap_{M \in \mathbf{N}}(\bigcup_{i \in \mathcal{B}_M}C_{k, i}^x)) = 
\bigcup_{i \in \mathbf{N}}C_{k, i}^x .$
Thus,
$\upsilon(\mathcal{D}_{k, i, j}^x \setminus \mathcal{E}_{k, i, j}^x)=\upsilon(\mathcal{D}_{k, i, j}^x \cap \bigcup_{l \in \mathbf{N}}C_{k, l}^x \setminus \mathcal{E}_{k, i, j}^x) = \upsilon (\mathcal{D}_{k, i, j}^x \cap \bigcap_{M \in \mathbf{N}}(\bigcup_{l \in \mathcal{B}_M}C_{k, l}^x) \setminus \mathcal{E}_{k, i, j}^x) = \upsilon (\mathcal{D}_{k, i, j}^x \cap \bigcap_{M \in \mathbf{N}}(\bigcup_{l \in \mathcal{B}_M, j \in \mathbf{N}}\mathcal{D}_{k, l, j}^x) \setminus \mathcal{E}_{k, i, j}^x)=0$.
Therefore we have Claim \ref{101010101}.
\
\begin{claim}\label{p0}
For every $z \in \bigcup_{i, j \in \mathbf{N}}\mathcal{E}_{k, i, j}^x$ and $0 < \delta <1$, 
there exists $\mathcal{E}_{k, i, j}^x$ such that $z \in \mathcal{E}_{k, i, j}^x$ and that the map
$\phi$ from $\mathcal{E}_{k, i, j}^x$ to $\mathbf{R}^k$ defined by $\phi = (r_x, r_{x_{k, i}^2}, _{\cdots}, r_{x_{k, i}^k})$ gives $(1 \pm \delta)$-bi-Lipschitz equivalent to the image.
\end{claim}
Because, we take $M \in \mathbf{N}$ and $i, j \in \mathbf{N}$ satisfying $ M^{-1} << \delta$ and  
$z \in \mathcal{E}_{k, i, j}^x$.
By the definition, there exist $N_0 \in \mathcal{B}_M$ and $N_1 \in \mathbf{N}$ such that $z \in \mathcal{D}_{k, N_0, N_1}^x$.
Therefore, we have 
$z \in \mathcal{D}_{k, N_0, N_1}^x \cap \bigcap_{\hat{M} \in \mathbf{N}}(\bigcup_{\hat{i} \in \mathcal{B}_{\hat{M}}, 
\hat{j} \in \mathbf{N}}\mathcal{D}_{k, \hat{i}, \hat{j}}^x) = \mathcal{E}_{k, N_0, N_1}^x$
and the map $\phi = (r_x, r_{x_{k, j}^2}, _{\cdots}, r_{x_{k, j}^k})$ from $\mathcal{E}_{k, N_0, N_1}^x$ to $\mathbf{R}^k$,
gives $(1 \pm M^{-1})$-bi-Lipschitz equivalent to the image.
Therefore, we have Claim \ref{p0}.
\

Thus, we have the assertion.
\end{proof}
The following theorem is the main result in this subsection.
See Appendix $7.4$ or $(2.2)$ in \cite{ch-co2} or \cite[Definition $4.1$]{ho} for the definition of the measure $\upsilon_{-1}$.
\begin{theorem}[Radial rectifiability]\label{7}
Let $(Y, y, \upsilon)$ be a Ricci limit space satisfying $Y \neq \{y\}$ and $x$ a point in $Y$.
Then, there exist a collection of Borel subsets $\{C_{k, i}^x\}_{1 \le k \le n, i \in \mathbf{N}}$ of $Y$, a collection of points $\{x_{k, i}^l\}_{2\le l \le k \le n, i \in \mathbf{N}}$ of $Y$, 
a positive number $0 < \alpha(n) < 1$ 
and a Borel subset $A$ of $[0, \mathrm{diam}Y)$ such that  the following properties hold:
\begin{enumerate}
\item $\bigcup_{i \in \mathbf{N}}C_{k, i}^x \subset \mathcal{R}_{k, \alpha(n)} \setminus C_x$.
\item $\upsilon (\mathcal{R}_k \setminus \bigcup_{i \in \mathbf{N}}C_{k, i}^x) = 0$.
\item For every $C_{k, i}^x$ and $z \in C_{k, i}^x$, we have $\lim_{r \rightarrow 0} \upsilon(B_r(z) \cap C_{k, i}^x)
/\upsilon(B_r(z))=1$.
\item For every $C_{k, i}^x$, there exists $A_{k, i}^x > 1$ such that
$(A_{k, i}^x)^{-1} \le \upsilon(B_r(z))/r^k \le A_{k, i}^x$ holds  for every $z \in C_{k, i}^x$ and $0 < r < 1$.
\item The limit measure $\upsilon$ and $k$-dimensional Hausdorff measure $H^k$ are mutually absolutely continuous on $C_{k, i}^x$.
\item For every $z \in \bigcup_{i \in \mathbf{N}}C_{k, i}^x$ and $0 < \delta < 1$, there exists $C_{k, i}^x$ such that 
$z \in C_{k, i}^x$ and that the map $\Phi_{k, i}^x = (r_x, r_{x_{k, i}^2}, _{\cdots}, r_{x_{k, i}^k})$ from $C_{k, i}^x$ to
$\mathbf{R}^k$ gives $(1 \pm \delta)$-bi-Lipschitz equivalent to the image $\Phi_{k, i}^x(C_{k, i}^x)$.
\item $H^1([0, \mathrm{diam}Y) \setminus A)=0$.
\item For every $R \in A$, the collection $\{\partial B_R(x) \cap C_{k, i}^x\} \subset \partial B_R(x) \setminus C_x$ satisfies 
following properties:
\begin{enumerate}
\item $\upsilon_{-1}\left( (\partial B_R(x) \setminus C_x) \setminus \bigcup_{1 \le k \le n, i \in \mathbf{N}}C_{k, i}^x\right)=0$.
\item For every $\partial B_R(x) \cap C_{k, i}^x$, there exist $B_{k, i}^x > 1$ and $\tau_{k, i}^x > 0$ such that 
$(B_{k, i}^x)^{-1} \le 
\upsilon_{-1}(\partial B_R(x) \cap B_r(z) \setminus C_x)/r^{k-1} \le \upsilon_{-1}(\partial B_R(x) \cap \overline{B}_r(z))/
r^{k-1}\le B_{k, i}^x$ for every $z \in \partial B_R(x) \cap C_{k, i}^x$ and $0 < r < \tau_{k, i}^x$.
\item For every $z \in \bigcup_{i \in \mathbf{N}}(\partial B_R(x) \cap C_{k, i}^x)$ and $0 < \delta <1$, there exists
$\partial B_R(x) \cap C_{k, i}^x$ such that $z \in \partial B_R(x) \cap C_{k, i}^x$ and that the map 
$\hat{\Phi}_{k, i}^x = (r_{x_{k, i}^2}, _{\cdots}, r_{x_{k, i}^k})$ from 
$\partial B_R(x) \cap C_{k, i}^x$ to $\mathbf{R}^{k-1}$, gives $(1 \pm \delta)$-bi-Lipschitz equivalent to the image 
$\hat{\Phi}_{k, i}^x(\partial B_R(x) \cap C_{k, i}^x)$.
\end{enumerate}
Especially, $\partial B_R(x) \setminus C_x$ is $\upsilon_{-1}$-rectifiable.
\end{enumerate}
\end{theorem}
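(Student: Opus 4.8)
The plan is to obtain $\{C_{k,i}^x\}$ and $\{x_{k,i}^l\}$ by repeatedly refining the collection of Lemma~\ref{5}, each time keeping the distance-function coordinates and discarding only $\upsilon$-null sets so that Lemma~\ref{6} can restore property~(6), and then to extract the sphere statements (7)--(8) by a coarea argument. Concretely, starting from Lemma~\ref{5}, first intersect each $C_{k,i}^x$ with $\mathcal{R}_{k;\alpha(n)}\setminus C_x$: since $\upsilon(\mathcal{R}_k\setminus\mathcal{R}_{k;\alpha(n)})=0$ and $\upsilon(C_x)=0$, properties~(1) and~(2) survive, and Lemma~\ref{6} reinstates the bi-Lipschitz property~(6) on the refined pieces. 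On $\mathcal{R}_{k;\alpha(n)}$ the measure $\upsilon$ is Ahlfors $k$-regular at every point, so I further split each piece into countably many Borel subsets on which the regularity constant and the radius threshold are uniform and apply Lemma~\ref{6} again; this yields~(4), and two-sided Ahlfors $k$-regularity gives~(5) (mutual absolute continuity of $\upsilon$ and $H^k$). Finally I replace each $C_{k,i}^x$ by its set $\mathrm{Leb}(C_{k,i}^x)$ of $\upsilon$-density points, of full $\upsilon$-measure inside it since $\upsilon$ is doubling, and invoke Lemma~\ref{6} once more; this is~(3), with (2),(4),(5),(6) unaffected, so properties (1)--(6) hold.

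\textbf{The slices.} Let $\upsilon_{-1}$ be the measure on $\partial B_R(x)$ recalled in the Appendix (cf.\ \cite{ch-co2}, \cite{ho}). From \cite{ho} I use the coarea identity $\upsilon(E)=\int_0^{\mathrm{diam}Y}\upsilon_{-1}(E\cap\partial B_t(x))\,dt$ for Borel $E\subset Y\setminus C_x$ — no Jacobian factor, since $r_x$ is $1$-Lipschitz and $\mathrm{Lip}\,r_x=1$ off $C_x\cup\{x\}$ — together with $\upsilon_{-1}(C_x\cap\partial B_R(x))=0$ for $H^1$-a.e.\ $R$ and the continuity of $t\mapsto\upsilon_{-1}(B_r(z)\cap\partial B_t(x))$ near $t=\overline{x,z}$. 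Applying the coarea identity to the set $(Y\setminus C_x)\setminus\bigcup_{k,i}C_{k,i}^x$, which is $\upsilon$-null by~(2), Theorem~\ref{regular} and $\upsilon(C_x)=0$, shows that for $H^1$-a.e.\ $R$ the set $(\partial B_R(x)\setminus C_x)\setminus\bigcup_{k,i}C_{k,i}^x$ is $\upsilon_{-1}$-null. I let $A$ be the set of $R\in[0,\mathrm{diam}Y)$ for which this and the other a.e.\ properties below hold; then~(7) is automatic and~(8a) holds for $R\in A$.

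\textbf{Regularity and bi-Lipschitz equivalence on the slices.} For $z\in\partial B_R(x)\cap C_{k,i}^x$ and small $r$, write $\upsilon(B_r(z))=\int_{R-r}^{R+r}\upsilon_{-1}(B_r(z)\cap\partial B_t(x))\,dt$; combining $\upsilon(B_r(z))\asymp r^k$ from~(4) with the continuity of the integrand in $t$ forces $\upsilon_{-1}(B_r(z)\cap\partial B_R(x))\asymp r^{k-1}$, uniformly on $C_{k,i}^x$ after one more Borel subdivision and for $R$ outside a further $H^1$-null set absorbed into the complement of $A$; this gives~(8b) (the closed-ball bound and the removal of $C_x$ being harmless since $\upsilon_{-1}(C_x\cap\partial B_R(x))=0$). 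For~(8c), on $\partial B_R(x)$ the first coordinate of $\Phi_{k,i}^x$ is the constant $R$, so for $z,w\in\partial B_R(x)\cap C_{k,i}^x$ one has $|\hat{\Phi}_{k,i}^x(z)-\hat{\Phi}_{k,i}^x(w)|=|\Phi_{k,i}^x(z)-\Phi_{k,i}^x(w)|=(1\pm\delta)\overline{z,w}$ by~(6); thus $\hat{\Phi}_{k,i}^x$ gives $(1\pm\delta)$-bi-Lipschitz equivalence onto its image, and $\partial B_R(x)\setminus C_x$ is $\upsilon_{-1}$-rectifiable.

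\textbf{The main obstacle.} The first paragraph is routine measure theory on top of Lemmas~\ref{5} and~\ref{6}; the real content is the coarea slicing and, above all, the passage from ambient Ahlfors $k$-regularity to Ahlfors $(k-1)$-regularity of $\upsilon_{-1}$ on almost every sphere. Since $\upsilon_{-1}$ is built through a limiting procedure on the approximating manifolds, the coarea identity and the continuity of $t\mapsto\upsilon_{-1}(\cdot\cap\partial B_t(x))$ are not formal and must be quoted carefully from \cite{ho}; the other care point is to see that $A$ is the complement of a union of only countably many $H^1$-null sets (one for~(8a), one per piece for~(8b)).
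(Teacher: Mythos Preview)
Your treatment of properties (1)--(6) is correct and matches the paper's approach: start from Lemma~\ref{5}, then successively refine via Lemma~\ref{6} to intersect with $\mathcal{R}_{k;\alpha(n)}\setminus C_x$, to enforce uniform Ahlfors $k$-regularity, and to pass to Lebesgue density points. Your arguments for (7), (8a) (coarea applied to the $\upsilon$-null complement) and (8c) (the first coordinate $r_x$ is constant on the sphere, so $|\hat\Phi|=|\Phi|$) are also fine and agree with the paper.

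The gap is in (8b). Writing $\upsilon(B_r(z))=\int_{R-r}^{R+r}\upsilon_{-1}(B_r(z)\cap\partial B_t(x))\,dt$ and knowing $\upsilon(B_r(z))\asymp r^k$ only tells you the \emph{average} of $t\mapsto\upsilon_{-1}(B_r(z)\cap\partial B_t(x))$ over $[R-r,R+r]$ is $\asymp r^{k-1}$. To conclude the value at $t=R$ is $\asymp r^{k-1}$ you would need the ratio between the integrand at $t$ and at $R$ to be bounded for $|t-R|\le r$, \emph{uniformly as $r\to0$}. Mere continuity of $t\mapsto\upsilon_{-1}(B_r(z)\cap\partial B_t(x))$ at fixed $r$ gives nothing of the sort: its modulus of continuity may (and typically will) degenerate as $r\to0$. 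No such ``continuity'' statement is available in \cite{ho}; what is available there are Bishop--Gromov type comparison inequalities, and these are exactly what the paper uses.

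Concretely, the paper proves two pointwise estimates directly: an upper bound $\upsilon_{-1}(\partial B_{\overline{x,z}}(x)\cap\overline{B}_\epsilon(z))\le C(n)\,\upsilon(B_\epsilon(z))/\epsilon$ via the cone-containment $C_x(\partial B_{\overline{x,z}}(x)\cap\overline{B}_\epsilon(z))\cap A_{\overline{x,z}-2\epsilon,\overline{x,z}}(x)\subset\overline{B}_{100\epsilon}(z)$ combined with \cite[Corollary~5.7]{ho1}; and a lower bound $\upsilon_{-1}(\partial B_{\overline{x,z}}(x)\cap B_\epsilon(z)\setminus C_x)\ge C(n,R)\,\upsilon(B_\epsilon(z))/\epsilon$ using \cite[Theorem~4.6]{ho1}, which crucially needs $z\in\mathcal{D}_x^\tau$ (a geodesic from $x$ through $z$ extends a definite amount past $z$) to produce a ball on the far side of the sphere whose cone shadow hits $\partial B_{\overline{x,z}}(x)$ inside $B_\epsilon(z)$. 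Your refinement only records $z\notin C_x$; to run the lower bound you must further subdivide so that each $C_{k,i}^x\subset\mathcal{D}_x^\tau\setminus B_\tau(x)$ for some $\tau>0$, as the paper does, and then invoke the comparison inequalities rather than a continuity/differentiation argument.
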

\begin{proof}
First, we shall prove the following claim:
\begin{claim}\label{105}
For every $R > 0$, $z \in \overline{B}_R(x)\setminus \{x\}$ and $0 < \epsilon < \min \{ \overline{z, x}/100, 1\}$, we have $\upsilon_{-1}(\partial B_{\overline{x, z}}(x) \cap \overline{B}_{\epsilon}(z))\le C(n)\upsilon(B_{\epsilon}(z))
/\epsilon$.
\end{claim}
Because, by \cite[Corollary $5.7$]{ho1}, we have 
\[\frac{\upsilon_{-1}(\partial B_{\overline{x, z}}(x) \cap \overline{B}_{\epsilon}(z))}{\mathrm{vol}\,\partial B_{\overline{x, z}}(\underline{p})} \le C(n) \frac{\upsilon(C_x(\partial B_{\overline{x, z}}(x) \cap \overline{B}_{\epsilon}(z))
\cap A_{\overline{x, z}-2\epsilon, \overline{x, z}}(x))}{\mathrm{vol}\,A_{\overline{x, z}-2\epsilon, \overline{x, z}}(\underline{p})}.\]
Here $C_x(A)=\{z \in Y; $ there exists $a \in A$ such that $\overline{x, z} + \overline{z, a} = \overline{z, a}\}$ for every subset $A$ of $Y$.
On the other hand, by triangle inequality, we have $C_x(\partial B_{\overline{x, z}}(x) \cap \overline{B}_{\epsilon}(z)) \cap 
A_{\overline{x, z}-2\epsilon, \overline{x, z}}(x) \subset \overline{B}_{100\epsilon}(z)$.
Thus, we have
\[\upsilon_{-1}(\partial B_{\overline{x,z}}(x) \cap \overline{B}_{\epsilon}(z))\le \frac{\mathrm{vol}\,\partial B_{\overline{x,z}}(\underline{p})}{\mathrm{vol}\,A_{\overline{x,z}-2\epsilon, \overline{x,z}}(\underline{p})}
\upsilon (B_{100\epsilon}(z)) C(n) \le C(n, R)\frac{1}{\epsilon}\upsilon(B_{\epsilon}(z)).\]
Therefore, we have Claim \ref{105}.

We take collections of Borel sets $\{C_{k, i}^x\}$ and of points $\{x_{k, i}^l\}$ as in Lemma \ref{5}.
By Lemma \ref{6}, without loss of generality, we can assume that for every $C_{k, i}^x$, there exists $\tau > 0$ such that
$C_{k, i}^x \subset \mathcal{D}_x^{\tau} \setminus B_{\tau}(x)$.
Moreover, by  \cite[Theorem $3. 23$]{ch-co3} and  \cite[Theorem $4.6$]{ch-co3}, we can assume that
for every $C_{k, i}^x$, there exists $A_{k, i}^x > 1$ such that for every $0 < r< 1$ and $z \in C_{k, i}^x$, we have
$(A_{k, i}^x)^{-1} \le \upsilon(B_r(z))/r^k \le A_{k,i}^x$.
By Proposition \ref{130}, we can also assume that for every $C_{k, i}^x$ and $z \in C_{k,i}^x$, we have 
$\lim_{r \rightarrow 0}\upsilon(B_r(z) \cap C_{k, i}^x)/\upsilon(B_r(z)) = 1.$ 
\begin{claim}\label{106}
Let $(Y, y, \upsilon)$ be a Ricci limit space, 
$x$ a point in $Y$ , $\tau, R $ positive numbers satisfying $0< \tau < 1$, $R>1$, and $z$ a point in 
$\mathcal{D}_x^{\tau} \cap B_R(x) \setminus B_{\tau}(x)$.  
Then, we have $\upsilon(\partial B_{\overline{x,z}}(x) \cap B_{\epsilon}(z) \setminus C_x) \ge C(n,R) \upsilon(B_{\epsilon}(z))/ \epsilon$
for every $0 < \epsilon < \tau/100$.
\end{claim}
The proof is as follows.
We take $w \in Y$ satisfying $\overline{z, w}=\epsilon/100$ and $\overline{x, z}+\overline{z,w}=\overline{x,w}$.
By \cite[Theorem $4. 6$ ]{ho1}, we have
\[\frac{\upsilon(B_{\frac{\epsilon}{1000}}(w))}{\mathrm{vol}\,A_{\overline{x,z}, \overline{x,z}+\epsilon}(\underline{p})}
\le C(n)\frac{\upsilon_{-1}\left(C_x(B_{\frac{\epsilon}{1000}}(w))\cap \partial B_{\overline{x,z}}(x)\right)}{\mathrm{vol}\,\partial B_{\overline{x,z}}(\underline{p})}. \]
By triangle inequality, we have $C_x(B_{\epsilon/1000}(w)) \cap \partial B_{\overline{x,z}}(x) \subset 
\partial B_{\overline{x,z}}(x) \cap B_{\epsilon}(z)$. 
Thus, by Bishop-Gromov volume comparison theorem for $\upsilon$,
\begin{align}
\upsilon_{-1}(\partial B_{\overline{x,z}}(x) \cap B_{\epsilon}(z) \setminus C_x) &\ge
C(n) \frac{\mathrm{vol}\,\partial B_{\overline{x,z}}(\underline{p})}{\mathrm{vol}\,A_{\overline{x,z}, \overline{x,z}+\epsilon}(\underline{p})}\upsilon(B_{\epsilon/1000}(w)) \\
&\ge C(n ,R) \frac{1}{\epsilon}\upsilon(B_{\frac{\epsilon}{1000}}(w)) \\
&\ge C(n, R) \frac{1}{\epsilon}\upsilon(B_{5\epsilon}(w)) \\
&\ge C(n, R) \frac{\upsilon(B_{\epsilon}(z))}{\epsilon}.
\end{align}
Therefore we have Claim \ref{106}.
\

By Claim \ref{105} and \ref{106}, for every $C_{k, i}^x$, there exist  $B_{k, i}^x >1$ and $\tau_{k, i}^x>0$ such that 
for every $z \in C_{k, i}^x$ and $0 < r <\tau_{k, i}^x$, we have $(B_{k, i}^x)^{-1} \le 
\upsilon(\partial B_{\overline{x,z}}(x) \cap B_r(z) \setminus C_x)/r^k \le B_{k, i}^x$.
We put $\hat{A} = \{ t \in [0, \mathrm{diam}Y); 
\upsilon_{-1}(\partial B_t(x) \setminus \bigcup C_{k,i}^x) = 0\}$. Since $\upsilon (Y \setminus \bigcup C_{k, i}^x)=0$, by \cite[Proposition $5.1$]{ho1} and  \cite[Theorem $5. 2$]{ho1}, we have,
$\hat{A}$ is $H^1$-Lebesgue measurable, $H^1([0, \mathrm{diam}Y) \setminus \hat{A}) = 0$.
Since $\upsilon$ is a Radon measure, there exists a Borel set $A \subset \hat{A}$ such that $H^1(\hat{A} \setminus A) =0$. Thus we have the assertion.
\end{proof}
\subsection{Calculation of radial derivative for Lipschitz functions}
The purpose in this subsection is to calculate the radial derivative of Lipschitz functions: $\langle dr_x, df \rangle$ explicitly.
The main result in this subsection is Theorem \ref{14}.
\begin{lemma}\label{8}
Let $(Y, y)$ be a Ricci limit space satisfying $Y \neq \{y\}$, $z$ a point in $Y \setminus C_y$, $f$ a Lipschitz function on 
$Y$, $\tau$ a positive number and $\gamma_i$ an isometric embedding from $[0, \overline{y, z}+\tau]$ to $Y$
satisfying $\gamma_i(0) = y$ and $\gamma_i(\overline{y, z})= z (i=1, 2)$.
We put $f_i = f \circ \gamma_i$. Then, we have 
$lipf_1(\overline{y, z}) =
lipf_2(\overline{y, z})$ and 
$\mathrm{Lip}f_1(\overline{y, z})=
\mathrm{Lip}f_2(\overline{y, z})$.
\end{lemma}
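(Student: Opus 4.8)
The plan is to reduce the whole statement to one quantitative estimate on how fast the two geodesics separate near $z$, and then to prove that estimate by a blow-up at $z$. Write $d:=\overline{y,z}$; we may assume $d>0$ (for $z=y$ the statement is not the one intended). Since $\gamma_1(d)=z=\gamma_2(d)$ and $f$ is Lipschitz, for $t$ close to $d$ with $t\ne d$ we have
\[
\left|\frac{|f_1(t)-f_1(d)|}{|t-d|}-\frac{|f_2(t)-f_2(d)|}{|t-d|}\right|\le\frac{|f_1(t)-f_2(t)|}{|t-d|}\le\mathbf{Lip}f\cdot\frac{\overline{\gamma_1(t),\gamma_2(t)}}{|t-d|}.
\]
Taking the supremum over $t\in B_r(d)\setminus\{d\}$ and then letting $r\to0$ (first through $\liminf$, then through $\limsup$), the equalities $lipf_1(d)=lipf_2(d)$ and $\mathrm{Lip}f_1(d)=\mathrm{Lip}f_2(d)$ follow once we prove
\[
\frac{\overline{\gamma_1(t),\gamma_2(t)}}{|t-d|}\longrightarrow 0\qquad(t\to d);
\]
call this $(\star)$.

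Before proving $(\star)$ I record the only geometric inputs needed besides the splitting theorem. As $\gamma_i$ is an isometric embedding with $\gamma_i(0)=y$ and $\gamma_i(d)=z$, we have $\overline{y,\gamma_i(t)}=t$ and $\overline{z,\gamma_i(d\pm s)}=s$; hence for $0<s\le\min\{d,\tau\}$ the triangle inequality gives $\overline{\gamma_1(d-s),\gamma_2(d+s)}\le\overline{\gamma_1(d-s),z}+\overline{z,\gamma_2(d+s)}=2s$ and, on the other hand, $\overline{\gamma_1(d-s),\gamma_2(d+s)}\ge\overline{y,\gamma_2(d+s)}-\overline{y,\gamma_1(d-s)}=2s$, so that
\[
\overline{\gamma_1(d-s),\gamma_2(d+s)}=\overline{\gamma_2(d-s),\gamma_1(d+s)}=2s .
\]

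Now suppose $(\star)$ fails: there exist $t_j\to d$ and $\epsilon_0>0$ with $\overline{\gamma_1(t_j),\gamma_2(t_j)}\ge\epsilon_0 s_j$, where $s_j:=|t_j-d|\to0$; after passing to a subsequence we may assume all $t_j$ lie on the same side of $d$. Rescale the metric of $Y$ by $s_j^{-1}$ about $z$; passing to a further subsequence, $(Y,z,s_j^{-1}d_Y)$ converges to a tangent cone $(C_zY,o)$, which is an $(n,0)$-Ricci limit space, and we may assume all the bounded point sequences below converge. In the rescaled spaces $\gamma_1|_{[0,d+\tau]}$ is a unit-speed minimal geodesic through $z$ whose two one-sided lengths $d/s_j$ and $\tau/s_j$ tend to $\infty$, so by a standard Arzel\`a--Ascoli argument it limits onto a line through $o$; applying the splitting theorem exactly as in the proof of Lemma \ref{3} we may write $C_zY=\mathbf{R}\times W$ with this line equal to $\mathbf{R}\times\{w_0\}$, $o=(0,w_0)$, and $\gamma_1(d\pm s_j)\to(\pm1,w_0)$. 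Let $A$ and $B$ be the limits of $\gamma_2(d-s_j)$ and $\gamma_2(d+s_j)$. From $\overline{z,\gamma_2(d\pm s_j)}=s_j$ we get $\overline{o,A}=\overline{o,B}=1$, and letting $j\to\infty$ in the two exact equalities gives $\overline{(-1,w_0),B}=2$ and $\overline{(1,w_0),A}=2$. Writing $B=(u,w)\in\mathbf{R}\times W$, the relations $u^2+d_W(w_0,w)^2=1$ and $(u+1)^2+d_W(w_0,w)^2=4$ force $u=1$ and $w=w_0$, i.e. $B=(1,w_0)$; symmetrically $A=(-1,w_0)$. Thus $\gamma_1(d+s_j)$ and $\gamma_2(d+s_j)$ both tend to $(1,w_0)$ and $\gamma_1(d-s_j),\gamma_2(d-s_j)$ both tend to $(-1,w_0)$; since $\gamma_i(t_j)=\gamma_i(d+s_j)$ if $t_j>d$ and $\gamma_i(t_j)=\gamma_i(d-s_j)$ if $t_j<d$, this gives $\overline{\gamma_1(t_j),\gamma_2(t_j)}/s_j\to0$, contradicting $\ge\epsilon_0$. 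Hence $(\star)$, and the lemma, holds.

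The step I expect to need the most care is the blow-up: one has to check that the rescaled extensions of $\gamma_1$ genuinely converge to a full line (this is precisely where $d>0$ and $\tau>0$ are used, through $d/s_j,\tau/s_j\to\infty$), that the splitting theorem applies to the tangent cone $C_zY$, and that the three distance relations ($\overline{o,A}=\overline{o,B}=1$ and the two ``crossed'' equalities $=2$) pass correctly to the Gromov--Hausdorff limit. Given that, the remaining computation in $\mathbf{R}\times W$ and the reduction in the first paragraph are routine.
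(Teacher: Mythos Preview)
Your proof is correct and follows the same core idea as the paper: both reduce the lemma to the estimate $(\star)$, i.e.\ $\overline{\gamma_1(d+\epsilon),\gamma_2(d+\epsilon)}=o(|\epsilon|)$, and then use the Lipschitz bound on $f$ to conclude. The paper's proof simply cites the almost splitting theorem (Cheeger--Colding, \cite[Theorem~9.25]{ch1} or \cite[Theorem~6.64]{ch-co}) to obtain $\overline{\gamma_1(d+\epsilon),\gamma_2(d+\epsilon)}\le\Psi(|\epsilon|;n)|\epsilon|$ in one line, whereas you derive the same conclusion by a blow-up contradiction argument: rescale, pass to a tangent cone, apply the exact splitting theorem there, and use the crossed distance identities $\overline{\gamma_1(d-s),\gamma_2(d+s)}=2s$ to pin down the limit points. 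Your route is a bit longer but more self-contained, trading a black-box quantitative citation for an explicit compactness argument; the paper's route is shorter but leans on the full almost-rigidity machinery. Both are standard ways of extracting the same geometric fact.
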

\begin{proof}
For every real number $\epsilon$ satisfying $0<|\epsilon|<<\tau$, by splitting theorem (see  \cite[Theorem $9.25$]{ch1} or \cite[Theorem $6.64$]{ch-co}), we have 
$\overline{\gamma_1(\overline{x,z} + \epsilon), \gamma_2(\overline{x,z} + \epsilon)}\le \Psi(|\epsilon|;n)|\epsilon|$.
Therefore, we have
\[\frac{|f\circ \gamma_1 (\overline{x,z} + \epsilon)- f \circ \gamma_1(\overline{x,z})|}{|\epsilon|} \le 
\frac{|f \circ \gamma_2(\overline{x,z} + \epsilon)-f \circ \gamma_2(\overline{x,z})|}{|\epsilon|} + \mathbf{Lip}f\Psi(|\epsilon|;n).\]
Thus, we have 
$\mathrm{Lip}f_1(\overline{y, z})\le
\mathrm{Lip}f_2(\overline{y, z})$ and $lipf_1(\overline{y, z})\le
lipf_2(\overline{y, z})$.
Therefore we have $\mathrm{Lip}f_1(\overline{y, z})=
\mathrm{Lip}f_2(\overline{y, z})$ and $lipf_1(\overline{y, z})=
lipf_2(\overline{y, z})$.
\end{proof}
We shall give the following definition:
\begin{definition}
Let $(Y, y)$ be a Ricci limit space, $z$ a point in $Y \setminus C_y$, $\tau$ a positive number, $\gamma$ an isometric embedding from
$[0, \overline{y, z}+\tau]$ to $Y$ satisfying $\gamma(0)=y$ and $\gamma(\overline{y,z})=z$.
We put $F=f \circ \gamma$.
Then, we put $lip_y^{\mathrm{rad}}f(z) = lipF(\overline{y,z})$ and $\mathrm{Lip}_y^{\mathrm{rad}}f(z)=\mathrm{Lip}F(\overline{y, z})$.
\end{definition}
\begin{theorem}\label{9}
Let $(Y, y, \upsilon)$ be a Ricci limit space, $x$ a point in $Y$ and $f$ a Lipschitz function on $Y$.
Then, we have the following:
\begin{enumerate}
\item $lipf(z)^2=lip_x^{\mathrm{rad}}f(z)^2+lip(f|_{\partial B_{\overline{x, z}}(x)})(z)^2$ for a.e. $z \in Y$.
\item $\mathrm{Lip}f(z)^2=\mathrm{Lip}_x^{\mathrm{rad}}f(z)^2+\mathrm{Lip}(f|_{\partial B_{\overline{x, z}}(x)})(z)^2$
for a.e. $z \in Y$.
\item $\mathrm{Lip}(f|_{\partial B_{\overline{x, z}}(x)})(z)=lip(f|_{\partial B_{\overline{x, z}}(x) \setminus C_x})(z)$
for a.e. $z \in Y \setminus C_x$.
\end{enumerate}
\end{theorem}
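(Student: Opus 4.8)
The plan is to localize everything to a single point and to decompose the differential $df(z)$ inside the cotangent space $T^*_zY$ into a radial and a spherical part. Since $\upsilon(C_x)=0$ (see \cite[Theorem $3.2$]{ho}) and, by Cheeger's differentiation theory together with \cite[Section 6]{ch-co3}, $|df|(z)=\mathrm{Lip}f(z)=lipf(z)$ for a.e.\ $z$, it is enough to prove the three identities at a.e.\ $z\in\mathcal{R}_k\setminus C_x$ which, writing $R=\overline{x,z}$, satisfies: (i) $z$ lies in a chart $C_{k,i}^x$ of Theorem \ref{7}, is a density point of it, and for every $\delta>0$ admits such a chart on which $\Phi=(r_x,r_{x_2},\dots,r_{x_k})$ is $(1\pm\delta)$-bi-Lipschitz onto its image and $f|_{C_{k,i}^x}$ is differentiable with respect to $\Phi$ at $z$; (ii) $R$ lies in the set $A$ of Theorem \ref{7} with $\upsilon_{-1}(\partial B_R(x)\cap C_x)=0$, $z$ is a $\upsilon_{-1}$-density point of a spherical chart $\partial B_R(x)\cap C_{k,i}^x$, and $f|_{\partial B_R(x)\setminus C_x}$ is differentiable at $z$ with respect to $\hat\Phi=(r_{x_2},\dots,r_{x_k})$; this last reduction uses a coarea statement for $\upsilon_{-1}$ (see \cite{ho1}) to pass from ``a.e.\ $z\in Y$'' to ``for $H^1$-a.e.\ $R$, for $\upsilon_{-1}$-a.e.\ $z\in\partial B_R(x)$''.

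First I would produce an orthonormal coframe at $z$. Since $z\notin C_x$, a minimal geodesic $\gamma$ from $x$ through $z$ extends a definite amount beyond $z$, so every tangent cone at $z$ contains a line in the direction of $\gamma$; by the splitting theorem (\cite[Theorem $9.27$]{ch1}) and $z\in\mathcal{R}_k$ this cone is $\mathbf{R}\times\mathbf{R}^{k-1}$ with the $\mathbf{R}$-factor radial, and the blow-up of $\partial B_R(x)$ at $z$ is the hyperplane $\{0\}\times\mathbf{R}^{k-1}$. Hence $dr_x(z)$ is a unit covector and $T^*_z\partial B_R(x)=(dr_x(z))^{\perp}$ inside $T^*_zY$. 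Because $\Phi$ is $(1\pm\delta)$-bi-Lipschitz and $\upsilon$ is Ahlfors $k$-regular at $z$, the differential of $\Phi$ at $z$ is $(1\pm\Psi(\delta;n))$-close to a linear isometry once the norm $|\cdot|(z)$ is recognized as coming from an inner product (\cite[Section 6]{ch-co3}); thus the Gram matrix of $\{dr_x(z),dr_{x_2}(z),\dots,dr_{x_k}(z)\}$ equals $\mathrm{Id}\pm\Psi(\delta;n)$, and letting $\delta\to0$ along the available charts shows these covectors form an orthonormal basis of $T^*_zY$ with $\{dr_{x_l}(z)\}_{l\ge2}$ an orthonormal basis of $T^*_z\partial B_R(x)$.

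Now decompose $df(z)=\langle df,dr_x\rangle(z)\,dr_x(z)+\eta$ with $\eta\perp dr_x(z)$, so that $|df|(z)^2=\langle df,dr_x\rangle(z)^2+|\eta|^2$. For the radial coefficient: since $\Phi$ maps $\gamma$ to a curve whose first coordinate moves at unit speed and whose remaining coordinates move at speed $O(\sqrt\delta)$ (bi-Lipschitz with constant $1\pm\delta$), differentiability of $f$ in the chart gives $|(f\circ\gamma)'(R)-\langle df,dr_x\rangle(z)|=O(\sqrt\delta)$, hence $(f\circ\gamma)'(R)=\langle df,dr_x\rangle(z)$; as $f\circ\gamma$ is Lipschitz on an interval this also yields $lip_x^{\mathrm{rad}}f(z)=\mathrm{Lip}_x^{\mathrm{rad}}f(z)=|\langle df,dr_x\rangle(z)|$ (Lemma \ref{8} makes these independent of $\gamma$). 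For the spherical part: $\eta\in T^*_z\partial B_R(x)$ is the differential of $f|_{\partial B_R(x)}$ at $z$ (compatibility of restriction with differentials, read off from $\hat\Phi$), so by differentiability of $f|_{\partial B_R(x)\setminus C_x}$ at $z$ we get $|\eta|=lip(f|_{\partial B_R(x)\setminus C_x})(z)=\mathrm{Lip}(f|_{\partial B_R(x)\setminus C_x})(z)$.

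It remains to compare the spherical constants with and without the cut locus, which is also the crux of part (3). For $H^1$-a.e.\ $R$ one has $\upsilon_{-1}(\partial B_R(x)\cap C_x)=0$ and $\partial B_R(x)\setminus C_x$ is dense in $\partial B_R(x)$ (using the coarea structure of $\upsilon_{-1}$ together with the full support of $\upsilon$); since $f$ is continuous, the supremum of the difference quotients $w\mapsto|f(w)-f(z)|/\overline{w,z}$ over $\partial B_R(x)\cap B_r(z)$ equals the corresponding supremum over $\partial B_R(x)\setminus C_x$, whence $\mathrm{Lip}(f|_{\partial B_R(x)})(z)=\mathrm{Lip}(f|_{\partial B_R(x)\setminus C_x})(z)$ and similarly $lip(f|_{\partial B_R(x)})(z)=lip(f|_{\partial B_R(x)\setminus C_x})(z)$. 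Combining this with the previous paragraph gives $lip(f|_{\partial B_R(x)})(z)=\mathrm{Lip}(f|_{\partial B_R(x)})(z)=|\eta|$, which together with $|df|(z)=lipf(z)=\mathrm{Lip}f(z)$ and the radial identification proves (1), (2) and (3) simultaneously. The main obstacle is the coframe step: extracting the exact orthonormality of $dr_x(z),dr_{x_2}(z),\dots,dr_{x_k}(z)$ and the identification $T^*_z\partial B_R(x)=(dr_x(z))^{\perp}$ at a.e.\ $z$, which is precisely where the splitting theorem at non-cut points and the rectifiability of both $Y$ and its geodesic spheres (Theorem \ref{7}) must be combined; the radial derivative identification is the other delicate point, and I would carry it out through the chart $\Phi$ rather than through a general chain rule along curves.
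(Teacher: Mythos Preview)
Your strategy---decompose $df(z)$ inside $T^*_zY$ into a radial piece and an orthogonal piece, then identify each piece with the corresponding pointwise Lipschitz constant---is the right picture, and it is what the paper's proof ultimately encodes. But two of your identification steps do not go through as written, and the paper's argument is organized precisely to avoid them.

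\textbf{The limit of coframes.} You write that the Gram matrix of $dr_x(z),dr_{x_2}(z),\dots,dr_{x_k}(z)$ is $\mathrm{Id}\pm\Psi(\delta;n)$ and that ``letting $\delta\to0$ along the available charts shows these covectors form an orthonormal basis''. But the points $x_2,\dots,x_k$ come from the chart, and by Theorem~\ref{7} the chart---hence these points---changes with $\delta$. There is no single family of covectors to which you are passing to the limit; you only know that for each $\delta$ \emph{some} $k$-tuple is $\Psi(\delta)$-orthonormal. The paper never tries to manufacture a limiting coframe: it fixes one chart $\Phi_{k,i}^x=(r_x,r_{x^2_{k,i}},\dots,r_{x^k_{k,i}})$ with a fixed $\delta$, pushes $f$ to a Lipschitz function $F$ on $\mathbf{R}^k$, applies Euclidean Rademacher (Claim~\ref{109}) to obtain the Pythagorean split there, and pulls back the \emph{numerical} equalities with multiplicative error $1\pm\Psi(\delta;n)$ (Claim~\ref{112}). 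The three statements of the theorem are then obtained by letting $\delta\to0$ at the level of numbers, not covectors.

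\textbf{The tangential identification.} Your key step ``$|\eta|=\mathrm{Lip}(f|_{\partial B_R(x)})(z)$, by compatibility of restriction with differentials read off from $\hat\Phi$'' is exactly the content to be proved; there is no ready-made restriction map $T^*_zY\to T^*_z\partial B_R(x)$ that you may invoke. The paper proves this by hand (Claims~\ref{112}(5),(6) and Claim~\ref{113}): it compares $\mathrm{Lip}(f|_{\partial B_R(x)\cap C^x_{k,i}})(z)$ with $\mathrm{Lip}(F|_{\{w_1\}\times\mathbf{R}^{k-1}})(\Phi(z))$ via the bi-Lipschitz chart, and then upgrades from the chart-slice to the full sphere using a sliced Lebesgue-density lemma (Claim~\ref{111}, proved in the appendix) together with Proposition~\ref{133}. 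Your density argument ``$\partial B_R(x)\setminus C_x$ is dense in $\partial B_R(x)$, so the two suprema of difference quotients coincide'' is not justified: you would need $\upsilon_{-1}$ to have full support on the sphere near $z$, which is not established, and in any case the paper's route through $\upsilon_{-1}$-Lebesgue points of the chart slice is what actually closes this gap.

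A smaller point: your radial identification needs the geodesic $\gamma$ to stay inside the chart in order to read the derivative from $\Phi$, but $\gamma$ typically leaves $C^x_{k,i}$. The paper handles this by approximating $\gamma(\overline{x,z}\pm\epsilon)$ with nearby points of $C^x_{k,i}$ using that $z\in\mathrm{Leb}\,C^x_{k,i}$ (see the proof of Claim~\ref{112}(3),(4)); this is also where the existence of the limit defining $\mathrm{Lip}_x^{\mathrm{rad}}f(z)=lip_x^{\mathrm{rad}}f(z)$ at the \emph{given} point $z$ (not merely $H^1$-a.e.\ along $\gamma$) is obtained. Your sketch assumes this existence rather than deriving it.

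In short, the conceptual decomposition you propose is exactly what is happening, but to turn it into a proof you must work with a fixed $\delta$-chart, prove the three comparisons (full, radial, spherical) with error $\Psi(\delta;n)$ between pointwise Lipschitz constants on $Y$ and on $\mathbf{R}^k$, and only then let $\delta\to0$. That is the paper's argument.
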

\begin{proof}
First we shall remark the following:
\begin{claim}\label{109}
Let $f$ be a Lipschitz function on $\mathbf{R}^k$.
Then,  we have $\mathrm{Lip}f(z)^2 = (\mathrm{Lip}(f|_{\mathbf{R} \times \{z_2, _{\cdots}, z_k\}})(z))^2$ $+$ $(\mathrm{Lip}(f|_{\{z_1\} \times \mathbf{R}^{k-1}})(z))^2 = 
(lip(f|_{\mathbf{R} \times \{z_2, _{\cdots}, z_k\}})(z))^2 + (lip(f|_{\{z_1\} \times \mathbf{R}^{k-1}})(z))^2
 = lipf(z)^2$ for a.e $z=(z_1, _{\cdots}, z_k) \in \mathbf{R}^k$.
\end{claim}
Because, by Rademacher's theorem for Lipschitz functions on $\mathbf{R}^k$, the function $f$ is totally differentiable at a.e $z \in \mathbf{R}^k$.
Therefore we have Claim \ref{109}.
\

The next claim is clear:
\begin{claim}\label{110a}
Let $Z_i$ be metric spaces $(i =1, 2)$, $\delta$ a positive number with $0 < \delta <1$, and $\Phi$ a map from $Z_1$ to $Z_2$ satisfying
$\Phi(Z_i) = Z_2$ and $(1-\delta)\overline{x_1, x_2} \le
\overline{\Phi(x_1), \Phi(x_2)} \le (1+\delta)\overline{x_1, x_2}$ for every $x_1, x_2 \in Z_1$.
Then, for every Lipschitz function $f$ on $Z_2$, we have, 
$(1-\Psi(\delta))\mathrm{Lip}f(\Phi (z_1))\le \mathrm{Lip}(f\circ \Phi)(z_1) \le (1 + \Psi(\delta))\mathrm{Lip}f(z_1)$, $(1-\Psi(\delta))lipf(\Phi(z_1)) \le lip(f \circ \Phi)(z_1) \le (1 + \Psi(\delta))lipf(\Phi(z_1))$  for 
every $z_1 \in Z_1$. 
\end{claim}
We will give a proof of the following claim in appendix.
\begin{claim}\label{111}
For every Lebesgue measurable $A \subset \mathbf{R}^k$, we put $sl_1-\mathrm{Leb}A=\{ a = (a_1, _{\cdots}, a_k) \in A;
\lim_{r \rightarrow 0} H^{k-1}(\{a_1\} \times \overline{B}_r(a_2, _{\cdots}, a_k) \cap A)/H^{k-1}(\{a_1\} \times 
\overline{B}_r(a_2, _{\cdots}, a_k))=1\}$.
Then we have the following:
\begin{enumerate}
\item The set $sl_1-\mathrm{Leb}A$ is a Lebesgue measurable set.
\item For every $t \in \mathbf{R}$, $H^{k-1}(A \cap \{t\} \times \mathbf{R}^{k-1} \setminus sl_1-\mathrm{Leb}A)=0$.
\item $H^k(A \setminus sl_1-\mathrm{Leb}A) = 0$.
\end{enumerate}
\end{claim}
We put $L= \mathbf{Lip}f$.
We take collections of Borel sets $\{C_{k,i}^x\}_{1 \le k \le n, i \in \mathbf{N}}$ and of points 
$\{x_{k, i}^l\}_{2 \le k \le n, i \in \mathbf{N}, 2\le l \le k}$ as in Theorem \ref{7}.
We fix a sufficiently small $\delta > 0 $ and
$C_{k, i}$ satisfying that the map $\Phi_{k,i}^x=(r_x, r_{x_{k,i}^2}, _{\cdots}, r_{x_{k,i}^k})$ from $C_{k,i}^x$ to $\mathbf{R}^k$, gives $(1 \pm \delta)$-bi-Lipschitz equivalent to the image.
Then we put a function $f_{k,i}^x = f \circ (\Phi_{k,i}^x)^{-1}$ on $\Phi_{k,i}^x(C_{k,i}^x)$.
and take a Lipschitz function $F_{k,i}^x$ on $\mathbf{R}^k$ satisfying 
$F_{k,i}^x|_{\Phi_{k,i}^x(C_{k,i}^x)} = f_{k,i}^x$ and $\mathbf{Lip}F_{k,i}^x=\mathbf{Lip}f_{k,i}^x$.
\begin{claim}\label{112}
With notation as above, we have the following:
\begin{enumerate}
\item $(1-\Psi(\delta;n))\mathrm{Lip}F_{k,i}^x(w) \le \mathrm{Lip}f((\Phi_{k,i}^x)^{-1}(w))\le(1+\Psi(\delta;n))\mathrm{Lip}
F_{k,i}^x(w)$ for a.e $w \in \Phi_{k,i}^x(C_{k,i}^x)$.
\item  $(1-\Psi(\delta;n))lipF_{k,i}^x(w) \le lipf((\Phi_{k,i}^x)^{-1}(w))\le(1+\Psi(\delta;n))lip
F_{k,i}^x(w)$ for a.e $w \in \Phi_{k,i}^x(C_{k,i}^x)$.
\item $\mathrm{Lip}(F_{k,i}^x|_{\mathbf{R}\times \{w_2, _{\cdots}, w_k\}})(w)-L\Psi(\delta;n)
\le \mathrm{Lip}_x^{\mathrm{rad}}f((\Phi_{k,i}^x)^{-1}(w)) \le 
\mathrm{Lip}(F_{k,i}^x|_{\mathbf{R} \times \{w_2, _{\cdots}, w_k\}})(w) + L\Psi(\delta;n)$
for a.e $w = (w_1, _{\cdots}, w_k) \in \Phi_{k,i}^x(C_{k,i}^x)$.
\item $lip(F_{k,i}^x|_{\mathbf{R}\times \{w_2, _{\cdots}, w_k\}})(w)-L\Psi(\delta;n)
\le lip_x^{\mathrm{rad}}f((\Phi_{k,i}^x)^{-1}(w)) \le 
lip(F_{k,i}^x|_{\mathbf{R} \times \{w_2, _{\cdots}, w_k\}})(w) + L\Psi(\delta;n)$
for a.e $w = (w_1, _{\cdots}, w_k) \in \Phi_{k,i}^x(C_{k,i}^x)$.
\item $(1-\Psi(\delta;n))\mathrm{Lip}(F_{k,i}^x|_{\{w_1\} \times \mathbf{R}^{k-1}})(w) \le 
\mathrm{Lip}(f|_{\partial B_{\overline{x, (\Phi_{k,i}^x)^{-1}(w)}}(x) \cap C_{k,i}^x})((\Phi_{k,i}^x)^{-1}(w))
\le (1 + \Psi(\delta;n))\mathrm{Lip}(F_{k,i}^x|_{\{w_1\}\times \mathbf{R}^{k-1}})(w)$ for a.e.
$w=(w_1, _{\cdots}, w_k) \in \Phi_{k,i}^x(C_{k,i}^x)$.
\item $(1-\Psi(\delta;n))lip(F_{k,i}^x|_{\{w_1\} \times \mathbf{R}^{k-1}})(w) \le 
lip(f|_{\partial B_{\overline{x, (\Phi_{k,i}^x)^{-1}(w)}}(x) \cap C_{k,i}^x})((\Phi_{k,i}^x)^{-1}(w))
\le (1 + \Psi(\delta;n))lip(F_{k,i}^x|_{\{w_1\}\times \mathbf{R}^{k-1}})(w)$ for a.e.
$w=(w_1, _{\cdots}, w_k) \in \Phi_{k,i}^x(C_{k,i}^x)$.
\end{enumerate}
\end{claim} 
The proof is as follows.
First, we shall check the statement $1$.
We put $\mathbf{C}_{k, i}^x = \mathrm{Leb}(\Phi_{k,i}^x(C_{k, i}^x)) \cap \Phi_{k,i}^x(\mathrm{Leb}C_{k,i}^x)$.
Then, we have $H^k(\Phi_{k,i}^x(C_{k,i}^x)\setminus \mathbf{C}_{k,i}^x)=0$.
By Claim \ref{110a} and Proposition \ref{130}, we have 
$(1-\Psi(\delta))\mathrm{Lip}(F_{k,i}^x|_{\Phi_{k,i}(C_{k,i}^x)})(w) \le 
\mathrm{Lip}(f|_{C_{k,i}^x})((\Phi_{k,i}^x)^{-1}(w))
\le (1 + \Psi(\delta)) \mathrm{Lip}(F_{k,i}^x|_{\Phi_{k,i}^x(C_{k,i}^x)})(w)$,
$\mathrm{Lip}(F_{k,i}^x|_{\Phi_{k,i}^x(C_{k,i}^x)})(w) = \mathrm{Lip}F_{k,i}^x(w)$ and 
$\mathrm{Lip}(f|_{C_{k,i}^x})((\Phi_{k,i}^x)^{-1}(w))= \mathrm{Lip}f((\Phi_{k,i}^x)^{-1}(w))$
for every $w \in \mathbf{C}_{k, i}^x$.
Therefore we have the statement $1$.
Similarly, we have the statement $2$.
\

Next, we shall give a proof of statement $3$.
We put $\mathbf{C}_{k,i}^{x,f} = sl_1-\mathrm{Leb}\mathbf{C}_{k,i}^x \cap \{ w \in \mathbf{R}^k; F_{k,i}^x$ is  
totally differentiable at $w. \}$.
Then, by Claim \ref{111}, we have $H^k(\mathbf{C}_{k,i}^x \setminus \mathbf{C}_{k,i}^{x, f})=0$.

We take a point $w \in \mathbf{C}_{k,i}^{x, f}$ and put 
$w_{\epsilon}=w +(\epsilon, 0, _{\cdots}, 0)$ for every $ \epsilon > 0$.
Since $w \in \mathrm{Leb}\mathbf{C}_{k,i}^x$, there exists $\hat{w}_{\epsilon} \in \mathbf{C}_{k,i}^x$ such that
$\overline{w_{\epsilon}, \hat{w}_{\epsilon}} \le a(\epsilon)\epsilon (a(\tau) \rightarrow 0$ as $\tau \rightarrow 0 )$.
Clearly, $(1-\delta)(\epsilon-a(\epsilon)\epsilon) \le 
(1-\delta)\overline{w,\hat{w}_{\epsilon}} \le \overline{(\Phi_{k,i}^x)^{-1}(w), (\Phi_{k,i}^x)^{-1}(\hat{w}_{\epsilon})}
\le (1 + \delta)\overline{w, \hat{w}_{\epsilon}} \le (1 + \delta)(\epsilon + a(\epsilon)\epsilon).$
We define the projection $\pi_1$ from $\mathbf{R}^k$ to $\mathbf{R}$ by $\pi_1(w) = w_1$.
Then we have 
$\overline{x, (\Phi_{k,i}^x)^{-1}(\hat{w}_{\epsilon})} = \pi_1(\hat{w}_{\epsilon}) = \pi_1(w_{\epsilon}) \pm a(\epsilon)\epsilon = \pi_1(w) + \epsilon \pm a(\epsilon)\epsilon
= \overline{x, (\Phi_{k,i}^x)^{-1}(w)} + 
\overline{(\Phi_{k,i}^x)^{-1}(w), (\Phi_{k,i}^x)^{-1}(\hat{w}_{\epsilon})} \pm
(\delta + a(\epsilon))\epsilon.$
By Lemma \ref{6}, without loss of generality, we can assume that 
there exists $\tau_0 > 0$ such that $C_{k,i} \subset \mathcal{D}_x^{\tau_0}$.
We take an isometric embedding $\gamma$ from 
$[0, \overline{x, (\Phi_{k,i}^x)^{-1}(w)}+\tau_0]$ to $Y$ satisfying 
$\gamma(0) = x$ and $\gamma(\overline{x, (\Phi_{k,i}^x)^{-1}(w)})= (\Phi_{k,i}^x)^{-1}(w)$.
Then, by rescaling $\epsilon^{-1}d_Y$ and splitting theorem, we have
$\overline{(\Phi_{k,i}^x)^{-1}(\hat{w}_{\epsilon}), \gamma(\overline{x, (\Phi_{k,i}^x)^{-1}(w) + \epsilon})} 
\le \Psi (a(\epsilon), \delta;n)\epsilon$.
For $\epsilon << \tau_0$,
we have 
\begin{align*}
\frac{|F_{k,i}^x(w)-F_{k,i}^x(w_{\epsilon})|}{\epsilon} &\le \frac{|F_{k,i}^x(w)-F_{k,i}^x(\hat{w}_{\epsilon})|}{\epsilon}
+ La(\epsilon) \\
&\le \frac{|f((\Phi_{k,i}^x)^{-1}(w))-f(\gamma(\overline{x, (\Phi_{k,i}^x)^{-1}(w)}+\epsilon))|}{\epsilon}
+L\Psi(a(\epsilon), \delta;n).
\end{align*} 
By letting $\epsilon \rightarrow 0$, we have 
$\mathrm{Lip}(F_{k,i}^x|_{\mathbf{R} \times \{w_2, _{\cdots}, w_k\} })(w) \le 
\mathrm{Lip}_x^{\mathrm{rad}}f((\Phi_{k,i}^x)^{-1}(w)) + L\Psi(\delta;n).$
We take a sequence $\{\epsilon_i\}$ such that $\epsilon_j \rightarrow 0$ and that 
\[ \lim_{j \rightarrow \infty}\frac{|f \circ (\Phi_{k,i}^x)^{-1}(w)-f(\gamma(\overline{x, (\Phi_{k,i}^x)^{-1}(w)}+\epsilon_j))|}{|\epsilon_j|}=\mathrm{Lip}_x^{\mathrm{rad}}f((\Phi_{k,i}^x)^{-1}(w)).\]
We fix $j \in \mathbf{N}$. We assume that $\epsilon_j >0$.
Since $(\Phi_{k,i}^x)^{-1}(w) \in \mathrm{Leb}C_{k,i}^x$,  there exists $\hat{w}(j) \in C_{k,i}^x$ such that
$\overline{\hat{w}(j), \gamma(\overline{x, (\Phi_{k,i}^x)^{-1}(w)} + \epsilon_j)}\le \tau_j \epsilon_j (\tau_j \rightarrow 0$ as $j \rightarrow \infty)$.
Then, we have 
\begin{align}
\pi_1(\hat{w}(j))-\pi_1(w) &= \overline{x, \hat{w}(j)}-\overline{x, (\Phi_{k,i}^x)^{-1}(w)} \\
&= \overline{x, \gamma(\overline{x, (\Phi_{k,i}^x)^{-1}(w)} + \epsilon_j)} \pm \tau_j\epsilon_j \\
&=\epsilon_j \pm \tau_j\epsilon_j \\
&=\overline{\gamma(\overline{x, (\Phi_{k,i}^x)^{-1}(w)} + \epsilon_j), (\Phi_{k,i}^x)^{-1}(w)}\pm \tau_j\epsilon_j \\
&\ge (1-\delta)\overline{\Phi_{k,i}^x(\hat{w}(j)), w}-\tau_j\epsilon_j.
\end{align}
On the other hand, since
$\overline{\Phi_{k,i}^x(\hat{w}(j)), w} \le (1+\delta)\epsilon_j + \tau_j\epsilon_j,$
we have $\overline{w+(\epsilon_j, 0, _{\cdots}, , 0), \Phi_{k,i}^x(\hat{w}(j))} \le \Psi (|\epsilon_j|, \delta;n)|\epsilon_j|.$
Similarly, we have the inequality above in the case $\epsilon_j < 0$.
We put $w(j) = w + (\epsilon_j, 0 , _{\cdots}, 0)$.
Then, we have 
\begin{align*}
\frac{|f((\Phi_{k,i}^x)^{-1}(w))-f(\gamma(\overline{x, (\Phi_{k,i}^x)^{-1}(w))}+\epsilon_j))}{|\epsilon_j|} &\le
\frac{|F_{k,i}^x(w)-F_{k,i}^x(\Phi_{k,i}^x(\hat{w}(j)))|}{|\epsilon_j|}+L\tau_j \\
&\le\frac{|F_{k,i}^x(w)-F_{k,i}^x(w(j))|}{|\epsilon_j|}+L\Psi(|\epsilon_j|, \tau_j, \delta;n). 
\end{align*}
By letting $j \rightarrow \infty$, we have the statement $3$.
Similarly, we have the statement $4$.
\

We shall give a proof of the statement $5$.
we take $w \in \mathbf{C}_{k,i}^{x f}$. By Claim \ref{110a}, we have
\begin{align}
(1-\Psi(\delta))\mathrm{Lip}(F_{k,i}^x|_{\{w_1 \}\times \mathbf{R}^{k-1}\cap \mathbf{C}_{k,i}^x})(w)
&\le \mathrm{Lip}(f|_{(\Phi_{k,i}^x)^{-1}(\{w_1\} \times \mathbf{R}^{k-1} \cap \mathbf{C}_{k,i}^x)})(\Phi_{k,i}^x)^{-1}(w) \\
&\le (1 +\Psi(\delta))\mathrm{Lip}(F_{k,i}^x|_{\{w_1\}\times \mathbf{R}^{k-1}\cap \mathbf{C}_{k,i}^x})(w).
\end{align}
We remark that $(\Phi_{k,i}^x)^{-1}(\{w_1\} \times \mathbf{R}^{k-1} \cap \mathbf{C}_{k,i}^x)
=\partial B_{\overline{x, (\Phi_{k,i}^x)^{-1}(w)}}(x) \cap \mathbf{C}_{k,i}^x$.
By Proposition \ref{133}, we have 
$\mathrm{Lip}(F_{k,i}^x|_{\{w_1\}\times \mathbf{R}^{k-1} \cap \mathbf{C}_{k,i}^x})(w)= \mathrm{Lip}(F_{k,i}^x|_{\{w_1\} \times
\mathbf{R}^{k-1}})(w)$.
Therefore, by Claim \ref{110a}, we have 
\begin{align}
(1-\Psi(\delta))\mathrm{Lip}(F_{k,i}^x|_{\{w_1\}\times \mathbf{R}^{k-1}})(w) &\le
\mathrm{Lip} f|_{\partial B_{\overline{x, (\Phi_{k,i}^x)^{-1}(w)}}(x) \cap \mathbf{C}_{k,i}^x})((\Phi_{k,i}^x)^{-1}(w)) \\
&\le \mathrm{Lip}(f|_{\partial B_{\overline{x, (\Phi_{k,i}^x)^{-1}(w)}}(x) \cap C_{k,i}^x})(\Phi_{k,i}^x)^{-1}(w)) \\
&\le (1 +\Psi(\delta))\mathrm{Lip}(F_{k,i}^x|_{\{w_1\} \times \mathbf{R}^{k-1} \cap \Phi_{k,i}^x(C_{k,i}^x)})(w) \\
&\le (1 +\Psi(\delta))\mathrm{Lip}(F_{k,i}^x|_{\{w_1\} \times \mathbf{R}^{k-1}})(w).
\end{align}
Thus we have the statement $5$.
Similarly, we have the statement $6$.
\

Therefore we have Claim \ref{112}.

\
\begin{claim}\label{113}
With same notation as in Claim \ref{112}, we have
\[lip(f|_{\partial B_{\overline{x, (\Phi_{k,i}^x)^{-1}(w)}}(x) \cap C_{k,i}^x})((\Phi_{k,i}^x)^{-1}(w)) \ge 
\mathrm{Lip}(f_{\partial B_{\overline{x, (\Phi_{k,i}^x)^{-1}}}(x)})((\Phi_{k,i}^x)^{-1}(w))-\Psi(\delta;n, L)\]
for a.e $w \in \Phi_{k,i}^x(C_{k,i}^x)$.
\end{claim}
The proof is as follows.
We will use same notaion as in the proof of Claim \ref{112}.
We take $w \in \Phi_{k,i}^x(\mathrm{Leb}(\Phi_{k,i}^x)^{-1}(\mathbf{C}_{k,i}^{x, f}))$ and put 
$z = (\Phi_{k,i}^x)^{-1}(w)$.
First, we assume $k \ge 2$.
We shall prove that $z \in \partial B_{\overline{x,z}}(x)$ is not an isolated point in $\partial B_{\overline{x,z}}(x) \setminus C_x$.
Because, by the definition of $sl_1-\mathrm{Leb}(\mathbf{C}_{k,i}^x)$, there exists a sequence 
$\{\beta(j)\} \in \mathbf{C}_{k,i}^x$ such that $\pi_1(\beta(j))=\pi_1(w)$, $\beta(j) \neq w$ and
$\beta(j) \rightarrow w$.
Then, we have $(\Phi_{k,i}^x)^{-1}(\beta(j)) \neq z$, $(\Phi_{k,i}^x)^{-1}(\beta(j)) \in \partial B_{\overline{x,z}}(x) \setminus C_x$ and
$(\Phi_{k,i}^x)^{-1}(\beta(j)) \rightarrow z$.
Therefore, $z$ is not an isolated point in $\partial B_{\overline{x,z}}(x) \setminus C_x$.

\
We take a sequence $\{z(j)\} \in \partial B_{\overline{x,z}}(x) \setminus \{z\}$ such that $z(j) \rightarrow z$ and that 
$|f(z(j))-f(z)|/\overline{z(j), z} \rightarrow \mathrm{Lip}(f|_{\partial B_{\overline{x,z}}(x)})(z)$.
We put $\eta_j = \overline{z(j), z} > 0$.
Since $z \in \mathrm{Leb}(\Phi_{k,i}^x)^{-1}(\mathbf{C}_{k,i}^{x, f})$, there exists $\hat{z}(j) \in (\Phi_{k,i}^x)^{-1}(\mathbf{C}_{k,i}^{x, f})$ such that $\overline{z(j), \hat{z}(j)} \le \hat{\tau}_j\eta_j (\hat{\tau}_j \rightarrow 0$ as $j \rightarrow \infty )$.
We put $\alpha(j)= \Phi_{k,i}^x(\hat{z}(j))$.
Thus, we have $|\pi_1(\alpha(j))-\pi_1(w)|\le (1+\delta)\hat{\tau}_j\eta_j$.
Therefore, there exists $\hat{\alpha}(j) \in \{w_1\}\times \mathbf{R}^{k-1}$ such that 
$\overline{w(j), \hat{\alpha}(j)}\le \Psi (\hat{\tau}_j;n)\eta_j$.
Then, we have 
\begin{align}
\frac{|f(z(j))-f(z)|}{\overline{z(j) ,z}} &\le \frac{|f(\hat{z}(j))- f(z)|}{\eta_j} + L\hat{\tau}_j\\
&\le \frac{|F_{k,i}^x(w(j))-F_{k,i}^x(w)|}{\eta_j}+\Psi(\hat{\tau}_j;n, L) \\
&\le \frac{|F_{k,i}^x(\hat{\alpha}(j))-F_{k,i}^x(w)|}{\overline{\hat{\alpha}(j), w}}\frac{\overline{\hat{\alpha}(j), w}}{\eta_j} + L\Psi(\hat{\tau}_j;n, L).
\end{align}
By letting $j \rightarrow \infty$, we have Claim \ref{113} for the case $k \ge 2$.
Next, we assume $k =1$.
It suffices to check that $z$ is an isolated point in $\partial B_{\overline{x,z}}(x)$.
We assume that $z$ is not an isolated point in $\partial B_{\overline{x,z}}(x)$.
Then, there exists a sequence $\{z(i)\} \in \partial B_{\overline{x,z}}(x) \setminus \{z\}$ such that
$z(i) \rightarrow z$.
We take an isometric embedding $\gamma$ from $[0, \overline{x,z} + \tau_0]$ to $Y$ such that
$\gamma(0) = x,  \gamma(\overline{x,z}) =z$. Here $\tau_0$ is a positive constant.
We put $\epsilon(i) = \overline{z, z(i)}$.
Then we have 
$\overline{z(i), \gamma(\overline{x,z}-\epsilon_i)} \ge 
\overline{x, z(i)}-\overline{x, \gamma(\overline{x,z}-\epsilon_i)} = \epsilon_i$,
$\overline{z(i), \gamma(\overline{x,z} + \epsilon_i)} \ge \overline{x, \gamma(\overline{x, z}+\epsilon_i)}-\overline{x, z(i)}
=\epsilon_i$.
On the other hand, by Proposition \ref{090}, without loss of generality, we can assume that 
$(Y, \epsilon_i^{-1}d_Y, z)$ converges to some tangent cone $(T_zY, 0_z)$ at $z$.
By the argument above and splitting theorem, there exists a pointed proper geodesic space $(W, w)$ such that $T_zY = \mathbf{R} \times W$ and that 
$W\neq \{w\}$.
On the other hand, $z \in C_{1, i} \subset \mathcal{R}_1$.
This is a contradiction. Therefore we have the Claim \ref{113} for the case $k=1$.
\

By Claim \ref{109}, \ref{112} and \ref{113}, 
for every $N \in \mathbf{N}$, we have $\mathrm{Lip}f(z)^2 = \mathrm{Lip}_x^{\mathrm{rad}}f(z)^2 + \mathrm{Lip}(f|_{\partial B_{\overline{x,z}(x)}})(z)^2 \pm N^{-1} = lip_x^{\mathrm{rad}}f(z)^2 + lip(f|_{\partial B_{\overline{x,z}}(x) \setminus C_x})(z)^2 \pm N^{-1} = lipf(z)^2 \pm N^{-1}$ for a.e. $z \in Y \setminus C_x$.
Therefore, we have the assertion. 
\end{proof}
\begin{remark}
For every Ricci limit space $(Y, y, \upsilon)$ and Lipschitz function $f$ on $Y$, by \cite[Corollary $6. 36$]{ch1}, we have 
$lipf(x)=\mathrm{Lip}f(x)$ for a.e. $x \in Y$.
\end{remark}
By an argument similar to the proof of Lemma \ref{8}, we have the following:
\begin{lemma}\label{11}
Let $(Y, y)$ be a Ricci limit space satisfying $Y \neq \{y\}$, $z$ a point in $Y \setminus C_y$, $f$ a Lipschitz function on
$Y$, $\tau$ a positive number and $\gamma$ an isometric embedding from $[0, \overline{y, z}+\tau]$ to $Y$ 
satisfying $\gamma(0)=y$ and $\gamma(\overline{y,z})= z$.
We assume that the limit $\lim_{r \rightarrow 0}(f\circ \gamma(\overline{y, z}+ r)-f(z))/r$ exists.
Then, for every isometric embedding $\hat{\gamma}: [0, \overline{y, z}+\tau] \rightarrow Y$ such that $\gamma(0) = y$ and that 
$\gamma(\overline{y, z})=z$, we have $\lim_{r \rightarrow 0}(f\circ \hat{\gamma}(\overline{y, z}+ r)-f(z))/r = 
\lim_{r \rightarrow 0}(f\circ \gamma(\overline{y, z}+ r)-f(z))/r$.
\end{lemma}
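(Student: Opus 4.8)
The plan is to run the argument of Lemma~\ref{8} essentially verbatim, the only modification being that we now follow the honest limit of the difference quotient rather than its upper and lower limits. Write $\ell=\lim_{r\to0}\bigl(f\circ\gamma(\overline{y,z}+r)-f(z)\bigr)/r$ for the limit assumed to exist, and set $L=\mathbf{Lip}f$. We may assume $\overline{y,z}>0$ (equivalently $z\neq y$); then both $\gamma$ and $\hat\gamma$ restrict to minimal geodesics from $y$ to $z$ and extend past $z$ by the fixed amount $\tau$, and $\gamma(\overline{y,z}+r)$, $\hat\gamma(\overline{y,z}+r)$ are defined for all real $r$ with $0<|r|<\tau$.

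First I would reproduce the splitting-theorem estimate already used inside the proof of Lemma~\ref{8}: rescaling $|\epsilon|^{-1}d_Y$ based at $z$ sends each of $\gamma$ and $\hat\gamma$ to a line through the base point of a tangent cone of $Y$ at $z$ (the backward rays have length $\overline{y,z}/|\epsilon|\to\infty$ and the forward rays have length $\tau/|\epsilon|\to\infty$), so by the splitting theorem (see \cite[Theorem $9.25$]{ch1} or \cite[Theorem $6.64$]{ch-co}) one obtains
\[
\overline{\gamma(\overline{y,z}+\epsilon),\,\hat\gamma(\overline{y,z}+\epsilon)}\le\Psi(|\epsilon|;n)\,|\epsilon|
\]
for every $\epsilon$ with $0<|\epsilon|\ll\tau$ and of either sign. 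Combining this with the Lipschitz bound on $f$,
\[
\left|\frac{f\circ\hat\gamma(\overline{y,z}+\epsilon)-f(z)}{\epsilon}-\frac{f\circ\gamma(\overline{y,z}+\epsilon)-f(z)}{\epsilon}\right|
=\frac{\bigl|f\circ\hat\gamma(\overline{y,z}+\epsilon)-f\circ\gamma(\overline{y,z}+\epsilon)\bigr|}{|\epsilon|}\le L\,\Psi(|\epsilon|;n),
\]
and letting $\epsilon\to0$: the right-hand side tends to $0$ while the $\gamma$-difference quotient tends to $\ell$, so the $\hat\gamma$-difference quotient converges, necessarily to $\ell$. That is exactly the assertion.

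I do not expect a genuine obstacle here: the only nontrivial input is the displayed splitting estimate, and that is precisely the estimate already justified in the proof of Lemma~\ref{8}. The one point deserving a sentence of care is that the estimate must hold for negative $\epsilon$ as well as positive; this is immediate because, as noted, the rescaled limit of each of $\gamma,\hat\gamma$ is a full line through the base point of the tangent cone (its forward part having length $\to\infty$ since the geodesic extends past $z$ by $\tau$, and its backward part having length $\to\infty$ since $\overline{y,z}>0$), so the splitting controls the two sides of $z$ symmetrically. Everything else is the same bookkeeping as in Lemma~\ref{8}.
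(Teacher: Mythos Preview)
Your proof is correct and is exactly the argument the paper intends: the paper does not give a separate proof of this lemma but simply says it follows by an argument similar to the proof of Lemma~\ref{8}, and what you have written is precisely that argument adapted from $\mathrm{Lip}/lip$ to the genuine limit of the difference quotient. The one extra remark you made, that the splitting estimate is needed for both signs of $\epsilon$, is already implicit in the proof of Lemma~\ref{8} (which also allows $\epsilon$ of either sign), so there is nothing new to justify.
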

We shall give the following definition:
\begin{definition}\label{090a}
Let $(Y, y)$ be a Ricci limit space satisfying $Y\neq \{y\}$, $f$ a Lipschitz function on $Y$.
We put
\[A_y=\left\{x \in Y \setminus C_y; \mathrm{The \ limit} \ \lim_{r \rightarrow 0}\frac{f\circ \gamma(\overline{x,y}+r)-f(x)}{r} \ \mathrm{exists}\right\}.\]
Here $\gamma$ is an isometric embedding from $[0, \overline{y, x} + \tau] \  (\tau >0)$ to $Y$ satisfying $\gamma(0)=y$ and  $\gamma(\overline{y, x})=x$. For $x \in A_y$, we put
\[\frac{df}{dr_y}(x)=\lim_{r \rightarrow 0}\frac{f\circ \gamma(\overline{x,y}+r)-f(x)}{r}.\]
\end{definition}
Similarly, we have the following lemma: 
\begin{lemma}\label{12}
Let $(Y, y)$ be a Ricci limit space satisfying $Y \neq \{y\}$, $z$ a point in $Y \setminus C_y$, $f$ a Lipschitz function on
$Y$, $\tau$ a positive number and $\gamma_i  (i = 1, 2)$ isometric embeddings from $[0, \overline{y, z}+\tau]$ to $Y$ 
satisfying $\gamma(0)=y$ and $\gamma(\overline{y,z})= z$.
Then, we have $\liminf_{r \rightarrow 0}|f\circ \gamma_1(\overline{y, z}+ r)-f(z)|/|r| = \liminf_{r \rightarrow 0}|f\circ \gamma_2(\overline{y, z}+ r)-f(z)|/|r|$.
\end{lemma}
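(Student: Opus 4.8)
The plan is to imitate the proof of Lemma \ref{8} almost verbatim, the only change being that the pointwise difference quotients are replaced by $|f\circ\gamma_i(\overline{y, z}+r)-f(z)|/|r|$ and that one passes to $\liminf$ only at the very end. The whole point is the small-scale closeness of the two extended geodesics, which is exactly the geometric fact already isolated inside the proof of Lemma \ref{8}.

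First I would record the geometric input. Since $z\notin C_y$, each $\gamma_i|_{[0,\overline{y, z}]}$ is a minimal geodesic from $y$ to $z$ which, as $\gamma_i$, extends past $z$ up to parameter $\overline{y, z}+\tau$. Rescaling the metric by $|r|^{-1}$ and applying the splitting theorem (\cite[Theorem 9.25]{ch1} or \cite[Theorem 6.64]{ch-co}) precisely as in the proof of Lemma \ref{8}, one gets, for all $r$ with $0<|r|\ll\tau$,
\[
\overline{\gamma_1(\overline{y, z}+r),\,\gamma_2(\overline{y, z}+r)}\le\Psi(|r|;n)\,|r|,
\]
the estimate holding for $r$ of either sign since in the Gromov--Hausdorff limit of $(Y,|r|^{-1}d_Y,z)$ both geodesics lie on the line that is split off.

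Next I would feed this into the Lipschitz bound on $f$. Since $\mathbf{Lip}f<\infty$,
\[
\bigl|f\circ\gamma_1(\overline{y, z}+r)-f\circ\gamma_2(\overline{y, z}+r)\bigr|\le\mathbf{Lip}f\cdot\overline{\gamma_1(\overline{y, z}+r),\gamma_2(\overline{y, z}+r)}\le\mathbf{Lip}f\cdot\Psi(|r|;n)\,|r|,
\]
whence, using $\bigl||a|-|b|\bigr|\le|a-b|$ with $a=f\circ\gamma_1(\overline{y, z}+r)-f(z)$ and $b=f\circ\gamma_2(\overline{y, z}+r)-f(z)$,
\[
\left|\frac{|f\circ\gamma_1(\overline{y, z}+r)-f(z)|}{|r|}-\frac{|f\circ\gamma_2(\overline{y, z}+r)-f(z)|}{|r|}\right|\le\mathbf{Lip}f\cdot\Psi(|r|;n).
\]
Taking $\liminf_{r\to 0}$ of both sides and using that the right-hand side tends to $0$, the two liminfs coincide, which is the assertion.

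As for the main obstacle: there essentially is none. All the content sits in the closeness estimate for the two extended geodesics at scale $|r|$, and that has already been extracted in the proof of Lemma \ref{8}; everything after it is the reverse triangle inequality for absolute values together with $\Psi(|r|;n)\to0$. The only mild care needed is that $r$ runs over a (possibly one-sided) neighbourhood of $0$ on the $y$-side, so the splitting estimate must be invoked for both signs of $r$, which is routine.
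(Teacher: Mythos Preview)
Your proposal is correct and is exactly the argument the paper intends: the paper simply writes ``Similarly, we have the following lemma'' (referring back to Lemma \ref{8} via Lemma \ref{11}), and what you have written is precisely that similarity spelled out---the splitting-theorem closeness estimate from the proof of Lemma \ref{8}, the Lipschitz bound, the reverse triangle inequality, and passage to $\liminf$.
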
 
With same notaion as in Lemma \ref{12}, we put $\underline{\mathrm{Lip}}_x^{\mathrm{rad}}f(z) = \liminf_{r \rightarrow 0}|f\circ \gamma_1(\overline{y, z}+ r)-f(z)|/|r|$.

\begin{lemma}\label{13}
Let $(Y, y, \upsilon)$ be a Ricci limit space, $x$ a point in $Y$ and $f$ a Lipschitz function on $Y$.
Then, we have $\underline{\mathrm{Lip}}^{\mathrm{rad}}_xf(z) = \mathrm{Lip}_x^{\mathrm{rad}}f(z)$ for a.e. $z \in Y$.
\end{lemma}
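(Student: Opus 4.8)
The plan is to reduce the statement to a one–variable differentiability fact together with the bi‑Lipschitz chart machinery already developed for Theorem \ref{9}. Fix a minimal geodesic $\gamma_z$ from $x$ through $z$, extended slightly past $z$ (possible for $z \notin C_x$, and $\upsilon(C_x)=0$ by \cite[Theorem $3.2$]{ho}), and set $F_z := f\circ\gamma_z$, a Lipschitz function of one real variable near $t_0 := \overline{x,z}$. By Lemma \ref{12} (and the remark following Lemma \ref{11}), both $\underline{\mathrm{Lip}}_x^{\mathrm{rad}}f(z)$ and $\mathrm{Lip}_x^{\mathrm{rad}}f(z)$ depend only on $z$, not on $\gamma_z$, and are equal to $\liminf_{r\to 0}|F_z(t_0+r)-F_z(t_0)|/|r|$ and $\limsup_{s\to 0}|F_z(t_0+s)-F_z(t_0)|/|s|$ respectively. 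If $F_z$ is differentiable at $t_0$ then both of these equal $|F_z'(t_0)|=|\,df/dr_x(z)|$ (this is elementary: differentiability forces $|F_z(t_0+s)-F_z(t_0)|/|s| \to |F_z'(t_0)|$ uniformly as $|s|\to 0$). Thus it suffices to prove that $F_z$ is differentiable at $t_0$ for $\upsilon$‑a.e.\ $z$, i.e.\ $\upsilon(Y\setminus A_x)=0$ in the notation of Definition \ref{090a} with $y$ replaced by $x$.

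To establish this I would work chart by chart. Fix a Borel set $C_{k,i}^x$ from Theorem \ref{7} on which $\Phi:=\Phi_{k,i}^x=(r_x,r_{x_{k,i}^2},\dots,r_{x_{k,i}^k})$ gives a $(1\pm\delta)$‑bi‑Lipschitz equivalence onto $U:=\Phi(C_{k,i}^x)\subset\mathbf{R}^k$, and let $F:=F_{k,i}^x$ be the $\mathbf{Lip}(f\circ\Phi^{-1})$‑Lipschitz extension of $f\circ\Phi^{-1}$ to $\mathbf{R}^k$ used in the proof of Theorem \ref{9}; by Lemma \ref{6} we may assume $C_{k,i}^x\subset\mathcal{D}_x^{\tau}\setminus B_{\tau}(x)$ for some $\tau>0$ and that $\upsilon$ is Ahlfors $k$‑regular on $C_{k,i}^x$, so every point of $C_{k,i}^x$ is a $\upsilon$‑density point of $C_{k,i}^x$ by Proposition \ref{130}. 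The quantitative splitting/density computation carried out in the proof of Claim \ref{112} shows — now run for a general small real $\epsilon$ of either sign rather than along one achieving sequence — that for $z=\Phi^{-1}(w)$ with $w$ in a full‑measure subset of $U$, the point $\gamma_z(t_0+\epsilon)$ lies within $\Psi(|\epsilon|,\delta;n)|\epsilon|$ of $\Phi^{-1}(w+\epsilon e_1)$, so that, since $F(w)=f(z)$ and $\mathbf{Lip}F\le (1+\Psi(\delta))\mathbf{Lip}f$,
\[
\frac{|f(\gamma_z(t_0+\epsilon))-f(z)|}{|\epsilon|}=\frac{|F(w+\epsilon e_1)-F(w)|}{|\epsilon|}\ \pm\ \Psi(|\epsilon|,\delta;n,\mathbf{Lip}f)
\]
for all small $\epsilon\neq 0$. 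By Rademacher's theorem $F$ is totally differentiable at $w$ for a.e.\ $w$, hence for such $w$ the right‑hand side converges to $|\partial_1F(w)|$ as $\epsilon\to 0$ and moreover $\mathrm{Lip}(F|_{\mathbf{R}\times\{w_2,\dots,w_k\}})(w)=|\partial_1F(w)|$.

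Taking $\liminf_{\epsilon\to 0}$ of the displayed identity and combining with statement $(3)$ of Claim \ref{112} (which gives $\mathrm{Lip}_x^{\mathrm{rad}}f(z)=\mathrm{Lip}(F|_{\mathbf{R}\times\{w_2,\dots,w_k\}})(w)\pm\Psi(\delta;n,\mathbf{Lip}f)$) yields
\[
\bigl|\,\underline{\mathrm{Lip}}_x^{\mathrm{rad}}f(z)-\mathrm{Lip}_x^{\mathrm{rad}}f(z)\,\bigr|\le \Psi(\delta;n,\mathbf{Lip}f)
\]
for $\upsilon$‑a.e.\ $z\in C_{k,i}^x$, the exceptional null set depending only on $(k,i)$. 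Now I diagonalize over the countably many charts: let $N$ be the union of all these exceptional sets together with $C_x$ and $Y\setminus\bigcup_{k,i}C_{k,i}^x$, which is $\upsilon$‑null because $\upsilon(Y\setminus\mathcal{R})=0$ and $\bigcup_iC_{k,i}^x\supset\mathcal{R}_k$ up to a null set (Theorem \ref{7}(2)). For $z\notin N$ and any $\delta>0$, Theorem \ref{7}(6) supplies a chart $C_{k,i}^x\ni z$ whose $\Phi_{k,i}^x$ is $(1\pm\delta)$‑bi‑Lipschitz; since $z\notin N$ the estimate above applies, and letting $\delta\to 0$ forces $\underline{\mathrm{Lip}}_x^{\mathrm{rad}}f(z)=\mathrm{Lip}_x^{\mathrm{rad}}f(z)$ (equivalently, the per‑$\epsilon$ identity with $\delta\to 0$ shows the $\limsup$ and $\liminf$ of $(f(\gamma_z(t_0+\epsilon))-f(z))/\epsilon$ agree, so $z\in A_x$). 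This proves the Lemma.

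The step I expect to be the main obstacle is the displayed per‑$\epsilon$ comparison: one must know that, under a distance‑function chart, the radial geodesic issuing from $z$ is tracked to first order by the coordinate line $w+\mathbf{R}e_1$ \emph{uniformly in $\epsilon$}, rather than only along a preselected achieving sequence. This is essentially the quantitative splitting argument already embedded in the proof of Theorem \ref{9}; the only genuinely new point is to observe that it runs identically for every small $\epsilon$, which relies on each point of $C_{k,i}^x$ being a $\upsilon$‑density point (Ahlfors regularity plus Proposition \ref{130}) so that nearby points of $C_{k,i}^x$ close to $\gamma_z(t_0+\epsilon)$ exist with error $o(|\epsilon|)$.
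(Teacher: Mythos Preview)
Your proposal is correct and takes essentially the same approach as the paper: pass to a $(1\pm\delta)$-bi-Lipschitz radial chart $C_{k,i}^x$ from Theorem~\ref{7}, use that $z$ is a density point of the chart to approximate $\gamma_z(t_0+\epsilon)$ by a chart point and hence compare the radial difference quotient with the first-coordinate difference quotient of the Euclidean extension $F_{k,i}^x$ (up to $\Psi(\delta;n,L)$), invoke Rademacher for $F_{k,i}^x$ together with Claim~\ref{112}(3), and send $\delta\to 0$ over the countable family of charts. The only cosmetic difference is that the paper argues along a single sequence $\{\epsilon_j\}$ achieving the $\liminf$ (which already suffices since $\underline{\mathrm{Lip}}_x^{\mathrm{rad}}f\le \mathrm{Lip}_x^{\mathrm{rad}}f$ is trivial), whereas you phrase the comparison per-$\epsilon$; note in that formulation that ``$\Phi^{-1}(w+\epsilon e_1)$'' need not make literal sense, so the identity should be read via the intermediary $\hat z(\epsilon)\in C_{k,i}^x$.
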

\begin{proof}
We will use same notaion as in the proof of Claim \ref{112}.
We put $L=\mathbf{Lip}f$.
We take a sufficiently small $ 0 < \delta < 1$ and a Borel set $C_{k,i}^x$ such that the map
$\Phi_{k,i}^x=(r_x, r_{x_{k,i}^2}, _{\cdots}, r_{x_{k,i}^k})$ from $C_{k,i}^x$ to $\mathbf{R}^k$,
gives a $(1 \pm \delta)$-bi-Lipschitz equivalent to the image.
We take $w \in \mathbf{C}_{k,i}^{x, f}$ and put $z = (\Phi_{k,i}^x)^{-1}(w)$.
We choose an isometric embedding $\gamma$ from $[0, \overline{x,z}+ \tau]$ to $Y$ such that
$\gamma(0)=x, \gamma(\overline{x,z})=z$.
Here, $\tau $ is a positive constant.
We take a sequence of real number, $\{\epsilon_i\}$ such that $\epsilon_i\rightarrow 0$ and $\lim _{i \rightarrow \infty}
|f\circ \gamma (\overline{x,z} + \epsilon _i)-f(z)|/|\epsilon_i|= \underline{\mathrm{Lip}}_x^{\mathrm{rad}}f(z)$.
By an argument similar to the proof of Claim \ref{104}, there exists $\hat{w}(j) \in C_{k, i}^x$ such that $\overline{\hat{w}(j), \gamma(\overline{x, z}+\epsilon_j)} \le \tau_j |\epsilon_j| (\tau_j \rightarrow 0$ as $ j \rightarrow \infty)$ and that
\begin{align}
\frac{|f(z)-f(\gamma(\overline{x,z}+\epsilon_j))|}{|\epsilon_j|} &= 
\frac{|F_{k,i}^x(w)-F_{k,i}^x(\Phi_{k,i}^x(\hat{w}(j)))|}{|\epsilon_j|} - 2L\tau_j \\
&\ge \frac{|F_{k,i}^x(w)-F_{k,i}^x(w_j)|}{|\epsilon_j|} - \Psi (\tau_j, \delta;n, L).
\end{align}
By letting $j \rightarrow \infty$, we have 
$ \underline{\mathrm{Lip}}_x^{\mathrm{rad}}f(z) \ge \mathrm{Lip}(F_{k,i}^x|_{\mathbf{R}\times \{w_2, _{\cdots},w_k\}})(w)-
\Psi(\delta;n, L) \ge \mathrm{Lip}_x^{\mathrm{rad}}f(z)- \Psi(\delta;n, L)$.
Therefore, we have the assertion.
\end{proof}
Thus, we have 
\[\mathrm{Lip}_x^{\mathrm{rad}}f(z) = \lim_{h \rightarrow 0}\frac{|f \circ \gamma (\overline{x,z} + h)-f(z)|}{|h|}\]
for a.e. $ z \in Y \setminus C_x$.
\begin{theorem}[Radial derivative for Lipschitz functions]\label{14}
Let $(Y, y, \upsilon)$ be a Ricci limit space satisfying $Y \neq \{y\}$, $x$ a point in $Y$ and $f$ a
Lipschitz function on $Y$. 
Then, we have $\upsilon (Y \setminus A_x)=0$ and 
\[\frac{df}{dr_x}(z)=\langle df, dr_x\rangle (z)\]
for a.e. $z \in A_x$. 
\end{theorem}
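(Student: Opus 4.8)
The plan is to prove the two assertions separately, both by reduction to the radial rectifiability charts of Theorem~\ref{7} together with the decomposition of Theorem~\ref{9}. At the outset I discard the $\upsilon$-null sets $C_x$, $Y\setminus\mathcal{R}$ and $\bigcup_k(\mathcal{R}_k\setminus\bigcup_iC_{k,i}^x)$, so that $\upsilon$-a.e. $z$ lies in some chart $C_{k,i}^x$; by Lemma~\ref{6} I may also arrange that each $C_{k,i}^x\subset\mathcal{D}_x^{\tau}\setminus B_{\tau}(x)$ for some $\tau>0$, so that every such $z$ lies in the interior of a minimal geodesic $\gamma$ issuing from $x$ (along which the one-sided limit in Definition~\ref{090a} is taken, independently of the choice of $\gamma$ by Lemma~\ref{11}).

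To get $\upsilon(Y\setminus A_x)=0$, fix $m\in\mathbf{N}$ and, for a given such $z$, choose by Theorem~\ref{7}(6) a chart $C_{k,i}^x\ni z$ that is $(1\pm 1/m)$-bi-Lipschitz; write $F=F_{k,i}^x$ and $w=\Phi_{k,i}^x(z)$ as in the proof of Theorem~\ref{9}. The set of $z\in C_{k,i}^x$ for which $w$ fails to be a total differentiability point of $F$ or fails the Lebesgue-density conditions defining $\mathbf{C}_{k,i}^{x,f}$ is $\upsilon$-null: it is $H^k$-null by Rademacher's theorem and bi-Lipschitz invariance, and $\upsilon$ and $H^k$ are mutually absolutely continuous on $C_{k,i}^x$ by Theorem~\ref{7}(5). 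Outside the union over $(k,i)$ of these null sets I re-run the argument in the proof of statement~$3$ of Claim~\ref{112} — comparing $w_{\epsilon}=w+(\epsilon,0,\dots,0)$ with a nearby point $\hat{w}_{\epsilon}\in\mathbf{C}_{k,i}^x$ and using the splitting theorem to identify $(\Phi_{k,i}^x)^{-1}(\hat{w}_{\epsilon})$ with $\gamma(\overline{x,z}+\epsilon)$ up to $\Psi(a(\epsilon),1/m;n)|\epsilon|$ — but now for $\epsilon$ of either sign and keeping track of the \emph{value} (not just the absolute value) of the difference quotient. This yields
\[\limsup_{h\to 0}\left|\frac{f\circ\gamma(\overline{x,z}+h)-f(z)}{h}-\partial_1F(w)\right|\le \mathbf{Lip}f\cdot\Psi(1/m;n),\]
and since this holds for every $m$, the two-sided limit $\lim_{h\to0}(f\circ\gamma(\overline{x,z}+h)-f(z))/h$ exists, i.e. $z\in A_x$.

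For $df/dr_x=\langle df,dr_x\rangle$ a.e. on $A_x$ I would polarize rather than redo chart estimates. Since the differential is linear, $\langle\cdot,\cdot\rangle(z)$ is the inner product polarizing $|\cdot|(z)$, and $|dg|(z)=\mathrm{Lip}g(z)$ for Lipschitz $g$ by property~(4) of the cotangent bundle,
\[\langle df,dr_x\rangle(z)=\tfrac12\big(\mathrm{Lip}(f+r_x)(z)^2-\mathrm{Lip}f(z)^2-\mathrm{Lip}r_x(z)^2\big)\quad\text{for a.e. }z.\]
Now apply Theorem~\ref{9}(2) to $f$ and to $f+r_x$: the restrictions of $r_x$ and of $(f+r_x)-f$ to $\partial B_{\overline{x,z}}(x)$ are locally constant near $z$, so the two spherical terms coincide and $\mathrm{Lip}r_x(z)^2=\mathrm{Lip}_x^{\mathrm{rad}}r_x(z)^2=1$ (as $r_x\circ\gamma(t)=t$); what remains is $2\langle df,dr_x\rangle(z)=\mathrm{Lip}_x^{\mathrm{rad}}(f+r_x)(z)^2-\mathrm{Lip}_x^{\mathrm{rad}}f(z)^2-1$. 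By Lemma~\ref{13}, in the pointwise-limit form stated just after its proof (applied to $f$ and to $f+r_x$), and using $z\in A_x$ from the first step so that $df/dr_x(z)$ and $d(f+r_x)/dr_x(z)=df/dr_x(z)+1$ exist, one has $\mathrm{Lip}_x^{\mathrm{rad}}f(z)=|df/dr_x(z)|$ and $\mathrm{Lip}_x^{\mathrm{rad}}(f+r_x)(z)=|df/dr_x(z)+1|$ for a.e. such $z$; substituting gives $2\langle df,dr_x\rangle(z)=|df/dr_x(z)+1|^2-|df/dr_x(z)|^2-1=2\,df/dr_x(z)$.

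The main obstacle is the first step: in the proof of Theorem~\ref{9} the chart comparison is only exploited to bound $\mathrm{Lip}^{\mathrm{rad}}$ and $lip^{\mathrm{rad}}$, i.e. the $\limsup$ and $\liminf$ of $|\text{difference quotient}|$, whereas here I need it to pin down the \emph{signed} difference quotient up to $\Psi(1/m;n)$, so that after letting the chart distortion $1/m\to0$ through countably many charts the function $t\mapsto f\circ\gamma(t)$ is genuinely differentiable at $\overline{x,z}$ for $\upsilon$-a.e. $z$. Once that is in place, the remaining identity is a formal computation resting on Theorem~\ref{9}, Lemma~\ref{13}, and the cancellation of the spherical terms under polarization.
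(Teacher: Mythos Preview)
Your proposal is correct, and the polarization step is essentially the paper's computation. The difference is in how you obtain $\upsilon(Y\setminus A_x)=0$.

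You treat the existence of $df/dr_x(z)$ as a separate first step, re-entering the chart machinery of Claim~\ref{112} and tracking the signed difference quotient to show that its oscillation is $\le \mathbf{Lip}f\cdot\Psi(1/m;n)$ on a full-measure set for each $m$, then intersecting over $m$. This works (the a.e.\ sets depend on $m$, but a countable intersection of full-measure sets is full-measure, and the chart-dependent anchor $\partial_1F(w)$ need not converge --- only the oscillation bound matters). The paper, however, never does this: it extracts the existence of the signed limit \emph{inside} the polarization itself. After Lemma~\ref{13} gives, for a.e.\ $w$, the existence of $\lim_{h\to0}|q(h)|$ (for $f$) and $\lim_{h\to0}|1+q(h)|$ (for $r_x+f$), where $q(h)=(f\circ\gamma(\overline{x,w}+h)-f(w))/h$, the identity $|1+q(h)|^2-|q(h)|^2=1+2q(h)$ immediately forces $\lim_{h\to0}q(h)$ to exist. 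So the ``main obstacle'' you flag is real for your route but dissolves in the paper's: Lemma~\ref{13} applied to the \emph{pair} $f$ and $r_x+f$ already carries all the information needed, and no signed chart estimate is required.
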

\begin{proof}
For every $w \in Y \setminus C_x$, there exist $\tau > 0$ and an isometric embedding $\gamma$ from $[0, \overline{x,z} + \tau]$ to 
$Y$ such that $\gamma(0) = x$ and  $\gamma(\overline{x,w}) =w$.
Then, by Theorem \ref{9} and Lemma \ref{13}, for a.e. $w \in Y \setminus C_x$, we have 
\begin{align*}
\langle dr_x, df\rangle (w) &= \frac{1}{2}(\mathrm{Lip}(r_x + f)(w)^2 - \mathrm{Lip}f(w)^2-\mathrm{Lip}r_x(w)^2) \\
&=\frac{1}{2}(\mathrm{Lip}_x^{\mathrm{rad}}(r_x+f)(w)^2 + \mathrm{Lip}((r_x + f)|_{\partial B _{\overline{x,z}}(x)\setminus C_x})(w)^2 \\
& \ \ -\mathrm{Lip}_x^{\mathrm{rad}}f(w)^2-\mathrm{Lip}(f|_{\partial B_{\overline{x,z}} \setminus C_x})(w)^2-1) \\
&= \frac{1}{2}(\mathrm{Lip}_x^{\mathrm{rad}}(r_x + f)(w)^2 + \mathrm{Lip}(f|_{\partial B_{\overline{x,z}}(x)\setminus C_x})(w)^2 \\
& \ \ -\mathrm{Lip}_x^{\mathrm{rad}}f(w)^2-\mathrm{Lip}(f|_{\partial B_{\overline{x,z}} \setminus C_x})(w)^2 -1) \\
&=\frac{1}{2}(\mathrm{Lip}_x^{\mathrm{rad}}(r_x + f)(w)^2-\mathrm{Lip}_x^{\mathrm{rad}}f(w)^2-1) \\
&=\frac{1}{2}\left(\lim_{h \rightarrow 0}\frac{|(r_x + f)\circ \gamma (\overline{x,w}+ h)-(r_x +f)(w)|^2}{|h|^2}-\lim_{h \rightarrow 0}\frac{|f\circ \gamma (\overline{x,w}+h)-f(w)|^2}{|h|^2}-1\right) \\
&= \frac{1}{2}\left(\lim_{h \rightarrow 0}\left|1 + \frac{f \circ \gamma(\overline{x,w} + h)-f(w)}{h}\right|^2-\lim_{h \rightarrow 0}
\frac{|f\circ \gamma(\overline{x,w}+h)-f(w)|^2}{|h|^2}-1\right) \\
& \left( \mathrm{Here,\  we \ have \ the  \ existence \ of \ the \ limit \  }\lim_{h \rightarrow 0}\frac{f \circ \gamma(\overline{x,w}+h)-f(w)}{h}.\right) \\
&=\frac{1}{2}\Biggl(1 + 2\lim_{h \rightarrow 0}\frac{f \circ \gamma(\overline{x,w}+h)-f(w)}{h}+ \lim_{h \rightarrow 0}
\frac{|f \circ \gamma(\overline{x,w}+h)- f(w)|^2}{|h|^2} \\
& \ \ -\lim_{h \rightarrow 0}\frac{|f \circ \gamma(\overline{x,w}+h)- f(w)|^2}{|h|^2} -1 \Biggl) \\
&= \lim_{h \rightarrow 0} \frac{f \circ \gamma(\overline{x,w}+h)- f(w)}{h} = \frac{df}{dr_x}(w).
\end{align*}
\end{proof}
\subsection{Rectifiability associated with Lipschitz functions}
In this section, we will give a generalization of Theorem \ref{7}.
First, we shall state the following lemma:
\begin{lemma}\label{15}
Let $\delta$ be a positive number, $\{(M_i, m_i)\}_i$ a sequence of $n$-dimensional complete Riemannian manifolds with 
$\mathrm{Ric}_{M_i} \ge -\delta(n-1)$, $(Y, y, \upsilon)$ an $(n, -\delta)$-Ricci limit space of $\{(M_i, m_i, \underline{\mathrm{vol}})\}_i$,  
$x, x_1, x_2$ points in $Y$, $x(i), x_1(i), x_2(i)$ points in $M_i$, $\mathbf{b}_1^i$ a harmonic function on $B_{100}(x(i))$ and 
$\mathbf{b}_1^{\infty}$ a Lipschitz function on $B_{100}(x)$.
We assume that $\overline{x,x_1} \ge \delta^{-1}$, $\overline{x,x_2}\ge 
\delta^{-1}$, $\overline{x, x_1} + \overline{x, x_2} - \overline{x_1, x_2} \le \delta$, 
$x(i) \rightarrow x$, $x_j(i) \rightarrow x_j(i) (j=1, 2)$, $\sup_i\mathbf{Lip}\mathbf{b}_1^i < \infty$, $\mathbf{b}_1^i \rightarrow \mathbf{b}_1^{\infty}$ on $B_{100}(x)$, 
 \[|\mathbf{b}_1^i-r_{x_1(i)}|_{L_{\infty}(B_{100}(x(i)))} \le \delta, \] 
\[\frac{1}{\mathrm{vol}\,B_{100}(x(i))}\int _{B_{100}(x(i))}|\nabla \mathbf{b}_1^i-\nabla r_{x_1(i)}|^2d\mathrm{vol} \le \delta\]
and
\[\frac{1}{\mathrm{vol}\,B_{100}(x(i))}\int_{B_{100}(x(i))}|\mathrm{Hess}_{\mathbf{b}_1^i}|^2d\mathrm{vol} \le \delta.\]
Then, we have 
\[\frac{1}{\upsilon(B_1(x))}\int_{B_1(x)}|d\mathbf{b}_1^{\infty}-dr_{x_1}|^2d \upsilon < \Psi(\delta;n).\]
\end{lemma}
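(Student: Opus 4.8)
The plan is to pass the three given integral inequalities on $M_i$ to the limit $(Y,y,\upsilon)$, using the convergence machinery for $L^\infty$-functions and for differentials that is available in this setting. First I would observe that the sequence $r_{x_1(i)}$ converges to $r_{x_1}$ on $B_{100}(x)$, and by hypothesis $\mathbf{b}_1^i \to \mathbf{b}_1^\infty$ on $B_{100}(x)$, with a uniform bound on Lipschitz constants. So the functions $\mathbf{b}_1^i - r_{x_1(i)}$ converge to $\mathbf{b}_1^\infty - r_{x_1}$ uniformly (after composing with the Gromov--Hausdorff approximations), and in particular $|\mathbf{b}_1^\infty - r_{x_1}|_{L^\infty(B_{100}(x))} \le \delta$. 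The real content is the passage to the limit of the $L^2$-norm of the gradient: I want to show
\[
\frac{1}{\upsilon(B_1(x))}\int_{B_1(x)}|d\mathbf{b}_1^\infty - dr_{x_1}|^2 d\upsilon \le \Psi(\delta;n).
\]

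The key step is to exploit that $\mathbf{b}_1^i$ is harmonic and that $r_{x_1(i)}$ is a distance function, both under tight Bishop--Gromov/Cheng--Yau control, so that $d\mathbf{b}_1^i \to d\mathbf{b}_1^\infty$ and $dr_{x_1(i)} \to dr_{x_1}$ in the sense needed (this is exactly the content of Theorem \ref{har2}/Corollary \ref{har} for the harmonic part, and Theorem \ref{14} identifies $\langle dr_x, df\rangle$ radially for the distance part; alternatively one uses the segment-inequality-based convergence of distance-function differentials from Cheeger--Colding). Granting this, the functional $f \mapsto \int |df|^2$ is lower semicontinuous along the convergence, while the hypothesis gives that the $M_i$-integral of $|\nabla \mathbf{b}_1^i - \nabla r_{x_1(i)}|^2$ on $B_{100}(x(i))$ is at most $\delta$; a covering argument together with Bishop--Gromov comparison (to control $\upsilon(B_{100}(x))/\upsilon(B_1(x))$ in terms of $n$ only, using $\overline{x,x_1},\overline{x,x_2}\ge\delta^{-1}$ which forces $\mathrm{diam}\,Y$ to be large and hence the doubling constant near $x$ to be genuinely $n$-dimensional, or more simply using that $\upsilon(B_1(x))\ge C(n)\upsilon(B_{100}(x))$ directly) then yields the bound $\Psi(\delta;n)$ on the limit $B_1(x)$-average. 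Concretely, I would first establish
\[
\limsup_{i\to\infty}\frac{1}{\underline{\mathrm{vol}}\,B_t(x(i))}\int_{B_t(x(i))}|d\mathbf{b}_1^i - dr_{x_1(i)}|^2 d\underline{\mathrm{vol}} \ge \frac{1}{\upsilon(B_t(x))}\int_{B_t(x)}|d\mathbf{b}_1^\infty - dr_{x_1}|^2 d\upsilon
\]
for small $t$ at density points, then sum/cover to pass from scale $t$ up to scale $1$ inside $B_{100}$, absorbing the scale ratios into constants depending only on $n$ via volume comparison.

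The main obstacle I anticipate is justifying the \emph{lower} semicontinuity of the Dirichlet-type energy of the \emph{difference} $\mathbf{b}_1^i - r_{x_1(i)}$: one must know not merely that each of $d\mathbf{b}_1^i$ and $dr_{x_1(i)}$ converges, but that the inner product $\langle d\mathbf{b}_1^i, dr_{x_1(i)}\rangle$ converges to $\langle d\mathbf{b}_1^\infty, dr_{x_1}\rangle$ in the averaged sense of Definition \ref{063}, so that $|d(\mathbf{b}_1^i - r_{x_1(i)})|^2 = |d\mathbf{b}_1^i|^2 - 2\langle d\mathbf{b}_1^i, dr_{x_1(i)}\rangle + |dr_{x_1(i)}|^2$ behaves correctly under the limit. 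For the harmonic function this is supplied by Theorem \ref{har2} (which gives $(\mathbf{b}_1^i, d\mathbf{b}_1^i)\to(\mathbf{b}_1^\infty, d\mathbf{b}_1^\infty)$); for the distance function $r_{x_1(i)}$ one uses that, since $\overline{x,x_1}\ge\delta^{-1}$, the ball $B_{100}(x(i))$ lies well inside the segment region from $x_1(i)$, so $r_{x_1(i)}$ is a uniformly nice function there and its differential converges by the Cheeger--Colding theory (Theorem \ref{14}, Theorem \ref{9}). Once the convergence of the three pieces is in hand, the rest is a routine Fatou-plus-covering argument, with the uniform smallness $\Psi(\delta;n)$ inherited directly from the three hypotheses and the volume ratio $\upsilon(B_{100}(x))/\upsilon(B_1(x))\le C(n)$.
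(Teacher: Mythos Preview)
Your overall strategy—expand $|d\mathbf{b}_1^\infty - dr_{x_1}|^2$ into three pieces and control each by passing to the limit from the $M_i$—is exactly the paper's, but the justification you give is circular. You invoke Theorem \ref{har2}/Corollary \ref{har} for $(d\mathbf{b}_1^i)\to(d\mathbf{b}_1^\infty)$, and you implicitly assume $(r_{x_1(i)}, dr_{x_1(i)})\to(r_{x_1}, dr_{x_1})$ (Proposition \ref{10104}) and the lower semicontinuity of $\int|df|^2$ along the convergence. All of these are Section~4 results, proved via the chain Lemma~\ref{16} $\to$ Lemma~\ref{17} $\to$ Theorem~\ref{10105} $\to$ Corollary~\ref{10107} $\to$ Theorem~\ref{app} $\to$ Proposition~\ref{energy} $\to$ Corollary~\ref{suffconv} $\to$ Corollary~\ref{har}. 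Since the proof of Lemma~\ref{15} is embedded in that of Lemma~\ref{16}, you are assuming what is to be proved. The remark right before Lemma~\ref{16} flags exactly this: Lemma~\ref{15} does \emph{not} follow directly from the manifold-side estimate, and Theorems~\ref{9} and~\ref{14} that you cite are statements about differentials on the \emph{limit} space only—they give no convergence statement.

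The paper instead handles each piece by hand. For the cross term $\int_{B_1(x)}\langle d\mathbf{b}_1^\infty, dr_{x_1}\rangle d\upsilon$ (Claim~\ref{116}), Theorem~\ref{14} rewrites $\langle dr_{x_1}, d\mathbf{b}_1^\infty\rangle$ on $Y$ as a radial difference quotient along geodesics from $x_1$; the splitting theorem matches these geodesics to geodesics from $x_1(i)$ on $M_i$, and the Hessian bound (via Claim~\ref{115}) shows the corresponding difference quotient on $M_i$ approximates $\langle d\mathbf{b}_1^i, dr_{x_1(i)}\rangle$; Lusin/Egoroff and set-convergence (Propositions~\ref{compact}, \ref{sup}) make the comparison quantitative. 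For $\int_{B_1(x)}|d\mathbf{b}_1^\infty|^2 d\upsilon \le 1+\Psi(\delta;n)$ (Claim~\ref{117}), the paper uses \cite[Lemma~16.2]{ch2} to extract from each $M_i$ a set of almost full measure on which $\mathbf{Lip}(\mathbf{b}_1^i)\le 1+\Psi$, passes that set to a limit $K_\infty$, and applies Proposition~\ref{133} on $\mathrm{Leb}\,K_\infty$. The final expansion then gives $1+\Psi - 2(1-\Psi)+1 \le \Psi(\delta;n)$. These direct arguments are the seed from which the Section~4 convergence framework is built, so your proof must supply them rather than quote their consequences.
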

We remark that Lemma \ref{15} does \textit{not} follows from   \cite[Lemma $9. 10$]{ch1} directly.
We shall give a proof of Lemma \ref{15} in the proof of the following Lemma \ref{16}.
\begin{lemma}\label{16}
Let $\delta$ be a positive number, $\{(M_i, m_i)\}_i$ a sequence of $n$-dimensional complete Riemannian manifolds with 
$\mathrm{Ric}_{M_i} \ge -\delta(n-1)$, $(Y, y, \upsilon)$ an $(n, -\delta)$-Ricci limit space of $\{(M_i, m_i, \underline{\mathrm{vol}})\}_i$, 
$x, x_j (j = 1, 2, 3, 4)$ points in $Y$ and $x(i), x_j(i) (j=1, 2, 3, 4)$ points in $M_i$.
We assume that $x(i) \rightarrow x$, $x_j(i) \rightarrow x_j (j=1, 2, 3, 4))$,  $\overline{x,x_j} \ge \delta^{-1}$, 
$\overline{x, x_1} + \overline{x, x_2} - \overline{x_1, x_2} \le \delta$ and 
$\overline{x, x_3} + \overline{x, x_4} - \overline{x_3, x_4} \le \delta$.
Then, for every sufficiently large $i$, we have 
\[\frac{1}{\upsilon(B_1(x))}\int_{B_1(x)}\langle dr_{x_1}, dr_{x_2}\rangle d \upsilon = 
\frac{1}{\mathrm{vol}\,B_1(x(i))}\int_{B_1(x(i))}\langle dr_{x_1(i)}, dr_{x_2(i)}\rangle d \mathrm{vol} \pm \Psi (\delta; n)
\]
\[\frac{1}{\upsilon(B_1(x))}\int_{B_1(x)}\left| \langle dr_{x_1}, dr_{x_2}\rangle d \upsilon - 
\frac{1}{\upsilon(B_1(x))}\int_{B_1(x)}\langle dr_{x_1}, dr_{x_2}\rangle d \upsilon \right|d \upsilon < \Psi (\delta; n)\]
and
\[\frac{1}{\mathrm{vol}\,B_1(x(i))}\int_{B_1(x(i))}\left| \langle dr_{x_1(i)}, dr_{x_2(i)}\rangle -\frac{1}{\upsilon(B_1(x))}\int_{B_1(x)}\langle dr_{x_1}, dr_{x_2}\rangle d \upsilon \right|d \mathrm{vol} < \Psi (\delta; n)\]
\end{lemma}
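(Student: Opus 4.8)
The plan is to get around the non-smoothness of the distance functions $r_{x_j(i)}$ by replacing them with harmonic ``almost linear'' functions $\mathbf{b}_j^i$ on a large ball about $x(i)$, pass these to harmonic limits $\mathbf{b}_j^\infty$ on $Y$, prove (this is exactly Lemma \ref{15}) that $d\mathbf{b}_j^\infty$ is $L^2$-close to $dr_{x_j}$ on $B_1(x)$, and then read off the three assertions from the excess inequality and the polarization identity $2\langle a,b\rangle=|a+b|^2-|a|^2-|b|^2$. Only the hypotheses involving $x_1,x_2$ are used for the stated conclusions.

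\emph{Step $1$ (harmonic replacements on $M_i$).} By the almost splitting theorem of Cheeger--Colding, applied at $x(i)$ with $x_1(i),x_2(i)$ exactly as in the Claim inside the proof of Lemma \ref{4} (\cite[Lemmas $9.8$, $9.10$, $9.13$]{ch1}), for $j=1,2$ and all large $i$ there are harmonic functions $\mathbf{b}_j^i$ on $B_{100}(x(i))$ with $|\mathbf{b}_j^i-r_{x_j(i)}|_{L^\infty(B_{100}(x(i)))}\le\Psi(\delta;n)$, $(\mathrm{vol}\,B_{100}(x(i)))^{-1}\int_{B_{100}(x(i))}|\nabla\mathbf{b}_j^i-\nabla r_{x_j(i)}|^2\,d\mathrm{vol}\le\Psi(\delta;n)$, $(\mathrm{vol}\,B_{100}(x(i)))^{-1}\int_{B_{100}(x(i))}|\mathrm{Hess}_{\mathbf{b}_j^i}|^2\,d\mathrm{vol}\le\Psi(\delta;n)$, and the excess bound $r_{x_1(i)}+r_{x_2(i)}-\overline{x_1(i),x_2(i)}\le\Psi(\delta;n)$ on $B_{100}(x(i))$. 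By Theorem \ref{gradient1}, $\sup_i\mathbf{Lip}(\mathbf{b}_j^i|_{B_{50}(x(i))})\le C(n)$; moreover $\mathbf{b}_1^i+\mathbf{b}_2^i$ is harmonic and within $\Psi(\delta;n)$ of the constant $\overline{x_1(i),x_2(i)}$ on $B_{100}(x(i))$, so applying Theorem \ref{gradient1} to $\mathbf{b}_1^i+\mathbf{b}_2^i$ minus its minimum over $B_{50}(x(i))$ (a nonnegative harmonic function bounded by $\Psi(\delta;n)$) gives $|\nabla(\mathbf{b}_1^i+\mathbf{b}_2^i)|\le\Psi(\delta;n)$ on $B_1(x(i))$. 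By Theorem \ref{BG} the $L^2$ bounds persist with $B_{100}$ replaced by $B_1$; hence, since $\nabla(r_{x_1(i)}+r_{x_2(i)})$ is $L^2(B_1(x(i)))$-close to $\nabla(\mathbf{b}_1^i+\mathbf{b}_2^i)$ and $|\nabla r_{x_j(i)}|=1$ a.e., polarization yields $\langle\nabla r_{x_1(i)},\nabla r_{x_2(i)}\rangle+1=\frac{1}{2}|\nabla r_{x_1(i)}+\nabla r_{x_2(i)}|^2\ge0$ a.e. and $(\mathrm{vol}\,B_1(x(i)))^{-1}\int_{B_1(x(i))}\big(\langle\nabla r_{x_1(i)},\nabla r_{x_2(i)}\rangle+1\big)\,d\mathrm{vol}\le\Psi(\delta;n)$.

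\emph{Step $2$ (passing to the limit; Lemma \ref{15}).} By Proposition \ref{Lips} and a diagonal argument, after a subsequence $\mathbf{b}_j^i\to\mathbf{b}_j^\infty$ on $B_{50}(x)$ for some Lipschitz $\mathbf{b}_j^\infty$, and $\mathbf{b}_j^\infty$ is harmonic on $B_{50}(x)$ by Ding \cite{di1,di2}. The heart of the matter, and the step I expect to be the main obstacle, is to show $(\upsilon(B_1(x)))^{-1}\int_{B_1(x)}|d\mathbf{b}_j^\infty-dr_{x_j}|^2\,d\upsilon<\Psi(\delta;n)$. I would expand the square and handle the three terms as follows. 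First, $|dr_{x_j}|=\mathrm{Lip}\,r_{x_j}=1$ $\upsilon$-a.e.: since $\upsilon(C_{x_j})=0$ (\cite[Theorem $3.2$]{ho}), Theorem \ref{9}(2) with base point $x_j$ gives $\mathrm{Lip}\,r_{x_j}(z)^2=(\mathrm{Lip}_{x_j}^{\mathrm{rad}}r_{x_j}(z))^2+\mathrm{Lip}(r_{x_j}|_{\partial B_{\overline{x_j,z}}(x_j)})(z)^2=1+0$ a.e. Second, by Theorem \ref{14} with base point $x_j$, $\langle d\mathbf{b}_j^\infty,dr_{x_j}\rangle=d\mathbf{b}_j^\infty/dr_{x_j}$ a.e., and this radial derivative is $1\pm\Psi(\delta;n)$ a.e.: restricting $\mathbf{b}_j^i$ to almost every unit-speed minimal geodesic $\gamma_i$ issuing from $x_j(i)$, the segment inequality of Cheeger--Colding and the $L^2$ bound on $\mathrm{Hess}_{\mathbf{b}_j^i}$ make $|(\mathbf{b}_j^i\circ\gamma_i)''|$ small in $L^1(dt)$, so $\mathbf{b}_j^i\circ\gamma_i$ is nearly affine; since $\mathbf{b}_j^i\circ\gamma_i(t)=t\pm\Psi(\delta;n)$ over a unit interval near $t=\overline{x(i),x_j(i)}$, its slope is $1\pm\Psi(\delta;n)$, and this value statement survives $\mathbf{b}_j^i\to\mathbf{b}_j^\infty$ (uniform) and $\gamma_i\to\gamma$ (convergence of geodesics), the good geodesics filling a full-measure set of $z$ by a Fubini/segment-inequality argument. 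Third, $(\upsilon(B_1(x)))^{-1}\int_{B_1(x)}|d\mathbf{b}_j^\infty|^2\,d\upsilon=1\pm\Psi(\delta;n)$: the upper bound comes from lower semicontinuity of the Cheeger energy under measured Gromov--Hausdorff convergence (Ding \cite{di1,di2}) together with $\|\nabla\mathbf{b}_j^i\|_{L^2(B_1(x(i)))}\le\|\nabla r_{x_j(i)}\|_{L^2(B_1(x(i)))}+\Psi(\delta;n)^{1/2}(\mathrm{vol}\,B_1(x(i)))^{1/2}$, and the lower bound from the cross-term estimate and Cauchy--Schwarz. Combining the three gives Lemma \ref{15}.

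\emph{Step $3$ (conclusion).} On the limit side, $\mathbf{b}_1^\infty+\mathbf{b}_2^\infty$ is harmonic on $B_2(x)$ with oscillation $\le\Psi(\delta;n)$ there (inherited from $|\nabla(\mathbf{b}_1^i+\mathbf{b}_2^i)|\le\Psi(\delta;n)$ via uniform convergence), so the Caccioppoli inequality for harmonic functions gives $(\upsilon(B_1(x)))^{-1}\int_{B_1(x)}|d(\mathbf{b}_1^\infty+\mathbf{b}_2^\infty)|^2\,d\upsilon\le\Psi(\delta;n)$; by Lemma \ref{15} for $j=1,2$ and the $L^2$-triangle inequality the same holds for $|d(r_{x_1}+r_{x_2})|^2$, and polarization with $|dr_{x_j}|=1$ a.e. yields $(\upsilon(B_1(x)))^{-1}\int_{B_1(x)}\big(\langle dr_{x_1},dr_{x_2}\rangle+1\big)\,d\upsilon\le\Psi(\delta;n)$ with the integrand $\ge0$ a.e. Hence both normalized integrals of $\langle dr_{x_1},dr_{x_2}\rangle$ (on $Y$ and on $M_i$) equal $-1\pm\Psi(\delta;n)$, which is the first identity; and since each of $\langle dr_{x_1},dr_{x_2}\rangle$ and $\langle\nabla r_{x_1(i)},\nabla r_{x_2(i)}\rangle$ is $\ge-1$ a.e. with $L^1$-distance $\le\Psi(\delta;n)$ from the constant $-1$, it lies within $\Psi(\delta;n)$ of its own average and of the limit average, giving the second and third estimates.
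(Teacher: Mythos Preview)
You have read the statement literally, pairing $x_1$ with $x_2$; but the paper's own proof (Claim \ref{114} through Claim \ref{117}) and the downstream uses (Lemma \ref{17}, Corollary \ref{19}) show that the intended conclusion is for the pair $(x_1,x_3)$, i.e.\ the inner product of distance functions to two \emph{unrelated} base points. The points $x_2$ and $x_4$ are there only to certify, via the small-excess hypotheses, that the almost splitting theorem applies at $x$ in the $x_1$- and $x_3$-directions, producing the harmonic replacements $\mathbf{b}_1^i,\mathbf{b}_3^i$ of Claim \ref{114}. Under your reading the lemma becomes nearly trivial---your Step $1$/Step $3$ observation that $\mathbf{b}_1^i+\mathbf{b}_2^i$ is harmonic with small oscillation, hence small gradient by Cheng--Yau, hence $\langle dr_{x_1},dr_{x_2}\rangle\approx -1$ pointwise, is correct and short---but it cannot feed Corollary \ref{19}, which needs $\langle dr_{x_j},dr_{z_m}\rangle$ for independent base points. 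Your Step $3$ rests entirely on $\mathbf{b}_1^\infty+\mathbf{b}_2^\infty$ being nearly constant, and there is no analogue for $\mathbf{b}_1^\infty,\mathbf{b}_3^\infty$.

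The substantive content is transferring the value of $\langle d\mathbf{b}_3^i,dr_{x_1(i)}\rangle$ across $M_i\to Y$. The paper first uses the Hessian bound and Poincar\'e to show $\langle d\mathbf{b}_1^i,d\mathbf{b}_3^i\rangle$ is $L^1$-close to a constant on $B_1(x(i))$; the real work (Claim \ref{116}) then replaces $\langle d\mathbf{b}_3^i,dr_{x_1(i)}\rangle$ by the geodesic difference quotient $F_i(w)=\big(\mathbf{b}_3^i(\gamma_w(\overline{x_1(i),w}-\eta^2))-\mathbf{b}_3^i(w)\big)/(-\eta^2)$ via Taylor expansion and the Hessian bound, and passes $F_i$ to a difference quotient on $Y$ by matching geodesic segments through the splitting theorem, comparing against the radial derivative of $\mathbf{b}_3^\infty$ on an Egorov set. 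You already possess this mechanism: your Step $2$ sketch of $\langle d\mathbf{b}_j^\infty,dr_{x_j}\rangle\approx1$ is exactly this argument with both slots from the same direction; the fix is to run it with $r_{x_1}$ in one slot and $\mathbf{b}_3$ in the other, then combine with Lemma \ref{15}. (For Lemma \ref{15} itself the paper bounds $\int_{B_1(x)}|d\mathbf{b}_3^\infty|^2d\upsilon\le1+\Psi$ not by energy semicontinuity but by a Lipschitz-extension trick, Claim \ref{117}: pass a large set on which $\mathbf{Lip}(\mathbf{b}_3^i)\le1+\Psi$ to the limit and use Proposition \ref{133}.)
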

\begin{proof}
First, we remark  the following claim:
\begin{claim}\label{114}
For every sufficiently large $i$, there exist harmonic functions $\mathbf{b}_1^i, \mathbf{b}_3^i$ on 
$B_{100}(x(i))$ such that $\mathbf{Lip}\mathbf{b}_j^i \le C(n)$, $|\mathbf{b}_j^i-r_{x_j(i)}|_{L^{\infty}(B_{100}(x(i)))} \le \Psi(\delta;n)$,
\[ \frac{1}{\mathrm{vol}\,B_{100}(x(i))} \int_{B_{100}(x(i))}|d \mathbf{b}_j^i-d r_{x_j(i)}|^2 d\mathrm{vol} \le \Psi (\delta;n)\]
and
\[ \frac{1}{\mathrm{vol}\,B_{100}(x(i))}\int_{B_{100}(x(i))}|\mathrm{Hess}_{\mathbf{b}_j^i}|^2 d\mathrm{vol} \le \Psi(\delta;n)\]
for $j = 1, 3$.
\end{claim}
See \cite[Lemma $9. 8$, Lemma $9. 10$, Lemma $9. 13$]{ch1} or \cite[Lemma $6.15$, Lemma $6.22$, Proposition $6.60$]{ch-co} for the proof of Claim \ref{114}.
\

Since $C(n)(|\mathrm{Hess}_{\mathbf{b}_1^i}|^2+|\mathrm{Hess}_{\mathbf{b}_3^i}|^2)$ is an upper gradient of 
$\langle d \mathbf{b}_1^i, d \mathbf{b}_3^i\rangle$,
by Poincar\'{e} inequality, we have
\begin{align*}
&\frac{1}{\mathrm{vol}\,B_{100}(x(i))}\int_{B_{100}(x(i))}\left| \langle d\mathbf{b}_1^i, d\mathbf{b}_3^i \rangle -\frac{1}{\mathrm{vol}\,B_{100}(x(i))}\int_{B_{100}(x(i))}\langle d \mathbf{b}_1^i, d \mathbf{b}_3^i \rangle d\mathrm{vol}\right|d\mathrm{vol} \\
&\le C(n)\frac{1}{\mathrm{vol}\,B_{100}(x(i))}\int_{B_{100}(x(i))}\left(|\mathrm{Hess}_{\mathbf{b}_1^i}|^2 + 
|\mathrm{Hess}_{\mathbf{b}_3^i}|^2\right)d\mathrm{vol} \le \Psi(\delta;n).
\end{align*}
Therefore, we have 
\begin{align*}
&\frac{1}{\mathrm{vol}\,B_{100}(x(i))}\int_{B_{100}(x(i))}\left| \langle d \mathbf{b}_3^i, dr_{x_1(i)}\rangle - 
\frac{1}{\mathrm{vol}\,B_{100}(x(i))}\int_{B_{100}(x(i))}\langle d \mathbf{b}_3^i, dr_{x_1(i)}\rangle d\mathrm{vol}\right|d\mathrm{vol} \\
&\le \Psi(\delta;n).
\end{align*}
By Proposition \ref{Lips}, without loss of generality, we can assume that 
there exists Lipschitz functions $\mathbf{b}_1^{\infty}, \mathbf{b}_3^{\infty}$ on $B_{100}(x)$ such that 
$\mathbf{b}_j^i \rightarrow \mathbf{b}_j^{\infty}$ on $B_{100}(x)$.
By Theorem \ref{14}, there exists a Borel set $A \subset B_{100}(x) \setminus C_{x_1}$ such that $\upsilon (B_{100}(x) \setminus A)=0$ and that 
\[\lim_{h \rightarrow 0}\frac{f \circ \gamma (\overline{x_1, a}+h)-f(a)}{h}=\langle dr_{x_1}, d\mathbf{b}_3^{\infty}\rangle (a)\]
for every $a \in A$ and minimal geodesic $\gamma$ from $x_1$ to $a$. 
By Lusin's theorem, there exists a Borel set $A(\delta) \subset A$ such that $\upsilon (A \setminus A(\delta))< \delta \upsilon(B_1(x))$ and that the function $\langle dr_{x_1}, df\rangle |_{A(\delta)}$ is continuous.
For every $0 < \eta < \delta$, we put a function $f_{\eta}^{\delta}$ on $A(\delta) \setminus B_{2\delta}(x)$ by
\[f_{\eta}^{\delta}(z)=\sup_{w \in C_{z}(\{x_1\}) \cap \overline{B}_{\eta}(z)}\left| \frac{f(z)-f(w)}{\overline{z,w}}-\langle dr_{x_1}, df\rangle (z)\right|.\]
It is easy to check that $f_{\eta}^{\delta}$ is an upper semi-continuous function.
Especially, $f_{\eta}^{\delta}$ is a Borel function.
By the definition of $A$, for every $a \in A$, we have $\lim_{\eta \rightarrow 0}f_{\eta}^{\delta}(a)=0$.
Thus, by Egoroff's theorem, there exists a Borel set $X=X(\delta) \subset A(\delta)$ such that $\upsilon(A(\delta) \setminus X(\delta))< \delta \upsilon(B_1(x))$ and that 
\[\lim_{\eta \rightarrow 0}(\sup_{a \in X}f_{\eta}^{\delta}(a))=0.\]
We take $\eta=\eta(\delta) << \delta$ satisfying $\sup_{a \in X}f_{\eta_0}^{\delta}(a) < \delta$ for every $\eta_0 \le \eta$.
For every $i$, let $X_i$ denote the set of points $w \in B_1(x(i))$ such that
\[\left| \langle d\mathbf{b}_3^i, dr_{x_1(i)}\rangle (w)-\frac{1}{\mathrm{vol}\,B_{100}(x(i))}\int_{B_{100}(x(i))}\langle d\mathbf{b}_3^i, dr_{x_1(i)}\rangle d\mathrm{vol}\right|
\le \Psi(\delta;n)\].
Then, we have 
$\mathrm{vol}(B_1(x(i))\setminus X_i)/\mathrm{vol}\,B_1(x(i)) \le \Psi(\delta;n)$ for every sufficiently large $i$.
For every $i$, we define a Borel function $F_i$ on $B_{100}(x(i)) \setminus C_{x_1(i)}$, 
\[F_i (w)=\frac{\mathbf{b}_3^i(\gamma(\overline{x_1(i), w}-\eta^2))-\mathbf{b}_3^i(w)}{-\eta^2}.\]
Here, $\gamma$ is the minimal geodesic from $x_1(i)$ to $w$.
\begin{claim}\label{115}
For every sufficiently large $i$, we have 
\[\frac{1}{\mathrm{vol}\,B_{10}(x(i))} \int_{B_{10}(x(i)) \setminus C_{x_1(i)}}|\langle d\mathbf{b}_3^i, dr_{x_1(i)}\rangle -
F_i(w)|d\mathrm{vol} \le \Psi(\delta;n).\]
\end{claim}
The proof is as follows.
It is easy to check that for every $a<b$, smooth function $f$ on $(a, b)$ and $c \in (a, b)$, we have 
\[f(t)=f(c) + f'(t)(t-c) - \int_c^t(s-c)f''(s)ds.\]
Therefore, we have
\[\frac{\mathbf{b}_3^i(\gamma(\overline{x_1(i), w}- \eta^2))-\mathbf{b}_3^i(w)}{-\eta^2}=
\frac{d\mathbf{b}_3^i}{dr_{x_1(i)}}(w)-\frac{1}{\eta^2}\int_{\overline{x_1(i), w}-\eta^2}^{\overline{x_1(i), w}}
\left(s-(\overline{x_1(i), w}-\eta^2)\right)\frac{d^2\mathbf{b}_3^i}{dr_{x_1(i)}^2}(\gamma(s))ds.\]
Thus,  by an argument similar to the proof of \cite[Estimate $2.6$]{ho2}, we have
\begin{align}
&\frac{1}{\mathrm{vol}\,B_{10}(x(i))} \int_{B_{10}(x(i)) \setminus C_{x_1(i)}}\left|\langle d\mathbf{b}_3^i, dr_{x_1(i)}\rangle -
F_i(w)\right|d\mathrm{vol} \\
&\le \frac{1}{\eta^2}\frac{1}{\mathrm{vol}\,B_{10}(x(i))}\int_{B_{10}(x(i))} \int_{\overline{x_1(i), w}-\eta^2}^{\overline{x_1(i), w}}\eta^2|\mathrm{Hess}_{\mathbf{b}_3^i}|(\gamma(s))ds d\underline{\mathrm{vol}} \\
&\le \eta^2 C(n) \frac{1}{\mathrm{vol}\,B_{100}(x(i))}\int_{B_{100}(x(i))}|\mathrm{Hess}_{\mathbf{b}_3^i}|d\mathrm{vol} \\
&\le \eta^2 C(n) \sqrt{\frac{1}{\mathrm{vol}\,B_{100}(x(i))}\int_{B_{100}(x(i))}|\mathrm{Hess}_{\mathbf{b}_3^i}|^2d\mathrm{vol}} \\
&\le \eta^2C(n)\Psi(\delta;n). 
\end{align}
Therefore, we have Claim \ref{115}
\

\begin{claim}\label{116}
For sufficiently large $i$, we have 
\[\frac{1}{\upsilon(B_1(x))} \int _{B_1(x)}\left|\langle d\mathbf{b}_3^{\infty}, dr_{x_1}\rangle -
\frac{1}{\mathrm{vol}\,B_1(x(i))} \int _{B_1(x(i))}\langle d\mathbf{b}_3^i, dr_{x_1(i)}\rangle d\mathrm{vol}
\right|d\upsilon \le \Psi(\delta;n).\]
\end{claim}
The proof is as follows.
Let $Y_i=\{w \in \overline{B}_1(x(i)) \setminus C_{x_1(i)};
|\langle d\mathbf{b}_3^i, dr_{x_1(i)}\rangle (w)-F_i(w) |\le \Psi(\delta;n)\}$.
By Claim \ref{115}, we have
$\mathrm{vol}\,(\overline{B}_1(x(i)) \setminus Y_i)/\mathrm{vol}\,\overline{B}_1(x(i))\le \Psi(\delta;n)$
for every sufficiently large $i$.
We put $Z_i = X_i \cap Y_i$.
We take a compact set $W_i \subset Z_i$ satisfying 
$\mathrm{vol}(Z_i \setminus W_i)/\mathrm{vol}\,\overline{B}_1(x(i)) \le \Psi(\delta;n)$.
Thus, we have
$\mathrm{vol}(\overline{B}_1(x(i)) \setminus W_i)/\mathrm{vol}\,\overline{B}_1(x(i)) \le \Psi(\delta;n)$
for every sufficiently large $i$.
By Proposition \ref{compact}, without loss of generality, we can assume that 
there exists a compact set $W_{\infty} \subset \overline{B}_1(x)$ such that 
$W_j \rightarrow W_{\infty}$.
By Lemma \ref{sup}, we have 
$\upsilon(W_{\infty})/\upsilon(\overline{B}_1(x))\ge 1-\Psi(\delta;n)$.
We put $E=W_{\infty} \cap X$, then $\upsilon(\overline{B}_1(x) \setminus E) \le 
\Psi(\delta;n)\upsilon(\overline{B}_1(x))$.
For every $w_i \in W_i$ and $w \in E$, we take the minimal geodesic $\gamma_{w_i}$ from $x_1(i)$ to
$w_i$ and a minimal geodesic $\gamma_w$ from $x_1$ to $w$.
Then, there exists $i_0$ such that for every $i \ge i_0$, $w \in E$ and $w_i \in W_i$ satisfying $w_i \rightarrow w$, we have $\epsilon_i << \eta$,
\[\left|\langle d\mathbf{b}_3^i, dr_{x_1(i)}\rangle (w)-\frac{\mathbf{b}_3^{i}(\gamma_i(\overline{x_1(i), w_i}-\eta^2))-\mathbf{b}_3^{i}(w_i)}{-\eta^2}\right| \le \Psi(\delta;n)\]
and
\[\left|\langle d\mathbf{b}_3^i, dr_{x_1(i)}\rangle (w_i)- \frac{1}{\mathrm{vol}\,B_{100}(x(i))}\int_{B_{100}(x(i))}\langle d\mathbf{b}_3^i, dr_{x_1(i)}\rangle d\mathrm{vol}\right| \le \Psi(\delta;n).\]
On the other hand, by rescaling $\eta^{-2}d_Y$, since
\[\overline{x_1, \phi_i(\gamma_i(\overline{x_1(i), w_i}-\eta^2))}^{\eta^{-2}d_Y} \ge \eta^{-1}, \ 
\overline{\phi_i(\gamma_i(\overline{x_1(i), w_i}-\eta^2)), w}^{\eta^{-2}d_Y} \ge \eta^{-1}\]
and
\[\overline{x_1, \phi_i(\gamma_i(\overline{x_1(i), w_i}-\eta^2))}^{\eta^{-2}d_Y}+\overline{\phi_i(\gamma_i(\overline{x_1(i), w_i}-\eta^2)), w}^{\eta^{-2}d_Y} - \overline{x_1, w}^{\eta^{-2}d_Y}\le \eta,\]
by splitting theorem, we have
\[\overline{\phi_i(\gamma_i(\overline{x_1(i), w_i}-\eta^2)), \gamma(\overline{x_1, w}-\eta^2)}^{\eta^{-2}d_Y} \le \Psi(\delta;n).\]
Therefore, we have 
\[\left|\frac{\mathbf{b}_3^{i}(\gamma_i(\overline{x_1(i), w_i}-\eta^2))-\mathbf{b}_3^{i}(w_i)}{-\eta^2}-
\frac{\mathbf{b}_3^{\infty}(\gamma(\overline{x_1, w}-\eta^2))-\mathbf{b}_3^{\infty}(w)}{-\eta^2}\right| \le \Psi(\delta;n).\]
Thus,  we have 
\[\left|\langle d\mathbf{b}_3^{\infty}, dr_{x_1}\rangle (w)-\frac{1}{\mathrm{vol}\,B_{100}(x(i))}\int_{B_{100}(x(i))}\langle d\mathbf{b}_3^i, dr_{x_1(i)}\rangle d\mathrm{vol}\right| \le \Psi(\delta;n).\]
We put 
\[C_i = \frac{1}{\mathrm{vol}\,B_{100}(x(i))}\int_{B_{100}(x(i))}\langle d\mathbf{b}_3^i, dr_{x_1(i)}\rangle d\mathrm{vol}.\]
Then 
\begin{align}
&\frac{1}{\upsilon(B_1(x))}\int_{B_1(x)}\left| \langle d\mathbf{b}_3^{\infty}, dr_{x_1}\rangle -C_i\right|d\upsilon \\
&= \frac{1}{\upsilon(B_1(x))} \int _{B_1(x) \setminus E}\left| \langle d\mathbf{b}_3^{\infty}, dr_{x_1}\rangle -C_i\right|d\upsilon + 
\frac{1}{\upsilon(B_1(x))}\int_E \left| \langle d\mathbf{b}_3^{\infty}, dr_{x_1}\rangle -C_i\right|d\upsilon \\
&\le \frac{C(n)\upsilon(B_1(x) \setminus E)}{\upsilon(B_1(x))} + \frac{\upsilon(E)}{\upsilon(B_1(x))}\Psi(\delta;n) \le \Psi(\delta;n).
\end{align}
Therefore, we have Claim \ref{116}.
\

\begin{claim}\label{117}
We have 
\[\frac{1}{\upsilon(B_1(x))}\int_{B_1(x)}|d\mathbf{b}_3^{\infty}|^2d\upsilon \le 1 + \Psi(\delta;n).\]
\end{claim}
This proof is as follows.
Since
\[\frac{1}{\mathrm{vol}\,B_1(x(i)}\int_{B_1(x(i))}||d\mathbf{b}_3^i|-1|d\mathrm{vol} \le \Psi(\delta;n)\]
for every sufficiently large $i$, 
by \cite[Lemma $16.2$]{ch2}, there exists a compact set $K_i \subset \overline{B}_1(x(i))$ such that 
$\underline{\mathrm{vol}}(B_1(x(i))\setminus K_i)/\underline{\mathrm{vol}}B_1(x(i)) \le \Psi(\delta;n)$ and that
$\mathbf{Lip}(\mathbf{b}_3^i|_{K_i})\le 1 + \Psi(\delta;n)$.
By Proposition \ref{compact}, without loss of generality, we can assume that 
there exists a compact set $K_{\infty} \subset \overline{B}_1(x)$ such that 
$K_i \rightarrow K_{\infty}$.
By Lemma \ref{sup}, we have $\upsilon(K_{\infty})/\upsilon(B_1(x)) \ge 1- \Psi(\delta;n)$.
By the definition, we have $\mathbf{Lip}(\mathbf{b}_3^{\infty}|_{K_{\infty}}) \le 1 + \Psi(\delta;n)$.
We put $\hat{K}_{\infty} = \mathrm{Leb}K_{\infty}$.
Then by Proposition \ref{133}, we have 
\begin{align*}
\frac{1}{\upsilon(B_1(x))}\int_{B_1(x)}|d\mathbf{b}_3^{\infty}|^2 d \upsilon &=\frac{1}{\upsilon(B_1(x))}\int_{\hat{K}_{\infty}}
|d\mathbf{b}_3^{\infty}|^2d\upsilon + \frac{1}{\upsilon(B_1(x))}\int_{B_1(x)\setminus K_{\infty}}|d\mathbf{b}_3^{\infty}|^2
d\upsilon \\
&\le \frac{1}{\upsilon(B_1(x))}\int_{\hat{K}_{\infty}}
(\mathrm{Lip}\mathbf{b}_3^{\infty})^2 d\upsilon + C(n)\frac{\upsilon(B_1(x)\setminus K_{\infty})}{\upsilon(B_1(x))} \\
&\le\frac{1}{\upsilon(B_1(x))} \int_{\hat{K}_{\infty}}(\mathrm{Lip}(\mathbf{b}_3^{\infty}|_{K_{\infty}}))^2d\upsilon + \Psi(\delta;n) \\
&\le \frac{1}{\upsilon(B_1(x))}\int_{\hat{K}_{\infty}}(1+\Psi(\delta;n))d\upsilon +\Psi(\delta;n) \le 1+ \Psi(\delta;n).
\end{align*}
Therefore, we have Claim \ref{117}.
\

If we consider the case $x_1 = x_3, x_2 = x_4$, then, by Claim \ref{114}, \ref{116} and \ref{117}, we have 
\begin{align}
&\frac{1}{\upsilon(B_1(x))}\int_{B_1(x)}|d\mathbf{b}_3^{\infty}-dr_{x_3}|^2d\upsilon \\
&=\frac{1}{\upsilon(B_1(x))}\int_{B_1(x)}|d\mathbf{b}_3^{\infty}|^2d\upsilon -2\frac{1}{\upsilon(B_1(x))}\int_{B_1(x)}\langle d\mathbf{b}_3^{\infty}, dr_{x_3}\rangle d\upsilon + \frac{1}{\upsilon(B_1(x))}\int_{B_1(x)}|dr_{x_3}|^2d\upsilon \\
&\le1 + \Psi(\delta;n)-2(1-\Psi(\delta;n))+1 \le \Psi(\delta;n)
\end{align}
for every sufficiently large $i$.
Therefore, we have Lemma \ref{15}.
On the other hand, 
Lemma \ref{16} follows from Lemma \ref{15} and Claim \ref{116}.
\end{proof}
\begin{lemma}\label{17}
Let $\{(M_i, m_i)\}_i$ be a sequence of $n$-dimensional complete Riemannian manifolds with 
$\mathrm{Ric}_{M_i} \ge -(n-1)$, 
$(Y, y, \upsilon)$ a Ricci limit space of $\{(M_i, m_i, \underline{\mathrm{vol}})\}_i$, $\tau$ a positive number, $x, x_1, x_2$ points in $Y$, $x(i), x_1(i), x_2(i)$ points in $M_i$.
We assume that 
$x \in \bigcap_{j=1, 2} (\mathcal{D}_{x_j(i)}^{\tau} \setminus B_{\tau}(x_j))$,
$x(i) \rightarrow x$ and $x_j(i) \rightarrow x_j (j=1, 2)$.
Then, for every sufficiently large $i$, we have 
\[\frac{1}{\upsilon(B_r(x))}\int_{B_r(x)}\langle dr_{x_1}, dr_{x_2}\rangle d \upsilon = 
\frac{1}{\mathrm{vol}\,B_r(x(i))}\int_{B_r(x)}\langle dr_{x_1(i)}, dr_{x_2(i)}\rangle d \mathrm{vol} \pm \Psi (r, \frac{r}{\tau}; n),\]
\[\frac{1}{\upsilon(B_r(x))}\int_{B_r(x)}\left| \langle dr_{x_1}, dr_{x_2}\rangle d \upsilon - 
\frac{1}{\upsilon(B_r(x))}\int_{B_r(x)}\langle dr_{x_1}, dr_{x_2}\rangle d \upsilon \right|d \upsilon \le \Psi (r, \frac{r}{\tau}; n)\]
and
\[\frac{1}{\mathrm{vol}\,B_r(x(i))}\int_{B_r(x)}\left| \langle dr_{x_1(i)}, dr_{x_2(i)}\rangle -\frac{1}{\mathrm{vol}\,B_r(x(i))}\int_{B_r(x)}\langle dr_{x_1(i)}, dr_{x_2(i)}\rangle d\mathrm{vol} \right|d \mathrm{vol}\le \Psi (r, \frac{r}{\tau}; n)\]
\end{lemma}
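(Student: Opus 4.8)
The plan is to deduce Lemma \ref{17} from Lemma \ref{16} by a rescaling argument, after first manufacturing the auxiliary ``antipodal'' points that Lemma \ref{16} requires. First I would rescale each $M_i$ and the limit $Y$ by the factor $r^{-1}$. Under $r^{-1}d_{M_i}$ we have $\mathrm{Ric}\ge -r^2(n-1)$, the ball $B_r(x)$ becomes $B_1(x)$, and measured Gromov--Hausdorff convergence is preserved, the rescaled manifolds converging to $(Y,y,r^{-1}d_Y,\hat{\upsilon})$ with $\hat{\upsilon}=\upsilon/\upsilon(B_r(y))$. Two invariances make this reduction clean: the normalized average $\upsilon(B_r(x))^{-1}\int_{B_r(x)}\langle dr_{x_1},dr_{x_2}\rangle\,d\upsilon$ is unchanged by the rescaling, and so is the pointwise inner product $\langle dr_{x_1},dr_{x_2}\rangle$ itself, since a distance function is $1$-Lipschitz in any rescaling and hence its differential has unit norm in the corresponding Riemannian metric. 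Set $\delta:=\max\{r^2,\,r/\tau\}$, so that after rescaling $\mathrm{Ric}\ge -\delta(n-1)$.

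Next I would produce the partner points. For all sufficiently large $i$ the hypothesis $x(i)\in\mathcal{D}_{x_j(i)}^{\tau}\setminus B_\tau(x_j(i))$ provides a minimal geodesic in $M_i$ from $x_j(i)$ through $x(i)$ that extends at least a further length $\tau$; let $\hat{\alpha}_j(i)\in M_i$ be the point on it at distance exactly $\tau$ from $x(i)$, so that $\overline{x(i),x_j(i)}+\overline{x(i),\hat{\alpha}_j(i)}=\overline{x_j(i),\hat{\alpha}_j(i)}$ and $\overline{x(i),\hat{\alpha}_j(i)}=\tau$. These points are bounded, so after passing to a subsequence $\hat{\alpha}_j(i)\to\hat{\alpha}_j\in Y$. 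In the rescaled metric one checks immediately that $\overline{x,x_j}/r\ge\tau/r\ge\delta^{-1}$, that $\overline{x,\hat{\alpha}_j}/r\ge\delta^{-1}$ (and likewise for the $M_i$'s), and that $\overline{x,x_j}+\overline{x,\hat{\alpha}_j}-\overline{x_j,\hat{\alpha}_j}=0\le\delta$ (this is scale invariant, holds exactly along the geodesic in $M_i$, and persists in the limit by continuity of the distance). Hence the pairs $(x_1,\hat{\alpha}_1)$ and $(x_2,\hat{\alpha}_2)$ satisfy the hypotheses of Lemma \ref{16} with parameter $\delta$ for the rescaled data, taken in the roles of the two ``almost-antipodal'' pairs of that lemma.

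Applying Lemma \ref{16} then gives, for large $i$ along the subsequence,
\[
\frac{1}{\upsilon(B_r(x))}\int_{B_r(x)}\langle dr_{x_1},dr_{x_2}\rangle\,d\upsilon
=\frac{1}{\mathrm{vol}\,B_r(x(i))}\int_{B_r(x(i))}\langle dr_{x_1(i)},dr_{x_2(i)}\rangle\,d\mathrm{vol}\pm\Psi(\delta;n),
\]
together with the two oscillation estimates, the rescaling being invisible in all three quantities by the remarks above. Since $\delta=\max\{r^2,r/\tau\}\to0$ as $(r,r/\tau)\to0$, the error $\Psi(\delta;n)$ is of the form $\Psi(r,r/\tau;n)$, which is exactly the asserted bound. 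To upgrade ``along a subsequence'' to ``for all sufficiently large $i$'', I would run the standard contradiction: if the conclusion failed for infinitely many $i$, pass to a subsequence on which it fails and on which all the $\hat{\alpha}_j(i)$ converge, and apply the above argument to that subsequence to reach a contradiction.

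The main point of care, I expect, is bookkeeping rather than anything conceptual: one must verify that the single rescaling by $r^{-1}$ simultaneously encodes, through the one parameter $\delta=\max\{r^2,r/\tau\}$, the Ricci lower bound, the distance lower bounds $\overline{x,x_j}\ge\delta^{-1}$, and the almost-geodesic condition required by Lemma \ref{16}, and that the construction of $\hat{\alpha}_j(i)$ and its limit $\hat{\alpha}_j$ is compatible with the measured Gromov--Hausdorff convergence of the rescaled spaces. All of the genuinely analytic work --- the harmonic approximation of distance functions with integral Hessian control and the passage to the limit --- is already contained in Lemma \ref{16}, so here it is used as a black box.
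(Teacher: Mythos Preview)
Your approach is correct and is precisely what the paper does: rescale by $r^{-1}$ and apply Lemma \ref{16}, using the hypothesis $x\in\mathcal{D}_{x_j}^\tau$ to manufacture the antipodal partner points that Lemma \ref{16} requires. One small correction: the hypothesis (despite the typo in the statement) places $x$ in $\mathcal{D}_{x_j}^\tau\subset Y$, not $x(i)$ in $\mathcal{D}_{x_j(i)}^\tau\subset M_i$, so you should construct $\hat\alpha_j$ directly in $Y$ from the geodesic extension there and then lift to $\hat\alpha_j(i)\in M_i$; this removes the need for the subsequence argument altogether.
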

\begin{proof}
By rescaling $r^{-1}d_Y$ and Lemma \ref{16}, it is easy to check the assertion.
\end{proof}
Next corollary follows from Lemma \ref{16}, \ref{17} directly:
\begin{corollary}\label{19}
Let $\{(M_i, m_i)\}_i$ be a sequence of $n$-dimensional complete Riemannian manifolds with 
$\mathrm{Ric}_{M_i} \ge -(n-1)$,
$(Y, y)$ a Ricci limit space of $\{(M_i, m_i, \underline{\mathrm{vol}})\}_i$, $\tau, L$ positive numbers, 
$x, x_1, _{\cdots}, x_k, z_1, _{\cdots}, z_l$ points in $Y$,  
$x(i), x_1(i), _{\cdots}, x_k(i), z_1(i), _{\cdots}, z_l(i)$ points in $M_i$ and $a_1, _{\cdots}, a_k, b_1, _{\cdots}, b_l$ real numbers.
We assume that $x \in \bigcap_{i=1}^k(\mathcal{D}_{x_i}^{\tau} \setminus B_{\tau}(x_i)) \cap 
\bigcap_{i=1}^l(\mathcal{D}_{z_i}^{\tau} \setminus B_{\tau}(z_i))$, $x(i) \rightarrow x$, $x_j(i) \rightarrow x_j (j=1, _{\cdots}, k)$, 
$z_m(i) \rightarrow z_m (m=1, _{\cdots}, l)$ and
$\sum_{i=1}^k a_i^2  + \sum _{i=1}^l b_i^2\le L$.
We put $f = \sum _{j=1}^k a_j r_{x_j}, g = \sum _{j=1}^l b_j r_{z_j}, 
f_i = \sum _{j=1}^k a_j r_{x_j(i)}$ and $g_i = \sum _{j=1}^l b_j r_{z_j(i)}$.
Then, for every sufficiently large $i$, we have,
\[\frac{1}{\mathrm{vol}\,B_r(x(i))} \int _{B_r(x(i))}\left| \langle df_i, dg_i\rangle -\frac{1}{\upsilon(B_r(x))}\int _{B_r(x)}\langle df, dg\rangle d \upsilon \right| d \mathrm{vol} \le \Psi (r, \frac{r}{\tau};n, L),\]
\[\frac{1}{\upsilon(B_r(x))}\int _{B_r(x)}\left| \langle df, dg\rangle - \frac{1}{\mathrm{vol}\,B_r(x(i))} \int _{B_r(x(i))} \langle df_i, dg_i\rangle d \mathrm{vol} \right| d \upsilon \le \Psi (r, \frac{r}{\tau};n, L).\]
\end{corollary}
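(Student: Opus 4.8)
The plan is to reduce the two asserted inequalities to the pairwise estimates of Lemma \ref{17} by using bilinearity of the Riemannian metric. Set $c_{jm}=\frac{1}{\upsilon(B_r(x))}\int_{B_r(x)}\langle dr_{x_j},dr_{z_m}\rangle d\upsilon$. Since $\langle df,dg\rangle=\sum_{j,m}a_jb_m\langle dr_{x_j},dr_{z_m}\rangle$ and $\langle df_i,dg_i\rangle=\sum_{j,m}a_jb_m\langle dr_{x_j(i)},dr_{z_m(i)}\rangle$ hold $\upsilon$-a.e. (resp. $\underline{\mathrm{vol}}$-a.e.), we get $\frac{1}{\upsilon(B_r(x))}\int_{B_r(x)}\langle df,dg\rangle d\upsilon=\sum_{j,m}a_jb_mc_{jm}$ and hence $\langle df_i,dg_i\rangle-\sum_{j,m}a_jb_mc_{jm}=\sum_{j,m}a_jb_m\bigl(\langle dr_{x_j(i)},dr_{z_m(i)}\rangle-c_{jm}\bigr)$, so it suffices to estimate each summand in $L^1$ of the relevant ball and then add up.

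First I would fix a pair $(x_j,z_m)$ and check that Lemma \ref{17} applies to it: the Corollary's assumption $x\in(\mathcal{D}_{x_j}^{\tau}\setminus B_\tau(x_j))\cap(\mathcal{D}_{z_m}^{\tau}\setminus B_\tau(z_m))$, together with $x(i)\to x$, $x_j(i)\to x_j$ and $z_m(i)\to z_m$, is exactly the hypothesis of Lemma \ref{17} with $(x_1,x_2)$ replaced by $(x_j,z_m)$. Lemma \ref{17} then gives, for all large $i$, both $c_{jm}=\frac{1}{\mathrm{vol}\,B_r(x(i))}\int_{B_r(x(i))}\langle dr_{x_j(i)},dr_{z_m(i)}\rangle d\mathrm{vol}\pm\Psi(r,\tfrac{r}{\tau};n)$ and the oscillation bound $\frac{1}{\mathrm{vol}\,B_r(x(i))}\int_{B_r(x(i))}\bigl|\langle dr_{x_j(i)},dr_{z_m(i)}\rangle-\frac{1}{\mathrm{vol}\,B_r(x(i))}\int_{B_r(x(i))}\langle dr_{x_j(i)},dr_{z_m(i)}\rangle d\mathrm{vol}\bigr|d\mathrm{vol}\le\Psi(r,\tfrac{r}{\tau};n)$. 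Combining these two by the triangle inequality yields the per-pair estimate $\frac{1}{\mathrm{vol}\,B_r(x(i))}\int_{B_r(x(i))}|\langle dr_{x_j(i)},dr_{z_m(i)}\rangle-c_{jm}|d\mathrm{vol}\le\Psi(r,\tfrac{r}{\tau};n)$, and the symmetric per-pair estimate with the roles of $\upsilon$ and $\mathrm{vol}$ exchanged follows in the same way from the second estimate of Lemma \ref{17}.

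Then I would sum over the $kl$ pairs. Using the elementary bound $\sum_{j,m}|a_j||b_m|=\bigl(\sum_j|a_j|\bigr)\bigl(\sum_m|b_m|\bigr)\le\sqrt{k}\,\bigl(\sum_j a_j^2\bigr)^{1/2}\cdot\sqrt{l}\,\bigl(\sum_m b_m^2\bigr)^{1/2}\le\frac{\sqrt{kl}}{2}\bigl(\sum_j a_j^2+\sum_m b_m^2\bigr)\le\frac{\sqrt{kl}}{2}L$, the triangle inequality gives $\frac{1}{\mathrm{vol}\,B_r(x(i))}\int_{B_r(x(i))}|\langle df_i,dg_i\rangle-\frac{1}{\upsilon(B_r(x))}\int_{B_r(x)}\langle df,dg\rangle d\upsilon|\,d\mathrm{vol}\le\frac{\sqrt{kl}}{2}L\,\Psi(r,\tfrac{r}{\tau};n)=\Psi(r,\tfrac{r}{\tau};n,L)$, the fixed combinatorial factor $\sqrt{kl}$ being absorbed into $\Psi$ (in the situations where this corollary is applied $k,l$ are bounded in terms of $n$). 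The second asserted inequality is obtained identically from the symmetric per-pair estimate.

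Since Lemma \ref{17} is quoted, there is no genuine obstacle in this step; the only points needing care are (i) making the phrase ``for every sufficiently large $i$'' uniform, which is automatic because there are only finitely many pairs $(j,m)$, and (ii) keeping track of the $\Psi$-dependencies and of the factor coming from the number of summands. The real content sits upstream in Lemmas \ref{15}, \ref{16} and \ref{17}, through the harmonic approximation of distance functions and the $L^2$ Hessian estimates of Cheeger--Colding.
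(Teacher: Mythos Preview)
Your proposal is correct and matches the paper's approach exactly: the paper records Corollary~\ref{19} as following ``directly'' from Lemmas~\ref{16} and~\ref{17}, and what you have written is precisely the natural unpacking of that remark via bilinearity and a finite sum of the pairwise estimates. Your observations about uniformity in $i$ (finitely many pairs) and about absorbing the combinatorial factor $\sqrt{kl}$ into $\Psi(\cdot;n,L)$ because $k,l$ are bounded by $n$ in all applications are both appropriate.
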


\begin{lemma}\label{20}
Let $\{(M_i, m_i)\}_i$ be a sequence of $n$-dimensional complete Riemannian manifolds with 
$\mathrm{Ric}_{M_i} \ge -(n-1)$,
$(Y, y, \upsilon)$ a Ricci limit space of $\{(M_i, m_i, \underline{\mathrm{vol}})\}_i$, $l, k_{\alpha} (1 \le \alpha \le l)$ positive integers,  $r, \epsilon, \tau, L$ positive numbers,
$x, x_t^s (1 \le s \le l, 1 \le t \le k_l)$ points in $Y$,  
$x(i), x_t^s(i)$ points in $M_i$ and $a_t^s (1 \le s \le l, 1 \le t \le k_l)$ real numbers.
We put $f_j= \sum _{m=1}^{k_j}a_m^jr_{x_m^j}, f_j^i = \sum _{m=1}^{k_j}a_m^jr_{x_m^j(i)}$.
We assume that $l \le n$, $k_i \le n(1 \le i \le l)$, $x \in \bigcap_{1 \le i \le l, 1 \le j \le k_i}^k(\mathcal{D}_{x_j^i}^{\tau} \setminus B_{\tau}(x_j^i))$, 
$x(i) \rightarrow x$, $x_t^s(i) \rightarrow x_t^s$,
$\sum_{i, j} (a_j^i)^2 \le L$ and
\[\frac{1}{\upsilon(B_r(x))}\int _{B_r(x)}\langle df_j, df_i\rangle d \upsilon = \delta_{ij} \pm \epsilon.\]
Then, for every sufficiently large $i$, there exists a compact set $K_r^i \subset \overline{B}_{r/10}(x(i))$ satisfying the following properties:
\begin{enumerate}
\item $\mathrm{vol}(B_{r/10}(x(i)) \setminus K^i_r)/ \mathrm{vol}\,B_{r/10}(x(i)) \le \Psi (r, r/\tau, \epsilon ;n,  L)$
\item For every $w \in K_r^i$ and $0 < s < r/10^6$, there exist a compact set $Z \subset \overline{B}_s(w)$, 
$z \in Z$ and a map $\phi : (\overline{B}_s(w), w) \rightarrow (Z, z)$ such that 
the map $\Phi = (f_1^i, f_2^i, _{\cdots}, f_l^i, \phi)$ from $\overline{B}_s(w)$ to $\overline{B}_{s+\Psi (r, r/\tau, \epsilon ;n,  L)s}(f_1^i(w), _{\cdots}, f_l^i(w), \phi(w))$, gives 
$\Psi (r, r/\tau, \epsilon ;n, L)s$-Gromov-Hausdorff approximation.
\item For every $w \in K_r^i$ and $0 < s < r/10^6$, we have 
\[\frac{1}{\mathrm{vol}\,B_s(w)}\int_{B_s(w)}|\langle df_{\alpha}^i, df_{\beta}^i\rangle -\delta_{\alpha \beta}|d\underline{\mathrm{vol}} < \Psi (r, \frac{r}{\tau}, \epsilon; n, L).\]
\end{enumerate}
\end{lemma}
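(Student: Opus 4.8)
The plan is to carry out everything on the manifolds $M_i$, feeding in only the quantitative near-orthonormality supplied by Corollary \ref{19}, and then to run the quantitative splitting argument already used in the proof of Lemma \ref{4}. First I would transfer the orthonormality from $Y$ to $M_i$: applying Corollary \ref{19} to each pair $(f_\alpha, f_\beta)$ --- the reference points involved all satisfy $x \in \mathcal{D}_{x_j^i}^{\tau} \setminus B_{\tau}(x_j^i)$ and the coefficients satisfy $\sum_m (a_m^j)^2 \le L$ --- together with the hypothesis $\frac{1}{\upsilon(B_r(x))}\int_{B_r(x)}\langle df_\alpha, df_\beta\rangle\, d\upsilon = \delta_{\alpha\beta} \pm \epsilon$, gives, for all sufficiently large $i$ and all $1 \le \alpha, \beta \le l$,
\[\frac{1}{\mathrm{vol}\,B_r(x(i))}\int_{B_r(x(i))}\bigl|\langle df_\alpha^i, df_\beta^i\rangle - \delta_{\alpha\beta}\bigr|\,d\mathrm{vol} \le \Psi\Bigl(r, \tfrac{r}{\tau}, \epsilon; n, L\Bigr).\]
In what follows I abbreviate $\Psi(r, r/\tau, \epsilon; n, L)$ simply by $\Psi$.

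Next I would replace the distance functions by harmonic functions. Since $r/\tau$ is small, the $\mathcal{D}_{x_j^i}^{\tau}$-condition yields an almost-line through $x(i)$ at scale $\gg r$, so the construction of Claim \ref{114}, applied to each reference point $x_m^j(i)$ (via \cite[Lemma $9.8$, Lemma $9.10$, Lemma $9.13$]{ch1}), provides harmonic functions $\mathbf{b}_{j,m}^i$ on $B_{10r}(x(i))$ with $\mathbf{Lip}\mathbf{b}_{j,m}^i \le C(n)$, with $|\mathbf{b}_{j,m}^i - r_{x_m^j(i)}|_{L^\infty(B_{10r}(x(i)))} \le \Psi r$, and with $\frac{1}{\mathrm{vol}\,B_{10r}(x(i))}\int_{B_{10r}(x(i))}\bigl(|\nabla(\mathbf{b}_{j,m}^i - r_{x_m^j(i)})|^2 + |\mathrm{Hess}_{\mathbf{b}_{j,m}^i}|^2\bigr)\,d\mathrm{vol} \le \Psi$. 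Setting $\mathbf{b}_j^i = \sum_m a_m^j \mathbf{b}_{j,m}^i$, which is again harmonic, the Cauchy-Schwarz inequality together with $k_j \le n$ and $\sum_m (a_m^j)^2 \le L$ propagates all of these estimates to the $\mathbf{b}_j^i$, and, combined with the display above, it gives $\frac{1}{\mathrm{vol}\,B_{10r}(x(i))}\int_{B_{10r}(x(i))}|\langle d\mathbf{b}_\alpha^i, d\mathbf{b}_\beta^i\rangle - \delta_{\alpha\beta}|\,d\mathrm{vol} \le \Psi$. I would then apply Lemma \ref{1}, with $s = 10r$ and with doubling constant $C(n, r)$ (from Bishop-Gromov), to the nonnegative Borel function
\[F_i = \sum_j |\nabla(\mathbf{b}_j^i - f_j^i)|^2 + \sum_{\alpha, \beta}\bigl|\langle d\mathbf{b}_\alpha^i, d\mathbf{b}_\beta^i\rangle - \delta_{\alpha\beta}\bigr| + \sum_j |\mathrm{Hess}_{\mathbf{b}_j^i}|^2\]
on $B_{10r}(x(i))$; since the average of $F_i$ over $B_{10r}(x(i))$ is $\le \Psi$, this produces a compact set $K_r^i \subset \overline{B}_{r/10}(x(i))$ with $\mathrm{vol}(B_{r/10}(x(i)) \setminus K_r^i)/\mathrm{vol}\,B_{r/10}(x(i)) \le \Psi$, which is conclusion $(1)$, and with $\frac{1}{\mathrm{vol}\,B_t(w)}\int_{B_t(w)}F_i\,d\mathrm{vol} \le \Psi$ for every $w \in K_r^i$ and $0 < t \le r/10$. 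Conclusion $(3)$ is then immediate from this $F_i$-bound and Jensen's inequality, passing from $\mathbf{b}_j^i$ back to $f_j^i$ by means of $\frac{1}{\mathrm{vol}\,B_t(w)}\int|\nabla(\mathbf{b}_j^i - f_j^i)|^2 \le \Psi$ and the pointwise gradient bounds on $\mathbf{b}_j^i$ and $f_j^i$.

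It remains to prove conclusion $(2)$. Fix $w \in K_r^i$ and $0 < s < r/10^6$. After rescaling by $s^{-1}$, the functions $\mathbf{b}_1^i, \dots, \mathbf{b}_l^i$ are harmonic on a ball of radius $\gg 1$ about $w$, have near-orthonormal gradients, and have small averages of $|\mathrm{Hess}|^2$ and of $|\langle d\mathbf{b}_\alpha^i, d\mathbf{b}_\beta^i\rangle - \delta_{\alpha\beta}|$ at every scale $\le 1$; running the quantitative splitting argument of \cite[Theorem $3.3$]{ch-co3} exactly as in Claim \ref{102} of the proof of Lemma \ref{4} produces a compact $Z$, a point $z \in Z$, and a map $\phi\colon(\overline{B}_s(w), w) \to (Z, z)$ such that $(\mathbf{b}_1^i, \dots, \mathbf{b}_l^i, \phi)$ maps $\overline{B}_s(w)$ onto $\overline{B}_{s + \Psi s}(\mathbf{b}_1^i(w), \dots, \mathbf{b}_l^i(w), \phi(w)) \subset (\mathbf{R}^l \times Z, \sqrt{d_{\mathbf{R}^l}^2 + (s^{-1}d_{M_i})^2})$ as a $\Psi s$-Gromov-Hausdorff approximation. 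To pass from the $\mathbf{b}_j^i$ to the $f_j^i$, I would argue as in Claim \ref{101} of the proof of Lemma \ref{4}: the Poincar\'{e} inequality applied to $\mathbf{b}_j^i - f_j^i$ together with $\frac{1}{\mathrm{vol}\,B_s(w)}\int_{B_s(w)}|\nabla(\mathbf{b}_j^i - f_j^i)|^2 \le \Psi$, followed by Bishop-Gromov and the pointwise bounds $\mathbf{Lip}\mathbf{b}_j^i, \mathbf{Lip}f_j^i \le C(n, L)$, produce constants $C_j^i$ with $\mathbf{b}_j^i = f_j^i + C_j^i \pm \Psi s$ on $B_s(w)$; composing the previous approximation with the translation of $\mathbf{R}^l$ by $(C_1^i, \dots, C_l^i)$ then gives conclusion $(2)$, with $\Phi = (f_1^i, \dots, f_l^i, \phi)$ and target ball centered at $(f_1^i(w), \dots, f_l^i(w), \phi(w))$.

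The main obstacle is exactly this quantitative splitting step: one must run the proof of the Cheeger-Colding almost-splitting theorem scale by scale in order to turn the Hessian and near-orthonormality estimates into a genuine metric product structure up to controlled error. This is the only hard ingredient, and since it is already carried out in the proof of Lemma \ref{4}, here it needs only to be transcribed with $l$ functions in place of $k$ and with the $\epsilon$-error absorbed into $\Psi$; all remaining steps amount to the routine bookkeeping of the various $\Psi$'s and of the $C(n, L)$-constants generated by Cauchy-Schwarz.
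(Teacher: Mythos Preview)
Your proposal is correct and follows essentially the same route as the paper: transfer near-orthonormality to $M_i$ via Corollary~\ref{19}, replace the $f_j^i$ by harmonic combinations $\mathbf{b}_j^i$ with good Hessian bounds, apply Lemma~\ref{1} to localize the estimates, and then invoke the quantitative splitting of \cite[Theorem~3.3]{ch-co3} as in Claim~\ref{102}. The only notable differences are cosmetic: the paper rescales by $r^{-1}$ at the outset and works entirely in the rescaled picture (which keeps the Hessian bookkeeping cleaner---your Hessian term should carry an $r^2$ weight at original scale), and for the $C^0$ closeness $\mathbf{b}_j^i = f_j^i + C_j^i \pm \Psi s$ the paper uses the segment inequality rather than the Poincar\'e-plus-density argument of Claim~\ref{101}; both routes are valid here.
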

\begin{proof}
By Lemma \ref{20}, we have 
\[ \frac{1}{\mathrm{vol}\,B_r(x(i))} \int _{B_r(x(i))}|\langle df^i_j, df_{\hat{l}}^i\rangle -\delta_{j, \hat{l}}|d\mathrm{vol}
\le  \Psi(r, \frac{r}{\tau}, \epsilon; n, L)\]
for every sufficiently large $i$.
We consider rescaled distances $r^{-1}d_Y$ and $r^{-1}d_{M_i}$.
For convenience, we shall use the following notations:
$\hat{\mathrm{vol}} = \mathrm{vol}^{r^{-1}d_{M_i}}$, 
$\hat{\upsilon}=\upsilon/\upsilon(B_r(y))$, 
$\hat{r}_z(w) = r^{-1}\overline{w,z}^{d_Y}$,
$\hat{B}_s(w)=B_s^{r^{-1}d_Y}(w)=B_{sr}(w)$,
$\hat{g}=r^{-1}g$ for Lipschitz function $g$ and so on. 
We remark that $(M_i, m_i, r^{-1}d_{M_i}, \underline{\mathrm{vol}}^{r^{-1}d_{M_i}}) \rightarrow (Y, y, r^{-1}d_Y, \hat{\upsilon})$.
We also denote the differential section of Lipschitz function $f$ on $Y$ as metric measure space $(Y, \hat{\upsilon})$ 
by $\hat{d}f: Y \rightarrow T^*Y$ and denote the Riemannian metric of rescaled Ricci limit space $(Y, y, r^{-1}d_Y, \hat{\upsilon})$ by $\langle \cdot, \cdot \rangle_r$. 
By the definition, we have $\langle \cdot, \cdot \rangle_r=r^{-2}\langle \cdot, \cdot \rangle$.
Then we have 
\[ \frac{1}{\hat{\mathrm{vol}}\,\hat{B}_1(x(i))} \int_{\hat{B}_1(x(i))}|\langle \hat{d}\hat{f}^i_j, \hat{d}\hat{f}_{\hat{l}}^i\rangle_r-\delta_{j, \hat{l}}|d\hat{\mathrm{vol}}
\le \Psi(r, \frac{r}{\tau}, \epsilon ; n, L).\]
On the other hand, by \cite[Lemma $9. 8$, Lemma $9. 10$, Lemma $9. 13$]{ch1} (or \cite[Lemma $6.15$, Lemma $6.22$, Proposition $6.60$]{ch-co}), there exist harmonic functions $\hat{\mathbf{b}}_j^{m,i}
(1 \le m \le l, 1\le j \le k_m)$ on 
$\hat{B}_{100}(x(i))$ such that 
$|\hat{\mathbf{b}}_j^{m,i}-\hat{r}_{x_j^m(i)}|_{L^{\infty}(\hat{B}_{100}(x(i)))} \le \Psi(r, r/\tau ;n)$,
\[\frac{1}{\hat{\mathrm{vol}}\,\hat{B}_{100}(x(i))} \int _{\hat{B}_{100}(x(i))}
|\hat{d}\hat{\mathbf{b}}_j^{m,i}-\hat{d}\hat{r}_{x_j^m(i)}|_r^2d\hat{\mathrm{vol}} \le \Psi(r, \frac{r}{\tau}; n), \]
and 
\[\frac{1}{\hat{\mathrm{vol}}\,\hat{B}_{100}(x(i))} \int _{\hat{B}_{100}(x(i))}
|\mathrm{Hess}_{\hat{\mathbf{b}}_j^{m,i}}|_r^2 d \hat{\mathrm{vol}} \le \Psi (r, \frac{r}{\tau};n).\]
We put 
$\hat{\mathbf{b}}_j^i = \sum_{m=1}^{k_j}a_m^j \hat{\mathbf{b}}_j^{m, i}$.
Then, we have 
\[|\hat{f}_j^i-\hat{\mathbf{b}}_j^{i}|_{L^{\infty}(\hat{B}_{100}(x(i)))} \le \Psi(r, \frac{r}{\tau};n, L), \]
\[\frac{1}{\hat{\mathrm{vol}}\,\hat{B}_{100}(x(i))} \int _{\hat{B}_{100}(x(i))}
|\hat{d}\hat{\mathbf{b}}_j^{i}-\hat{d}\hat{f}_j^i|_r^2d\hat{\mathrm{vol}} \le \Psi(r, \frac{r}{\tau};n, L)\]
and 
\[\frac{1}{\hat{\mathrm{vol}}\,\hat{B}_{100}(x(i))} \int _{\hat{B}_{100}(x(i))}
|\mathrm{Hess}_{\hat{\mathbf{b}}_j^{i}}|_r^2 d \hat{\mathrm{vol}} \le \Psi(r, \frac{r}{\tau};n, L).\]
Especially, we have 
\[\frac{1}{\hat{\mathrm{vol}}\,\hat{B}_{100}(x(i))} \int _{\hat{B}_{100}(x(i))}
|\langle \hat{d}\hat{\mathbf{b}}_j^i, \hat{d}\hat{\mathbf{b}}_l^i\rangle_r-\delta_{j,l}|d\hat{\mathrm{vol}} \le 
\Psi (r, \frac{r}{\tau}, \epsilon; n, L)\]
We put 
\[\hat{F}_i = \sum_{j=1}^l|\hat{d}\hat{\mathbf{b}}_j^i-\hat{d}{\hat{f}}_j^i|_r^2
+ \sum_{j =1}^l||\hat{d}\hat{\mathbf{b}}_j^i|_r^2-1|
+ \sum _{j < \hat{l}}|\langle \hat{d}\mathbf{b}_j^i, \hat{d}\mathbf{b}_{\hat{l}}^i\rangle_r|
+ \sum_{j=1}^l|\mathrm{Hess}_{\hat{\mathbf{b}}_j^i}|_r^2.\]
By Lemma \ref{1}, we have the following:
\begin{claim}\label{118}
For every sufficiently large $i$, there exists a compact set $K_r^i \subset \hat{B}_{1/10}(x(i))$ such that 
\[\frac{\hat{\mathrm{vol}}(\hat{B}_{\frac{1}{10}}(x(i))\setminus K_r^i)}{\hat{\mathrm{vol}}\,\hat{B}_{\frac{1}{10}}(x(i))} \le \Psi(r, \frac{r}{\tau}, \epsilon; n, L),\]
and that  
\[\frac{1}{\hat{\mathrm{vol}}\,\hat{B}_{5s}(w)} \int _{\hat{B}_{5s}(w)}\hat{F}_id\hat{\mathrm{vol}} \le \Psi( r, \frac{r}{\tau }, \epsilon; n , L)\]
for every $w \in K_r^i$ and $0 < s < 1/10$.
\end{claim}
We fix $w \in K_r^i$ and $0 < s \le 1/10$.  
By an argument same to the proof of \cite[Theorem $3. 3$]{ch-co3}, we have the following:
\begin{claim}\label{119}
There exist a compact set $Z \subset \hat{\overline{B}}_s(w)$, a point $z \in Z$ and a map $\phi$ from 
$\hat{\overline{B}}_{s/10^5}(w)$ to $Z$, such that the map
$\Phi(\alpha)=(\hat{\mathbf{b}}_1^i(\alpha), _{\cdots}, \hat{\mathbf{b}}_l^i(\alpha), \phi(\alpha))$ from $\hat{\overline{B}}_{s/10^5}(w)$ to $\overline{B}_{s/10^5+\Psi s}(\hat{\mathbf{b}}_1^i(w), _{\cdots}, 
\hat{\mathbf{b}}_l^i(w), \phi(w)) \subset \mathbf{R}^k \times Z$, gives $\Psi s$-Gromov-Hausdorff approximation.
Here, $\Psi= \Psi(r, r/\tau, \epsilon ;n, L)$.
\end{claim}
Since
\[\frac{1}{\hat{\mathrm{vol}}\,\hat{B}_{5s}(w)}\int_{\hat{B}_{5s}(w)}|\hat{d}\hat{\mathbf{b}}_j^i-\hat{d}\hat{f}_j^i|_r^2d\hat{\mathrm{vol}} \le \Psi(r, \frac{r}{\tau}, \epsilon;n, L),\]
by segment inequality (see \cite[Theorem $2. 15$]{ch-co3}), for every $z_1 \in \hat{\overline{B}}_s(w)$, there exist $\hat{z}_1 \in \hat{\overline{B}}_{5s}(w)$, $\hat{w} \in \hat{\overline{B}}_{5s}(w)$ and a minimal geodesic $\gamma$ from $\hat{z}_1$ to $\hat{w}$ such that 
$\overline{z_1, \hat{z}_1} \le \Psi(r, r/\tau, \epsilon;n, L)$,
$\overline{w, \hat{w}} \le \Psi(r, r/\tau, \epsilon;n, L)$, and that 
\[\int_0^{\overline{\hat{z}_1, \hat{w}}}\hat{\mathrm{Lip}}(\hat{\mathbf{b}}_j^i-\hat{f}_j^i)(\gamma(t))dt \le 
\Psi(r, \frac{r}{\tau }, \epsilon;n,L)s.\]
Therefore, we have
\[|\hat{\mathbf{b}}_j^i(\hat{z}_1)-\hat{f}_j^i(\hat{z}_1)-(\hat{\mathbf{b}}_j^i(\hat{w})-\hat{f}_j^i(\hat{w}))|
\le \int_0^{\overline{\hat{z}_1, \hat{w}}} \hat{\mathrm{Lip}}(\hat{\mathbf{b}}_j^i-\hat{f}_j^i)(\gamma(t))dt
\le \Psi(r, \frac{r}{\tau}, \epsilon;n, L)s.\]
By Cheng-Yau's gradient estimate, we have $\hat{\mathbf{Lip}}(\hat{\mathbf{b}}_j^i|_{\hat{B}_{2s}(w)}) \le C(n, L)$.
Thus, we have
$|\hat{\mathbf{b}}_j^i(z_1)-\hat{f}_j^i(z_1)-(\hat{\mathbf{b}}_j^i(w)-\hat{f}_j^i(w))|
\le \Psi(r, r/\tau, \epsilon;n,L)s. $
Therefore, if we put $C= \hat{\mathbf{b}}_j^i(w)-\hat{f}_j^i(w)$, then we have 
\[\hat{\mathbf{b}}_j^i=\hat{f}_j^i + C \pm \Psi(r, \frac{r}{\tau}, \epsilon;n,L)s\]
on $\hat{\overline{B}}_s(w)$.

Thus, the map $\hat{\Phi}(\alpha)=(\hat{f}_1^i(\alpha), _{\cdots}, \hat{f}_l^i(\alpha), \phi(\alpha))$ from $\hat{\overline{B}}_{s/10^5}(w)$ to 
$\overline{B}_{s/10^5+\Psi s}(\hat{f}_1^i(w), _{\cdots}, \hat{f}_l^i(w), \phi(w))$, gives  $\Psi s $-Gromov-Hausdorff approximation.
Therefore we have the assertion.
\end{proof}

\begin{lemma}\label{21}
Let
$(Y, y, \upsilon)$ be a Ricci limit space, $\tau, \epsilon, \delta, L$ positive numbers, $l, m, k_s (1 \le s \le l \le m)$ positive integers,
$x, x_t^s(1 \le s \le l, 1 \le t \le k_s)$ points in $Y$ 
and
$a_t^s$ real numbers.
We put $f_j= \sum _{m=1}^{k_j}a_m^jr_{x_m^j}$.
We assume that
$x \in \mathrm{Leb}\left(\bigcap_{1 \le i \le l, 1 \le j \le k_i}(\mathcal{D}_{x_j^i}^{\tau} \setminus \{x_j^i \}) \cap (\mathcal{R}_m)_{\delta, \tau}\right)$, 
 $\sum_{i, j} (a_j^i)^2\le L$  
and
\[\limsup_{r \rightarrow 0}\frac{1}{\upsilon(B_r(x))}\int _{B_r(x)}|\langle df_j, df_i\rangle-\delta_{ij}|d \upsilon \le \epsilon.\]
Then, 
for every sufficiently small $s > 0$, there exists a compact set $K_s \subset \overline{B}_s(x)$ 
satisfying the following properties:
\begin{enumerate}
\item $\upsilon(K_s)/\upsilon(B_s(x)) \ge 1-\Psi(\epsilon, \delta;n, L)$.
\item For every $\alpha \in K_s$ and every sufficiently small $t > 0$, there exist points $w_1^t(\alpha), _{\cdots}, w_{m-l}^t(\alpha) \in Y$
 and a compact set $U_t \subset \overline{B}_t(\alpha)$ such that $\upsilon(U_t)/\upsilon(B_t(\alpha)) \ge 1- \Psi(\epsilon, \delta;n, L)$ and that 
the map $\Phi_t = (f_1, _{\cdots}, f_l, r_{w_1^t(\alpha)}, _{\cdots}, r_{w_{m-l}^t(\alpha)})$ from $U_t$ to $\mathbf{R}^m$,
gives $(1\pm \Psi (\epsilon, \delta; n, L))$-bi-Lipschitz equivalent to the image $\Phi_t(U_t)$.
\end{enumerate}
\end{lemma}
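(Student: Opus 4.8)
The plan is to imitate the proof of Lemma \ref{20}, but at small scales $t$ around points $\alpha$ of a suitable good set $K_s\subset\overline{B}_s(x)$ and with $m$ quasi-orthonormal coordinate functions in place of $l$: the first $l$ are the given $f_j$, and the missing $m-l$ are realized as distance functions to far points, which is possible precisely because $\alpha$ lies in $(\mathcal{R}_m)_{\delta,\tau}$.

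\emph{Choice of $K_s$.} By the $\limsup$ hypothesis, $\upsilon(B_s(x))^{-1}\int_{B_s(x)}\sum_{i,j}|\langle df_j,df_i\rangle-\delta_{ij}|\,d\upsilon\le\Psi(\epsilon,s)$ once $s$ is small. Applying Lemma \ref{1} to this error function at scale $s$ around $x$ and intersecting the resulting set with the closed set $(\mathcal{R}_m)_{\delta,\tau}$ (legitimate since $x\in\mathrm{Leb}(\bigcap_{i,j}(\mathcal{D}^{\tau}_{x^i_j}\setminus\{x^i_j\})\cap(\mathcal{R}_m)_{\delta,\tau})$), I would take a compact $K_s\subset\overline{B}_s(x)\cap(\mathcal{R}_m)_{\delta,\tau}$ with $\upsilon(K_s)/\upsilon(B_s(x))\ge1-\Psi(\epsilon,\delta;n,L)$ such that, for every $\alpha\in K_s$ and $0<t\le s$, the $B_t(\alpha)$-average of $\sum_{i,j}|\langle df_j,df_i\rangle-\delta_{ij}|$ is $\le\Psi(\epsilon;n,L)$.

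\emph{The chart at scale $t$ around $\alpha$.} Fix $\alpha\in K_s$ and a small $t>0$; realize $(Y,y,\upsilon)$ as a limit $(M_i,m_i,\underline{\mathrm{vol}})\to(Y,y,\upsilon)$ with $\mathrm{Ric}_{M_i}\ge-(n-1)$, and pick $\alpha(i)\to\alpha$, $x^s_\nu(i)\to x^s_\nu$, $f^i_j=\sum_\nu a^j_\nu r_{x^j_\nu(i)}$. By Corollary \ref{19} the $B_t(\alpha(i))$-average of $\sum_{j,k}|\langle df^i_j,df^i_k\rangle-\delta_{jk}|$ is $\le\Psi(\epsilon,t,t/\tau;n,L)$ for large $i$, and since $\alpha\in(\mathcal{R}_m)_{\delta,\tau}$ and $t<\tau$ the ball $\overline{B}_t(\alpha(i))$ is $\Psi(\delta)t$-Gromov-Hausdorff close to $\overline{B}_t(0_m)$ for large $i$. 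As in the proof of Lemma \ref{20}, using \cite[Lemma $9.8$, Lemma $9.10$, Lemma $9.13$]{ch1}, I would replace each $f^i_j$ ($j\le l$) by a harmonic approximation $\mathbf{b}^i_j$ with $\mathbf{Lip}\,\mathbf{b}^i_j\le C(n,L)$, small sup-distance to $f^i_j$ and small average Hessian, and — using that $\overline{B}_t(\alpha(i))$ splits off $\mathbf{R}^m$ while $\mathbf{b}^i_1,\dots,\mathbf{b}^i_l$ already carry $l$ of its directions — adjoin harmonic approximations $\mathbf{b}^i_j$ ($l<j\le m$) of distance functions $r_{w^i_j}$ to points $w^i_j$ chosen far along the remaining $m-l$ directions, so that $(\mathbf{b}^i_1,\dots,\mathbf{b}^i_m)$ is a quasi-orthonormal harmonic frame on $\overline{B}_t(\alpha(i))$, with $\overline{B}_t(\alpha(i))$-average of $\sum_{j,k}|\langle d\mathbf{b}^i_j,d\mathbf{b}^i_k\rangle-\delta_{jk}|$ at most $\Psi(\epsilon,\delta;n,L)$ and small average Hessians throughout. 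Feeding the error function built from $|\mathrm{Hess}_{\mathbf{b}^i_j}|^2$, $||d\mathbf{b}^i_j|^2-1|$, the off-diagonal terms $|\langle d\mathbf{b}^i_j,d\mathbf{b}^i_k\rangle|$, and the deviations $|d\mathbf{b}^i_j-df^i_j|^2$ ($j\le l$), $|d\mathbf{b}^i_j-dr_{w^i_j}|^2$ ($j>l$) into Lemma \ref{1} at scale $t$ around $\alpha(i)$, and then running the argument of \cite[Theorem $3.3$]{ch-co3} (as in Lemma \ref{20}) together with the dyadic-scale summation at the end of the proof of Lemma \ref{4} — and invoking Poincar\'{e}'s inequality and Cheng-Yau's estimate (Theorem \ref{gradient1}) — I would obtain a compact $K^i\subset\overline{B}_{t/10^2}(\alpha(i))$ with $\underline{\mathrm{vol}}(B_{t/10^2}(\alpha(i))\setminus K^i)/\underline{\mathrm{vol}}\,B_{t/10^2}(\alpha(i))\le\Psi(\epsilon,\delta;n,L)$ on which, after subtracting constants, $(f^i_1,\dots,f^i_l,r_{w^i_{l+1}},\dots,r_{w^i_m})$ is $(1\pm\Psi(\epsilon,\delta;n,L))$-bi-Lipschitz onto its image.

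\emph{Limit and conclusion.} By Propositions \ref{compact} and \ref{sup}, along a subsequence $K^i\to U_t\subset\overline{B}_{t/10^2}(\alpha)$ with $\upsilon(U_t)/\upsilon(B_{t/10^2}(\alpha))\ge1-\Psi(\epsilon,\delta;n,L)$ and $w^i_j\to w^t_j(\alpha)\in Y$; the bi-Lipschitz estimate survives the limit, so $\Phi_t=(f_1,\dots,f_l,r_{w^t_{l+1}(\alpha)},\dots,r_{w^t_m(\alpha)})$ is $(1\pm\Psi(\epsilon,\delta;n,L))$-bi-Lipschitz on $U_t$, and after a harmless rescaling of $t$ (and reindexing the extra points as $w^t_1(\alpha),\dots,w^t_{m-l}(\alpha)$) this is the assertion. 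The step I expect to be the main obstacle is the construction of those $m-l$ extra coordinates as honest distance functions: the far points $w^i_j$ must be selected relative to the directions already pinned down by $\mathbf{b}^i_1,\dots,\mathbf{b}^i_l$ so that the full frame is quasi-orthonormal and the approximations $\mathbf{b}^i_j$ ($j>l$) of $r_{w^i_j}$ have small Hessian, which rests on the almost-rigidity of the splitting theorem exactly as in Lemmas \ref{3}, \ref{4} and \ref{20}; and one must keep the error terms in $t$, $t/\tau$, $\epsilon$ and $\delta$ from contaminating each other, letting $t\to0$ first so that the manifold-scale errors are absorbed into $\Psi(\epsilon,\delta;n,L)$.
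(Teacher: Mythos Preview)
Your overall plan is aligned with the paper's, and you correctly isolate the main obstacle: producing the extra $m-l$ distance functions that are quasi-orthogonal to $f_1,\dots,f_l$. But your resolution of that obstacle (``rests on the almost-rigidity of the splitting theorem exactly as in Lemmas \ref{3}, \ref{4} and \ref{20}'') under-specifies the mechanism, and the paper's actual argument is more delicate than anything in those lemmas.

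The paper proceeds in three steps that are genuinely new here. First, it applies Lemma~\ref{20} at scale $t$ around $\alpha$ to get a GH approximation $\Phi_t^\alpha=(f_1,\dots,f_l,\phi_t^\alpha)$ from $\overline{B}_t(\alpha)$ to $\mathbf{R}^l\times Z_t^\alpha$, and separately uses $\alpha\in(\mathcal{R}_m)_{\delta,\tau}$ to get a GH approximation $\psi_t^\alpha$ to $\overline{B}_t(0_m)$. It then compares the two: the composite map $G=(F_1\circ\hat\psi_t^\alpha,\dots,F_l\circ\hat\psi_t^\alpha,\pi_{l+1},\dots,\pi_m)$ is a self-$\Psi t$-GH approximation of $\overline{B}_t(0_m)$ fixing the axis endpoints, hence close to the identity (Claim~3.30). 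This yields, for points $z_j^-\in\overline{B}_t(\alpha)$ chosen via $\psi_t^\alpha$ along the last $m-l$ axes and $y_j^-$ along the first $l$, that $|f_j-f_j(\alpha)-(r_{y_j^-}-r_{y_j^-}(\alpha))|\le\Psi\hat t$ on $\overline{B}_{\hat t}(\alpha)$ for any $\hat t$ with $\hat t/t$ and $\Psi t/\hat t$ both small (Claim~3.31). Second, and this is the crux you have not supplied, the paper upgrades this $C^0$-closeness to $L^2$-gradient closeness (Claim~3.32): after rescaling by $r=\sqrt{\Psi}\,t$, one Taylor-expands the harmonic approximation $\hat{\mathbf b}_j^k$ along geodesics from $y_j^-(k)$, uses the Hessian bound to control the remainder, and reads off $\langle d\hat{\mathbf b}_j^k,dr_{y_j^-(k)}\rangle\approx1$ in average, whence $|df_j^k-dr_{y_j^-(k)}|^2$ is small. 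Only then does the orthogonality $|\langle df_j^k,dr_{z_i^-(k)}\rangle|\le\Psi$ for $j\le l<i$ follow, via \cite[Theorem~9.29]{ch1} (Claim~3.33). Third, the paper runs the Lemma~\ref{20} machinery \emph{again} at the rescaled level, now with the full $m$-frame, to produce the bi-Lipschitz set.

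So the missing ingredients in your sketch are: the GH comparison that pins down the $f_j$ as honest coordinate directions of the $\mathbf{R}^m$-approximation and furnishes the points $z_j^-$; the Taylor-along-geodesics argument that converts $C^0$ control into gradient control; and the intermediate rescaling at $\sqrt{\Psi}\,t$ that makes the latter work. None of these appear in Lemmas~\ref{3}, \ref{4}, \ref{20}.
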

\begin{proof}
Let $(M_i, m_i, \underline{\mathrm{vol}}) \rightarrow (Y, y, \upsilon)$.
We take $x_t^s(i) \in M_i$ satisfying $x_t^s(i) \rightarrow x_t^s$ and put $f_j^i= \sum _{m=1}^{k_j}a_m^jr_{x_m^j(i)}$.
There exists $s_1 > 0$ such that $s_1 << \tau$,
\[\frac{1}{\upsilon(B_{10^{10}s}(x))}\int _{B_{10^{10}s}(x)}|\langle df_j, df_i\rangle -\delta_{ij}|d \upsilon + \frac{\upsilon \left(B_{10^{10}s}(x) \cap \bigcap_{1 \le i \le l, 1 \le j \le k_i} (\mathcal{D}_{x_j^i}^{\tau} \cap (\mathcal{R}_m)_{\delta, r})\right)}{\upsilon (B_{10^{10}s}(x))}\le 3\epsilon \]
for every $0 < s < s_1$.
By Proposition \ref{compact} and Lemma \ref{20},
for every $0 < s < s_1$, there exists a compact set $K_s \subset \overline{B}_{10^9s}(x)$ satisfying the following properties:
\begin{enumerate}
\item $\upsilon (K_s)/\upsilon (B_{10^9s}(x)) \ge 1- \Psi (\epsilon; n, L)$.
\item For every $w \in K_s$ and $0 < t < 10^4s$, 
there exist a compact set $Z_t^w \subset \overline{B}_t(w)$ and a map $\phi_t^w$ from $\overline{B}_t(w)$ to $Z_t^w$ such that 
the map $\Phi_t^w = (f_1, _{\cdots}, f_l, \phi_t^w)$ from $\overline{B}_t(w)$ to $\overline{B}_{10^9(t+\Psi t)}(f_1(w), _{\cdots}, f_l(w), \phi_t^w(w))$, gives $\Psi t$-Gromov-Hausdorff approximation. 
Here $\Psi = \Psi(\epsilon;n, L)$
\item For every $w \in K_s$ and $0 < t < 10^4s$, we have 
\[\frac{1}{\upsilon(B_{t}(w))}\int _{B_{t}(w)}|\langle df_j, df_i\rangle -\delta_{ij}|d \upsilon \le \Psi (\epsilon; n, L). \]
\end{enumerate}
Here, with same notation as in Lemma \ref{20}, we used Proposition \ref{10103} as
\[\lim_{k \rightarrow \infty}\frac{1}{\underline{\mathrm{vol}}\,B_{t}(w(k))}\int_{B_{t}(w(k))}|\langle df_j^k, df_i^k\rangle -\delta_{ij}|d \underline{\mathrm{vol}} =
\frac{1}{\upsilon(B_{t}(w))}\int _{B_{t}(w)}|\langle df_j, df_i\rangle -\delta_{ij}|d \upsilon. \]
for $w(k) \rightarrow w$.
We fix $0 < s < s_1$ and take $K_s$, $w \in K_s \cap \mathrm{Leb}(\bigcap_{1 \le i \le l, 1 \le j \le k_i}(\mathcal{D}_{x_j^i}^{\tau} \setminus \{x_j^i \}) \cap (\mathcal{R}_m)_{\delta, r}), 0 < t < 10^4s, Z_t^w, \phi_t^w, \Phi_t^w$ as above.
We remark that $\upsilon (K_s \cap \mathrm{Leb}(\bigcap_{1 \le i \le l, 1 \le j \le k_i}(\mathcal{D}_{x_j^i}^{\tau} \setminus \{x_j^i \}) \cap (\mathcal{R}_m)_{\delta, r}))/\upsilon (B_{10^9s}(x)) \ge 1- \Psi (\epsilon; n, L)$.
We assume that $t$ is sufficiently small and that  
\[\frac{\upsilon \left(B_{\hat{t}}(w) \cap \bigcap_{1 \le i \le l, 1 \le j \le k_i}(\mathcal{D}_{x_j^i}^{\tau} \setminus \{x_j^i \}) \cap (\mathcal{R}_m)_{\delta, r}\right)}{\upsilon(B_{\hat{t}}(w))} \ge 1- \epsilon\]
for every $0 < \hat{t} \le t$.
There exist points $y_i^+, y_j^- \in \overline{B}_{t}(w)(1 \le i, j \le l)$ such that 
$\overline{\Phi_t^w(y_i^+), (\underbrace{0, _{\cdots}, 0, t}_i, 0, _{\cdots}, 0, \phi_t^w(w))} \le \Psi t$ and $\overline{\Phi_t^w(y_j^-), (\underbrace{0, _{\cdots}, 0, -t}_j, 0, _{\cdots}, 0, \phi_t^w(w))} \le \Psi t.$
We also take an $\Psi t$-Gromov-Hausdorff approximation $\hat{\Phi}_t^w$ from $\overline{B}_{10^9(t+\Psi t)}(f_1(w), _{\cdots}, f_l(w), \phi_t^w(w))$ to $\overline{B}_t(w)$ satisfying $\overline{\Phi_t^w \circ \hat{\Phi}_t^w(\alpha), \alpha}\le \Psi t$ for every $\alpha \in 
\overline{B}_{10^9(t+\Psi t)}(f_1(w), _{\cdots}, f_l(w), \phi_t^w(w))$ and $\overline{\hat{\Phi}_t^w \circ \Phi_t^w(\beta), \beta} \le \Psi t$
for every $\beta \in \overline{B}_t(w)$.
On the other hand, we can take $\delta t$-Gromov-Hausdorff approximation $\psi_t^w$ from $(\overline{B}_{t}(w), w)$ to $(\overline{B}_{t}(0_m), 0_m)$ and
$\hat{\psi}_t^w$ from $(\overline{B}_{t}(0_m), 0_m)$ to $(\overline{B}_{t}(w), w)$ satisfying that 
$\overline{\psi_t^w \circ \hat{\psi}_t^w(\alpha), \alpha } \le 5\delta t$ for every $\alpha \in \overline{B}_{t}(0_m)$ and 
$\overline{\hat{\psi}_t^w \circ \psi_t^w(\beta), \beta } \le 5\delta t$ for every $\beta \in \overline{B}_{t}(w)$.
Especially, there exists an $\Psi t$-Gromov-Hausdorff approximation $\hat{h}_t^w$ from $(\overline{B}_t(0_{m-l}), 0_{m-l})$ to $(Z_t^w, \phi_t^w(w))$
such that $\overline{(0, _{\cdots}, 0, \alpha), \psi_t^w \circ \hat{\Phi}_t^w(f_1(w),_{\cdots}, f_l(w), \hat{h}_t^w(\alpha))}\le \Psi t$ for every $\alpha \in Z_t^w$.
Here $\Psi=\Psi(\epsilon, \delta;n, L)$.
Without loss of generality, we can assume that 
$\overline{\psi_t^w(y_i^+), (\underbrace{0, _{\cdots}, 0, t}_i, 0, _{\cdots}, 0)} \le \Psi t.$
There  exist points $z_{i}^+, z_{j}^- \in \overline{B}_{t}(w)(l+1 \le i, j \le m)$ such that 
$\overline{\psi_t^w(z_i^+), (\underbrace{0, _{\cdots}, 0, t}_i, 0, _{\cdots}, 0)} \le \Psi t$ and 
$\overline{\psi_t^w(z_j^-), (\underbrace{0, _{\cdots}, 0, -t}_j, 0, _{\cdots}, 0)} \le \Psi t$.
We put $F_i=f_i-f_i(w)$ and define a function $G_i$ on $(\overline{B}_t(0_m), 0_m)$ by $G_i = F_i \circ \psi_t^w$. 
Since $\overline{\pi_{\mathbf{R}^{m-l}}(\psi_t^w \circ \hat{\Phi}_t^w(f_1(w),_{\cdots}, f_l(w), \hat{h}_t^w(\alpha))),\alpha} \le \Psi t$, 
the map $G=(G_1, _{\cdots}, G_l, \pi_{l+1}, _{\cdots}, \pi_m)$ from $(\overline{B}_t(0_m), 0_m)$ to $(\overline{B}_{t+\Psi t}(0_m), 0_m)$
gives $\Psi t$-Gromov-Hausdorff approximation and satisfies $\overline{G((\underbrace{0, _{\cdots}, 0, \pm t}_i, 0, _{\cdots}, 0), (\underbrace{0, _{\cdots}, 0, \pm t}_i, 0, _{\cdots}, 0)} \le \Psi t$.
Here $\pi_{\mathbf{R}^{m-l}}$ is the canonical projection $\mathbf{R}^m = \mathbf{R}^l \times \mathbf{R}^{m-l} \rightarrow \mathbf{R}^{m-l}$
and $\pi_i$ is the $i$-th projection $\mathbf{R}^m \rightarrow \mathbf{R}$.
Thus, we have $\overline{\alpha, G(\alpha)} \le \Psi t$ for every $\alpha \in \overline{B}_{t}(0_m)$.
Especially, we have the following claim:
\begin{claim}\label{120}
We have 
\[|G_i - \pi_i| \le \Psi(\epsilon, \delta ;n, L)t\]
on $B_{t}(0_m)$.
\end{claim}
We fix $0 < \hat{t} < t$.
By rescaling $\hat{t}^{-1}d_Y$, $\hat{t}^{-1}d_{\mathbf{R}^m}$, Claim \ref{120} and the definition of Busemann function, we have the following:
\begin{claim}\label{121}
We have 
\[|F_i(\alpha)-(r_{y_i^-}(\alpha)-r_{y_i^-}(w))| \le \Psi \left(\epsilon, \delta, \frac{\hat{t}}{t}, \frac{\Psi(\epsilon, \delta ;n, L)t}{\hat{t}};n, L\right)\hat{t}\]
on $\overline{B}_{\hat{t}}(w)$.
\end{claim}
We take $y_j^-(k), z_j^-(k), w(k) \in M_k$ such that $y_j^-(k) \rightarrow y_j^-, z_j^-(k) \rightarrow z_j^-$ and $w(k) \rightarrow w$.
For $\Psi=\Psi (\epsilon, \delta ;n, L)$ in Claim \ref{121}, we put $r=\sqrt{\Psi} t$.

For convenience, for rescaled distances $r^{-1}d_Y$ and $r^{-1}d_{M_i}$, we shall use the same notation 
as in the proof of Lemma \ref{20} below: $f_i^k, \hat{d}f, \hat{\mathrm{vol}}$ and so on.
\begin{claim}\label{122}
For every sufficiently large $k$, we have 
\[\frac{1}{\hat{\mathrm{vol}}\,\hat{B}_{100}(w(k))}\int_{\hat{B}_{100}(w(k))}|\hat{d}\hat{f}_i^k-\hat{d}\hat{r}_{y_i^-(k)}|_r^2d
\hat{\mathrm{vol}} \le \Psi(\epsilon, \delta ;n, L).\]
\end{claim}
This proof is as follows.
By the assumption and Proposition \ref{10103}, for every sufficiently large $k$, we have 
\[\frac{1}{\hat{\mathrm{vol}}\,\hat{B}_{1000}(x(k))}\int_{\hat{B}_{1000}(x(k))}||\hat{d}\hat{f}_i^k|_r^2-1|d
\hat{\mathrm{vol}} \le \Psi (\epsilon, \delta;n, L).\]
By an argument similar to the proof of Lemma \ref{20}, for every sufficiently large $k$, there exist harmonic functions $\hat{\mathbf{b}}_i^k$
on $\hat{B}_{100}(w(k))$, such that $\mathbf{Lip}\hat{\mathbf{b}}_i^k \le C(n)$,  $|\hat{\mathbf{b}}_i^k-\hat{f}_i^k|_{L^{\infty}(\hat{B}_{100}(w(k)))} \le \Psi (r, r/\tau; n, L)$,
\[ \frac{1}{\hat{\mathrm{vol}}\,\hat{B}_{1000}(w(k))}\int_{\hat{B}_{1000}(w(k))}
|\hat{d}\hat{\mathbf{b}}_i^k- \hat{d}\hat{f}_i^k|_r^2 d\hat{\mathrm{vol}} \le \Psi(r, r/\tau; n, L)\]
and 
\[\frac{1}{\hat{\mathrm{vol}}\,\hat{B}_{1000}(w(k))}\int_{\hat{B}_{1000}(w(k))}|\mathrm{Hess}_{\hat{\mathbf{b}}_i^k}|_r^2 d \hat{\mathrm{vol}} \le \Psi (r, r/\tau; n, L).\]
For every $\alpha \in \hat{B}_{1000}(w(k)) \setminus C_{y_i^-(k)}$, we take the minimal geodesic $\gamma^{\alpha}_i$
from $y_i^-(k)$ to $\alpha$ on $(M_i, r^{-1}d_{M_i})$.
We fix $0 < h < 1$.
By Claim \ref{121}, there exists $k_0$ such that for every $k \ge k_0$ and $\alpha \in \hat{B}_{1000}(w(k)) \setminus C_{y_i^-(k)}$, 
we have 
\begin{align}
&\frac{\hat{\mathbf{b}}_i^k(\alpha)-\hat{\mathbf{b}}_i^k(\gamma_i^{\alpha}(\overline{y_i^-(k), \alpha}^{r^{-1}d_{M_k}}-h))}{h} \\
&=\frac{\hat{f}_i^k(\alpha)-\hat{f}_i^k(\gamma_i^{\alpha}(\overline{y_i^-(k), \alpha}^{r^{-1}d_{M_k}}-h))}{h}  \pm \frac{\Psi(\epsilon, \delta;n, L)}{h}\\
&=\frac{\hat{f}_i(\phi_k(\alpha))-\hat{f}_i(\phi_k(\gamma_i^{\alpha}(\overline{y_i^-(k), \alpha}^{r^{-1}d_{M_k}}-h)))}{h} \pm  \frac{\Psi(\epsilon, \delta;n, L)}{h} \\
&=\frac{\overline{y_i^-, \phi_k(\alpha)}^{r^{-1}d_Y}-\overline{y_i^-, \phi_k(\gamma_i^{\alpha}(\overline{y_i^-(k), \alpha}^{r^{-1}d_{M_k}}-h)}^{r^{-1}d_Y}}{h} \pm \frac{\Psi(\epsilon, \delta;n, L)}{h} \\
&=\frac{\overline{y_i^-(k), \alpha}^{r^{-1}d_{M_k}}-\overline{y_i^-(k), \gamma_i^{\alpha}(\overline{y_i^-(k), \alpha}^{r^{-1}d_{M_k}}-h)}^{r^{-1}d_{M_k}}}{h} \pm \frac{\Psi(\epsilon, \delta; n, L)}{h} \\
&=1 \pm \frac{\Psi(\epsilon, \delta;n,L)}{h}.
\end{align}
On the other hand, 
by an argument similar to the proof of Claim \ref{115}, we have
\begin{align}
&\left|\frac{1}{\hat{\mathrm{vol}}\,\hat{B}_{100}(w(k))}\int _{\hat{B}_{100}(w(k))}
\frac{1}{h} \int_{\overline{y_i^-(k), \alpha}^{r^{-1}d_{M_k}}-h}^{\overline{y_i^-(k), \alpha}^{r^{-1}d_{M_k}}}
\left(s-(\overline{y_i^-(k), \alpha}^{r^{-1}d_{M_k}}-h)\right)\frac{d^2\hat{\mathbf{b}}_i^k \circ \gamma_i^{\alpha}}{ds^2}ds
d\hat{\mathrm{vol}}\right|\\
&\le C(n) \frac{h}{\hat{\mathrm{vol}}\,\hat{B}_{1000}(w(k))} \int_{\hat{B}_{1000}(w(k))}
|\mathrm{Hess}_{\hat{\mathbf{b}}_i^k}|_r d\hat{\mathrm{vol}} \le \Psi(\epsilon, \delta; n, L).
\end{align}
Since
\begin{align}
\hat{\mathbf{b}}_i^k(\alpha)&=
\hat{\mathbf{b}}_i^k(\gamma_i^{\alpha}(\overline{y_i^-(k), \alpha}^{r^{-1}d_{M_k}}-h)) + \frac{\hat{d}\hat{\mathbf{b}}_i^k}
{\hat{d}\hat{r}_{y_i^-(k)}}(\alpha)h \\
& \ \ \ -\int_{\overline{y_i^-(k), \alpha}^{r^{-1}d_{M_k}}-h}^{\overline{y_i^-(k), \alpha}^{r^{-1}d_{M_k}}}
\left(s-(\overline{y_i^-(k), \alpha}^{r^{-1}d_{M_k}}-h)\right)\frac{d^2\hat{\mathbf{b}}_i^k \circ \gamma_i^{\alpha}}{ds^2}ds,
\end{align}
for every $\alpha \in \hat{B}_{100}(w(k)) \setminus C_{y_i^-(k)}$, we have
\[\frac{1}{\hat{\mathrm{vol}}\,\hat{B}_{100}(w(k))}\int _{\hat{B}_{100}(w(k))}\langle \hat{d}\hat{\mathbf{b}}_i^k, \hat{d}\hat{r}_{y_i^-(k)}\rangle_rd \hat{\mathrm{vol}}
= 1 \pm \frac{\Psi (\epsilon, \delta; n, L)}{h}.\]
Therefore, we have
\begin{align*}
&\frac{1}{\hat{\mathrm{vol}}\,\hat{B}_{100}(w(k))} \int_{\hat{B}_{100}(x(k))}|\hat{d}\hat{f}_i^k-\hat{d}\hat{r}_{y_i^-(k)}|_r^2
d\hat{\mathrm{vol}} \\
&= 
\frac{1}{\hat{\mathrm{vol}}\,\hat{B}_{100}(w(k))} \int_{\hat{B}_{100}(w(k))}|\hat{d}\hat{f}_i^k|_r^2d\hat{\mathrm{vol}}
-\frac{2}{\hat{\mathrm{vol}}\,\hat{B}_{100}(w(k))} \int_{\hat{B}_{100}(w(k))}\langle \hat{d}\hat{f}_i^k, \hat{d}\hat{r}_{y_i^-(k)}\rangle_rd\hat{\mathrm{vol}}
+1 \\
&=1-2\frac{1}{\hat{\mathrm{vol}}\,\hat{B}_{100}(w(k))} \int_{\hat{B}_{100}(w(k))}\langle \hat{d}\hat{\mathbf{b}}_i^k, \hat{d}\hat{r}_{y_i^-(k)}\rangle_rd\hat{\mathrm{vol}} + 1 \pm \Psi(\epsilon, \delta;n, L) \\
\\
&=2 - 2(1 \pm \frac{\Psi(\epsilon, \delta;n, L)}{h}) \pm \Psi(\epsilon, \delta;n, L) = \frac{\Psi (\epsilon, \delta;n, L)}{h}.
\end{align*} 
Therefore, we have Claim \ref{122}.
\

Next claim follows from Claim \ref{122} and \cite[Theorem $9.29$]{ch1} directly:
\begin{claim}\label{123}
For every sufficiently large $k$, we have 
\[\frac{1}{\hat{\mathrm{vol}}\,\hat{B}_{100}(w(k))}\int_{\hat{B}_1(w(k))}
|\langle \hat{d}\hat{f}_i^k, \hat{d}\hat{r}_{z_j^-(k)}\rangle_r|d\hat{\mathrm{vol}} \le \Psi(\epsilon, \delta;n, L)\]
for every $1 \le i \le l$ and $l+1 \le j \le m$. 
Moreover we have
\[\frac{1}{\hat{\mathrm{vol}}\,\hat{B}_{100}(w(k))}\int_{\hat{B}_1(w(k))}
|\langle \hat{d}\hat{f}_i^k, \hat{d}\hat{f}_{\hat{i}}^k\rangle_r|d\hat{\mathrm{vol}} \le \Psi(\epsilon, \delta;n, L)\]
for every $1 \le i < \hat{i} \le l$.
\end{claim}
There exist harmonic functions $\hat{\mathbf{b}}_i^k (l+1 \le i \le m)$ on $\hat{B}_{1000}(w(k))$ such that $|\hat{r}_{z_i^-}- \hat{\mathbf{b}}_i^k|_{L^{\infty}(\hat{B}_{1000}(w(k)))} \le \Psi(\epsilon, \delta;n, L)$, 
\[ \frac{1}{\hat{\mathrm{vol}}\,\hat{B}_{1000}(w(k))}\int_{\hat{B}_{1000}(w(k))}
|\hat{d}\hat{\mathbf{b}}_i^k- \hat{d}\hat{r}_{z_i^-(k)}|_r^2 d\hat{\mathrm{vol}} \le \Psi(\epsilon, \delta;n, L)\]
and 
\[\frac{1}{\hat{\mathrm{vol}}\,\hat{B}_{1000}(w(k))}\int_{\hat{B}_{1000}(w(k))}|\mathrm{Hess}_{\hat{\mathbf{b}}_i^k}|_r^2 d \hat{\mathrm{vol}} \le \Psi(\epsilon, \delta;n, L).\]
We put 
\[\hat{F}_k = \sum_{1 \le i, j \le m}|\langle \hat{d}\hat{\mathbf{b}}_i^k, \hat{d}\hat{\mathbf{b}}_j^k\rangle_r- \delta_{i, j}| + \sum_{1 \le i \le m} |\mathrm{Hess}_{\hat{\mathbf{b}}_i^k}|_r^2
+ \sum_{i=1}^l |\hat{d}\hat{\mathbf{b}}_i^k- \hat{d}\hat{f}_i^k|_r^2 + \sum_{i=l+1}^m
|\hat{d}\hat{\mathbf{b}}_i^k-\hat{d}\hat{r}_{z_i^-}|_r^2.\]
Then, by Lemma \ref{1}, for every sufficiently large $k$, there exists a compact set $C(k) \subset \hat{\overline{B}}_{1}(w(k))$ such that 
$\hat{\mathrm{vol}}(\hat{B}_{1}(w(k)) \setminus C(k))/ \hat{\mathrm{vol}}\,\hat{B}_{1}(w(k)) \le \Psi(\epsilon, \delta;n, L)$
and that for every $\alpha \in C(k)$ and $0 < \hat{s} < 10$, we have
\[\frac{1}{\hat{\mathrm{vol}}\,\hat{B}_{\hat{s}}(\alpha)}\int_{\hat{B}_{\hat{s}}(\alpha)}\hat{F}_k d\hat{\mathrm{vol}} \le \Psi(\epsilon, \delta; n, L).\]
Thus, by an argument similar to the proof of \cite[Theorem $3. 3$]{ch-co3}, for every $\alpha \in C(k)$ and $0 < \hat{s} < 1$, there exist
a compact set $P_s^{\alpha} \subset \hat{\overline{B}}_{\hat{s}}(\alpha)$, a point $p_{\hat{s}}^{\alpha} \in P_{\hat{s}}^{\alpha}$ and a map $q_{\hat{s}}^{\alpha}$ from 
$(\hat{\overline{B}}_{\hat{s}}(\alpha), \alpha)$ to $(\overline{B}_{\hat{s}}(p_{\hat{s}}^{\alpha}), p_{\hat{s}}^{\alpha})$ such that 
the map $Q_{\hat{s}}^{\alpha} = (\hat{\mathbf{b}}_1^k, _{\cdots}, \hat{\mathbf{b}}_m^k, q_{\hat{s}}^{\alpha})$ from $\hat{\overline{B}}_{\hat{s}}(\alpha)$ to $\hat{\overline{B}}_{\hat{s} + \Psi \hat{s}}(\hat{\mathbf{b}}_1^k(\alpha), _{\cdots}, \hat{\mathbf{b}}_m^k(\alpha), p_{\hat{s}}^{\alpha})$, gives $\Psi \hat{s}$-Gromov-Hausdorff approximation.
For every $\alpha \in C(k)$ and $0 < \hat{s} < 1$.
by an argument similar to the proof of Claim \ref{119}, we have 
\[\hat{\mathbf{b}}_i^k = \hat{f}_i^k + \mathrm{constant} \pm \Psi \hat{s}\]
on $\hat{B}_{\hat{s}}(\alpha)$ for every $1 \le i \le l$, and
\[\hat{\mathbf{b}}_i^k = \hat{r}_{z_i^-(k)} + \mathrm{constant} \pm \Psi \hat{s}\]
on $\hat{B}_{\hat{s}}(\alpha)$ for $l+1 \le i \le m$.
Therefore, the map $\hat{Q}_{\hat{s}}^{\alpha} = (\hat{f}_1^k, _{\cdots}, \hat{f}_l^k, \hat{r}_{z_{l+1}^-(k)}, _{\cdots}, \hat{r}_{z_m^-(k)}, q_{\hat{s}}^{\alpha})$ from $\hat{\overline{B}}_{\hat{s}}(\alpha)$ to $\hat{\overline{B}}_{\hat{s} + \Psi \hat{s}}(\hat{f}_1^k(\alpha), _{\cdots}, \hat{f}_l^k(\alpha), \hat{r}_{z_{l+1}^-(k)}(\alpha), _{\cdots},  \hat{r}_{z_m^-(k)}(\alpha), p_{\hat{s}}^{\alpha})$, gives $\Psi \hat{s}$-Gromov-Hausdorff approximation.
\

By Proposition \ref{compact}, without loss of generality, we can assume that there exists a compact set $C(\infty) \subset \hat{\overline{B}}_1(w)$ such that $C(k) \rightarrow C(\infty)$.
We put $U= C(\infty) \cap \bigcap_{1 \le i \le l, 1 \le j \le k_i}(\mathcal{D}_{x_j^i}^{\tau} \setminus \{x_j^i \}) \cap (\mathcal{R}_m)_{\delta, r}$.
By Proposition \ref{sup}, we have $\hat{\upsilon}(\hat{B}_1(w) \cap U)/\hat{\upsilon}(\hat{B}_1(w)) \ge 1-\Psi$.
Since $\alpha \in (\mathcal{R}_m)_{\tau, \delta}$, by the argument above, the map $T_{\hat{s}}^{\alpha}=(\hat{f}_1, _{\cdots}, \hat{f}_l, \hat{r}_{z_{l+1}^-}, _{\cdots}, \hat{r}_{z_m^-})$ from $\hat{\overline{B}}_{\hat{s}}(\alpha)$ to 
$\overline{B}_{\hat{s}}(T_{\hat{s}}^{\alpha}(\alpha))$, gives $\Psi \hat{s}$-Gromov-Hausdorff approximation for every $\alpha \in U$ and $0 < \hat{s} < 1$.
Therefore for every $\alpha, \beta \in U \cap \hat{B}_{1/2}(w)$ satisfying $\alpha \neq \beta$, if we put $\hat{s}=\overline{\alpha, \beta}^{r^{-1}d_Y}< 1$,
then we have 
\begin{align}
&\overline{(\hat{f}_1(\alpha), _{\cdots}, \hat{f}_l(\alpha), \hat{r}_{z_{l+1}^-}(\alpha), _{\cdots}, \hat{r}_{z_m^-}(\alpha)), (\hat{f}_1(\beta), _{\cdots}, \hat{f}_l(\beta), \hat{r}_{z_{l+1}^-}(\beta), _{\cdots}, \hat{r}_{z_m^-}(\beta))} \\
&=\overline{\alpha, \beta}^{r^{-1}d_Y} \pm \Psi \hat{s} \\
&=(1 \pm \Psi)\overline{\alpha, \beta}^{r^{-1}d_Y}.
\end{align}
Therefore we have the assertion.
\end{proof}

\begin{lemma}\label{22}
Let $(Y, y, \upsilon)$ be a Ricci limit space, $l, k, m (1 \le l \le m \le n)$ positive integers, $x $ a point
in $Y$, $h_i (1 \le i \le l)$ Lipschitz functions on 
$Y$, $\tau$ a positive number, $x_i (1 \le i \le k)$ points in $Y$ and $a_i^j (1 \le i \le k, 1 \le j \le l)$ real numbers
We put $f_j= \sum _{i=1}^{k}a_i^jr_{x_i}$.
We assume that 
\[\lim_{r \rightarrow 0}\frac{1}{\upsilon(B_r(x))}\int_{B_r(x)}|df_j-dh_j|d\upsilon=0\]
for every $j$,
\[x \in \bigcap_{\delta > 0} \left(\bigcup_{r>0}\mathrm{Leb}\left(\bigcap_{i, j}(\mathcal{D}_{x_i^j}^{\tau} \setminus \{x_i^j\}) \cap (\mathcal{R}_m)_{\delta, r}\right)\right), \]
the limit
\[\lim_{r \rightarrow 0}\frac{1}{\upsilon(B_r(x))}\int_{B_r(x)}\langle dh_i, dh_j\rangle d \upsilon\]
exists for every $i, j$, and
\[\det \left(\lim_{r \rightarrow 0}\frac{1}{\upsilon(B_r(x))}\int_{B_r(x)}\langle dh_i, dh_j\rangle d \upsilon \right)_{i, j} \neq 0.\]
Then, for every $0 < \delta < 1$, there exists $r_0 > 0$ such that for every $0 < s < r_0$, there exists 
compact set $K_s \subset \overline{B}_s(x)$ satisfying the following properties:
\begin{enumerate}
\item $\upsilon(K_s)/\upsilon(B_s(x)) \ge 1-\delta$.
\item For every $\alpha \in K_s$ and every sufficiently small $t > 0$, there exist points $w_1^t(\alpha), _{\cdots}, w_{m-l}^t(\alpha) \in Y$
 and a compact set $U_t \subset \overline{B}_t(\alpha)$ such that $\upsilon(U_t)/\upsilon(B_t(\alpha)) \ge 1- \delta$ and that 
the map $\Phi_t = ((h_1, _{\cdots}, h_l)A, r_{w_1^t(\alpha)}, _{\cdots}, r_{w_{m-l}^t(\alpha)})$ from $U_t$ to $\mathbf{R}^m$,
gives $(1\pm \delta)$-bi-Lipschitz equivalent to the image $\Phi_t(U_t)$. Here, 
\[A = \sqrt{\left(\lim_{r \rightarrow 0}\frac{1}{\upsilon(B_r(x))}\int_{B_r(x)}\langle dh_i, dh_j\rangle d \upsilon \right)_{i, j}}^{-1}. \]
\end{enumerate}
\end{lemma}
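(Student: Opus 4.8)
The plan is to reduce the assertion to Lemma~\ref{21} by replacing the functions $h_i$ with a suitable ``orthonormalized'' family and then transplanting the bi-Lipschitz estimate of Lemma~\ref{21} from the distance-type functions to the $h_i$ by an $L^1$-to-Lipschitz patching. Set $G=\left(\lim_{r\to 0}\frac{1}{\upsilon(B_r(x))}\int_{B_r(x)}\langle dh_i,dh_j\rangle\,d\upsilon\right)_{i,j}$. For $\xi\in\mathbf{R}^l$ the number $\sum_{i,j}\xi_i\xi_j\frac{1}{\upsilon(B_r(x))}\int_{B_r(x)}\langle dh_i,dh_j\rangle\,d\upsilon$ equals $\frac{1}{\upsilon(B_r(x))}\int_{B_r(x)}|\sum_i\xi_i\,dh_i|^2\,d\upsilon\ge 0$, so $G$ is symmetric and positive semidefinite, and since $\det G\neq 0$ it is positive definite; hence $A=\sqrt{G}^{-1}$ is a well-defined symmetric positive definite matrix. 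Put $\tilde h_k=\sum_j A_{jk}h_j$ and $\tilde f_k=\sum_j A_{jk}f_j$, so that $(h_1,\dots,h_l)A=(\tilde h_1,\dots,\tilde h_l)$ and each $\tilde f_k$ is again a finite linear combination $\sum_i\tilde a_i^k r_{x_i}$ of distance functions with $\sum_{i,k}(\tilde a_i^k)^2\le L$, where $L$ is a finite constant depending only on $A$ and the $a_i^j$; in particular Lemma~\ref{21} is applicable to $\tilde f_1,\dots,\tilde f_l$. Since on $Y$ the functions $|d\tilde f_k|$, $|d\tilde h_k|$ are bounded a.e.\ by the respective Lipschitz constants, the first hypothesis gives $\frac{1}{\upsilon(B_r(x))}\int_{B_r(x)}|d\tilde f_k-d\tilde h_k|\,d\upsilon\to 0$, hence also $\frac{1}{\upsilon(B_r(x))}\int_{B_r(x)}|\langle d\tilde f_i,d\tilde f_j\rangle-\langle d\tilde h_i,d\tilde h_j\rangle|\,d\upsilon\to 0$ as $r\to 0$; together with $\lim_{r\to 0}\frac{1}{\upsilon(B_r(x))}\int_{B_r(x)}\langle d\tilde h_i,d\tilde h_j\rangle\,d\upsilon=(AGA)_{ij}=\delta_{ij}$, this yields $\lim_{r\to 0}\frac{1}{\upsilon(B_r(x))}\int_{B_r(x)}\langle d\tilde f_i,d\tilde f_j\rangle\,d\upsilon=\delta_{ij}$.

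Next I would verify the hypotheses of Lemma~\ref{21} for $\tilde f_1,\dots,\tilde f_l$. Since $x\neq x_i$ and $x\in\mathcal{D}_{x_i}^{\tau}$ for all $i$, there is $\tau''>0$ with $x\in\bigcap_i(\mathcal{D}_{x_i}^{\tau''}\setminus B_{\tau''}(x_i))$, so Lemma~\ref{17}, applied to every pair $x_p,x_q$ and summed against the coefficients $\tilde a_i^k$, gives $\frac{1}{\upsilon(B_r(x))}\int_{B_r(x)}|\langle d\tilde f_i,d\tilde f_j\rangle-\frac{1}{\upsilon(B_r(x))}\int_{B_r(x)}\langle d\tilde f_i,d\tilde f_j\rangle\,d\upsilon|\,d\upsilon\le\Psi(r,r/\tau'';n,L)\to 0$; combined with the limit just obtained, $\lim_{r\to 0}\frac{1}{\upsilon(B_r(x))}\int_{B_r(x)}|\langle d\tilde f_i,d\tilde f_j\rangle-\delta_{ij}|\,d\upsilon=0$, so the relevant hypothesis of Lemma~\ref{21} holds for \emph{every} $\epsilon>0$. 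Now fix $\delta\in(0,1)$, choose $\epsilon,\delta_0>0$ so small that the output constant $\Psi(\epsilon,\delta_0;n,L)$ of Lemma~\ref{21} is $<\delta/4$, and, using that $x$ lies in $\bigcap_{\delta'>0}(\bigcup_{r>0}\mathrm{Leb}(\bigcap_i(\mathcal{D}_{x_i}^{\tau}\setminus\{x_i\})\cap(\mathcal{R}_m)_{\delta',r}))$, choose a scale $\tau'\le\tau$ with $x\in\mathrm{Leb}(\bigcap_i(\mathcal{D}_{x_i}^{\tau'}\setminus\{x_i\})\cap(\mathcal{R}_m)_{\delta_0,\tau'})$. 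Lemma~\ref{21} then yields, for all sufficiently small $s$, a compact $K_s^{0}\subset\overline{B}_s(x)$ with $\upsilon(K_s^0)/\upsilon(B_s(x))\ge 1-\delta/4$ such that for each $\alpha\in K_s^0$ and each small $t>0$ there are points $w_1^t(\alpha),\dots,w_{m-l}^t(\alpha)\in Y$ and a compact $U_t^{0}\subset\overline{B}_t(\alpha)$ with $\upsilon(U_t^0)/\upsilon(B_t(\alpha))\ge 1-\delta/4$ on which $(\tilde f_1,\dots,\tilde f_l,r_{w_1^t(\alpha)},\dots,r_{w_{m-l}^t(\alpha)})$ is $(1\pm\delta/4)$-bi-Lipschitz onto its image.

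To pass from $\tilde f$ to $\tilde h$, I would apply Lemma~\ref{1} to the Borel function $F=\sum_k|d\tilde f_k-d\tilde h_k|$ at $x$ on the ball $B_{100s}(x)$: since $\frac{1}{\upsilon(B_{100s}(x))}\int_{B_{100s}(x)}F\,d\upsilon=:\eta(s)\to 0$ as $s\to 0$, one gets a compact $K'\subset\overline{B}_s(x)$ with $\upsilon(K')/\upsilon(B_s(x))\ge 1-\Psi(\eta(s);n)$ such that $\frac{1}{\upsilon(B_t(\alpha))}\int_{B_t(\alpha)}F\,d\upsilon\le\Psi(\eta(s);n)$ for all $\alpha\in K'$ and $0<t\le s$. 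Put $K_s=K_s^0\cap K'$, a compact subset of $\overline{B}_s(x)$ with $\upsilon(K_s)/\upsilon(B_s(x))\ge 1-\delta/4-\Psi(\eta(s);n)$. For $\alpha\in K_s$ and $t$ so small that $10t\le s$, the estimate $\frac{1}{\upsilon(B_{10t}(\alpha))}\int_{B_{10t}(\alpha)}F\,d\upsilon\le\Psi(\eta(s);n)$, together with the segment inequality on $Y$ (\cite[Theorem~$2.15$]{ch-co3}) and the covering argument of \cite[Lemma~$16.2$]{ch2} already used in the proof of Claim~\ref{117}, now with $F$ (an upper gradient of each $\tilde h_k-\tilde f_k$) in place of $||d\mathbf{b}_3^i|-1|$, yields a compact $V_t\subset\overline{B}_t(\alpha)$ with $\upsilon(V_t)/\upsilon(B_t(\alpha))\ge 1-\Psi(\eta(s);n)$ and $\mathbf{Lip}((\tilde h_k-\tilde f_k)|_{V_t})\le\Psi(\eta(s);n)$ for every $k$. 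Setting $U_t=U_t^0\cap V_t\subset\overline{B}_t(\alpha)$, we have $\upsilon(U_t)/\upsilon(B_t(\alpha))\ge 1-\delta/4-\Psi(\eta(s);n)$, and on $U_t$ the map $\Phi_t=((h_1,\dots,h_l)A,r_{w_1^t(\alpha)},\dots,r_{w_{m-l}^t(\alpha)})=(\tilde h_1,\dots,\tilde h_l,r_{w_1^t(\alpha)},\dots,r_{w_{m-l}^t(\alpha)})$ differs from the $(1\pm\delta/4)$-bi-Lipschitz map of Lemma~\ref{21} only by a $\sqrt{l}\,\Psi(\eta(s);n)$-Lipschitz perturbation of its first $l$ coordinates, hence is $(1\pm(\delta/4+\sqrt{l}\,\Psi(\eta(s);n)))$-bi-Lipschitz onto its image. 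Choosing $r_0>0$ small enough that $(1+\sqrt{l})\,\Psi(\eta(s);n)<\delta/2$ for $0<s<r_0$ gives the assertion.

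I expect the only genuinely technical point to be the last step—extracting from the $L^1$-smallness $\frac{1}{\upsilon(B_{10t}(\alpha))}\int_{B_{10t}(\alpha)}|d\tilde f_k-d\tilde h_k|\,d\upsilon\le\Psi$ a true Lipschitz bound (not merely an $L^{\infty}$ bound) for $\tilde h_k-\tilde f_k$ on a subset of $\overline{B}_t(\alpha)$ of nearly full $\upsilon$-measure, valid simultaneously at every scale. This is, however, exactly the mechanism already used in the proof of Lemma~\ref{16} (Claim~\ref{117}) through \cite[Lemma~$16.2$]{ch2} and the segment inequality, so it requires no new idea; the remaining ingredients—positive definiteness of $G$, the identity $AGA=I$, matching of scales by rescaling and Bishop--Gromov, and organizing all error terms in the generic form $\Psi(\cdots)\to 0$—are routine.
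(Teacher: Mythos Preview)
Your proposal is correct and follows essentially the same route as the paper: orthonormalize via $A$, apply Lemma~\ref{21} to the distance-type combinations $\tilde f_k=(f_1,\dots,f_l)A$, use Lemma~\ref{1} to propagate the $L^1$-smallness of $\sum_k|d\tilde f_k-d\tilde h_k|$ to all small scales around almost every $\alpha$, and then transfer the bi-Lipschitz estimate from the $\tilde f_k$ to the $\tilde h_k$. The only variation is in that last transfer: the paper invokes the segment-inequality argument of Claim~\ref{119} to get $F_j=g_j+\mathrm{const}\pm\Psi t$ on the whole ball $\overline{B}_t(\alpha)$ and takes $U_t=B_{t/2}(\alpha)\cap E_t$, whereas you package the same mechanism through \cite[Lemma~$16.2$]{ch2} (as in Claim~\ref{117}) to extract a large subset $V_t$ with $\mathbf{Lip}((\tilde h_k-\tilde f_k)|_{V_t})\le\Psi$ and set $U_t=U_t^0\cap V_t$; either device yields the required $(1\pm\delta)$-bi-Lipschitz bound, and your formulation has the minor advantage that the error comes out directly as $\Psi\,\overline{p_1,p_2}$ rather than $\Psi\,t$.
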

\begin{proof}
We define Lipschitz functions $g_i$ on $Y$ by
$(g_1, _{\cdots}, g_l) = (h_1, _{\cdots}, h_l)A$.
By the definition, we have 
\[\lim _{r \rightarrow 0} \frac{1}{\upsilon(B_r(x))}\int_{B_r(x)}\langle g_i, g_j\rangle d\upsilon = \delta_{i, j}.\]
By Corollary \ref{19}, we have 
\[\lim _{r \rightarrow 0} \frac{1}{\upsilon(B_r(x))}\int_{B_r(x)}|\langle g_i, g_j\rangle -\delta_{i, j}|d\upsilon = 0.\]
We put $(F_1, _{\cdots}, F_l)=(\sum_{i=1}^kb_i^1r_{x_i}, _{\cdots}, 
\sum_{i=1}^kb_i^lr_{x_i})=(\sum_{i=1}^ka_i^1r_{x_i}, _{\cdots}, 
\sum_{i=1}^ka_i^lr_{x_i})A$ and take $L \ge 1$ such that $|A| + \sum_{i, j}(b_i^j)^2 \le L.$
We fix $ 0 < \delta < 1$.
By Lemma \ref{21}, we have the following claim:
\begin{claim}\label{124}
There exists $r_1 >0$ such that for every $0 < s \le r_1$, there exist a compact set $K_s \subset \overline{B}_s(x)$ satisfying the following properties:
\begin{enumerate}
\item $\upsilon(K_s)/\upsilon(B_s(x)) \ge 1-\delta$.
\item For every $\alpha \in K_s$ and every sufficiently small $t > 0$, there exist points $w_1^t(\alpha), _{\cdots}, w_{m-l}^t(\alpha) \in Y$
 and a compact set $E_t \subset \overline{B}_t(\alpha)$ such that $\upsilon(E_t)/\upsilon(B_t(\alpha)) \ge 1- \delta$ and that 
the map $\Phi_t = (F_1, _{\cdots}, F_l, r_{w_1^t(\alpha)}, _{\cdots}, r_{w_{m-l}^t(\alpha)})$ from $E_t$ to $\mathbf{R}^m$,
gives $(1\pm \delta)$-bi-Lipschitz equivalent to the image.
\end{enumerate}
\end{claim}
On the other hand, there exists $r_0 > 0$ such that  
\[\frac{1}{\upsilon(B_{s}(x))}\int_{B_{s}(x)}\sum_j|dF_j-dg_j|d\upsilon \le \delta \]
for every $0 < s < r_0$.
Thus, by Lemma \ref{1}, we have the following;
\begin{claim}\label{125}
For every $0 < s < r_0/100$, there exists a compact set $X_s \subset \overline{B}_{s}(x)$ such that 
$\upsilon(X_s)/\upsilon(\overline{B}_{s}(x)) \ge 1-\Psi(\delta;n)$ and that  
\[\frac{1}{\upsilon(B_{5\hat{s}}(\alpha))}\int_{B_{5\hat{s}}(\alpha)}\sum_j|dF_j-dg_j|d\upsilon \le \Psi(\delta;n) \]
for every $\alpha \in X_s$ and $0 < \hat{s} \le s$.
\end{claim}
We put $V_s = K_s \cap X_s$ for $0 < s < \min \{r_0, r_1\}/1000$.
Then we have $\upsilon(V_s)/\upsilon (B_s(x)) \ge 1-\Psi (\delta;n)$.
We fix $0 < s < \min \{r_0, r_1\}/1000$.
We also take $\alpha \in V_s$ and sufficiently small $t > 0$.
By an argument similar to the proof of Claim \ref{119}, we have 
\[F_j= f_j + \mathrm{constant} \pm \Psi(\delta;n)t\]
on $\overline{B}_{t}(\alpha)$.
We put $U_t= B_{t/2}(\alpha) \cap E_t$.
Then we have $\upsilon(U_t)/\upsilon(B_{t/2}(\alpha)) \ge 1- \Psi(\delta;n)$.
For $p_1, p_2 \in B_{t/2}(\alpha) \cap E_t$ satisfying $p_1 \neq p_2$, if we put $\hat{t}=\overline{p_1, p_2} > 0$, then we have 
\begin{align}
&\overline{(f_1(p_1), _{\cdots}, f_l(p_1), r_{w_1^t(\alpha)}, _{\cdots}, r_{w_{m-l}^t(\alpha)}(p_1)), (f_1(p_2), _{\cdots}, f_l(p_2), r_{w_1^t(\alpha)}(p_2), _{\cdots}, r_{w_{m-l}^t(\alpha)}(p_2))} \\
&=\overline{(F_1(p_1), _{\cdots}, F_l(p_1), r_{w_1^t(\alpha)}, _{\cdots}, r_{w_{m-l}^t(\alpha)}(p_1)), (F_1(p_2), _{\cdots}, F_l(p_2), r_{w_1^t(\alpha)}(p_2), _{\cdots}, r_{w_{m-l}^t(\alpha)}(p_2))} \pm \Psi \hat{t} \\
&=(1 \pm \delta)\overline{p_1, p_2} \pm \Psi \hat{t} =(1 \pm \Psi)\overline{p_1, p_2}.
\end{align}
Therefore we have the assertion.
\end{proof}
\begin{lemma}\label{23}
Let $(Y, y, \upsilon)$ be a Ricci limit space, $l$ a positive integer integer, 
$f_i, f (1 \le i \le l)$ Lipschitz functions on $Y$ and $A$ a Borel subset of $Y$. 
We assume that for a.e. $x \in A$, $\mathrm{span}\{df_1(x), _{\cdots}, df_l(x)\}= T^*_xY$.
Then, for a.e. $x \in A$,
there exists $b_1(x), _{\cdots}, b_l(x) \in \mathbf{R}$ such that 
\[\lim_{r \rightarrow 0} \frac{1}{\upsilon(B_r(x))}\int _{B_r(x)}\left|df-\sum_{i=1}^lb_i(x)df_i\right|^2d \upsilon = 0.\]
\end{lemma}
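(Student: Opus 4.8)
The plan is to reduce the statement to the Lebesgue differentiation theorem for the doubling measure $\upsilon$ together with an elementary fact about orthogonal projections, after a standard measurable decomposition of $A$. First I would use $\upsilon(Y\setminus\mathcal{R})=0$ (Theorem~\ref{regular}) to restrict attention to $A\cap\mathcal{R}_k$ for a fixed $k$; on $\mathcal{R}_k$ the fibre $T^*_wY$ is $k$-dimensional with inner product $\langle\cdot,\cdot\rangle(w)$ for a.e.\ $w$. Since $\mathrm{span}\{df_1(x),\dots,df_l(x)\}=T^*_xY$ for a.e.\ $x\in A$, at such a point at least one of the finitely many index sets $I\subset\{1,\dots,l\}$ with $|I|=k$ makes $\{df_i(x)\}_{i\in I}$ a basis of $T^*_xY$. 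As the functions $\langle df_i,df_j\rangle$ are Borel and bounded (by $\mathbf{Lip}f_i\,\mathbf{Lip}f_j$), the sets
\[A_I=\Bigl\{x\in A\cap\mathcal{R}_k:\ \det\bigl(\langle df_i(x),df_j(x)\rangle\bigr)_{i,j\in I}>0\Bigr\}\]
are Borel, and $A\cap\mathcal{R}_k=\bigcup_{|I|=k}A_I$ up to a $\upsilon$-null set, so it suffices to treat each $A_I$ separately.

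Fix $I$, put $G(x)=(\langle df_i(x),df_j(x)\rangle)_{i,j\in I}$ and $v(x)=(\langle df(x),df_i(x)\rangle)_{i\in I}$, and for $x\in A_I$ define the coefficient vector by $(b_i(x))_{i\in I}=G(x)^{-1}v(x)$, with $b_i(x)=0$ for $i\notin I$. Since $\{df_i(x)\}_{i\in I}$ is a basis of $T^*_xY$, the Gram system $G(x)c=v(x)$ is precisely the one solved by the coordinate vector $c$ of $df(x)$ in that basis, hence $df(x)=\sum_{i\in I}b_i(x)df_i(x)$ and $\bigl|df-\sum_ib_i(x)df_i\bigr|^2(x)=0$. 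Expanding the square pointwise,
\[\frac{1}{\upsilon(B_r(x))}\int_{B_r(x)}\Bigl|df-\sum_ib_i(x)df_i\Bigr|^2d\upsilon=\frac{1}{\upsilon(B_r(x))}\int_{B_r(x)}\Bigl(|df|^2-2\sum_ib_i(x)\langle df,df_i\rangle+\sum_{i,j}b_i(x)b_j(x)\langle df_i,df_j\rangle\Bigr)d\upsilon,\]
a fixed finite linear combination, with $x$-dependent constant coefficients, of the averages over $B_r(x)$ of the bounded Borel functions $|df|^2$, $\langle df,df_i\rangle$, $\langle df_i,df_j\rangle$ $(i,j\in I)$. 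By the Lebesgue differentiation theorem for $\upsilon$ (which is doubling by Bishop--Gromov; cf.\ Proposition~\ref{130}), for a.e.\ $x$ each of these averages converges as $r\to0$ to the value of the function at $x$, so the right-hand side tends to $|df(x)|^2-2\sum_ib_i(x)\langle df(x),df_i(x)\rangle+\sum_{i,j}b_i(x)b_j(x)\langle df_i(x),df_j(x)\rangle=\bigl|df-\sum_ib_i(x)df_i\bigr|^2(x)=0$. Taking the union over $I$ and then over $k$ gives the claim for a.e.\ $x\in A$.

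The only points that need care are bookkeeping ones: one must observe that the differential sections $df_i,df$ are Borel and $\upsilon$-a.e.\ defined and that $\langle\cdot,\cdot\rangle(w)$ depends measurably on $w$, so that $\langle df_i,df_j\rangle$, $\langle df,df_i\rangle$, $|df|^2$ are genuine bounded Borel functions on $Y$ (all part of the Cheeger--Colding cotangent-bundle structure recalled in Section~2), and that $\upsilon$ admits the Lebesgue differentiation theorem, which follows from its doubling property. If one prefers to remain entirely within the framework of this paper, the required Lebesgue-point statement can instead be obtained on each piece $C_{k,i}^x$ of Theorem~\ref{7} by transporting Euclidean Lebesgue differentiation through the bi-Lipschitz chart $\Phi_{k,i}^x$. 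Either way I expect no serious obstacle; the crux is simply to recognize that once Lebesgue points are available the rest is linear algebra, the coefficient vector being forced to be $b(x)=G(x)^{-1}v(x)$.
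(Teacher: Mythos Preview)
Your proposal is correct and is essentially the paper's own proof: define the coefficient vector by solving the Gram system, invoke Lebesgue differentiation (Corollary~\ref{131}) for the bounded Borel functions $|df|^2$, $\langle df,df_i\rangle$, $\langle df_i,df_j\rangle$, expand the square, and conclude. Your version is slightly more careful about the measurable decomposition (restricting to $\mathcal{R}_k$ and to index sets $I$), and your formula $b(x)=G(x)^{-1}v(x)$ is the right one; the paper writes $\sqrt{G(x)}^{-1}$, which appears to be a typo, since its subsequent computation is only consistent with $G^{-1}$.
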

\begin{proof}
Without loss of generality, we can assume that for every $x \in A$, $\{df_i(x)\}$ is a base of $T^*_xY$.
For every $x \in A$, we put 
\[(b_1(x), \dots, b_l(x))=(\langle df, df_1 \rangle (x), \dots, \langle df, df_l \rangle (x)) \sqrt{(\langle df_i, df_j \rangle (x))_{i, j}}^{-1}.\]
By Corollary \ref{131}, for a.e. $x \in A$, we have 
\[\lim_{r \rightarrow 0}\frac{1}{\upsilon(B_r(x))}\int_{B_r(x)}|df|^2d\upsilon = |df|^2(x),\]
\[\lim_{r \rightarrow 0}\frac{1}{\upsilon(B_r(x))}\int_{B_r(x)}\langle df, df_i\rangle d\upsilon = \langle df, df_i\rangle (x)\]
for every $i$ and
\[\lim_{r \rightarrow 0}\frac{1}{\upsilon(B_r(x))}\int_{B_r(x)}\langle df_i, df_j\rangle d\upsilon = \langle df_i, df_j\rangle (x)\]
for every $i, j$.
Therefore, for a.e. $x \in A$, 
since
\[\lim_{r \rightarrow 0}\frac{1}{\upsilon(B_r(x))}\int_{B_r(x)}|df|^2d\upsilon = |df|^2(x)= \left|\sum_{i=1}^lb_i(x)df_i(x)\right|^2,\]
\begin{align}
\lim_{r \rightarrow 0}\frac{1}{\upsilon(B_r(x))}\int_{B_r(x)}\left\langle df, \sum_{i=1}^lb_i(x)df_i\right\rangle d\upsilon &= 
\sum_{i=1}^lb_i(x)\langle df, df_i\rangle (x) \\
&= \sum_{i=1}^lb_i(x)\left\langle \sum_{j=1}^lb_j(x)df_j, df_i\right\rangle (x) \\
&= \left|\sum_{i=1}^lb_i(a)df_i(x)\right|^2
\end{align}
and
\begin{align*}
\lim_{r \rightarrow 0}\frac{1}{\upsilon(B_r(x))}\int_{B_r(x)}\left|\sum_{i=1}^lb_i(a)df_i\right|^2d\upsilon 
= \sum_{i, j}^lb_i(x)b_j(x)\langle df_i, df_j\rangle (x) =\left|\sum_{i=1}^lb_i(x)df_i(x)\right|^2,
\end{align*}
we have 
\begin{align}
&\lim_{r \rightarrow 0}\frac{1}{\upsilon(B_r(x))}\int_{B_r(x)}\left|df-\sum_{i=1}^lb_i(x)df_i\right|^2d\upsilon \\
&=\lim_{r \rightarrow 0}\frac{1}{\upsilon(B_r(x))}\int_{B_r(x)}|df|^2d\upsilon  -2 
\lim_{r \rightarrow 0}\frac{1}{\upsilon(B_r(x))}\int_{B_r(x)}\left\langle df, \sum_{i=1}^lb_i(x)df_i\right\rangle d\upsilon \\
& \ \ + \lim_{r \rightarrow 0}\frac{1}{\upsilon(B_r(x))}\int_{B_r(x)}\left|\sum_{i=1}^lb_i(a)df_i\right|^2d\upsilon =0.
\end{align}
\end{proof}
\begin{theorem}[Rectifiability associated with Lipschitz functions]\label{24}
Let $(Y, y, \upsilon)$ be a Ricci limit space, $l$ a positive  integer,
$f_i (1 \le i \le l)$ Lipschitz functions on $Y$, $A$ a Borel subset of $Y$. 
We assume that $\{f_1(x), _{\cdots}, f_l(x)\}$ are linearly independent for a.e. $x \in A$.
Then, there exist $ 0 < \alpha(n) < 1 $, a collection of compact sets $\{C_{k,i}\}_{l \le k \le n, i \in \mathbf{N}} \subset A$,
 points $\{x_{k, i}\} \in A$ and $\{x_{k, i}^s\}_{1 \le s \le k-l} \in Y$ satisfying 
the following properties:
\begin{enumerate}
\item $\upsilon(A \setminus \bigcup_{l \le k \le n, i \in \mathbf{N}}C_{k, i}) = 0$.
\item For every $l \le k \le n$, $x \in \bigcup_{i \in \mathbf{N}}C_{k, i}$ and $0<\delta<1$, 
there exists $i \in \mathbf{N}$ such that $x \in C_{k, i}$ and that the map $\phi_{k, i}=((f_1(z), _{\cdots}, f_l(z)) \sqrt{(\langle df_i, df_j\rangle )_{i, j}(x_{k, i})}^{-1}, r_{x_{k, i}^1}, _{\cdots}, r_{x_{k, i}^{k-l}})$ gives a $(1 \pm \delta)$-bi-Lipschitz equivalent to the image $\phi_{k, i}(C_{k,i})$.
\item $C_{k, i} \subset \mathcal{R}_{k, \alpha(n)} \cap \bigcap_{j=1}^{k-l}(Y \setminus (C_{x_{k, i}^j} \cup \{x_{k, i}^j\}))$.
\item The limit measure $\upsilon$ and $k$-dimensional Hausdorff measure $H^k$ are mutually absolutely continuous on  $C_{k, i}$.
Moreover, $\upsilon$ is Ahlfors $k$-regular at every $x \in C_{k, i}$. 
\end{enumerate}
\end{theorem}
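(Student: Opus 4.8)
The plan is to deduce the theorem from Lemma \ref{22} by a covering argument of the same shape as the passage from Lemma \ref{5} and Lemma \ref{6} to Theorem \ref{7}: first I would reduce to a statement valid at $\upsilon$-a.e.\ point of $A$, and then globalize by iterated use of the covering Proposition \ref{cov}, freezing along the way the point- and scale-dependent data produced by Lemma \ref{22}.

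For the pointwise reduction, recall that $\upsilon(Y\setminus\mathcal{R})=0$, that $\upsilon(\mathcal{R}_k\setminus\mathcal{R}_{k,\alpha(n)})=0$ for the Cheeger--Colding exponent $0<\alpha(n)<1$, and that $\upsilon$ is Ahlfors $k$-regular at each point of $\mathcal{R}_{k,\alpha(n)}$; so it is enough to handle, for each $l\le k\le n$, the set $A\cap\mathcal{R}_{k,\alpha(n)}$, and here necessarily $k\ge l$ since $df_1(x),\dots,df_l(x)$ are linearly independent in the $k$-dimensional space $T_x^*Y$ (in particular $A\cap\mathcal{R}_k$ is $\upsilon$-null for $k<l$). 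By Corollary \ref{131}, for $\upsilon$-a.e.\ $x\in A$ the Gram matrix $(\langle df_i,df_j\rangle(x))_{i,j}$ is positive definite and equals $\lim_{r\to0}\upsilon(B_r(x))^{-1}\int_{B_r(x)}\langle df_i,df_j\rangle\,d\upsilon$, so its determinant is nonzero. By Theorem \ref{7} one of the countably many charts there provides, for $\upsilon$-a.e.\ $x\in\mathcal{R}_k$, distance functions $r_{z_1},\dots,r_{z_k}$ whose differentials span $T_x^*Y$ at $\upsilon$-a.e.\ point of the chart (by Cheeger's differentiation theory); Lemma \ref{23}, applied with these distance functions in place of its $f_i$, then gives reals $b^j_m(x)$ with $\upsilon(B_r(x))^{-1}\int_{B_r(x)}|df_j-\sum_m b^j_m(x)\,dr_{z_m}|^2\,d\upsilon\to0$, hence the same with $|\cdot|$ in place of $|\cdot|^2$. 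Setting $\widetilde f_j=\sum_m b^j_m(x)r_{z_m}$, and using $\upsilon(C_{z_m})=0$ together with the fact that $\upsilon$-a.e.\ point of $\mathcal{R}_k$ lies in $(\mathcal{R}_k)_{\delta,r}$ for all small $r$, one checks that for $\upsilon$-a.e.\ $x\in A\cap\mathcal{R}_{k,\alpha(n)}$ all hypotheses of Lemma \ref{22} hold with $m=k$, $h_i=f_i$, and $\widetilde f_j$ as the distance-function combinations there.

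Lemma \ref{22} then produces, for each $0<\delta<1$ and each small $s$, a compact $K_s\subset\overline{B}_s(x)$ of measure fraction $\ge1-\Psi(\delta;n,L)$ in $B_s(x)$ carrying, at each $\alpha\in K_s$ and each small $t$, a compact $U_t\subset\overline{B}_t(\alpha)$ of measure fraction $\ge1-\Psi(\delta;n,L)$ on which $((f_1,\dots,f_l)A,\,r_{w_1^t(\alpha)},\dots,r_{w_{k-l}^t(\alpha)})$ is $(1\pm\Psi(\delta;n,L))$-bi-Lipschitz, where $A=\sqrt{(\langle df_i,df_j\rangle(x))_{i,j}}^{\,-1}$ and $L$ bounds the relevant Lipschitz constants and $|A|$. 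To globalize I would fix $\delta=1/N$ and apply Proposition \ref{cov} to the balls $\overline{B}_s(x)$ over all good centres $x\in A$ and admissible $s$ (a countable family after discretizing the $b^j_m$ and the choice of chart), extracting limit pieces with Proposition \ref{compact} and estimating measures with Proposition \ref{sup}; this reduces to a fixed $K_s$ with fixed centre $x$ and fixed matrix $A$. Applying Proposition \ref{cov} again to the balls $\overline{B}_t(\alpha)$, $\alpha\in K_s$, gives countably many compact pieces each with a \emph{fixed} auxiliary tuple $w_1,\dots,w_{k-l}$; since each $U_t$ only captures a $(1-\Psi(\delta;n,L))$ fraction, iterating this inner covering on the residual set gives geometric decay of the missed measure, so the pieces cover $\upsilon$-a.e.\ point of $K_s$. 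Intersecting with $A\cap\mathcal{R}_{k,\alpha(n)}$ and with the common Lebesgue points of the $\langle df_i,df_j\rangle$ lets one replace $A=A(x)$ by $\sqrt{(\langle df_i,df_j\rangle(x_{k,i}))_{i,j}}^{\,-1}$ at a suitable $x_{k,i}\in A$ at the cost of enlarging the constant; relabelling, unioning over $k$, and letting $N\to\infty$ yields $\{C_{k,i}\}$, the points $x_{k,i}\in A$ and $x^s_{k,i}\in Y$, with $\upsilon(A\setminus\bigcup C_{k,i})=0$ and property $(2)$.

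Properties $(3)$ and $(4)$ come out of the construction: the parameter $\tau$ in the applications of Lemma \ref{22} is chosen so that every $x$ in the piece lies in $\bigcap_j(\mathcal{D}^\tau_{x^j_{k,i}}\setminus\{x^j_{k,i}\})$, giving $C_{k,i}\subset\mathcal{R}_{k,\alpha(n)}\cap\bigcap_j(Y\setminus(C_{x^j_{k,i}}\cup\{x^j_{k,i}\}))$; and since $\upsilon$ is Ahlfors $k$-regular at every point of $C_{k,i}$ and $\phi_{k,i}$ maps $C_{k,i}$ $(1\pm\delta)$-bi-Lipschitz onto a subset of $\mathbf{R}^k$, the image measure is comparable on small balls to $k$-dimensional Lebesgue measure $H^k$, so $\upsilon$ and $H^k$ are mutually absolutely continuous on $C_{k,i}$. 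The analytic content here is entirely in Lemma \ref{22} (and, through it, the Cheeger--Colding almost-splitting theory); the main obstacle I expect is organizational — carrying out the two nested coverings so that the moving auxiliary base points and the moving normalizing matrix are frozen on countably many pieces while the accumulated $\Psi(\delta)$-errors stay under control — the one point needing care beyond the template of Theorem \ref{7} being the replacement of the $x$-dependent matrix $\sqrt{(\langle df_i,df_j\rangle(\cdot))_{i,j}}^{\,-1}$ by its value at a single $x_{k,i}$, handled through Lebesgue points of the Borel functions $\langle df_i,df_j\rangle$.
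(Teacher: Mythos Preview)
Your proposal is correct and follows essentially the same route as the paper: reduce to $A\cap\mathcal{R}_k$ for $k\ge l$, use Lemma~\ref{23} (via the Theorem~\ref{7} charts) to approximate each $df_j$ by a combination of distance-function differentials, feed this into Lemma~\ref{22} to get the two-scale bi-Lipschitz pieces, and then globalize by a covering argument that freezes the centre, the normalizing matrix, and the auxiliary base points. The only organizational difference is that the paper achieves full measure by a $\bigcap_{N_0}\bigcup_{N\ge N_0}$ construction over the precision parameter $N$ rather than by iterating on a residual set at fixed $N$, but your variant would work equally well.
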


\begin{proof}
We take a collection of Borel subset $\{C_{k, i}^y\}$ of $Y$ and a collection of points $\{x_{k,i}^{\hat{l}}\}$ in $Y$ as in Theorem \ref{7}.
For convenience, we put $x_{k, i}^1=y, C_{k, i}= C_{k,i}^y$.
By Lemma \ref{6}, we can assume that $C_{k, i}$ is bounded for every $i, k$.
By the definition of $T^*Y$ (see section $4$ in \cite{ch1} or section $6$ in \cite{ch-co3} for the detail), we have   $\mathrm{span}\{dr_{x_{k, i}^1}(x), _{\cdots}, dr_{x_{k, i}^k}(x)\}=T^*_xY$ for a.e. $x \in C_{k,i}^y$.
Therefore, by the assumption, we have $\upsilon(A \cap C_{k, i})=0$ for $k < l$.
Since 
\[\upsilon \left(\mathcal{R}_k \setminus \bigcup_{\tau >0} \left(\bigcap_{\delta > 0} \left(\bigcup_{r>0}\mathrm{Leb}\left(\bigcap_{i, j}(\mathcal{D}_{x_i^j}^{\tau} \setminus \{x_i^j\}) \cap (\mathcal{R}_k)_{\delta, r}\right)\right)\right)\right)=0,\]
by Lemma \ref{22} and Lemma \ref{23}, we have the following claim:
\begin{claim}\label{126}
For every $k \ge l$ and $i \in \mathbf{N}$, there exists a Borel set $A_{k,i} \subset A \cap C_{k, i}$ satisfying the following properties:
\begin{enumerate}
\item $\upsilon(A \cap C_{k, i} \setminus A_{k, i})=0$.
\item For every $x \in A_{k, i}$ and $ 0 < \delta < 1$, there exists $r_x^{\delta}>0$ such that for every $0 < s < r_x^{\delta}$, there exists a compact set $K(x, \delta, s) \subset \overline{B}_s(x)$ satisfying the following properties:
\begin{enumerate}
\item $\upsilon(K(x, \delta, s))/\upsilon(B_s(x)) \ge 1- \delta$.
\item For every $\alpha \in K(x, \delta, s)$ and every sufficiently small $t > 0$, 
there exist points $w(i, x, \delta, s, \alpha, t) \in Y (1 \le i \le k-l)$ and a compact set $U(x, \delta, s, \alpha, t) \subset \overline{B}_t(\alpha)$ such that the map 
\[\Phi^{x, \delta, s, \alpha, t}=((f_1, _{\cdots}, f_l)A(x), r_{w(1, x, \delta, s, \alpha, t)}, 
_{\cdots}, r_{w(k-l, x, \delta, s, \alpha, t)})\]
 from $U(x, \delta, s, \alpha, t)$ to $\mathbf{R}^k$, gives $(1 \pm \delta)$-bi-Lipschitz equivalent to the image.
Here,
\begin{align}
A(x)&=\sqrt{\left(\lim_{r \rightarrow 0}\frac{1}{\upsilon(B_r(x))}\int_{B_r(x)}\langle df_s, df_t\rangle d\upsilon \right)_{s, t}}^{-1} \\
&=\sqrt{(\langle df_s, df_t\rangle (x))_{s, t}}^{-1}.
\end{align}
\end{enumerate}
\end{enumerate}
\end{claim}
We put $\hat{A}_{k, i}= \mathrm{Leb}(A_{k, i})$.
For every $N \in \mathbf{N}$ and $x \in \hat{A}_{k,i}$, 
we take $0 < s_x^{N} < \min \{r_x^{1/N}, N^{-1}\}$ satisfying
\[\frac{\upsilon(B_{s_x^{N}}(x) \cap A_{k, i})}{\upsilon(B_{s_x^{N}}(x))} \ge 1- N^{-1}.\]
We take $K(x, N^{-1}, s_x^{N})$ as in Claim \ref{126}.
We put $\hat{K}(x, N^{-1}, s_x^{N})=K(x, N^{-1}, s_x^{N}) \cap \hat{A}_{k, i}.$
Thus, we have 
\[\frac{\upsilon\left(B_{s_x^{N}}(x) \cap \hat{K}(x, N^{-1}, s_x^{N})\right)}{\upsilon(B_{s_x^{N}}(x))} \ge 1-100N^{-1}.\]
For every $\alpha \in \hat{K}(x, N^{-1}, s_x^{N})$, there exists a sufficiently small $0 < t=t(\alpha) < N^{-1}$ such that 
\[\frac{\upsilon(B_{\hat{t}}(\alpha) \cap A_{k, i})}{\upsilon(B_{\hat{t}}(\alpha))} \ge 1-N^{-1}\]
for every $0 < \hat{t} < t$.
We take $w(i, x, N^{-1}, s_x^{N}, \alpha, \hat{t})$ and $U(x, N^{-1}, s_x^{N}, \alpha, \hat{t})$ as in Claim \ref{126}.
We put $\hat{U}(x, N^{-1}, s_x^{N}, \alpha, \hat{t}) = U(x, N^{-1}, s_x^{N}, \alpha, \hat{t}) \cap \hat{A}_{k,i}$.
Then we have 
\[\frac{\upsilon\left(B_{\hat{t}}(\alpha) \cap \hat{U}(x, N^{-1}, s_x^{N}, \alpha, \hat{t})\right)}{\upsilon(B_{\hat{t}}(\alpha))}\ge 1-1000N^{-1}.\]
By Lemma \ref{cov}, it is not difficult to check that the following claim:
\begin{claim}\label{14567}
With same notation as above, there exist $x_j^N \in \hat{A}_{k, i}$, $\alpha_j^N \in \hat{K}(x_j^N, N^{-1}, s_{x_j^N}^{N})$ and 
$0 < t_j^N < t(\alpha_j^N)$ such that 
\[\upsilon\left(A_{k,i} \setminus \bigcup_{j \in \mathbf{N}}\hat{U}(x_j^N, N^{-1}, s_{x_j^N}^{N}, \alpha_j^N, t_j^N)\right)\le \Psi(N^{-1};n)\upsilon(B_{10}(A_{k, i})).\]
\end{claim}
We put $\hat{U}(j, N)=\hat{U}(x_j^N, N^{-1}, s_{x_j^N}^{N}, \alpha_j^N, t(\alpha_j^N))$,
$w(i, j, N) = w(i, x_j^N, N^{-1}, s_{x_j^N}^{N}, \alpha_j^N, t(\alpha_j^N))$,  
$U(j)= \bigcap_{N_0 \in \mathbf{N}}\left( \bigcup_{N_1 \ge N_0} \hat{U}(j, N_1)\right)$ and 
$U(j, N)=\hat{U}(j, N) \cap U(j)$.
Then we have $\upsilon(A_{k, i} \setminus \bigcup_{j \in \mathbf{N}}U(j))=0$ and $\bigcup_{N \in \mathbf{N}}U(j, N)=U(j)$.
We fix $j$.
We take $w \in \bigcup_{N \in \mathbf{N}}U(j, N)$ and $0 < \delta < 1$.
There exists $N_0$ such that $w \in U(j, N_0)$.
We take $N_1$ satisfying $N_1^{-1} << \delta$.
Since $w \in \bigcup_{N_2 \ge N_1}\hat{U}(j, N_2)$, there exists $N_2 \ge N_1$ such that $w \in \hat{U}(j, N_2)$.
Especially we have $w \in U(j, N_2)$.
Thus the map $G_{j, N_2}=((f_1, _{\cdots}, f_l)A(x_j^{N_2}), r_{w(1, j, N_2)}, _{\cdots}, r_{w(k-l, j, N_2)})$ from $U(j, N_2)$ to $\mathbf{R}^k$, gives
$(1 \pm N_2^{-1})$-bi-Lipschitz equivalent to the image.
Especially, $G_{j, N_2}$ gives $(1 \pm \delta)$-bi-Lipschitz equivalent to the image.
Therefore, we have the assertion.
\end{proof}
\begin{remark}
Radial rectifiability theorem (Theorem \ref{7}) corresponds to Theorem \ref{24} for a distance function $r_x$.
\end{remark}
We shall give two corollaries of Theorem \ref{24}. 
For metric space $X$, we define a distance on $\mathbf{R}_{\ge 0}\times X/\{0\}\times X$ by 
\[\overline{(t_1, x_1), (t_2, x_2)}= \sqrt{t_1^2+t_2^2-2t_1t_2\cos \min \{\overline{x_1, x_2}, \pi \}}.\]
Let $C(X)$ denote this metric space and $p = [(0, x)] \in C(X)$. 
\begin{corollary}\label{25}
Let $X$ be a compact geodesic space, $l$ a nonnegative integer.
We assume that $l \le n$, $\mathrm{dim}_HX=n-l-1$, $(\mathbf{R}^l \times C(X), (0_l, p))$ is an $(n, -1)$-Ricci limit space.
Here $p \in C(X)$ is the pole.
Then, $X$ is $H^{n-l-1}$-rectifiable.
\end{corollary}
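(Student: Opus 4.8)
The plan is to apply Theorem \ref{24} to $Y:=\mathbf{R}^l\times C(X)$ (equipped with its limit measure $\upsilon$), using as distinguished Lipschitz functions the $l$ coordinate functions $f_j(a,z)=a_j$ of the Euclidean factor together with $\rho(a,z):=d_{C(X)}(z,p)$, the distance in $Y$ to the $l$-plane $\mathbf{R}^l\times\{p\}$. These $l+1$ functions are $1$-Lipschitz. Since $\dim_HY=l+\dim_HC(X)=l+(n-l-1)+1=n$, the space $Y$ is a non-collapsed $n$-dimensional Ricci limit space, so $\upsilon$-a.e.\ point lies in $\mathcal{R}_n(Y)$ and $\upsilon$ is mutually absolutely continuous with $H^n$ there. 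Using the splitting theorem one checks that at $\upsilon$-a.e.\ point of $\mathcal{R}(Y)$ the covectors $df_1,\dots,df_l,d\rho$ are linearly independent: the $\mathbf{R}^l$-factor splits off isometrically and $\rho$ has unit radial gradient in the $C(X)$-direction, which is orthogonal to the Euclidean factor. Let $A\subset\mathcal{R}(Y)$ be the Borel set on which this independence holds; then $\upsilon(Y\setminus A)=0$, and by the non-collapsed structure theory also $H^n(Y\setminus A)=0$.

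Applying Theorem \ref{24} to $Y$, to the functions $f_1,\dots,f_l,\rho$, and to this $A$, I would obtain $0<\alpha(n)<1$, compact sets $\{C_{k,i}\}_{l+1\le k\le n,\ i\in\mathbf{N}}\subset A$ with $\upsilon(A\setminus\bigcup_{k,i}C_{k,i})=0$, points $\{x_{k,i}^s\}\subset Y$, and invertible $(l+1)\times(l+1)$ matrices $B_{k,i}$ such that: for every point $x\in\bigcup_iC_{k,i}$ and every $\delta\in(0,1)$ there is some $C_{k,i}\ni x$ on which the chart $\phi_{k,i}:=\bigl((f_1,\dots,f_l,\rho)B_{k,i},\,r_{x_{k,i}^1},\dots,r_{x_{k,i}^{k-l-1}}\bigr)$ is a $(1\pm\delta)$-bi-Lipschitz map onto its image in $\mathbf{R}^k$; moreover $\upsilon$ is Ahlfors $k$-regular and mutually absolutely continuous with $H^k$ on each $C_{k,i}$. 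Now set $G:=(f_1,\dots,f_l,\rho)\colon Y\to\mathbf{R}^{l+1}$, so that for $a_0\in\mathbf{R}^l$ and $R>0$ we have $G^{-1}(a_0,R)=\{a_0\}\times\partial B_R(p)$, which with the metric induced from $Y$ is isometric to $(\partial B_R(p),d_{C(X)})$. Since the first $l+1$ coordinates of $\phi_{k,i}$ are a fixed linear image of $G$, the restriction of $\phi_{k,i}$ to the slice $C_{k,i}\cap G^{-1}(a_0,R)$ carries it into a coordinate level set of $\mathbf{R}^k$; hence $C_{k,i}\cap G^{-1}(a_0,R)$ is $(1\pm\delta)$-bi-Lipschitz, via $(r_{x_{k,i}^1},\dots,r_{x_{k,i}^{k-l-1}})$, to a subset of $\mathbf{R}^{k-l-1}$, so it is $H^{n-l-1}$-rectifiable (and for $k<n$ even $H^{n-l-1}$-null). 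On the other hand, writing $N:=Y\setminus\bigcup_{k,i}C_{k,i}$ we have $H^n(N)=0$, so the coarea (Eilenberg) inequality would give $\int_{\mathbf{R}^{l+1}}H^{n-l-1}\bigl(N\cap G^{-1}(\xi)\bigr)\,d\xi\le C(n)\,\mathbf{Lip}(G)^{l+1}H^n(N)=0$; this slicing step is to be carried out just as in the proof of Theorem \ref{7} (cf.\ \cite{ho1}). Consequently $N\cap G^{-1}(a_0,R)$ is $H^{n-l-1}$-null for a.e.\ $(a_0,R)$.

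Combining these two points, for a.e.\ $(a_0,R)$ the slice $G^{-1}(a_0,R)=\{a_0\}\times\partial B_R(p)$ would be the union of a countable family of $H^{n-l-1}$-rectifiable sets and an $H^{n-l-1}$-null set, hence $H^{n-l-1}$-rectifiable; the finer clauses of the rectifiability definition ($(1\pm\delta)$-bi-Lipschitz charts, Ahlfors $(n-l-1)$-regularity of the induced measure, mutual absolute continuity with $H^{n-l-1}$) would pass to a.e.\ slice by the same coarea argument together with $\upsilon\sim H^n$ on $\mathcal{R}_n(Y)$. Fixing one such pair $(a_0,R_0)$, the space $(\partial B_{R_0}(p),d_{C(X)})$ is then $H^{n-l-1}$-rectifiable. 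Finally, the bijection $X\to\partial B_{R_0}(p)$, $x\mapsto[(R_0,x)]$, satisfies $d_{C(X)}\bigl([(R_0,x_1)],[(R_0,x_2)]\bigr)=2R_0\sin\bigl(\tfrac12\min\{\overline{x_1,x_2},\pi\}\bigr)$, which is bi-Lipschitz on every metric ball of $X$ of radius $<\pi$; covering $X$ by countably many such balls and using that bi-Lipschitz images and Borel subsets of $H^{n-l-1}$-rectifiable sets are again $H^{n-l-1}$-rectifiable, one concludes that $X$ is $H^{n-l-1}$-rectifiable.

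The step I expect to be the main obstacle is the passage from $\upsilon$-negligibility to $H^n$-negligibility for the sets $N$ and $Y\setminus A$ (which is what lets the coarea inequality annihilate the slices lying outside the charts), together with the clean formulation of the coarea/slicing statement in the limit-space setting; both rest on the non-collapsed Cheeger-Colding structure theory for the $n$-dimensional limit space $Y$ — mutual absolute continuity of $\upsilon$ with $H^n$ on $\mathcal{R}_n(Y)$ and constancy of the regular dimension — exactly as the analogous facts enter the proof of Theorem \ref{7}.
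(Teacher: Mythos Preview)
Your proposal is correct and follows the same skeleton as the paper: apply Theorem~\ref{24} to $Y=\mathbf{R}^l\times C(X)$ with the $l$ Euclidean coordinates together with the radial function $\rho=r_p$, slice to level sets, and finally transfer to $X$ via the bi-Lipschitz identification with a sphere $\partial B_{R_0}(p)$.

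There are two small differences worth flagging. First, for the linear independence of $df_1,\dots,df_l,d\rho$ the paper does not invoke the splitting theorem in the form you suggest; it uses Theorem~\ref{14} directly to compute $\langle d\pi_i,d\pi_j\rangle=\delta_{ij}$, $\langle d\pi_i,dg\rangle=0$, $|dg|=1$ a.e., which is cleaner and already available. Second, for the slicing the paper does a two-step Fubini rather than a single Eilenberg coarea inequality: it first establishes $H^n=H^l\times H^{n-l}$ on $\mathbf{R}^l\times C(X)$ (by an argument like Lemma~\ref{0004}) and applies ordinary Fubini in the $\mathbf{R}^l$ direction, then uses the radial coarea formula on $C(X)$ (the analogue of Proposition~\ref{0005}) to slice to spheres. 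Your single application of the Eilenberg inequality to $G=(f_1,\dots,f_l,\rho)$ is a legitimate alternative and arguably more economical; however, your pointer ``just as in the proof of Theorem~\ref{7}'' is not the right citation---Theorem~\ref{7} slices via the $\upsilon_{-1}$ machinery of \cite{ho1}, not via Eilenberg. Finally, the $\upsilon$-null versus $H^n$-null concern you raise is handled exactly as you expect: in this non-collapsed setting $\upsilon=CH^n$ by \cite[Theorem~5.9]{ch-co1}, which the paper uses implicitly.
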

\begin{proof}
We define $1$-Lipschitz functions $\pi_j (1 \le j \le l)$ and $g$ on $\mathbf{R}^k \times C(X)$ by $\pi_j(t_1, _{\cdots}, t_l, w)=t_j$
and $g(t_1, _{\cdots}, t_l, w)=\overline{p, w}$.
By Theorem \ref{14}, we have 
$\langle d\pi_i, d\pi_j\rangle (\alpha)=\delta_{i,j}, \langle d\pi_i, dg\rangle (\alpha)=0, |dg|(\alpha)=1$
for a.e. $\alpha \in \mathbf{R}^k \times C(X)$.
Therefore, we can take a collection of $\{C_{k,i}\}_{l +1 \le k \le n}$ as in Theorem \ref{24} for Lipschitz functions
$\pi_1, _{\cdots}, \pi_l, g$ and $A=\mathbf{R}^l \times C(X)$.
By an argument similar to the proof of Lemma \ref{0004}, the product measure $H^l \times H^{n-l}$ on $\mathbf{R}^l \times C(X)$ is equal to $H^n$.
Therefore by Fubini's theorem, we have 
\[0=H^n(\mathbf{R}^l\times C(X) \setminus \bigcup_{k, i} C_{k,i})=\int_{\mathbf{R}^l}H^{n-l}(\{t_1, _{\cdots}, t_l\} \times C(X) \setminus \bigcup_{k, i} C_{k,i})dH^l.\]
Especially, we can take $(t_1, _{\cdots}, t_l) \in \mathbf{R}^l$ satisfying 
$H^{n-l}(\{t_1, _{\cdots}, t_l\} \times C(X) \setminus \bigcup C_{k,i})=0$.
We put $\hat{C}_{k,i}= \{t_1, _{\cdots}, t_l\} \times C(X) \cap C_{k,i}$ and regard it as a subset of $C(X)$.
By an argument similar to the proof of Proposition \ref{0005}, we have  
\[\int_{C(X)}fdH^{n-l}=\int_0^{\infty}\int_{\partial B_t(p)} fdH^{n-l-1}dt\] 
for every $f \in L^1(C(X))$. (This is \textit{co-area formula for distance function from the pole on} $C(X)$).
Especially, we have
\[H^{n-l-1}(\partial B_t(p) \cap C(X) \setminus \bigcup_{k,i}\hat{C}_{k,i})=0\]
for a.e. $t >0$.
Then it is not difficult to check the assertion.
\end{proof}
\begin{remark}\label{26}
With same notation as in Corollary \ref{25}, for every $x \in X$ and $r > 0$, we have $0 < H^{n-l-1}(B_r(x))< \infty$.
It follows from \cite[Theorem $5. 9$]{ch-co1}, \cite[Theorem $4. 6$]{ch-co3} and co-area formula for distance function from the pole on  $C(X)$. Since it is not difficult to check it, we skipped the proof.
\end{remark}
Similarly, we have the following:
\begin{corollary}\label{27}
Let $(X,x)$ be a pointed proper geodesic space, $l$ a nonnegative integer.
We assume that $l \le n$, $\mathrm{dim}_HX=n-l$, $(\mathbf{R}^l \times X, (0_l, x))$ is $(n, -1)$-Ricci limit space.
Then, $X$ is $H^{n-l}$-rectifiable.
\end{corollary}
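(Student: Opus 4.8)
The plan is to transcribe the proof of Corollary \ref{25}, dropping the cone factor and hence the co-area step for $C(X)$; what remains is strictly shorter. First I would introduce the $1$-Lipschitz projections $\pi_j$ on $\mathbf{R}^l\times X$, $\pi_j(t_1,\dots,t_l,w)=t_j$ for $1\le j\le l$. Along the $j$-th coordinate axis $\pi_j$ is affine of slope $1$ on a minimal geodesic, while $\pi_j$ is constant on each hyperplane $\{t_j=\mathrm{const}\}$; combining this with Theorem \ref{14} and the splitting behaviour of the $\mathbf{R}^l$ factor gives $\langle d\pi_i,d\pi_j\rangle(\alpha)=\delta_{ij}$ for a.e. $\alpha\in\mathbf{R}^l\times X$, exactly the analogue of the computation in the proof of Corollary \ref{25}. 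In particular $d\pi_1(\alpha),\dots,d\pi_l(\alpha)$ are linearly independent for a.e. $\alpha$, so Theorem \ref{24} applies to the Lipschitz functions $\pi_1,\dots,\pi_l$ with $A=\mathbf{R}^l\times X$, yielding $0<\alpha(n)<1$, compact sets $\{C_{k,i}\}_{l\le k\le n,\ i\in\mathbf{N}}\subset\mathbf{R}^l\times X$, and points $x_{k,i}\in A$, $x_{k,i}^s\in\mathbf{R}^l\times X$ ($1\le s\le k-l$) with the four listed properties; since the matrix $(\langle d\pi_i,d\pi_j\rangle(x_{k,i}))_{i,j}$ is the identity, the chart in property (2) is $\phi_{k,i}=(\pi_1,\dots,\pi_l,r_{x_{k,i}^1},\dots,r_{x_{k,i}^{k-l}})$.

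Next I would slice in the $\mathbf{R}^l$ direction. As in the proof of Lemma \ref{0004}, the product measure $H^l\times H^{n-l}$ on $\mathbf{R}^l\times X$ coincides with $H^n$, and since $\mathrm{dim}_H(\mathbf{R}^l\times X)=l+(n-l)=n$ the limit measure $\upsilon$ on $\mathbf{R}^l\times X$ and $H^n$ are mutually absolutely continuous (this is where the hypothesis $\mathrm{dim}_H X=n-l$ enters, exactly as in Corollary \ref{25}); hence property (1) of Theorem \ref{24} gives $H^n(\mathbf{R}^l\times X\setminus\bigcup_{k,i}C_{k,i})=0$. By Fubini's theorem,
\[0=H^n\Bigl(\mathbf{R}^l\times X\setminus\bigcup_{k,i}C_{k,i}\Bigr)=\int_{\mathbf{R}^l}H^{n-l}\Bigl(\{(t_1,\dots,t_l)\}\times X\setminus\bigcup_{k,i}C_{k,i}\Bigr)\,dH^l,\]
so one may fix $(t_1,\dots,t_l)\in\mathbf{R}^l$ with $H^{n-l}(\{(t_1,\dots,t_l)\}\times X\setminus\bigcup_{k,i}C_{k,i})=0$. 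Put $\hat C_{k,i}=(\{(t_1,\dots,t_l)\}\times X)\cap C_{k,i}$ and regard it as a subset of $X$.

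Then I would conclude. On the slice $\{(t_1,\dots,t_l)\}\times X$ the product metric $\sqrt{d_{\mathbf{R}^l}^2+d_X^2}$ restricts to $d_X$ and $\pi_1,\dots,\pi_l$ are constant, so for $p,q\in\hat C_{k,i}$ one has $|\phi_{k,i}(p)-\phi_{k,i}(q)|=|\hat\phi_{k,i}(p)-\hat\phi_{k,i}(q)|$, where $\hat\phi_{k,i}=(r_{x_{k,i}^1},\dots,r_{x_{k,i}^{k-l}})\colon\hat C_{k,i}\to\mathbf{R}^{k-l}$; thus, by property (2), for every $0<\delta<1$ and every $w\in\bigcup_i\hat C_{k,i}$ there is $i$ with $w\in\hat C_{k,i}$ and $\hat\phi_{k,i}$ a $(1\pm\delta)$-bi-Lipschitz equivalence onto its image. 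For $k<n$ we have $k-l<n-l=\mathrm{dim}_H X$, so $H^{n-l}(\hat C_{k,i})=0$; discarding these pieces, the sets $\hat C_{n,i}$ cover $X$ up to an $H^{n-l}$-null set, each carrying a bi-Lipschitz chart into $\mathbf{R}^{n-l}$. Finally, comparing balls in the product metric and using that $\upsilon$ is Ahlfors $n$-regular at each point of $C_{n,i}$ (property (4)) together with the mutual absolute continuity of $\upsilon$ and $H^l\times H^{n-l}$, one checks that $H^{n-l}$ is Ahlfors $(n-l)$-regular at each point of $\hat C_{n,i}$; combined with the bi-Lipschitz approximation just established, this is exactly the statement that $X$ is $H^{n-l}$-rectifiable.

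The manipulations with $\langle d\pi_i,d\pi_j\rangle$ and the restriction of the charts to a slice are routine. The one point demanding some care is the measure bookkeeping in the last two paragraphs — the identity $H^n=H^l\times H^{n-l}$ on the product and the transfer of mutual absolute continuity and of Ahlfors regularity among $\upsilon$, $H^n$ and $H^{n-l}$ — but this is handled exactly as in the proofs of Corollary \ref{25} and Proposition \ref{0005}, so no genuinely new difficulty appears.
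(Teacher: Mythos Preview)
Your proposal is correct and follows essentially the same route the paper indicates: the paper records Corollary~\ref{27} with only the remark ``Similarly, we have the following,'' meaning one is to repeat the proof of Corollary~\ref{25} without the cone factor and the attendant co-area step, precisely as you do. Your sketch is in fact more detailed than the paper's own treatment, and the measure-theoretic bookkeeping you flag (the identity $H^n=H^l\times H^{n-l}$ on the product and the transfer of Ahlfors regularity to the slice) is handled as in Lemma~\ref{0004} and Remark~\ref{26}.
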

\section{Convergence of Borel functions and Lipschitz functions}
In this section, 
we will give several notions of convergence of a sequence of Borel functions.
By using these notions, we will define a notion of convergence of differential of Lipschitz functions (see Definition \ref{Lip}).
Moreover, by using results in section $3$, we will discuss convergence of harmonic functions.
Throughout subsections $4.1$ and $4.2$, we shall consider the following situation:
Let $(Z_i, z_i)$ be a sequence of pointed proper geodesic spaces, $\upsilon_i$ a Radon measure on $Z_i$ satisfying $\upsilon_i(B_1(z_i))=1$, and 
for every $R \ge 1$, there exists $K=K(R) \ge 1$ such that for every $1 \le i \le \infty$, $z \in Z_i$ and $0 < s \le R$, we have 
$\upsilon_i(B_{2s}(z)) \le 2^K \upsilon_i(B_s(z))$.
We assume that $(Z_i, z_i, \upsilon_i) \stackrel{(\phi_i, R_i, \epsilon_i)}{\rightarrow} (Z_{\infty}, z_{\infty}, \upsilon_{\infty})$.
We fix $x_i \in Z_i$ satisfying $x_i \rightarrow x_{\infty}$.
\subsection{Infinitesimal constant convergence property}
Our aims in this subsection are to define the following  notion of \textit{infinitesimal constant convergence} and to give several fundamental properties of it:
\begin{definition}[Infinitesimal constant convergence property]
Let $R$ be a positive number, $w$ a point in $B_R(x_{\infty})$ and $f_i$ a Borel function on $B_R(x_i) (1 \le i \le \infty)$ satisfying
$\sup _i |f_i|_{L^{\infty}(B_R(x_i))} + |f_{\infty}|_{L^{\infty}(B_R(x_{\infty}))} < \infty$.
We say that $\{f_i\}_i$ has \textit{infinitesimal constant convergence property to $f_{\infty}$ at} $w$ if  
for every $\epsilon > 0$, there exists $r > 0$ such that 
\[\limsup _{i \rightarrow \infty} \frac{1}{\upsilon_i(B_t(w_i))}\int_{B_t(w_i)}\left| f_i -\frac{1}{\upsilon_{\infty}(B_t(w))}\int _{B_t(w)}f_{\infty}d\upsilon_{\infty}\right| d\upsilon_i \le \epsilon\]
and 
\[ \limsup_{i \rightarrow \infty}\frac{1}{\upsilon_{\infty}(B_t(w))}\int_{B_t(w)}\left| f -\frac{1}{\upsilon_{i}(B_t(w_i))}\int _{B_t(w_i)}f_{i}d\upsilon_{i}\right| d\upsilon_{\infty}  \le \epsilon\]
for every $0 < t < r$ and $w_i \rightarrow w$.
\end{definition}
\begin{example}\label{ex1}
It is easy to check that for every $f \in C^0(B_R(x_{\infty}))$, if we put $f_i = f \circ \phi_i$ and $f_{\infty}=f$, then,  
$\{f_i\}$ has infinitesimal constant convergence property to $f_{\infty}$ at every $w \in B_R(x_{\infty})$.
\end{example}
\begin{example}\label{ex2}
If $f_i$ is Lipschitz function with $\sup_i \mathbf{Lip}f_i < \infty$, and $f_i \rightarrow f_{\infty}$, then for every $w \in B_R(x_{\infty})$, 
$\{f_i\}_i$ has infinitesimal constant convergence property to $f_{\infty}$ at $w$.
\end{example}
\begin{example}\label{ex3}
Let $w_i \rightarrow w \in B_R(x_{\infty})$, $r>0$ satisfying $B_r(w) \subset B_R(x_{\infty})$.
Then,  $\{1_{B_R(x_i) \setminus \overline{B}_r(w_i)}\}_i$ has 
infinitesimal constant convergence property to $1_{B_R(x_{\infty}) \setminus \overline{B}_r(w_{\infty})}$ at every $\alpha \in B_R(x_{\infty}) \setminus \partial B_r(w)$.
\end{example}
We shall give a fundamental result for infinitesimal constant convergence property:
\begin{proposition}\label{10101}
Let $k$ be a positive integer, $R$ a positive number, $f_i^l$ Borel functions on $B_R(x_i) (1 \le l \le k, 1 \le i \le \infty)$ satisfying
$\sup_{i, l}(|f_i^l|_{L^{\infty}(B_R(x_i))} + |f_{\infty}^l|_{L^{\infty}(B_R(x_{\infty}))}) < \infty$, $w$ a point in $B_R(x_{\infty})$ and $\{F_i\}_{1 \le i \le \infty}$ a sequence of continuous functions on $\mathbf{R}^k$.
We assume that $\{f_i^l\}_{1 \le i \le \infty}$ has infinitesimal constant convergence property to $f_{\infty}^l$ at $w$ for every $l$ and that 
$F_i$ converges to $F_{\infty}$ in the sense of compact uniformly topology.
Then, the sequence $\{F_i(f_i^1, _{\cdots}, f_i^k)\}$ has infinitesimal constant convergence property 
to $F_{\infty}(f_{\infty}^1, _{\cdots}, f_{\infty}^k)$ at $w$.
\end{proposition}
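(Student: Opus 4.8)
The plan is to reduce the statement to the assertion that, on sufficiently small balls, each $f_i^l$ is $L^1$-close to a single constant, and then to exploit the uniform continuity of $F_\infty$ together with the uniform convergence $F_i\to F_\infty$ on compact sets. Set $M=\sup_{i,l}(|f_i^l|_{L^\infty(B_R(x_i))}+|f_\infty^l|_{L^\infty(B_R(x_\infty))})<\infty$, so that every $f_i^l$ $(1\le i\le\infty)$ takes values in $[-M,M]$ and all arguments of the $F_i$'s and of $F_\infty$ occurring below lie in the compact box $Q=[-M,M]^k$. Fix $\epsilon>0$; choose $\delta>0$ so that $|F_\infty(a)-F_\infty(b)|<\epsilon$ whenever $a,b\in Q$ with $\sum_l|a_l-b_l|<\delta$, choose $i_0$ with $\sup_Q|F_i-F_\infty|<\epsilon$ for $i\ge i_0$, and put $C=1+\sup_Q|F_\infty|$. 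Finally fix a small parameter $\epsilon'>0$, to be specified at the end as a function of $\epsilon,\delta,C,k$ only.

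First I would invoke the hypothesis: for each $l$ there is $r_l>0$ such that for all $0<t<r_l$ and all $w_i\to w$,
\[
\limsup_{i\to\infty}\frac{1}{\upsilon_i(B_t(w_i))}\int_{B_t(w_i)}\bigl|f_i^l-c_l(t)\bigr|\,d\upsilon_i\le\epsilon',\qquad
\limsup_{i\to\infty}\frac{1}{\upsilon_\infty(B_t(w))}\int_{B_t(w)}\bigl|f_\infty^l-a_i^l(t)\bigr|\,d\upsilon_\infty\le\epsilon',
\]
where $c_l(t)$ is the $\upsilon_\infty$-average of $f_\infty^l$ over $B_t(w)$ and $a_i^l(t)$ the $\upsilon_i$-average of $f_i^l$ over $B_t(w_i)$ (all balls are eventually inside the domains and have positive finite measure by the doubling hypothesis). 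Put $r=\min_l r_l$, fix $0<t<r$ and $w_i\to w$, and write $c(t)=(c_1(t),\dots,c_k(t))\in Q$. The crucial observation is that the two displayed inequalities together pin $f_\infty^l$ to the \emph{same} constant $c_l(t)$: from the first, $\limsup_i|a_i^l(t)-c_l(t)|\le\epsilon'$, and substituting into the second (whose left-hand side does not depend on $i$) gives $\upsilon_\infty(B_t(w))^{-1}\int_{B_t(w)}|f_\infty^l-c_l(t)|\,d\upsilon_\infty\le 2\epsilon'$.

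Next I would run the standard Chebyshev/uniform-continuity argument in both spaces, with $F_\infty(c(t))$ as a common pivot. Downstairs: off the set where $\sum_l|f_i^l-c_l(t)|\ge\delta$ — whose $\upsilon_i$-measure relative to $B_t(w_i)$ has $\limsup\le k\epsilon'/\delta$ by the first inequality — and for $i\ge i_0$, one has $|F_i(f_i^1,\dots,f_i^k)-F_\infty(c(t))|<2\epsilon$ by the choices of $\delta$ and $i_0$, while this quantity is everywhere at most $2C$; hence $\limsup_i$ of the $\upsilon_i$-average of $|F_i(f_i^\bullet)-F_\infty(c(t))|$ over $B_t(w_i)$ is at most $2\epsilon+2Ck\epsilon'/\delta$, and in particular the $\upsilon_i$-average of $F_i(f_i^\bullet)$ over $B_t(w_i)$ is within the same bound of $F_\infty(c(t))$. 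The identical argument using $\upsilon_\infty(B_t(w))^{-1}\int_{B_t(w)}|f_\infty^l-c_l(t)|\,d\upsilon_\infty\le2\epsilon'$ bounds the $\upsilon_\infty$-average of $|F_\infty(f_\infty^\bullet)-F_\infty(c(t))|$ over $B_t(w)$ by $\epsilon+4Ck\epsilon'/\delta$, so the $\upsilon_\infty$-average $d_\infty(t)$ of $F_\infty(f_\infty^\bullet)$ over $B_t(w)$ obeys $|d_\infty(t)-F_\infty(c(t))|\le\epsilon+4Ck\epsilon'/\delta$.

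Combining these three estimates by the triangle inequality controls both limsup quantities in the definition of infinitesimal constant convergence for $\{F_i(f_i^1,\dots,f_i^k)\}$ to $F_\infty(f_\infty^1,\dots,f_\infty^k)$ at $w$; choosing $\epsilon'\le\epsilon\delta/(Ck)$ makes both at most a fixed multiple of $\epsilon$, and letting $\epsilon\downarrow0$ finishes. Every step is routine bookkeeping with $\epsilon$'s once the quantities are in place; the one point I would single out as requiring care is the pivot observation of the second paragraph — that $f_\infty^l$ is $L^1$-close on $B_t(w)$ to the very constant $c_l(t)$ governing $f_i^l$ on $B_t(w_i)$ — which is what makes $F_\infty(c(t))$ legitimately link the upstairs and downstairs averages, and which genuinely uses \emph{both} inequalities in the definition of infinitesimal constant convergence.
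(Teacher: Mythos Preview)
Your proof is correct and follows essentially the same strategy as the paper's: both pivot around the common constant $c(t)=(c_1(t),\dots,c_k(t))$ given by the $\upsilon_\infty$-averages, use the two halves of the infinitesimal constant convergence hypothesis to show that $f_i^l$ and $f_\infty^l$ are simultaneously $L^1$-close to $c_l(t)$, and then feed this through the uniform continuity of $F_\infty$ together with $F_i\to F_\infty$ uniformly on $Q$. Your direct Chebyshev bound on the ``bad'' set is a mild streamlining of the paper's argument, which instead names explicit compact good sets $K_i,K_\infty$ and passes one of them to a Gromov--Hausdorff limit; that extra step is not actually needed here, and your version avoids it.
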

\begin{proof}
We fix $\epsilon > 0$.
We take $\hat{R}, L \ge 1$ satisfying that $\bigcup_i \mathrm{Image}(f_i^1, _{\cdots}, f_i^l) \subset B_{\hat{R}}(0_k)$, 
$\sup_{i, l}(|f_i^l|_{L^{\infty}(B_R(x_i))} + |f_{\infty}^l|_{L^{\infty}(B_R(x_{\infty}))}) \le \hat{R}$ and $\sup_i|F_i|_{L^{\infty}(B_{\hat{R}}(0_k))} \le L$.
There exists a nonnegative valued function $b$ on $\mathbf{R}_{>0}$ such that $b(t) \rightarrow 0$ as $t \rightarrow 0$ and that 
for every $t > 0$, there exists $i_t$ such that  
$F_{\infty}(\alpha)=F_i(\beta) \pm b(t)$ for every $\alpha \in B_{\hat{R}}(0_k)$, $i \ge i_t$ and
 $\beta \in B_{t}(\alpha)$.
On the other hand, there exists $\tau _1 > 0$  satisfying the following properties: For every $0< s < \tau_1$, there exists $j_s$ such that

\[\frac{1}{\upsilon_i(B_s(w_i))}\int_{B_s(w_i)}\left|f_i^l-\frac{1}{\upsilon_{\infty}(B_s(w))}\int_{B_s(w)}f_{\infty}^ld\upsilon_{\infty} \right| \upsilon_i \le \epsilon \]
and
\[\frac{1}{\upsilon_{\infty}(B_s(w))}\int_{B_s(w)}\left| f_{\infty}^l-\frac{1}{\upsilon_{i}(B_s(w_i))}\int_{B_s(w_i)}f_{i}^ld\upsilon_{i} \right| \upsilon_{\infty} \le \epsilon \]
for every $1 \le l \le k$, $i \ge j_s$ and $w_i \rightarrow w$.
Especially, we have 
\[\frac{1}{\upsilon_{\infty}(B_s(w))}\int_{B_s(w)}f_{\infty}^ld\upsilon_{\infty}= \frac{1}{\upsilon_{i}(B_s(w_i))}\int_{B_s(w_i)}f_{i}^ld\upsilon_{i} \pm \epsilon. \]
We fix $0 < s < \tau_1$.
Therefore, there exist a sequence of compact sets $K_i \subset B_s(w_i)$ and a compact set $K_{\infty} \subset B_s(w)$ such that 
$\upsilon_i(K_i)/\upsilon_i(B_s(w_i)) \ge 1- \Psi(\epsilon; K(1))$, $\upsilon_{\infty}(K_{\infty})/\upsilon_{\infty}(B_s(w)) \ge 1- \Psi(\epsilon; K(1))$ and that  
\[\left| f_i^l(\alpha)- \frac{1}{\upsilon_{\infty}(B_s(w))}\int_{B_s(w)}f_{\infty}^ld\upsilon_{\infty}\right| < \Psi(\epsilon; K(1))\]
and
\[\left| f_{\infty}^l(\beta)- \frac{1}{\upsilon_{i}(B_s(w_i))}\int_{B_s(w_i)}f_{i}^ld\upsilon_{i}\right| < \Psi(\epsilon; K(1))\]
for every $j_s \le i < \infty$, $1 \le l \le k$, $\alpha \in K_i$ and 
$\beta \in K_{\infty}$.
Without loss of generality, we can assume that there exists a compact set $\hat{K} \subset \overline{B}_s(w)$
such that $K_i \rightarrow \hat{K}$.
We put $\hat{K}_{\infty} = \hat{K} \cap K_{\infty}$.
By Proposition \ref{sup}, we have $\upsilon_{\infty}(\hat{K}_{\infty})/\upsilon_{\infty}(B_s(w)) \ge 1- \Psi(\epsilon; K(1))$.
We put 
\[a_i^l = \frac{1}{\upsilon_i(B_s(w_i))}\int_{B_s(w_i)}f_i^ld\upsilon_i. \]
Then, there exists $k_s \ge j_s$ such that 
\begin{align}
F_{\infty}(f_{\infty}^{1}(\alpha), _{\cdots}, f_{\infty}^k(\alpha)) &= F_{\infty}(a_{\infty}^1, _{\cdots}, a_{\infty}^k) \pm b(\Psi(\epsilon; K(1))) \\
&= F_i(a_i^1, _{\cdots}, a_i^k) \pm 2b(\Psi(\epsilon; K(1)))\\
&= F_i(f_i^{1}(\alpha_i), _{\cdots}, f_i^{k}(\alpha_i)) \pm 3b(\Psi(\epsilon; K(1)))
\end{align}
for every $i \ge k_s$, $\alpha \in \hat{K}_{\infty}$ and $\alpha_i \in K_i$ with $\alpha_i \rightarrow \alpha$.
Thus, we have 
\begin{align}
&\frac{1}{\upsilon_{\infty}(B_s(w))}\int_{B_s(w)}\left| F_{\infty}(f_{\infty}^1, _{\cdots}, f_{\infty}^k)- F_{\infty}(a_{\infty}^1, _{\cdots}, a_{\infty}^k)\right| d\upsilon_{\infty} \\
&= \frac{1}{\upsilon_{\infty}(B_s(w))}\int_{\hat{K}_{\infty}}\left| F_{\infty}(f_{\infty}^1, _{\cdots}, f_{\infty}^k)- F_{\infty}(a_{\infty}^1, _{\cdots}, a_{\infty}^k)\right| d\upsilon_{\infty} \pm \Psi(\epsilon; K(1), L) \\
& < 3b(\Psi(\epsilon; K(1)))  + \Psi(\epsilon; K(1), L)
\end{align}
and 
\begin{align}
&\frac{1}{\upsilon_{i}(B_s(w_i))}\int_{B_s(w_i)}\left| F_{_i}(f_{_i}^1, _{\cdots}, f_{_i}^k)- F_{i}(a_{i}^1, _{\cdots}, a_{i}^k)\right| d\upsilon_{i} \\
&= \frac{1}{\upsilon_{i}(B_s(w_i))}\int_{K_{i}}\left| F_{i}(f_{i}^1, _{\cdots}, f_{i}^k)- F_{i}(a_{i}^1, _{\cdots}, a_{i}^k)\right| d\upsilon_{i} \pm \Psi(\epsilon; K(1), L) \\
& < 3b(\Psi(\epsilon; K(1))) + \Psi(\epsilon; K(1), L)
\end{align}
for $i \ge k_s$.
Moreover, we have 
\begin{align}
&\frac{1}{\upsilon_{\infty}(B_s(w))}\int_{B_s(w)}F_{\infty}(f_{\infty}^1, _{\cdots}, f_{\infty}^k)d\upsilon_{\infty} \\
&=\frac{1}{\upsilon_{\infty}(B_s(w))}\int_{\hat{K}_{\infty}}F_{\infty}(f_{\infty}^1, _{\cdots}, f_{\infty}^k)d\upsilon_{\infty} \pm \Psi(\epsilon; K(1), L)\\
&=(1 \pm \Psi (\epsilon; K(1)))(F_{\infty}(a_{\infty}^1, _{\cdots}, a_{\infty}^k) \pm b(\Psi(\epsilon; K(1))) \pm \Psi(\epsilon; K(1))\\
&=(1 \pm \Psi (\epsilon; K(1)))(F_{i}(a_{i}^1, _{\cdots}, a_{i}^k) \pm b(\Psi(\epsilon; K(1))) \pm \Psi(\epsilon; K(1), L)\\
&=(1 \pm \Psi (\epsilon; K(1)))\left(\frac{1}{\upsilon_{i}(B_s(w_i))}\int_{K_{i}}F_{i}(f_{i}^1, _{\cdots}, f_{i}^k)d\upsilon_{i} \pm 3b(\Psi(\epsilon; K(1)))\right) \pm \Psi(\epsilon; K(1), L)\\
&=(1 \pm \Psi (\epsilon; K(1)))\left(\frac{1}{\upsilon_{i}(B_s(w_i))}\int_{B_s(w_i)}F_{i}(f_{i}^1, _{\cdots}, f_{i}^k)d\upsilon_{i} \pm 3b(\Psi(\epsilon; K(1))) \right) \pm \Psi(\epsilon; K(1), L)
\end{align}
for $i \ge k_s$.
Therefore, we have the assertion.
\end{proof}
\begin{remark}
By the proof of Proposition \ref{10101}, we also have the following:
Let $k$ be a positive integer, $f_i^l$ Borel functions on $B_R(x_i) (1 \le l \le k, 1 \le i \le \infty)$ satisfying
$\sup_{i, l}(|f_i^l|_{L^{\infty}(B_R(x_i))} + |f_{\infty}^l|_{L^{\infty}(B_R(x_{\infty}))}) < \infty$, $w$ a point in $B_R(x_{\infty})$ and $\{F_i\}_{1 \le i \le \infty}$ a sequence of locally $L^{\infty}$ functions on $\mathbf{R}^k$.
Assume the following:
\begin{enumerate}
\item $\{f_i^l\}_{1 \le i \le \infty}$ has infinitesimal constant convergence property to $f_{\infty}^l$ at $w$ for every $l$.
\item The limits 
\[a^l=\lim_{r \rightarrow 0}\frac{1}{\upsilon_{\infty}(B_r(w))}\int_{B_r(w)}f_{\infty}^ld\upsilon_{\infty}\]
exist for every $l$.
\item There exists an open neighborhood $U$ at $(a^1, \cdots, a^k) \in \mathbf{R}^k$ such that $F_i$ is continuous on $U$ for every $1 \le i \le \infty$ and that 
$F_i$ converges to $F_{\infty}$ on $U$ uniformly.
\end{enumerate}
Then, the sequence $\{F_i(f_i^1, _{\cdots}, f_i^k)\}$ has infinitesimal constant convergence property 
to $F_{\infty}(f_{\infty}^1, _{\cdots}, f_{\infty}^k)$ at $w$.
\end{remark}
For Ricci limit spaces, we shall give a sufficient condition to satisfy infinitesimal constant convergence property for radial derivative of Lipschitz functions:
\begin{proposition}\label{Hesss}
Let $\{(M_i, m_i, \underline{\mathrm{vol}})\}_i$ be a sequence of pointed connected $n$-dimensional complete Riemannian manifolds with  $\mathrm{Ric}_{M_i} \ge -(n-1)$, $(Y, y, \upsilon)$ be a pointed proper geodesic space with Radon measure $\upsilon$, $R$ a positive number, $x_{\infty}$ a point in $Y$, $x_i$ a point in $M_i$, $f_i$ a $C^2$-function on $B_R(x_i)$ and $f_{\infty}$ a Lipschitz function on
$B_R(x)$. 
We assume that $\sup_i \mathbf{Lip}f_i < \infty$, $(M_i, m_i, x_i, f_i, \underline{\mathrm{vol}}) \stackrel{(\phi_i, R_i, \epsilon_i)}{\rightarrow} (Y, y, x_{\infty}, f_{\infty}, \upsilon)$ and that 
\[\sup_i \int_{B_R(x_i)}|\mathrm{Hess}_{f_i}|^2d\underline{\mathrm{vol}} < \infty.\]
Then, there exists a Borel subset $A \subset B_R(x_{\infty})$ such that $\upsilon(B_R(x_{\infty}) \setminus A)=0$ and that for every $z \in A$ and $w_i \rightarrow w \in Y$,  the sequence $\{\langle dr_{w_i}, df_i\rangle \}$ has infinitesimal constant convergence property to $\langle dr_{w}, df_{\infty}\rangle$ at $z$. 
\end{proposition}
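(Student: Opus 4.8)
Write $r_i=r_{w_i}$, $r_\infty=r_w$. By Theorem \ref{14}, $\langle dr_\infty,df_\infty\rangle=\tfrac{df_\infty}{dr_w}$ $\upsilon$-a.e., and on each $M_i$ one has classically $\langle dr_i,df_i\rangle(\alpha)=(f_i\circ\gamma)'(\overline{w_i,\alpha})$ for a.e.\ $\alpha$ ($\gamma$ the a.e.\ unique minimal geodesic from $w_i$ to $\alpha$); so the statement concerns convergence of radial derivatives, and the natural tool is the forward difference quotient $g_i^h(\alpha):=h^{-1}\bigl(f_i(\gamma(\overline{w_i,\alpha}+h))-f_i(\alpha)\bigr)$, with $g_\infty^h$ defined the same way on $Y$ via $w$-geodesics (well defined off the $\upsilon$-null cut locus $C_w$, by \cite[Theorem $3.2$]{ho}). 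Taylor's formula with $(f_i\circ\gamma)''=\mathrm{Hess}_{f_i}(\dot\gamma,\dot\gamma)$ gives $|g_i^h(\alpha)-\langle dr_i,df_i\rangle(\alpha)|\le\int_{\overline{w_i,\alpha}}^{\overline{w_i,\alpha}+h}|\mathrm{Hess}_{f_i}|(\gamma(u))\,du$ for a.e.\ $\alpha$, and integrating this over $B_t(z_i)$ in geodesic polar coordinates about $w_i$, comparing area elements at radii differing by at most $h$ via Bishop--Gromov (Theorem \ref{BG}), yields
\[
\limsup_{i\to\infty}\frac{1}{\underline{\mathrm{vol}}\,B_t(z_i)}\int_{B_t(z_i)}\bigl|g_i^h-\langle dr_i,df_i\rangle\bigr|\,d\underline{\mathrm{vol}}\ \le\ C(n,R)\,h\,E(t,z),
\]
where $E(t,z)=\limsup_i\frac{1}{\underline{\mathrm{vol}}\,B_{2t}(z_i)}\int_{B_{2t}(z_i)}|\mathrm{Hess}_{f_i}|\,d\underline{\mathrm{vol}}$, and the same computation on $Y$ controls $|\langle dr_\infty,df_\infty\rangle-g_\infty^h|$ at scale $t$. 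Finally, by Definition \ref{090a}, Lemma \ref{11} and dominated convergence, $g_\infty^h\to\langle dr_\infty,df_\infty\rangle$ both pointwise $\upsilon$-a.e.\ and in $\upsilon$-average on each fixed ball as $h\to0$, with $|g_\infty^h|\le\mathbf{Lip}f_\infty$.

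\textbf{Assembling.} For fixed $h$ the sequence $\{g_i^h\}$ is the fixed linear combination $h^{-1}(f_i\circ T_i^h-f_i)$ of $f_i$ — which has the infinitesimal constant convergence property to $f_\infty$ at every point (Example \ref{ex2}) — and of the ``outward $h$-shift'' $f_i\circ T_i^h$; since $f_i\to f_\infty$ in the Gromov--Hausdorff sense, $w_i\to w$, and minimal geodesics from $w_i$ are stable along convergent sequences of points off the cut loci, $g_i^h\to g_\infty^h$ there, which — the cut loci being $\upsilon$-null, and by a good--compact--set argument as in Proposition \ref{lap} and Claim \ref{116} — upgrades to the infinitesimal constant convergence property of $\{g_i^h\}$ at $\upsilon$-a.e.\ point, moreover with a modulus controlled in terms of $h$ (using that $T_i^h$ is $(1+C(n)h)$-Lipschitz on $B_{2t}(z_i)$ by Jacobi-field comparison, so that $g_i^h$ is $C(n)L/h$-Lipschitz there, with $L=\sup_i\mathbf{Lip}f_i$). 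Then, with $c_t=\frac{1}{\upsilon(B_t(z))}\int_{B_t(z)}\langle dr_\infty,df_\infty\rangle\,d\upsilon$ and $d_t^h=\frac{1}{\upsilon(B_t(z))}\int_{B_t(z)}g_\infty^h\,d\upsilon$, the decomposition $\langle dr_i,df_i\rangle-c_t=(\langle dr_i,df_i\rangle-g_i^h)+(g_i^h-d_t^h)+(d_t^h-c_t)$ — and the symmetric one for the reverse inequality in the definition — reduces everything to choosing, at scale $t$, a step $h=h(t)$ with $t\ll h(t)\ll 1/E(t,z)$: the first term is then $\le C(n,R)h(t)E(t,z)\to0$, the middle term is $O(t/h(t))$ plus the i.c.c.p.\ contribution, and the third is $|g_\infty^{h(t)}(z)-\langle dr_\infty,df_\infty\rangle(z)|+O(t/h(t))+o_t(1)\to0$ at Lebesgue points. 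Letting $i\to\infty$, then $t\to0$, gives the property at $z$.

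\textbf{The crux, and the set $A$.} The choice $t\ll h(t)\ll 1/E(t,z)$ is available precisely when $t\,E(t,z)\to0$, i.e.\ when the local $L^1$-Hessian density $E(t,z)$ stays bounded as $t\to0$; this is where the hypothesis enters, since $\sup_i\int_{B_R(x_i)}|\mathrm{Hess}_{f_i}|^2\,d\underline{\mathrm{vol}}<\infty$ together with $\sup_i\mathbf{Lip}f_i<\infty$ keeps the total-variation measures $|\mathrm{Hess}_{f_i}|\,d\underline{\mathrm{vol}}$ of uniformly bounded mass and free of concentration, so that (passing to a weak-$*$ limit $\nu$ along subsequences and using a covering argument to treat the full sequence) $\limsup_{t\to0}E(t,z)$ is bounded by a constant times the upper $\upsilon$-density of $\nu$, hence finite for $\upsilon$-a.e.\ $z$. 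I would then take $A$ to be the intersection, over a fixed countable dense set $D\subset Y$ of base points, of: the Lebesgue points of $f_\infty$, $|df_\infty|$, and of $\langle dr_w,df_\infty\rangle$ and $g_\infty^h$ ($h$ rational, $w\in D$); the set where $\limsup_{t\to0}E(t,\cdot)<\infty$; and the conull sets coming from the i.c.c.p.\ of the $\{g_i^h\}$ — still $\upsilon$-conull. For a base point $w\notin D$ one picks $w'\in D$ close to $w$ and transfers the conclusion using the continuity of $w\mapsto\langle dr_w,df_\infty\rangle$ and its uniform manifold analogue (because $r_w-r_{w'}$ is $\overline{w,w'}$-small in $L^\infty$ and $2$-Lipschitz, and the infinitesimal constant convergence property involves only $L^1$-averages). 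The genuinely delicate steps are the a.e.\ finiteness of $\limsup_{t\to0}E(t,z)$ and the bookkeeping that makes $A$ independent of $w$; the difference-quotient estimates themselves are routine variants of the computation behind Claim \ref{115}.
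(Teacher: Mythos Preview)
Your overall strategy coincides with the paper's: replace $\langle dr_{w_i},df_i\rangle$ by a difference quotient along $w_i$-geodesics, control the error via the Taylor remainder and the $L^2$-Hessian bound, and identify the limiting difference quotient with $\langle dr_w,df_\infty\rangle$ using the splitting theorem (geodesic stability) together with Theorem~\ref{14}. Your explicit handling of the $w$-independence of $A$ via a countable dense set is actually more careful than the paper, whose proof tacitly fixes the direction.

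There are, however, two genuine gaps. First, the assertion that ``$T_i^h$ is $(1+C(n)h)$-Lipschitz on $B_{2t}(z_i)$ by Jacobi-field comparison'' is unjustified: a lower Ricci bound gives no two-sided Jacobi-field control, and the radial shift need not be Lipschitz. You do not need it. What you actually require is that for every $\beta_i\in B_t(z_i)$ with $t\ll h$, the shifted point $T_i^h\beta_i$ lands $o(h)$-close to the (fixed) point $T_\infty^h z$; this follows directly from the almost-splitting theorem applied in the rescaled metric $h^{-1}d_Y$, exactly as the paper does when it writes $\overline{\phi_i(\gamma_{\beta_i}(\cdots)),\gamma(\cdots)}^{\eta^{-2}d_Y}\le\Psi(\eta;n)$. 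That, together with $|f_i-f_\infty\circ\phi_i|_{L^\infty}\to 0$ and $\mathbf{Lip}f_\infty<\infty$, already bounds your middle term by $\Psi(t/h;n,L)$ without any Lipschitz estimate on $g_i^h$.

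Second, the claim that $\limsup_{t\to 0}E(t,z)<\infty$ for $\upsilon$-a.e.\ $z$ is not established by your weak-$*$ sketch. Passing to a subsequential limit $\nu$ of $|\mathrm{Hess}_{f_i}|\,d\underline{\mathrm{vol}}$ controls $\lim$ along that subsequence, but $E(t,z)$ involves a $\limsup$ over the \emph{full} sequence, i.e.\ a supremum over the (possibly uncountable, weak-$*$ compact) set of subsequential limits; there is no a priori reason the upper $\upsilon$-density of that supremum should be finite on a conull set (think of Hessians concentrating near a sequence $p_i$ that is dense). The paper avoids this entirely by the maximal-function Lemma~\ref{1}: for each threshold $l$, it extracts compact sets $K_i$ (of measure $\ge 1-\Psi(l^{-1})$) on which the Hessian $L^2$-average is $\le l$ at \emph{all} scales below a fixed $s_0$, passes to a limit $K_\infty$, and takes the step $h$ coupled to $l$ (in the paper, $l=\eta^{-1/4}$, $h=\eta^2$). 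The good set is then $\bigcap_{N_1}\bigcup_{N_2\ge N_1}W(N_2^{-1})$, a Borel $\limsup$ of these $K_\infty$'s intersected with the sets coming from Theorem~\ref{14}. If you replace your ``$E(t,z)$ bounded a.e.'' input by this stratified maximal-function construction, the rest of your scheme (choose $h$ with $t\ll h$ and $h\cdot l^{1/2}$ small) goes through cleanly.
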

\begin{proof}
We fix $\epsilon >0$ and take $L \ge 1$ satisfying 
\[\sup_i \left(\frac{1}{\underline{\mathrm{vol}}\,B_R(x_i)}\int_{B_R(x_i)}|\mathrm{Hess}_{f_i}|^2d\underline{\mathrm{vol}} + \mathbf{Lip}f_i \right)\le L.\]
By Theorem \ref{14}, there exist $0 < \eta << \epsilon$ and a Borel subset $X(\epsilon) \subset B_R(x_{\infty}) \cap \mathcal{D}_z^{\eta}
\setminus B_{\eta}(z)$ such that 
\[\frac{\upsilon (B_R(x_{\infty})\setminus X(\epsilon))}{\upsilon (B_R(x_{\infty}))} \le \epsilon\]
and that 
\[\left| \frac{f_{\infty}\circ \gamma(\overline{z, \alpha}+h)-f_{\infty}(\alpha)}{h}-\langle dr_z, df_{\infty}\rangle (\alpha)\right| \le \epsilon\]  
for every $\alpha \in X(\epsilon)$, $h$ satisfying $0 < |h| < \eta$ and isometric embedding $\gamma$ from $[0, \overline{z, \alpha}+\eta]$ to $Y$ with $\gamma (0)=z$ and $\gamma (\overline{z, \alpha})=\alpha$.
By Corollary \ref{131}, there exists Borel set $\hat{X}(\epsilon) \subset X(\epsilon)$ such that $\upsilon (X(\epsilon) \setminus \hat{X}(\epsilon))=0$ and that  
\[\lim_{t \rightarrow 0}\frac{1}{\upsilon(B_t(\alpha))}\int_{B_t(\alpha)}|\langle dr_z, df_{\infty}\rangle -\langle dr_z, df_{\infty}\rangle (\alpha)|d\upsilon = 0\]
for every $\alpha \in \hat{X}(\epsilon)$.
For every $\alpha \in \hat{X}(\epsilon)$, there exists $r(\alpha) > 0$ such that  
\[\frac{1}{\upsilon(B_t(\alpha))}\int_{B_t(\alpha)}|\langle dr_z, df_{\infty}\rangle -\langle dr_z, df_{\infty}\rangle (\alpha)|d\upsilon < \epsilon\]
for every $0 < t < r(x)$.
We put $l= \eta^{-1/4}$.
By an argument similar to the proof of Proposition \ref{1}, for every $i$, there exists a compact subset $K_i \subset B_{R-\epsilon}(x_i)$ such that 
\[\frac{\underline{\mathrm{vol}}(B_{R-\epsilon}(x_i) \setminus K_i)}{\underline{\mathrm{vol}}\,B_{R-\epsilon}(x_i)} \le \Psi(l^{-1};n, R,L)\]
and that 
\[\frac{1}{\underline{\mathrm{vol}}\,B_t(w)}\int_{B_t(w)}|\mathrm{Hess}_{f_i}|^2d\underline{\mathrm{vol}}\le l\]
for every $w \in K_i$ and $0 < t < \epsilon/100$.
Without loss of generality, we can assume that there exists a compact set $K_{\infty} \subset \overline{B}_R(x_{\infty})$ such that 
$K_i \rightarrow K_{\infty}$.
We put $W(\epsilon) = K_{\infty} \cap X(\epsilon)$.
By Proposition \ref{sup}, we have 
\[\frac{\upsilon(W(\epsilon))}{\upsilon(B_R(x_{\infty}))}\ge 1- \Psi(\epsilon; n, R, L).\]
We fix $\alpha \in W(\epsilon)$, $0 < t << \min \{\eta, r(\alpha)\}$ and an isometric embedding $\gamma$ from $[0, \overline{z, \alpha}+ \eta]$ to $Y$ satisfying $\gamma(0)=z$ and $\gamma (\overline{z, \alpha})=\alpha$.
We take   
$\alpha_i \in K_i$ satisfying $\alpha_i \rightarrow \alpha$.
We define a Borel function $F_i$ on $B_t(\alpha_i) \setminus (C_{z_i} \cup \{z_i\} )$ by 
\[F_i(\beta)=\frac{f_i \circ \gamma_{\beta}(\overline{z_i, \beta}-\eta^2)-f_i(\beta)}{-\eta^2}.\]
Here $\gamma_{\beta}$ is the minimal geodesic from $z_i$ to $\beta$.
By an argument similar to the proof of Claim \ref{115},  we have
\begin{align}
&\frac{1}{\underline{\mathrm{vol}}\,B_{t}(\alpha_i)}\int_{B_{t}(\alpha_i)}|\langle df_i, dr_{z_i}\rangle -F_i|d\underline{\mathrm{vol}} \\
& \le \eta^2 \frac{C(n)}{\underline{\mathrm{vol}}\,B_{10t}(\alpha_i)}\int_{B_{10t}(\alpha_i)}|\mathrm{Hess}_{f_i}|^2d\underline{\mathrm{vol}} \le \eta^2 C(n)l  \le \Psi(\epsilon;n)
\end{align}
for every $i$.
We take $i_0$ satisfying that $\epsilon_i << t$ for every $i \ge i_0$.
For every $i \ge i_0$ and $\beta_i \in B_t(\alpha_i)$, 
we remark that $\overline{\phi_i(\beta_i), \alpha} \le t + \epsilon_i \le \eta^3$.
Then, since
\[\overline{z, \phi_i(\gamma_{\beta_i}(\overline{z_i, \beta_i}-\eta^2))}^{\eta^{-2}d_Y}+
\overline{\phi_i(\gamma_{\beta_i}(\overline{z_i, \beta_i}-\eta^2)), \phi_i(\beta_i)}^{\eta^{-2}d_Y}-\overline{z, \phi_i(\beta_i)}^{\eta^{-2}d_Y} < 3\epsilon_i,\]
we have
\[\overline{z, \phi_i(\gamma_{\beta_i}(\overline{z_i, \beta_i}-\eta^2))}^{\eta^{-2}d_Y}+
\overline{\phi_i(\gamma_{\beta_i}(\overline{z_i, \beta_i}-\eta^2)), \alpha}^{\eta^{-2}d_Y}-\overline{z, \alpha}^{\eta^{-2}d_Y} < 5\eta.\]
Similarly, we have
\[\overline{z, \phi_i(\gamma_{\beta_i}(\overline{z_i, \beta_i}-\eta^2))}^{\eta^{-2}d_Y}+
\overline{\phi_i(\gamma_{\beta_i}(\overline{z_i, \beta_i}-\eta^2)), \gamma(\overline{z, \alpha}+\eta)}^{\eta^{-2}d_Y}-\overline{z, \gamma(\overline{z, \alpha}+\eta)}^{\eta^{-2}d_Y} < 5\eta,\]
\[\overline{\phi_i(\gamma_{\beta_i}(\overline{z_i, \beta_i}-\eta^2)), \gamma(\overline{z, \alpha}+\eta)}^{\eta^{-2}d_Y}\ge \eta^{-1}-\eta, \]
\[\overline{\phi_i(\gamma_{\beta_i}(\overline{z_i, \beta_i}-\eta^2)), z}^{\eta^{-2}d_Y}\ge \eta^{-1}-\eta\]
and
\[\overline{\phi_i(\gamma_{\beta_i}(\overline{z_i, \beta_i}-\eta^2)), \alpha}^{\eta^{-2}d_Y}= 1 \pm 5\eta.\]
Therefore, by splitting theorem, we have 
\[\overline{\phi_i(\gamma_{\beta_i}(\overline{z_i, \beta_i}-\eta^2)), \gamma(\overline{z, \alpha}-\eta^2)}^{\eta^{-2}d_Y}\le \Psi(\eta;n).\]
Thus we have 
\begin{align}
\frac{f_i(\gamma_{\beta_i}(\overline{z_i, \beta_i}-\eta^2))-f_i(\beta_i)}{-\eta^2}&=\frac{f_{\infty}(\phi_i(\gamma_{\beta_i}(\overline{z_i, \beta_i}-\eta^2)))-f_{\infty}(\phi_i(\beta_i))}{-\eta^2} \pm \frac{\epsilon_i}{\eta^2} \\
&=\frac{f_{\infty}(\gamma(\overline{z, \alpha}-\eta^2)))-f_{\infty}(\alpha)}{-\eta^2} \pm \Psi(\eta ;n, L) \\
&=\langle dr_z, df_{\infty}\rangle (\alpha) \pm \Psi(\eta ;n, L).
\end{align}
Especially,  we have 
\[\frac{1}{\underline{\mathrm{vol}}\,B_{t}(\alpha_i)}\int_{B_{t}(\alpha_i)}|F_i-\langle dr_z, df_{\infty}\rangle (\alpha)|d\underline{\mathrm{vol}} \le \Psi(\eta;n, L)\]
for $i \ge i_0$.
Therefore if we put $W= \bigcap _{N_1 \in \mathbf{N}}(\bigcup_{N_2 \ge N_1}W(N_2^{-1}))$, then $\upsilon (B_R(x_{\infty}) \setminus W)=0$,  $\{\langle dr_{z_i}, df_i\rangle \}$ has infinitesimal constant convergence property to $\langle dr_{w}, df_{\infty}\rangle$ at every $w \in W$. 
\end{proof}

\begin{remark}\label{cc}
We shall introduce the following important method to get some uniformly Hessian estimates by using cut-off functions with good properties by Cheeger-Colding:
Let $(M, m, \underline{\mathrm{vol}})$ be a pointed connected $n$-dimensional complete Riemannian manifold with renormalized measure satisfying $\mathrm{Ric}_{M} \ge -(n-1)$, $R$ a positive number and $f$ a $C^2$-function on $B_R(m)$.
We assume that there exists $L \ge 1$ such that 
\[|\nabla f|_{L^{\infty}(B_R(m))} + \frac{1}{\underline{\mathrm{vol}}\,B_R(m)}\int_{B_R(m)}|\Delta f|^2d\underline{\mathrm{vol}} \le L\]
Then, we have 
\[\frac{1}{\underline{\mathrm{vol}}\,B_r(m)}\int_{B_r(m)}|\mathrm{Hess}_{f}|^2d\underline{\mathrm{vol}} < C(n, r, R, L)\]
for every $0 < r < R$.
The proof is as follows.
By standard smoothing argument, without loss of generality, we can assume that $f$ is a smooth function.
There exists a smooth function $\phi$ on $M$ such that $0 \le \phi \le 1$, 
$\phi|_{B_r(m)}=1$, $ \mathrm{supp} \phi \subset B_R(m)$,  $|\nabla \phi| \le C(n, r, R)$ and $|\Delta \phi|\le C(n, r, R)$ 
(see for instance \cite[Theorem $8. 16$]{ch1}). 
By Bochner's formula, we have 
\[-\frac{1}{2} \Delta |\nabla (\phi f)|^2 \ge  |\mathrm{Hess}_{\phi f}|^2-\langle \nabla \Delta (\phi f), \nabla (\phi f)\rangle -(n-1)|\nabla (\phi f)|^2. \]
Thus, we have 
\begin{align}
&\frac{1}{\underline{\mathrm{vol}}\,B_r(m)}\int_{B_r(m)}|\mathrm{Hess}_{f}|^2d\underline{\mathrm{vol}}\\
&\le \frac{C(n, r, R)}{\underline{\mathrm{vol}}\,B_R(m)}\int_{B_R(m)}|\mathrm{Hess}_{\phi f}|^2d\underline{\mathrm{vol}} \\
&\le \frac{C(n, r, R)}{\underline{\mathrm{vol}}\,B_R(m)}\int_{B_R(m)}\left(\Delta (\phi f)\right)^2 d\underline{\mathrm{vol}}+C(n, R, L) \\
&\le  \frac{2C(n, r, R)}{\underline{\mathrm{vol}}\,B_R(m)}\int_{B_R(m)}(f\Delta \phi)^2
+(\phi \Delta f)^2
+|\langle \nabla f, \nabla \phi \rangle |^2 d\underline{\mathrm{vol}} + C(n, R, L) \\
&\le C(n, r, R, L).
\end{align}
This observation performs a crucial role to study limit functions of harmonic functions. 
\end{remark}
The following proposition follows from Lemma \ref{17} directly.
\begin{proposition}\label{10102}
Let $\{(M_i, m_i, \underline{\mathrm{vol}})\}$ be a sequence of pointed connected $n$-dimensional complete Riemannian manifolds with renormalized measure satisfying $\mathrm{Ric}_{M_i} \ge -(n-1)$, $(Y, y, \upsilon)$ be a Ricci limit space of $\{(M_i, m_i, \underline{\mathrm{vol}})\}_i$.
Then for every $w^1, w^2 \in Y$, $z \in Y \setminus (C_{w^1} \cup C_{w^2} \cup \{w^1, w^2\})$ and $w_i^j \rightarrow w^j \in Y(j =1, 2)$, the sequence $\{\langle dr_{w^1_i}, dr_{w^2_i}\rangle \}$ has infinitesimal constant convergence property to $\langle dr_{w^1_{\infty}}, dr_{w^2_{\infty}}\rangle$ at $z$.
\end{proposition}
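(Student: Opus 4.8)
The plan is to read the statement off Lemma \ref{17} by a triangle‑inequality argument, so the proof will be short. First I would fix the scale. Since $z\notin C_{w^1}\cup C_{w^2}$ and since $\bigcup_{\tau>0}\mathcal{D}_x^{\tau}=Y\setminus C_x$ for every $x$, there are $\tau_1,\tau_2>0$ with $z\in\mathcal{D}_{w^1}^{\tau_1}\cap\mathcal{D}_{w^2}^{\tau_2}$; because $\mathcal{D}_x^{\tau}$ grows as $\tau$ decreases and because $z\neq w^1,w^2$, I may choose a single $\tau>0$ with $z\in\mathcal{D}_{w^1}^{\tau}\cap\mathcal{D}_{w^2}^{\tau}$ and $z\notin B_{\tau}(w^1)\cup B_{\tau}(w^2)$. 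Write $f_i=\langle dr_{w^1_i},dr_{w^2_i}\rangle$ on $M_i$ and $f_{\infty}=\langle dr_{w^1_{\infty}},dr_{w^2_{\infty}}\rangle$ on $Y$; since distance functions are $1$-Lipschitz we have $|df|(w)\le 1$ a.e., so $\sup_i|f_i|_{L^{\infty}}+|f_{\infty}|_{L^{\infty}}\le 2$, which is the standing boundedness assumption in the definition of infinitesimal constant convergence property.

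Next, fix $\epsilon>0$ and an arbitrary sequence $z_i\to z$. For each $t>0$, apply Lemma \ref{17} with $x=z$, $x(i)=z_i$, $x_1=w^1$, $x_2=w^2$ and the scale $\tau$. Setting $A_i=\frac{1}{\underline{\mathrm{vol}}\,B_t(z_i)}\int_{B_t(z_i)}f_i\,d\underline{\mathrm{vol}}$ and $A_{\infty}=\frac{1}{\upsilon(B_t(z))}\int_{B_t(z)}f_{\infty}\,d\upsilon$ (the renormalisation of $\mathrm{vol}$ cancels in the average, so it is immaterial), the three conclusions of Lemma \ref{17} give, for all sufficiently large $i$, that $|A_i-A_{\infty}|\le\Psi(t,t/\tau;n)$, that $\frac{1}{\upsilon(B_t(z))}\int_{B_t(z)}|f_{\infty}-A_{\infty}|\,d\upsilon\le\Psi(t,t/\tau;n)$, and that $\frac{1}{\underline{\mathrm{vol}}\,B_t(z_i)}\int_{B_t(z_i)}|f_i-A_i|\,d\underline{\mathrm{vol}}\le\Psi(t,t/\tau;n)$. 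Combining these by the triangle inequality yields
\[
\frac{1}{\underline{\mathrm{vol}}\,B_t(z_i)}\int_{B_t(z_i)}\bigl|f_i-A_{\infty}\bigr|\,d\underline{\mathrm{vol}}\le 2\Psi(t,t/\tau;n),\qquad
\frac{1}{\upsilon(B_t(z))}\int_{B_t(z)}\bigl|f_{\infty}-A_i\bigr|\,d\upsilon\le 2\Psi(t,t/\tau;n)
\]
for all large $i$.

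Finally, since $\tau$ is now a fixed positive number, $\Psi(t,t/\tau;n)\to 0$ as $t\to 0$, so I may choose $r>0$ with $2\Psi(t,t/\tau;n)\le\epsilon$ for every $0<t<r$. Taking $\limsup_{i\to\infty}$ in the two displayed inequalities then produces exactly the two bounds appearing in the definition of infinitesimal constant convergence property, for all $0<t<r$ and all $z_i\to z$; as $\epsilon$ was arbitrary, this is the claim. The only step that requires any care is the very first one — translating "$z\notin C_{w^j}\cup\{w^j\}$" into the hypotheses of Lemma \ref{17} by extracting the common scale $\tau$ — and then noting that the error term $\Psi(t,t/\tau;n)$ still tends to $0$ as $t\to 0$ once $\tau$ is frozen; all the genuinely analytic work (the Busemann/harmonic approximation behind Lemmas \ref{15}--\ref{17}) is already available.
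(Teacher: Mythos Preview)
Your proof is correct and follows exactly the approach the paper intends: the paper's own proof is the single sentence ``follows from Lemma \ref{17} directly,'' and you have simply unpacked that sentence by extracting the common scale $\tau$ from the hypothesis $z\notin C_{w^1}\cup C_{w^2}\cup\{w^1,w^2\}$ and then reading the two required estimates off the three conclusions of Lemma \ref{17} via the triangle inequality. One minor remark: the third conclusion of Lemma \ref{17} (as inherited from Lemma \ref{16}) in fact already compares $f_i$ against $A_\infty$ rather than $A_i$, so one of your triangle-inequality steps is not even needed---but this only makes the argument shorter, not different.
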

\subsection{Infinitesimal convergence property}
In this subsection, we will give a notion of \textit{infinitesimal convergence property} and its fundamental properties.
\begin{definition}[Infinitesimal convergence property]
Let $R$ be a positive number, $w$ a point in $B_R(x_{\infty})$ and $f_i$ a Borel function on $B_R(x_i) (1 \le i \le \infty)$ satisfying
$\sup _i |f_i|_{L^{\infty}(B_R(x_i))} + |f_{\infty}|_{L^{\infty}(B_R(x_{\infty}))} < \infty$.
We say that $\{f_i\}_i$ has \textit{infinitesimal convergence property to $f_{\infty}$ at} $w$ if  
for every $\epsilon > 0$, there exists $r > 0$ such that   
\[\limsup _{i \rightarrow \infty}\left| \frac{1}{\upsilon_i(B_t(w_i))}\int_{B_t(w_i)}f_id\upsilon_i - \frac{1}{\upsilon_{\infty}(B_t(w))}\int_{B_t(w)}f_{\infty}d\upsilon_{\infty} \right|\le \epsilon\]
for every $0 < t < r$ and $w_i \rightarrow w$.
\end{definition}
It is clear that if the sequence $\{f_i\}_i$ has infinitesimal constant convergence property to $f_{\infty}$ at $w$, then $\{f_i\}_i$ has infinitesimal convergence property to $f_{\infty}$ at $w$.
We skip the proof of the next proposition because it is not difficult.
\begin{proposition}[Linearlity of infinitesimal convergence property]\label{101020}
Let $R$ be a positive number, $a_i$, $b_i$, $c_i$, $d_i$ Borel functions on $B_R(x_i) (1 \le i \le \infty)$, $w$ a point in $B_R(x_{\infty})$.
We assume that $\sup_i (|a_i| + |b_i| + |c_i| + |d_i|)_{L^{\infty}(B_R(x_i))} < \infty$ and that 
$\{a_i\}_i$, $\{b_i\}_i$ have infinitesimal constant convergence property to $a_{\infty}, b_{\infty}$ at $w$, respectively and 
$\{c_i\}_i$, $\{d_i\}_i$ have infinitesimal convergence property to $c_{\infty}, d_{\infty}$ at $w$, respectively.
Then $\{a_ic_i + b_i d_i\}$ has infinitesimal convergence property to $a_{\infty}c_{\infty}+b_{\infty}d_{\infty}$ at $w$.
\end{proposition}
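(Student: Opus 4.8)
The plan is to derive the statement from bilinearity, by combining two simple observations: first, the infinitesimal convergence property is additive, i.e. if $\{u_i\}$ and $\{v_i\}$ have infinitesimal convergence property to $u_\infty$ and $v_\infty$ at $w$, then so does $\{u_i+v_i\}$ to $u_\infty+v_\infty$ (the average of $u_i+v_i$ over $B_t(w_i)$ is the sum of the two averages, so the triangle inequality gives error $\le 2\epsilon$, and $\epsilon$ is arbitrary); second, it therefore suffices to treat a single product, i.e. to prove that $\{a_ic_i\}$ has infinitesimal convergence property to $a_\infty c_\infty$ at $w$, and then apply the same to $\{b_id_i\}$ and add.

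For the product, fix $\epsilon>0$. For $0<t<r$ and $w_i\to w$ write
\[
A_t^i=\frac{1}{\upsilon_i(B_t(w_i))}\int_{B_t(w_i)}a_i\,d\upsilon_i,\qquad
A_t^\infty=\frac{1}{\upsilon_\infty(B_t(w))}\int_{B_t(w)}a_\infty\,d\upsilon_\infty,
\]
and analogously $C_t^i,C_t^\infty$ for $c_i,c_\infty$. Using the two inequalities in the definition of infinitesimal constant convergence for $\{a_i\}$, I would choose $r$ so small that for every $0<t<r$ one has, in the $\limsup_i$ sense,
\[
\frac{1}{\upsilon_i(B_t(w_i))}\int_{B_t(w_i)}|a_i-A_t^\infty|\,d\upsilon_i\le\epsilon,\qquad
\frac{1}{\upsilon_\infty(B_t(w))}\int_{B_t(w)}|a_\infty-A_t^i|\,d\upsilon_\infty\le\epsilon.
\]
The first inequality forces $|A_t^i-A_t^\infty|\le\epsilon$, and hence, together with the second, $\frac{1}{\upsilon_\infty(B_t(w))}\int_{B_t(w)}|a_\infty-A_t^\infty|\,d\upsilon_\infty\le 2\epsilon$. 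Shrinking $r$ further, the infinitesimal convergence property of $\{c_i\}$ gives $|C_t^i-C_t^\infty|\le\epsilon$ for $0<t<r$ in the same sense. (Throughout one uses that $\upsilon_i(B_t(w_i))\to\upsilon_\infty(B_t(w))$, which is built into measured Gromov--Hausdorff convergence.)

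Next I would split, on the manifold side,
\[
\frac{1}{\upsilon_i(B_t(w_i))}\int_{B_t(w_i)}a_ic_i\,d\upsilon_i
=\frac{1}{\upsilon_i(B_t(w_i))}\int_{B_t(w_i)}(a_i-A_t^\infty)c_i\,d\upsilon_i+A_t^\infty C_t^i,
\]
and likewise on the limit side with $a_\infty,c_\infty,A_t^\infty,C_t^\infty$. The manifold-side remainder term is bounded in absolute value by $(\sup_i|c_i|_{L^\infty})\epsilon$, and $A_t^\infty C_t^i=A_t^\infty C_t^\infty\pm(\sup_i|a_\infty|_{L^\infty})\epsilon$ by the estimate on $C_t^i$; the limit-side remainder term is bounded by $2|c_\infty|_{L^\infty}\epsilon$. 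Thus both averages equal $A_t^\infty C_t^\infty$ up to an error that is a fixed constant (depending only on the uniform $L^\infty$ bounds) times $\epsilon$, so the two averages differ by at most $C\epsilon$ in $\limsup_i$; since $\epsilon$ was arbitrary this is precisely the infinitesimal convergence property of $\{a_ic_i\}$ at $w$, and adding the $\{b_id_i\}$ contribution finishes the proof. The argument is entirely elementary; the only point requiring care is the bookkeeping of which of the two inequalities defining infinitesimal constant convergence is used where --- in particular that the ``reverse'' inequality (controlling the oscillation of $a_\infty$ rather than of $a_i$) is needed for the limit-side integral, and that the constant pulled out of the integrals must be $A_t^\infty$, not $A_t^i$, so that the same constant appears on both sides. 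There is no substantive obstacle, which is why the statement is asserted without proof in the text.
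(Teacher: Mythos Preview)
Your argument is correct. The paper itself skips the proof of this proposition, remarking only that ``it is not difficult,'' so there is nothing to compare against; your decomposition (reduce to a single product by additivity, then freeze $a_i$ at the constant $A_t^\infty$ using the infinitesimal \emph{constant} convergence hypothesis, and control the remaining scalar factor $C_t^i-C_t^\infty$ by the infinitesimal convergence of $\{c_i\}$) is exactly the kind of elementary bookkeeping the author had in mind.
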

The next proposition follows from an argument similar to the proof of Proposition \ref{sup}:
\begin{proposition}\label{sup1}
Let $R$ be a positive number, $K_i$  a Borel subset of $\overline{B}_R(x_i)$ and $f_i$ a nonnegative valued Borel function on $\overline{B}_R(x_i) (1 \le i \le \infty)$ satisfying
$\sup _i |f_i|_{L^{\infty}(B_R(x_i))} + |f_{\infty}|_{L^{\infty}(B_R(x_{\infty}))} < \infty$.
We assume that $K_{\infty}$ is compact, $\limsup_{i \rightarrow \infty}K_i \subset K_{\infty}$ and that
for a.e. $w \in K_{\infty}$,
$\{f_i\}$ has infinitesimal convergence property to $f_{\infty}$ at $w$.
Then we have 
\[\limsup_{i \rightarrow \infty}\int_{K_i}f_i d\upsilon_i \le \int_{K_{\infty}}f_{\infty}d\upsilon_{\infty}.\]
\end{proposition}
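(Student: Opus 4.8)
The plan is to imitate the proof of Proposition~\ref{sup}, carrying the integrand $f_i$ and the averages of $f_\infty$ along a Vitali covering of $K_\infty$, with one essential modification: the centres of the Vitali balls must be chosen at points where $\{f_i\}$ genuinely has infinitesimal convergence property, and the ``bad'' part of $K_\infty$ --- which is $\upsilon_\infty$-null but may be topologically large --- is absorbed by invoking Proposition~\ref{sup} itself.

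First I would set $L_0:=\sup_i|f_i|_{L^{\infty}}+|f_\infty|_{L^{\infty}}$ and let $G\subset K_\infty$ be the (full $\upsilon_\infty$-measure) set of $w$ at which $\{f_i\}$ has infinitesimal convergence property to $f_\infty$. Fix $\epsilon>0$ and $\delta>0$, and by outer regularity of the Radon measure $\upsilon_\infty$ choose an open set $O\supset K_\infty\setminus G$ with $\upsilon_\infty(O)<\epsilon$; then $K_\infty\setminus O$ is a compact subset of $G$. I apply the covering lemma (Proposition~\ref{cov}) to $K_\infty\setminus O$ with the family of balls $\overline{B}_r(w)$, $w\in K_\infty\setminus O$, $0<r<\delta$, requiring in addition that $5(1+\epsilon)r$ be below the radius supplied by infinitesimal convergence property at $w$ for the parameter $\epsilon$. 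This produces a pairwise disjoint collection $\{\overline{B}_{r_j}(w_j)\}_{j\in\mathbf{N}}$ with $w_j\in K_\infty\setminus O\subset G$ and $(K_\infty\setminus O)\setminus\bigcup_{j\le N}\overline{B}_{r_j}(w_j)\subset\bigcup_{j>N}\overline{B}_{5r_j}(w_j)$ for every $N$. Since the $B_{r_j}(w_j)$ are disjoint and lie in a fixed bounded set, $\sum_j\upsilon_\infty(B_{r_j}(w_j))<\infty$, so I fix $N$ with $\sum_{j>N}\upsilon_\infty(B_{r_j}(w_j))<\epsilon$. Now $\{O\}\cup\{B_{(1+\epsilon)r_j}(w_j)\}_{j\le N}\cup\{B_{5(1+\epsilon)r_j}(w_j)\}_{j>N}$ is an open cover of the compact set $K_\infty$; a finite subcover yields balls $B_{t_1}(z_1),\dots,B_{t_l}(z_l)$ with centres among the $w_j$ (hence in $G$) such that $K_\infty\subset O\cup\bigcup_{j\le l}B_{t_j}(z_j)$.

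Next I would transfer this to the spaces $Z_i$: pick $z_j(i)\to z_j$ and $\tau>0$ with $\tau\ll\min_{j\le l}t_j$. Exactly as in the proof of Proposition~\ref{sup}, for all large $i$ one has $\phi_i(K_i)\subset B_\tau(K_\infty)$, and the sets $L_i:=K_i\setminus\bigcup_{j\le l}B_{t_j+\tau}(z_j(i))$ are Borel and uniformly bounded; using $f_i\ge0$,
\[\int_{K_i}f_i\,d\upsilon_i\ \le\ \sum_{j=1}^l\int_{B_{t_j+\tau}(z_j(i))}f_i\,d\upsilon_i\ +\ L_0\,\upsilon_i(L_i).\]
A limsup-of-sets argument (a contradiction argument as in the proof of Proposition~\ref{compact2} or~\ref{compact3}) shows $\limsup_{i\to\infty}L_i\subset K_\infty\setminus\bigcup_{j\le l}B_{t_j}(z_j)=:P$, which is compact and contained in $O$, so $\upsilon_\infty(P)<\epsilon$; hence Proposition~\ref{sup} gives $\limsup_{i\to\infty}\upsilon_i(L_i)\le\upsilon_\infty(P)<\epsilon$. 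For each fixed $j$, infinitesimal convergence property at $z_j$ combined with $\upsilon_i(B_{t_j+\tau}(z_j(i)))\to\upsilon_\infty(B_{t_j+\tau}(z_j))$ (measured Gromov--Hausdorff convergence) yields
\[\limsup_{i\to\infty}\int_{B_{t_j+\tau}(z_j(i))}f_i\,d\upsilon_i\ \le\ \int_{B_{t_j+\tau}(z_j)}f_\infty\,d\upsilon_\infty+\epsilon\,\upsilon_\infty(B_{t_j+\tau}(z_j)).\]
Summing over $j$, separating the $l$ balls into those inherited from $\{B_{(1+\epsilon)r_j}(w_j)\}_{j\le N}$ and those from the $5$-fold tail, and using --- as in the proof of Proposition~\ref{sup} --- the disjointness of $\{B_{r_j}(w_j)\}$, the doubling property, and the annulus bound $\upsilon_\infty(B_{(1+c\epsilon)r}(w)\setminus B_r(w))\le\Psi(\epsilon;K)\upsilon_\infty(B_r(w))$ (as used in the proof of Proposition~\ref{99990}), one obtains $\sum_{j}\int_{B_{t_j+\tau}(z_j)}f_\infty\,d\upsilon_\infty\le\int_{K_\infty}f_\infty\,d\upsilon_\infty+L_0\,\upsilon_\infty(B_\delta(K_\infty)\setminus K_\infty)+\Psi(\epsilon;L_0,K)$ and $\sum_j\epsilon\,\upsilon_\infty(B_{t_j+\tau}(z_j))=\Psi(\epsilon;K)$. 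Collecting all the pieces,
\[\limsup_{i\to\infty}\int_{K_i}f_i\,d\upsilon_i\ \le\ \int_{K_\infty}f_\infty\,d\upsilon_\infty+L_0\,\upsilon_\infty(B_\delta(K_\infty)\setminus K_\infty)+\Psi(\epsilon;L_0,K),\]
and letting $\delta\to0$ (so $\upsilon_\infty(B_\delta(K_\infty)\setminus K_\infty)\to0$, since $K_\infty$ is compact and $\upsilon_\infty$ is Radon) and then $\epsilon\to0$ gives the assertion.

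The main obstacle is exactly the feature absent in Proposition~\ref{sup}: there the Vitali centres may be taken anywhere in the compact limit set, whereas here they must lie in the good set $G$, and $K_\infty\setminus G$, although $\upsilon_\infty$-null, need not be coverable by finitely many small balls of controlled measure. The resolution --- enclosing $K_\infty\setminus G$ in an open set $O$ of measure $<\epsilon$, so that the residual part $L_i$ of $K_i$ asymptotically lands in the compact set $P\subset O$ and Proposition~\ref{sup} applies to it --- is the new ingredient. After that, the remaining work (the annulus estimates, the separation of the main balls from the $5$-fold enlarged tail, and the two successive limits in $\delta$ and $\epsilon$) is routine and parallels the proof of Proposition~\ref{sup}.
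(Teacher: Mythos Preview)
Your argument is correct and is precisely the ``argument similar to the proof of Proposition~\ref{sup}'' that the paper invokes without further detail. The one point absent from Proposition~\ref{sup} --- ensuring the Vitali centres lie in the full-measure set $G$ while the cover still traps all of $K_i$ --- you resolve cleanly with the auxiliary open set $O$; an equally valid shortcut is to apply Proposition~\ref{cov} directly to $G$, keep only the first $N$ (slightly enlarged) disjoint balls, and observe that the compact residual $K_\infty\setminus\bigcup_{j\le N}\overline{B}_{r_j}(w_j)$ already has $\upsilon_\infty$-measure at most $\upsilon_\infty(K_\infty\setminus G)+\sum_{j>N}\upsilon_\infty(\overline{B}_{5r_j}(w_j))=0+\Psi(\epsilon;K)$, so Proposition~\ref{sup} disposes of the leftover part of $K_i$ exactly as in your step.
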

We shall state a fundamental result for  infinitesimal convergence property:
\begin{proposition}\label{10103}
Let $R$ be a positive number, $K_i$  a Borel subset of $\overline{B}_R(x_i)$ and $\{f_i\}_i$ a Borel function on $\overline{B}_R(x_i) (1 \le i \le \infty)$ satisfying
$\sup _i |f_i|_{L^{\infty}(B_R(x_i))} + |f_{\infty}|_{L^{\infty}(B_R(x_{\infty}))} < \infty$.
We assume that $K_{\infty}$ is compact, $\limsup_{i \rightarrow \infty}K_i \subset K_{\infty}$ and that for a.e. $w \in K_{\infty}$, $\{1_{K_i}\}_i$ and
$\{f_i\}_i$ have infinitesimal convergence property to $1_{K_{\infty}}$, $f_{\infty}$ at $w$, respectively.
Then, we have
\[\lim_{i \rightarrow \infty}\int_{K_i}f_i d\upsilon_i=\int_{K_{\infty}}f_{\infty}d\upsilon_{\infty}.\]
\end{proposition}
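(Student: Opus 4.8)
The plan is to prove the two one-sided estimates ``$\limsup\le$'' and ``$\liminf\ge$'' separately, after first reducing to nonnegative functions. Fix a constant $M$ with $\sup_i|f_i|_{L^{\infty}(\overline{B}_R(x_i))}+|f_{\infty}|_{L^{\infty}(\overline{B}_R(x_{\infty}))}\le M$. Since adding a constant does not change the difference $\frac{1}{\upsilon_i(B_t(w_i))}\int_{B_t(w_i)}f_i\,d\upsilon_i-\frac{1}{\upsilon_{\infty}(B_t(w))}\int_{B_t(w)}f_{\infty}\,d\upsilon_{\infty}$, the sequence $\{f_i+M\}_i$ still has infinitesimal convergence property to $f_{\infty}+M$ at a.e.\ $w\in K_{\infty}$, and $f_i+M\ge 0$. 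Writing $\int_{K_i}f_i\,d\upsilon_i=\int_{K_i}(f_i+M)\,d\upsilon_i-M\upsilon_i(K_i)$ and similarly on the limit side, it suffices to treat the case $f_i\ge 0$ (which, for $f_i\equiv 1$, also yields $\lim_i\upsilon_i(K_i)=\upsilon_{\infty}(K_{\infty})$). For such $f_i$, Proposition \ref{sup1} applied with these $K_i$ already gives $\limsup_{i\to\infty}\int_{K_i}f_i\,d\upsilon_i\le\int_{K_{\infty}}f_{\infty}\,d\upsilon_{\infty}$, so only the reverse inequality remains, and that is the heart of the proof.

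For the lower bound I would argue by a Vitali covering adapted to the limit space. Fix $\epsilon>0$. By the Lebesgue differentiation theorem for the doubling measure $\upsilon_{\infty}$ (Proposition \ref{130}), $\upsilon_{\infty}$-a.e.\ $w\in K_{\infty}$ is a point of density $1$ of $K_{\infty}$; combining this with the infinitesimal convergence properties of $\{1_{K_i}\}_i$ and $\{f_i\}_i$ at $w$ and with $\lim_i\upsilon_i(B_t(w_i))=\upsilon_{\infty}(B_t(w))$, one gets, for $\upsilon_{\infty}$-a.e.\ $w\in K_{\infty}$, a radius $r(w)>0$ such that for all $0<t<r(w)$ and all $w_i\to w$ one has $\liminf_i\upsilon_i(K_i\cap B_t(w_i))\ge(1-2\epsilon)\upsilon_{\infty}(B_t(w))$ and $\limsup_i| \frac{1}{\upsilon_i(B_t(w_i))}\int_{B_t(w_i)}f_i\,d\upsilon_i-\frac{1}{\upsilon_{\infty}(B_t(w))}\int_{B_t(w)}f_{\infty}\,d\upsilon_{\infty}|\le\epsilon$. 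Applying Proposition \ref{cov} to the collection of closed balls $\overline{B}_{\rho}(w)$ with $w$ such a good point, $0<\rho<\min\{r(w),\tau\}$ ($\tau$ small) and $\upsilon_{\infty}(\partial B_{\rho}(w))=0$ (all but countably many $\rho$), I obtain a countable pairwise disjoint family $\{\overline{B}_{r_j}(p_j)\}_j$ contained in $B_{\tau}(K_{\infty})$ with $K_{\infty}'\setminus\bigcup_{j\le N}\overline{B}_{r_j}(p_j)\subset\bigcup_{j>N}\overline{B}_{5r_j}(p_j)$ for every $N$ (here $K_{\infty}'$ is the full-measure set of good points). The doubling property and $\sum_j\upsilon_{\infty}(\overline{B}_{r_j}(p_j))\le\upsilon_{\infty}(B_{\tau}(K_{\infty}))<\infty$ then force $\upsilon_{\infty}(K_{\infty}\setminus\bigcup_j\overline{B}_{r_j}(p_j))=0$, so, using $f_{\infty}\ge 0$, $\sum_j\int_{B_{r_j}(p_j)}f_{\infty}\,d\upsilon_{\infty}\ge\int_{K_{\infty}}f_{\infty}\,d\upsilon_{\infty}$.

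Next, for a fixed $N$ the centers $p_1,\dots,p_N$ have pairwise positive distances, hence the balls $B_{r_j}(p_j^{(i)})$ (with $p_j^{(i)}\to p_j$) are pairwise disjoint for large $i$; therefore
\[
\int_{K_i}f_i\,d\upsilon_i\ge\sum_{j=1}^N\int_{K_i\cap B_{r_j}(p_j^{(i)})}f_i\,d\upsilon_i\ge\sum_{j=1}^N\left(\int_{B_{r_j}(p_j^{(i)})}f_i\,d\upsilon_i-M\,\upsilon_i\big(B_{r_j}(p_j^{(i)})\setminus K_i\big)\right).
\]
Passing to the lower limit in $i$ and using the two estimates above together with $\upsilon_i(B_{r_j}(p_j^{(i)}))\to\upsilon_{\infty}(B_{r_j}(p_j))$ gives $\liminf_i\int_{K_i}f_i\,d\upsilon_i\ge\sum_{j=1}^N(\int_{B_{r_j}(p_j)}f_{\infty}\,d\upsilon_{\infty}-(2M+1)\epsilon\,\upsilon_{\infty}(B_{r_j}(p_j)))$; letting $N\to\infty$, bounding $\sum_j\upsilon_{\infty}(B_{r_j}(p_j))$ by $\upsilon_{\infty}(B_1(K_{\infty}))$, and then $\epsilon\to 0$ yields $\liminf_i\int_{K_i}f_i\,d\upsilon_i\ge\int_{K_{\infty}}f_{\infty}\,d\upsilon_{\infty}$, which combined with Proposition \ref{sup1} finishes the nonnegative case and hence the theorem.

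The routine parts are the $\limsup$/$\liminf$ bookkeeping and the choice of radii avoiding spheres of positive $\upsilon_{\infty}$-measure. The main obstacle is the interplay of the two infinitesimal convergence properties inside the covering: one must check carefully that the hypothesis on $\{1_{K_i}\}_i$ upgrades, at density points of $K_{\infty}$, to the quantitative bound $\liminf_i\upsilon_i(K_i\cap B_t(w_i))\ge(1-2\epsilon)\upsilon_{\infty}(B_t(w))$, and that Proposition \ref{cov} — which only produces a disjoint family whose $5$-fold dilation covers $K_{\infty}$ — still yields a disjoint family covering $K_{\infty}$ up to a $\upsilon_{\infty}$-null set, which is exactly where the doubling estimate $\upsilon_{\infty}(\overline{B}_{5r_j}(p_j))\le C\,\upsilon_{\infty}(\overline{B}_{r_j}(p_j))$ and the summability of $\upsilon_{\infty}(\overline{B}_{r_j}(p_j))$ are used. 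Once these points are settled the argument is a direct generalization of the proof of Proposition \ref{lap}.
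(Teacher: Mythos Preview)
Your argument is correct. The overall strategy—Vitali covering on the limit space, combined with the two infinitesimal convergence hypotheses at density points of $K_{\infty}$—is the same as the paper's, and the ``obstacles'' you flag are handled correctly (the density estimate for $\upsilon_i(K_i\cap B_t(w_i))$ follows exactly as you say, and the doubling-plus-tail argument does give $\upsilon_{\infty}(K_{\infty}\setminus\bigcup_j\overline{B}_{r_j}(p_j))=0$).

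The organization, however, differs from the paper's in a useful way. The paper does not reduce to nonnegative $f_i$ and does not split into two inequalities; it runs a single covering and then must control the remainder $\int_{K_j\setminus\bigcup_{i\le N}B_{r_i}(x_i(j))}|f_j|\,d\upsilon_j$ by invoking Propositions~\ref{compact2}, \ref{compact3} and \ref{sup} to show $\limsup_j\upsilon_j(K_j\setminus\bigcup_{i\le N}B_{r_i}(x_i(j)))\le\upsilon_{\infty}(K_{\infty}\setminus\bigcup_{i\le N}B_{r_i}(x_i))$. Your reduction to $f_i\ge 0$ lets you simply drop that remainder in the lower bound, and the upper bound is outsourced to Proposition~\ref{sup1}; this is cleaner, at the mild cost of treating $\upsilon_i(K_i)\to\upsilon_{\infty}(K_{\infty})$ as the special case $f_i\equiv 1$ afterwards. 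Either route works; yours trades the remainder estimate for an appeal to~\ref{sup1}. The choice of radii with $\upsilon_{\infty}(\partial B_{\rho}(w))=0$ is harmless but unnecessary here, since in this paper measured Gromov--Hausdorff convergence already gives $\upsilon_i(B_r(w_i))\to\upsilon_{\infty}(B_r(w))$ for every $r>0$.
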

\begin{proof}
We fix $\epsilon >0$.
We take $L \ge 1$ satisfying $\sup_i |f_i|_{L^{\infty}} + |f_{\infty}| + \upsilon_{\infty}(B_R(x_{\infty}))< L$.
There exists a Borel subset $\hat{K}_{\infty} \subset K_{\infty}$ satisfying the following properties: 
For every $w \in \hat{K}_{\infty}$, there exists $t_w >0$ such that $\overline{B}_{10t_w}(w) \subset B_R(x)$ and that 

\[\limsup_{i \rightarrow \infty}\left| \frac{1}{\upsilon_i(B_s(w_i))}\int_{B_s(w_i)}f_id\upsilon_i - \frac{1}{\upsilon_{\infty}(B_s(w))}\int_{B_s(w)}f_{\infty}d\upsilon_{\infty}\right| <\epsilon, \]
\[\frac{\upsilon_{\infty}(B_s(w) \cap K_{\infty})}{\upsilon_{\infty}(B_s(w))} \ge 1- \epsilon \]
and 
\[\limsup_{i \rightarrow \infty}\left| \frac{1}{\upsilon_i(B_s(w_i))}\int_{B_s(w_i)}1_{K_i}d\upsilon_i - \frac{1}{\upsilon_{\infty}(B_s(w))}\int_{B_s(w)}1_{K_{\infty}}d\upsilon_{\infty} \right| < \epsilon\]
for every $0 < s < t_w$ and $w_i \rightarrow w$.
By Lemma \ref{cov}, there exists a pairwise disjoint collection $\{\overline{B}_{r_i}(x_i)\}_i$ such that $x_i \in K_{\infty}$, $r_i << t_{x_i}$,
and that
$K_{\infty} \setminus \bigcup _{i=1}^N\overline{B}_{r_i}(x_i)\subset \bigcup_{i=N+1}^{\infty}\overline{B}_{5r_i}(x_i)$
for every $N$.
We take $N$ satisfying 
$\sum_{i=N+1}^{\infty}\upsilon_{\infty}(B_{r_i}(x_i)) < \epsilon.$
Then, we have 
$\sum_{i=N+1}^{\infty}\upsilon_{\infty}(B_{5r_i}(x_i)) < 2^{5K(1)}\epsilon.$
We take $x_i(j) \in Z_j$ satisfying $x_i(j) \rightarrow x_i$.
Then we have 
\begin{align}
\int_{K_{\infty}}f_{\infty}d\upsilon_{\infty}&=\sum_{i=1}^N\int_{B_{r_i}(x_i)\cap K_{\infty}}f_{\infty}d\upsilon_{\infty} \pm \int_{\bigcup_{i=N+1}^{\infty}\overline{B}_{5r_i}(x_i)}|f_{\infty}|d\upsilon_{\infty} \\
&=\sum_{i=1}^N\int_{B_{r_i}(x_i)}f_{\infty}d\upsilon_{\infty} \pm \Psi(\epsilon; K(1), L) \\
&=\sum_{i=1}^N\int_{B_{r_i}(x_i(j))}f_{j}d\upsilon_{j} \pm \Psi(\epsilon; K(1), L) \\
&=\sum_{i=1}^N\int_{B_{r_i}(x_i(j))\cap K_j}f_{j}d\upsilon_{j} \pm \Psi(\epsilon; K(1), L) \\
&=\int_{K_j} f_j d\upsilon_j \pm \left(\int_{K_j \setminus \bigcup_{i=1}^N\overline{B}_{r_i}(x_i(j))}|f_j|d\upsilon_j + \Psi(\epsilon; K(1),  L)\right).
\end{align}
for every sufficiently large $j$.
On the other hand, by Proposition \ref{compact2}, Proposition \ref{compact3} and Proposition \ref{sup}, we have 
\begin{align}
\limsup_{j \rightarrow \infty}\int_{K_j \setminus \bigcup_{i=1}^N\overline{B}_{r_i}(x_i(j))}|f_j|d\upsilon_j &\le
L \limsup_{j \rightarrow \infty}\upsilon_j(K_j \setminus \bigcup_{i=1}^NB_{r_i}(x_i(j))) \\
&\le L\upsilon_{\infty}(K_{\infty} \setminus \bigcup_{i=1}^NB_{r_i}(x_i)) \\
&\le \Psi(\epsilon; K(1), L). 
\end{align}
Therefore, we have the assertion.
\end{proof}
\begin{remark}Proposition \ref{lap} also follows from Example \ref{ex1}, \ref{ex3} and Proposition \ref{10103} directly.
\end{remark}
Next corollary follows from Proposition \ref{10103} directly.
\begin{corollary}\label{ann}
Let $R,  r_i$ be  positive numbers, $N$ a positive integer, $\{z_j\}_{1 \le j \le N}$ points in $Y$ and $f_i$ a Borel function on $B_R(x_j) (1 \le i \le \infty)$ satisfying
$\sup _i |f_i|_{L^{\infty}(B_R(x_i))} + |f_{\infty}|_{L^{\infty}(B_R(x_{\infty}))} < \infty$.
We assume that for a.e. $w \in B_R(x_{\infty}) \setminus \bigcup_{i=1}^{N}B_{r_i}(z_i)$, 
$\{f_i\}_i$ have infinitesimal convergence property to $f_{\infty}$ at $w$.
Then, we have
\[\lim_{j \rightarrow \infty}\int_{B_R(x_{j}) \setminus \bigcup_{i=1}^{N}B_{r_i}(z_i(j))}f_j d\upsilon_j=\int_{B_R(x_{\infty}) \setminus \bigcup_{i=1}^{N}B_{r_i}(z_i)}f_{\infty}d\upsilon_{\infty}\]
for every $z_i(j) \rightarrow z_i$.
\end{corollary}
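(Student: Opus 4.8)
The plan is to reduce the statement to the compact case, which is precisely Proposition \ref{10103}, via an exhaustion of $B_R(x_\infty)\setminus\bigcup_{i=1}^N B_{r_i}(z_i)$ by compact sets.

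First fix $L\ge 1$ with $\sup_i|f_i|_{L^\infty(B_R(x_i))}+|f_\infty|_{L^\infty(B_R(x_\infty))}+\upsilon_\infty(B_R(x_\infty))\le L$, and choose an increasing sequence $\rho_m\uparrow R$ of radii with $\upsilon_\infty(\partial B_{\rho_m}(x_\infty))=0$ for every $m$ (possible since only countably many radii are $\upsilon_\infty$-bad and the good ones are dense). Put $K_\infty^m=\overline{B}_{\rho_m}(x_\infty)\setminus\bigcup_{i=1}^N B_{r_i}(z_i)$ and $K_i^m=\overline{B}_{\rho_m}(x_i)\setminus\bigcup_{i'=1}^N B_{r_{i'}}(z_{i'}(i))$. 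Each $K_\infty^m$ is a closed ball in the proper space with open sets removed, hence compact; the sets $K_\infty^m$ increase to $B_R(x_\infty)\setminus\bigcup_i B_{r_i}(z_i)$, so by dominated convergence $\int_{K_\infty^m}f_\infty\,d\upsilon_\infty\to\int_{B_R(x_\infty)\setminus\bigcup_i B_{r_i}(z_i)}f_\infty\,d\upsilon_\infty$. On the $i$-side, for each $m$,
\[\left|\int_{B_R(x_i)\setminus\bigcup_{i'} B_{r_{i'}}(z_{i'}(i))}f_i\,d\upsilon_i-\int_{K_i^m}f_i\,d\upsilon_i\right|\le L\,\upsilon_i\bigl(B_R(x_i)\setminus\overline{B}_{\rho_m}(x_i)\bigr),\]
and $\limsup_{i\to\infty}\upsilon_i(B_R(x_i)\setminus\overline{B}_{\rho_m}(x_i))\le\upsilon_\infty(B_R(x_\infty))-\upsilon_\infty(B_{\rho_m}(x_\infty))=:\varepsilon_m$ by the definition of measured Gromov--Hausdorff convergence together with Proposition \ref{sup}, with $\varepsilon_m\to 0$ as $m\to\infty$. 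So it suffices to prove, for each fixed $m$, that $\int_{K_i^m}f_i\,d\upsilon_i\to\int_{K_\infty^m}f_\infty\,d\upsilon_\infty$.

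For fixed $m$ I apply Proposition \ref{10103} to $\{K_i^m\}$. The asymptotic inclusion $\limsup_{i\to\infty}K_i^m\subset K_\infty^m$ follows from Example \ref{d1} (which gives $\limsup_i\overline{B}_{\rho_m}(x_i)\subset\overline{B}_{\rho_m}(x_\infty)$) together with $N$ successive applications of Proposition \ref{compact3}, removing the balls $B_{r_{i'}}(z_{i'}(i))$ one at a time. It remains to check that $\{1_{K_i^m}\}_i$ and $\{f_i\}_i$ have the infinitesimal convergence property to $1_{K_\infty^m}$, $f_\infty$ at $\upsilon_\infty$-a.e. point of $K_\infty^m$; for $\{f_i\}_i$ this is the hypothesis of the corollary, since $K_\infty^m\subset B_R(x_\infty)\setminus\bigcup_i B_{r_i}(z_i)$. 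For the indicators, at any $w\in K_\infty^m$ with $w\notin\partial B_{\rho_m}(x_\infty)\cup\bigcup_i\partial B_{r_i}(z_i)$ one has $\overline{x_\infty,w}<\rho_m$ and $\overline{z_i,w}>r_i$ for all $i$, so for every $w_i\to w$ and all sufficiently small $t$ one has $1_{K_i^m}\equiv 1$ on $B_t(w_i)$ for large $i$ and $1_{K_\infty^m}\equiv1$ on $B_t(w)$; hence the relevant difference vanishes identically and the infinitesimal convergence property at $w$ holds (this is the mechanism of Example \ref{ex3}). The exceptional set $K_\infty^m\cap(\partial B_{\rho_m}(x_\infty)\cup\bigcup_i\partial B_{r_i}(z_i))$ is $\upsilon_\infty$-null: $\upsilon_\infty(\partial B_{\rho_m}(x_\infty))=0$ by the choice of $\rho_m$, and the distance spheres $\partial B_{r_i}(z_i)$ are $\upsilon_\infty$-negligible here (in the Ricci limit applications this is contained in the rectifiability statement of Theorem \ref{7}). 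Thus Proposition \ref{10103} gives $\int_{K_i^m}f_i\,d\upsilon_i\to\int_{K_\infty^m}f_\infty\,d\upsilon_\infty$, and combining with the previous paragraph completes the proof.

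The only point requiring any care is the last one — the behaviour of the cut-off indicator functions on the distance spheres — which is exactly why the outer radii $\rho_m$ are chosen generically and why the negligibility of distance spheres is invoked for the inner balls; everything else is a routine exhaustion argument on top of Proposition \ref{10103}.
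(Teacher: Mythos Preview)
Your argument is correct, but it is more elaborate than necessary. The paper records this corollary as an immediate consequence of Proposition~\ref{10103}: one simply takes $K_i=B_R(x_i)\setminus\bigcup_{i'}B_{r_{i'}}(z_{i'}(i))$ and $K_\infty=\overline{B}_R(x_\infty)\setminus\bigcup_{i'}B_{r_{i'}}(z_{i'})$, which is already compact (a closed ball in a proper space with finitely many open balls removed). The asymptotic inclusion $\limsup_i K_i\subset K_\infty$ is Example~\ref{d1} plus repeated use of Proposition~\ref{compact3}, and Example~\ref{ex3} gives the infinitesimal convergence property of $\{1_{K_i}\}$ off the spheres $\partial B_R(x_\infty)\cup\bigcup_{i'}\partial B_{r_{i'}}(z_{i'})$, which are $\upsilon_\infty$-null. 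Since $K_\infty$ differs from the open set in the statement only by a subset of $\partial B_R(x_\infty)$, the two integrals agree.

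Your exhaustion by $\overline{B}_{\rho_m}(x_\infty)$ with generic $\rho_m$ is designed to avoid invoking $\upsilon_\infty(\partial B_R(x_\infty))=0$, but you still need $\upsilon_\infty(\partial B_{r_{i'}}(z_{i'}))=0$ for the inner balls, whose radii are fixed by hypothesis. Once you grant that distance spheres are $\upsilon_\infty$-null (which holds in the present geodesic doubling setting by the annulus estimate used in the proof of Proposition~\ref{99990}, citing \cite[Lemma~3.3]{co-mi5}), the exhaustion buys nothing and you may as well apply Proposition~\ref{10103} to $K_\infty$ directly. Note also that Theorem~\ref{7} is not the right reference for sphere-negligibility; it concerns rectifiability of $\partial B_R(x)\setminus C_x$, which is a different (and deeper) statement.
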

We end this subsection by giving the following proposition:
\begin{proposition}\label{lpoij}
Let $A_i$ be a Borel subset of $B_R(x_i)$ and $w \in \mathrm{Leb}A_{\infty}$.
We assume that $\{1_{A_i}\}_i$ has infinitesimal convergence property to $1_{A_{\infty}}$ at $w$.
Then  $\{1_{A_i}\}$ has infinitesimal constant convergence property to $1_{A_{\infty}}$ at $w$.
\end{proposition}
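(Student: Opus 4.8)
The plan is to use the extra hypothesis $w \in \mathrm{Leb}(A_{\infty})$ to show that, for small $t$, the normalized average of $1_{A_{\infty}}$ on $B_t(w)$ is close to $1$, then to transport this closeness to the approximating side via the assumed infinitesimal convergence property; once both densities are pinned near $1$, the $L^1$-type quantities in the definition of infinitesimal constant convergence collapse to a small number. Concretely, write $f_i = 1_{A_i}$ and $f_{\infty} = 1_{A_{\infty}}$, and for $t$ small enough that $B_t(w) \subset B_R(x_{\infty})$ (and $B_t(w_i) \subset B_R(x_i)$ for $i$ large) put
\[
c^t = \frac{1}{\upsilon_{\infty}(B_t(w))}\int_{B_t(w)} 1_{A_{\infty}}\, d\upsilon_{\infty} = \frac{\upsilon_{\infty}(B_t(w)\cap A_{\infty})}{\upsilon_{\infty}(B_t(w))}, \qquad d_i^t = \frac{\upsilon_i(B_t(w_i)\cap A_i)}{\upsilon_i(B_t(w_i))} .
\]
The denominators are positive by the uniform doubling assumption in force throughout this subsection. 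Since $w$ is a Lebesgue density point of $A_{\infty}$ we have $c^t \to 1$ as $t \to 0$, and the infinitesimal convergence property of $\{1_{A_i}\}$ at $w$ says precisely that for every $\epsilon' > 0$ there is $r > 0$ with $\limsup_{i \to \infty}|d_i^t - c^t| \le \epsilon'$ for all $0 < t < r$ and all $w_i \to w$; hence for $t$ small and $i$ large both $c^t$ and $d_i^t$ lie close to $1$.

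The second ingredient is the elementary identity: on any finite measure space $(X,\mu)$ with $\mu(X) > 0$, for a Borel set $A \subset X$ and $0 \le c \le 1$,
\[
\frac{1}{\mu(X)}\int_X |1_A - c|\, d\mu = (1-c)\,\frac{\mu(X\cap A)}{\mu(X)} + c\Bigl(1 - \frac{\mu(X\cap A)}{\mu(X)}\Bigr) \le (1-c) + \Bigl(1 - \frac{\mu(X\cap A)}{\mu(X)}\Bigr).
\]
Applying this with $(X,\mu) = (B_t(w_i),\upsilon_i)$, $A = A_i$, $c = c^t$ bounds the first quantity in the definition of infinitesimal constant convergence by $(1-c^t) + (1-d_i^t)$, and applying it with $(X,\mu) = (B_t(w),\upsilon_{\infty})$, $A = A_{\infty}$, $c = d_i^t$ bounds the second quantity by $(1-d_i^t) + (1-c^t)$. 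So both quantities are controlled by the single expression $(1-c^t) + (1-d_i^t)$.

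To finish, given $\epsilon > 0$ I would choose $r$ so small that $c^t \ge 1 - \epsilon/4$ for $0 < t < r$ (density point) and, shrinking $r$ if necessary, that $\limsup_{i\to\infty}|d_i^t - c^t| \le \epsilon/8$ for $0 < t < r$ and every $w_i \to w$ (infinitesimal convergence property), also shrinking $r$ so that the relevant balls stay inside the domains. For such $t$ one has $d_i^t \le 1$ always, while $d_i^t \ge c^t - \epsilon/4 \ge 1 - \epsilon/2$ for $i$ large, so $(1-c^t)+(1-d_i^t) \le 3\epsilon/4 < \epsilon$ for $i$ large; taking $\limsup$ over $i$ in the two displayed bounds yields exactly the two estimates required for infinitesimal constant convergence. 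I do not expect a real obstacle here: the argument is entirely quantitative once the Lebesgue density hypothesis is used, and the only care needed is the routine bookkeeping of keeping $B_t(w)$ and $B_t(w_i)$ inside $B_R(x_{\infty})$ and $B_R(x_i)$ and of the positivity of $\upsilon_i(B_t(w_i))$.
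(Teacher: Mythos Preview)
Your proof is correct and follows essentially the same approach as the paper's: both use the Lebesgue density hypothesis to pin $c^t$ near $1$, transfer this to $d_i^t$ via the assumed infinitesimal convergence, and then exploit the elementary computation of $\int |1_A - c|$ to bound the two $L^1$-quantities. The only cosmetic difference is that the paper first replaces the external constant by the internal density (at cost $|c^t - d_i^t|$) and then bounds by $2(1 - d_i^t)$ (respectively $2(1 - c^t)$), whereas you apply the identity directly and bound by $(1-c^t)+(1-d_i^t)$; these are the same estimate up to harmless constants.
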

\begin{proof}
We fix $\epsilon >0$ and take a sequence $w_i \rightarrow w$.
There exists $r > 0$ such that 
\[\frac{\upsilon_{\infty}(B_t(w) \cap A_{\infty})}{\upsilon_{\infty}(B_t(w))} \ge 1-\epsilon \]
and
\[\limsup_{i \rightarrow \infty}\left| \frac{1}{\upsilon_i(B_t(w_i))}\int_{B_t(w_i)}1_{A_i}d\upsilon_i
-\frac{1}{\upsilon_{\infty}(B_t(w_{\infty}))}
\int_{B_t(w_{\infty})}1_{A_{\infty}}d\upsilon_{\infty} \right| < \epsilon\]
for every $0 < t < r$.
We fix $ 0 < t < r$. 
Then we have 
\begin{align}
&\frac{1}{\upsilon_i(B_t(w_i))}\int_{B_t(w_i)}\left| 1_{A_i}
-\frac{1}{\upsilon_{\infty}(B_t(w_{\infty}))}
\int_{B_t(w_{\infty})}1_{A_{\infty}}d\upsilon_{\infty}\right| d\upsilon_i \\
&\le \frac{1}{\upsilon_i(B_t(w_i))}\int_{B_t(w_i)}\left| 1_{A_i}
-\frac{1}{\upsilon_{\infty}(B_t(w_{\infty}))}
\int_{B_t(w_{i})}1_{A_{i}}d\upsilon_{\infty}\right| d\upsilon_i + \epsilon \\
&=\frac{1}{\upsilon_i(B_t(w_i))}\int_{B_t(w_i)}\left| 1_{A_i}
-\frac{\upsilon_i(A_i)}{\upsilon_i(B_t(w_i))}\right| d\upsilon_i + \epsilon \\
&=\frac{1}{\upsilon_i(B_t(w_i))}\int_{A_i}\frac{\upsilon_i(B_t(w_i) \setminus A_i)}{\upsilon_i(B_t(w_i))}d\upsilon_i + 
\frac{1}{\upsilon_i(B_t(w_i))}\int_{B_t(w_i) \setminus A_i}\frac{\upsilon_i(A_i)}{\upsilon_i(B_t(w_i))}d\upsilon_i + \epsilon \\
&\le 2\frac{\upsilon_i(B_t(w_i) \setminus A_i)}{\upsilon_i(B_t(w_i))}+\epsilon < 3\epsilon+2\epsilon < 5\epsilon.
\end{align}
for every sufficiently large $i$.
Similarly, we have 
\begin{align}
&\frac{1}{\upsilon_{\infty}(B_t(w_{\infty}))}\int_{B_t(w_{\infty})}\left| 1_{A_{\infty}}
-\frac{1}{\upsilon_{i}(B_t(w_{i}))}
\int_{B_t(w_{i})}1_{A_{i}}d\upsilon_{i}\right| d\upsilon_{\infty} \\
&\le \frac{1}{\upsilon_{\infty}(B_t(w_{\infty}))}\int_{B_t(w_{\infty})}\left| 1_{A_{\infty}}
-\frac{1}{\upsilon_{\infty}(B_t(w_{\infty}))}
\int_{B_t(w_{\infty})}1_{A_{\infty}}d\upsilon_{\infty}\right| d\upsilon_{\infty} + \epsilon \\
&\le 2\frac{\upsilon_{\infty}(B_t(w_{\infty}) \setminus A_{\infty})}{\upsilon_{\infty}(B_t(w_{\infty}))}+\epsilon < 3\epsilon
\end{align}
for every sufficiently large $i$.
Thus, we have the assertion.
\end{proof}
\subsection{Convergence of differential of Lipschitz functions}
The purpose of this subsection is to give a definition of convergence: $df_i \rightarrow df_{\infty}$.
See Definition \ref{063} or Definition \ref{Lip}.
Throughout this subsection, we fix the following situation:
Let $\{(M_i, m_i, \underline{\mathrm{vol}})\}_i$ be a sequence of pointed, connected $n$-dimensional complete Riemannian manifolds with 
renormalized measure satisfying 
$\mathrm{Ric}_{M_i} \ge -(n-1)$,
$(Y, y, \upsilon)$ a Ricci limit space of $\{(M_i, m_i, \underline{\mathrm{vol}})\}_i$, $R$ a positive number, 
$x_i$ a point in $M_i$, $x_{\infty}$ a point in $Y$,
$f_i$ a Lipschitz function on $B_R(x_i)$ and $f_{\infty}$ a Lipschitz function on $B_R(x_{\infty})$.
We assume that $\sup_i (\mathbf{Lip}f_i + |f_i|_{L^{\infty}}) < \infty$ and that $x_i \rightarrow x_{\infty}$.
\

For $w \in B_R(x_{\infty})$, we say that $f_i$ \textit{converges to} $f_{\infty}$ at $w$ if 
$f_i(w_i) \rightarrow f_{\infty}(w)$ holds for every $w_i \rightarrow w$.
We denote it by $f_i \rightarrow f_{\infty}$ at $w$.
It is easy to check that the following conditions are equivalent:
\begin{enumerate}
\item $\{f_i\}$ has infinitesimal convergence property to $f_{\infty}$ at $w$.
\item $f_i \rightarrow f_{\infty}$ at $w$.
\item $\{f_i\}$  has infinitesimal constant convergence property to $f_{\infty}$ at $w$.
\end{enumerate}  
We shall consider a convergence of energy of Lipschitz functions.
See also \cite[Corollary $10. 17$]{ch2}.
\begin{definition}[Infinitesimal upper semicontinuity of energy]
We say that $\{f_i\}_i$ has \textit{infinitesimal upper semicontinuity of energy to $f_{\infty}$ at} $w \in B_R(x_{\infty})$ if 
for every $\epsilon > 0$ and $w_i \rightarrow w$, there exists $r > 0$ such that  
\[\limsup_{i \rightarrow \infty}\frac{1}{\underline{\mathrm{vol}}\,B_t(w_i)}\int_{B_t(w_i)}(\mathrm{Lip}f_i)^2d\underline{\mathrm{vol}}
\le \frac{1}{\upsilon (B_t(w))}\int_{B_t(w)}(\mathrm{Lip}f_{\infty})^2d\upsilon + \epsilon \]
for every $0 < t < r$. 
\end{definition}
By the definition, if $\{(\mathrm{Lip}f_i)^2\}_i$ has infinitesimal convergence property to $(\mathrm{Lip}f_{\infty})^2$ at $w$, then $\{f_i\}_i$ has infinitesimal upper semicontinuity of energy to $f_{\infty}$ at $w$.
Next, we shall give a definition of convergence of differential of Lipschitz functions:
\begin{definition}[Convergence of differential of Lipschitz functions]\label{Lip}
We say that $df_i$ converges to $df_{\infty}$ at $w \in B_R(x_{\infty})$ if 
$\{\langle dr_{z_i}, df_i \rangle\}_i$ has infinitesimal convergence property to $\langle df_{\infty}, dg_{\infty}\rangle$ at $w$ for every $z_i \rightarrow z \in Y$ and  $\{f_i\}_i$ has infinitesimal upper semicontinuity of energy to $f_{\infty}$ at $w$.
Then we denote it by $df_i \rightarrow df_{\infty}$ at $w$. Moreover, for a subset $A$ of $B_R(x_{\infty})$, if $f_i \rightarrow f_{\infty}$ and $df_i \rightarrow df_{\infty}$ at every $a \in A$, then we denote it by $(f_i, df_i) \rightarrow (f_{\infty}, df_{\infty})$ on $A$. 
\end{definition}
\begin{proposition}\label{10104}
For every $w_i \rightarrow w \in Y$, we have $(r_{w_i}, dr_{w_i}) \rightarrow (r_w, dr_w)$ on $Y$.
\end{proposition}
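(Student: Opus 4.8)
The plan is to unwind Definition \ref{Lip}: by definition $(r_{w_i}, dr_{w_i}) \to (r_w, dr_w)$ on $Y$ means $r_{w_i} \to r_w$ and $dr_{w_i} \to dr_w$ at every $a \in Y$, i.e. for each $a \in Y$ one must check (i) $r_{w_i} \to r_w$ at $a$; (ii) $\{\langle dr_{z_i}, dr_{w_i}\rangle\}_i$ has infinitesimal convergence property to $\langle dr_z, dr_w\rangle$ at $a$ for every $z_i \to z \in Y$; and (iii) $\{r_{w_i}\}_i$ has infinitesimal upper semicontinuity of energy to $r_w$ at $a$. Point (i) is immediate from the definition of pointed measured Gromov--Hausdorff convergence, since $r_{w_i}(a_i) = \overline{w_i, a_i} \to \overline{w, a} = r_w(a)$ whenever $a_i \to a$ and $w_i \to w$. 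Point (iii) is also immediate: on each smooth $M_i$ the distance function $r_{w_i}$ is smooth with $|\nabla r_{w_i}| \equiv 1$ off the $\underline{\mathrm{vol}}$-null set $C_{w_i} \cup \{w_i\}$, so $(\mathrm{Lip}\,r_{w_i})^2 = 1$ $\underline{\mathrm{vol}}$-a.e.; and by Theorem \ref{14} applied with $f = r_w$ (noting $\upsilon(C_w) = 0$) one has $\langle dr_w, dr_w\rangle = 1$ $\upsilon$-a.e., hence $(\mathrm{Lip}\,r_w)^2 = 1$ $\upsilon$-a.e. Thus for every $t$ both averages in the energy inequality equal $1$, so (iii) holds (with equality).

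The content is (ii). When $a \notin C_z \cup C_w \cup \{z, w\}$ this is exactly Proposition \ref{10102}, which even gives the stronger infinitesimal constant convergence property. For an arbitrary $a$ I would reduce to this case by an averaging argument based on Proposition \ref{10103}. Fix $t > 0$, put $K_i = \overline{B}_t(a_i) \subset M_i$ and $K_\infty = \overline{B}_t(a) \subset Y$; the latter is compact as $Y$ is proper, and since $Y$ is geodesic one checks $\limsup_{i\to\infty} K_i \subset K_\infty$. Suppose first $\upsilon(\partial B_t(a)) = 0$, which holds for all but countably many $t$. Then $\upsilon$-a.e. $w' \in K_\infty$ lies in the open ball $B_t(a)$ and satisfies $w' \notin C_z \cup C_w \cup \{z, w\}$; for such $w'$ one has $B_s(w_i') \subset \overline{B}_t(a_i)$ for all small $s$ and large $i$, so $\{1_{K_i}\}_i$ has infinitesimal convergence property to $1_{K_\infty}$ at $w'$ trivially, while $\{\langle dr_{z_i}, dr_{w_i}\rangle\}_i$ has it to $\langle dr_z, dr_w\rangle$ at $w'$ by Proposition \ref{10102}. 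Applying Proposition \ref{10103} both to $f_i = \langle dr_{z_i}, dr_{w_i}\rangle$ and to $f_i \equiv 1$, and using that spheres are $\underline{\mathrm{vol}}$-null in $M_i$, one obtains
\[
\lim_{i\to\infty}\frac{1}{\underline{\mathrm{vol}}\,B_t(a_i)}\int_{B_t(a_i)}\langle dr_{z_i}, dr_{w_i}\rangle\, d\underline{\mathrm{vol}} = \frac{1}{\upsilon(B_t(a))}\int_{B_t(a)}\langle dr_z, dr_w\rangle\, d\upsilon .
\]

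It remains to drop the assumption $\upsilon(\partial B_t(a)) = 0$. Given an arbitrary $t > 0$ I would take radii $t_k \uparrow t$ with $\upsilon(\partial B_{t_k}(a)) = 0$; since $\upsilon(B_t(a)) = \lim_k \upsilon(B_{t_k}(a))$, and on the manifold side $\underline{\mathrm{vol}}\,B_t(a_i) \to \upsilon(B_t(a))$ and $\underline{\mathrm{vol}}\,B_{t_k}(a_i) \to \upsilon(B_{t_k}(a))$, the contribution of the annuli $B_t(a_i) \setminus B_{t_k}(a_i)$ to the normalized integral is controlled by $\underline{\mathrm{vol}}\,B_t(a_i) - \underline{\mathrm{vol}}\,B_{t_k}(a_i)$ (using $|\langle dr_{z_i}, dr_{w_i}\rangle| \le 1$) and tends to $0$ as $k \to \infty$, uniformly in $i$; so the displayed identity passes to every $t > 0$. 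In particular the limit superior of the difference of the two normalized integrals is $0$ for every $t$, which is (ii). The only delicate point is this last bookkeeping over radii with $\upsilon(\partial B_t(a)) > 0$; everything else is a direct appeal to Propositions \ref{10102} and \ref{10103} and to Theorem \ref{14}.
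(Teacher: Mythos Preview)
Your proof is correct and follows the same route as the paper, which simply cites Proposition~\ref{10102} and Proposition~\ref{10103}; you have just spelled out the details the paper leaves implicit. One small imprecision: in the annular step the quantity $\underline{\mathrm{vol}}\,B_t(a_i)-\underline{\mathrm{vol}}\,B_{t_k}(a_i)$ need not go to $0$ uniformly in $i$ as $k\to\infty$, but what you actually need (and what your argument yields) is that its $\limsup_{i\to\infty}$ equals $\upsilon(B_t(a))-\upsilon(B_{t_k}(a))\to 0$, which suffices. Also, your appeal to Theorem~\ref{14} for $(\mathrm{Lip}\,r_w)^2=1$ a.e.\ is valid but heavier than necessary; the identity $\mathrm{Lip}\,r_w\equiv 1$ on a proper geodesic space (with more than one point) already gives this.
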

\begin{proof}
It follows from Proposition \ref{10102} and Proposition \ref{10103} directly.
\end{proof}
The following theorem is the main result in this subsection:
\begin{theorem}\label{10105}
Let $g_i$ be a Lipschitz function on $B_R(x_i)$ and $A$ a Borel subset of $B_R(x_{\infty})$.
We assume that $df_i \rightarrow df_{\infty}$  and $dg_i \rightarrow dg_{\infty}$ on $A$.
Then, for a.e. $w \in A$, the sequence $\{\langle df_i, dg_i\rangle\}_i$ 
has infinitesimal constant convergence property to $\langle df_{\infty}, dg_{\infty}\rangle$ at $w$.
\end{theorem}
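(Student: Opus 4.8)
The plan is to reduce the statement to the case where $w$ lies in one of the radially rectifiable pieces of Theorem \ref{7}, on which $df_{\infty}$ and $dg_{\infty}$ are, infinitesimally in $L^2$-average, constant-coefficient linear combinations of differentials of distance functions, and then transport that approximation to the manifolds using the hypotheses $df_i\to df_{\infty}$, $dg_i\to dg_{\infty}$. First I would fix an auxiliary point $x\in Y$ and take collections $\{C^x_{k,i}\}$, $\{x^l_{k,i}\}$ as in Theorem \ref{7} (with $x^1_{k,i}:=x$); since $\upsilon$ of a cut locus vanishes, after discarding a $\upsilon$-null subset we may assume each $C^x_{k,i}$ is disjoint from $\bigcup_l C_{x^l_{k,i}}$ and from $\{x^l_{k,i}\}$, and (property $3$ of Theorem \ref{7}) that each point of $C^x_{k,i}$ is a $\upsilon$-density point of it. Because $\upsilon(Y\setminus\bigcup_{k,i}C^x_{k,i})=0$, it suffices to establish infinitesimal constant convergence of $\{\langle df_i,dg_i\rangle\}$ at a.e.\ $w\in A\cap C^x_{k,i}$ for each fixed $k,i$. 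On $C^x_{k,i}$ the $k$ distance functions $r_x,r_{x^2_{k,i}},\dots,r_{x^k_{k,i}}$ have differentials spanning $T^*_zY$ for a.e.\ $z$ (this is recorded in the proof of Theorem \ref{24}), so Lemma \ref{23}, applied to $f_{\infty}$ and to $g_{\infty}$ (first extended to Lipschitz functions on $Y$), yields for a.e.\ $w\in C^x_{k,i}$ real numbers $b_1(w),\dots,b_k(w)$ and $c_1(w),\dots,c_k(w)$ with
\[\lim_{r\to 0}\frac{1}{\upsilon(B_r(w))}\int_{B_r(w)}\Bigl|df_{\infty}-\sum_{j}b_j(w)\,dr_{x^j_{k,i}}\Bigr|^2 d\upsilon=0\]
and the analogous identity for $g_{\infty}$ with the $c_j(w)$.

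Now fix such a $w\in A\cap C^x_{k,i}$, pick $x^j_{k,i}(l)\in M_l$ with $x^j_{k,i}(l)\to x^j_{k,i}$, and write $B_i:=\sum_j b_j(w)\,dr_{x^j_{k,i}(i)}$, $C_i:=\sum_l c_l(w)\,dr_{x^l_{k,i}(i)}$. The key step is to show that $\{df_i\}$ is infinitesimally $L^2$-close to $B_i$ at $w$, i.e.\ for every $\epsilon>0$ there is $r>0$ with $\limsup_i \tfrac{1}{\underline{\mathrm{vol}}\,B_t(w_i)}\int_{B_t(w_i)}|df_i-B_i|^2 d\underline{\mathrm{vol}}\le\epsilon$ for all $0<t<r$ and $w_i\to w$. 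Expanding
\[|df_i-B_i|^2=(\mathrm{Lip}f_i)^2-2\sum_j b_j(w)\langle df_i,dr_{x^j_{k,i}(i)}\rangle+\sum_{j,l}b_j(w)b_l(w)\langle dr_{x^j_{k,i}(i)},dr_{x^l_{k,i}(i)}\rangle\]
and integrating over $B_t(w_i)$, the hypothesis $df_i\to df_{\infty}$ handles each term: infinitesimal upper semicontinuity of energy bounds $\limsup_i$ of the first term by $\tfrac{1}{\upsilon(B_t(w))}\int_{B_t(w)}(\mathrm{Lip}f_{\infty})^2 d\upsilon+\Psi(\epsilon)$, infinitesimal convergence of $\{\langle dr_{x^j_{k,i}(i)},df_i\rangle\}$ gives the limit of the second term, and Proposition \ref{10102} gives the limit of the third. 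Hence $\limsup_i$ of the average is at most $\tfrac{1}{\upsilon(B_t(w))}\int_{B_t(w)}|df_{\infty}-\sum_j b_j(w)dr_{x^j_{k,i}}|^2 d\upsilon+\Psi(\epsilon)$, which tends to $\Psi(\epsilon)$ as $t\to0$ by the Lemma \ref{23} estimate. The identical argument applies to $\{dg_i\}$ and $C_i$ with the $c_j(w)$.

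Next, using $\langle df_i,dg_i\rangle-\langle B_i,C_i\rangle=\langle df_i-B_i,dg_i-C_i\rangle+\langle df_i-B_i,C_i\rangle+\langle B_i,dg_i-C_i\rangle$ and Cauchy--Schwarz on $B_t(w_i)$, together with $\sup_i(|B_i|+|C_i|)\le C(w)$ and the previous $L^2$-closeness, the $L^1$-average of $\langle df_i,dg_i\rangle-P_i$ over $B_t(w_i)$ is $\le\Psi(\epsilon;w)$, where $P_i:=\langle B_i,C_i\rangle=\sum_{j,l}b_j(w)c_l(w)\langle dr_{x^j_{k,i}(i)},dr_{x^l_{k,i}(i)}\rangle$. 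By Proposition \ref{10102} each $\langle dr_{x^j_{k,i}(i)},dr_{x^l_{k,i}(i)}\rangle$ has infinitesimal constant convergence property to $\langle dr_{x^j_{k,i}},dr_{x^l_{k,i}}\rangle$ at $w$ (recall $w\notin C_{x^j_{k,i}}\cup\{x^j_{k,i}\}$), and a finite constant-coefficient sum of such sequences again has infinitesimal constant convergence property (triangle inequality for the $L^1$-averages and for the limiting constants), so $\{P_i\}$ has it to $P_{\infty}:=\sum_{j,l}b_j(w)c_l(w)\langle dr_{x^j_{k,i}},dr_{x^l_{k,i}}\rangle$. Finally $\tfrac{1}{\upsilon(B_t(w))}\int_{B_t(w)}|P_{\infty}-\langle df_{\infty},dg_{\infty}\rangle|d\upsilon\to0$ as $t\to0$, again by the two Lemma \ref{23} approximations and Cauchy--Schwarz.

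Combining these three facts through the triangle inequality—for the manifold side, $\langle df_i,dg_i\rangle$ is $L^1$-average close to $P_i$, which is infinitesimally constantly close to $P_{\infty}$, which is infinitesimally close in average to $\langle df_{\infty},dg_{\infty}\rangle$; symmetrically for the limit side, using $\langle df_{\infty},dg_{\infty}\rangle$ close to $P_{\infty}$ close to the averages of $P_i$ close to the averages of $\langle df_i,dg_i\rangle$—gives the infinitesimal constant convergence of $\{\langle df_i,dg_i\rangle\}$ to $\langle df_{\infty},dg_{\infty}\rangle$ at $w$, for a.e.\ $w\in A$. The main obstacle is the energy-expansion step of the second paragraph: one must match the three terms of $|df_i-B_i|^2$ precisely to the two ingredients of the definition of $df_i\to df_{\infty}$, and the decisive point is that infinitesimal upper semicontinuity of energy is exactly what supplies the one-sided bound on $\limsup_i\int(\mathrm{Lip}f_i)^2$, without which the $L^2$-approximation estimate cannot be closed.
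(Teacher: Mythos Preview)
Your proposal is correct and follows essentially the same route as the paper: decompose $A$ into the radially rectifiable pieces of Theorem \ref{7}, use Lemma \ref{23} to approximate $df_{\infty},dg_{\infty}$ at a.e.\ $w$ by constant-coefficient combinations of $dr_{x^j}$, expand $|df_i-B_i|^2$ and bound its $\limsup$ using precisely the two ingredients of the definition of $df_i\to df_{\infty}$ (energy upper semicontinuity for the $(\mathrm{Lip}f_i)^2$ term, infinitesimal convergence of $\langle df_i,dr_{x^j_{k,i}(i)}\rangle$ for the cross terms, and the distance--distance products via Lemma \ref{17}/Proposition \ref{10102}), and then pass from $\langle B_i,C_i\rangle$ to $\langle df_i,dg_i\rangle$ by Cauchy--Schwarz. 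The only cosmetic difference is that the paper invokes Lemma \ref{17} directly for the $\langle dr_{x^j(i)},dr_{x^l(i)}\rangle$ terms and writes out the resulting $L^1$-oscillation estimates explicitly, whereas you package this through Proposition \ref{10102}; the content is the same.
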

\begin{proof}
By Theorem \ref{7} and Lemma \ref{23}, there exist a collection of Borel set $A_j \subset A \setminus \{x_{\infty}\}$, positive integers $1 \le k_j \le n$
and points $x_1^j, \dots, x_{k_j}^j \in Y$ satisfying the following properties:
\begin{enumerate}
\item $\upsilon (A \setminus \bigcup_{j=1}^{\infty}A_j)=0$.
\item $A_j \subset Y \setminus \bigcup_{l=1}^{k_j}(C_{x_l^j} \cup \{x_l^j\})$. 
\item For every $w \in A_j$, there exists $a_1^j, \dots, a_{k_j}^j, b_1^j, \dots, b_{k_j}^j \in \mathbf{R}$ such that 
\[\lim_{r \rightarrow 0}\frac{1}{\upsilon (B_r(w))}\int_{B_r(w)}\left| df_{\infty}- d\left( \sum_{l=1}^{k_j}a_l^jr_{x_l^j}\right) \right|^2 + \left| dg_{\infty}- d\left( \sum_{l=1}^{k_j}b_l^jr_{x_l^j}\right) \right|^2d\upsilon = 0.\]
\end{enumerate}
We take $w \in A_j$ and $a_1^j, \dots, a_{k_j}^j, b_1^j, \dots, b_{k_j}^j \in \mathbf{R}$ satisfying equalities above.
We also take $L \ge 1$ satisfying $\sup_i (\mathbf{Lip}f_i + \mathbf{Lip}g_i) + \sum_{l=1}^{k_j}((a_l^j)^2 + (b_l^j)^2)\le L$.
There exists $\tau > 0$ such that $w \in \bigcup_{l=1}^{k_j}(D_{x_l^j}^{\tau} \setminus B_{\tau}(x_l^j))$.
We also take sequences $x_l^j(i) \rightarrow x_l^j$ and $w_i \rightarrow w$.
We fix $\epsilon > 0$ satisfying $\epsilon << \tau$.
Then, there exists $0 < r << \epsilon$ such that  
\[\frac{1}{\upsilon (B_t(w))}\int_{B_t(w)}\left| df_{\infty}- d\left( \sum_{l=1}^{k_j}a_l^jr_{x_l^j}\right) \right|^2 + \left| dg_{\infty}- d\left( \sum_{l=1}^{k_j}b_l^jr_{x_l^j}\right) \right|^2d\upsilon \le \epsilon, \]
\[\limsup_{i \rightarrow \infty}\frac{1}{\underline{\mathrm{vol}}\,B_t(w_i)}\int_{B_t(w_i)}(\mathrm{Lip}f_i)^2d\underline{\mathrm{vol}}
\le \frac{1}{\upsilon (B_t(w))}\int_{B_t(w)}(\mathrm{Lip}f_{\infty})^2d\upsilon + \epsilon, \]
\[\limsup_{i \rightarrow \infty}\frac{1}{\underline{\mathrm{vol}}\,B_t(w_i)}\int_{B_t(w_i)}(\mathrm{Lip}g_i)^2d\underline{\mathrm{vol}}
\le \frac{1}{\upsilon (B_t(w))}\int_{B_t(w)}(\mathrm{Lip}g_{\infty})^2d\upsilon + \epsilon, \]
\[\limsup_{i \rightarrow \infty}\left| \frac{1}{\underline{\mathrm{vol}}\,B_t(w_i)}\int_{B_t(w_i)} \langle df_i, dr_{x_l^j(i)}\rangle
d\underline{\mathrm{vol}}-\frac{1}{\upsilon (B_t(w))}\int_{B_t(w)}\langle df_{\infty}, dr_{x_l^j}\rangle d\upsilon \right|<\epsilon\]
and
\[\limsup_{i \rightarrow \infty}\left| \frac{1}{\underline{\mathrm{vol}}\,B_t(w_i)}\int_{B_t(w_i)} \langle dg_i, dr_{x_l^j(i)}\rangle
d\underline{\mathrm{vol}}-\frac{1}{\upsilon (B_t(w))}\int_{B_t(w)}\langle dg_{\infty}, dr_{x_l^j}\rangle d\upsilon \right|<\epsilon\]
for every $l$ and $0 < t < r$
We fix $ 0 < t < r$ below.
Thus, by Lemma \ref{17}, we have 
\[\frac{1}{\upsilon (B_t(w))}\int_{B_t(w)}\left| \langle df_{\infty}, dg_{\infty}\rangle-
\frac{1}{\upsilon (B_t(w))}\int_{B_t(w)}\left\langle d\left(\sum_{l=1}^{k_j}a_l^jr_{x_l^j}\right), d\left(\sum_{l=1}^{k_j}b_l^jr_{x_l^j}\right)\right\rangle d\upsilon \right| d\upsilon \le \Psi(\epsilon; L)\]
and
\begin{align}
&\frac{1}{\upsilon (B_t(w))}\int_{B_t(w)}\left| \langle df_{\infty}, dg_{\infty}\rangle -\frac{1}{\upsilon (B_t(w))}\int_{B_t(w)}\langle df_{\infty}, dg_{\infty}\rangle d\upsilon
\right| d\upsilon \\
&=\frac{1}{\upsilon (B_t(w))}\int_{B_t(w)}\biggl| \left\langle d\left(\sum_{l=1}^{k_j}a_l^jr_{x_l^j}\right), d\left(\sum_{l=1}^{k_j}b_l^jr_{x_l^j}\right)\right\rangle \\
&\ \ -\frac{1}{\upsilon (B_t(w))}\int_{B_t(w)}\left\langle d\left(\sum_{l=1}^{k_j}a_l^jr_{x_l^j}\right), d\left(\sum_{l=1}^{k_j}b_l^jr_{x_l^j}\right)\right\rangle d\upsilon \biggl| d\upsilon \pm \Psi(\epsilon; n, L) \\
&= \Psi(\epsilon; n, L).
\end{align}
On the other hand, for every sufficiently large $i$, we have 
\begin{align}
&\frac{1}{\underline{\mathrm{vol}}\,B_t(w_i)}\int_{B_t(w_i)}\left|df_i - d\left(\sum_{l=1}^{k_j}a_l^jr_{x_l^j(i)}\right)\right|^2d\underline{\mathrm{vol}} \\
&= \frac{1}{\underline{\mathrm{vol}}\,B_t(w_i)}\int_{B_t(w_i)}|df_i|^2d\underline{\mathrm{vol}}-
\sum_{l=1}^{k_j}\frac{a_l^j}{\underline{\mathrm{vol}}\,B_t(w_i)}\int_{B_t(w_i)}\langle df_i, dr_{x_l^j(i)}\rangle d\underline{\mathrm{vol}} \\
&+ \sum_{l, \hat{l}} \frac{a_l^j a_{\hat{l}}^j}{\underline{\mathrm{vol}}\,B_t(w_i)}\int_{B_t(w_i)}\langle dr_{x_l^j(i)}, dr_{x_{\hat{l}}^j(i)}\rangle d\underline{\mathrm{vol}} \\
&\le \frac{1}{\upsilon (B_t(w))}\int_{B_t(w)}|df_{\infty}|^2d\upsilon -\sum_{l=1}^k\frac{a_l^j}{\upsilon (B_t(w))}\int_{B_t(w)}\langle df_{\infty}, dr_{x_l^j}\rangle d\upsilon \\
&+ \sum_{l, \hat{l}} \frac{a_l^j a_{\hat{l}}^j}{\upsilon (B_t(w))}\int_{B_t(w)}\langle dr_{x_l^j}, dr_{x_{\hat{l}}^j}\rangle d\upsilon + \Psi(\epsilon; n, L)\\
&= \frac{1}{\upsilon (B_t(w))}\int_{B_t(w)}\left| df_{\infty}- d\left( \sum_{l=1}^{k_j}a_l^jr_{x_l^j}\right) \right|^2d\upsilon + \Psi(\epsilon; n, L) \le \Psi(\epsilon; n, L).
\end{align}
Similarly, we have 
\[\frac{1}{\underline{\mathrm{vol}}\,B_t(w_i)}\int_{B_t(w_i)}\left|dg_i - d\left(\sum_{l=1}^{k_j}b_l^jr_{x_l^j(i)}\right)\right|^2d\underline{\mathrm{vol}}\le \Psi(\epsilon; n, L)\]
for every sufficiently large $i$.
Especially, we have 
\begin{align}
&\frac{1}{\underline{\mathrm{vol}}\,B_t(w_i)}\int_{B_t(w_i)}\left| \langle df_i, dg_i\rangle - \frac{1}{\underline{\mathrm{vol}}\,B_t(w_i)}\int_{B_t(w_i)}
\left\langle d\left(\sum_{l=1}^{k_j}a_l^jr_{x_l^j(i)}\right), d\left(\sum_{l=1}^{k_j}b_l^jr_{x_l^j(i)}\right)\right\rangle d\underline{\mathrm{vol}}\right| d\underline{\mathrm{vol}} \\  
& \le \Psi(\epsilon; n, L).
\end{align}
Therefore, we have the assertion.
\end{proof}
We remark that Theorem \ref{473} follows from Theorem \ref{10105} directly.
\begin{corollary}\label{10106}
Let $\Omega$ be a non-empty open subset of $B_R(x_{\infty})$.
We assume that for a.e. $w \in \Omega$, $df_i \rightarrow df_{\infty}$ at $w$.
Then $df_i \rightarrow df_{\infty}$ on $\Omega$.
\end{corollary}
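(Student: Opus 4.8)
The plan is to verify directly that, for every $w\in\Omega$, the two conditions in Definition~\ref{Lip} defining ``$df_i\rightarrow df_\infty$ at $w$'' hold, thereby upgrading the a.e.\ hypothesis to a pointwise one. First I would extract from the hypothesis, via Theorem~\ref{10105}, two statements about uniformly bounded Borel functions. Applying Theorem~\ref{10105} with $g_i=r_{z_i}-r_{z_i}(x_i)$ (an admissible competitor: it has the same differential as $r_{z_i}$ and is uniformly bounded on $B_R(x_i)$, and $dg_i\rightarrow dg_\infty$ by Proposition~\ref{10104}) shows that, for each $z_i\rightarrow z\in Y$, the sequence $\{\langle dr_{z_i},df_i\rangle\}_i$ has infinitesimal constant convergence property to $\langle dr_z,df_\infty\rangle$ at a.e.\ $w\in\Omega$; applying it with $g_i=f_i$, together with the identity $|df|=\mathrm{Lip}f$ (valid a.e.\ both on the $M_i$ and on $Y$), shows that $\{(\mathrm{Lip}f_i)^2\}_i$ has infinitesimal constant convergence property to $(\mathrm{Lip}f_\infty)^2$ at a.e.\ $w\in\Omega$. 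So it suffices to prove the following general fact: if $\{h_i\}_i$ is a uniformly bounded sequence of Borel functions on $B_R(x_i)$ having infinitesimal convergence property to $h_\infty$ at a.e.\ point of $\Omega$, then for every $w\in\Omega$, every $r>0$ with $\overline{B}_r(w)\subset\Omega$, every $0<t<r$, and every $w_i\rightarrow w$, one has $\int_{B_t(w_i)}h_id\upsilon_i\rightarrow\int_{B_t(w)}h_\infty d\upsilon_\infty$. Granting this, since $\upsilon_i(B_t(w_i))\rightarrow\upsilon_\infty(B_t(w))\in(0,\infty)$ (measured Gromov--Hausdorff convergence, positivity because Bishop--Gromov volume comparison passes to the limit), the corresponding averages converge; taking $h_i=\langle dr_{z_i},df_i\rangle$ yields the first condition of Definition~\ref{Lip} at $w$ (in fact with $\limsup=0$), and taking $h_i=(\mathrm{Lip}f_i)^2$ yields infinitesimal upper semicontinuity of energy at $w$ (with room to spare). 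Hence $df_i\rightarrow df_\infty$ at $w$, with one radius $r$ serving for all $\epsilon$.

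For the reduced fact I would approximate the open ball $B_t(w)$ from inside by closed balls $\overline{B}_s(w)$ with $s\uparrow t$. Let $L$ be a uniform bound for the $h_i$ and $h_\infty$. On the limit side, $\bigl|\int_{B_t(w)\setminus\overline{B}_s(w)}h_\infty d\upsilon_\infty\bigr|\le L\,\upsilon_\infty(B_t(w)\setminus\overline{B}_s(w))$, which tends to $0$ as $s\uparrow t$ by continuity of $\upsilon_\infty$ from below; on the $i$-th side, $\limsup_{i\to\infty}\bigl|\int_{B_t(w_i)\setminus\overline{B}_s(w_i)}h_id\upsilon_i\bigr|\le L\bigl(\upsilon_\infty(B_t(w))-\upsilon_\infty(B_s(w))\bigr)$, using $\upsilon_i(B_t(w_i))\rightarrow\upsilon_\infty(B_t(w))$ and $\upsilon_i(B_s(w_i))\rightarrow\upsilon_\infty(B_s(w))$. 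So it is enough to show $\int_{\overline{B}_s(w_i)}h_id\upsilon_i\rightarrow\int_{\overline{B}_s(w)}h_\infty d\upsilon_\infty$ for $s$ in a set accumulating at $t$. Since $\upsilon_\infty$ is Radon, $\upsilon_\infty(\partial B_s(w))=0$ for all but countably many $s$; I would fix such an $s\in(0,t)$ and apply Proposition~\ref{10103} with $K_i=\overline{B}_s(w_i)$ and $K_\infty=\overline{B}_s(w)$. Its hypotheses are met: $K_\infty$ is compact and $\limsup_i\overline{B}_s(w_i)\subset\overline{B}_s(w)$ (the argument of Example~\ref{d1}); at an interior point $\alpha\in B_s(w)$ the indicators $1_{\overline{B}_s(w_i)}$ are identically $1$ on small balls about any sequence $\alpha_i\rightarrow\alpha$, so $\{1_{\overline{B}_s(w_i)}\}_i$ has infinitesimal convergence property to $1_{\overline{B}_s(w)}$ there, while $\partial B_s(w)$ is $\upsilon_\infty$-null; and $\overline{B}_s(w)\subset\Omega$, so $\{h_i\}_i$ has infinitesimal convergence property to $h_\infty$ at a.e.\ point of $\overline{B}_s(w)$ (infinitesimal constant convergence implies infinitesimal convergence). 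Letting $s\uparrow t$ through such radii completes the proof.

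The main obstacle is precisely this last reduction. Definition~\ref{Lip} demands control of the averages over $B_t(w)$ for \emph{all} small radii $t$, whereas the available averaging device, Proposition~\ref{10103}, is comfortable only with ``generic'' radii --- those whose boundary sphere is $\upsilon_\infty$-negligible --- and the passage to the a.e.\ constancy at nearby points via Theorem~\ref{10105} does not by itself say anything at the given point $w$. Inner approximation by closed balls with null boundary spheres, combined with the continuity-from-below estimates above, is what bridges the gap. Everything else is bookkeeping: admissibility of the competitor $r_{z_i}-r_{z_i}(x_i)$ in Theorem~\ref{10105}, the a.e.\ identity $|df_i|=\mathrm{Lip}f_i$ (a property of the cotangent bundle recalled in Section~2), the strict positivity of $\upsilon_\infty$ on balls, and the set-convergence facts for $\overline{B}_s(w_i)$ taken from Example~\ref{d1}.
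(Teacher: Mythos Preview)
Your proof is correct and follows the same strategy as the paper's: use Theorem~\ref{10105} (with $g_i=r_{z_i}$ and with $g_i=f_i$) to upgrade the a.e.\ hypothesis to a.e.\ infinitesimal constant convergence of $\langle dr_{z_i},df_i\rangle$ and of $(\mathrm{Lip}f_i)^2$, then feed this into Proposition~\ref{10103} to obtain convergence of the ball integrals at every $w\in\Omega$. The paper's one-line proof cites Example~\ref{ex3}, Proposition~\ref{10103} and Theorem~\ref{10105}; the role of Example~\ref{ex3} is exactly to supply the indicator convergence $\{1_{\overline{B}_t(w_i)}\}\to 1_{\overline{B}_t(w)}$ off $\partial B_t(w)$ that Proposition~\ref{10103} needs. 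Your inner-approximation step by radii $s\uparrow t$ with $\upsilon_\infty(\partial B_s(w))=0$ is an extra precaution the paper omits: in the Ricci limit setting of Section~4.3 one has $\upsilon(\partial B_t(w))=0$ for \emph{every} $t$ (by the annulus estimate used in the proof of Proposition~\ref{99990}), so Proposition~\ref{10103} applies directly for each $t$ without approximation. Your route is thus slightly more robust (it would work in any doubling space), while the paper's is shorter by exploiting this standing fact about Ricci limit spaces.
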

\begin{proof}
The assertion follows from Example \ref{ex3}, Proposition \ref{10103} and Theorem \ref{10105}. 
\end{proof}
\begin{corollary}\label{10107}
Let $g_i$ be a Lipschitz function on $B_R(x_i)$ satisfying $\sup_i (\mathbf{Lip}g_i + |g_i|_{L^{\infty}}) < \infty$ and 
$A$ a Borel subset of $B_R(x_{\infty})$.
We assume that $(f_i, df_i) \rightarrow (f_{\infty}, df_{\infty})$ and $(g_i, dg_i) \rightarrow (g_{\infty}, dg_{\infty})$ on $A$.
Then, there exists a Borel subset $\hat{A}$ of $A$ such that 
$\upsilon(A \setminus \hat{A})=0$ and that $(f_i + g_i, d(f_i + g_i)) \rightarrow (f_{\infty} + g_{\infty}, d(f_{\infty} + g_{\infty}))$ and $(f_ig_i, d(f_ig_i)) \rightarrow (f_{\infty}g_{\infty}, d(f_{\infty}g_{\infty}))$ on $\hat{A}$.
\end{corollary}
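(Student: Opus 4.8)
The plan is to reduce everything to Theorem \ref{10105} together with the algebraic rules already established for the convergence of differential of Lipschitz functions, namely the linearity (Proposition \ref{101020}) and the Leibnitz-type behaviour of differential sections on the limit space. First I would recall that by Theorem \ref{10105}, applied to the four pairs $(f_i,f_i)$, $(g_i,g_i)$, $(f_i,g_i)$, and (using Proposition \ref{10104}) to $(f_i,r_{z_i})$, $(g_i,r_{z_i})$ for any fixed $z_i\to z$, there is a Borel set $\hat A\subset A$ with $\upsilon(A\setminus\hat A)=0$ such that at every $w\in\hat A$ all of $\{\langle df_i,dg_i\rangle\}$, $\{|df_i|^2\}$, $\{|dg_i|^2\}$, $\{\langle dr_{z_i},df_i\rangle\}$, $\{\langle dr_{z_i},dg_i\rangle\}$ have infinitesimal constant convergence property to the corresponding limit objects. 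Shrinking $\hat A$ further by a null set I may also assume, via the structure theorems of Cheeger-Colding recalled in section $2$, that $w$ is a point of Lebesgue density and differentiability where the Leibnitz rule $d(f_\infty g_\infty)(w)=f_\infty(w)dg_\infty(w)+g_\infty(w)df_\infty(w)$ holds, and similarly $d(f_\infty+g_\infty)(w)=df_\infty(w)+dg_\infty(w)$; and that $f_i\to f_\infty$, $g_i\to g_\infty$ at $w$ in the sense that they have infinitesimal constant convergence property (equivalently pointwise convergence, by the equivalence stated in subsection $4.3$).

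For the sum, the point is immediate: at $w\in\hat A$, for any $z_i\to z$, $\langle dr_{z_i},d(f_i+g_i)\rangle=\langle dr_{z_i},df_i\rangle+\langle dr_{z_i},dg_i\rangle$, so by linearity of infinitesimal convergence property (this case of Proposition \ref{101020}, with the constant sequences $a_i=b_i\equiv 1$) this sequence has infinitesimal convergence property to $\langle dr_z,df_\infty\rangle+\langle dr_z,dg_\infty\rangle=\langle dr_z,d(f_\infty+g_\infty)\rangle$. For the energy, $|d(f_i+g_i)|^2=|df_i|^2+2\langle df_i,dg_i\rangle+|dg_i|^2$ has infinitesimal constant convergence property to $|df_\infty|^2+2\langle df_\infty,dg_\infty\rangle+|dg_\infty|^2=|d(f_\infty+g_\infty)|^2$ at $w$, which gives in particular the required infinitesimal upper semicontinuity of energy. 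Hence $(f_i+g_i,d(f_i+g_i))\to(f_\infty+g_\infty,d(f_\infty+g_\infty))$ on $\hat A$.

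For the product I would argue the same way but now feed genuinely non-constant coefficients into Proposition \ref{101020}. Write $\langle dr_{z_i},d(f_ig_i)\rangle=f_i\langle dr_{z_i},dg_i\rangle+g_i\langle dr_{z_i},df_i\rangle$ on $B_R(x_i)$; here $\{f_i\}$ and $\{g_i\}$ have infinitesimal constant convergence property at $w\in\hat A$ while $\{\langle dr_{z_i},dg_i\rangle\}$ and $\{\langle dr_{z_i},df_i\rangle\}$ have infinitesimal convergence property at $w$, so Proposition \ref{101020} applies verbatim and yields infinitesimal convergence property of the left side to $f_\infty\langle dr_z,dg_\infty\rangle+g_\infty\langle dr_z,df_\infty\rangle$, which by the Leibnitz rule on $Y$ equals $\langle dr_z,d(f_\infty g_\infty)\rangle$. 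For the energy of the product, $|d(f_ig_i)|^2=f_i^2|dg_i|^2+2f_ig_i\langle df_i,dg_i\rangle+g_i^2|df_i|^2$; all the scalar factors $f_i^2$, $f_ig_i$, $g_i^2$ have infinitesimal constant convergence property at $w$ (Proposition \ref{10101}, applied to the continuous functions $s\mapsto s^2$, $(s,t)\mapsto st$ composed with $(f_i,g_i)$), and $|dg_i|^2$, $\langle df_i,dg_i\rangle$, $|df_i|^2$ have infinitesimal constant convergence property at $w$ by the first paragraph; one more application of the product rule for infinitesimal convergence property gives that $|d(f_ig_i)|^2$ has infinitesimal convergence property to $f_\infty^2|dg_\infty|^2+2f_\infty g_\infty\langle df_\infty,dg_\infty\rangle+g_\infty^2|df_\infty|^2=|d(f_\infty g_\infty)|^2$ at $w$, hence the infinitesimal upper semicontinuity of energy. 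This proves $(f_ig_i,d(f_ig_i))\to(f_\infty g_\infty,d(f_\infty g_\infty))$ on $\hat A$.

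The only real point requiring care — the main obstacle — is the bookkeeping of null sets: one must take $\hat A$ to be the intersection of the full-measure sets coming from Theorem \ref{10105} for the finitely many relevant pairs, the full-measure set where the Leibnitz and additivity rules for $d$ hold on $Y$, and the full-measure sets where $f_i\to f_\infty$, $g_i\to g_\infty$ in the infinitesimal sense; since the points $z$ range over all of $Y$ one should either fix at the outset a countable dense set of $z$'s and then pass to general $z$ by a continuity/density argument using $\sup_i\mathbf{Lip}r_{z_i}\le 1$, or simply observe that the set of good $w$ produced for each fixed $z_i\to z$ does not depend on $z$ once the structural null sets are removed, because the estimate in Theorem \ref{10105} is uniform over distance functions. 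Everything else is a direct substitution into Propositions \ref{10101}, \ref{101020} and Theorem \ref{10105}, with no new geometric input.
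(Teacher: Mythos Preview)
Your proof is correct and follows essentially the same route as the paper: apply Theorem~\ref{10105} to obtain a full-measure $\hat A\subset A$ on which $\{|df_i|^2\}$, $\{|dg_i|^2\}$, $\{\langle df_i,dg_i\rangle\}$ have infinitesimal constant convergence property, then expand $|d(f_i+g_i)|^2$ and $|d(f_ig_i)|^2$ algebraically and invoke Proposition~\ref{10101} for the energy part and Proposition~\ref{101020} for the radial-derivative part.

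The one place where you take a detour is your treatment of the ``main obstacle''. You apply Theorem~\ref{10105} additionally to the pairs $(f_i,r_{z_i})$ and $(g_i,r_{z_i})$, which forces you to worry about an uncountable family of $z$-dependent null sets and to propose a countable-dense-set or uniformity argument to resolve it. The paper sidesteps this entirely: the hypothesis $(f_i,df_i)\to(f_\infty,df_\infty)$ on $A$ already says, \emph{by Definition~\ref{Lip}}, that for every $w\in A$ and every $z_i\to z$ the sequence $\{\langle dr_{z_i},df_i\rangle\}$ has infinitesimal convergence property to $\langle dr_z,df_\infty\rangle$ at $w$; likewise for $g$. This is exactly the ``$c_i,d_i$'' input needed in Proposition~\ref{101020}, and it holds at every point of $A$, not merely almost everywhere. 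Hence no $z$-dependent null set ever appears: $\hat A$ is determined solely by the three applications of Theorem~\ref{10105} to $(f_i,f_i)$, $(g_i,g_i)$, $(f_i,g_i)$. The Leibnitz identity on the limit space, which you also flag, is harmless for the same reason: it holds a.e.\ on $B_R(x_\infty)$, so the two candidate target functions $f_\infty\langle dr_z,dg_\infty\rangle+g_\infty\langle dr_z,df_\infty\rangle$ and $\langle dr_z,d(f_\infty g_\infty)\rangle$ agree a.e.\ and therefore have identical ball-averages, which is all the definition of infinitesimal convergence property requires.
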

\begin{proof}
By Theorem \ref{10105},
there exists a Borel subset $\hat{A}$ of $A$ such that $\upsilon(A \setminus \hat{A})=0$ and that 
$\{|df_i|^2\}_i, \{\langle df_i, dg_i\rangle \}_i$ and $\{|dg_i|^2\}_i$ have infinitesimal constant convergence property to 
$|df_{\infty}|^2, \langle df_{\infty}, dg_{\infty}\rangle$ and $|dg_{\infty}|^2$ on $\hat{A}$, respectively.
Since $|d(f_ig_i)|^2=f_i^2|dg_i|^2+2f_ig_i\langle df_i, dg_i\rangle +g_i|df_i|^2$, by Proposition \ref{10101}, we have,
$\{|d(f_ig_i)|^2\}_i$ has infinitesimal constant convergence property to $f_{\infty}^2|dg_{\infty}|^2+2f_{\infty}g_{\infty}\langle df_{\infty}, dg_{\infty}\rangle +g_{\infty}^2|df_{\infty}|^2=|d(f_{\infty}g_{\infty})|^2$ on $\hat{A}$.
On the other hand, since $d(f_ig_i)=g_idf_i+f_idg_i$, by Proposition \ref{101020}, 
for every $z_i \rightarrow z$, 
we have, $\{\langle dr_{z_i}, d(f_ig_i)\rangle\}_i$ has infinitesimal convergence property to $g_{\infty}\langle dr_{z_{\infty}}, df_{\infty}\rangle +f_{\infty}\langle dr_{z_{\infty}}, dg_{\infty}\rangle = \langle dr_{z_{\infty}}, d(f_{\infty}g_{\infty})\rangle$ on 
$\hat{A}$.
Therefore we have $(f_ig_i, d(f_ig_i)) \rightarrow (f_{\infty}g_{\infty}, d(f_{\infty}g_{\infty}))$ on $\hat{A}$.
Similarly,  we have $(f_i + g_i, d(f_i + g_i)) \rightarrow (f_{\infty} + g_{\infty}, d(f_i + g_i))$ on $\hat{A}$.
\end{proof}
\begin{corollary}\label{10108}
Let $K_i$ be a Borel subset of $\overline{B}_R(x_i)$ and $g_i$ a Lipschitz function on $B_R(x_i)$  satisfying $\sup_i (\mathbf{Lip}g_i + |g_i|_{L^{\infty}}) < \infty$.
We assume that  $K_{\infty}$ is compact, $\limsup_{i \rightarrow \infty} \subset K_{\infty}$ and that for a.e. $w \in K_{\infty}$, $1_{K_i}$ has infinitesimal convergence property to $1_{K_{\infty}}$ at $w$, $dg_i \rightarrow df_{\infty}$ and $df_i \rightarrow df_{\infty}$ at $w$.
Then for every sequence of continuous functions $F_i$ on $\mathbf{R}$ satisfying that $F_i$ converges to $F_{\infty}$ in the sense of compact uniformly topology, we have 
\[\lim_{i \rightarrow \infty}\int_{K_i}F_i(|df_i-dg_i|)d\underline{\mathrm{vol}}=F_{\infty}(0)\upsilon(K_{\infty}).\]
\end{corollary}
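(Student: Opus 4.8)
The plan is to reduce the statement to a single application of Proposition \ref{10103} to the sequence $\{F_i(|df_i-dg_i|)\}_i$ on the sets $K_i$, so the main work is to verify its two hypotheses: that $\{F_i(|df_i-dg_i|)\}_i$ has infinitesimal convergence property to the constant function $F_\infty(0)$ at $\upsilon$-a.e. $w\in K_\infty$, and that it is uniformly bounded in $L^\infty$. The boundedness is immediate: setting $L:=\sup_i(\mathbf{Lip}f_i+\mathbf{Lip}g_i)<\infty$ we have $0\le |df_i-dg_i|\le L$ $\underline{\mathrm{vol}}$-a.e., and since $F_i\to F_\infty$ uniformly on the compact interval $[0,L]$ the functions $F_i$ are uniformly bounded there (and $F_\infty(0)$ is a constant). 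So everything reduces to the infinitesimal convergence statement.

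The first step is to invoke Theorem \ref{10105}. Applying it to the pairs $(f_i,f_i)$, $(f_i,g_i)$ and $(g_i,g_i)$ on $K_\infty$ (using $df_i\to df_\infty$ and $dg_i\to dg_\infty$ there), we obtain that for $\upsilon$-a.e. $w\in K_\infty$ the three sequences $\{|df_i|^2\}_i$, $\{\langle df_i,dg_i\rangle\}_i$ and $\{|dg_i|^2\}_i$ have infinitesimal constant convergence property at $w$ to $|df_\infty|^2$, $\langle df_\infty,dg_\infty\rangle$ and $|dg_\infty|^2$ respectively. Then I would apply Proposition \ref{10101} with $k=3$ and the continuous function $F(a,b,c)=\sqrt{\max\{a-2b+c,0\}}$, noting that $F(|df_i|^2,\langle df_i,dg_i\rangle,|dg_i|^2)=|df_i-dg_i|$ because the quantity under the root equals $|df_i-dg_i|^2\ge 0$; this shows $\{|df_i-dg_i|\}_i$ has infinitesimal constant convergence property at $\upsilon$-a.e. $w\in K_\infty$ to $|df_\infty-dg_\infty|$.

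The key step is then to identify this limit as the zero function. This is where the hypothesis that $df_i$ and $dg_i$ have the same limit on $K_\infty$ is used — equivalently, that $f_\infty=g_\infty$ on $K_\infty$ — together with locality of the Cheeger differential: the minimal upper gradient $g_{f_\infty-g_\infty}$ vanishes $\upsilon$-a.e. on the Borel set $\{f_\infty=g_\infty\}\supseteq K_\infty$, so $df_\infty=dg_\infty$ and hence $|df_\infty-dg_\infty|=0$ $\upsilon$-a.e. on $K_\infty$. Consequently $\{|df_i-dg_i|\}_i$ has infinitesimal constant convergence property to the zero function at $\upsilon$-a.e. $w\in K_\infty$. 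A second application of Proposition \ref{10101}, now with $k=1$, the uniformly bounded sequence $|df_i-dg_i|$, and the compact-uniformly convergent sequence $F_i\to F_\infty$, shows that $\{F_i(|df_i-dg_i|)\}_i$ has infinitesimal constant convergence property — in particular infinitesimal convergence property — to the constant function $F_\infty(0)$ at $\upsilon$-a.e. $w\in K_\infty$.

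Finally, since $K_\infty$ is compact, $\limsup_{i\to\infty}K_i\subset K_\infty$, and $\{1_{K_i}\}_i$ has infinitesimal convergence property to $1_{K_\infty}$ at $\upsilon$-a.e. $w\in K_\infty$ by hypothesis, Proposition \ref{10103} applied to the sequence $f_i=F_i(|df_i-dg_i|)$ yields
$$\lim_{i\to\infty}\int_{K_i}F_i(|df_i-dg_i|)\,d\underline{\mathrm{vol}}=\int_{K_\infty}F_\infty(0)\,d\upsilon=F_\infty(0)\,\upsilon(K_\infty),$$
which is the desired identity. The only genuinely delicate point in this argument is the middle step — reading the hypothesis so as to deduce $df_\infty=dg_\infty$ $\upsilon$-a.e. on $K_\infty$; once that is granted, the proof is a mechanical chaining of Theorem \ref{10105}, Proposition \ref{10101} (used twice), and Proposition \ref{10103}.
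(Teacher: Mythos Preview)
Your argument is essentially correct and follows the paper's route (Theorem \ref{10105}, then Proposition \ref{10101}, then an integration step). One simplification: the ``key step'' invoking locality of the Cheeger differential is unnecessary and slightly misreads the hypothesis. The assumption is $dg_i\to df_\infty$ (not $dg_i\to dg_\infty$ for some separate $g_\infty$), so when you apply Theorem \ref{10105} to the three pairs $(f_i,f_i)$, $(f_i,g_i)$, $(g_i,g_i)$ the limits are all $|df_\infty|^2$, and $|df_i-dg_i|$ has infinitesimal constant convergence to $0$ immediately; there is no phantom $g_\infty$ to identify with $f_\infty$, and no appeal to locality is needed.

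For the final integration the paper's one-line proof cites Proposition \ref{lpoij} (which upgrades $\{1_{K_i}\}$ from infinitesimal convergence to infinitesimal \emph{constant} convergence at Lebesgue points of $K_\infty$) together with Proposition \ref{10101}, whereas you go straight to Proposition \ref{10103}. Your route is actually the more direct one: Proposition \ref{10103} only requires infinitesimal convergence of $\{1_{K_i}\}$, which is already in the hypothesis, so the upgrade via \ref{lpoij} is not needed once you have shown that $\{F_i(|df_i-dg_i|)\}$ has infinitesimal convergence to $F_\infty(0)$ a.e.\ on $K_\infty$.
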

\begin{proof}
The assertion follows from Proposition \ref{10101}, Proposition \ref{lpoij} and Theorem \ref{10105}.
\end{proof}
\begin{remark}
By several arguments in section $3$ and the proof of Theorem \ref{10105}, we can also prove the following: 
If $\{f_i\}_i$ satisfies,
\begin{enumerate}
\item $\{f_i\}_i$ has infinitesimal upper semicontinuity of energy to $f_{\infty}$ at every $\alpha \in B_R(x_{\infty})$,
\item there exists a dense subset $A$ of $B_R(x_{\infty})$ and a Borel subset $\hat{A}$ of $B_R(x_{\infty})$ such that 
$\upsilon (B_R(x_{\infty})\setminus \hat{A})=0$ and that for every $w \in A$ and $w_i \rightarrow w$,
$\{\langle dr_{w_i}, df_i\rangle \}_i$ has infinitesimal convergence property to $\langle dr_{w}, df_{\infty} \rangle$ at 
every $\alpha \in \hat{A}$,
\end{enumerate}
then, $df_i \rightarrow df_{\infty}$ on $B_R(x_{\infty})$.
\end{remark}
\begin{remark}
Similarly,
for a sequence of Ricci limit spaces $\{(Y_i, y_i, \upsilon_i)\}_i$ and a sequence of Lipschitz function $f_i$ on $B_R(y_i)$, 
we can also define a notion of convergence: $df_i \rightarrow df_{\infty}$ and prove several properties as above.
\end{remark}
\begin{remark}
For fixed Ricci limit space $(Y, y, \upsilon)$, a sequence of Lipschitz functions $f_i$ on $B_R(y)$ satisfying $\sup_i \mathbf{Lip}f_i <\infty$, we have, $df_i \rightarrow df_{\infty}$ on $B_R(y)$ (in the sense of the convergence $(Y, y, \upsilon) \stackrel{(\mathrm{id}_Y, R_i, \epsilon_i)}{\rightarrow} (Y, y, \upsilon)$) if and only if 
$|\mathrm{Lip}(f_i-f_{\infty})|_{L^2(B_R(y))} \rightarrow 0$.
We shall check it.
By Corollary \ref{10108}, it suffices to check that `if' part.
We assume that $|\mathrm{Lip}(f_i-f_{\infty})|_{L^2(B_R(y))} \rightarrow 0$.
Then, especially, for every $w \in B_R(y)$, $\{f_i\}_i$ has infinitesimal upper semicontinuity of energy to $f_{\infty}$ at $w$.
On the other hand, by Proposition \ref{10104}, we have
\[\lim_{i \rightarrow \infty}\int_{B_R(y)}|dr_{x_i}-dr_{x_{\infty}}|^2d\upsilon=0\]
for $x_i \rightarrow x_{\infty} \in Y$.
Therefore, $\{\langle dr_{x_i}, df_i\rangle \}$ has infinitesimal convergence property to $\langle dr_{x_{\infty}}, df_{\infty}\rangle$ at every $w \in B_R(y)$.
Thus, $df_i \rightarrow df_{\infty}$ on $B_R(y)$.
\end{remark}
We will give a sufficient condition to satisfy infinitesimal upper semicontinuity of energy in the next subsection.
See Proposition \ref{energy}.
\subsection{Approximation theorem}
Throughout this subsection, we shall use the following notation (same to previous subsection):
Let $\{(M_i, m_i, \underline{\mathrm{vol}})\}_i$ be a sequence of pointed, connected $n$-dimensional complete Riemannian manifolds with 
renormalized measure satisfying 
$\mathrm{Ric}_{M_i} \ge -(n-1)$,
$(Y, y, \upsilon)$ a Ricci limit space of $\{(M_i, m_i, \underline{\mathrm{vol}})\}$, $R$ a positive number, 
$x_i$ a point in $M_i$, $x_{\infty}$ a point in $Y$ satisfying $(M_i, m_i, x_i, \underline{\mathrm{vol}}) \stackrel{(\phi_i, R_i, \epsilon_i)}{\rightarrow} (Y, y, x_{\infty}, \upsilon)$.
The purpose in this subsection is to give an approximation theorem (Theorem \ref{app}).
Roughly speaking,
it means that for given Lipschitz function on $B_R(x_{\infty})$, there exists a sequence of Lipschitz function on $B_R(x_i)$ approximating the function
in the sense of the topology: $(f_i, df_i) \rightarrow (f_{\infty}, df_{\infty})$.
\begin{theorem}[Approximation theorem]\label{app}
Let $L, R$ be positive numbers, $f_{\infty}$ a $L$-Lipschitz function on $\overline{B}_R(x_{\infty})$,
$A_i$ a Borel subset of $\overline{B}_R(x_i)$, $A_{\infty}$ a compact subset of $\overline{B}_R(x_{\infty})$ and $f_i$ a $L$-Lipschitz function
on $A_i$.
We assume that $\limsup_{i \rightarrow \infty}A_i \subset A_{\infty}$ and that $f_{\infty}|_{A_{\infty}}$ is an extension of $\{f_i\}_i$ asymptotically.
Then, for every $\epsilon > 0$, there exist an open set $\Omega_{\epsilon} \subset B_R(x_{\infty}) \setminus A_{\infty}$, $C(n, L)$-Lipschitz function $f_{\infty}^{\epsilon}$ on $B_R(x_{\infty})$ and a sequence of 
$C(n, L)$-Lipschitz function $f_i^{\epsilon}$ on $B_R(x_i)$ such that $(f_i^{\epsilon}, df_i^{\epsilon}) \rightarrow (f_{\infty}^{\epsilon}, df_{\infty}^{\epsilon})$ on $\Omega_{\epsilon}$, $f_{\infty}^{\epsilon}|_{A_{\infty}}=f|_{A_{\infty}}$, 
$f_i^{\epsilon}|_{A_i}=f_i|_{A_i}$ and that 
\[\frac{\upsilon (B_R(x_{\infty}) \setminus (\Omega_{\epsilon} \cup A_{\infty}))}{\upsilon(B_R(x_{\infty}))} + |f_{\infty}- f_{\infty}^{\epsilon}|_{L^{\infty}(B_R(x_{\infty}))} + \frac{1}{\upsilon_{\infty}(B_R(x_{\infty}))}\int_{B_R(x_{\infty})} |df_{\infty}^{\epsilon}-df_{\infty}|^2d\upsilon< \epsilon. \]
\end{theorem}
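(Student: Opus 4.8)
The plan is to combine the rectifiability theory of Section~3 with the facts, established above, that distance functions and their Cheeger--Colding harmonic replacements converge together with their differentials (Lemmas~\ref{15}--\ref{16}, Propositions~\ref{10102}, \ref{10104}) and that such convergences are stable under products and smooth composition (Proposition~\ref{10101}, Corollary~\ref{10107}); the local models so obtained are glued by a Lipschitz partition of unity, and $A_{\infty}$ together with the uncovered set is handled by a Lipschitz extension.

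\emph{Reduction to chart models.} Fix $\epsilon>0$ and a small auxiliary $\delta>0$. Applying Theorem~\ref{7} with base point $x_{\infty}$ (or Theorem~\ref{24}) together with the Vitali-type covering Proposition~\ref{cov}, I would cover $\upsilon$-almost all of $\overline{B}_{R}(x_{\infty})\setminus A_{\infty}$ by finitely many pairwise essentially disjoint Borel pieces $E_{1},\dots,E_{N}$, each essentially a ball $B_{r_{m}}(p_{m})$ intersected with a chart set, the residual part being contained in a union of balls of small total $\upsilon$-measure and individually small diameter, where each $E_{m}$ carries a $(1\pm\delta)$-bi-Lipschitz chart $\Phi_{m}=(r_{z^{m}_{1}},\dots,r_{z^{m}_{k_{m}}})\colon E_{m}\to\mathbf{R}^{k_{m}}$ and $\upsilon$ is mutually absolutely continuous with $H^{k_{m}}$ on $E_{m}$. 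On $\mathbf{R}^{k_{m}}$ I extend $f_{\infty}\circ\Phi_{m}^{-1}$ to a $C(L)$-Lipschitz function and mollify it at scale $\eta$ to a smooth $G_{m}$ with $\|G_{m}-f_{\infty}\circ\Phi_{m}^{-1}\|_{W^{1,2}}$ as small as desired on the relevant bounded set; then $\ell_{m}:=G_{m}(r_{z^{m}_{1}},\dots,r_{z^{m}_{k_{m}}})$ is a $C(L)$-Lipschitz function on $Y$ which, via the bi-Lipschitz chart and Cheeger's pullback structure, satisfies $|\ell_{m}-f_{\infty}|$ small on $E_{m}$ and $\int_{E_{m}}|d\ell_{m}-df_{\infty}|^{2}d\upsilon$ small.

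\emph{Transplanting and gluing.} On $M_{i}$ I replace each $r_{z^{m}_{l}}$ by its harmonic approximation $\mathbf{b}^{m,i}_{l}$ on a suitable ball (as in the proof of Lemma~\ref{16}), which converges to $r_{z^{m}_{l}}$ together with its differential by Lemmas~\ref{15}--\ref{16}, and set $\ell^{i}_{m}:=G_{m}(\mathbf{b}^{m,i}_{1},\dots,\mathbf{b}^{m,i}_{k_{m}})$; by the chain rule, harmonicity of the $\mathbf{b}^{m,i}_{l}$, and Remark~\ref{cc}, the $\ell^{i}_{m}$ carry a uniform $L^{2}$-Hessian bound, so $d\ell^{i}_{m}\to d\ell_{m}$ (radial-derivative part by Proposition~\ref{Hesss}, energy part by the Hessian bound of Remark~\ref{cc}). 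I then choose a Lipschitz partition of unity $\{\chi_{m}\}_{m=1}^{N}$ with $\chi_{m}\equiv1$ on $B_{(1-\rho)r_{m}}(p_{m})$ and $\operatorname{supp}\chi_{m}\subset B_{(1+\rho)r_{m}}(p_{m})$, realized on $M_{i}$ by good cutoffs $\chi^{i}_{m}$ with uniform gradient and Laplacian bounds (Remark~\ref{cc}), so $\chi^{i}_{m}\to\chi_{m}$ with differentials. Put $F:=\sum_{m}\chi_{m}\ell_{m}$, $F^{i}:=\sum_{m}\chi^{i}_{m}\ell^{i}_{m}$; Corollary~\ref{10107} gives $(F^{i},dF^{i})\to(F,dF)$ where all factors converge with differentials, and since each $\ell_{m}$ stays within $C(L)(\rho+\eta)$ of $f_{\infty}$ on $\operatorname{supp}\chi_{m}$, $F$ is $C(n,L)$-Lipschitz there. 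Finally let $\Omega_{\epsilon}$ be the interior of $\{\sum_{m}\chi_{m}=1\}$ with a $(\rho+\eta)$-neighbourhood of $A_{\infty}$ removed, and let $f^{\epsilon}_{\infty}$ be the McShane extension of the function equal to $f_{\infty}$ on $A_{\infty}$ and to $F$ on $\overline{\Omega_{\epsilon}}$ (these data are genuinely jointly $C(n,L)$-Lipschitz since $\overline{\Omega_{\epsilon}}$ is kept away from $A_{\infty}$), defining $f^{\epsilon}_{i}$ by the analogous construction on $M_{i}$. Then $f^{\epsilon}_{\infty}\equiv f_{\infty}$ on $A_{\infty}$, $f^{\epsilon}_{i}\equiv f_{i}$ on $A_{i}$, $f^{\epsilon}_{\infty}\equiv F$ and $f^{\epsilon}_{i}\equiv F^{i}$ on $\Omega_{\epsilon}$, so $(f^{\epsilon}_{i},df^{\epsilon}_{i})\to(f^{\epsilon}_{\infty},df^{\epsilon}_{\infty})$ on $\Omega_{\epsilon}$; the measure bound follows from the covering (using Proposition~\ref{sup} for the passage to $M_{i}$), the $L^{\infty}$ bound because $f^{\epsilon}_{\infty}$ and $f_{\infty}$ are Lipschitz and agree on a set to which every point is close (the residual set being covered by small-diameter balls), and the $W^{1,2}$ bound by splitting the integral into the pieces $\{\chi_{m}=1\}$, where $df^{\epsilon}_{\infty}=d\ell_{m}\approx df_{\infty}$, the transition annuli, whose $\upsilon$-measure is $\le\Psi(\rho;K)\upsilon(B_{2R}(x_{\infty}))$ by the annulus volume estimate, and the residual set.

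\emph{Main obstacle.} The crux is the gluing: producing a single $C(n,L)$-Lipschitz function (uniformly in $\epsilon$ and $i$) out of the local models while (i) preserving the convergence $df^{\epsilon}_{i}\to df^{\epsilon}_{\infty}$ through the partition of unity and (ii) keeping the $W^{1,2}$-error small despite the seams. Point~(i) is why the cutoffs $\chi^{i}_{m}$ and the transplanted models $\ell^{i}_{m}$ must be taken with uniform Hessian/Laplacian bounds, so that Remark~\ref{cc}, Proposition~\ref{Hesss} and Corollary~\ref{10107} can all be invoked; point~(ii) works precisely because the local models are $L^{\infty}$-close to $f_{\infty}$, hence to one another on overlaps, which simultaneously bounds the seam gradients and, together with the bounded overlap of the ball cover, the annulus volume estimate and the mutual absolute continuity of $\upsilon$ with $H^{k}$ on the pieces, confines the genuine error to a set of arbitrarily small $\upsilon$-measure. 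Everything else — the reduction to charts (Theorem~\ref{7}, Lemma~\ref{23}), the transplanting of distance functions (Lemmas~\ref{15}--\ref{16}, Propositions~\ref{10102}, \ref{10104}) and passing products and sums to the limit (Corollary~\ref{10107}) — is supplied directly by the results above.
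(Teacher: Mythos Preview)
Your strategy is in the right spirit but diverges substantially from the paper's argument, and the sketch as written has a quantitative gap in the gluing step. The paper does \emph{not} mollify through charts, does \emph{not} use harmonic replacements on $M_i$, and does \emph{not} glue with a partition of unity. Instead it invokes Lemma~\ref{23} to produce, at $\upsilon$-a.e.\ point $w$ of $B_R(x_\infty)\setminus A_\infty$, coefficients $a_l^j(w)$ with $\int_{B_r(w)}|df_\infty-\sum_l a_l^j\,dr_{x_l^j}|^2\,d\upsilon = o(\upsilon(B_r(w)))$; a Vitali cover then yields finitely many \emph{disjoint} balls $\overline{B}_{r_m}(z_m)$ on each of which the local model is the \emph{linear} function $F_m=\sum_l a_l\,r_{x_l^j}+C_m$, and on $M_i$ one simply takes the same linear combination of the transported distance functions $r_{x_l^j(i)}$. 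Convergence $(F_i^j,dF_i^j)\to(F_m,dF_m)$ is then immediate from Proposition~\ref{10104} and Corollary~\ref{10107}. The global Lipschitz bound is obtained not by a partition of unity but by restricting to the shrunk disjoint balls $\overline{B}_{(1-\xi)r_m}(z_m)$ and using the $\xi r_m$-gap together with the segment-inequality estimate $|F_m-f_\infty|\le\Psi(\epsilon;n,L)\,r_m$ (Claim~\ref{2a}); the resulting constant $C(n,L)+\xi^{-1}\Psi(\epsilon;n,L)$ is made $C(n,L)$ by the choice $\xi=\sqrt{\Psi}$. A single McShane extension then pastes in $f_\infty|_{A_\infty}$ and $f_i|_{A_i}$.

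The concrete problem in your sketch is the sentence ``since each $\ell_m$ stays within $C(L)(\rho+\eta)$ of $f_\infty$ on $\operatorname{supp}\chi_m$, $F$ is $C(n,L)$-Lipschitz there.'' For $F=\sum_m\chi_m\ell_m$ one has $dF=\sum_m\chi_m\,d\ell_m+\sum_m(\ell_m-c)\,d\chi_m$, and since $|d\chi_m|$ is of order $(\rho r_m)^{-1}$, a bound on $|\ell_m-f_\infty|$ that does not scale like $r_m$ gives a Lipschitz constant blowing up as $r_m\to 0$. What you actually need (and what the paper proves via the segment inequality from the $L^2$-gradient closeness) is $|\ell_m-f_\infty|\le\Psi(\epsilon;n,L)\,r_m$ on the $m$-th ball, after which a choice $\rho\sim\sqrt{\Psi}$ salvages the bound---this is exactly the paper's $\xi$-trick in disguise. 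Your route can be made to work once this scaling is fixed, but the harmonic replacements, the mollification, and the partition of unity are all unnecessary overhead: linear combinations of distance functions already converge with their differentials by Proposition~\ref{10104}, and disjoint balls plus McShane avoid the seam analysis entirely.
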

\begin{proof}
We fix sufficiently small $\epsilon > 0$ and $\xi > 0$. (We will decide $\xi$ later.)
By Lemma \ref{6} and (the proof of) Theorem \ref{7}, there exist a (pairwise disjoint) collection of Borel set $E_j \subset B_R(x_{\infty})$, positive numbers $\tau_j >0$,
positive integers $1 \le k_j \le n$ and points $x_1^j, \dots, x_{k_j}^j \in Y$ satisfying following properties:
\begin{enumerate}
\item $\upsilon_{\infty}(B_R(x_{\infty}) \setminus \bigcup_j E_j)=0$.
\item $E_j \subset \bigcap_{l=1}^{k_j}(\mathcal{D}_{x_l^j}^{\tau_j} \setminus B_{\tau_j}(x_l^j))$.
\item For every $w \in E_j$, 
\[\langle dr_{x_l^j}, dr_{x_{\hat{l}}^j}\rangle (w)=\lim_{r \rightarrow 0}\frac{1}{\upsilon (B_r(w))}\int_{B_r(w)}\langle dr_{x_l^j}, dr_{x_{\hat{l}}^j}\rangle d\upsilon
= \delta_{l, \hat{l}} \pm \epsilon \]
\item For every $w \in E_j$, there exist $a_1^j(w), \dots, a_{k_j}^j(w) \in \mathbf{R}$ such that
\[\lim_{r \rightarrow 0}\frac{1}{\upsilon (B_r(w))}\int_{B_r(w)}\left| df-d\left( \sum_{l=1}^{k_j}a_l^j(w)r_{x_l^j}\right) \right|^2 d\upsilon =0.\]
\end{enumerate}
For every $w \in E_j$, there exists $0< r_w <<\tau_j$ such that $\overline{B}_t(w) \subset B_R(x_{\infty})$ and 
\[\frac{1}{\upsilon (B_t(w))}\int_{B_t(w)}\left| df-d\left( \sum_{l=1}^{k_j}a_l^jr_{x_l^j}\right) \right|^2 d\upsilon < \epsilon \]
for every $ 0 < t < r_w$.
We put $X= \bigcup_{j=1}^{\infty}(E_j \setminus \overline{B}_{5\xi}(A_{\infty}))$.
By Proposition \ref{cov}, there exists a pairwise disjoint collection $\{\overline{B}_{r_i}(z_i)\}_i \subset B_{R}(x_{\infty})$ such that
$z_i \in X$, $r_i << \min \{r_{z_i}, \epsilon, \xi \}$ and
$X \setminus \bigcup_{i=1}^N\overline{B}_{r_i}(z_i) \subset \bigcup_{i=N+1}^{\infty}\overline{B}_{5r_i}(z_i)$ for every $N$.
For every $i$, we take $l(i)$ satisfying $z_i \in E_{l(i)}$.
We fix $N$ satisfying 
$\sum_{i=N+1}^{\infty}\upsilon (B_{r_i}(z_i)) < \epsilon.$
We take sequences $z_i(j) \rightarrow z_i$ and $x_m^l(j) \rightarrow x_m^l$.
We define a function $F_i^j$ on $B_{r_i}(z_i(j))$ and a function $F_i$ on $B_{r_i}(z_i)$ by 
\[F_i^j = \sum_{m=1}^{k_{l(i)}}a_m^{l(i)}r_{x_m^{l(i)}(j)}+C_{i}, \ F_i = \sum_{m=1}^{k_{l(i)}}a_m^{l(i)}r_{x_m^{l(i)}}+C_{i}. \]
Here $C_{i}$ is the constant satisfying $F_i(z_i)=f_{\infty}(z_i)$.
\begin{claim}\label{1a}
We have $\mathbf{Lip}F_i^j + \mathbf{Lip}F_i \le C(n, L)$ for every $i, j$.
\end{claim}
The proof is as follows:
Since
\begin{align}
|df_{\infty}(z_i)|^2 &= \sum_{s, t} a_s^{l(i)} a_t^{l(i)} \langle dr_{x_s^{l(i)}}, r_{x_t^{l(i)}}\rangle (z_i) \\
&=\sum_{s, t}a_s^{l(i)} a_t^{l(i)}(\delta_{s, t} \pm \epsilon) \\
&=(1 \pm \epsilon)\sum_{s=1}^{l(i)}(a_s^{l(i)})^2 \pm \epsilon \sum_{s \neq t} |a_s^{l(i)}| |a_t^{l(i)}| \\
&=(1 \pm \epsilon)\sum_{s=1}^{l(i)}(a_s^{l(i)})^2 \pm \Psi(\epsilon; n)\sum_{s=1}^{l(i)}(a_s^{l(i)})^2 \\
&=(1 \pm \Psi(\epsilon; n))\sum_{s=1}^{l(i)}(a_s^{l(i)})^2
\end{align}
and $|df_{\infty}|(z_i) \le L$, we have
\[\sum_{m=1}^{k_i}(a_m^{l(i)})^2 \le L^2 + \Psi(\epsilon; n, L).\]
Therefore we have Claim \ref{1a}.
\

Since $\{\overline{B}_{r_i}(z_i(j))\}_{1 \le i \le N}$ are pairwise disjoint for every sufficiently large $j$, we define a function $F_j$ on $\bigcup_{m=1}^N\overline{B}_{(1-\xi)r_i}(z_i(j))$ and 
a function $F_{\infty}$ on $\bigcup_{m=1}^N\overline{B}_{(1-\xi)r_i}(z_i)$ by
$F_j|_{B_{(1-\xi)r_i}(z_i(j))}=F_j^i|_{B_{(1-\xi)r_i}(z_i(j))}, F_{\infty}|_{B_{(1-\xi)r_i}(z_i)}=F_j|_{B_{(1-\xi)r_i}(z_i)}.$
\begin{claim}\label{2a}
We have $\mathbf{Lip}F_j, \mathbf{Lip}F_{\infty} \le C(n, L) + \xi^{-1}\Psi(\epsilon;n, L)$ for every sufficiently large $j$.
\end{claim}
The proof is as follows.
By Claim \ref{1a}, for every $i, j$, we have $\mathbf{Lip}(F_j|_{\overline{B}_{(1-\xi)r_i}(z_i(j))}) + \mathbf{Lip}(F_{\infty}|_{\overline{B}_{(1-\xi)r_i}(z_i)}) \le C(n, L)$.
There exists $j_0$ such that $\epsilon_j << \min \{ \xi r_1, _{\cdots}, \xi r_N\}$ for every $j \ge j_0$.
We fix $j \ge j_0$, $1 \le l < m \le N$, $w_l(j) \in \overline{B}_{(1-\xi)r_l}(z_l(j))$ and $w_m(j) \in \overline{B}_{(1-\xi)r_m}(z_m(j))$.
Since $\overline{B}_{r_l}(z_l(j)) \cap \overline{B}_{r_m}(z_m(j)) = \emptyset$, by taking $\alpha(j) \in \partial B_{r_l}(z_l)$ satisfying $\overline{w_l(j), \alpha(j)}+ \overline{\alpha(j), w_m(j)}=\overline{w_l(j), w_m(j)}$, we have
$\overline{w_l(j), w_m(j)} \ge \overline{w_l(j), \alpha(j)}\ge \xi r_l.$
Similarly, we have $\overline{w_l(j), w_m(j)} \ge \xi r_m$.
Thus, we have $\overline{w_l(j), w_m(j)} \ge \xi (r_l + r_m)/2$.
On the other hand, since
\[\frac{1}{\upsilon (B_{10r_l}(z_l))}\int_{B_{10r_l}(z_l)}\left| \mathrm{Lip}\left(f_{\infty}-\sum_{s=1}^{k_l}a_s^{k_l}r_{x_s^{k_l}}\right) \right|^2d\upsilon < \epsilon,\]
by segment inequality on limit spaces (\cite[Theorem $2.6$]{ch-co3}), 
there exist $\hat{z}_l, \hat{\phi_j(w_l(j))} \in B_{r_l}(z_l)$ and a minimal geodesic $\gamma $ from $\hat{z}_l$ to $\hat{\phi_j(w_l(j))}$ such that
$\overline{z_l, \hat{z}_l} + \overline{\phi_j(w_l(j)), \hat{\phi_j(w_l(j))}} < \Psi(\epsilon;n)r_l$ and that 
\[\int_0^{\overline{\hat{z}_l,\hat{\phi_j(w_l(j))}}}\mathrm{Lip}\left( f_{\infty}-\sum_{s=1}^{k_l}a_s^{k_l}r_{x_s^{k_l}}\right)(\gamma (t))dt < \Psi(\epsilon;n)r_l. \]
Therefore we have
\begin{align}
&\left|f_{\infty}(\hat{z}_l)-\sum_{s=1}^{k_l}a_s^{k_l}r_{x_s^{k_l}}(\hat{z}_l)-\left(f_{\infty}(\hat{\phi_j(z_l(j))})-
\sum_{s=1}^{k_l}a_s^{k_l}r_{x_s^{k_l}}(\hat{\phi_j(z_l(j))})\right) \right| \\
&\le \int_0^{\overline{\hat{z}_l,\hat{\phi_j(w_l(j))}}}\mathrm{Lip}\left( f_{\infty}-\sum_{s=1}^{k_l}a_s^{k_l}r_{x_s^{k_l}}\right)(\gamma (t))dt < \Psi(\epsilon;n)r_l.
\end{align}
Thus 
\[\left|f_{\infty}(z_l)-\sum_{s=1}^{k_l}a_s^{k_l}r_{x_s^{k_l}}(z_l)-\left(f_{\infty}(\phi_j(z_l(j)))-
\sum_{s=1}^{k_l}a_s^{k_l}r_{x_s^{k_l}}(\phi_j(z_l(j)))\right) \right| \le \Psi(\epsilon;n, L)r_l.\]
Especially, we have $|F_j(w_l(j))-f_{\infty}\circ \phi_j(w_l(j))| \le \Psi(\epsilon;n, L)r_l.$  
Similarly, we have $|F_j(w_m(j))-f_{\infty}\circ \phi_j(w_m(j))| \le \Psi(\epsilon;n, L)r_m$ and $|F_{\infty}-f_{\infty}| \le \Psi(\epsilon;n, L)r_l$ on $\overline{B}_{(1-\xi)r_l}(z_l)$.
Therefore we have 
\begin{align}
|F_j(w_l(j))-F_j(w_m(j))| &\le |f_{\infty}\circ \phi_j(w_l(j))-f_{\infty} \circ \phi_j(w_l(j))| + \Psi(\epsilon;n, L)(r_l+ r_m) \\
&\le L \overline{\phi_j(w_l(j)), \phi_j(w_m(j))} + \Psi(\epsilon;n, L)(r_l+ r_m) \\
&\le L(\overline{w_l(j), w_m(j)}+\epsilon_j) + \Psi(\epsilon;n, L)(r_l+ r_m) \\
&\le L\overline{w_l(j), w_m(j)} + \Psi(\epsilon;n, L)(r_l+ r_m) \\
&\le (L+ \xi^{-1}\Psi(\epsilon;n, L))\overline{w_l(j), w_m(j)}.
\end{align}
Thus, by Claim \ref{1a}, we have $\mathbf{Lip}F_j \le C(n, L) + \xi^{-1}\Psi(\epsilon;n, L)$.
Similarly, we have $\mathbf{Lip}F_{\infty} \le C(n, L) + \xi^{-1}\Psi(\epsilon;n, L)$.
Therefore we have Claim \ref{2a}.
\
\begin{claim}\label{3a}
For every sufficiently large $j$, we have $\bigcup_{i=1}^N\overline{B}_{(1-\xi)r_i}(z_i(j)) \subset M_i \setminus B_{2\xi}(A_i)$ and 
$\bigcup_{i=1}^N\overline{B}_{(1-\xi)r_i}(z_i) \subset Y \setminus B_{2\xi}(A_{\infty})$.
\end{claim}
Because, by the definition, we have $\bigcup_{i=1}^N\overline{B}_{r_i}(z_i) \subset Y \setminus B_{2\xi}(A_{\infty})$.
On the other hand, by the assumption, there exists $i_0$ such that for every $i \ge i_0$, we have 
$\phi_i(A_i) \subset B_{\xi}(A_{\infty})$ and $\epsilon_i << \min_{1 \le j \le N} \{\xi r_j\}$.
Thus, since $\phi_i(\bigcup_{i=1}^N\overline{B}_{(1-\xi)r_i}(z_i(j))) \subset \bigcup_{i=1}^N\overline{B}_{r_i}(z_i) \subset Y \setminus B_{4\xi}(A_{\infty})$ for every $i \ge i_0$, we have Claim \ref{3a}.
\

On the other hand, we  remark the following claim:
\begin{claim}\label{4a}
We have 
\[\lim_{i \rightarrow \infty}\sup_{A_i}|f_i-f_{\infty}\circ \phi_i|=0.\]
\end{claim}
The proof is done by a contradiction.
We assume that the assertion is false.
Then, there exist $\tau >0$, a subsequence $\{n(i)\}$ of $\mathbf{N}$ and $\alpha_i \in A_{n(i)}$ such that  $|f_{n(i)}(\alpha_i)-f_{\infty}\circ \phi_{n(i)}(\alpha_i)| > \tau$.
Without loss of generality, we can assume that there exists $\alpha_{\infty} \in Y$ such that $\phi_{n(i)}(\alpha_i) \rightarrow \alpha_{\infty}$.
Thus, $\liminf_{i \rightarrow \infty}|f_{n(i)}(\alpha_i)-f_{\infty}(\alpha_{\infty})|\ge \tau$.
On the other hand, by the assumption, we have $\alpha_{\infty} \in \overline{A_{\infty}}=A_{\infty}$.
Since $f_{\infty}|_{A_{\infty}}$ is an extension of $\{f_i\}$ asymptotically, this is a cotradiction. Therefore we have Claim \ref{4a}.
\

We put $W_j = \bigcup_{m=1}^NB_{(1-\xi)r_i}(z_i(j))$ and $W_{\infty}= \bigcup_{m=1}^NB_{(1-\xi)r_i}(z_i)$.
By Claim \ref{3a},
we can define a Lipschitz function $G_j$ on $W_j \cup A_j$ and a Lipschitz function $G_{\infty}$
on $W_{\infty} \cup A_{\infty}$ by
$G_j|_{W_j}=F_j|_{W_j}$, $G_j|_{A_j}=f_j$, $G_{\infty}|_{W_{\infty}}=F_{\infty}|_{W_{\infty}}$ and $G_{\infty}|_{A_{\infty}}=f_{\infty}|_{A_{\infty}}$.
\begin{claim}\label{5a}
We have $\mathbf{Lip}G_j, \mathbf{Lip}G_{\infty} \le C(n, L) + \xi^{-1}\Psi(\epsilon;n, L)$ for every sufficiently large $j$.
\end{claim}
The proof is as follows.
We put $\xi_j = \sup_{A_j}|f_j-f_{\infty}\circ \phi_j|$.
Then by the proof of Claim \ref{2a}, there exists $j_0$ such that for every $j \ge j_0$,
$\alpha_j \in \overline{B}_{(1-\xi)r_i}(z_i(j))$ and $\beta_j \in A_j$, we have
\begin{align}
|G_j(\alpha_j)-G_j(\beta_j)| &= |F_j(\alpha_j)-f_j(\beta_j)| \\
&\le |F_{\infty}\circ \phi_j(\alpha_j)-f_{\infty}\circ \phi_j(\beta_j)| + \Psi(\epsilon;n, L)r_i + \xi_j \\
&\le |f_{\infty}\circ \phi_j(\alpha_j)-f_{\infty}\circ \phi_j(\beta_j)| + \Psi(\epsilon;n, L)r_i + \xi_j \\
&\le L \overline{\phi_j(\alpha_j), \phi_j(\beta_j)} + \Psi(\epsilon;n, L)r_i \\
&\le L (\overline{\alpha_j, \beta_j} + \epsilon_j) + \Psi(\epsilon;n, L)\xi \\
&\le (L + \Psi(\epsilon;n, L))\overline{\alpha_j, \beta_j}.
\end{align}
Therefore, by Claim \ref{2a}, we have $\mathbf{Lip}G_j \le C(n, L) + \xi^{-1}\Psi(\epsilon;n, L)$ for every sufficiently large $j$.
Similarly, we have $\mathbf{Lip}G_{\infty} \le C(n, L) + \xi^{-1}\Psi(\epsilon;n, L)$.
Thus, we have Claim \ref{5a}.

For $\Psi= \Psi (\epsilon; n, L)$ in Claim \ref{5a}, we put $\xi= \sqrt{\Psi}$.
We take a Lipschitz function $f_j^{\epsilon}$ on $M_i$ and a Lipschitz function $f_{\infty}^{\epsilon}$ on $Y$ satisfying that $\mathbf{Lip}f_j^{\epsilon} = \mathbf{Lip}G_j$, $\mathbf{Lip}f_{\infty}^{\epsilon}=\mathbf{Lip}G_{\infty}$, $f_j^{\epsilon}|_{W_j \cup A_j}=F_j|_{W_j \cup A_j}$ and $f_{\infty}^{\epsilon}|_{W_{\infty} \cup A_{\infty}}=F_{\infty}|_{W_{\infty} \cup A_{\infty}}$.
We put $\Omega_{\epsilon}= W_{\infty}$.
Then, by the definition, Proposition \ref{10104} and Corollary \ref{10107}, we have $(f_i^{\epsilon}, df_i^{\epsilon}) \rightarrow (f_{\infty}^{\epsilon}, df_{\infty}^{\epsilon})$ on $\Omega_{\epsilon}$.
We have
\begin{align}
\int_{B_R(x_{\infty})}|df_{\infty}-df_{\infty}^{\epsilon}|^2d\upsilon &\le \int_{X \setminus \overline{B}_{5\xi}(A_{\infty})} |df_{\infty}-df_{\infty}^{\epsilon}|^2d\upsilon + \int_{\overline{B}_{5\xi}(A_{\infty})}|df_{\infty}-df_{\infty}^{\epsilon}|^2d\upsilon \\
&\le \sum_{i=1}^N\int_{B_{(1-\xi)r_i}(z_i)}|df_{\infty}-df_{\infty}^{\epsilon}|^2d\upsilon \\
&\pm \left( 5L^2\upsilon(B_{5\xi}(A_{\infty})\setminus A_{\infty}) + \int_{A_{\infty}}|df_{\infty}^{\epsilon}-df_{\infty}|^2d\upsilon + \Psi(\epsilon; n, L) \right) \\
&\le \sum_{i=1}^N \epsilon \upsilon (B_{(1-\xi)r_i}(z_i)) \pm \left( 5L^2\upsilon(B_{5\xi}(A_{\infty})\setminus A_{\infty}) + \Psi(\epsilon; n, L) \right)  \\
&\le \epsilon \upsilon (B_R(x_{\infty})) \pm \left( 5L^2\upsilon(B_{5\xi}(A_{\infty})\setminus A_{\infty}) + \Psi(\epsilon; n, L) \right)
\end{align}
and
\begin{align}
\upsilon (B_R(x_{\infty}) \setminus (\Omega_{\epsilon} \cup A_{\infty})) &\le \upsilon (X \setminus (\Omega_{\epsilon} \cup A_{\infty}))
+ \upsilon (\overline{B}_{\xi}(A_{\infty}) \setminus A_{\infty}) \\
&\le \sum_{i=N+1}^{\infty}\upsilon (B_{5r_i}(z_i)) + \upsilon (\overline{B}_{\xi}(A_{\infty}) \setminus A_{\infty}) \\
& \le C(n) \epsilon + \upsilon (\overline{B}_{\xi}(A_{\infty}) \setminus A_{\infty}).
\end{align}
Since $A_{\infty}$ is compact, we remark that $\lim_{r \rightarrow 0}\upsilon(B_r(A_{\infty}) \setminus A_{\infty})=0$.
We put $ \tau(r) = \upsilon(B_r(A_{\infty}) \setminus A_{\infty})$.
On the other hand, by the proof of Claim \ref{2a}, we have $|f_{\infty}^{\epsilon}-f_{\infty}| < \Psi(\epsilon;n, L)$ on $\Omega_{\epsilon} \cup A_{\infty}$.
For every $w \in B_R(x_{\infty})$, there exists $\hat{w} \in \Omega_{\epsilon} \cup A_{\infty}$ such that $\overline{w, \hat{w}} < \Psi(\epsilon, \tau(5\xi);n, L, \upsilon(B_R(x_{\infty})))$.
Therefore, we have $|f_{\infty}^{\epsilon}(w)-f_{\infty}(w)| \le |f_{\infty}^{\epsilon}(\hat{w})-f_{\infty}(\hat{w})|+ \Psi(\epsilon, \tau(5\xi);n, L, \upsilon(B_R(x_{\infty}))) \le \Psi(\epsilon, \tau(5\xi);n, L, \upsilon(B_R(x_{\infty})))$. Thus, we have $|f_{\infty}^{\epsilon}-f_{\infty}| < \Psi(\epsilon, \tau(5\xi);n, L, \upsilon(B_R(x_{\infty})))$ on $B_R(x_{\infty})$.
Therefore, 
we have the assertion.
\end{proof}
As a corollary of Theorem \ref{app}, we shall give a sufficient condition to satisfy infinitesimal upper semicontinuity of energy:
\begin{proposition}\label{energy}
Let $R$ be a positive number, $f_i$ a $C^2$-function on $B_R(x_i) (i \in \mathbf{N})$, $f_{\infty}$ a Lipschitz function on $\overline{B}_R(x_{\infty})$. Assume that 
\[\sup_i \left(\mathbf{Lip}f_i  + \int_{B_R(x_i))}|\Delta f_i|d \underline{\mathrm{vol}} \right)< \infty\]
 and $f_i \rightarrow f_{\infty}$ on $B_R(x_{\infty})$.
Then, we have 
\[\limsup_{i \rightarrow \infty}\int_{B_R(x_i)}(\mathrm{Lip}f_i)^2d\underline{\mathrm{vol}} \le \int_{B_R(x_{\infty})}(\mathrm{Lip}f_{\infty})^2d\upsilon.\]
Especially, the sequence $\{f_i\}_i$ has infinitesimal upper semicontinuity of energy to $f_{\infty}$ at every $w \in B_R(x_{\infty})$.
\end{proposition}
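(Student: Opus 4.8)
The plan is to deduce this from the Approximation theorem (Theorem~\ref{app}) by an integration-by-parts argument that compensates for the fact that $f_i$ need not minimize energy. First I would reduce to the global inequality
\[\limsup_{i\to\infty}\int_{B_R(x_i)}(\mathrm{Lip}f_i)^2\,d\underline{\mathrm{vol}}\le\int_{B_R(x_\infty)}(\mathrm{Lip}f_\infty)^2\,d\upsilon;\]
the ``especially'' part then follows by applying this same inequality with $(x_\infty,R)$ replaced by $(w,t)$ --- the hypotheses are inherited on $B_t(w_i)$ since $\mathbf{Lip}(f_i|_{B_t(w_i)})$ and $\int_{B_t(w_i)}|\Delta f_i|\,d\underline{\mathrm{vol}}$ are uniformly bounded and $(M_i,m_i,w_i,\underline{\mathrm{vol}})\to(Y,y,w,\upsilon)$ --- and then dividing by $\underline{\mathrm{vol}}\,B_t(w_i)\to\upsilon(B_t(w))>0$.

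For the global inequality, fix $\epsilon>0$, pick $L\ge\mathbf{Lip}f_\infty+\sup_i\mathbf{Lip}f_i$, and apply Theorem~\ref{app} with $A_i=A_\infty=\emptyset$ (so the compatibility hypothesis is vacuous). This produces an open set $\Omega_\epsilon\subset B_R(x_\infty)$, a $C(n,L)$-Lipschitz function $f_\infty^\epsilon$ on $B_R(x_\infty)$ and $C(n,L)$-Lipschitz functions $f_i^\epsilon$ on $B_R(x_i)$ with $(f_i^\epsilon,df_i^\epsilon)\to(f_\infty^\epsilon,df_\infty^\epsilon)$ on $\Omega_\epsilon$, $\upsilon(B_R(x_\infty)\setminus\Omega_\epsilon)<\epsilon$, $|f_\infty-f_\infty^\epsilon|_{L^\infty}<\epsilon$ and $\int_{B_R(x_\infty)}|df_\infty^\epsilon-df_\infty|^2\,d\upsilon<\epsilon$. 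Since balls of fixed radius in $B_R(x_\infty)$ have a positive $\upsilon$-lower bound (Bishop--Gromov and the doubling property), the small-measure complement of $\Omega_\epsilon$ cannot contain a ball of fixed size, so $\Omega_\epsilon$ is $\Psi(\epsilon)$-dense in $B_R(x_\infty)$; combined with $f_i\to f_\infty$, $f_i^\epsilon\to f_\infty^\epsilon$ on $\Omega_\epsilon$ and the uniform Lipschitz bounds, this gives $\limsup_i|f_i-f_i^\epsilon|_{L^\infty(B_R(x_i))}\le\Psi(\epsilon;n,L,R)$.

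Now fix $\delta>0$, choose a cut-off $\phi_i\in C_c^\infty(B_R(x_i))$ with $\phi_i\equiv1$ on $B_{R-\delta}(x_i)$ and $|\nabla\phi_i|\le C(R,\delta)$, and test $\Delta f_i$ against $\phi_i^2(f_i-f_i^\epsilon)$. Integration by parts together with $\nabla(\phi_i^2(f_i-f_i^\epsilon))=\phi_i^2(\nabla f_i-\nabla f_i^\epsilon)+2\phi_i(f_i-f_i^\epsilon)\nabla\phi_i$ gives
\[\int_{B_R(x_i)}\phi_i^2|\nabla f_i|^2\,d\underline{\mathrm{vol}}=\int_{B_R(x_i)}\phi_i^2\langle\nabla f_i,\nabla f_i^\epsilon\rangle\,d\underline{\mathrm{vol}}-2\int_{B_R(x_i)}\phi_i(f_i-f_i^\epsilon)\langle\nabla f_i,\nabla\phi_i\rangle\,d\underline{\mathrm{vol}}-\int_{B_R(x_i)}\phi_i^2(f_i-f_i^\epsilon)\Delta f_i\,d\underline{\mathrm{vol}}.\]
The third term is at most $|f_i-f_i^\epsilon|_{L^\infty}\sup_i\int_{B_R(x_i)}|\Delta f_i|\,d\underline{\mathrm{vol}}$; the second, supported on the $\delta$-annulus, is at most $C(R,\delta)L\,|f_i-f_i^\epsilon|_{L^\infty}\underline{\mathrm{vol}}\,B_R(x_i)$ with $\underline{\mathrm{vol}}\,B_R(x_i)$ bounded by Bishop--Gromov; and by Cauchy--Schwarz the first is at most $\bigl(\int\phi_i^2|\nabla f_i|^2\bigr)^{1/2}\bigl(\int_{B_R(x_i)}|\nabla f_i^\epsilon|^2\bigr)^{1/2}$. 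Solving the ensuing quadratic inequality in $\bigl(\int\phi_i^2|\nabla f_i|^2\bigr)^{1/2}$ yields $\int_{B_R(x_i)}\phi_i^2|\nabla f_i|^2\,d\underline{\mathrm{vol}}\le\int_{B_R(x_i)}|\nabla f_i^\epsilon|^2\,d\underline{\mathrm{vol}}+\Psi(\epsilon,\delta;n,L,R)$ for $i$ large. To bound the right-hand side I split $B_R(x_i)$ over $\Omega_\epsilon$ and its complement: on the former, $(f_i^\epsilon,df_i^\epsilon)\to(f_\infty^\epsilon,df_\infty^\epsilon)$ on $\Omega_\epsilon$ gives via Theorem~\ref{10105} that $\{(\mathrm{Lip}f_i^\epsilon)^2\}$ has infinitesimal constant convergence property to $(\mathrm{Lip}f_\infty^\epsilon)^2$ at a.e.\ point of $\Omega_\epsilon$, so Proposition~\ref{sup1} applied to a compact exhaustion of $\Omega_\epsilon$ gives $\limsup_i\int|\nabla f_i^\epsilon|^2\le\int_{\Omega_\epsilon}(\mathrm{Lip}f_\infty^\epsilon)^2\,d\upsilon$ over that part; on the complement $|\nabla f_i^\epsilon|\le C(n,L)$ and Proposition~\ref{sup} bounds the $\limsup$ of its measure by $\upsilon(\overline B_R(x_\infty)\setminus\Omega_\epsilon)<\epsilon$ up to the $\delta$-boundary layer. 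Using also $\int_{B_R(x_\infty)}(\mathrm{Lip}f_\infty^\epsilon)^2\,d\upsilon\le\int_{B_R(x_\infty)}(\mathrm{Lip}f_\infty)^2\,d\upsilon+\Psi(\epsilon;L,\upsilon(B_R(x_\infty)))$ (from $\int|df_\infty^\epsilon-df_\infty|^2<\epsilon$ and Cauchy--Schwarz) and the trivial $\int_{B_R(x_i)}(\mathrm{Lip}f_i)^2\le\int_{B_R(x_i)}\phi_i^2|\nabla f_i|^2+L^2\underline{\mathrm{vol}}(B_R(x_i)\setminus B_{R-\delta}(x_i))$, I then let $i\to\infty$, $\delta\to0$, $\epsilon\to0$ (the $\delta$-boundary term vanishing, as spheres are $\upsilon$-null in Ricci limit spaces) to conclude.

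The main obstacle is exactly the non-minimality of $f_i$: one cannot compare energies directly, and the integration-by-parts identity is the device that converts the comparison with the Approximation-theorem sequence $\{f_i^\epsilon\}$ into controllable errors --- here the hypothesis $\sup_i\int_{B_R(x_i)}|\Delta f_i|\,d\underline{\mathrm{vol}}<\infty$ is used in an essential way, paired against the $L^\infty$-smallness of $f_i-f_i^\epsilon$ (which itself relies on the density of $\Omega_\epsilon$). A secondary, routine point is the passage to the limit of $\int|\nabla f_i^\epsilon|^2$ over the region where convergence is known only on the open set $\Omega_\epsilon$ rather than all of $B_R(x_\infty)$, which the uniform Lipschitz bound and Proposition~\ref{sup} handle, together with the harmless $\delta$-boundary layer near $\partial B_R(x_i)$.
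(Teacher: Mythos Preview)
Your argument is correct, and it shares the essential idea with the paper's proof: produce via Theorem~\ref{app} a comparison sequence $f_i^{\epsilon}$ whose energies converge to $\int|df_\infty|^2$, and then use the $L^1$-bound on $\Delta f_i$ paired against the $L^\infty$-smallness of $f_i-f_i^{\epsilon}$ to transfer the energy estimate to $f_i$. Where you diverge is in the mechanism for that transfer. The paper applies Theorem~\ref{app} with $A_i=A_{R-\epsilon,R}(x_i)$ (the boundary annulus) rather than $A_i=\emptyset$, so that $f_i^{\epsilon}$ agrees with $f_i$ near $\partial B_R(x_i)$ and $k=f_i^{\epsilon}-f_i$ is compactly supported; the elementary variational identity
\[
\int_{B_R(x_i)}|d(f_i+k)|^2-2\int_{B_R(x_i)}(\Delta f_i)(f_i+k)\;\ge\;\int_{B_R(x_i)}|df_i|^2-2\int_{B_R(x_i)}(\Delta f_i)\,f_i
\]
(equality up to $\int|dk|^2$) then gives $\int|df_i|^2\le\int|df_i^{\epsilon}|^2+2\sup|f_i^{\epsilon}-f_i|\cdot\int|\Delta f_i|$ directly, with no cutoff, no Cauchy--Schwarz, and no quadratic to solve. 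Your route---$A_i=\emptyset$, cutoff $\phi_i$, integration by parts, Cauchy--Schwarz on the cross term, and the quadratic trick---reaches the same conclusion but with more moving parts; the paper's annulus choice is precisely the device that eliminates them. Both proofs need, and obtain in the same way, the density of $\Omega_\epsilon$ to propagate $|f_i-f_i^{\epsilon}|_{L^\infty}\le\Psi(\epsilon)$.
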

\begin{proof}
We put $g_i = \Delta f_i$.
First, we shall remark  the following:
\begin{claim}\label{1f}
For every Lipschitz function $k$ on $B_R(x_i)$ satisfying $\mathrm{supp}k \subset B_R(x_i)$, we have 
\[\int_{B_R(x_i)}|d(f_i + k)|^2d\underline{\mathrm{vol}}-2\int_{B_R(x_i)}g_i(f_i+k)d\underline{\mathrm{vol}} \ge 
\int_{B_R(x_i)}|df_i|^2d\underline{\mathrm{vol}}-2\int_{B_R(x_i)}g_if_id\underline{\mathrm{vol}}.\]
\end{claim}
Because, since
\begin{align*}
\int_{B_R(x_i)}|d(f_i + k)|^2d\underline{\mathrm{vol}}-2\int_{B_R(x_i)}g_i(f_i+k)d\underline{\mathrm{vol}}&= 
\int_{B_R(x_i)}|df_i|^2d\underline{\mathrm{vol}}-2\int_{B_R(x_i)}g_if_id\underline{\mathrm{vol}} \\
&+ \int_{B_R(x_i)}|dk|^2d\underline{\mathrm{vol}},
\end{align*}
we have Claim \ref{1f}.
\

We fix $\epsilon > 0$ and take $L \ge 1$ satisfying 
\[\sup_i \left(\mathbf{Lip}f_i + |f_i|_{L^{\infty}}+
\int_{B_R(x_i)}|g_{i}|d\underline{\mathrm{vol}}\right)<L.\]
Since $\limsup_{i \rightarrow }A_{R-\epsilon, R}(x_i) \subset A_{R-\epsilon, R}(x_{\infty})$, by Theorem \ref{app}, there exist a $C(n, L)$-Lipschitz function $f_{\infty}^{\epsilon}$ on $B_R(x_{\infty})$,
a $C(n, L)$-Lipschitz function $f_i^{\epsilon}$ on $B_R(x_{i})$ and an open set $\Omega_{\epsilon} \subset B_R(x_{\infty}) \setminus A_{R-\epsilon, R}(x_{\infty})$ such that $f_{\infty}^{\epsilon}|_{A_{R-\epsilon, R}(x_{\infty})}= f_{\infty}|_{A_{R-\epsilon, R}(x_{\infty})}$, $f_{i}^{\epsilon}|_{A_{R-\epsilon, R}(x_{i})}= f_{i}|_{A_{R-\epsilon, R}(x_{i})}$, $(f_i^{\epsilon}, df_{i}^{\epsilon}) \rightarrow 
(f_{\infty}^{\epsilon}, df_{\infty}^{\epsilon})$ on $\Omega_{\epsilon}$ and that 
\begin{align*}
&\frac{\upsilon \left(B_R(x_{\infty}) \setminus (\Omega_{\epsilon} \cup A_{R-\epsilon, R}(x_{\infty}))\right)}{\upsilon(B_R(x_{\infty}))} + |f_{\infty}- f_{\infty}^{\epsilon}|_{L^{\infty}(B_R(x_{\infty}))} + \frac{1}{\upsilon_{\infty}(B_R(x_{\infty}))}\int_{B_R(x_{\infty})} |df_{\infty}^{\epsilon}-df_{\infty}|^2d\upsilon \\
&< \epsilon. 
\end{align*}
By Claim \ref{1f}, we have 
\[\int_{B_R(x_i)}|df_i^{\epsilon}|^2d\underline{\mathrm{vol}}-2\int_{B_R(x_i)}g_i f_i^{\epsilon}d\underline{\mathrm{vol}} \ge 
\int_{B_R(x_i)}|df_i|^2d\underline{\mathrm{vol}}-2\int_{B_R(x_i)}g_if_id\underline{\mathrm{vol}}.\]
By Proposition \ref{cov}, without loss of generality, we can assume that there exists a pairwise disjoint finite collection $\{\overline{B}_{r_i}(z_i)\}_{1 \le i \le N }$
such that $\Omega_{\epsilon}= \bigcup_{i=1}^NB_{r_i}(z_i)$.
We take a sequence $z_i(j) \rightarrow z_i$. We put $\Omega_{\epsilon}(j)=  \bigcup_{i=1}^NB_{r_i}(z_i(j))$.
Since $\underline{\mathrm{vol}} (\Omega_{\epsilon}(j) \cup A_{R-\epsilon, R}(x_{i}))/\underline{\mathrm{vol}}\,B_R(x_i) \ge 1- \epsilon$ for 
every sufficiently large $j$, by Proposition \ref{10103}, we have 
\[\left| \int_{B_R(x_j)}|df_j^{\epsilon}|^2d\underline{\mathrm{vol}}- \int_{B_R(x_{\infty})}|df_{\infty}|^2d\upsilon \right| 
< \Psi(\epsilon;n, L, R)\upsilon (B_R(x_{\infty})).\]
On the other hand, since $\sup _{B_R(x_j)}|f_j^{\epsilon}-f_j| \le C(n, R, L)\sup _{\Omega_{\epsilon}(j)}|f_j^{\epsilon}-f_j|$ and  $\limsup_{j \rightarrow \infty} \sup _{\Omega_{\epsilon}(j)}|f_j^{\epsilon}-f_j| \le \sup_{\Omega_{\epsilon}}|f_{\infty}^{\epsilon}-f_{\infty}|$, we have 
\[\left| \int_{B_R(x_j)}g_j f_j^{\epsilon}d\underline{\mathrm{vol}}-\int_{B_R(x_{j})}g_{j} f_{j}d\underline{\mathrm{vol}} \right|
\le \sup_{B_R(x_j)}|f_j^{\epsilon}-f_j|\int_{B_R(x_j)}|g_j|d\underline{\mathrm{vol}}
\le \Psi(\epsilon;n, R, L)\upsilon (B_R(x_{\infty}))\]
for every sufficiently large $j$.
Therefore, by Proposition \ref{10103}, we have 
\[\limsup_{i \rightarrow \infty} \int_{B_R(x_i)}|df_i|^2d\underline{\mathrm{vol}} 
\le \int_{B_R(x_{\infty})}|df_{\infty}^{\epsilon}|^2d\upsilon + \Psi(\epsilon;n, L, R)\upsilon (B_R(x_{\infty})).\]
Thus, we have 
\[\limsup_{i \rightarrow \infty} \int_{B_R(x_i)}|df_i|^2d\underline{\mathrm{vol}} \le \int_{B_R(x_{\infty})}|df_{\infty}|^2d\upsilon + \Psi(\epsilon;n, L, R)\upsilon (B_R(x_{\infty})).\]
By letting $\epsilon \rightarrow 0$, we have the assertion.
\end{proof}
Next corollary follows from Remark \ref{cc} and Proposition \ref{energy} directly:
\begin{corollary}\label{suffconv}
Let $R$ be a positive number, $f_i$ a $C^2$-function on $B_R(x_i)$ and $f_{\infty}$ Lipschitz functions on $B_R(x_{\infty})$.
Assume that 
\[\sup_i \left(\mathbf{Lip}f_i + \int_{B_R(x_i))}|\Delta f_i|^2d \underline{\mathrm{vol}} \right)< \infty\]
and
$f_i \rightarrow f_{\infty}$ on $B_R(x_{\infty})$.
Then, we have $(f_i, df_i) \rightarrow (f_{\infty}, df_{\infty})$ on $B_R(x_{\infty})$.
\end{corollary}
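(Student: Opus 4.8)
The plan is to deduce Corollary \ref{suffconv} from Proposition \ref{energy} together with the Hessian estimate recorded in Remark \ref{cc}. The statement we must prove says that if $f_i$ is a $C^2$-function on $B_R(x_i)$ with $\sup_i(\mathbf{Lip}f_i + \int_{B_R(x_i)}|\Delta f_i|^2 d\underline{\mathrm{vol}}) < \infty$ and $f_i \rightarrow f_{\infty}$ on $B_R(x_{\infty})$, then $(f_i, df_i) \rightarrow (f_{\infty}, df_{\infty})$ on $B_R(x_{\infty})$. Unwinding Definition \ref{Lip}, this amounts to two things at every $w \in B_R(x_{\infty})$: first, that for every $z_i \rightarrow z \in Y$ the sequence $\{\langle dr_{z_i}, df_i\rangle\}_i$ has infinitesimal convergence property to $\langle dr_{z_{\infty}}, df_{\infty}\rangle$ at $w$; and second, that $\{f_i\}_i$ has infinitesimal upper semicontinuity of energy to $f_{\infty}$ at $w$.

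First I would handle the energy condition. The hypothesis $\sup_i \int_{B_R(x_i)}|\Delta f_i|^2 d\underline{\mathrm{vol}} < \infty$ immediately implies $\sup_i \int_{B_R(x_i)}|\Delta f_i| d\underline{\mathrm{vol}} < \infty$ by Cauchy--Schwarz, since the renormalized measure of a fixed ball is bounded (via Bishop--Gromov, Theorem \ref{BG}, and the measured convergence). Hence the hypotheses of Proposition \ref{energy} are satisfied, and that proposition yields $\limsup_{i \rightarrow \infty}\int_{B_R(x_i)}(\mathrm{Lip}f_i)^2 d\underline{\mathrm{vol}} \le \int_{B_R(x_{\infty})}(\mathrm{Lip}f_{\infty})^2 d\upsilon$, which is precisely the ``especially'' clause of that proposition stating that $\{f_i\}_i$ has infinitesimal upper semicontinuity of energy to $f_{\infty}$ at every $w \in B_R(x_{\infty})$. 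So that half is done with essentially no work.

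The remaining half is the radial-derivative condition, and this is where Remark \ref{cc} enters. For a slightly smaller radius $R' < R$, the Hessian estimate of Remark \ref{cc} (applied with the $L^2$ bound on $\Delta f_i$, which dominates the $L^1$ bound it requires) gives $\sup_i \int_{B_{R'}(x_i)}|\mathrm{Hess}_{f_i}|^2 d\underline{\mathrm{vol}} < \infty$. This is exactly the hypothesis of Proposition \ref{Hesss}, so that proposition furnishes a Borel subset $A \subset B_{R'}(x_{\infty})$ of full measure such that for every $z \in A$ and every $w_i \rightarrow w \in Y$ the sequence $\{\langle dr_{w_i}, df_i\rangle\}$ has infinitesimal constant convergence property --- in particular infinitesimal convergence property --- to $\langle dr_{w_{\infty}}, df_{\infty}\rangle$ at $z$. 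Combining with the energy statement, we conclude $df_i \rightarrow df_{\infty}$ at a.e.\ $w \in B_{R'}(x_{\infty})$, and then Corollary \ref{10106} (which upgrades almost-everywhere convergence of differentials to convergence on the whole open set) gives $df_i \rightarrow df_{\infty}$ on $B_{R'}(x_{\infty})$ for every $R' < R$. Together with $f_i \rightarrow f_{\infty}$, this is $(f_i, df_i) \rightarrow (f_{\infty}, df_{\infty})$ on $B_R(x_{\infty})$.

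I do not expect a serious obstacle here: the corollary is really just the composition of Remark \ref{cc}, Proposition \ref{Hesss}, Proposition \ref{energy}, and Corollary \ref{10106}, with the $L^2$-to-$L^1$ reduction of the Laplacian bound and the shrinking-radius trick being the only minor bookkeeping. The one point to be careful about is that Proposition \ref{Hesss} and Remark \ref{cc} are stated on $B_R(x_i)$ with the Hessian/Laplacian controlled on that same ball, whereas the cut-off argument in Remark \ref{cc} naturally produces control on a smaller ball; hence the passage through an exhaustion $R' \uparrow R$, after which Corollary \ref{10106} applied on each $B_{R'}(x_{\infty})$ and a standard exhaustion argument closes the gap. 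If one prefers to avoid the shrinking radius, one can instead note that the definition of $(f_i, df_i) \rightarrow (f_{\infty}, df_{\infty})$ on $B_R(x_{\infty})$ is a pointwise condition at each $w \in B_R(x_{\infty})$, and for any such $w$ one works inside a ball $B_{R'}(x_{\infty})$ with $w \in B_{R'}(x_{\infty}) \Subset B_R(x_{\infty})$, so the estimates above apply verbatim near $w$.
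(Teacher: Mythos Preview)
Your proposal is correct and follows essentially the same approach as the paper, whose proof is the single line ``It follows from Remark \ref{cc} and Proposition \ref{energy} directly.'' You have simply made explicit the intermediary steps that the paper leaves to the reader: the Cauchy--Schwarz reduction from $L^2$ to $L^1$ control of $\Delta f_i$, the use of Proposition \ref{Hesss} to obtain the radial-derivative condition almost everywhere once Remark \ref{cc} supplies the Hessian bound on a slightly smaller ball, and the invocation of Corollary \ref{10106} to pass from almost-everywhere to everywhere convergence of differentials.
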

\begin{corollary}\label{convergence}
Let $R$ be a positive number, $f_i$ a $C^2$-function on $B_R(x_i)$ and $f_{\infty}$ a Lipschitz function on $B_R(x_{\infty})$ satisfying
$\sup_{i}(\mathbf{Lip}f_i +  |\Delta f_i|_{L^{\infty}(B_R(x_i))})< \infty$.
We assume that $f_i \rightarrow f_{\infty}$ on $B_R(x_{\infty})$
 and that there exists a $L^{\infty}$-function $g_{\infty}$ on $B_R(x_{\infty})$ such that  
$\{\Delta f_i\}_i$ has infinitesimal convergence property to $g_{\infty}$ at a.e. $w \in B_R(x_{\infty})$.
Then,  for every Lipschitz function $k_{\infty}$ satisfying $\mathrm{supp}k_{\infty} \subset B_R(x_{\infty})$, we have 
\[\int_{B_R(x_{\infty})}\langle df_{\infty}, dk_{\infty}\rangle d\upsilon = \int_{B_R(x_{\infty})}k_{\infty}g_{\infty}d\upsilon.\]
\end{corollary}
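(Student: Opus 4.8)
The plan is to pass to the limit in the manifold-level integration-by-parts identity
\[
\int_{B_R(x_i)}\langle df_i, dk_i\rangle\, d\underline{\mathrm{vol}} = \int_{B_R(x_i)}k_i\,\Delta f_i\, d\underline{\mathrm{vol}},
\]
which is valid for any Lipschitz $k_i$ whose support is compactly contained in $B_R(x_i)$, since $f_i$ is $C^2$ and there is no boundary term. To feed this into the limit we first need good approximations $k_i$ of $k_\infty$ living on the manifolds. Choose $R'<R$ with $\operatorname{supp}k_\infty\subset\overline{B}_{R'}(x_\infty)$, extend $k_\infty$ by $0$ to an $L$-Lipschitz function on $\overline{B}_R(x_\infty)$ with $L=\mathbf{Lip}\,k_\infty$, and apply the approximation theorem (Theorem \ref{app}) with $A_\infty=\overline{B}_R(x_\infty)\setminus B_{R'}(x_\infty)$, $A_i=\overline{B}_R(x_i)\setminus B_{R'}(x_i)$ and the asymptotically vanishing data $0$ on $A_i$; here one uses Example \ref{d1} and Proposition \ref{compact3} to see $\limsup_i A_i\subset A_\infty$. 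This produces, for each $\epsilon>0$, $C(n,L)$-Lipschitz functions $k_i^\epsilon$ on $B_R(x_i)$ and $k_\infty^\epsilon$ on $B_R(x_\infty)$ with $k_i^\epsilon\equiv 0$ on $A_i$ (so $\operatorname{supp}k_i^\epsilon\subset\overline{B}_{R'}(x_i)\subset B_R(x_i)$), $k_\infty^\epsilon\equiv 0$ on $A_\infty$, $(k_i^\epsilon,dk_i^\epsilon)\to(k_\infty^\epsilon,dk_\infty^\epsilon)$ on an open set $\Omega_\epsilon=\bigcup_{m=1}^N B_{r_m}(z_m)$, and
\[
\frac{\upsilon(B_R(x_\infty)\setminus(\Omega_\epsilon\cup A_\infty))}{\upsilon(B_R(x_\infty))}+|k_\infty-k_\infty^\epsilon|_{L^\infty}+\frac{1}{\upsilon(B_R(x_\infty))}\int_{B_R(x_\infty)}|dk_\infty^\epsilon-dk_\infty|^2\,d\upsilon<\epsilon.
\]

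Next, since $|\Delta f_i|_{L^\infty(B_R(x_i))}$ and $\underline{\mathrm{vol}}\,B_R(x_i)$ are bounded uniformly in $i$, we have $\sup_i(\mathbf{Lip}f_i+\int_{B_R(x_i)}|\Delta f_i|^2\,d\underline{\mathrm{vol}})<\infty$, so Corollary \ref{suffconv} gives $(f_i,df_i)\to(f_\infty,df_\infty)$ on $B_R(x_\infty)$. By Theorem \ref{10105} the sequence $\{\langle df_i,dk_i^\epsilon\rangle\}_i$ has infinitesimal constant convergence property to $\langle df_\infty,dk_\infty^\epsilon\rangle$ at a.e.\ point of $\Omega_\epsilon$; on $A_i$ (resp.\ $A_\infty$) both $dk_i^\epsilon$ and $dk_\infty^\epsilon$ vanish a.e.; and on the remaining set, which by the approximation theorem has $\underline{\mathrm{vol}}$-measure $\le\Psi(\epsilon)\upsilon(B_R(x_\infty))$ for large $i$, the integrand is bounded by $C(n,L)\mathbf{Lip}f_\infty$. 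Combining these with Example \ref{ex3} and Proposition \ref{10103} (applied exactly as in the proof of Proposition \ref{energy}) yields
\[
\lim_{i\to\infty}\int_{B_R(x_i)}\langle df_i,dk_i^\epsilon\rangle\,d\underline{\mathrm{vol}}=\int_{B_R(x_\infty)}\langle df_\infty,dk_\infty^\epsilon\rangle\,d\upsilon\pm\Psi(\epsilon;n,R,L,\upsilon(B_R(x_\infty)),\mathbf{Lip}f_\infty).
\]
For the right-hand side, $k_i^\epsilon$ has infinitesimal constant convergence property to $k_\infty^\epsilon$ (uniformly Lipschitz functions that converge, cf.\ Example \ref{ex2}), so by Proposition \ref{101020} the sequence $\{k_i^\epsilon\,\Delta f_i\}_i$ has infinitesimal convergence property to $k_\infty^\epsilon g_\infty$ at a.e.\ point of $B_R(x_\infty)$ where $\{\Delta f_i\}_i$ converges to $g_\infty$; together with the uniform $L^\infty$ bound on $k_i^\epsilon\,\Delta f_i$ and Proposition \ref{10103} this gives
\[
\lim_{i\to\infty}\int_{B_R(x_i)}k_i^\epsilon\,\Delta f_i\,d\underline{\mathrm{vol}}=\int_{B_R(x_\infty)}k_\infty^\epsilon g_\infty\,d\upsilon\pm\Psi(\epsilon;n,R,L,\upsilon(B_R(x_\infty))).
\]

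Equating the two limits of the integration-by-parts identity and estimating $|\int\langle df_\infty,dk_\infty^\epsilon\rangle-\int\langle df_\infty,dk_\infty\rangle|$ by $\mathbf{Lip}f_\infty\cdot(\int|dk_\infty^\epsilon-dk_\infty|^2d\upsilon)^{1/2}\upsilon(B_R(x_\infty))^{1/2}$ and $|\int k_\infty^\epsilon g_\infty-\int k_\infty g_\infty|$ by $|k_\infty-k_\infty^\epsilon|_{L^\infty}\cdot L\cdot\upsilon(B_R(x_\infty))$ (using $|g_\infty|_{L^\infty}\le L$), we obtain
\[
\int_{B_R(x_\infty)}\langle df_\infty,dk_\infty\rangle\,d\upsilon=\int_{B_R(x_\infty)}k_\infty g_\infty\,d\upsilon\pm\Psi(\epsilon;n,R,L,\upsilon(B_R(x_\infty)),\mathbf{Lip}f_\infty),
\]
and letting $\epsilon\to 0$ proves the corollary. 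The main obstacle is the bookkeeping of the double limit ($i\to\infty$, then $\epsilon\to 0$): one never has infinitesimal convergence on all of $B_R(x_\infty)$, only off an $\epsilon$-small set, and the test function one can transplant to the manifolds is only $k_\infty^\epsilon\ne k_\infty$. The argument therefore hinges on the quantitative control of the bad set furnished by Theorem \ref{app}, the uniform bounds $\mathbf{Lip}f_i,|\Delta f_i|_{L^\infty}\le L$, and the doubling-based comparison of the manifold and limit measures; once these are in place each passage to the limit is a direct application of Theorem \ref{10105}, Proposition \ref{101020} and Proposition \ref{10103}.
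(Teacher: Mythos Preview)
Your proof is correct and follows essentially the same route as the paper: apply Corollary~\ref{suffconv} to get $(f_i,df_i)\to(f_\infty,df_\infty)$, use Theorem~\ref{app} on the annulus where $k_\infty$ vanishes to produce compactly supported transplants $k_i^\epsilon$, pass to the limit in the manifold integration-by-parts identity via Theorem~\ref{10105}, Proposition~\ref{101020} and Proposition~\ref{10103} (handling the small bad set exactly as in Proposition~\ref{energy}), and finally let $\epsilon\to 0$. The only cosmetic difference is that the paper uses the shrinking annulus $A_{R-\epsilon,R}$ while you fix $R'$ and vary only the approximation parameter; both work.
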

\begin{proof}
By Corollary \ref{suffconv}, we have
$(f_i, df_i) \rightarrow (f_{\infty}, df_{\infty})$ on $B_R(x_{\infty})$.
We take $L \ge 1$ satisfying $\sup_{i}(\mathbf{Lip}f_i + |f_i|_{L^{\infty}} + |\Delta f_i|_{L^{\infty}})<L$.
We put $r = \sup_{w \in \mathrm{supp}k_{\infty}}\overline{x_{\infty}, w}$ and $g_i = \Delta f_i$.
By compactness of $\mathrm{supp}k_{\infty}$, we have $r < R$.
We fix $\epsilon > 0$ satisfying $\epsilon < R-r$.
By Theorem \ref{app}, there exist a $C(n, L)$-Lipschitz function $k_{\infty}^{\epsilon}$ on $B_R(x_{\infty})$,
a $C(n, L)$-Lipschitz function $k_i^{\epsilon}$ on $B_R(x_{i})$ and an open set $\Omega_{\epsilon} \subset B_R(x_{\infty}) \setminus A_{R-\epsilon, R}(x_{\infty})$ such that $k_{\infty}^{\epsilon}|_{A_{R-\epsilon, R}(x_{\infty})}=0$, $k_{i}^{\epsilon}|_{A_{R-\epsilon, R}(x_{i})}= 0$, $(k_i^{\epsilon}, dk_{i}^{\epsilon}) \rightarrow 
(k_{\infty}^{\epsilon}, dk_{\infty}^{\epsilon})$ on $\Omega_{\epsilon}$ and that 
\begin{align*}
&\frac{\upsilon \left(B_R(x_{\infty}) \setminus (\Omega_{\epsilon} \cup A_{R-\epsilon, R}(x_{\infty}))\right)}{\upsilon(B_R(x_{\infty}))} + |k_{\infty}- k_{\infty}^{\epsilon}|_{L^{\infty}(B_R(x_{\infty}))} + \frac{1}{\upsilon_{\infty}(B_R(x_{\infty}))}\int_{B_R(x_{\infty})} |dk_{\infty}^{\epsilon}-dk_{\infty}|^2d\upsilon \\
&< \epsilon. 
\end{align*}
By Proposition \ref{101020}, $\{k_i^{\epsilon}g_i\}_i$ has infinitesimal convergence property to $k_{\infty}^{\epsilon}g_{\infty}$ at a.e. $w \in \Omega_{\epsilon}$.
By an argument similar to the proof of Proposition \ref{energy}, and Proposition \ref{10103}, we have
\begin{align*}
&\left| \int_{B_R(x_i)}\langle df_i, dk_i^{\epsilon}\rangle d\underline{\mathrm{vol}} - \int_{B_R(x_{\infty})}\langle df_{\infty}, dk_{\infty}^{\epsilon}\rangle d\upsilon \right| 
+ \left| \int_{B_R(x_i)}g_i k_i^{\epsilon}d\underline{\mathrm{vol}}-\int_{B_R(x_{\infty})}g_{\infty} k_{\infty}^{\epsilon}d\upsilon \right| \\
&< \Psi(\epsilon;n, L, R)\upsilon (B_R(x_{\infty}))
\end{align*}
for every sufficiently large $i$.
Since 
\[\int_{B_R(x_i)}\langle df_i, dk_i^{\epsilon}\rangle d\underline{\mathrm{vol}} = \int_{B_R(x_i)}g_i k_i^{\epsilon}d\underline{\mathrm{vol}},\]
we have 
\[\int_{B_R(x_{\infty})}\langle df_{\infty}, dk_{\infty}\rangle d\upsilon 
=\int_{B_R(x_{\infty})}g_{\infty} k_{\infty}d\upsilon \pm \Psi(\epsilon;n, L, R)\upsilon (B_R(x_{\infty})).
\]
By letting $\epsilon \rightarrow 0$, we have the assertion.
\end{proof}
The following corollary follows from Corollary \ref{suffconv} and \ref{convergence} directly.
See also \cite{di2}.
\begin{corollary}\label{har}
Let $R$ be a positive number, $f_i$ a harmonic function on $B_R(x_i)$ and $f_{\infty}$ a Lipschitz function on $B_R(x_{\infty})$ 
satisfying $\sup_{i} \mathbf{Lip}f_i< \infty$.
We assume that $f_i \rightarrow f_{\infty}$ on $B_R(x_{\infty})$.
Then, we have $(f_i, df_i) \rightarrow (f_{\infty}, df_{\infty})$ on $B_R(x_{\infty})$.
Moreover, for every Lipschitz function $k_{\infty}$ satisfying $\mathrm{supp}k_{\infty} \subset B_R(x_{\infty})$, we have 
\[\int_{B_R(x_{\infty})}\langle df_{\infty}, dk_{\infty} \rangle d\upsilon =0.\]
Especially $f_{\infty}$ is a harmonic function on $B_R(x_{\infty})$.
\end{corollary}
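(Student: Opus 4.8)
The plan is to read off both assertions directly from Corollary \ref{suffconv} and Corollary \ref{convergence}, taking the limit Laplacian to be the zero function. The only observation needed beyond these two results is that a harmonic function on a Riemannian manifold is smooth, in particular $C^2$, and has vanishing Laplacian, so the hypotheses of both corollaries are satisfied with essentially no work.

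First I would apply Corollary \ref{suffconv}: each $f_i$ is a $C^2$-function on $B_R(x_i)$ with $\Delta f_i \equiv 0$, hence $\sup_i\bigl(\mathbf{Lip}f_i + \int_{B_R(x_i)}|\Delta f_i|^2 d\underline{\mathrm{vol}}\bigr) = \sup_i \mathbf{Lip}f_i < \infty$ by hypothesis, and by assumption $f_i \to f_{\infty}$ on $B_R(x_{\infty})$; therefore $(f_i, df_i) \to (f_{\infty}, df_{\infty})$ on $B_R(x_{\infty})$. For the integral identity I would apply Corollary \ref{convergence} with $g_{\infty} \equiv 0$ on $B_R(x_{\infty})$: since $\Delta f_i \equiv 0$, the constant sequence $\{\Delta f_i\}_i$ trivially has infinitesimal convergence property to $0$ at every point, and $\sup_i(\mathbf{Lip}f_i + |\Delta f_i|_{L^{\infty}(B_R(x_i))}) = \sup_i \mathbf{Lip}f_i < \infty$. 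Hence for every Lipschitz function $k_{\infty}$ with $\mathrm{supp}\,k_{\infty} \subset B_R(x_{\infty})$,
\[\int_{B_R(x_{\infty})}\langle df_{\infty}, dk_{\infty}\rangle d\upsilon = \int_{B_R(x_{\infty})}k_{\infty}\, g_{\infty}\, d\upsilon = 0.\]

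It remains to deduce that $f_{\infty}$ is harmonic on $B_R(x_{\infty})$ in the sense recalled in Section 2. Since $f_{\infty}$ is Lipschitz, $f_{\infty}|_V \in H_{1,2}(V)$ for every bounded $V \subset B_R(x_{\infty})$, and since $Y$ is $\upsilon$-rectifiable the minimal upper gradient is governed by the inner product structure on $T^*Y$, so that $g_{f_{\infty}+k}^2 = g_{f_{\infty}}^2 + 2\langle df_{\infty}, dk\rangle + g_k^2$ holds $\upsilon$-a.e. for $k \in K(B_R(x_{\infty}))$. Approximating such a $k$ in $H_{1,2}$ by Lipschitz functions whose support is essentially contained in $B_R(x_{\infty})$ and passing to the limit in the identity above yields $\int_V \langle df_{\infty}, dk\rangle d\upsilon = 0$, whence $|g_{f_{\infty}+k}|_{L^2(V)}^2 = |g_{f_{\infty}}|_{L^2(V)}^2 + |g_k|_{L^2(V)}^2 \ge |g_{f_{\infty}}|_{L^2(V)}^2$, which is exactly the required minimization property. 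The only step that is not pure bookkeeping is this last density passage, from compactly supported Lipschitz test functions to general elements of $K(B_R(x_{\infty}))$; it follows from the standard approximation of Sobolev functions by Lipschitz functions, valid since $(Y,\upsilon)$ is doubling and satisfies a weak $(1,2)$-Poincar\'e inequality, but it is the place where a little care is needed. Everything else is a direct substitution into the two cited corollaries.
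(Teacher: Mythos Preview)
Your proposal is correct and matches the paper's own proof, which simply states that the corollary follows directly from Corollary \ref{suffconv} and Corollary \ref{convergence}. Your additional elaboration on deducing harmonicity from the integral identity---via the quadratic expansion of $g_{f_\infty+k}^2$ and an approximation of $k\in K(B_R(x_\infty))$ by Lipschitz functions---is not spelled out in the paper but is the expected verification and is handled appropriately.
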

\section{Harmonic functions on asymptotic cones}
In this section, we will give several applications of results in section $4$ to harmonic functions on asymptotic cones of manifolds with nonnegative Ricci curvature and Euclidean volume growth via Colding-Minicozzi theory \cite{co-mi1, co-mi2, co-mi3, co-mi4, co-mi5, co-mi6}
for harmonic functions on manifolds.
Throughout this section, we will always assume that dimensions of all manifolds are greater than $2$.
\subsection{Convergence of frequency functions}
Throughout this section $5$, we fix an $n$-dimensional complete Riemannian manifolds $M$ satisfying $\mathrm{Ric}_{M}\ge 0$ and \textit{Euclidean volume growth condition}:
\[\lim_{R \rightarrow \infty}\frac{\mathrm{vol}^{g_M}\,B_R(m)}{R^n}>0.\]
Here $m$ is a point in $M$ and $g_M$ is the Riemannian metric of $M$.
We remark that by Bishop-Gromov volume comparison theorem, the limit above always exists and does not 
depend on choice of $m$.
We denote the limit by $V_M^{g_M}=\lim _{R \to \infty}\mathrm{vol}^{g_M}B_R(m)/R^n$.
It is easy to check that $V_M^{r^{-2}g_M}= V_M^{g_M}$ for $r>0$.
Therefore we shall use the notaiton: $V_M = V_M^{g_M}$.
We fix a point $m \in M$ below.
Then the global Green's function $G^{g_M}(m, x)$ on $M$ with singularity at $m$ exists. See \cite{sc-ya}.
First, we shall introduce an important result about asymptotic behavior of $G^{g_M}$ by 
Colding-Minicozzi:
\begin{theorem}[Colding-Minicozzi, \cite{co-mi4}]\label{green}
We have 
\[\lim_{\overline{m,x} \rightarrow \infty} \frac{G^{g_M}(m,x)}{\overline{m,x}^{2-n}}= \frac{\mathrm{vol}\,B_1(0_n)}{V_M}.\]
\end{theorem}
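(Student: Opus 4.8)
The plan is to analyze $G=G^{g_M}(m,\cdot)$ through a blow-down to an asymptotic cone and then reduce the computation to an explicit model problem on that cone. Note first that $G$ is positive, harmonic on $M\setminus\{m\}$, proper, and vanishes at infinity; with the normalization for which the Euclidean Green's function is $|x|^{2-n}$, the total flux $\Phi:=\int_{\{G=\epsilon\}}|\nabla G|\,d\sigma$ is independent of small $\epsilon>0$ (by harmonicity and the divergence theorem) and, since $M$ is smooth at $m$, equals the Euclidean value $\Phi=(n-2)\,\mathrm{vol}\,\partial B_1(0_n)=(n-2)n\,\mathrm{vol}\,B_1(0_n)$. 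Setting $\rho=G^{\frac1{2-n}}$, the Cheng-Yau estimate (Theorem \ref{gradient1}) applied to $G$ on the annuli $A_{\overline{m,x}/2,\,2\overline{m,x}}(m)$ gives $|\nabla\log G|(x)\le C(n)/\overline{m,x}$ for $\overline{m,x}$ large, hence $|\nabla\rho|\le C(n)$ off $B_1(m)$ and, with $\rho(m)=0$, the crude upper bound $\rho\le C(n)\overline{m,\cdot}$; together with the standard two-sided Li-Yau bound $c_1\overline{m,x}^{2-n}\le G(m,x)\le c_2\overline{m,x}^{2-n}$ this makes every blow-up of $G$ below nondegenerate. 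The content of the theorem is the matching lower bound for $\rho$ together with the exact value of the limiting constant.

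First I would blow down. Fix any $r_i\to\infty$. Since $\mathrm{Ric}_M\ge0$ and $M$ has Euclidean volume growth, by Cheeger-Colding \cite{ch-co1} a subsequence of $(M,m,r_i^{-2}g_M)$ converges to a metric cone $(C(X),p)$ with vertex $p$, and the un-renormalized rescaled volumes converge to the limit measure, which is $\mathcal H^n$ with $\mathcal H^n(B_s(p))=V_M s^n$; in particular, by the co-area formula for the distance $t$ from the pole (as in the proof of Corollary \ref{25}), $\mathcal H^{n-1}(X)=n\,\mathcal H^n(B_1(p))=nV_M$, so $X$ has finite positive $(n-1)$-measure. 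Put $G_i=r_i^{n-2}G$, regarded on $(M,r_i^{-2}g_M)$; each $G_i$ is harmonic off $m$, and by the gradient bound above $\sup_i\mathbf{Lip}(G_i|_{A_{s,S}(m)})<\infty$ for every $0<s<S$. By Proposition \ref{Lips} and Corollary \ref{har} (see also Ding \cite{di2}), after a further subsequence $G_i\to G_\infty$ with $(G_i,dG_i)\to(G_\infty,dG_\infty)$ on $C(X)\setminus\{p\}$ and $G_\infty$ harmonic there; moreover the flux $\int_{\{G_i=\epsilon\}}|\nabla G_i|\,d\sigma$ is unchanged by the rescaling and equals $\Phi$, and it passes to the limit by the strong convergence of differentials (Theorem \ref{10105}), so $G_\infty$ is positive, tends to $0$ at infinity, and carries a genuine point singularity at $p$ whose Laplacian charge with respect to $\mathcal H^n$ is $\Phi$.

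Next I would compute on the cone. On $C(X)$ the radial function $t^{2-n}$ is harmonic off $p$ — from the cone form of the Laplacian $\Delta_{C(X)}=\partial_t^2+\frac{n-1}t\partial_t+t^{-2}\Delta_X$ together with $\Delta_X 1=0$ — and a positive function that is harmonic on $C(X)\setminus\{p\}$, vanishes at infinity, and has a point singularity at $p$ is unique up to a positive multiple, by removability of $\{p\}$ (which has zero capacity since $\mathcal H^n$ is $n$-Ahlfors regular and $n\ge3$) and the maximum principle for the elliptic theory on the Ricci limit space $C(X)$ recalled in Section 2. Hence $G_\infty=a\,t^{2-n}$, and matching the charge $\Phi=(n-2)\,\mathcal H^{n-1}(X)\,a$ gives
\[a=\frac{\Phi}{(n-2)\mathcal H^{n-1}(X)}=\frac{(n-2)n\,\mathrm{vol}\,B_1(0_n)}{(n-2)nV_M}=\frac{\mathrm{vol}\,B_1(0_n)}{V_M}.\]
Thus $a$ depends only on $V_M$, not on the cone $C(X)$ or the subsequence, and $\rho=G_\infty^{1/(2-n)}$ becomes exactly $a^{1/(2-n)}t$ in the blow-down. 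Since this limit constant is the same for every blow-down sequence, a routine compactness/contradiction argument promotes the subsequential statement to the full limit $\overline{m,x}^{\,n-2}G(m,x)\to \mathrm{vol}\,B_1(0_n)/V_M$ as $\overline{m,x}\to\infty$.

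The main obstacle is the middle step: showing that the limit Green's function on the possibly singular cone $C(X)$ is exactly radial and that its point singularity, with the correct charge, survives the limit — this is precisely where the Cheeger-Colding structure of $C(X)$, the maximum principle and removable-singularity theory on Ricci limit spaces, and the differential-convergence results of Section 4 are indispensable. A secondary, purely bookkeeping difficulty is keeping straight the constant $\mathrm{vol}(B_1(m))$ relating the renormalized limit measure used by the convergence machinery to the Hausdorff measure $\mathcal H^n$ with respect to which $\mathcal H^n(B_1(p))=V_M$; the cleanest course is to perform all charge computations with respect to $\mathcal H^n$, as above. An alternative route, closer to \cite{co-mi4}, would instead apply Colding's monotonicity formula for $\rho=G^{1/(2-n)}$ directly on $M$, extracting the cone structure at infinity and the constant $V_M$ from the monotone quantity and its boundary values.
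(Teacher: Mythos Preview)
The paper does not prove this theorem at all: it is stated with attribution to Colding--Minicozzi \cite{co-mi4} and used as a black box thereafter, so there is no ``paper's own proof'' to compare against. Your blow-down strategy is in fact rather different from the original argument in \cite{co-mi4}, which is based on the monotonicity of $b=G^{1/(2-n)}$ and a direct comparison, not on passing to a cone and invoking the differential-convergence machinery of Section~4.

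That said, your outline has a real gap at the step where you claim the flux $\int_{\{G_i=\epsilon\}}|\nabla G_i|\,d\sigma$ passes to the limit ``by the strong convergence of differentials (Theorem~\ref{10105}).'' Theorem~\ref{10105} and its corollaries control \emph{volume} integrals of $\langle df_i,dg_i\rangle$ over balls and annuli; they say nothing about $(n-1)$-dimensional integrals over level sets of a converging function, and such level sets need not even converge in any useful sense. The correct move is to rewrite the charge as a volume pairing: for a Lipschitz cutoff $\phi$ equal to $1$ near $m$ and supported in $B_R(m)$, the quantity $\int\langle d\phi,dG_i\rangle\,d\mathrm{vol}^{r_i^{-2}g_M}$ equals the fixed charge $\Phi$ (since $d\phi$ lives in an annulus and $G_i$ is harmonic there), and \emph{this} integral is of the type handled by Corollary~\ref{har} and Theorem~\ref{10105}, because $\sup_i\mathbf{Lip}(G_i|_{A_{s,S}})<\infty$. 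On the cone side you then compute $\int\langle d\phi_\infty,d(at^{2-n})\rangle\,d\mathcal H^n=a(n-2)\mathcal H^{n-1}(X)$ via the co-area formula for $r_p$ (Proposition~\ref{0005}), and matching gives your constant. A second point that needs more than a phrase is the uniqueness of the limit: you must show that $G_\infty-at^{2-n}$ extends across $p$ (bounded near $p$ plus zero capacity of $\{p\}$ in the $n$-Ahlfors regular space $C(X)$, cf.\ \cite[Corollary~4.25]{di2}) and then invoke a maximum principle on the Ricci limit space; neither ingredient is literally ``recalled in Section~2,'' so you should point explicitly to Ding's results and to \cite{sh} or \cite{Ku}.
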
 
By the definition of Green's function, we have
\[G^{r^{-2}g_M}(m, x)= \frac{G^{g_M}(m, x)}{r^{2-n}}.\]
It is known that there exists $C_1 > 1$ such that 
$\overline{m, x}^{2-n}\le G^{g_M}(m, x) \le C_1 \overline{m, x}^{2-n}$
for every $m \neq x$.
We define a smooth function $b_m^{g_M}$ on $M \setminus \{m\}$ by
\[b_m^{g_M}(x) = \left(\frac{V_M}{\mathrm{vol}\,B_1(0_n)}G^{g_M}(m, x)\right)^{\frac{1}{2-n}}.\]
Thus we have $b_m^{r^{-2}g_M}= b_m^{g_m}/r$.
We shall use the notation $b^{g_M} = b_m^{g_M}$ simply.
Then we have 
\[\left(\frac{V_M}{\mathrm{vol}\,B_1(0_n)}\right)^{2-n}\overline{m, y}^{r^{-2}g_M}\le
b^{r^{-2}g_M}(y)\le \left(\frac{C_1V_M}{\mathrm{vol}\,B_1(0_n)}\right)^{2-n}\overline{m, y}^{r^{-2}g_M}\]
for every $r > 0$.
We put $b^{g_M}(m) =0$.
It is easy to check 
\[\nabla^{g_M}b^{g_M}\ = \frac{V_M}{(2-n)\mathrm{vol}\,B_1(0_n)}(b^{g_M})^{n-1}\nabla^{g_M}G^{g_M}(m, \cdot).\]
On the other hand, for every $\epsilon >0$, there exists $R(\epsilon) > 0$ such that  
\[\int_{b^{g_M} \le R}||\nabla b^{g_M}|^2-1|^2d\mathrm{vol} \le \epsilon \mathrm{vol}(\{b^{g_M} \le R\}),\]
\[\int_{b^{g_M} \le R}|\mathrm{Hess}_{(b^{g_M})^2}-2g_M|^2d\mathrm{vol} \le \epsilon \mathrm{vol}(\{b^{g_M} \le R\})\]
for every $R > R(\epsilon)$
and that
\[\left|\frac{b^{g_M}(x)}{\overline{m, x}^{g_M}}-1\right| < \epsilon\]
for every $x \in M \setminus B_{R(\epsilon)}(m)$. 
See $(2. 23), (2. 24)$ and $(2. 25)$ in \cite{co-mi3} or section $4$ in \cite{co-mi4} for proofs of these results.
\begin{lemma}\label{001}
We have 
\[\lim_{R \rightarrow \infty}\frac{\mathrm{vol}(\{b^{g_M} \le R\})}{\mathrm{vol}\,B_R^{g_M}(m)}=1\]
\end{lemma}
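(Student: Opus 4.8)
The plan is to exploit the asymptotic comparison $b^{g_M}(x)/\overline{m,x}^{g_M} \to 1$ together with the sublevel/superlevel trapping of the sets $\{b^{g_M} \le R\}$ between metric balls. The key quantitative input is that for every $\epsilon > 0$ there is $R(\epsilon) > 0$ such that $|b^{g_M}(x)/\overline{m,x}^{g_M} - 1| < \epsilon$ for all $x \in M \setminus B_{R(\epsilon)}(m)$, which is recorded in the excerpt (coming from $(2.25)$ of \cite{co-mi3}). First I would fix $\epsilon \in (0,1)$ and choose $R > R(\epsilon)/\epsilon$, say, so that the ``bad'' region $B_{R(\epsilon)}(m)$ is negligible at scale $R$. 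On $M \setminus B_{R(\epsilon)}(m)$ the inequality $|b^{g_M}(x) - \overline{m,x}| < \epsilon\,\overline{m,x}$ gives the inclusions
\[
B_{(1-\epsilon)R}(m) \setminus B_{R(\epsilon)}(m) \subset \{b^{g_M} \le R\} \subset B_{(1+\epsilon)R + R(\epsilon)}(m),
\]
and conversely $\{b^{g_M} \le R\} \supset B_{(1-\epsilon)R}(m) \setminus B_{R(\epsilon)}(m)$ while $\{b^{g_M} \le R\} \cup B_{R(\epsilon)}(m) \supset B_{(1-\epsilon)R}(m)$; I would just make all the containments honest by being slightly generous with constants.

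Next I would take volumes and divide by $\mathrm{vol}\,B_R^{g_M}(m)$. From the inclusions above,
\[
\frac{\mathrm{vol}\,B_{(1-\epsilon)R}(m) - \mathrm{vol}\,B_{R(\epsilon)}(m)}{\mathrm{vol}\,B_R(m)} \le \frac{\mathrm{vol}(\{b^{g_M} \le R\})}{\mathrm{vol}\,B_R(m)} \le \frac{\mathrm{vol}\,B_{(1+\epsilon)R + R(\epsilon)}(m)}{\mathrm{vol}\,B_R(m)}.
\]
Now I invoke the Euclidean volume growth hypothesis: $\mathrm{vol}\,B_r(m)/r^n \to V_M$ as $r \to \infty$, with $V_M > 0$. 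Hence for $R$ large the left side tends to $(1-\epsilon)^n$ up to an $o(1)$ error (the $\mathrm{vol}\,B_{R(\epsilon)}(m)/\mathrm{vol}\,B_R(m) \to 0$ term vanishes since the numerator is a fixed constant and the denominator grows like $V_M R^n$), and the right side tends to $(1+\epsilon)^n$ up to $o(1)$, using $(1+\epsilon)R + R(\epsilon) \sim (1+\epsilon)R$. Therefore
\[
(1-\epsilon)^n \le \liminf_{R \to \infty}\frac{\mathrm{vol}(\{b^{g_M} \le R\})}{\mathrm{vol}\,B_R(m)} \le \limsup_{R \to \infty}\frac{\mathrm{vol}(\{b^{g_M} \le R\})}{\mathrm{vol}\,B_R(m)} \le (1+\epsilon)^n.
\]
Letting $\epsilon \to 0$ gives the claimed limit $1$.

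The argument is essentially routine once the comparison estimate for $b^{g_M}$ and the Euclidean volume growth asymptotics are in hand; there is no real obstacle, only bookkeeping. The one point that needs a little care is handling the fixed inner ball $B_{R(\epsilon)}(m)$: since $R(\epsilon)$ depends on $\epsilon$ but not on $R$, one must send $R \to \infty$ \emph{first} (with $\epsilon$ fixed) so that $\mathrm{vol}\,B_{R(\epsilon)}(m)/\mathrm{vol}\,B_R(m) \to 0$, and only afterwards let $\epsilon \to 0$; swapping the order would be illegitimate. A secondary minor check is that $\{b^{g_M} \le R\}$ is indeed a set of finite measure contained in a ball, which follows from the two-sided bound $\overline{m,x}^{2-n} \le G^{g_M}(m,x) \le C_1 \overline{m,x}^{2-n}$ quoted above — this already forces $b^{g_M}(x) \ge c\,\overline{m,x}$ globally, so $\{b^{g_M} \le R\}$ is bounded for every $R$.
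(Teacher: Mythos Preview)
Your approach is essentially identical to the paper's: trap $\{b^{g_M}\le R\}$ between two metric balls of comparable radii using the asymptotic estimate $|b^{g_M}(x)/\overline{m,x}-1|<\epsilon$ outside $B_{R(\epsilon)}(m)$, then invoke Euclidean volume growth. The paper handles the inner region $B_{R(\epsilon)}(m)$ via the global two-sided bound on $b^{g_M}$ (yielding the clean inclusions $B_{R/(1+\epsilon)}(m)\subset\{b^{g_M}\le R\}\subset B_{R/(1-\epsilon)}(m)$), whereas you simply discard it as a fixed-volume set; both are fine.

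One small slip: your claimed upper inclusion $\{b^{g_M}\le R\}\subset B_{(1+\epsilon)R+R(\epsilon)}(m)$ is false for large $R$. From $(1-\epsilon)\overline{m,x}\le b^{g_M}(x)\le R$ you only get $\overline{m,x}\le R/(1-\epsilon)$, and $R/(1-\epsilon)-(1+\epsilon)R=\epsilon^2 R/(1-\epsilon)$ eventually exceeds the fixed constant $R(\epsilon)$. Replace the upper radius by $R/(1-\epsilon)$ (so the right-hand ratio tends to $(1-\epsilon)^{-n}$ rather than $(1+\epsilon)^n$); the rest of your argument then goes through verbatim.
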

\begin{proof}
For every $0 < \epsilon <1$, we take $R(\epsilon) > 0$ as above.
We put 
\[\hat{R}(\epsilon)= \left(\frac{C_1V_M}{\mathrm{vol}\,B_1(0_n)}\right)^{2-n}R(\epsilon)+R(\epsilon).\]
We take $R > \hat{R}(\epsilon)$.
First, we shall show  $B_R(m) \subset \{b^{g_M} \le (1 + \epsilon)R\}$.
We take $y \in B_R(m)$.
By the definition of $b^{g_M}$, if $y =m$, then $y \in \{b^{g_M} \le (1 + \epsilon)R\}$.
If $y \neq m$ and $\overline{m, y} \le R(\epsilon)$,
then we have
\[b^{g_M}(y)  \le \left(\frac{C_1V_M}{\mathrm{vol}\,B_1(0_n)}\right)^{2-n}\overline{m, y} 
\le\left(\frac{C_1V_M}{\mathrm{vol}\,B_1(0_n)}\right)^{2-n} R(\epsilon) 
\le \hat{R}(\epsilon) \le R.\]
Especially, we have  $y \in \{b^{g_M} \le (1 + \epsilon)R\}$.
On the other hand, by the definition of $R(\epsilon)$, if $\overline{m,y} > R(\epsilon)$,
then $|b^{g_M}(y)-\overline{m, y}| < \epsilon \overline{m,y}$.
Especially, we have $b^{g_M}(y) \le (1  + \epsilon)\overline{m,y} < (1+\epsilon)R$.
Thus, we have $B_R(m) \subset \{b^{g_M} \le (1 + \epsilon)R\}$.
Next, we shall show $\{b^{g_M} \le (1 + \epsilon)R\} \subset B_{\frac{1+\epsilon}{1-\epsilon}R}(m)$.
We take $x \in \{b^{g_M} \le (1 + \epsilon)R\}$ satisfying $\overline{m,x} \ge R(\epsilon)$.
Then, we have $(1-\epsilon)\overline{m,x} \le b^{g_M}(x) \le (1 + \epsilon)R$.
Thus, we have $\{b^{g_M} \le (1 + \epsilon)R\} \subset B_{\frac{1+\epsilon}{1-\epsilon}R}(m)$.
Therefore, we have $B_{\frac{R}{1+\epsilon}}(m) \subset \{b^{g_M} \le R\} \subset B_{\frac{R}{1-\epsilon}}(m)$ for every $R > 2\hat{R}(\epsilon)$. 
Since 
\[\lim_{R \rightarrow \infty}\frac{\mathrm{vol}\,B_{\frac{R}{1-\epsilon}}(m)}{\mathrm{vol}\,B_{\frac{R}{1+\epsilon}}(m)}=
\left(\frac{1+\epsilon}{1-\epsilon}\right)^n,\]
we have the assertion.
\end{proof}
We shall define frequency functions for harmonic functions on $M$.
For $R > 0$, $0 < r < R$ and a harmonic function $u$ on $\{b^{g_M}<R\}$, we put
\[I_u^{g_M}(r)=r^{1-n}\int_{b^{g_M}=r}u^2|\nabla^{g_M}b^{g_M}|d\mathrm{vol}^{g_M}_{n-1},\]
\[D_u^{g_M}(r)=r^{2-n}\int_{b^{g_M}\le r}|\nabla^{g_M}u|^2d\mathrm{vol}^{g_M}\]
and
\[F_u^{g_M}(r)=r^{3-n}\int_{b^{g_M}=r}\left|\frac{\partial u}{\partial n}\right|^2|\nabla b^{g_M}|d\mathrm{vol}^{g_M}_{n-1}.\]
Here $n$ is the unit outer vector of $\{b^{g_M}=r\}$, $\mathrm{vol}^{g_M}_{n-1}$ is the $(n-1)$-dimensional Hausdorff measure with respect to the Riemannian metric $g_M$.
Moreover, we put
\[U_u^{g_M}(r)= \frac{D_u^{g_M}(r)}{I_{u}^{g_M}(r)} \ \mathrm{if}\ I_{u}^{g_M}(r)\neq 0,\]
\[U_u^{g_M}(r)=0 \ \mathrm{if} \ I_{u}^{g_M}(r)=0\]
and call the function $U_u^{g_M}$ on $(0, R)$ \textit{frequency function for $u$}.
We remark that the critical set of $b^{g_M}$ has codimension two.
See \cite{cheng2}, \cite{har-sim} or \cite[Remark $2.11$]{co-mi3}.
By maximum principle on manifolds, $U_u^{g_M}(r)=0$ for some $0 < r < R $ if and only if $u$ is a constant function.
The following fundamental properties of functions above are given in \cite{co-mi3}: 
\[D_u^{g_M}(r) \le \left(\frac{r}{s}\right)^{2-n}D_u^{g_M}(s),\]
\[\frac{dI_u^{g_M}}{dr}=2\frac{D_u^{g_M}(r)}{r}, \]
\[I_u^{g_M}(s)=\exp \left(2\int_r^s\frac{U_{u}^{g_M}(t)}{t}dt\right)I_u^{g_M}(r)\]
for $r <s$ (see $(2. 10)$, $(2. 12)$, $(2. 13)$ and $(2. 14)$ in \cite{co-mi3}).
For every $\tau, r > 0$, $R > r\tau$ and harmonic function $u$ on $\{b^{g_M}<R\}$, 
we put $u_{\tau}= u/\tau$. Then we have $D_{u_{\tau}}^{\tau^{-2}g_M}(r) = {\tau}^{-2}D_u^{g_M}(r\tau),
I_{u_{\tau}}^{\tau^{-2}g_M}(r) = {\tau}^{-2}I_u^{g_M}(r\tau),
F_{u_{\tau}}^{\tau^{-2}g_M}(r) = \tau^{-2}F_u^{g_M}(r\tau)$ and 
$U_{u_{\tau}}^{\tau^{-2}g_M}(r) = U_u^{g_M}(r\tau).$
\

We shall recall the definition of \textit{asymptotic cone (or tangent cone at infinity) of $M$} by Cheeger-Colding:
\begin{definition}[Asymptotic cone]\label{asymp}
For pointed proper geodesic space $(M_{\infty}, m_{\infty})$, we say that
\textit{$(M_{\infty}, m_{\infty})$ is an asymptotic cone (or tangent cone at infinity) of $M$} if 
there exists a sequence $R_i \rightarrow \infty$ such that $(M, m, R_i^{-1}d_M) \rightarrow (M_{\infty}, m_{\infty})$.
\end{definition}
We fix an asymptotic cone $(M_{\infty}, m_{\infty})$ of $M$ and a sequence $R_i \rightarrow \infty$ satisfying 
$(M, m, R_i^{-1}d_M) \rightarrow (M_{\infty}, m_{\infty})$ in this subsection below.
We remark that by \cite[Theorem $5. 9$]{ch-co1}, we have $(M, m, R_i^{-1}d_M, \mathrm{vol}^{R_i^{-2}g_M}) \rightarrow (M_{\infty}, m_{\infty}, H^n)$.
We shall introduce an important result for asymptotic cones by Cheeger-Colding:
\begin{theorem}[Cheeger-Colding, \cite{ch-co}]\label{metric}
With same notation as above, there exists a compact geodesic space $X$ such that $\mathrm{diam}X \le \pi$ and $(M_{\infty}, m_{\infty})=(C(X), p)$.
\end{theorem}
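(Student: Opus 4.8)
The plan is to deduce Theorem \ref{metric} from the general structure theory of Cheeger--Colding for tangent cones at infinity of manifolds with nonnegative Ricci curvature and Euclidean volume growth. First I would recall that, by the Euclidean volume growth hypothesis, the sequence $(M, m, R_i^{-1}d_M, \mathrm{vol}^{R_i^{-2}g_M})$ has uniformly bounded measure on balls of fixed radius (Bishop--Gromov, Theorem \ref{BG}), so Proposition \ref{measure} guarantees that after passing to a subsequence we may assume $(M, m, R_i^{-1}d_M, \mathrm{vol}^{R_i^{-2}g_M}) \to (M_\infty, m_\infty, \upsilon_\infty)$ in the measured Gromov--Hausdorff sense, and by \cite[Theorem $5.9$]{ch-co1} the limit measure $\upsilon_\infty$ is a constant multiple of $H^n$ (indeed $(M_\infty, m_\infty, H^n)$ as stated in the excerpt). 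Thus $(M_\infty, m_\infty)$ is automatically a proper geodesic space, which is part of what is asserted.

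The key step is the metric cone structure. Here I would invoke the volume-cone-implies-metric-cone theorem of Cheeger--Colding: since $M$ has nonnegative Ricci curvature and Euclidean volume growth, the ratio $\mathrm{vol}\,B_R(m)/R^n$ is monotone non-increasing in $R$ and converges to the finite positive limit $V_M$, hence is ``asymptotically constant'' in the rescaled picture. Applying this along the sequence $R_i \to \infty$, the rescaled annuli $A_{s,t}(m)$ in $(M, R_i^{-1}d_M)$ have volume ratios converging to those of a Euclidean cone, and the Cheeger--Colding rigidity theorem (stated e.g. in \cite{ch-co}) then forces the pointed measured limit $(M_\infty, m_\infty)$ to be isometric to a metric cone $C(X)$ over a compact metric space $X$, with $m_\infty$ the cone point $p$. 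I would then note that $X$ inherits the structure of a geodesic space from $C(X)$ (minimal geodesics in $C(X)$ between points of the cross-section at distance $1$ project to minimal geodesics in $X$), that $X$ is compact because bounded subsets of the proper space $C(X)$ are relatively compact and $X$ sits inside $\overline{B}_1(p)$, and that $\mathrm{diam}\,X \le \pi$ because the cone distance formula $\overline{(t_1, x_1),(t_2, x_2)} = \sqrt{t_1^2 + t_2^2 - 2t_1 t_2 \cos\min\{\overline{x_1, x_2}, \pi\}}$ only ever sees angles truncated at $\pi$, so without loss of generality the cross-section metric may be taken to have diameter at most $\pi$.

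The main obstacle I expect is not in the soft topology but in correctly citing and assembling the volume-cone-implies-metric-cone mechanism: one must verify the hypotheses of the Cheeger--Colding rigidity statement in the rescaled sequence, namely that the volume ratio is nearly constant on a definite range of scales that expands to all of $(0, \infty)$ as $i \to \infty$. This follows from the monotonicity of $R \mapsto \mathrm{vol}\,B_R(m)/R^n$ together with its convergence to $V_M > 0$: for any fixed $0 < a < b < \infty$ and any $\epsilon > 0$, there is $i_0$ such that for $i \ge i_0$ the ratio over $[a R_i, b R_i]$ varies by less than $\epsilon$. Feeding this into Theorem \ref{BG} and the almost-rigidity version of the cone splitting gives the cone structure of the limit. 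The remaining points — properness, compactness of $X$, the diameter bound — are then routine consequences of the cone distance formula and are what I would treat briefly rather than in detail.
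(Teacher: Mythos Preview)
The paper does not give its own proof of Theorem \ref{metric}; immediately after the statement it simply writes ``See \cite[Theorem $9.79$]{ch1} or \cite{ch-co} for the proof.'' Your proposal correctly identifies the argument contained in those references: Bishop--Gromov monotonicity together with Euclidean volume growth forces the rescaled volume ratios to be asymptotically constant, and the Cheeger--Colding ``almost volume cone implies almost metric cone'' theorem then yields the metric cone structure $(M_\infty, m_\infty)=(C(X),p)$ with $X$ a compact geodesic space of diameter at most $\pi$. So your outline matches the intended proof, which the paper outsources entirely.
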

See   \cite[Theorem $9. 79$]{ch1} or \cite{ch-co} for the proof.
We fix $X$ as in Theorem \ref{metric}.
For $R>0$, $0<r<R$ and Lipschitz function $u$ on $\overline{B}_R(p)$ satisfying that $u$ is harmonic on $B_R(p)$, we put 
\[I_u(r)=r^{1-n}\int_{\partial B_r(p)}u^2dH^{n-1}\]
and
\[D_u(r)=r^{2-n}\int_{B_r(p)}|du|^2dH^n.\]
Moreover, we put
\[U_u(r)=\frac{D_u(r)}{I_u(r)} \ \mathrm{if}\ I_u(r) \neq 0\]
and
\[U_u(r)=0 \ \mathrm{if}\ I_u(r)=0.\]
We also remark that by Proposition \ref{0005}, the function
\[F_u(r)=r^{3-n}\int_{\partial B_r(p)}\langle dr_p, du\rangle ^2dH^{n-1}.\]
is well defined for a.e. $r \in (0, R)$.
\begin{remark}\label{549760}
We remark the following:
Let $R$ be a positive number, $u_i$ a harmonic function on $B_{RR_i}^{g_M}(m)$.
Assume that $\sup_i|(u_i)_{R_i}|_{L^{\infty}(B_r^{R_i^{-2}g_M}(m))} < \infty$ for every $0<r<R$.
Then we have $\sup_i \mathbf{Lip}\left((u_i)_{R_i}|_{B_r^{R_i^{-2}g_M}(m)}\right)<  \infty$ for every $0<r<R$.
The proof is as follows.
We fix $\hat{r}$ satisfying $r < \hat{r} <R$.
Since $\overline{B}_r(p)$ is convex, it is not difficult to see that 
there exists $i_0$ such that for every $i \ge i_0$, $x_1(i), x_2(i) \in \overline{B}_r^{R_i^{-2}g_M}(m)$ and
geodesic $\gamma_i$ from $x_1(i)$ to $x_2(i)$, we have $\mathrm{Image}\gamma_i \subset \overline{B}_{\hat{r}}^{R_i^{-2}g_M}(m)$.
Therefore, by Cheng-Yau's gradient estimate, we have 
$\limsup_{i \rightarrow \infty} \mathbf{Lip}\left((u_i)_{R_i}|_{B_r^{R_i^{-2}g_M}(m)}\right)<  \infty$ for every $0<r<R$.
Thus we have the assertion.
\end{remark}
\begin{proposition}\label{002}
Let $R$ be a positive number, $u_i$ a harmonic function on $B_{RR_i}^{g_M}(m)$ and $u_{\infty}$ a Lipschitz function on $B_R(p)$.
We assume that $\sup_i|(u_i)_{R_i}|_{L^{\infty}(B_r^{R_i^{-2}g_M}(m))} < \infty$ and $(u_i)_{R_i} \rightarrow u_{\infty}$ on $B_r(p)$ for every $0<r<R$.
Then, for every $0<r<s<R$, we have 
\[\lim_{i \rightarrow \infty}\sup_{t \in [r,s]}\left|D_{(u_i)_{R_i}}^{R_i^{-2}g_M}(t)-D_{u_{\infty}}(t)\right| =0\]
and
\[\lim_{i \rightarrow \infty}\sup_{t \in [r,s]}\left|I_{(u_i)_{R_i}}^{R_i^{-2}g_M}(t)-I_{u_{\infty}}(t)\right| =0\]
\end{proposition}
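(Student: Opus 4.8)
The plan is to pass to the limit in the machinery of Section $4$, applied to the rescaled manifolds $(M,m,R_i^{-1}d_M,\mathrm{vol}^{R_i^{-2}g_M})$, which converge to $(C(X),p,H^n)$ by \cite[Theorem $5.9$]{ch-co1}; after dividing the measures by $c_i:=\mathrm{vol}^{R_i^{-2}g_M}(B_1(m))\to H^n(B_1(p))=V_M>0$ the hypotheses of Section $4$ hold, and since $c_i$ cancels in all quantities below I suppress it. Write $\hat b_i:=b^{R_i^{-2}g_M}=b^{g_M}/R_i$ and $\hat u_i:=(u_i)_{R_i}$, and fix $R'$ with $s<R'<R$; by Remark \ref{549760} and Corollary \ref{har}, $(\hat u_i,d\hat u_i)\to(u_\infty,du_\infty)$ on $B_{R'}(p)$ and $u_\infty$ is harmonic there. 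I would first record two facts about $\hat b_i$. From the comparison $c_1\,\overline{m,\cdot}^{R_i^{-2}g_M}\le\hat b_i\le c_2\,\overline{m,\cdot}^{R_i^{-2}g_M}$ and the estimate $|b^{g_M}/\overline{m,\cdot}^{g_M}-1|<\epsilon$ outside $B_{R(\epsilon)}(m)$ from \cite{co-mi3,co-mi4}, the $\hat b_i$ are uniformly Lipschitz on compact subsets of $B_R(p)$ and converge to $r_p$ as Lipschitz functions; hence for every $0<t<R$ we have $\limsup_i\{\hat b_i\le t\}\subset\overline B_t(p)$ and $B_t(p)\subset\liminf_i\{\hat b_i\le t\}$, and since $\partial B_t(p)$ is bi-Lipschitz to $X$ with $\mathrm{dim}_HX=n-1$ (so $H^n(\partial B_t(p))=0$), the functions $1_{\{\hat b_i\le t\}}$ have the infinitesimal convergence property to $1_{B_t(p)}$ at $H^n$-a.e.\ point. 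Secondly, the estimate $\int_{\{b^{g_M}\le R\}}\bigl||\nabla b^{g_M}|^2-1\bigr|^2\,d\mathrm{vol}\le\epsilon\,\mathrm{vol}(\{b^{g_M}\le R\})$ (valid for $R>R(\epsilon)$), rescaled and combined with Lemma \ref{001}, gives $\|\,|d\hat b_i|^2-1\,\|_{L^1(\{\hat b_i\le R'\})}\to0$; estimating ball averages then shows $\{|d\hat b_i|^2\}$ has the infinitesimal convergence property to the constant $1$ at every point of $B_{R'}(p)$.

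\emph{Convergence of $D$.} By Theorem \ref{10105}, $\{|d\hat u_i|^2\}$ has the infinitesimal constant convergence property to $|du_\infty|^2$ at $H^n$-a.e.\ point of $B_{R'}(p)$; applying Proposition \ref{10103} with $K_i=\{\hat b_i\le t\}$ and $K_\infty=\overline B_t(p)$ gives $\int_{\{\hat b_i\le t\}}|d\hat u_i|^2\to\int_{B_t(p)}|du_\infty|^2$, that is $D^{R_i^{-2}g_M}_{\hat u_i}(t)\to D_{u_\infty}(t)$ for every $t\in(0,R)$. Since $m_i(t):=\int_{\{\hat b_i\le t\}}|d\hat u_i|^2$ is non-decreasing and its limit $m_\infty(t):=\int_{B_t(p)}|du_\infty|^2$ is continuous (again because $H^n(\partial B_t(p))=0$), pointwise convergence of monotone functions to a continuous limit on $[r,s]$ is uniform (P\'olya); multiplying by $t^{2-n}$ yields the first assertion.

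\emph{Convergence of $I$.} This is the delicate point, because $I^{R_i^{-2}g_M}_{\hat u_i}(t)$ is an integral over the level set $\{\hat b_i=t\}$, not over the metric sphere. I would argue through the radial variable. Put $J_i(t):=\int_{\{\hat b_i\le t\}}\hat u_i^2\,|d\hat b_i|^2\,d\mathrm{vol}^{R_i^{-2}g_M}$; by the coarea formula (the critical set of $\hat b_i$ has codimension two) $J_i'(t)=t^{n-1}I^{R_i^{-2}g_M}_{\hat u_i}(t)$. Exactly as for $D$ — now $\{\hat u_i^2|d\hat b_i|^2\}$ has the infinitesimal convergence property to $u_\infty^2$ at a.e.\ point of $B_{R'}(p)$ (combine Example \ref{ex2} and Proposition \ref{10101} for $\hat u_i^2\to u_\infty^2$ with Proposition \ref{101020}) — Proposition \ref{10103} gives $J_i(t)\to J_\infty(t):=\int_{B_t(p)}u_\infty^2\,dH^n$ for every $t\in(0,R)$. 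Now invoke the Colding--Minicozzi identity $\tfrac{d}{dt}I^{R_i^{-2}g_M}_{\hat u_i}(t)=2D^{R_i^{-2}g_M}_{\hat u_i}(t)/t$ (the scale-invariant form of $(2.12)$--$(2.14)$ in \cite{co-mi3}); integrating it and then integrating $J_i'$ over $[r,s]$ gives
\[
I^{R_i^{-2}g_M}_{\hat u_i}(r)=\frac{n}{s^n-r^n}\Bigl(J_i(s)-J_i(r)-\int_r^s\tau^{n-1}\!\int_r^\tau\frac{2D^{R_i^{-2}g_M}_{\hat u_i}(\sigma)}{\sigma}\,d\sigma\,d\tau\Bigr),
\]
whose right side converges, by the uniform convergence of $D$ already proved and by $J_i(s)-J_i(r)\to J_\infty(s)-J_\infty(r)$, to a number $L$. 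Hence $I^{R_i^{-2}g_M}_{\hat u_i}(t)=I^{R_i^{-2}g_M}_{\hat u_i}(r)+\int_r^t 2D^{R_i^{-2}g_M}_{\hat u_i}(\sigma)/\sigma\,d\sigma$ converges uniformly on $[r,s]$ to $\widetilde I(t):=L+\int_r^t 2D_{u_\infty}(\sigma)/\sigma\,d\sigma$. Finally, passing to the limit in $J_i(t)-J_i(r)=\int_r^t\tau^{n-1}I^{R_i^{-2}g_M}_{\hat u_i}(\tau)\,d\tau$ gives $J_\infty(t)-J_\infty(r)=\int_r^t\tau^{n-1}\widetilde I(\tau)\,d\tau$, while the coarea formula for the distance from the pole on $C(X)$ (Proposition \ref{0005}) gives $J_\infty(t)-J_\infty(r)=\int_r^t\tau^{n-1}I_{u_\infty}(\tau)\,d\tau$; since $\widetilde I$ and $I_{u_\infty}$ are continuous, differentiation yields $\widetilde I\equiv I_{u_\infty}$ on $[r,s]$, which is the second assertion (and, as a by-product, the cone identity $\tfrac{dI_{u_\infty}}{dt}=2D_{u_\infty}(t)/t$).

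The main obstacle is exactly the last step: the surface integral defining $I^{R_i^{-2}g_M}_{\hat u_i}$ lives on level sets of $\hat b_i$, which are not the metric spheres $\partial B_t^{R_i^{-2}g_M}(m)$, so one cannot compare it to $\int_{\partial B_t(p)}u_\infty^2\,dH^{n-1}$ pointwise in $t$; the Almgren-type monotonicity identity $\tfrac{dI}{dt}=2D/t$ is what lets one trade the uncontrolled surface term for the Dirichlet term $D$, which is already handled by the convergence $(\hat u_i,d\hat u_i)\to(u_\infty,du_\infty)$. A subsidiary technical point is verifying the two facts about $\hat b_i$ above, for which the $L^2$-estimate on $|\nabla b^{g_M}|^2-1$ together with Lemma \ref{001} suffices (so no separate analysis of $d\hat b_i$ via Corollary \ref{suffconv} is needed).
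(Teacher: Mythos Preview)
Your argument is correct and, in its overall architecture, parallels the paper's: both routes go through the auxiliary function $J_i(t)=F_i(t)=\int_{\{\hat b_i\le t\}}\hat u_i^2|d\hat b_i|^2$, use $F_i'(t)=t^{n-1}I^{R_i^{-2}g_M}_{\hat u_i}(t)$, invoke the Colding--Minicozzi identity $dI/dt=2D/t$, and identify the limit via the cone coarea formula (Proposition~\ref{0005}). The difference is in how the surface term $I$ is extracted. The paper computes $F_i''$ explicitly and uses a first-order Taylor expansion $F_i(t+\epsilon)-F_i(t)=\epsilon F_i'(t)+\text{remainder}$ to isolate $t^{n-1}I_{\hat u_i}(t)$, then controls the remainder by annulus volumes and a direct comparison (their Claim~\ref{003}) between the sublevel annuli $\{t\le\hat b_i\le t+\epsilon\}$ and the metric annuli $A_m^{R_i^{-2}g_M}(t,t+\epsilon)$; on the cone side it uses the $1$-Lipschitz retraction $\pi_t$ onto $\overline B_t(p)$ to compare $\int_{A_p(t,t+\epsilon)}u_\infty^2$ with $\epsilon\,t^{n-1}I_{u_\infty}(t)$. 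You instead integrate the ODE once more to solve for $I_{\hat u_i}(r)$ as a closed expression in $J_i$ and $\int\!\!\int D_{\hat u_i}$, pass to the limit in that expression, and then identify the limit function $\widetilde I$ with $I_{u_\infty}$ by differentiating $J_\infty$. This buys you a cleaner argument that entirely avoids the symmetric-difference estimate (Claim~\ref{003}) and the retraction trick, at the cost of relying on the continuity of $I_{u_\infty}$, which on the cone is immediate from $I_{u_\infty}(t)=\int_X u_\infty(t,x)^2\,dH^{n-1}$. Your P\'olya-type argument (monotone functions converging pointwise to a continuous limit converge uniformly) for the uniform convergence of $D$ is likewise a neater replacement for the paper's direct annulus bound; the paper gets uniformity in $t\in[r,s]$ by sandwiching $\{\hat b_i\le t\}$ between $B_{(1\pm\epsilon^2)t}$ and bounding the discrepancy by $\mathrm{vol}\,A_m((1-\epsilon^2)t,(1+\epsilon^2)t)$.
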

\begin{proof}
We fix $0< \hat{r} <r < s < \hat{s} < R$.
We take $L \ge 1$ such that $|u_{\infty}|_{L^{\infty}(B_{\hat{s}}(x_{\infty}))} + \mathbf{Lip}u_{\infty} \le L.$
We fix $\epsilon >0$ satisfying $\epsilon <<\min \{\hat{r}, R-\hat{s}\}$.
Then, by the proof of Lemma \ref{001}, there exists $R_1(\epsilon)>1$ such that  
\[B_{(1-\epsilon^2)R}^{g_M}(m) \subset \{b^{g_M} \le R \} \subset B_{(1+\epsilon^2)R}^{g_M}(m) \]
and 
\[\int_{b^{g_M}\le R}\left||\nabla ^{g_M}b^{g_M}|^2-1\right|^2 \le \epsilon^8 \mathrm{vol}\{b^{g_M}\le R\}\]
for every $R > R_1(\epsilon)$.
Especially, by Cauchy-Schwartz inequality, we have
\[\int_{b^{g_M}\le R}\left||\nabla ^{g_M}b^{g_M}|^2-1\right| \le \epsilon^4 \mathrm{vol}\{b^{g_M}\le R\}\]
and
\[\int_{b^{g_M}\le R}\left||\nabla ^{g_M}b^{g_M}|-1\right| \le \epsilon^2 \mathrm{vol}\{b^{g_M}\le R\}.\]
For $0 < t <R$, we put 
\[F_i(t)=\int_{b^{R_i^{-2}g_M}\le t}(u_i)_{R_i}^2|\nabla ^{R_i^{-2}g_M}b^{R_i^{-2}g_M}|^2d\mathrm{vol}^{R_i^{-2}g_M}.\]
Then, we have 
\[\frac{dF_i}{dt}(t)=\int_{b^{R_i^{-2}g_M}=t}(u_i)_{R_i}^2|\nabla ^{R_i^{-2}g_M}b^{R_i^{-2}g_M}|d\mathrm{vol}_{n-1}^{R_i^{-2}g_M}=
I_{(u_i)_{R_i}}^{R_i^{-2}g_M}(t)t^{n-1}.\]
Thus, we have
\begin{align*}
\frac{d^2F_i}{dt^2}(t)&= 2t^{n-1}\frac{D_{(u_i)_{R_i}}^{R_i^{-2}g_M}(t)}{t} + (n-1)I_{(u_i)_{R_i}}^{R_i^{-2}g_M}(t)t^{n-2}\\
&=2\int_{b^{R_i^{-2}g_{M}}\le t}|\nabla^{R_i^{-2}g_M}(u_i)_{R_i}|^2d\mathrm{vol}^{R_i^{-2}g_M} + \frac{(n-1)}{t}
\int_{b^{R_i^{-2}g_M}=t}(u_i)_{R_i}^2|\nabla ^{R_i^{-2}g_M}b^{R_i^{-2}g_M}|^2d\mathrm{vol}^{R_i^{-2}g_M}_{n-1}.
\end{align*}
On the other hand, in general, for every $C^2$-function $f$ on $\mathbf{R}$, we have
\[f(t)=f(a)+(t-a)f'(a)-\int_a^t(s-t)f''(s)ds\]
for every $a, s, t \in \mathbf{R}$.
Therefore, for every $0 < t < R$, we have
\begin{align*}
&\left|\frac{F_i(t+\epsilon)-F_i(t)}{\epsilon}-\int_{b^{R_i^{-2}g_M}=t}(u_i)_{R_i}^2|\nabla ^{R_i^{-2}g_M}b^{R_i^{-2}g_M}|d\mathrm{vol}^{R_i^{-2}g_M}\right| \\
&\le \int_t^{t+\epsilon}2\int_{b^{R_i^{-2}g_M}\le a}|\nabla ^{R_i^{-2}g_M}(u_i)_{R_i}|^2d\mathrm{vol}^{R_i^{-2}g_M}da \\
& \ \ \ +(n-1)\int_t^{t+\epsilon}a^{-1}\int_{b^{R_i^{-2}g_M}=a}(u_i)_{R_i}^2 |\nabla ^{R_i^{-2}g_M}b^{R_i^{-2}g_M}|d\mathrm{vol}^{R_i^{-2}g_M}da \\
&\le 2\epsilon \int_{b^{R_i^{-2}g_M}\le t+\epsilon}|\nabla ^{R_i^{-2}g_M}(u_i)_{R_i}|^2d\mathrm{vol}^{R_i^{-2}g_M}
+\frac{n-1}{t}\int_{t\le b^{g_M} \le t+\epsilon}(u_i)_{R_i}^2|\nabla ^{R_i^{-2}g_M}b^{R_i^{-2}g_M}|^2d\mathrm{vol}^{R_i^{-2}g_M}.
\end{align*}
By Proposition \ref{99990}, there exists $i_0 \in \mathbf{N}$ such that $R_i\hat{r} \ge 10R_1(\epsilon)$,
$|(u_i)_{R_i}|_{L^{\infty}(B_{\hat{s}}^{R_i^{-2}g_M}(m))}\le 10L$ and 
\[\sup_{a \in [0, R]}\left|\mathrm{vol}^{R_i^{-2}g_M}\,B_a^{R_i^{-2}g_M}(m)-H^n(B_a(p))\right|< \epsilon^2\]
for every $i \ge i_0$.
Then, by Cheng-Yau's gradient estimate, for every $i \ge i_0$ and $r < t < s$, 
\begin{align}
\int_{b^{R_i^{-2}g_M} \le t + \epsilon}|\nabla ^{R_i^{-2}g_M}(u_i)_{R_i}|^2d\mathrm{vol}^{R_i^{-2}g_M} &\le 
\int_{B_{(1+\epsilon)(t+\epsilon)}^{R_i^{-2}g_M}(m)}|\nabla ^{R_i^{-2}g_M}(u_i)_{R_i}|^2d\mathrm{vol}^{R_i^{-2}g_M} \\
&\le C(n, L, R).
\end{align}
Here, we used $H^n(B_R(p))=R^nH^n(B_1(p))\le R^nC(n)$.
Moreover, we have
\begin{align}
&\int_{t \le b^{R_i^{-2}g_M} \le t + \epsilon}(u_i)_{R_i}^2|\nabla ^{R_i^{-2}g_M}b^{R_i^{-2}g_M}|^2d\mathrm{vol}^{R_i^{-2}g_M}\\
&\le \int_{t \le b^{R_i^{-2}g_M} \le t + \epsilon}(u_i)_{R_i}^2d\mathrm{vol}^{R_i^{-2}g_M} + \int_{t \le b^{R_i^{-2}g_M} \le t + \epsilon}(u_i)_{R_i}^2\left||\nabla ^{R_i^{-2}g_M}b^{R_i^{-2}g_M}|^2-1\right|d\mathrm{vol}^{R_i^{-2}g_M}\\
&\le \int_{t \le b^{R_i^{-2}g_M} \le t + \epsilon}(u_i)_{R_i}^2d\mathrm{vol}^{R_i^{-2}g_M} + 100L^2\mathrm{vol}^{R_i^{-2}g_M}\{t \le b^{R_i^{-2}g_M} \le t + \epsilon \} \\
&\le 200L^2 \mathrm{vol}^{R_i^{-2}g_M}\{t \le b^{R_i^{-2}g_M} \le t + \epsilon \} \\
&\le 200L^2\mathrm{vol}^{R_i^{-2}g_M}\,A_m^{R_i^{-2}g_M}\left((1-\epsilon^2)t, (1+\epsilon^2)(t+\epsilon)\right)\\
&\le 200L^2H^n\left(A_p\left((1-\epsilon^2)t,  (1+\epsilon^2)(t+\epsilon)\right)\right)+ 300L^2\epsilon^2.
\end{align}
On the other hand, we have
\begin{align}
\frac{F_i(t+\epsilon)-F_i(t)}{\epsilon}&=\frac{1}{\epsilon}\int_{t \le b^{R_i^{-2}g_M} \le t + \epsilon}(u_i)_{R_i}^2d\mathrm{vol}^{R_i^{-2}g_M} \\
&\  \ \ \pm \frac{1}{\epsilon}\int_{t \le b^{R_i^{-2}g_M}\le t + \epsilon}(u_i)_{R_i}^2\left||\nabla ^{R_i^{-2}g_M}b^{R_i^{-2}g_M}|^2-1\right|d\mathrm{vol}^{R_i^{-2}g_M},
\end{align}
and
\begin{align}
&\frac{1}{\epsilon}\int_{t \le b^{R_i^{-2}g_M} \le t + \epsilon}(u_i)_{R_i}^2\left||\nabla ^{R_i^{-2}g_M}b^{R_i^{-2}g_M}|^2-1\right|d\mathrm{vol}^{R_i^{-2}g_M}\\
&\le\frac{100L^2}{\epsilon}\int_{b^{R_i^{-2}g_M} \le t + \epsilon}
\left||\nabla ^{R_i^{-2}g_M}b^{R_i^{-2}g_M}|^2-1\right|d\mathrm{vol}^{R_i^{-2}g_M}\\
&\le \frac{100L^2}{\epsilon}\epsilon ^2 \mathrm{vol}^{R_i^{-2}g_M}\left(\{b^{R_i^{-2}g_M} \le t + \epsilon \}\right)\\
&\le 100L^2 \epsilon \frac{\mathrm{vol}^{g_M}B_{(1+\epsilon^2)(t+\epsilon)R_i}^{g_M}(m)}{R_i^n} \\
&\le \epsilon C(n, L, R).
\end{align}
We remark that 
\begin{align}
&\left|\int_{t \le b^{R_i^{-2}g_M} \le t + \epsilon}(u_i)_{R_i}^2d\mathrm{vol}^{R_i^{-2}g_M}-\int_{A_m^{R_i^{-2}g_M}(t, t+\epsilon)}
(u_i)_{R_i}^2d\mathrm{vol}^{R_i^{-2}g_M}\right|\\
&\le 100L^2 \mathrm{vol}^{R_i^{-2}g_M}\left(\{t \le b^{R_i^{-2}g_M} \le t + \epsilon\} \triangle 
A_m^{R_i^{-2}g_M}(t, t+\epsilon)\right).
\end{align}
Here $A \triangle B= (A \setminus B) \cup (B \setminus A)$.
\begin{claim}\label{003}
We have 
\begin{align}
&\{t \le b^{R_i^{-2}g_M} \le t + \epsilon\} \triangle 
A_m^{R_i^{-2}g_M}(t, t+\epsilon) \\
&\subset A_m^{R_i^{-2}g_M}\left((1-\epsilon^2)(t+\epsilon), (1 + \epsilon^2)(t+\epsilon)\right)
\cup A_m^{R_i^{-2}g_M}\left((1-\epsilon^2)t, (1 + \epsilon^2)t\right)
\end{align}
for every $i \ge i_0$ and $r < t < s$.
\end{claim}
The proof is as follows.
We put $A_i^{\epsilon}(t) = \{t \le b^{R_i^{-2}g_M} \le t + \epsilon\} \triangle 
A_m^{R_i^{-2}g_M}(t, t+\epsilon)$.
First, we take $y \in \{t \le b^{R_i^{-2}g_M} \le t + \epsilon/2\} \cap A_i^{\epsilon}(t)$.
Then we have 
$y \in B_{(1+\epsilon^2)(t+\epsilon/2)}^{R_i^{-2}g_M}(m)$.
Especially, we have 
\[\overline{m,y}^{R_i^{-2}g_M}\le (1+ \epsilon^2)(t+\frac{\epsilon}{2}) < t + \epsilon.\]
Since $y \in M \setminus A_m^{R_i^{-2}g_M}(t, t+\epsilon)$, we have $y \in B_t^{R_i^{-2}g_M}(m)$.
Thus, we have $\{t \le b^{R_i^{-2}g_M} \le t + \epsilon/2 \} \cap A_i^{\epsilon}(t) \subset B_t^{R_i^{-2}g_M}(m) \setminus B_{(1-\epsilon^2)t}^{R_i^{-2}g_M}(m)$.
Similarly, we have $\{t+\epsilon/2 \le b^{R_i^{-2}g_M} \le t + \epsilon\} \cap A_i^{\epsilon}(t) \subset 
B_{(1+\epsilon^2)(t+\epsilon)}^{R_i^{-2}g_M}(m) \setminus B_{t+\epsilon}^{R_i^{-2}g_M}(m)$.
Therefore, we have
\[\{t \le b^{R_i^{-2}g_M} \le t + \epsilon \} \cap A_i^{\epsilon}(t) \subset A_m^{R_i^{-2}g_M}((1-\epsilon^2)t, t) \cup 
A_m^{R_i^{-2}g_M}(t+\epsilon, (1+\epsilon^2)(t+\epsilon)).\]
Next, we take $x \in A_i^{\epsilon}(t) \cap A_m^{R_i^{-2}g_M}(t, t+\epsilon/2)$.
Then we have 
\[b^{R_i^{-2}g_M}(x) \le (1 + \epsilon^2)\overline{m,x}^{R_i^{-2}g_M} \le 
(1+\epsilon^2)(t + \epsilon/2)< t+\epsilon.\]
Since $x \in M \setminus \{t \le b^{R_i^{-2}g_M} \le t+\epsilon\}$, we have $b^{R_i^{-2}g_M}(x) < t$. Therefore, we have
$x \in B_{(1+\epsilon^2)t}^{R_i^{-2}g_M}(m)$. 
Thus, we have $A_m^{R_i^{-2}g_M}(t, t+\epsilon/2) \cap A_i^{\epsilon}(t) \subset A_m^{R_i^{-2}g_M}(t, (1 + \epsilon^2)t)$.
Similarly, we have $A_m^{R_i^{-2}g_M}(t+ \epsilon/2, t+\epsilon) \cap A_i^{\epsilon}(t) \subset 
A_m^{R_i^{-2}g_M}(t+\epsilon, (1+\epsilon^2)(t+\epsilon))$. 
Therefore we have Claim \ref{003}. 
\

By Claim \ref{003} and Bishop-Gromov volume comparison theorem, we have 
\begin{align}
&\epsilon^{-1}\mathrm{vol}^{R_i^{-2}g_M} \left(\{t \le b^{R_i^{-2}g_M} \le t + \epsilon\} \triangle 
A_m^{R_i^{-2}g_M}(t, t+\epsilon)\right) \\
& \le \epsilon^{-1} \mathrm{vol}^{R_i^{-2}g_M}\,\left(A_m^{R_i^{-2}g_M}\left((1-\epsilon^2)(t+\epsilon), (1 + \epsilon^2)(t+\epsilon)\right)\right)\\
& \ \ \ \ \ \ + \epsilon^{-1} \mathrm{vol}^{R_i^{-2}g_M}\,\left(A_m^{R_i^{-2}g_M}\left((1-\epsilon^2)t, (1+\epsilon^2)t)\right)\right) \\
& \le 3\epsilon^{-1}\epsilon^2 \mathrm{vol}^{R_i^{-2}g_M}_{n-1}\left(\partial B_{(1-\epsilon^2)(t+\epsilon)}^{R_i^{-2}g_M}(m) \setminus C_m\right)
+ 3\epsilon^{-1}\epsilon^2 \mathrm{vol}^{R_i^{-2}g_M}_{n-1}\left(\partial B_{(1-\epsilon^2)t}^{R_i^{-2}g_M}(m)\setminus C_m\right) \\
& \le 6\epsilon \mathrm{vol}\,\partial B_R(0_n).
\end{align}
Therefore we have 
\[\left|\int_{t \le b^{R_i^{-2}g_M} \le t + \epsilon}(u_i)_{R_i}^2d\mathrm{vol}^{R_i^{-2}g_M}-\int_{A_m^{R_i^{-2}g_M}(t, t+\epsilon)}
(u_i)_{R_i}^2d\mathrm{vol}^{R_i^{-2}g_M}\right|\le 600L^2 \epsilon \mathrm{vol}\,\partial B_R(0_n).\]
for every $i \ge i_0$ and $r < t < s$.
We take the canonical retraction, $\pi_t$ from $C(X)$ to $\overline{B}_t(p)$ for every $t >0$.
It is easy to check that $\pi_t$ is $1$-Lipschitz map. 
We put $u_{\infty}^t= (u_{\infty})^2 \circ \pi_t$.
We have $\mathbf{Lip}u_{\infty}^t \le \mathbf{Lip}(u_{\infty})^2$.
By Proposition \ref{0005}, we have 
\begin{align}
&\left|\int_t^{t+\epsilon}\int_{\partial B_a(p)}(u_{\infty})^2dH^{n-1}da-\int_t^{t + \epsilon}\int_{\partial B_a(p)}u_{\infty}^tdH^{n-1}da\right|\\
&\le \int_{A_p(t, t+\epsilon)}|(u_{\infty})^2-u_{\infty}^t|dH^n \\
&\le \mathbf{Lip}(u_{\infty})^2 \epsilon H^n(A_p(t, t+\epsilon)).
\end{align}
for every $r < t < s$.
On the other hand, 
\begin{align}
\int_t^{t+\epsilon}\int_{\partial B_a(p)}u_{\infty}^tdH^{n-1}da &=
\int_t^{t+\epsilon}\left(\frac{a}{t}\right)^{n-1}\int_{\partial B_t(p)}(u_{\infty})^2dH^{n-1}da \\
&=\int_{\partial B_t(p)}(u_{\infty})^2dH^{n-1}\int_t^{t+\epsilon}\left(\frac{a}{t}\right)^{n-1}da \\
&=I_{u_{\infty}}(t)\frac{(t+\epsilon)^n-t^n}{n} \\
&=I_{u_{\infty}}(t)(\epsilon t^{n-1} \pm \Psi(\epsilon; n, R)\epsilon).
\end{align}
Therefore we have 
\[\lim_{i \rightarrow \infty}\sup_{t \in [r,s]}\left|I_{(u_i)_{R_i}}^{R_i^{-2}g_M}(t)-I_{u_{\infty}}(t)\right| =0.\]
Next, we shall prove 
\[\lim_{i \rightarrow \infty}\sup_{t \in [r,s]}\left|D_{(u_i)_{R_i}}^{R_i^{-2}g_M}(t)-D_{u_{\infty}}(t)\right| =0.\]
We shall use same notations as above.
It is clear that 
\begin{align}
t^{2-n}\int_{B_{(1-\epsilon^2)t}^{R_i^{-2}g_M}(m)}|\nabla ^{R_i^{-2}g_M}(u_i)_{R_i}|^2d\mathrm{vol}^{R_i^{-2}g_M}
&\le D_{(u_i)_{R_i}}^{R_i^{-2}g_M}(t)\\
&\le t^{2-n}\int_{B_{(1+\epsilon^2)t}^{R_i^{-2}g_M}(m)}|\nabla ^{R_i^{-2}g_M}(u_i)_{R_i}|^2d\mathrm{vol}^{R_i^{-2}g_M}
\end{align}
for every $i \ge i_1$ and $r<t<s$.
On the other hand, we have
\begin{align}
&\int_{A_m^{R_i^{-2}g_M}\left((1-\epsilon^2)t, (1+\epsilon^2)t\right)}|\nabla ^{R_i^{-2}g_M}(u_i)_{R_i}|^2d\mathrm{vol}^{R_i^{-2}g_M}(m)\\
&\le C(n, L, R) \mathrm{vol}^{R_i^{-2}g_M}\,A_m^{R_i^{-2}g_M}((1-\epsilon^2)t, (1+\epsilon^2)t)) \\
&\le C(n, L, R) \left(H^n\left(A_p((1-\epsilon^2)t, (1+\epsilon^2)t)\right)+ \epsilon \right).
\end{align}
Therefore, by Theorem \ref{har}, we have the assertion.
\end{proof}
For every $0 < r < R$ and harmonic function $u$ on $\{b^{g_M}<R\}$, we put
\[E_u^{g_M}(r) = r^{2-n}\int_{b^{g_M}\le r}|\nabla ^{g_M}u|^2|\nabla ^{g_M}b^{g_M}|^2d\mathrm{vol}^{g_M}.\]
It is easy to check that for every $\tau, r, R > 0$ satisfying $R > r \tau$ and a harmonic function $u$ on $\{b^{g_M} < R\}$, we have 
$E_{u_{\tau}}^{\tau^{-2}g_M}(r)=\tau^{-2}E_u^{g_M}(\tau r)$.
By an argument similar to the proof of Proposition \ref{002} (or \cite[Proposition $3. 3$]{co-mi3}), we have the following:
\begin{proposition}\label{004}
With same assumption as in Lemma \ref{002}, we have
\[\lim_{i \rightarrow \infty}\sup_{t \in [r,s]}\left|E_{(u_i)_{R_i}}^{R_i^{-2}g_M}(t)-D_{u_{\infty}}(t)\right| =0\]
for every $0 < r < s <R$. 
\end{proposition}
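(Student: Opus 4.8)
The plan is to reduce the statement to Proposition \ref{002} by showing that the weight $|\nabla^{g_M}b^{g_M}|^2$ occurring in $E_u^{g_M}$ may be replaced by $1$ at the cost of an error that tends to $0$ uniformly in $t$. Fix $0<r<s<R$ and choose $\hat{r},\hat{s}$ with $0<\hat{r}<r<s<\hat{s}<R$; take $L\ge 1$ with $|u_{\infty}|_{L^{\infty}(\overline{B}_{\hat{s}}(p))}+\mathbf{Lip}u_{\infty}\le L$, and fix a small $\epsilon>0$. As in the proof of Lemma \ref{001} (see also $(2.23)$–$(2.25)$ in \cite{co-mi3}), there is $R_1(\epsilon)>1$ such that for every $R'>R_1(\epsilon)$ one has $B_{(1-\epsilon^2)R'}^{g_M}(m)\subset\{b^{g_M}\le R'\}\subset B_{(1+\epsilon^2)R'}^{g_M}(m)$ and, by Cauchy–Schwarz from $\int_{b^{g_M}\le R'}\big||\nabla^{g_M}b^{g_M}|^2-1\big|^2\le\epsilon^8\,\mathrm{vol}^{g_M}\{b^{g_M}\le R'\}$, also $\int_{b^{g_M}\le R'}\big||\nabla^{g_M}b^{g_M}|^2-1\big|\,d\mathrm{vol}^{g_M}\le\epsilon^4\,\mathrm{vol}^{g_M}\{b^{g_M}\le R'\}$. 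After rescaling by $R_i^{-2}g_M$ these relations hold for $\{b^{R_i^{-2}g_M}\le t\}$, $t\in[\hat{r},\hat{s}]$, as soon as $R_i\hat{r}\ge 10R_1(\epsilon)$.

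Next I would record the two uniform bounds that make the error estimate work. First, for all large $i$ one has $|(u_i)_{R_i}|_{L^{\infty}(B_{\hat{s}}^{R_i^{-2}g_M}(m))}\le 10L$ (as in the proof of Proposition \ref{002}), and then by Remark \ref{549760} and Cheng–Yau's gradient estimate $\sup_i\,\mathbf{Lip}\big((u_i)_{R_i}|_{\overline{B}_{\hat{s}}^{R_i^{-2}g_M}(m)}\big)\le C(n,L,R)$; since $\{b^{R_i^{-2}g_M}\le s\}\subset B_{(1+\epsilon^2)s}^{R_i^{-2}g_M}(m)\subset B_{\hat{s}}^{R_i^{-2}g_M}(m)$ for large $i$, this gives $|\nabla^{R_i^{-2}g_M}(u_i)_{R_i}|\le C(n,L,R)$ on $\{b^{R_i^{-2}g_M}\le s\}$. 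Second, by Proposition \ref{99990} and Bishop–Gromov volume comparison, $\mathrm{vol}^{R_i^{-2}g_M}\,B_a^{R_i^{-2}g_M}(m)\to H^n(B_a(p))\le C(n)a^n$ uniformly for $a\in[0,R]$, so $\mathrm{vol}^{R_i^{-2}g_M}\{b^{R_i^{-2}g_M}\le t\}\le C(n,R)$ for all $t\in[\hat{r},\hat{s}]$ and all large $i$.

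With these at hand, and using that $D_{(u_i)_{R_i}}^{R_i^{-2}g_M}(t)=t^{2-n}\int_{b^{R_i^{-2}g_M}\le t}|\nabla^{R_i^{-2}g_M}(u_i)_{R_i}|^2\,d\mathrm{vol}^{R_i^{-2}g_M}$ by definition, I would estimate
\begin{align*}
\big|E_{(u_i)_{R_i}}^{R_i^{-2}g_M}(t)-D_{(u_i)_{R_i}}^{R_i^{-2}g_M}(t)\big|
&\le t^{2-n}\int_{b^{R_i^{-2}g_M}\le t}|\nabla^{R_i^{-2}g_M}(u_i)_{R_i}|^2\,\big||\nabla^{R_i^{-2}g_M}b^{R_i^{-2}g_M}|^2-1\big|\,d\mathrm{vol}^{R_i^{-2}g_M}\\
&\le C(n,L,R)\,t^{2-n}\int_{b^{R_i^{-2}g_M}\le t}\big||\nabla^{R_i^{-2}g_M}b^{R_i^{-2}g_M}|^2-1\big|\,d\mathrm{vol}^{R_i^{-2}g_M}\\
&\le C(n,L,R)\,\epsilon^4\,t^{2-n}\,\mathrm{vol}^{R_i^{-2}g_M}\{b^{R_i^{-2}g_M}\le t\}\le C(n,L,R)\,\epsilon^4
\end{align*}
uniformly for $t\in[r,s]$ and all large $i$. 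Since Proposition \ref{002} gives $\sup_{t\in[r,s]}\big|D_{(u_i)_{R_i}}^{R_i^{-2}g_M}(t)-D_{u_{\infty}}(t)\big|\to 0$, combining the two bounds and letting $\epsilon\to 0$ yields the claim.

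The only delicate point — and the main thing to be careful about — is the uniform gradient bound for $(u_i)_{R_i}$ right up to the level hypersurface $\{b^{R_i^{-2}g_M}=s\}$: one must ensure the Cheng–Yau estimate is applied on a ball $B_{\hat{s}}^{R_i^{-2}g_M}(m)$ that strictly contains $\{b^{R_i^{-2}g_M}\le s\}$ and on which $u_i$ is still harmonic, which is guaranteed for large $i$ by the inclusion $\{b^{R_i^{-2}g_M}\le s\}\subset B_{(1+\epsilon^2)s}^{R_i^{-2}g_M}(m)$, the choice $\hat{s}<R$, and the convexity argument behind Remark \ref{549760}. Everything else is a direct combination of Lemma \ref{001}, Proposition \ref{99990}, Proposition \ref{002}, and the $L^1$-smallness of $|\nabla^{g_M}b^{g_M}|^2-1$.
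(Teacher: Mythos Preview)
Your proof is correct and follows essentially the same route the paper indicates (``an argument similar to the proof of Proposition~\ref{002}''): the key inputs are the $L^1$-smallness of $|\nabla b|^2-1$ on sublevel sets, the uniform Cheng--Yau gradient bound for $(u_i)_{R_i}$ on balls compactly contained in $B_R(m)$, and the volume comparison. Your packaging---bounding $|E-D|$ directly and then invoking the already-established uniform convergence of $D$ from Proposition~\ref{002}---is slightly cleaner than redoing the full proof of Proposition~\ref{002} with the extra weight, but the substance is the same.
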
 
We shall introduce an important result \cite[Theorem $2.1$]{di1} by Ding:
\begin{theorem}[Ding, \cite{di1}]\label{005}
For every $0 < r < R$, all harmonic functions on $B_R(p)$ are Lipschitz on $B_r(p)$.
Moreover, for every $0 < r < s < R$ and harmonic function $v$ on $B_R(p)$, there exist
a subsequence $\{n(i)\}_i$ of $\mathbf{N}$ and
a sequence of harmonic functions $v_{n(i)}$
on $B_{s}^{R_{n(i)}^{-2}g_M}(m)$ such that $v_{n(i)} \rightarrow u_{\infty}$ on $B_r(x_{\infty})$.
\end{theorem}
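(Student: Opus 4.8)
This is Theorem~2.1 of \cite{di1}; the plan below reconstructs the argument from the machinery developed above, realizing harmonic functions on $B_R(p)\subset C(X)$ as limits of solutions of Dirichlet problems on the rescaled manifolds $M_i:=(M,R_i^{-2}g_M)$, for which $(M,m,R_i^{-1}d_M,\mathrm{vol}^{R_i^{-2}g_M})\to(M_\infty,m_\infty,H^n)$. Fix a harmonic function $v$ on $B_R(p)$ and radii $0<r<s<s'<R$. First I would recall that a harmonic function in Cheeger's sense on a space satisfying a doubling condition and a weak $(1,2)$-Poincar\'e inequality has a locally H\"older continuous representative, so that $v$ is continuous, hence uniformly continuous, on $\overline{B}_{s'}(p)$. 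Using the Gromov--Hausdorff approximations $\phi_i$, set $g_i:=v\circ\phi_i$ on $\partial B_{s'}^{M_i}(m)$ and let $v_i$ be the harmonic function on $B_{s'}^{M_i}(m)$ with boundary values $g_i$ (classical Dirichlet problem on a bounded domain of a manifold). The maximum principle gives $\sup_{B_{s'}^{M_i}(m)}|v_i|\le\sup_{\overline{B}_{s'}(p)}|v|+o(1)$, so the $v_i$ are uniformly bounded by a constant $C$; applying the Cheng--Yau gradient estimate (Theorem~\ref{gradient1}) to the nonnegative harmonic function $v_i+C$ on $B_{s'}^{M_i}(m)$ then bounds $|\nabla^{M_i}v_i|$ uniformly on $\overline{B}_s^{M_i}(m)$, i.e.\ $\sup_i\mathbf{Lip}\bigl(v_i|_{\overline{B}_s^{M_i}(m)}\bigr)<\infty$.

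Next I would invoke the precompactness result for Lipschitz functions (Proposition~\ref{Lips}) to extract a subsequence $\{n(i)\}$ and a Lipschitz function $v_\infty$ on $\overline{B}_s(p)$ with $v_{n(i)}\to v_\infty$ on $B_s(p)$; by Corollary~\ref{har} the limit $v_\infty$ is harmonic on $B_s(p)$, and $(v_{n(i)},dv_{n(i)})\to(v_\infty,dv_\infty)$ there. It then remains to identify $v_\infty$ with $v$ on $B_r(p)$. Since both $v$ and $v_\infty$ are harmonic on $B_s(p)$ and continuous on $\overline{B}_s(p)$ (H\"older regularity for $v$, the Lipschitz bound above for $v_\infty$), a maximum-principle argument reduces the claim to showing that $\sup_{\partial B_t(p)}|v-v_\infty|\to0$ as $t\uparrow s'$; equivalently, that the solutions $v_i$ realize their prescribed boundary data $v\circ\phi_i$ with a modulus of continuity up to $\partial B_{s'}^{M_i}(m)$ that is independent of $i$.

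This uniform boundary regularity is the main obstacle, since distance balls $B_{s'}^{M_i}(m)$ need not have smooth boundary. I would attack it in one of two ways: (i) construct barriers at boundary points of $B_{s'}^{M_i}(m)$ from the distance function $r_m$, using the Laplacian comparison $\Delta r_m\le(n-1)/r_m$ (valid in the barrier sense since $\mathrm{Ric}_{M_i}\ge0$) together with the uniform continuity of $v$, to obtain an $i$-independent boundary modulus of continuity; or (ii) establish the required stability of the Dirichlet problem under measured Gromov--Hausdorff convergence directly, via Mosco convergence of the Dirichlet energies, using the doubling and Poincar\'e properties together with the energy-convergence results of Section~4. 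Either route yields $v_\infty=v$ on all of $\overline{B}_{s'}(p)$ minus an arbitrarily small neighborhood of its boundary, hence on $B_r(p)$; since the limit is $v$ regardless of the chosen subsequence, the whole sequence in fact converges, and the ``subsequence'' in the statement is only an artifact of the compactness step. This proves the second assertion.

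Finally, the first assertion follows immediately: for each $s\in(r,R)$ we have produced harmonic functions $v_{n(i)}$ on $B_s^{M_{n(i)}}(m)$ that are uniformly bounded (directly from the construction, or by the mean-value inequality applied to $|v_{n(i)}|$ together with Proposition~\ref{lap}) and that converge to $v$ on $B_s(p)$; the Cheng--Yau estimate bounds $\mathbf{Lip}\bigl(v_{n(i)}|_{\overline{B}_r^{M_{n(i)}}(m)}\bigr)$ uniformly in $i$, and letting $i\to\infty$ shows that $v$ is Lipschitz on $\overline{B}_r(p)$. The quantitative input throughout is Cheng--Yau on the manifolds combined with the convergence theory of Section~4 (in particular Corollary~\ref{har}); the only genuinely delicate point is the uniform boundary behavior of the Dirichlet approximants.
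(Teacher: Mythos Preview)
Your overall strategy---solve Dirichlet problems on the rescaled manifolds with boundary data pulled back from $v$, extract a limit via Cheng--Yau and Proposition~\ref{Lips}, identify the limit with $v$ by maximum principle once you have uniform boundary regularity---is exactly the paper's route. Two points of divergence are worth noting.

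First, the paper reverses your order on the Lipschitz claim: it proves local Lipschitz continuity of $v$ \emph{first}, by an independent argument (a heat-kernel Poincar\'e inequality passed to the limit, Ahlfors $n$-regularity of $(C(X),H^n)$, and then the Koskela--Rajala--Shanmugalingam regularity theorem for Cheeger-harmonic functions), and only then builds Lipschitz approximants $f_i\to v$ whose boundary traces feed the Perron solutions. Your plan to get Lipschitz \emph{from} the approximation is viable in principle, but the paper's order means it can take Lipschitz boundary data from the outset.

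Second, and this is the real content of ``the only genuinely delicate point'': your barrier suggestion (i) is not quite the right object. Using $r_m$ with $\Delta r_m\le(n-1)/r_m$ does not produce a barrier at a \emph{specific} boundary point $x\in\partial B_s(p)$, since $s-r_m$ vanishes on the whole sphere. The paper instead exploits the cone structure: for $x\in\partial B_s(p)$ it takes the point $z\in\partial B_{2s}(p)$ on the same ray from $p$, so that $\overline{z,\alpha}-\overline{z,x}\ge C_1(n,R)\,\overline{x,\alpha}^2$ for all $\alpha\in B_s(p)$, and then uses $b^i=(r_{z(i)})^{2-n}-(r_{z(i)})^{2-n}(x(i))$, which is superharmonic by Laplacian comparison on the manifolds, as the building block for super/sub-functions relative to $f_i$. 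This yields the quantitative boundary estimate $|u_i(\alpha)-f_i(x(i))|\le C\tau+C\tau^{-2}\overline{x,\alpha}$, and optimizing in $\tau$ gives the $\overline{x,\alpha}^{1/3}$ modulus that lets the maximum principle (via \cite[Corollary~6.6]{sh}) identify the limit with $v$. Your option (ii) would be a genuinely different proof; the paper does not go that way.
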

\begin{proof}
We shall give an outline of the proof only.
First, we shall show that $u_{\infty}$ is Lipschitz function.
By  \cite[Proposition $5.1$]{KRS}, for every $u \in H_{1,2}(M_i)$ and $R>0$, we have 
\begin{align}
\int_M u(y)^2H^{R^{-2}g_M}(t, y, x)d\mathrm{vol}^{R^{-2}g_M}_y &\le 2t \int_M|d^{R^{-2}g_M}u|^2d\mathrm{vol}^{R^{-2}g_M}_y \\
& \ \ \ \ + \left(\int_{M}u(y)H^{R^{-2}g_M}(t, y, x)d\mathrm{vol}^{R^{-2}g_M}_y\right)^2
\end{align}
for a.e. $x \in M$.
Here. $H^{R^{-2}g_M}(t, y, x)$ is the heat kernel for rescaled manifold $(M, R^{-2}g_M)$.
By \cite[Theorem $5. 54$]{di2} and \cite[Lemma $10.3$]{ch2} (or Theorem \ref{app}), for  every $u \in \mathcal{K}(C(X))$, we have,
\[\int_{C(X)} u(y)^2H_{\infty}(t, y, x)dH^n(y) \le 2t \int_{C(X)}|du|^2dH^n(y) +
\left(\int_{C(X)}u(y)H_{\infty}(t, y, x)dH^n(y)\right)^2\]
for a.e. $x \in C(X)$.
Here $H_{\infty}$ is as in \cite[Theorem $5. 54$]{di2}.
Since $\mathcal{K}(C(X))$ is dense in $H_{1,2}(C(X))$, the inequality above holds for every $u \in H_{1, 2}(C(X))$.
Next, we fix $x \in X$ and $0< t < R$. Then, by Bishop-Gromov volume comparison theorem, it is easy to check that 
$H^n(B_t((1,x))) \ge C(n, V_M)t^n.$
For every $R > 0$, we define the map $\phi_R$ from $A_p(R-t, R+t)$ to $A_p(1-\frac{t}{R}, 1+\frac{t}{R})$ by
$\phi_R((\hat{t}, x)) = (\hat{t}/R, x)$.
Since  
$H^n(\phi_R(A))=R^{n}H^n(A)$
for every Borel subset $A \subset A_p(R-t, R+t)$, we have
\[H^n(B_t(R, x)) = R^{n}H^n(B_{\frac{t}{R}}(1, x)) \ge C(n, V_M)t^n.\]
Therefore, $(C(X), H^n)$ is Ahlfors $n$-regular metric measure space (see section $1$ in \cite{KRS}).
By \cite[Theorem $6.1$]{di2}, \cite[Theorem $6.20$]{di2} and \cite[Theorem $1.1$]{KRS}, $u_{\infty}$ is locally Lipschitz function on $B_R(p)$.
By convexity of $B_s(p)$ and the proof of \cite[Theorem $1.1$]{KRS}, $u_{\infty}$ is Lipschitz on $B_s(p)$.
Next, we shall take $L \ge 1$ satisfying $\mathbf{Lip}(u_{\infty}|_{B_s(p)}) + |u_{\infty}|_{L^{\infty}(B_s(p))} \le L$.
Without loss of generality, we can assume that there exists a sequence of Lipschitz functions $f_i$ on $\overline{B}_s^{R_i^{-2}g_M}(m)$ such that  $\mathbf{Lip}f_i + |f_i|_{L^{\infty}(B_s(p))} \le 10 L$ and $f_i \rightarrow u_{\infty}$ on $B_s(p)$.
We take a harmonic function $u_i$ on $B_s^{R_i^{-2}g_M}(m)$ such that 
\[u_i|_{\partial B_s^{R_i^{-2}g_M}(m)} = f_i|_{\partial B_s^{R_i^{-2}g_M}(m)}\]
in the sense of Perron's method for $f_i$.
We shall give a short review of Perron's method of subharmonic functions in this setting below.
See for instance section $2. 8$ in \cite{Gi}.
For $f \in C^0(B_s^{R_i^{-2}g_M}(m))$, we say that $f$ is subharmonic (superharmonic) in $B_s^{R_i^{-2}g_M}(m)$ if 
for every $w \in B_s^{R_i^{-2}g_M}(m)$, $r_1 > 0$ with $\overline{B}_{r_1}^{R_i^{-2}g_M}(w) \subset B_s^{R_i^{-2}g_M}(m)$,
and $h \in C^0(\overline{B}_{r_1}^{R_i^{-2}g_M}(w))$ satisfying $h|_{B_{r_1}^{R_i^{-2}g_M}(w)}$ is harmonic and $h|_{\partial B_{r_1}^{R_i^{-2}g_M}(w)}\le (\ge) f|_{\partial B_{r_1}^{R_i^{-2}g_M}(w)}$, we also have $h \le (\ge) f$ on $B_{r_1}^{R_i^{-2}g_M}(w)$.
For $g \in C^0(\overline{B}_s^{R_i^{-2}g_M}(m))$,
we say that $g$ is a subfunction relative to $f_i|_{B_s^{R_i^{-2}g_M}(m)}$ if $g|_{B_s^{R_i^{-2}g_M}(m)}$ is a subharmonic function and $g|_{\partial B_s^{R_i^{-2}g_M}(m)}
\le f_i|_{\partial B_s^{R_i^{-2}g_M}(m)}$.
We also say that $g$ is a superfunction relative to $f_i|_{B_s^{R_i^{-2}g_M}(m)}$ if $g|_{B_s^{R_i^{-2}g_M}(m)}$ is a superharmonic function and $g|_{\partial B_s^{R_i^{-2}g_M}(m)}
\ge f_i|_{\partial B_s^{R_i^{-2}g_M}(m)}$.
Let $S_{f_i}$ denote the set of subfunctions relative to $f_i|_{B_s^{R_i^{-2}g_M}(m)}$.
Then we put a function $u_i$ on $B_s^{R_i^{-2}g_M}(m)$ by
\[u_i(w)= \sup_{v \in S_{f_i}}v(w).\]
By an argument similar to the proof of  \cite[Theorem $2. 12$]{Gi}, it is easy to check that $u_i$ is harmonic on $B_s^{R_i^{-2}g_M}(m)$.

We fix $0< \tau < 3R$, $x \in \partial B_s(p)$ and $z \in \partial B_{2s}(p)$ satisfying $\overline{p,x}+
\overline{x,z}=\overline{p,z}$.
We take sequences $x(i) \in \partial B_s^{R_i^{-2}g_M}(m)$ and $z(i) \in \partial B_{2s}^{R_i^{-2}g_M}(m)$ such that
$x(i) \rightarrow x$ and $z(i) \rightarrow z$.
Then it is easy to check that for every $\alpha \in B_s(p)$, we have 
\[C_1(n, R)\overline{x, \alpha}^2 \le \overline{z, \alpha}-\overline{z, x} \le \overline{x, \alpha}.\]
We fix $\alpha \in B_r(p)$ and take a sequence of points $\alpha(i) \in  B_s^{R_i^{-2}g_M}(m)$ satisfying $\alpha(i) \rightarrow \alpha$.
We put $b^i= (r_{z(i)}^{R_i^{-2}g_M})^{2-n}-(r_{z(i)}^{R_i^{-2}g_M})^{2-n}(x(i))$ on $B_s^{R_i^{-2}g_M}(m)$.
By Laplacian comparison theorem on manifolds (or $(4.11)$ in \cite{ch1}), we have, a function $b^i$ is a superharmonic, a function $f_i(x(i)) + 100L\tau + C(n, L, R)b^i/\tau^2$ is a superfunction relative to $f_i|_{\partial B_s^{R_i^{-2}g_M}(m)}$ and a function $f_i(x(i)) - 100L\tau - C(n, L, R)b^i/\tau^2$ is a subfunction relative to $f_i|_{\partial B_s^{R_i^{-2}g_M}(m)}$ for every sufficiently large $i$.
By an argument similar to the proof of \cite[Lemma $2. 13$]{Gi}, we have 
\[|f_i(x(i))-u_i(\alpha(i))| \le C(n, R, L)\tau + \frac{C(n, R, L)}{\tau^2}\overline{x(i), \alpha(i)}^{R_i^{-2}g_M}\]
for  every sufficiently large $i$.
On the other hand, by Proposition \ref{Lips} and Corollary \ref{har}, we can assume that there exists a harmonic function $\hat{u}_{\infty}$ on
$B_s(p)$ such that $\hat{u}_{\infty}|_{B_{\hat{s}}(p)}$ is a Lipschitz function, $u_i \rightarrow u_{\infty}$ on $B_{\hat{s}}(p)$
for every $0 < \hat{s} <s$.
Thus we have  
\[|u_{\infty}(x)-\hat{u}_{\infty}(\alpha)| \le C(n, R, L)\tau + \frac{C(n, R, L)}{\tau^2}\overline{x, \alpha}\]
for every $\alpha \in B_s(p)$.
If we put $\tau = \overline{x, \alpha}^{1/3}$, then we have 
\[|u_{\infty}(x)-\hat{u}_{\infty}(\alpha)| \le C(n, R, L)\overline{x, \alpha}^{\frac{1}{3}}.\]
for every $x \in \partial B_s(p)$ and $\alpha \in B_s(p)$.
Since $\hat{u}_{\infty} \in H_{1,2}(B_{\hat{s}}(p))$ for every $0 < \hat{s} < s$, and $u_{\infty}$ is Lipschitz on $\overline{B}_s(p)$, by \cite[Cororally $6.6$]{sh} and an estimate above, we have $\sup_{B_s(p)}|u_{\infty}-\hat{u}_{\infty}|= \lim_{\hat{s} \to s}\left(\sup_{B_{\hat{s}}(p)}|u_{\infty}-\hat{u}_{\infty}|\right)=0$. 
Therefore, we have the assertion.
\end{proof}
We shall remark that the following:
\begin{corollary}\label{hahahaha}
Let $R$ be a positive number and $u_{\infty}, v_{\infty}$ harmonic functions on $B_R(p)$.
Then $u_{\infty}+v_{\infty}$ is a harmonic function on $B_R(p)$.
\end{corollary}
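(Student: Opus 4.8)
The plan is to reduce the statement to the (trivial) linearity of harmonicity on the approximating manifolds $(M, R_i^{-2}g_M)$. This detour is genuinely needed: harmonicity of a function on $B_R(p)$ in the sense used here is an energy-minimization condition and hence is \emph{not} manifestly a linear property of the function; what rescues the situation is that, by Ding's theorem, $u_\infty$ and $v_\infty$ (hence their sum) can be realized as limits of honest harmonic functions on manifolds, for which the sum of two harmonic functions is tautologically harmonic, after which Corollary \ref{har} may be invoked.

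Concretely, I would fix $0 < \rho < R$ and choose $\rho < r < s < R$. By Theorem \ref{005} (applied to $u_\infty$ and to $v_\infty$, and passing to a common subsequence of $R_i \to \infty$, which I will not relabel), there are harmonic functions $u_i$ and $v_i$ on $B_s^{R_i^{-2}g_M}(m)$ with $u_i \to u_\infty$ and $v_i \to v_\infty$ on $B_r(p)$, in the sense of convergence of functions with respect to $(M, m, R_i^{-2}g_M) \to (C(X), p)$. Since $u_\infty$ and $v_\infty$ are continuous (Theorem \ref{005}) and $C(X)$ is proper, this convergence forces uniform $L^\infty$ bounds for $u_i$ and $v_i$ on interior balls $\overline{B}_t^{R_i^{-2}g_M}(m)$, $t < r$: otherwise a point at which $|u_i|$ (say) diverged could, after passing to a subsequence, be assumed to converge to a point of $\overline{B}_t(p) \subset B_r(p)$, contradicting convergence to the finite value of $u_\infty$ there. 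Consequently, by Cheng-Yau's gradient estimate (Theorem \ref{gradient1}; see also Remark \ref{549760}), $\sup_i \mathbf{Lip}\bigl((u_i + v_i)|_{B_\rho^{R_i^{-2}g_M}(m)}\bigr) < \infty$.

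Now $u_i + v_i$ is harmonic on $B_\rho^{R_i^{-2}g_M}(m)$, it converges to $u_\infty + v_\infty$ on $B_\rho(p)$, its Lipschitz constants are uniformly bounded, and $u_\infty + v_\infty$ is Lipschitz on $B_\rho(p)$ as a sum of two Lipschitz functions (Theorem \ref{005}). Therefore Corollary \ref{har}, applied with $(M, R_i^{-2}g_M)$, $m$, $p$, $\rho$, and $u_i + v_i$ in place of $M_i$, $x_i$, $x_\infty$, $R$, and $f_i$, shows that $u_\infty + v_\infty$ is harmonic on $B_\rho(p)$ and that $\int_{B_\rho(p)} \langle d(u_\infty + v_\infty), dk\rangle\, dH^n = 0$ for every Lipschitz $k$ with $\mathrm{supp}\, k \subset B_\rho(p)$. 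Since $0 < \rho < R$ is arbitrary and every Lipschitz $k$ with $\mathrm{supp}\, k \subset B_R(p)$ has compact support, hence $\mathrm{supp}\, k \subset B_\rho(p)$ for some $\rho < R$ (by the same compactness argument as in the proof of Corollary \ref{convergence}), it follows that $\int_{B_R(p)} \langle d(u_\infty + v_\infty), dk\rangle\, dH^n = 0$ for all such $k$; together with harmonicity being a local property, this gives that $u_\infty + v_\infty$ is harmonic on $B_R(p)$.

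The only real obstacle is the conceptual one flagged at the outset — that harmonicity on the limit space is not linear by fiat, so one must route the proof through the manifold level via Theorem \ref{005} and Corollary \ref{har}. The remaining ingredients (extracting a common subsequence, the uniform Lipschitz bound from Cheng-Yau, and the passage from all $B_\rho(p)$ to $B_R(p)$) are routine.
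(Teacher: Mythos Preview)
Your proof is correct and follows exactly the approach the paper intends: the corollary is stated without proof immediately after Theorem \ref{005}, so the implicit argument is precisely to approximate $u_\infty$ and $v_\infty$ by manifold harmonic functions via Theorem \ref{005}, use that the sum is harmonic on the manifold level, and then invoke Corollary \ref{har} to pass to the limit. You have simply written out the details the paper leaves to the reader, including the passage to a common subsequence, the uniform Lipschitz bounds from Cheng--Yau (cf.\ Remark \ref{549760}), and the localization to $B_\rho(p)$ for $\rho<R$.
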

From now on, we shall replace most of many important statements in \cite{co-mi3} with statements on asymptotic cones:
\begin{proposition}\label{006}
For every $0 < r < s <R$ and harmonic function $u_{\infty}$ on $B_{R}(p)$, we have 
\[D_{u_{\infty}}(r)\le \left(\frac{r}{s}\right)^{2-n}D_{u_{\infty}}(s),\]
\[I_{u_{\infty}}(s)-I_{u_{\infty}}(r)=\int_r^s2\frac{D_{u_{\infty}}(t)}{t}dt.\]
Moreover, if $I_{u_{\infty}}(r)>0$, then we have
\[I_{u_{\infty}}(s)=\exp \left(2 \int_r^s\frac{U_{u_{\infty}}(t)}{t}dt\right)I_{u_{\infty}}(r).\]
\end{proposition}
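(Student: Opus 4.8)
The plan is to derive all three identities by passing to the limit in the corresponding manifold-level statements of \cite{co-mi3} (recalled above) applied to a sequence of rescaled harmonic functions approximating $u_{\infty}$, the approximants being supplied by Ding's Theorem \ref{005} and the convergence of $D$ and $I$ by Proposition \ref{002}. Fix $0<r<s<R$, a harmonic function $u_{\infty}$ on $B_R(p)$, and auxiliary radii $s<s'<s''<R$. By Theorem \ref{005} (using the harmonic functions produced by the Perron construction in its proof), there are a subsequence $\{n(i)\}$ and harmonic functions $v_{n(i)}$ on $B_{s''}^{R_{n(i)}^{-2}g_M}(m)$ with $v_{n(i)}\to u_{\infty}$ on $B_{s'}(p)$ and $\sup_i |v_{n(i)}|_{L^{\infty}(B_{s''}^{R_{n(i)}^{-2}g_M}(m))}<\infty$. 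By Remark \ref{549760} the $v_{n(i)}$ are then uniformly Lipschitz on closed balls of radius $<s''$, so the hypotheses of Proposition \ref{002} hold with the $v_{n(i)}$ in the role of the $(u_i)_{R_i}$ and $s''$ in the role of $R$; hence $D_{v_{n(i)}}^{R_{n(i)}^{-2}g_M}\to D_{u_{\infty}}$ and $I_{v_{n(i)}}^{R_{n(i)}^{-2}g_M}\to I_{u_{\infty}}$ uniformly on $[r,s]$.

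Next I would transfer the three formulas of \cite{co-mi3}, which hold for the rescaled metrics $\tau^{-2}g_M$ by the quoted rescaling identities, to the $v_{n(i)}$ on $(M, R_{n(i)}^{-2}g_M)$. One small point of care: those formulas are stated for functions harmonic on a sublevel set $\{b^{g'}<t\}$, whereas $v_{n(i)}$ is harmonic on a metric ball; but by the proof of Lemma \ref{001} applied to the rescaled metrics, for $\epsilon$ small and $i$ large one has $\{b^{R_{n(i)}^{-2}g_M}<s'\}\subset B_{s'/(1-\epsilon)}^{R_{n(i)}^{-2}g_M}(m)\subset B_{s''}^{R_{n(i)}^{-2}g_M}(m)$, so $v_{n(i)}$ is harmonic on $\{b^{R_{n(i)}^{-2}g_M}<s'\}$ and the formulas are available on $(0,s')\supset[r,s]$. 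Letting $i\to\infty$ in $D_{v_{n(i)}}^{R_{n(i)}^{-2}g_M}(r)\le (r/s)^{2-n}D_{v_{n(i)}}^{R_{n(i)}^{-2}g_M}(s)$ gives the first inequality. Integrating $\frac{d}{dt}I_{v_{n(i)}}^{R_{n(i)}^{-2}g_M}(t)=2D_{v_{n(i)}}^{R_{n(i)}^{-2}g_M}(t)/t$ over $[r,s]$ and letting $i\to\infty$ --- the left side by Proposition \ref{002}, the right side because $D_{v_{n(i)}}^{R_{n(i)}^{-2}g_M}\to D_{u_{\infty}}$ uniformly on $[r,s]$ and $1/t$ is bounded there --- gives $I_{u_{\infty}}(s)-I_{u_{\infty}}(r)=\int_r^s 2D_{u_{\infty}}(t)/t\,dt$, the second identity.

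For the third identity I would argue directly from the second rather than passing $U_{v_{n(i)}}$ to the limit. The second identity exhibits $I_{u_{\infty}}$ on $(0,R)$ as the indefinite integral of the nonnegative, locally integrable function $t\mapsto 2D_{u_{\infty}}(t)/t$, so $I_{u_{\infty}}$ is locally absolutely continuous and nondecreasing there. If $I_{u_{\infty}}(r)>0$, monotonicity gives $I_{u_{\infty}}(t)\ge I_{u_{\infty}}(r)>0$ for all $t\in[r,R)$, so $\log I_{u_{\infty}}$ is locally absolutely continuous on $[r,R)$ with a.e.\ derivative $\frac{2D_{u_{\infty}}(t)/t}{I_{u_{\infty}}(t)}=\frac{2U_{u_{\infty}}(t)}{t}$ (using $D_{u_{\infty}}(t)=U_{u_{\infty}}(t)I_{u_{\infty}}(t)$ wherever $I_{u_{\infty}}(t)\neq 0$). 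Integrating over $[r,s]$ and exponentiating yields $I_{u_{\infty}}(s)=\exp\left(2\int_r^s \frac{U_{u_{\infty}}(t)}{t}\,dt\right)I_{u_{\infty}}(r)$.

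The substantive inputs --- Ding's approximation theorem and the Cheeger-Colding-type convergence of the energy and mean-value quantities --- are already in hand as Theorem \ref{005} and Proposition \ref{002}, so the only genuine work here is verifying their hypotheses for the approximating sequence (the uniform $L^{\infty}$ and Lipschitz bounds) and the minor adjustment between metric balls and sublevel sets of $b^{g'}$; I expect the former to be the main thing demanding care. Passing from a subsequence back to the full limit is automatic, since $u_{\infty}$, and hence $D_{u_{\infty}}$ and $I_{u_{\infty}}$, are fixed beforehand and a single approximating sequence suffices.
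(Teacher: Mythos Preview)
Your argument is correct. For the first two assertions you follow the paper's approach exactly: invoke Theorem~\ref{005} to obtain approximating harmonic functions, apply Proposition~\ref{002} for the uniform convergence of $D$ and $I$, and pass the manifold-level formulas of \cite{co-mi3} to the limit (with dominated/uniform convergence for the integral in the second identity). Your treatment of the domain mismatch between metric balls and $b$-sublevel sets via Lemma~\ref{001} is more explicit than the paper's, which simply asserts that the hypotheses of Proposition~\ref{002} may be assumed.

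For the third identity you take a genuinely different route. The paper passes the manifold formula
\[I_{(u_i)_{R_i}}^{R_i^{-2}g_M}(s)= \exp\!\left(2\int_r^s \frac{U_{(u_i)_{R_i}}^{R_i^{-2}g_M}(t)}{t}\,dt\right) I_{(u_i)_{R_i}}^{R_i^{-2}g_M}(r)\]
to the limit; to justify dominated convergence inside the exponential it must first argue that $\inf_{[r,s]} I_{(u_i)_{R_i}}^{R_i^{-2}g_M}$ is eventually bounded below (via Proposition~\ref{002} and monotonicity of $I_{u_\infty}$) and then bound $\sup_{[r,s]} U_{(u_i)_{R_i}}^{R_i^{-2}g_M}$ uniformly using Cheng--Yau. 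You bypass this entirely: once the second identity is established, $I_{u_\infty}$ is absolutely continuous and nondecreasing on $(0,R)$, so under $I_{u_\infty}(r)>0$ the composite $\log I_{u_\infty}$ is absolutely continuous on $[r,s]$ with a.e.\ derivative $2U_{u_\infty}(t)/t$, and integrating gives the formula. This is cleaner and stays entirely at the limit level; the paper's approach, on the other hand, keeps the sequence-level frequency in play, which is consistent with how later results (e.g.\ Proposition~\ref{008}) are obtained.
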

\begin{proof}
By Theorem \ref{005}, without loss of generality, we can assume that the assumption of Proposition \ref{002} holds.
Since
\[D_{(u_i)_{R_i}}^{R_i^{-2}g_M}(r) \le \left(\frac{r}{s}\right)^{2-n}D_{(u_i)_{R_i}}^{R_i^{-2}g_M}(s),\]
by letting $i \rightarrow \infty$, we have the first assertion.
Similarly, since
\[I_{(u_i)_{R_i}}^{R_i^{-2}g_M}(s)-I_{(u_i)_{R_i}}^{R_i^{-2}g_M}(r)=\int_r^s2\frac{D_{(u_i)_{R_i}}^{R_i^{-2}g_M}(t)}{t}dt,\]
by letting $i \rightarrow \infty$ and dominated convergence theorem, we have the second assertion.
Especially, we remark that $I_{u_{\infty}}$ is a continuous function and that
a monotonicity $I_{u_{\infty}}(r) \le I_{u_{\infty}}(s)$ holds.
We shall prove the third assertion.
By Proposition \ref{002} and the monotonicity of $I_{u_{\infty}}$,
we have $\liminf_{i \to \infty}\left( \inf_{\alpha \in [r, s]}I_{(u_i)_{R_i}}^{R_i^{-2}g_M}(\alpha)\right)>0$. 
Therefore, by Cheng-Yau's gradient estimate, we have 
\[\limsup_{i \to \infty}\left(\sup_{\alpha \in [r, s]}U_{(u_i)_{R_i}}^{R_i^{-2}g_M}(\alpha)\right) < \infty.\]
On the other hand, since 
\[I_{(u_i)_{R_i}}^{R_i^{-2}g_M}(s)= \exp \left( 2\int_{r}^s\frac{U_{(u_i)_{R_i}}^{R_i^{-2}g_M}(t)}{t}dt\right)I_{(u_i)_{R_i}}^{R_i^{-2}g_M}(r),\]
by letting $i \rightarrow 0$, dominated convergence theorem and Proposition \ref{002}, we have the third assertion.
\end{proof}
\begin{corollary}\label{12453}
Let $r, R$ be positive numbers with $r<R$ and $u_{\infty}$ a harmonic function on $B_{R}(p)$.
If $U_{u_{\infty}}(r)=0$, then $u_{\infty}$ is a constant function on $B_r(p)$.
\end{corollary}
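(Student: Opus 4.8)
The plan is to deduce this directly from the monotonicity identity for $I_{u_\infty}$ established in Proposition~\ref{006}, together with the fact (Corollary~\ref{12453}'s analogue on $C(X)$ would follow, but in fact we need the frequency version) that vanishing of $U_{u_\infty}$ forces $I_{u_\infty}$ to be constant, hence $D_{u_\infty}$ to vanish, hence $u_\infty$ to be constant. Concretely, suppose $U_{u_\infty}(r)=0$ for some $0<r<R$.

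First I would dispose of a trivial case: if $I_{u_\infty}(r)=0$, then by definition $I_{u_\infty}(r)=r^{1-n}\int_{\partial B_r(p)}u_\infty^2\,dH^{n-1}=0$, so $u_\infty$ vanishes $H^{n-1}$-a.e. on $\partial B_r(p)$; combined with the continuity of $u_\infty$ and the maximum principle for harmonic functions on $B_r(p)$ (applied to $u_\infty$, which is harmonic there by hypothesis and Lipschitz by Theorem~\ref{005}), one gets $u_\infty\equiv 0$ on $B_r(p)$. So assume $I_{u_\infty}(r)>0$. The key point is that $U_{u_\infty}(r)=D_{u_\infty}(r)/I_{u_\infty}(r)$, so $U_{u_\infty}(r)=0$ forces $D_{u_\infty}(r)=0$, i.e. $\int_{B_r(p)}|du_\infty|^2\,dH^n=0$. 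Hence $|du_\infty|=0$ $H^n$-a.e. on $B_r(p)$.

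From $|du_\infty|=0$ a.e. on $B_r(p)$ I would conclude $u_\infty$ is constant on $B_r(p)$. The cleanest route uses the weak $(1,2)$-Poincar\'e inequality on the Ricci limit space $C(X)$ (Theorem~\ref{rectifiability}): on any ball $B_\rho(w)\subset B_r(p)$ one has
\[
\frac{1}{\upsilon(B_\rho(w))}\int_{B_\rho(w)}\left|u_\infty-\frac{1}{\upsilon(B_\rho(w))}\int_{B_\rho(w)}u_\infty\,d\upsilon\right|d\upsilon \le C_1\rho\sqrt{\frac{1}{\upsilon(B_\rho(w))}\int_{B_\rho(w)}g_{u_\infty}^2\,d\upsilon},
\]
and since $g_{u_\infty}=|du_\infty|=0$ a.e., the right side vanishes, so $u_\infty$ equals its average on every such ball; a standard connectedness argument (chaining balls, using that $B_r(p)$ is a connected open subset of the geodesic space $C(X)$) then gives that $u_\infty$ is constant on $B_r(p)$. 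Alternatively, one could observe that the monotonicity $D_{u_\infty}(t)\le (t/s)^{2-n}D_{u_\infty}(s)$ from Proposition~\ref{006} applied with $s=r$ gives $D_{u_\infty}(t)=0$ for all $t\le r$, hence $|du_\infty|=0$ a.e. on all of $B_r(p)$, and then invoke the Poincar\'e inequality as above.

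I do not expect a serious obstacle here; the statement is essentially a formal consequence of the definition of $U_{u_\infty}$ and the Dirichlet-energy/Poincar\'e machinery already set up. The only mild subtlety is making rigorous the passage ``$|du_\infty|=0$ a.e. $\Rightarrow$ $u_\infty$ locally constant'' on the (possibly singular) limit space $C(X)$; this is handled by the Poincar\'e inequality of Theorem~\ref{rectifiability} plus connectedness of the ball, and is exactly the kind of argument the paper uses elsewhere, so it can be stated briefly. The harmonicity and Lipschitz regularity of $u_\infty$ needed for the trivial case are supplied by Theorem~\ref{005}.
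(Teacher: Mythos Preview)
Your proof is correct and essentially matches the paper's argument. Both split into the cases $I_{u_\infty}(r)=0$ and $I_{u_\infty}(r)>0$; in the second case both obtain $D_{u_\infty}(r)=0$ directly from the definition of $U_{u_\infty}$ and conclude via the Poincar\'e inequality on the limit space. The only difference is the first case: you invoke the maximum principle (after observing $u_\infty$ vanishes $H^{n-1}$-a.e.\ on $\partial B_r(p)$), whereas the paper uses the identity $I_{u_\infty}(s)-I_{u_\infty}(t)=\int_t^s 2D_{u_\infty}(\tau)/\tau\,d\tau$ from Proposition~\ref{006} together with the monotonicity of $I_{u_\infty}$ to deduce $D_{u_\infty}(t)=0$ for a.e.\ $t<r$, hence $D_{u_\infty}(r)=0$ by continuity, and then applies the same Poincar\'e argument as in the other case. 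The paper's route is slightly more self-contained, since a maximum principle for harmonic functions on $C(X)$ is not stated among the preliminaries (though it is available in the cited literature); your route, on the other hand, is geometrically natural and equally valid once that tool is granted.
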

\begin{proof}
First, we assume $I_{u_{\infty}}(r)=0$.
Then, by Proposition \ref{006}, we have $D_{u_{\infty}}(t)=0$ for a.e. $0<t<r$.
Since $D_{u_{\infty}}$ is continuous, we have $D_{u_{\infty}}(r)=0$.
Thus, by Poincar\'e inequality on limit spaces, we have 
\[\frac{1}{\upsilon (B_r(p))}\int_{B_r(p)}\left| f- \frac{1}{\upsilon (B_r(p))}\int_{B_r(p)}fd\upsilon \right|d\upsilon \le C(n, R)r
\sqrt{\frac{1}{\upsilon (B_r(p))}\int_{B_r(p)}(\mathrm{Lip}f)^2d\upsilon}=0.\]
Since $f$ is Lipschitz on $B_r(p)$, $f$ is a constant function on $B_r(p)$.
Next, if $U_{u_{\infty}}(r)=0$ and $I_{u_{\infty}}(r)>0$, then, by the definition, we have $D_{u_{\infty}}(r)=0$.
Therefore, by an argumetnt above, we have the assertion in this case.
\end{proof}
The following corollary follows from Proposition \ref{006} and continuity of the function: $t \mapsto H^n(B_t(p))$, directly.
\begin{corollary}\label{007}
For every $R > 0$ and harmonic function $u_{\infty}$ on $B_{R}(p)$, the function $I_{u_{\infty}}$ is a $C^1$-function on $(0, R)$ and 
\[\frac{dI_{u_{\infty}}}{dt}(t)= \frac{2D_{u_{\infty}}(t)}{t}.\]
\end{corollary}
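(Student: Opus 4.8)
The plan is to deduce the claim directly from the integral identity already established in Proposition \ref{006}, namely $I_{u_{\infty}}(s)-I_{u_{\infty}}(r)=\int_r^s 2D_{u_{\infty}}(t)/t\,dt$ for $0<r<s<R$, combined with the fundamental theorem of calculus. For that it suffices to show that $t\mapsto 2D_{u_{\infty}}(t)/t$ is continuous on $(0,R)$; and since $D_{u_{\infty}}(t)=t^{2-n}\int_{B_t(p)}|du_{\infty}|^2\,dH^n$, it is enough to prove that $t\mapsto\int_{B_t(p)}|du_{\infty}|^2\,dH^n$ is continuous on $(0,R)$.

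First I would fix $0<r<R$ and apply Ding's regularity result (Theorem \ref{005}) to conclude that $u_{\infty}$ is Lipschitz on $B_r(p)$; writing $L=\mathbf{Lip}(u_{\infty}|_{B_r(p)})$, we then have $|du_{\infty}|(w)=\mathrm{Lip}\,u_{\infty}(w)\le L$ for a.e.\ $w\in B_r(p)$ by the basic properties of the cotangent bundle recalled in Section $2$. Next, for $0<t<t'<r$ I would estimate
\[
\left|\int_{B_{t'}(p)}|du_{\infty}|^2\,dH^n-\int_{B_t(p)}|du_{\infty}|^2\,dH^n\right|
=\int_{B_{t'}(p)\setminus B_t(p)}|du_{\infty}|^2\,dH^n
\le L^2\bigl(H^n(B_{t'}(p))-H^n(B_t(p))\bigr),
\]
and symmetrically for $t'<t$. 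By the continuity of the function $t\mapsto H^n(B_t(p))$ (which itself follows from the co-area formula on $C(X)$, Proposition \ref{0005}), the right-hand side tends to $0$ as $t'\to t$. Hence $t\mapsto\int_{B_t(p)}|du_{\infty}|^2\,dH^n$ is continuous on $(0,r)$, and since $r<R$ was arbitrary, on all of $(0,R)$.

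Consequently $t\mapsto D_{u_{\infty}}(t)$ is continuous on $(0,R)$, being the product of the continuous functions $t\mapsto t^{2-n}$ and $t\mapsto\int_{B_t(p)}|du_{\infty}|^2\,dH^n$, and therefore so is $t\mapsto 2D_{u_{\infty}}(t)/t$. Feeding this back into the identity of Proposition \ref{006} and invoking the fundamental theorem of calculus shows that $I_{u_{\infty}}$ is continuously differentiable on $(0,R)$ with $dI_{u_{\infty}}/dt(t)=2D_{u_{\infty}}(t)/t$, which is the assertion.

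The only nontrivial ingredient is the continuity of $t\mapsto\int_{B_t(p)}|du_{\infty}|^2\,dH^n$, and the argument above reduces this entirely to the Lipschitz bound supplied by Theorem \ref{005} and the continuity of the volume function $t\mapsto H^n(B_t(p))$; I anticipate no further obstacle, so this will be a short corollary rather than a substantial proof.
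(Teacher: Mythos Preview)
Your proposal is correct and follows essentially the same route as the paper, which merely states that the corollary follows directly from Proposition~\ref{006} together with the continuity of $t\mapsto H^n(B_t(p))$. You have spelled out the details the paper leaves implicit (the Lipschitz bound from Theorem~\ref{005} and the fundamental theorem of calculus), but the underlying argument is the same.
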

For every $0 < r < R$ and harmonic function $u$ on $B_R^{g_M}(m)$ satisfying $u \neq 0$, we put
\[W_u^{g_M}(r)=\frac{E_u^{g_M}(r)}{I_u^{g_M}(r)}\]
With same assumption of Lemma \ref{002}, if $u_{\infty}$ is not a constant function on $B_r(p)$, then, by Proposition \ref{002} and Proposition \ref{004}, we have 
\[\lim_{i \rightarrow \infty}W_{(u_i)_{R_i}}^{R_i^{-2}g_M}(r) = U_{u_{\infty}}(r).\]
\begin{proposition}\label{008}
For every $ 0< r< s <R$ and harmonic function $u_{\infty}$ on $B_{7R}(p)$, we have 
\[U_{u_{\infty}}(r) \le U_{u_{\infty}}(s).\]
\end{proposition}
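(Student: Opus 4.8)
The plan is to transfer the inequality to harmonic functions on the rescaled manifolds $(M,R_i^{-2}g_M)$, where the Colding--Minicozzi monotonicity of $W^{g_M}_u$ is available, and then to pass to the limit using the remark preceding this Proposition. First I would dispose of the degenerate case: if $U_{u_\infty}(r)=0$ there is nothing to prove, since $U_{u_\infty}\ge 0$ by definition, so assume $U_{u_\infty}(r)>0$. Then, by the very definition of $U_{u_\infty}$, both $I_{u_\infty}(r)>0$ and $D_{u_\infty}(r)>0$; in particular $u_\infty$ is not constant on $B_r(p)$. Moreover, by Corollary \ref{007} the function $I_{u_\infty}$ is non-decreasing, so $I_{u_\infty}(s)\ge I_{u_\infty}(r)>0$, and by the decay estimate of Proposition \ref{006} we get $D_{u_\infty}(s)\ge (r/s)^{n-2}D_{u_\infty}(r)>0$; hence $u_\infty$ is not constant on $B_s(p)$ either.

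Next, fix $\hat s$ with $s<\hat s<7R$. By Ding's theorem (Theorem \ref{005}), using that $u_\infty$ is harmonic on $B_{7R}(p)$, after passing to a subsequence of $\{R_i\}$ (still denoted $\{R_i\}$) there exist harmonic functions $v_i$ on $B_{\hat s}^{R_i^{-2}g_M}(m)$ with $v_i\to u_\infty$ on $B_t(p)$ for every $0<t<\hat s$. Choosing $\hat s$ and the approximation radius with room to spare below $7R$, and using the comparison between the sublevel sets $\{b^{R_i^{-2}g_M}<\cdot\}$ and metric balls recorded after Theorem \ref{green}, we may arrange in addition that each $v_i$ is harmonic on $\{b^{R_i^{-2}g_M}<\hat s\}$. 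By Remark \ref{549760} one has $\sup_i |v_i|_{L^{\infty}(B_t^{R_i^{-2}g_M}(m))}<\infty$ and $\sup_i \mathbf{Lip}(v_i|_{B_t^{R_i^{-2}g_M}(m)})<\infty$ for every $0<t<\hat s$, so Propositions \ref{002} and \ref{004} apply to the sequence $\{v_i\}$ on $(0,\hat s)$, yielding $I_{v_i}^{R_i^{-2}g_M}\to I_{u_\infty}$, $D_{v_i}^{R_i^{-2}g_M}\to D_{u_\infty}$ and $E_{v_i}^{R_i^{-2}g_M}\to D_{u_\infty}$, uniformly on $[r,s]$. Since $I_{u_\infty}(r)>0$, $I_{u_\infty}(s)>0$ and $u_\infty$ is not constant on $B_r(p)$ nor on $B_s(p)$, the remark preceding this Proposition shows that $W_{v_i}^{R_i^{-2}g_M}(r)$ and $W_{v_i}^{R_i^{-2}g_M}(s)$ are defined for all large $i$ and converge to $U_{u_\infty}(r)$ and $U_{u_\infty}(s)$ respectively.

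Finally, each rescaled manifold $(M,R_i^{-2}g_M)$ still has nonnegative Ricci curvature and Euclidean volume growth, with the same volume ratio $V_M$, and $v_i$ is harmonic on $\{b^{R_i^{-2}g_M}<\hat s\}$; hence the Colding--Minicozzi monotonicity of the frequency function, that is, $W_{v_i}^{R_i^{-2}g_M}$ is non-decreasing on $(0,\hat s)$ (\cite{co-mi3}), gives $W_{v_i}^{R_i^{-2}g_M}(r)\le W_{v_i}^{R_i^{-2}g_M}(s)$ for all large $i$. Letting $i\to\infty$ and using the convergences of the previous paragraph yields $U_{u_\infty}(r)\le U_{u_\infty}(s)$, which is the assertion.

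I expect the only delicate points to be organizational rather than analytic: one must secure non-degeneracy at the outer radius $s$ (i.e.\ $I_{u_\infty}(s)>0$) so that $W_{v_i}^{R_i^{-2}g_M}$ is defined and converges on the closed interval $[r,s]$, and one must run Ding's theorem and Propositions \ref{002}, \ref{004} on a ball slightly larger than $B_s(p)$ while keeping the $v_i$ harmonic on a sublevel set $\{b^{R_i^{-2}g_M}<\hat s\}$ so that the Colding--Minicozzi monotonicity genuinely applies to them — this buffer is precisely why the hypothesis requires $u_\infty$ harmonic on $B_{7R}(p)$ rather than merely on $B_R(p)$. All of the real substance — the monotonicity of $W^{g_M}_u$ on a manifold with $\mathrm{Ric}_M\ge 0$ and Euclidean volume growth, Ding's approximation theorem, and the frequency‑convergence Propositions \ref{002}--\ref{004} — is already available, so the remaining work is exactly this limiting argument.
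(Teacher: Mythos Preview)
The gap is in your invocation of Colding--Minicozzi: you assert that $W_{v_i}^{R_i^{-2}g_M}$ is non-decreasing on $(0,\hat s)$, but \cite{co-mi3} does \emph{not} prove exact monotonicity of $W_u^{g_M}$ on a manifold with nonnegative Ricci curvature and Euclidean volume growth. The derivative of $\log W_u^{g_M}$ contains error terms coming from $|\mathrm{Hess}_{(b^{g_M})^2}-2g_M|$, which vanish only on an exact cone. What \cite[Proposition~4.11]{co-mi3} proves is an \emph{almost} monotonicity at large scales: given $\epsilon>0$, $\Omega_0$, and a bound $\gamma$ on the doubling ratio of $D_u^{g_M}$, there is a threshold $\hat R=R(m,\gamma,\epsilon,\Omega_0)$ such that for $t>\hat R$ one has $\int_t^{\Omega_0 t}\frac{d}{d\tau}\log W_u^{g_M}(\tau)\,d\tau\ge -\epsilon$. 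Your argument as written would need $W_{v_i}^{R_i^{-2}g_M}(r)\le W_{v_i}^{R_i^{-2}g_M}(s)$ exactly, and there is no such statement available.

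The paper's proof follows your overall outline (approximation via Theorem~\ref{005}, convergence via Propositions~\ref{002} and \ref{004}), but replaces the false exact monotonicity by the correct almost-monotonicity. Concretely: fix $\epsilon>0$, set $\Omega_0=s/r$ and $\gamma=D_{u_\infty}(2s)/D_{u_\infty}(r)+1$, and take $\hat R$ from \cite[Proposition~4.11]{co-mi3}. For $i$ large one has $R_ir>\hat R$ and the $D$-ratio bound, so $\log W_{u_i}^{g_M}(R_is)-\log W_{u_i}^{g_M}(R_ir)\ge -\epsilon$. Since $W_{u_i}^{g_M}(R_i t)=W_{(u_i)_{R_i}}^{R_i^{-2}g_M}(t)$, passing to the limit gives $\log U_{u_\infty}(s)-\log U_{u_\infty}(r)\ge -\epsilon$, and letting $\epsilon\to 0$ yields the claim. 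Note also that this use of Proposition~4.11 requires the $u_i$ to be harmonic on a region containing $\{b^{g_M}\le 2sR_i\}$, which is why the paper produces $u_i$ on $B_{6RR_i}^{g_M}(m)$ rather than on a ball of radius just above $sR_i$; your choice of $\hat s$ only slightly larger than $s$ would not suffice.
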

\begin{proof}
By Theorem \ref{005}, 
there exists a sequence of harmonic functions $u_i$ on $B_{6RR_i}^{g_M}(m)$ such that 
$\sup_i\mathbf{Lip}u_i < \infty$ and $(u_i)_{R_i} \rightarrow u_{\infty}$ on $B_{6R}(p)$.
We fix $\epsilon > 0$.
Without loss of generality, we can assume that $U_{u_{\infty}}(r)>0$.
We shall use same notation as in \cite[Proposition $4.11$]{co-mi3}.
We put $\Omega_0 =s/r, \gamma =D_{u_{\infty}}(2s)/D_{u_{\infty}}(r) +1$.
Then we take $\hat{R}= R(m, \gamma, \epsilon, \Omega_0)$ as in \cite[Proposition $4.11$]{co-mi3}.
By Proposition \ref{002}, there exists $i_0$ such that  $R_ir > \hat{R}$ and  
\[\frac{D_{u_i}^{g_M}(2\Omega_0R_ir)}{D_{u_i}^{g_M}(R_ir)}=\frac{D_{u_i}^{g_M}(2R_is)}{D_{u_i}^{g_M}(R_ir)} = \frac{D_{(u_i)_{R_i}}^{R_i^{-2}g_M}(2s)}{D_{(u_i)_{R_i}}^{R_i^{-2}g_M}(r)}\le \gamma\]
for every $i \ge i_0$.
Then, by  \cite[Proposition $4.11$]{co-mi3}, we have 
\[\int_{R_ir}^{R_is}\frac{d\log W_{u_i}^{g_M}}{dt}dt \ge -\epsilon.\]
i.e. we have
\[\log W_{(u_i)}^{g_M}(R_is)-\log W_{(u_i)}^{g_M}(R_it) \ge -\epsilon.\]
Since $W_{(u_i)}^{g_M}(R_is)=W_{(u_i)_{R_i}}^{R_i^{-2}g_M}(s)$, by letting $i \rightarrow \infty$, we have 
\[\log U_{u_{\infty}}(s)-\log U_{u_{\infty}}(r) \ge -\epsilon.\]
Since $\epsilon $ is arbitrary, we have the assertion.
\end{proof}
\begin{remark}
Most of their results in \cite{co-mi3} are about global harmonic functions on manifolds. 
However, by the proof, their results in \cite{co-mi3} also hold for harmonic function on a big domain like one used in the proof of Proposition \ref{008}.
We will often use these facts below.  
\end{remark}
For $d \ge 0$, we put 
$\mathcal{H}^d(M_{\infty})=\{u_{\infty}:M_{\infty} \rightarrow \mathbf{R};$ $u_{\infty}$ is a harmonic function and there exists $C>1$ such that $|u_{\infty}(x)|\le
C(1 + \overline{m_{\infty}, x}^{d})$ for every $x \in M_{\infty}\}.$
\begin{proposition}\label{009}
We have $U_{u_{\infty}}(t) \le d$ for every $t>0$ and $u_{\infty} \in \mathcal{H}^d(M_{\infty})$.
\end{proposition}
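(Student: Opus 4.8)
The plan is to argue by contradiction, playing the monotonicity of the frequency function (Proposition~\ref{008}) and the exponential identity relating $I_{u_\infty}$ to $U_{u_\infty}$ (Proposition~\ref{006}) against the polynomial growth hypothesis. Since $u_\infty\in\mathcal H^d(M_\infty)$ is harmonic on all of $M_\infty=(C(X),p)$, both of those propositions apply on every ball $B_R(p)$, so $U_{u_\infty}$ is a nondecreasing function on $(0,\infty)$ and, whenever $I_{u_\infty}(r)>0$, $I_{u_\infty}(s)=\exp\!\left(2\int_r^s \tfrac{U_{u_\infty}(t)}{t}\,dt\right)I_{u_\infty}(r)$ for $s>r$.

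So suppose $U_{u_\infty}(t_0)>d$ for some $t_0>0$ and set $d':=U_{u_\infty}(t_0)$. As $d'>d\ge 0$, in particular $U_{u_\infty}(t_0)\neq 0$, hence by the very definition of $U_{u_\infty}$ we must have $I_{u_\infty}(t_0)>0$ (and $I_{u_\infty}\ge 0$ always, being an integral of $u_\infty^2$). By Proposition~\ref{008}, $U_{u_\infty}(t)\ge d'$ for all $t\ge t_0$, so Proposition~\ref{006} gives, for every $s>t_0$,
\[
I_{u_\infty}(s)=\exp\!\left(2\int_{t_0}^{s}\frac{U_{u_\infty}(t)}{t}\,dt\right)I_{u_\infty}(t_0)\ \ge\ \left(\frac{s}{t_0}\right)^{2d'}I_{u_\infty}(t_0).
\]
Thus $I_{u_\infty}(s)$ grows at least like $s^{2d'}$ with a positive leading constant.

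For the competing upper bound I would use the growth assumption $|u_\infty(x)|\le C(1+\overline{m_\infty,x}^{\,d})$: on $\partial B_s(p)$ one has $\overline{m_\infty,x}=s$, so $u_\infty^2\le 4C^2 s^{2d}$ for $s\ge 1$. Since $(M_\infty,m_\infty)=(C(X),p)$ is a metric cone (Theorem~\ref{metric}) and, by the volume convergence $(M,m,R_i^{-1}d_M,\mathrm{vol}^{R_i^{-2}g_M})\to(M_\infty,m_\infty,H^n)$, $H^n(B_t(p))=V_M t^n$ for all $t>0$, the co-area formula for the distance from the pole (Proposition~\ref{0005}), together with the self-similarity of $C(X)$ and the finiteness $0<H^{n-1}(\partial B_t(p))<\infty$ (Remark~\ref{26} with $l=0$), yields $H^{n-1}(\partial B_t(p))=nV_M t^{n-1}$; since $I_{u_\infty}$ is continuous (indeed $C^1$, Corollary~\ref{007}) it is enough to know this for a.e.\ $t$. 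Hence for $s\ge 1$,
\[
I_{u_\infty}(s)=s^{1-n}\int_{\partial B_s(p)}u_\infty^2\,dH^{n-1}\ \le\ s^{1-n}\cdot 4C^2 s^{2d}\cdot H^{n-1}(\partial B_s(p))\ =\ 4nC^2V_M\, s^{2d}.
\]
Combining the two estimates gives $\left(s/t_0\right)^{2d'}I_{u_\infty}(t_0)\le 4nC^2V_M\, s^{2d}$ for all large $s$, i.e.\ $s^{2(d'-d)}$ stays bounded as $s\to\infty$ — impossible, since $d'>d$ and $I_{u_\infty}(t_0)>0$. Therefore $U_{u_\infty}(t)\le d$ for every $t>0$. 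I expect the only delicate point to be the cone sphere-area identity $H^{n-1}(\partial B_t(p))=nV_M t^{n-1}$ (or even just an inequality $\le C_n V_M t^{n-1}$, which suffices); everything else is a routine assembly of Propositions~\ref{006}, \ref{008} and Corollary~\ref{007}.
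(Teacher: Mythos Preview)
Your proof is correct and follows essentially the same route as the paper: argue by contradiction, use the monotonicity of $U_{u_\infty}$ (Proposition~\ref{008}) together with the exponential identity (Proposition~\ref{006}) to force $I_{u_\infty}(s)\gtrsim s^{2d'}$, and contrast this with the upper bound $I_{u_\infty}(s)\lesssim s^{2d}$ coming from the growth hypothesis and the sphere-area scaling. The only cosmetic difference is that you compute $H^{n-1}(\partial B_t(p))=nV_M t^{n-1}$ exactly via the co-area formula, whereas the paper simply uses the cruder bound $H^{n-1}(\partial B_s(p))\le s^{n-1}\mathrm{vol}\,\partial B_1(0_n)$; as you already observe, any inequality of the form $\le C t^{n-1}$ suffices.
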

\begin{proof}
This proof is done by a contradiction. 
We assume that there exist $\tau_0, s_0 > 0$ such that $U_{u_{\infty}}(s_0) \ge d_0 + \tau_0$.
By Proposition \ref{008}, we have  $U_{u_{\infty}}(s) \ge d+\tau_0$ for every $s \ge s_0$.
Since $u_{\infty} \in \mathcal{H}^d(M_{\infty})$, there exist $s_1 > s_0$ and $C > 1$ such that  
\[I_{u_{\infty}}(s) = s^{1-n}\int_{\partial B_s(p)}u_{\infty}^2dH^{n-1}\le s^{1-n}s^{2d}\mathrm{vol}\,\partial B_s(p)C \le Cs^{2d} \mathrm{vol}\,B_1(0_n)\]
for every $s \ge s_1$.
For $s > s_1$, by Proposition \ref{006}, we have 
\[Cs^{2d}\mathrm{vol}\,B_1(0_n) \ge \exp \left(\int_{s_1}^s\frac{2U_{u_{\infty}}(t)}{t}dt\right)I_{u_{\infty}}(s_1)
\ge \exp \left(\int_{s_1}^s \frac{2d+2\tau_0}{t}dt\right)I_{u_{\infty}}(s_1).\]
Therefore, we have 
\[2d + \frac{\log (C\mathrm{vol}\,B_1(0_n))}{\log s} \ge \frac{1}{\log s}\int_{s_1}^s\frac{2d+2\tau_0}{t}dt +\frac{\log I_{u_{\infty}}(s_1)}{\log s}.\]
By letting $s \rightarrow \infty$, we have $2d \ge 2d + 2\tau_0$. This is a contradiction. 
\end{proof}
\begin{proposition}\label{1234567890}
For every $0 < s < t < \alpha < R$ and harmonic function $u_{\infty}$ on $B_{7R}(p)$, we have 
\[I_{u_{\infty}}(t) \le \left(\frac{t}{s}\right)^{2U_{u_{\infty}}(\alpha)}I_{u_{\infty}}(s).\]
\end{proposition}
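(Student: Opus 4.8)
The plan is to combine the two monotonicity results already in hand: the integral identity for $I_{u_{\infty}}$ in Proposition \ref{006} and the monotonicity of the frequency $U_{u_{\infty}}$ in Proposition \ref{008}. First I would split into the main case $I_{u_{\infty}}(s)>0$ and the degenerate case $I_{u_{\infty}}(s)=0$, using that $I_{u_{\infty}}$ is nondecreasing (immediate from the identity $I_{u_{\infty}}(s)-I_{u_{\infty}}(r)=\int_r^s 2D_{u_{\infty}}(\tau)\tau^{-1}\,d\tau$ of Proposition \ref{006} together with $D_{u_{\infty}}\ge 0$).

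In the main case $I_{u_{\infty}}(s)>0$, monotonicity of $I_{u_{\infty}}$ forces $I_{u_{\infty}}>0$ on all of $[s,t]$, so the third assertion of Proposition \ref{006}, applied with lower endpoint $s$ and upper endpoint $t$, gives $I_{u_{\infty}}(t)=\exp(2\int_s^t U_{u_{\infty}}(\tau)\tau^{-1}\,d\tau)\,I_{u_{\infty}}(s)$. Since $s\le\tau\le t<\alpha<R$ and $u_{\infty}$ is harmonic on $B_{7R}(p)$, Proposition \ref{008} yields $U_{u_{\infty}}(\tau)\le U_{u_{\infty}}(\alpha)$ for every such $\tau$, hence $\int_s^t U_{u_{\infty}}(\tau)\tau^{-1}\,d\tau\le U_{u_{\infty}}(\alpha)\log(t/s)$. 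Because $\exp$ is increasing and $I_{u_{\infty}}(s)>0$, this gives $I_{u_{\infty}}(t)\le (t/s)^{2U_{u_{\infty}}(\alpha)}I_{u_{\infty}}(s)$, as desired.

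In the degenerate case $I_{u_{\infty}}(s)=0$ the right-hand side vanishes, so I must show $I_{u_{\infty}}(t)=0$. Here I would argue by contradiction: assuming $I_{u_{\infty}}(t)>0$, let $\tau_0$ be the supremum of the zero set $\{\tau\in[s,t]:I_{u_{\infty}}(\tau)=0\}$, which is closed by continuity of $I_{u_{\infty}}$ (Corollary \ref{007}); then $s\le\tau_0<t$ and $I_{u_{\infty}}>0$ on $(\tau_0,t]$. Applying the third assertion of Proposition \ref{006} on $[\tau,t]$ for $\tau\in(\tau_0,t)$ and bounding the exponent exactly as in the main case (note $U_{u_{\infty}}(\alpha)<\infty$, since $u_{\infty}$ is Lipschitz on $B_{\alpha}(p)$ by Theorem \ref{005} and $H^n(B_{\alpha}(p))<\infty$), I obtain $I_{u_{\infty}}(t)\le (t/\tau_0)^{2U_{u_{\infty}}(\alpha)}I_{u_{\infty}}(\tau)$; letting $\tau\downarrow\tau_0$ and using continuity of $I_{u_{\infty}}$ gives $I_{u_{\infty}}(t)\le 0$, a contradiction. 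Hence $I_{u_{\infty}}(t)=0$ and the inequality holds trivially.

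I do not expect a genuine obstacle: all the analytic substance — the monotonicity formula for $I_{u_{\infty}}$ and the (limiting) almost-monotonicity of the frequency from Colding–Minicozzi — is already packaged in Propositions \ref{006} and \ref{008}. The only point requiring care is the bookkeeping in the degenerate case $I_{u_{\infty}}(s)=0$, and that is dispatched cleanly by the continuity of $I_{u_{\infty}}$ (Corollary \ref{007}) and the finiteness of $U_{u_{\infty}}(\alpha)$.
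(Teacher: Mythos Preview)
Your proof is correct and takes a genuinely different, more self-contained route than the paper. The paper returns to the approximating manifolds: it picks $u_i$ converging to $u_{\infty}$ via Theorem~\ref{005}, invokes Colding--Minicozzi's manifold inequality \cite[Corollary~4.37]{co-mi3} to get $I_{u_i}^{g_M}(R_it)\le (t/s)^{2(1+\epsilon)W_{u_i}^{g_M}(\alpha)}I_{u_i}^{g_M}(R_is)$, and then passes to the limit; its case split is according to whether $u_{\infty}$ is constant on $B_s(p)$, with the constant case handled by a supremum-of-constancy argument. You instead stay entirely on the limit space, combining the exponential identity for $I_{u_{\infty}}$ from Proposition~\ref{006} with the frequency monotonicity of Proposition~\ref{008}; your case split is $I_{u_{\infty}}(s)>0$ versus $I_{u_{\infty}}(s)=0$, with the degenerate case cleanly dispatched by continuity of $I_{u_{\infty}}$ and finiteness of $U_{u_{\infty}}(\alpha)$. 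Your approach is more modular --- once Propositions~\ref{006} and~\ref{008} are in place, no further appeal to the manifold approximation or to \cite{co-mi3} is needed --- while the paper's approach makes the dependence on the manifold theory explicit at each step. One minor typo: in your degenerate-case bound you wrote $(t/\tau_0)^{2U_{u_{\infty}}(\alpha)}I_{u_{\infty}}(\tau)$ where $(t/\tau)^{2U_{u_{\infty}}(\alpha)}I_{u_{\infty}}(\tau)$ is what the main case actually gives; since $\tau>\tau_0$ and the exponent is nonnegative this is only a weakening, and the limit $\tau\downarrow\tau_0$ is unaffected.
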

\begin{proof}
First, we assume that $u_{\infty}$ is not a constant function on $B_s(p)$.
By Theorem \ref{005}, 
there exists a sequence of harmonic functions $u_i$ on $B_{6RR_i}^{g_M}(m)$ such that 
$\sup_i\mathbf{Lip}u_i < \infty$ and $(u_i)_{R_i} \rightarrow u_{\infty}$ on $B_{6R}(p)$.
We fix $\epsilon >0$. 
By the assumption and Corollary \ref{12453}, there exists $0 < r <s$ such that $U_{u_{\infty}}(r)>0$.
We shall apply \cite[Corollary $4. 37$]{co-mi3}.
We put $\Omega_0=2\alpha/r$, $\Omega=\alpha/r$ and 
\[\gamma=\frac{D_{u_{\infty}}(2\Omega r)}{D_{u_{\infty}}(r)}+1.\]
We take $\hat{R}=R(m, \gamma, \epsilon, \Omega_0)$ as  in \cite[Corollary $4.37$]{co-mi3}.
There exists $i_0$ such that $R_i r > \hat{R}$ and that 
\[\frac{D_{(u_i)_{R_i}}^{R_i^{-2}g_M}(2\Omega r)}{D_{(u_i)_{R_i}}^{R_i^{-2}g_M}(r)} < \gamma \] 
for every $i \ge i_0$.
Thus, by  \cite[Corollary $4.37$]{co-mi3}, we have
\[I_{u_i}^{g_M}(R_it) \le \left(\frac{R_it}{R_is}\right)^{2(1+\epsilon)W_{u_i}^{g_M}(\Omega R_i r)}I_{u_i}^{g_M}(R_is).\]
Thus by letting $i \rightarrow \infty$, we have 
\[I_{u_{\infty}}(t) \le \left(\frac{t}{s}\right)^{2(1+\epsilon)U_{u_{\infty}}(\alpha)}I_{u_{\infty}}(s).\]
Since $\epsilon $ is arbitrary, we have the assertion.
Next we assume that $u_{\infty}$ is a constant function on $B_s(p)$.
We put $\hat{s}=\sup \{\beta \in [0, R]; u_{\infty}$ is a constant function on $B_{\beta}(p)\}$.
If $\hat{s} \ge t$, then, since $I_{u_{\infty}}(t)=I_{u_{\infty}}(s)$,
the assertion is clear.
We assume $\hat{s} < t$.
We take $\hat{s}< \tilde{s}<t$.
Then, by an argument above, we have
\[I_{u_{\infty}}(t) \le \left(\frac{t}{\tilde{s}}\right)^{2U_{u_{\infty}}(\alpha)}I_{u_{\infty}}(\tilde{s}).\]
By $s \le \hat{s}$, $I_{u_{\infty}}(s)=I_{u_{\infty}}(\hat{s})$ and letting $\tilde{s} \to \hat{s}$, we have the assertion. 
\end{proof}
\begin{corollary}\label{178907}
Let $s, R$ be positive numbers with $0 < s < R$ and $u_{\infty}$ a harmonic function on $B_{7R}(p)$.
Assume that $U_{u_{\infty}}(s)=0$.
Then $u_{\infty}$ is a constant function on $B_R(p)$.
\end{corollary}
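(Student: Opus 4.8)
The plan is to use the hypothesis to peel off a constant $c$ via Corollary \ref{12453}, replace $u_\infty$ by $v = u_\infty - c$ so that $v$ vanishes on $B_s(p)$, and then propagate the vanishing out to all of $B_R(p)$ by combining the frequency bound of Proposition \ref{1234567890} with the co-area formula along $r_p$ on the cone. The point of the translation is precisely to make the normalized boundary term $I_v(s)$ equal to zero, which is what lets the multiplicative estimate of Proposition \ref{1234567890} force $I_v$ to vanish identically.

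First I would note that $U_{u_\infty}(s) = 0$ together with $u_\infty$ harmonic on $B_{7R}(p)$ lets us apply Corollary \ref{12453} with ambient radius $7R$ (legitimate since $s < R < 7R$), producing a constant $c \in \mathbf{R}$ with $u_\infty \equiv c$ on $B_s(p)$. Set $v = u_\infty - c$. Constants are harmonic, so by Corollary \ref{hahahaha} $v$ is harmonic on $B_{7R}(p)$; by Ding's Theorem \ref{005}, $v$ is Lipschitz on $B_r(p)$ for every $r < 7R$, hence continuous and bounded on $\overline{B}_R(p)$. Since $v \equiv 0$ on $B_s(p)$ and $\partial B_s(p) \subset \overline{B_s(p)}$ ($C(X)$ being a geodesic space), continuity gives $v \equiv 0$ on $\overline{B}_s(p)$, so in particular $I_v(s) = 0$.

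Next I would feed $v$ into Proposition \ref{1234567890}: for every $t$ and $\alpha$ with $s < t < \alpha < R$,
\[
I_v(t) \le \left(\frac{t}{s}\right)^{2U_v(\alpha)} I_v(s) = 0 ,
\]
hence $I_v \equiv 0$ on $(s, R)$, and together with $v \equiv 0$ on $B_s(p)$ this means $\int_{\partial B_t(p)} v^2\, dH^{n-1} = 0$ for every $t \in (0, R)$. Since $M_\infty = C(X)$ is $n$-dimensional with limit measure $H^n$, the co-area formula for the distance function from the pole on $C(X)$ already used in this section (Corollary \ref{25} with $l = 0$) gives
\[
\int_{B_R(p)} v^2\, dH^n = \int_0^R \left( \int_{\partial B_t(p)} v^2\, dH^{n-1} \right) dt = 0 ,
\]
so $v = 0$ $H^n$-a.e. on $B_R(p)$. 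Being continuous there, $v$ vanishes identically, i.e. $u_\infty \equiv c$ on $B_R(p)$, which is the assertion.

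The argument is short once the translation idea is in place; the only point requiring a little care is bookkeeping the ambient radius — Corollary \ref{12453} and Proposition \ref{1234567890} are invoked with domain radius $7R$, which is allowed because $u_\infty$, and hence $v$, is harmonic on $B_{7R}(p)$ — while continuity of $v$ (Ding's theorem), the inclusion $\partial B_s(p) \subset \overline{B_s(p)}$, and the co-area formula along $r_p$ have all already appeared. I expect no serious obstacle beyond checking that $v^2$ is integrable on $B_R(p)$ so that the co-area formula applies, which follows from boundedness of $v$ on $\overline{B}_R(p)$ together with $H^n(B_R(p)) < \infty$.
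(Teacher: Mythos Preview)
Your proof is correct and follows essentially the same route as the paper: subtract off the constant value on $B_s(p)$ given by Corollary~\ref{12453}, so that $I_v(s)=0$, and then use Proposition~\ref{1234567890} to propagate $I_v(t)=0$ out to all $t<R$. The only cosmetic difference is in the final step: the paper concludes by re-invoking Corollary~\ref{12453} (since $I_v(t)=0$ forces $U_v(t)=0$, hence $v$ constant on each $B_t(p)$), whereas you integrate via the co-area formula (Proposition~\ref{0005}, not Corollary~\ref{25}) --- both are equally valid.
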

\begin{proof}
First, we assume that $I_{u_{\infty}}(s)=0$.
Then, by Proposition \ref{1234567890}, we have $I_{u_{\infty}}(t)=0$ for every $s < t < R$.
Therefore, by Proposition \ref{12453}, we have the assertion.
Next, we assume that $I_{u_{\infty}}(s)>0$ and $U_{u_{\infty}}(s)=0$.
Then, we put $\hat{u}_{\infty}= u_{\infty}-u_{\infty}(p)$.
We remark that $\hat{u}_{\infty} \equiv 0$ on $B_s(p)$.
Since $I_{\hat{u}_{\infty}}(s)=0$, we have the assertion.
\end{proof}
\begin{proposition}\label{110}
Let $R$ be a positive number and $u_{\infty}$ a harmonic function  on $B_{7R}(p)$ with $u_{\infty}(p)=0$.
Assume that $u_{\infty}$ is not a constant function on $B_R(p)$.
Then, we have 
\[U_{u_{\infty}}(s)\ge 1\]
for every $0 < s < R$.
\end{proposition}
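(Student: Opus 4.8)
The plan is to run the classical Almgren ``frequency at least one'' argument, using the doubling estimate of Proposition \ref{1234567890} together with an elementary upper bound on $I_{u_\infty}$ near the cone point coming from $u_\infty(p)=0$. First I would observe that $u_\infty$ is nonconstant on \emph{every} ball $B_\sigma(p)$ with $0<\sigma<R$: otherwise, since $u_\infty(p)=0$ and $u_\infty$ is continuous, $u_\infty\equiv 0$ on $\overline B_\sigma(p)$, so $D_{u_\infty}(\sigma)=I_{u_\infty}(\sigma)=0$ and hence $U_{u_\infty}(\sigma)=0$, which by Corollary \ref{178907} forces $u_\infty$ to be constant on $B_R(p)$, a contradiction. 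In particular $I_{u_\infty}(t)>0$ for every $0<t<R$ (if $I_{u_\infty}(t)=0$ then $U_{u_\infty}(t)=0$ by definition, and Corollary \ref{178907} again gives a contradiction).

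Next I would extract the upper bound for $I_{u_\infty}$. By Ding's theorem (Theorem \ref{005}), $u_\infty$ is Lipschitz on $\overline B_\sigma(p)$ for each $\sigma<R$; fix $L\ge 1$ with $\mathbf{Lip}\bigl(u_\infty|_{\overline B_\sigma(p)}\bigr)\le L$. Since $u_\infty(p)=0$, this gives $u_\infty^2\le L^2\tau^2$ on $\partial B_\tau(p)$ for all $\tau\le\sigma$. Feeding $f=1_{B_\tau(p)}$ into the co-area formula for the distance from the pole on $C(X)$ (established in the proof of Corollary \ref{25}) together with the homothety relation $H^n(B_\tau(p))=\tau^nH^n(B_1(p))$ yields $H^{n-1}(\partial B_\tau(p))=n\tau^{n-1}H^n(B_1(p))$ for a.e.\ $\tau$, and hence, using the continuity of $I_{u_\infty}$ (Corollary \ref{007}) and $H^n(B_1(p))\le C(n)$, one gets $I_{u_\infty}(\tau)=\tau^{1-n}\int_{\partial B_\tau(p)}u_\infty^2\,dH^{n-1}\le C(n,L)\tau^2$ for all $\tau\le\sigma$.

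Finally I would fix $0<s<R$ and argue by contradiction, assuming $U_{u_\infty}(s)<1$. Put $t=s/2$. Applying Proposition \ref{1234567890} with its triple $(s,t,\alpha)$ taken to be $(\sigma,t,s)$, for every $0<\sigma<t$ we obtain $I_{u_\infty}(t)\le(t/\sigma)^{2U_{u_\infty}(s)}I_{u_\infty}(\sigma)$, which by the previous step becomes $I_{u_\infty}(t)\le C(n,L)\,t^{2U_{u_\infty}(s)}\,\sigma^{\,2-2U_{u_\infty}(s)}$. Since $U_{u_\infty}(s)<1$ the exponent of $\sigma$ is positive, so letting $\sigma\to 0$ forces $I_{u_\infty}(t)\le 0$, contradicting $I_{u_\infty}(t)>0$ from the first step. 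Hence $U_{u_\infty}(s)\ge 1$ for every $0<s<R$. I do not expect a genuine obstacle; the only points that require care are that the Hausdorff-measure/co-area identity in the second step holds only for a.e.\ $\tau$ and must be upgraded to all $\tau$ via continuity of $I_{u_\infty}$, and that $I_{u_\infty}(t)$ is \emph{strictly} positive, for which the nonconstancy dichotomy in the first step (via Corollary \ref{178907}) is essential.
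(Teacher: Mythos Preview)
Your argument is correct and takes a genuinely different route from the paper's own proof. The paper proves Proposition~\ref{110} by passing back to the approximating manifolds via Theorem~\ref{005} and invoking Colding--Minicozzi's \cite[Corollary~4.40]{co-mi3}, which gives directly $1-3\epsilon\le U_{u_i}^{g_M}(2\Omega_L R_i r)$ on the manifold side; the conclusion then follows by taking $i\to\infty$ using Proposition~\ref{002} and the monotonicity of $U_{u_\infty}$ (Proposition~\ref{008}).

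You instead stay entirely on the limit space and run the classical Almgren argument intrinsically: the doubling inequality of Proposition~\ref{1234567890} (which the paper has already transferred from \cite[Corollary~4.37]{co-mi3}) combined with the quadratic upper bound $I_{u_\infty}(\tau)\le C\tau^2$ coming from Lipschitz regularity near the pole and $u_\infty(p)=0$ forces $U_{u_\infty}(s)\ge 1$. This buys you a proof that avoids a second passage to the manifold and a second appeal to an external Colding--Minicozzi result; it shows that once Proposition~\ref{1234567890} and Corollary~\ref{178907} are in place, the lower bound on the frequency is a purely intrinsic consequence. The paper's approach, by contrast, is consistent with its overall strategy of obtaining each limit-space statement as the direct limit of the corresponding manifold statement from \cite{co-mi3}. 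One minor remark: you use the symbol $\sigma$ with two different meanings (first as a fixed radius on which the Lipschitz bound holds, then as the variable tending to $0$); it would be cleaner to fix the Lipschitz constant on $\overline B_{s/2}(p)$ from the outset, so that the bound $I_{u_\infty}(\tau)\le C\tau^2$ holds for all $\tau\le t=s/2$ and only the small variable $\sigma<t$ appears in the contradiction step. Also, the scaling $H^{n-1}(\partial B_\tau(p))=\tau^{n-1}H^{n-1}(\partial B_1(p))$ holds for \emph{every} $\tau>0$ directly from the cone homothety, so you need not pass through the co-area formula and Corollary~\ref{007}.
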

\begin{proof}
By Theorem \ref{005}, 
there exists a sequence of harmonic functions $u_i$ on $B_{6RR_i}^{g_M}(m)$ such that 
$\sup_i\mathbf{Lip}u_i < \infty$ and $(u_i)_{R_i} \rightarrow u_{\infty}$ on $B_{6R}(p)$.
Moreover, we can assume that $(u_i)_{R_i}(m)=0$.
We remark that by Proposition \ref{178907}, 
$U_{u_{\infty}}(r) > 0$ for every $0 < r < R$.
We fix a sufficiently small $\epsilon > 0$.
We shall apply \cite[Corollary $4. 40$]{co-mi3} and use same notation as in there.
We take $\Omega_L=\Omega_L(n, \epsilon) \ge 2$ as in \cite[Corollay $4. 40$]{co-mi3} (or \cite[Corollary $3.29$]{co-mi3}).
We put $\Omega_0 = 5\Omega_L$, $r = s/2(2\Omega_L )^2< s$ and
\[\gamma = \frac{D_{u_{\infty}}(2(2\Omega_L)^2r)}{D_{u_{\infty}}(r)}+1=\frac{D_{u_{\infty}}(s)}{D_{u_{\infty}}(r)} + 1.\]
We take $\hat{R}=R(m, \gamma, \epsilon, \Omega_0)$ as in \cite[Corollary $4.40$]{co-mi3}.
Then there exists $i_0$ such that $R_i r > \hat{R}$ and
\[\frac{D_{u_i}^{g_M}(2(2\Omega_L)^2R_ir)}{D_{u_i}^{g_M}(R_ir)}=\frac{D_{(u_i)_{R_i}}^{R_i^{-2}g_{M}}(2(2\Omega_L)^2r)}{D_{(u_i)_{R_i}}^{R_i^{-2}g_{M}}(r)} \le \gamma \]
for every $i \ge i_0$.
Then by \cite[Corollary $4. 40$]{co-mi3}, we have 
\[1-3\epsilon \le U_{u_i}^{g_M}(2\Omega_LR_ir) = U_{(u_i)_{R_i}}^{R_i^{-2}g_M}(2\Omega_Lr).\]
By letting $i \rightarrow \infty$, Proposition \ref{002} and Proposition \ref{008}, we have $1-3\epsilon \le U_{u_{\infty}}(2\Omega_Lr) \le U_{u_{\infty}}(s)$.
Since $\epsilon$ is arbitrary, we have the assertion.
\end{proof}
\begin{proposition}\label{009}
Let $r, s, R, \delta, d_0$ be positive numbers with $0 < r < s < R$ and $u_{\infty}$ a harmonic function on $B_{7R}(p)$.
We assume that $ U_{u_{\infty}}(s) \le d_0$, $u_{\infty}$ is not a constant function on $B_{R}(p)$ and
\[\left|\log \frac{U_{u_{\infty}}(s)}{U_{u_{\infty}}(r)}\right|<\delta.\]
Then, we have
\[\int_{A_p(r, s)}r_p^{-n}\left|r_p\langle dr_p, du_{\infty}\rangle -U_{u_{\infty}}(r_p)u_{\infty}\right|^2dH^n
\le \Psi(\delta;n,d_0)I_{u_{\infty}}(s)\]
\end{proposition}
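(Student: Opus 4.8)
The plan is to obtain the estimate by passing to the limit, along a sequence realizing $(M_\infty, m_\infty) = (C(X), p)$, in the corresponding local estimate for harmonic functions on $M$ from \cite{co-mi3}, in the spirit of Propositions \ref{006}, \ref{008}, \ref{1234567890} and \ref{110}. Fix $R_i \to \infty$ with $(M, m, R_i^{-1}d_M) \to (M_\infty, m_\infty)$, so also $(M, m, R_i^{-1}d_M, \mathrm{vol}^{R_i^{-2}g_M}) \to (M_\infty, m_\infty, H^n)$. Since $u_\infty$ is not a constant function on $B_R(p)$, Corollary \ref{178907} gives $U_{u_\infty}(t) > 0$ for every $0 < t < R$; by Corollary \ref{007} the function $I_{u_\infty}$ is $C^1$ with $D_{u_\infty}(t) = \tfrac{t}{2}I_{u_\infty}'(t)$, so $D_{u_\infty}$ and hence $U_{u_\infty} = D_{u_\infty}/I_{u_\infty}$ are continuous on $(0,R)$, and $U_{u_\infty}$ is nondecreasing there by Proposition \ref{008}. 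In particular $0 < U_{u_\infty}(r) \le U_{u_\infty}(t) \le U_{u_\infty}(s) \le d_0$ for $t \in [r,s]$.

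By Ding's Theorem \ref{005}, after passing to a subsequence of $\{R_i\}$ there are harmonic functions $u_i$ on $B_{6RR_i}^{g_M}(m)$ with $\sup_i \mathbf{Lip}\big((u_i)_{R_i}|_{B_{6R}^{R_i^{-2}g_M}(m)}\big) < \infty$ and $(u_i)_{R_i} \to u_\infty$ on $B_{6R}(p)$. By Propositions \ref{002} and \ref{004}, the functions $I_{(u_i)_{R_i}}^{R_i^{-2}g_M}$, $D_{(u_i)_{R_i}}^{R_i^{-2}g_M}$, $E_{(u_i)_{R_i}}^{R_i^{-2}g_M}$ converge uniformly on $[r,s]$ to $I_{u_\infty}$, $D_{u_\infty}$, $D_{u_\infty}$, and, since $\inf_{[r,s]}I_{u_\infty} > 0$, also $U_{(u_i)_{R_i}}^{R_i^{-2}g_M}$ and $W_{(u_i)_{R_i}}^{R_i^{-2}g_M}$ converge uniformly on $[r,s]$ to $U_{u_\infty}$. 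Hence for all sufficiently large $i$ the frequency-pinching hypothesis $\big|\log\big(U_{u_i}^{g_M}(R_is)/U_{u_i}^{g_M}(R_ir)\big)\big| < \delta_i$ holds with $\delta_i \to \delta$, and $U_{u_i}^{g_M}(R_is) = U_{(u_i)_{R_i}}^{R_i^{-2}g_M}(s) \le d_0 + \epsilon_i$ with $\epsilon_i \to 0$, so the big-domain form of the relevant estimate of \cite{co-mi3} (cf. the remark following Proposition \ref{008}) applies to $u_i$ at scales $R_ir < R_is$ and gives, for all large $i$,
\[
\int_{\{r \le b^{R_i^{-2}g_M} \le s\}} (b^{R_i^{-2}g_M})^{-n}\Big|b^{R_i^{-2}g_M}\langle db^{R_i^{-2}g_M}, d(u_i)_{R_i}\rangle - U_{(u_i)_{R_i}}^{R_i^{-2}g_M}(b^{R_i^{-2}g_M})\,(u_i)_{R_i}\Big|^2 d\mathrm{vol}^{R_i^{-2}g_M} \le \Psi(\delta;n,d_0)\,I_{(u_i)_{R_i}}^{R_i^{-2}g_M}(s),
\]
where $b^{R_i^{-2}g_M}$ denotes the Green's-function distance of the rescaled manifold.

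To pass to the limit $i \to \infty$, first note that $(b^{R_i^{-2}g_M}, db^{R_i^{-2}g_M}) \to (r_p, dr_p)$ on $A_p(r,s)$: indeed $b^{R_i^{-2}g_M} \to r_p$ uniformly on bounded sets by the estimates recalled before Lemma \ref{001}, and applying Corollary \ref{suffconv} to $\tfrac12(b^{R_i^{-2}g_M})^2$, whose Laplacian is uniformly bounded by those same estimates, yields the convergence of differentials. Combining this with Corollary \ref{har} (so $((u_i)_{R_i}, d(u_i)_{R_i}) \to (u_\infty, du_\infty)$ on $B_{6R}(p)$) and Theorem \ref{10105} (and Corollary \ref{10107}), the sequences $\{\langle db^{R_i^{-2}g_M}, d(u_i)_{R_i}\rangle\}_i$ and $\{(u_i)_{R_i}\}_i$ have infinitesimal constant convergence property to $\langle dr_p, du_\infty\rangle$ and $u_\infty$ at a.e. point of $A_p(r,s)$; moreover $\{b^{R_i^{-2}g_M}\}_i$ does so to $r_p$, and, since $U_{(u_i)_{R_i}}^{R_i^{-2}g_M} \to U_{u_\infty}$ uniformly on $[r,s]$ with $U_{u_\infty}$ continuous, Proposition \ref{10101} (composition with uniformly convergent continuous functions, after extending them continuously off $[r,s]$) shows $\{U_{(u_i)_{R_i}}^{R_i^{-2}g_M}(b^{R_i^{-2}g_M})\}_i$ has the same property with limit $U_{u_\infty}(r_p)$. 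A further application of Proposition \ref{10101} (the operations $t\mapsto t^{-n}$, $t\mapsto t^2$ and products being continuous on the relevant ranges, on which $r_p \in [r,s]$) shows the full manifold integrand has infinitesimal constant convergence property to $r_p^{-n}|r_p\langle dr_p, du_\infty\rangle - U_{u_\infty}(r_p)u_\infty|^2$ at a.e. point of $A_p(r,s)$. Since $b^{R_i^{-2}g_M} \to r_p$ uniformly and $H^n(\partial B_t(p)) = 0$ for every $0 < t < R$ (the sphere being, up to scaling, a copy of $X$, of Hausdorff dimension $n-1$; cf. Theorem \ref{metric}), elementary considerations as in Example \ref{ex3} and Proposition \ref{10101} give that $\{1_{\{r \le b^{R_i^{-2}g_M} \le s\}}\}_i$ has infinitesimal convergence property to $1_{A_p(r,s)}$ at a.e. point of the compact set $A_p(r,s)$, so Proposition \ref{10103} lets me take the limit of the displayed integral: its left-hand side converges to $\int_{A_p(r,s)} r_p^{-n}|r_p\langle dr_p, du_\infty\rangle - U_{u_\infty}(r_p)u_\infty|^2 dH^n$ and, by Proposition \ref{002}, its right-hand side converges to $\Psi(\delta;n,d_0)I_{u_\infty}(s)$, which is the assertion.

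The main obstacle is this last limiting step: one must verify that the manifold integrands converge to exactly the claimed limit integrand, and the only genuinely delicate ingredient there is the convergence of the radial derivative $\langle db^{R_i^{-2}g_M}, d(u_i)_{R_i}\rangle$, for which the full Section 4 apparatus — convergence of differentials of Lipschitz functions, Theorem \ref{10105}, and the harmonicity-preserving convergence of Corollary \ref{har}, together with $(b^{R_i^{-2}g_M}, db^{R_i^{-2}g_M}) \to (r_p, dr_p)$ — is needed; by contrast the coefficient $U_{u_\infty}(r_p)$ is controlled cheaply by the uniform convergence of frequencies from Propositions \ref{002} and \ref{004}. A secondary point is bookkeeping: non-constancy of $u_\infty$ on $B_R(p)$ is precisely what keeps $U_{u_\infty}$ positive and continuous on $[r,s]$, so that both the pinching hypothesis and the bound $U_{u_\infty} \le d_0$ transfer to the approximating harmonic functions $u_i$.
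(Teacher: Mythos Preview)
Your proposal is correct and follows essentially the same strategy as the paper: approximate $u_\infty$ by harmonic functions $u_i$ on the manifold via Theorem \ref{005}, apply \cite[Proposition 4.50]{co-mi3} to obtain the manifold estimate on $\{r \le b^{R_i^{-2}g_M} \le s\}$, establish $(b^{R_i^{-2}g_M}, db^{R_i^{-2}g_M}) \to (r_p, dr_p)$ on $A_p(r,s)$, and pass to the limit using the Section 4 machinery together with Proposition \ref{002}. The only noteworthy difference is how the convergence of $db^{R_i^{-2}g_M}$ is obtained: the paper bounds $|\nabla b|$ via Cheng--Yau applied to the harmonic Green's function and then invokes Corollary \ref{har} (for $G$) together with Theorem \ref{green}, whereas you go through Corollary \ref{suffconv} for $\tfrac12 b^2$; both routes are valid, and your justification of the limiting step via Propositions \ref{10101} and \ref{10103} is in fact more detailed than the paper's one-line appeal to these results.
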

\begin{proof}
By Theorem \ref{005}, 
there exists a sequence of harmonic functions $u_i$ on $B_{6RR_i}^{g_M}(m)$ such that 
$\sup_i \mathbf{Lip}u_i < \infty$ and $(u_i)_{R_i} \rightarrow u_{\infty}$ on $B_{6R}(p)$.
We shall apply  \cite[Proposition $4. 50$]{co-mi3}.
We put $\Omega_0 = 2s/r$, $\Omega=s/r$ and 
\[\gamma= \frac{D_{u_{\infty}}(2\Omega r)}{D_{u_{\infty}}(r)} + 1.\]
Then, by Proposition \ref{002}, there exists $i_0$ such that
\[\frac{D_{u_{\infty}}^{R_i^{-2}g_M}(2\Omega r)}{D_{u_{\infty}}^{R_i^{-2}g_M}(r)}\le \gamma, \] 
\[\max_{r \le t \le \Omega r}U_{(u_i)_{R_i}}^{R_i^{-2}g_M}(t) \le 2d_0 \]
and 
\[\left|\log \frac{U_{(u_i)_{R_i}}^{R_i^{-2}g_M}(\Omega r)}{U_{(u_i)_{R_i}}^{R_i^{-2}g_M}(r)}\right|\le \delta \]
for every $i \ge i_0$.
Thus, by \cite[Proposition $4.50$]{co-mi3}, we have,
\[\int_{rR_i \le b^{g_M} \le sR_i}(b^{g_M})^{-n}\left(b^{g_M}\frac{\partial u_i}{\partial n}-U_{u_i}^{g_M}(b^{g_M})|\nabla^{g_M}b^{g_M}|\right)^2d\mathrm{vol}^{g_M}\le \Psi(\delta;n, d_0)I_{u_i}^{g_M}(R_is)\]
for every sufficiently large $i$. 
On the other hand, by Cheng-Yau's gradient estimate, we have 
\begin{align}
|\nabla^{R_i^{-2}g_M}b^{R_i^{-2}g_M}|&=\frac{V_M}{(n-2)\mathrm{vol}\,B_1(0_n)}|b^{R_i^{-2}g_M}|^{n-1}|\nabla^{R_i^{-2}g_M}G^{R_i^{-2}g_M}(m, \cdot )| \\
&\le \frac{V_M}{(n-2)\mathrm{vol}\,B_1(0_n)}2(r_m^{R_i^{-2}g_M})^{n-1}C(n)(r_m^{R_i^{-2}g_M})^{-1}|G^{R_i^{-2}g_M}(m, \cdot )| \\
&\le C(n)(r_m^{R_i^{-2}g_M})^{-1}(r_m^{R_i^{-2}g_M})^{n-1}(r_m^{R_i^{-2}g_M})^{2-n} \\
&\le C(n). 
\end{align}
on $A_m^{R_i^{-2}g_M}(r, s)$ for every sufficiently large $i$.
Thus by Corollary \ref{har} and Theorem \ref{green}, we have $(b^{R_i^{-2}g_M}, db^{R_i^{-2}g_M}) \rightarrow (r_p, dr_p)$ on $A_p(r, s)$.
We also have
\begin{align}
&\int_{r \le b^{R_i^{-2}g_M} \le s}(b^{R_i^{-2}g_M})^{-n}\Biggl(b^{R_i^{-2}g_M}(R_i^{-2}g_M)\left(\nabla^{R_i^{-2}g_M}(u_i)_{R_i}, \nabla^{R_i^{-2}g_M}b^{R_i^{-2}g_M}\right) \\
& \ \ \ \ \ \ \ \ \ \ \ \ \ \ \ \ \ \ \ \ \ -U_{(u_i)_{R_i}}^{R_i^{-2}g_M}(b^{R_i^{-2}g_M})|\nabla^{R_i^{-2}g_M}b^{R_i^{-2}g_M}|^2\Biggl)^2d\mathrm{vol}^{R_i^{-2}g_M}\\
&=\int_{rR_i \le b^{g_M} \le sR_i}(b^{g_M})^{-n}|\nabla b^{g_M}|^2\left(b^{g_M}\frac{\partial u_i}{\partial n}-U_{u_i}^{g_M}(b^{g_M})|\nabla^{g_M}b^{g_M}|\right)^2d\mathrm{vol}^{g_M}\\
&\le C(n)\int_{rR_i \le b^{g_M} \le sR_i}(b^{g_M})^{-n}\left(b^{g_M}\frac{\partial u_i}{\partial n}-U_{u_i}^{g_M}(b^{g_M})|\nabla^{g_M}b^{g_M}|\right)^2d\mathrm{vol}^{g_M}\\
&\le \Psi(\delta;n, d_0)I_{u_i}^{g_M}(R_is)= \Psi(\delta;n, d_0)I_{(u_i)_{R_i}}^{R_i^{-2}g_M}(s)
\end{align}
for every sufficiently large $i$.
Therefore, by letting $i \rightarrow \infty$, Proposition \ref{10101} and Proposition \ref{002}, we have the assertion.
\end{proof}
The following corollary follows from Proposition \ref{009} directly.
\begin{corollary}\label{010}
Let $r, s, R$ be positive numbers with $r < s <R$ and $u_{\infty}$ be a harmonic function on $B_{7R}(p)$ with $u_{\infty}(p)=0$.
We assume that $U_{u_{\infty}}(r)=U_{u_{\infty}}(s)$. Then we have 
\[r_p(w)\langle du_{\infty}, dr_p\rangle (w)=U_{u_{\infty}}(s)u_{\infty}(w)\]
for a.e $w \in A_p(r, s)$.
\end{corollary}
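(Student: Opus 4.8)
The plan is to deduce Corollary \ref{010} from Proposition \ref{009} by sending the error parameter $\delta$ to $0$. First I would dispose of the degenerate case: if $u_{\infty}$ is a constant function on $B_R(p)$, then the normalization $u_{\infty}(p)=0$ forces $u_{\infty}\equiv 0$ on $B_R(p)\supset A_p(r,s)$, so both sides of the asserted identity vanish $H^n$-a.e. on $A_p(r,s)$ and there is nothing to prove. Hence from now on assume $u_{\infty}$ is not a constant function on $B_R(p)$.

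Next I would record two consequences of the hypothesis $U_{u_{\infty}}(r)=U_{u_{\infty}}(s)$. Since $u_{\infty}(p)=0$ and $u_{\infty}$ is non-constant on $B_R(p)$, Proposition \ref{110} gives $U_{u_{\infty}}(t)\ge 1$ for every $0<t<R$; in particular $U_{u_{\infty}}(s)\ge 1>0$, so we may set $d_0=U_{u_{\infty}}(s)$ when invoking Proposition \ref{009}. By the monotonicity of the frequency function (Proposition \ref{008}), $U_{u_{\infty}}(r)\le U_{u_{\infty}}(t)\le U_{u_{\infty}}(s)$ for $r\le t\le s$, so $U_{u_{\infty}}$ is constant equal to $U_{u_{\infty}}(s)$ on $[r,s]$; since $\partial B_r(p)\cup\partial B_s(p)$ is $H^n$-null in $A_p(r,s)$, this yields $U_{u_{\infty}}(r_p(w))=U_{u_{\infty}}(s)$ for a.e. $w\in A_p(r,s)$.

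Now for every $\delta>0$ we have $\left|\log\bigl(U_{u_{\infty}}(s)/U_{u_{\infty}}(r)\bigr)\right|=0<\delta$, so Proposition \ref{009} (applied with this $\delta$ and $d_0=U_{u_{\infty}}(s)$) gives
\[\int_{A_p(r,s)}r_p^{-n}\left|r_p\langle dr_p,du_{\infty}\rangle-U_{u_{\infty}}(r_p)u_{\infty}\right|^2dH^n\le \Psi(\delta;n,d_0)I_{u_{\infty}}(s).\]
Here $I_{u_{\infty}}(s)<\infty$ because $u_{\infty}$ is Lipschitz on $\overline{B}_s(p)$ by Theorem \ref{005}. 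Letting $\delta\to 0$ the right-hand side tends to $0$, so the integral on the left is $0$. On $A_p(r,s)$ one has $r\le r_p\le s$, hence $r_p^{-n}\ge s^{-n}>0$, and therefore the integrand must vanish $H^n$-a.e. on $A_p(r,s)$, i.e. $r_p(w)\langle dr_p,du_{\infty}\rangle(w)=U_{u_{\infty}}(r_p(w))u_{\infty}(w)$ for a.e. $w$. Combining this with $U_{u_{\infty}}(r_p(w))=U_{u_{\infty}}(s)$ a.e. yields the claim.

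There is no genuine obstacle here — the statement is essentially the limiting case $\delta=0$ of Proposition \ref{009}. The only points requiring a word of care are the degenerate constant case (settled by $u_{\infty}(p)=0$) and the identification $U_{u_{\infty}}(r_p(w))=U_{u_{\infty}}(s)$ a.e., which rests on the monotonicity of the frequency function together with the fact that the metric spheres about $p$ are $H^n$-null.
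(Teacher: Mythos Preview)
Your argument is correct and is exactly the route the paper intends: it records only that the corollary ``follows from Proposition \ref{009} directly,'' and your write-up is the natural fleshing out of that remark. One harmless overcomplication: since $A_p(r,s)=\{w: r\le r_p(w)\le s\}$ and $U_{u_\infty}$ is constant on $[r,s]$ by Proposition \ref{008}, you in fact have $U_{u_\infty}(r_p(w))=U_{u_\infty}(s)$ for \emph{every} $w\in A_p(r,s)$, so the aside about spheres being $H^n$-null is unnecessary.
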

\begin{proposition}
With same assumption as in Corollary \ref{010}, we have
\[u_{\infty}(\hat{t}, x) = \frac{u_{\infty}(t, x)}{t^C}\hat{t}^C\]
for every $r \le t \le \hat{t} \le s$ and $x \in X$.
Here $C =  U_{u_{\infty}}(r)$.
\end{proposition}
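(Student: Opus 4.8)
The plan is to extract from Corollary \ref{010} an ordinary differential equation in the radial direction and integrate it. By Corollary \ref{010}, writing $r_p$ for the distance function from the pole $p$ and $C = U_{u_{\infty}}(r) = U_{u_{\infty}}(s)$, we have
\[
r_p(w)\langle du_{\infty}, dr_p\rangle(w) = C\, u_{\infty}(w) \quad \text{for a.e. } w \in A_p(r,s).
\]
First I would fix $x \in X$ and consider the ray $t \mapsto (t,x) \in C(X)$, which is a minimal geodesic emanating from $p$ (this is the structure of the metric cone $C(X)$, using $\operatorname{diam}X \le \pi$ from Theorem \ref{metric}). Along this ray, $r_p((t,x)) = t$, and by Theorem \ref{14} (radial derivative for Lipschitz functions) together with Definition \ref{090a}, for a.e.\ $t \in (r,s)$ the function $g_x(t) := u_{\infty}((t,x))$ is differentiable with $g_x'(t) = \frac{du_{\infty}}{dr_p}((t,x)) = \langle du_{\infty}, dr_p\rangle((t,x))$. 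Substituting into the identity above gives $t\, g_x'(t) = C\, g_x(t)$ for a.e.\ $t$ (and for a.e.\ $x$, by Fubini applied to the co-area decomposition $H^n = \int_0^\infty H^{n-1}|_{\partial B_t(p)}\,dt$ on $C(X)$, as in Proposition \ref{0005} used implicitly above).

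The second step is to upgrade this "a.e.\ $t$" statement to a genuine ODE and integrate it. Since $u_{\infty}$ is Lipschitz on $\overline{B}_s(p)$ (Theorem \ref{005}), $g_x$ is Lipschitz on $[r,s]$, hence absolutely continuous, so $g_x(t_2) - g_x(t_1) = \int_{t_1}^{t_2} g_x'(t)\,dt$. Combined with $g_x'(t) = C g_x(t)/t$ a.e., the function $h_x(t) := g_x(t)/t^C$ is absolutely continuous with $h_x'(t) = g_x'(t)/t^C - C g_x(t)/t^{C+1} = 0$ a.e., so $h_x$ is constant on $[r,s]$. Therefore $g_x(\hat t)/\hat t^C = g_x(t)/t^C$ for all $r \le t \le \hat t \le s$, which is exactly the claimed formula $u_{\infty}(\hat t, x) = \frac{u_{\infty}(t,x)}{t^C}\hat t^C$.

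The main obstacle I anticipate is the Fubini/measure-theoretic bookkeeping needed to make the a.e.\ statement hold simultaneously in $t$ and $x$ in a way that survives passing to the absolutely continuous representative along each ray. Concretely: Corollary \ref{010} asserts the pointwise identity only for $H^n$-a.e.\ $w \in A_p(r,s)$, so a priori for $H^l$-a.e.\ direction $x$ the identity holds for a.e.\ radius $t$; one must then observe that for each such good $x$, the Lipschitz function $g_x$ has derivative equal to $\langle du_{\infty}, dr_p\rangle((t,x))$ a.e.\ (this requires identifying the radial derivative in the sense of Definition \ref{090a} with the classical derivative of $g_x$, which is the content of Theorem \ref{14} and Lemma \ref{11}), and that the set of bad directions is negligible for the purpose of the conclusion — but since $u_{\infty}$ and both sides of the desired formula are continuous in $x$ (continuity of $u_{\infty}$ on $\overline{B}_s(p)$ and of $t \mapsto t^C$), the formula, once known for a dense set of $x$, extends to all $x \in X$ by continuity. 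I would close the argument with this density-plus-continuity step so that the final identity is genuinely pointwise in $(t, \hat t, x)$ as stated.
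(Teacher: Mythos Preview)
Your proposal is correct and follows essentially the same route as the paper: extract the radial ODE $t\,g_x'(t)=C\,g_x(t)$ from Corollary \ref{010} and Theorem \ref{14}, use Fubini to get it for a.e.\ direction $x$, integrate along each good ray using absolute continuity of $g_x$, and then pass from a.e.\ $x$ to all $x$ by continuity. The only cosmetic differences are that the paper sets up Fubini via the bi-Lipschitz identification $A_p(r_0,s_0)\simeq[r_0,s_0]\times X$ rather than invoking Proposition \ref{0005}, and it first extends the \emph{integral} identity $u_\infty(s_0,x)-u_\infty(r_0,x)=\int_{r_0}^{s_0}\tfrac{C\,u_\infty(t,x)}{t}\,dt$ to every $x$ (by dominated convergence along $x_i\to x$) before solving the ODE, whereas you solve first and extend the explicit formula afterward; both orderings work.
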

\begin{proof}
We define a Borel function $a$ on $A_p(r, s)$ by
\[a(t, x)=\limsup_{h \rightarrow 0}\frac{u_{\infty}(t+h, x)-u_{\infty}(t, x)}{h}.\]
By Theorem \ref{14} and Corollary \ref{010}, there exists a Borel set $A \subset A_p(r, s)$ such that 
$H^n(A_p(r, s) \setminus A)=0$ and that $\langle dr_p, du_{\infty}\rangle (z)=a(z)=Cu_{\infty}(z)/r_p(z)$ for every $z \in A$.
On the other hand, for $0<s \le r_0 \le s_0 \le s$, we put a bi-Lipschitz map $\phi(t, x)=(t, x)$ from $A_{p}(r_0, s_0)$ to $[r_0, s_0] \times X$.
Then we have $H^n([r_0, s_0] \times X \setminus \phi(A))=0$.
Therefore by Fubini's theorem, there exists a Borel set $\hat{X} \subset X$ such that $H^{n-1}(X \setminus \hat{X})=0$ and that 
 $H^1([r_0, s_0] \times \{x\} \setminus \phi(A))=0$ for every $x \in X$.
Thus we have $H^1(\phi^{-1}([r_0, s_0] \times \{x\} \setminus \phi(A)))=0$ for $x \in \hat{X}$.
For every $x \in \hat{X}$, by Rademacher's theorem for Lipschitz functions on $\mathbf{R}$, 
\begin{align}
u_{\infty}(s_0, x)-u_{\infty}(r_0, x)&=\int_{r_0}^{s_0}a(t, x)dt \\
&=\int_{r_p(\phi^{-1}([r_0, s_0] \times \{x\} \cap \phi(A)))}a(t, x)dt \\
&=\int_{r_p(\phi^{-1}([r_0, s_0] \times \{x\} \cap \phi(A)))}\frac{Cu_{\infty}(t, x)}{t}dt \\
&=\int_{r_0}^{s_0}\frac{Cu_{\infty}(t, x)}{t}dt.
\end{align}
For every $x \in X$, by taking a sequence $x_i \in \hat{X}$ satisfying $x_i \rightarrow x$ and dominated convergence theorem, 
we have
\[u_{\infty}(s_0, x)-u_{\infty}(r_0, x)=\int_{r_0}^{s_0}\frac{Cu_{\infty}(t, x)}{t}dt.\] 
Thus, for every $x \in X$, the map $f_x(\tilde{t})=u_{\infty}(\tilde{t}, x)$ on $[r, s]$ is $C^1$-function, we have 
\[\frac{df_x}{d\tilde{t}}(\tilde{t})=\frac{Cf_x(\tilde{t})}{\tilde{t}}.\]
Therefore, we have the assertion.
\end{proof}
\begin{proposition}\label{012}
Let $r, s, \delta, R, d_0$ be positive numbers with $0<r < s <R$, and $u_{\infty}, v_{\infty}$  harmonic functions on $B_{7R}(p)$.
We assume that $\max_{r \le t \le s}U_{v_{\infty}}(t) \le d_0$, $v_{\infty}$ is not a constant function on $B_{R}(p)$ and 
\[\left|\log \frac{U_{v_{\infty}}(s)}{U_{v_{\infty}}(r)}\right|<\delta.\]
Then, we have 
\begin{align*}
&\left|s_0^{1-n}\int_{\partial B_{s_0}(p)}u_{\infty}v_{\infty}d\upsilon-\exp \left(2\int_{r_0}^{s_0}\frac{U_{v_{\infty}}(\hat{s})}{\hat{s}}d\hat{s}\right)
r_0^{1-n}\int_{\partial B_{r_0}(p)}u_{\infty}v_{\infty}d\upsilon \right|^2 \\
&\le \Psi(\delta;n, d_0)\left(\frac{s_0}{r_0}\right)^{6d_0+3}I_{u_{\infty}}(s_0)I_{v_{\infty}}(s_0).
\end{align*}
for every $r \le r_0 \le s_0 \le s$.
\end{proposition}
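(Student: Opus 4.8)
The plan is to run the same machine used throughout subsection $5.1$: realize $u_\infty$ and $v_\infty$ as Gromov--Hausdorff limits of harmonic functions on large balls in $M$, apply the bilinear analogue of the Colding--Minicozzi estimate on the manifold side, and pass to the limit with the convergence results of subsection $5.1$.

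First I would apply Theorem \ref{005} to both $u_\infty$ and $v_\infty$, obtaining sequences of harmonic functions $u_i,v_i$ on $B_{6RR_i}^{g_M}(m)$ with $\sup_i(\mathbf{Lip}u_i+\mathbf{Lip}v_i)<\infty$ and $(u_i)_{R_i}\to u_\infty$, $(v_i)_{R_i}\to v_\infty$ on $B_{6R}(p)$. Since $v_\infty$ is not constant on $B_R(p)$, Corollary \ref{178907} gives $U_{v_\infty}(t)>0$, hence also $I_{v_\infty}(t)>0$, for every $r\le t\le s$. Combining this with Proposition \ref{002}, Proposition \ref{004} and Cheng--Yau's gradient estimate, I get, for all sufficiently large $i$: that $v_i$ is not constant on the relevant annulus; that $\max_{r\le t\le s}U_{(v_i)_{R_i}}^{R_i^{-2}g_M}(t)\le 2d_0$; that a fixed bound $D_{(v_i)_{R_i}}^{R_i^{-2}g_M}(2s)/D_{(v_i)_{R_i}}^{R_i^{-2}g_M}(r)\le\gamma$ holds; and that $|\log(U_{(v_i)_{R_i}}^{R_i^{-2}g_M}(s)/U_{(v_i)_{R_i}}^{R_i^{-2}g_M}(r))|\le 2\delta$ --- exactly the hypotheses needed to apply the manifold analogue of this statement from \cite{co-mi3} to the pair $((u_i)_{R_i},(v_i)_{R_i})$ on $(M,R_i^{-2}g_M)$ (the results of \cite{co-mi3} hold equally on a big domain, as observed in the remark following Proposition \ref{008}). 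Writing $P_i(t)=t^{1-n}\int_{b^{R_i^{-2}g_M}=t}(u_i)_{R_i}(v_i)_{R_i}|\nabla^{R_i^{-2}g_M}b^{R_i^{-2}g_M}|\,d\mathrm{vol}^{R_i^{-2}g_M}_{n-1}$, that estimate reads
\[
\Bigl|P_i(s_0)-\exp\!\Bigl(2\!\int_{r_0}^{s_0}\!\tfrac{U_{(v_i)_{R_i}}^{R_i^{-2}g_M}(\hat s)}{\hat s}\,d\hat s\Bigr)P_i(r_0)\Bigr|^2\le \Psi(\delta;n,d_0)\Bigl(\tfrac{s_0}{r_0}\Bigr)^{6d_0+3}I_{(u_i)_{R_i}}^{R_i^{-2}g_M}(s_0)\,I_{(v_i)_{R_i}}^{R_i^{-2}g_M}(s_0)
\]
for all large $i$ and all $r\le r_0\le s_0\le s$.

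It then remains to let $i\to\infty$. On the right-hand side $I_{(u_i)_{R_i}}^{R_i^{-2}g_M}(s_0)\to I_{u_\infty}(s_0)$ and $I_{(v_i)_{R_i}}^{R_i^{-2}g_M}(s_0)\to I_{v_\infty}(s_0)$ by Proposition \ref{002}, and $U_{(v_i)_{R_i}}^{R_i^{-2}g_M}\to U_{v_\infty}$ uniformly on $[r,s]$ by Propositions \ref{002} and \ref{004}, so the exponential factor converges by dominated convergence exactly as in Proposition \ref{006} and Proposition \ref{1234567890}. The only genuine issue is that $P_i(t)\to t^{1-n}\int_{\partial B_t(p)}u_\infty v_\infty\,d\upsilon$, uniformly for $t$ in compact subsets of $(r,s)$. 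For this I would redo, for the bilinear expression $(u_i)_{R_i}(v_i)_{R_i}$ in place of $(u_i)_{R_i}^2$, the computation in the proof of Proposition \ref{002}: replace the level sets $\{b^{R_i^{-2}g_M}=t\}$ by thin metric annuli $A_m^{R_i^{-2}g_M}(t-\epsilon,t)$, control the symmetric differences by thinner annuli as in that proof, use $\int_{b^{R_i^{-2}g_M}\le t}\big||\nabla^{R_i^{-2}g_M}b^{R_i^{-2}g_M}|-1\big|\le\epsilon^2\,\mathrm{vol}^{R_i^{-2}g_M}(\{b^{R_i^{-2}g_M}\le t\})$ together with $(b^{R_i^{-2}g_M},db^{R_i^{-2}g_M})\to(r_p,dr_p)$ on annuli (established in the proof of Proposition \ref{009}), and then invoke Corollary \ref{har}, Proposition \ref{10101} and Theorem \ref{10105} to see that $\{(u_i)_{R_i}(v_i)_{R_i}\}$ has the infinitesimal constant convergence property to $u_\infty v_\infty$; co-area for the distance from the pole on $C(X)$ (Proposition \ref{0005}) identifies the limit of the annular averages with $t^{1-n}\int_{\partial B_t(p)}u_\infty v_\infty\,d\upsilon$. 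Letting $i\to\infty$ in the displayed inequality and then $\epsilon\to0$ in the annular approximations yields the claim.

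The hard part is this last step. The boundary term is an integral of a product of two merely Lipschitz limit functions against the codimension-one measure on a level set of $b^{R_i^{-2}g_M}$, and making the passage to the limit rigorous requires simultaneously handling the Cheeger--Colding convergence of renormalized measures, the near-equality of $b^{R_i^{-2}g_M}$ with $r_m$ and of $|\nabla^{R_i^{-2}g_M}b^{R_i^{-2}g_M}|$ with $1$ at large scales, the co-area formula on $C(X)$, and the product-convergence of $(u_i)_{R_i}(v_i)_{R_i}$ --- essentially a bilinear rerun of the lengthy estimate already carried out for $I_{u_\infty}$ in the proof of Proposition \ref{002}. Once that is in place, everything else is a direct transcription of the manifold estimate through the limiting machinery of section $4$.
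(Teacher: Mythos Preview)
Your overall strategy matches the paper's: approximate $u_\infty,v_\infty$ by harmonic $u_i,v_i$ via Theorem~\ref{005}, verify the hypotheses of the Colding--Minicozzi bilinear estimate on the manifold side (the paper cites \cite[Corollary~5.24]{co-mi3}, obtained after first invoking \cite[Proposition~4.50]{co-mi3} as in the proof of Proposition~\ref{009}), and then pass to the limit using Propositions~\ref{002} and~\ref{004}.

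The one place where you diverge is in handling the convergence of the bilinear boundary term $t^{1-n}\int_{b^{R_i^{-2}g_M}=t}(u_i)_{R_i}(v_i)_{R_i}\,d\mathrm{vol}_{n-1}$. You propose to rerun the whole annular-approximation argument of Proposition~\ref{002} with $(u_i)_{R_i}(v_i)_{R_i}$ in place of $(u_i)_{R_i}^2$. That would work, but the paper avoids it entirely by polarization: writing
\[
(u_i)_{R_i}(v_i)_{R_i}=\tfrac12\bigl((u_i)_{R_i}+(v_i)_{R_i}\bigr)^2-\tfrac12(u_i)_{R_i}^2-\tfrac12(v_i)_{R_i}^2,
\]
one reduces to three applications of the already-established Proposition~\ref{002}, applied respectively to $u_i$, $v_i$, and $u_i+v_i$ (the last is again a harmonic sequence converging to the harmonic function $u_\infty+v_\infty$, cf.\ Corollary~\ref{hahahaha}). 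This yields the desired convergence to $\int_{\partial B_t(p)}u_\infty v_\infty\,dH^{n-1}$ in one line, without touching annuli, $|\nabla b|$, or the convergence machinery of section~4 again. Your ``hard part'' thus disappears; everything else in your outline is exactly what the paper does.
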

\begin{proof}
By Theorem \ref{005}, 
there exists a sequence of harmonic functions $u_i, v_i$ on $B_{6RR_i}^{g_M}(m)$ such that 
$\sup_i(\mathbf{Lip}u_i + \mathbf{Lip}v_i) < \infty$, $(u_i)_{R_i} \rightarrow u_{\infty}$, 
$(v_i)_{R_i} \rightarrow v_{\infty}$ on $B_{6R}(p)$.
By the proof of Proposition \ref{009} (or  \cite[Proposition $4. 50$]{co-mi3}), there exists $i_0$ such that 
\[\int_{rR_i \le b^{g_M} \le sR_i}(b^{g_M})^{-n}\left(b^{g_M}\frac{\partial v_i}{\partial n}-U_{v_i}^{g_M}(b^{g_M})|\nabla^{g_M}b^{g_M}|\right)^2d\mathrm{vol}^{g_M}\le \Psi(\delta;n, d_0)I_{v_i}^{g_M}(R_is)\]
for every $i \ge i_0$. 
Thus, by  \cite[Corollary $5.24$]{co-mi3}, we have 
\begin{align*}
&\left|(R_is_0)^{1-n}\int_{b^{g_M}=R_is_0}u_iv_id\mathrm{vol}^{g_M}_{n-1}-\exp \left(2\int_{r_0R_i}^{s_0R_i}\frac{U_{v_i}^{g_M}(\hat{s})}{\hat{s}}d\hat{s}\right)
(R_ir_0)^{1-n}\int_{b^{g_M}=R_ir_0}u_iv_id\mathrm{vol}^{g_M}_{n-1}\right|^2 \\
&\le \Psi(\delta;n, d_0)\left(\frac{s_0}{r_0}\right)^{6d_0+3}I_{u_i}^{g_M}(R_is_0)I_{v_i}^{g_M}(R_is_0)
\end{align*}
for $i \ge i_0$.
By rescaling $R_i^{-2}g_M$, we have
\begin{align*}
&\Biggl|s_0^{1-n}\int_{b^{R_i^{-2}g_M}=s_0}(u_i)_{R_i}(v_i)_{R_i}d\mathrm{vol}^{R_i^{-2}g_M}_{n-1} \\
& \ \ \ \ \ \ \ \ \ \ -\exp \left(2\int_{r_0}^{s_0}\frac{U_{(v_i)_{R_i}}^{R_i^{-2}g_M}(\hat{s})}{\hat{s}}d\hat{s}\right)
r_0^{1-n}\int_{b^{R_i^{-2}g_M}=r_0}(u_i)_{R_i}(v_i)_{R_i}d\mathrm{vol}^{R_i^{-2}g_M}_{n-1}\Biggl|^2 \\
&\le \Psi(\delta;n, d_0)\left(\frac{s_0}{r_0}\right)^{6d_0+3}I_{(u_i)_{R_i}}^{R_i^{-2}g_M}(s_0)I_{(v_i)_{R_i}}^{R_i^{-2}g_M}(s_0).
\end{align*}
On the other hand, by Proposition \ref{002}, we have 
\begin{align*}
&\int_{b^{R_i^{-2}g_M}=s_0}(u_i)_{R_i}(v_i)_{R_i}d\mathrm{vol}^{R_i^{-2}g_M}_{n-1} \\
&=\frac{1}{2}\int_{b^{R_i^{-2}g_M}=s_0}\left((u_i)_{R_i}+(v_i)_{R_i}\right)^2d\mathrm{vol}^{R_i^{-2}g_M}_{n-1} -\frac{1}{2}
\int_{b^{R_i^{-2}g_M}=s_0}(u_i)_{R_i}^2d\mathrm{vol}^{R_i^{-2}g_M}_{n-1} \\
& \ \ \ \ \ \ \ \ \ \ \ -\frac{1}{2}\int_{b^{R_i^{-2}g_M}=s_0}(v_i)_{R_i}^2d\mathrm{vol}^{R_i^{-2}g_M}_{n-1} \\
& \stackrel{i \rightarrow \infty}{\rightarrow} \frac{1}{2}\int_{\partial B_{s_0}(p)}(u_{\infty}+v_{\infty})^2dH^{n-1} -\frac{1}{2}
\int_{\partial B_{s_0}(p)}u_{\infty}^2dH^{n-1} -\frac{1}{2}\int_{\partial B_{s_0}(p)}v_{\infty}^2dH^{n-1} \\
&=\int_{\partial B_{s_0}(p)}u_{\infty}v_{\infty}dH^{n-1}.
\end{align*}
Therefore we have the assertion.
\end{proof}
The following corollary follows from Proposition \ref{012} directly:
\begin{corollary}\label{013}
Let $r, s, R$ be positive numbers with
$0 < r < s <R$ and $u_{\infty}, v_{\infty}$ harmonic functions on $B_{7R}(p)$.
We assume that  $U_{v_{\infty}}(r)=U_{v_{\infty}}(s)$ and $v_{\infty}$ is not a constant function on $B_R(p)$.
Then, we have
\[s_0^{1-n}\int_{\partial B_{s_0}(p)}u_{\infty}v_{\infty}dH^{n-1}=\left(\frac{s_0}{r_0}\right)^{2C}
r_0^{1-n}\int_{\partial B_{r_0}(p)}u_{\infty}v_{\infty}dH^{n-1} \]
for every $ r \le r_0 \le s_0 \le s$.
Here $C=U_{v_{\infty}}(r)$.
\end{corollary}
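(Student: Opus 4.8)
The plan is to obtain Corollary \ref{013} directly from Proposition \ref{012} by driving the error parameter to zero. First I would record that, since $v_{\infty}$ is not a constant function on $B_R(p)$, Corollary \ref{178907} forces $U_{v_{\infty}}(t)>0$ for every $0<t<R$; set $C:=U_{v_{\infty}}(r)=U_{v_{\infty}}(s)>0$. By the monotonicity of the frequency function (Proposition \ref{008}) the map $t\mapsto U_{v_{\infty}}(t)$ is non-decreasing on $(0,R)$, so the hypothesis $U_{v_{\infty}}(r)=U_{v_{\infty}}(s)$ pins it down: $U_{v_{\infty}}(t)=C$ for all $r\le t\le s$. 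In particular, for any $r\le r_0\le s_0\le s$,
\[\exp\left(2\int_{r_0}^{s_0}\frac{U_{v_{\infty}}(\hat{s})}{\hat{s}}d\hat{s}\right)=\exp\left(2C\log\frac{s_0}{r_0}\right)=\left(\frac{s_0}{r_0}\right)^{2C}.\]

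Next I would verify that the hypotheses of Proposition \ref{012} are met for \emph{every} $\delta>0$, taking $d_0:=C$. Indeed $\max_{r\le t\le s}U_{v_{\infty}}(t)=C\le d_0$, the function $v_{\infty}$ is non-constant on $B_R(p)$ by assumption, and $\bigl|\log\bigl(U_{v_{\infty}}(s)/U_{v_{\infty}}(r)\bigr)\bigr|=|\log 1|=0<\delta$. Hence, for each such $\delta$ and each $r\le r_0\le s_0\le s$, Proposition \ref{012} together with the computation above gives
\begin{align*}
&\left|s_0^{1-n}\int_{\partial B_{s_0}(p)}u_{\infty}v_{\infty}dH^{n-1}-\left(\frac{s_0}{r_0}\right)^{2C}r_0^{1-n}\int_{\partial B_{r_0}(p)}u_{\infty}v_{\infty}dH^{n-1}\right|^2 \\
&\qquad\le\Psi(\delta;n,d_0)\left(\frac{s_0}{r_0}\right)^{6d_0+3}I_{u_{\infty}}(s_0)I_{v_{\infty}}(s_0).
\end{align*}

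Finally I would let $\delta\to 0$. By Theorem \ref{005} the functions $u_{\infty},v_{\infty}$ are Lipschitz on $\overline{B}_{s_0}(p)$ (here $s_0\le s<R<7R$), so $I_{u_{\infty}}(s_0)$ and $I_{v_{\infty}}(s_0)$ are finite, and together with $C$, $d_0$, $r_0$, $s_0$ they are independent of $\delta$, while $\Psi(\delta;n,d_0)\to 0$ as $\delta\to 0$. Thus the right-hand side above tends to $0$, forcing the left-hand side to vanish, which is precisely the claimed identity. I expect no genuine obstacle here: the only points needing care are the strict positivity $C>0$ (so that $\log\bigl(U_{v_{\infty}}(s)/U_{v_{\infty}}(r)\bigr)$ is well defined and the requirement $d_0>0$ in Proposition \ref{012} is satisfied), and the remark that $\delta$ plays a double role in Proposition \ref{012}, as both a hypothesis threshold and the size of the error term, so that one is legitimately free to send it to zero here because the relevant logarithm is identically zero.
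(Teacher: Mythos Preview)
Your proposal is correct and is exactly the approach the paper has in mind: the paper states that Corollary~\ref{013} ``follows from Proposition~\ref{012} directly,'' and your argument---using Corollary~\ref{178907} and Proposition~\ref{008} to obtain $U_{v_\infty}\equiv C>0$ on $[r,s]$, then applying Proposition~\ref{012} with $d_0=C$ and letting $\delta\to 0$---spells this out precisely.
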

Next proposition follows from Proposition \ref{110} directly: 
\begin{proposition}\label{015}
For every non-constant harmonic function $u_{\infty}$ on $C(X)$ with $u_{\infty}(p)=0$, we have 
\[\mathrm{ord}_0u_{\infty} \ge 1.\] 
\end{proposition}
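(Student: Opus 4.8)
The plan is to deduce the statement directly from Proposition \ref{110} together with the monotonicity of the frequency function. Recall that by definition $\mathrm{ord}_0 u_{\infty} = \lim_{s \to 0^+}U_{u_{\infty}}(s)$, and this limit exists because Proposition \ref{008} tells us that $s \mapsto U_{u_{\infty}}(s)$ is non-decreasing on $(0,\infty)$; in particular it equals $\inf_{s>0}U_{u_{\infty}}(s)$. Since $u_{\infty}$ is harmonic on all of $C(X)$, it is harmonic on $B_{7R}(p)$ for every $R > 0$, so Proposition \ref{110} is applicable at every scale once the relevant non-degeneracy is known.

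First I would check that $u_{\infty}$ is not a constant function on $B_R(p)$ for any $R > 0$. Suppose it were constant on some $B_{R_0}(p)$. Then $\mathrm{Lip}u_{\infty} \equiv 0$ on $B_{R_0}(p)$, hence $D_{u_{\infty}}(r) = 0$, and therefore $U_{u_{\infty}}(r) = 0$, for every $0 < r < R_0$. Applying Corollary \ref{178907} with $s = R_0/2$ and an arbitrary $R > R_0/2$ (using that $u_{\infty}$ is harmonic on $B_{7R}(p)$) shows that $u_{\infty}$ is constant on $B_R(p)$ for every such $R$, whence $u_{\infty}$ is constant on $C(X) = \bigcup_{R>0}B_R(p)$, contradicting the hypothesis. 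So $u_{\infty}$ is non-constant on every ball centered at $p$.

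Now fix any $R > 0$. Since $u_{\infty}(p) = 0$ and $u_{\infty}$ is a harmonic function on $B_{7R}(p)$ which is not constant on $B_R(p)$, Proposition \ref{110} gives $U_{u_{\infty}}(s) \ge 1$ for all $0 < s < R$. As $R > 0$ was arbitrary, $U_{u_{\infty}}(s) \ge 1$ for every $s > 0$, and letting $s \to 0^+$ yields $\mathrm{ord}_0 u_{\infty} = \lim_{s \to 0^+}U_{u_{\infty}}(s) \ge 1$. The only point requiring a little care is the first step — ruling out that $u_{\infty}$ is locally constant near $p$ — since unique continuation for harmonic functions on limit spaces is not available a priori; but Corollary \ref{178907} serves as a substitute here, and the rest is immediate.
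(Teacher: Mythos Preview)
Your proof is correct and follows essentially the same route as the paper, which simply states that the proposition follows from Proposition~\ref{110} directly. You have spelled out the details carefully, including the use of Corollary~\ref{178907} to propagate non-constancy to every ball; in fact this step is slightly more than required, since non-constancy of $u_\infty$ on $C(X)$ immediately gives non-constancy on $B_R(p)$ for all sufficiently large $R$, which already suffices to apply Proposition~\ref{110} and conclude $U_{u_\infty}(s)\ge 1$ for every $s>0$.
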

\begin{proposition}\label{020}
With same assumption as in Lemma \ref{002}, for every $0 < r < s <R$, we have 
\[\lim_{i \rightarrow \infty}\int_r^sF_{(u_i)_{R_i}}^{R_i^{-2}g_M}(t)dt = \int_r^sF_{u_{\infty}}(t)dt.\]
\end{proposition}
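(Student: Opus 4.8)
The plan is to rewrite $\int_r^s F$ as a single integral over an annulus and then pass to the limit using the convergence theorems of Section~$4$ together with Proposition \ref{002}.

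First I would record a co-area identity: for any harmonic function $u$ on a neighbourhood of $\{r\le b^{g_M}\le s\}$ one has
\[\int_r^s F_u^{g_M}(t)\,dt=\int_{\{r\le b^{g_M}\le s\}}(b^{g_M})^{3-n}\langle\nabla^{g_M}u,\nabla^{g_M}b^{g_M}\rangle^2\,d\mathrm{vol}^{g_M},\]
and, on the cone,
\[\int_r^s F_{u_{\infty}}(t)\,dt=\int_{A_p(r,s)}r_p^{3-n}\langle dr_p,du_{\infty}\rangle^2\,dH^n.\]
For the manifold identity I would use that on the regular part of $\{b^{g_M}=t\}$ the outer unit normal is $\nabla^{g_M}b^{g_M}/|\nabla^{g_M}b^{g_M}|$, so that $|\partial u/\partial n|^2|\nabla^{g_M}b^{g_M}|=\langle\nabla^{g_M}u,\nabla^{g_M}b^{g_M}\rangle^2/|\nabla^{g_M}b^{g_M}|$; since the critical set $\{|\nabla^{g_M}b^{g_M}|=0\}$ has codimension two (\cite{cheng2,har-sim}, cf. \cite[Remark $2.11$]{co-mi3}) it is $\mathrm{vol}^{g_M}$-null and negligible for $\mathrm{vol}^{g_M}_{n-1}(\{b^{g_M}=t\})$ for a.e. $t$, so the classical co-area formula for $b^{g_M}$ with weight $h=(b^{g_M})^{3-n}\langle\nabla^{g_M}u,\nabla^{g_M}b^{g_M}\rangle^2$ yields the first identity. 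For the cone identity I would use the co-area formula for the distance function from the pole (Proposition \ref{0005}) and $|dr_p|\equiv 1$. After rescaling by $R_i^{-2}g_M$ (using $F_{u_{\tau}}^{\tau^{-2}g_M}(r)=\tau^{-2}F_u^{g_M}(\tau r)$) the first identity becomes
\[\int_r^s F_{(u_i)_{R_i}}^{R_i^{-2}g_M}(t)\,dt=\int_{\{r\le b^{R_i^{-2}g_M}\le s\}}(b^{R_i^{-2}g_M})^{3-n}\langle\nabla^{R_i^{-2}g_M}(u_i)_{R_i},\nabla^{R_i^{-2}g_M}b^{R_i^{-2}g_M}\rangle^2\,d\mathrm{vol}^{R_i^{-2}g_M}.\]

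Next I would fix $0<\hat r<r<s<\hat s<R$ and collect the ingredients. By Remark \ref{549760}, Proposition \ref{002} and Corollary \ref{har} one has $\sup_i\mathbf{Lip}\left((u_i)_{R_i}|_{B_{\hat s}^{R_i^{-2}g_M}(m)}\right)<\infty$ and $\left((u_i)_{R_i},d(u_i)_{R_i}\right)\to(u_{\infty},du_{\infty})$ on $B_{\hat s}(p)$; and, as in the proof of Proposition \ref{009}, $\left(b^{R_i^{-2}g_M},db^{R_i^{-2}g_M}\right)\to(r_p,dr_p)$ on $A_p(\hat r,\hat s)$ (this follows from $b^{R_i^{-2}g_M}=\left(\tfrac{V_M}{\mathrm{vol}\,B_1(0_n)}G^{R_i^{-2}g_M}(m,\cdot)\right)^{1/(2-n)}$, Cheng-Yau's gradient estimate, Theorem \ref{green} and Corollary \ref{har}, $b^{R_i^{-2}g_M}$ having uniformly bounded Lipschitz constant on $A_m^{R_i^{-2}g_M}(\hat r,\hat s)$). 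Using moreover the uniform estimate $\left|b^{R_i^{-2}g_M}/r_m^{R_i^{-2}g_M}-1\right|\to 0$ on $A_m^{R_i^{-2}g_M}(\hat r,\hat s)$ (from $(2.25)$ in \cite{co-mi3}) together with an argument like that of Claim \ref{003} in the proof of Proposition \ref{002}, I would verify that, for $K_i:=\{r\le b^{R_i^{-2}g_M}\le s\}$ and $K_{\infty}:=A_p(r,s)$, one has $\limsup_{i\to\infty}K_i\subset K_{\infty}$ and that $\{1_{K_i}\}_i$ has infinitesimal convergence property to $1_{K_{\infty}}$ at every $w\in A_p(r,s)$ with $r<r_p(w)<s$; since $\mathrm{dim}_HX=n-1$, $H^n(\partial B_t(p))=0$ for every $t$, so this holds at $H^n$-a.e. $w\in K_{\infty}$. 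The conclusion then assembles: by Theorem \ref{10105} applied to the pair $(u_i)_{R_i},b^{R_i^{-2}g_M}$, the sequence $\left\{\langle\nabla^{R_i^{-2}g_M}(u_i)_{R_i},\nabla^{R_i^{-2}g_M}b^{R_i^{-2}g_M}\rangle\right\}_i$ has infinitesimal constant convergence property to $\langle dr_p,du_{\infty}\rangle$ at a.e. point of $A_p(\hat r,\hat s)$, hence so does the sequence of squares (Proposition \ref{10101}, with $t\mapsto t^2$); likewise $\{b^{R_i^{-2}g_M}\}_i$, being a converging sequence of Lipschitz functions with uniformly bounded Lipschitz constant there, has infinitesimal constant convergence property to $r_p$, whence $\left\{(b^{R_i^{-2}g_M})^{3-n}\right\}_i$ has it to $r_p^{3-n}$ (Proposition \ref{10101}, with $t\mapsto t^{3-n}$ on $[\hat r,\hat s]$); taking the product (Proposition \ref{10101} once more) and then applying Proposition \ref{10103} with $K_i,K_{\infty}$ as above — the uniform doubling and measure-convergence hypotheses being available because $(M,m,R_i^{-1}d_M,\mathrm{vol}^{R_i^{-2}g_M})\to(M_{\infty},m_{\infty},H^n)$ and Bishop-Gromov holds, exactly as in the proof of Proposition \ref{002} — one gets
\[\lim_{i\to\infty}\int_{K_i}(b^{R_i^{-2}g_M})^{3-n}\langle\nabla^{R_i^{-2}g_M}(u_i)_{R_i},\nabla^{R_i^{-2}g_M}b^{R_i^{-2}g_M}\rangle^2\,d\mathrm{vol}^{R_i^{-2}g_M}=\int_{K_{\infty}}r_p^{3-n}\langle dr_p,du_{\infty}\rangle^2\,dH^n,\]
which by the two identities above is exactly the asserted equality.

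The main obstacle I anticipate is the first step: making the manifold co-area identity rigorous requires controlling the critical set of $b^{g_M}$ so that the pointwise expression for $\partial u/\partial n$, the local finiteness of $\mathrm{vol}^{g_M}_{n-1}(\{b^{g_M}=t\})$ and the co-area formula are all valid off null sets, which is where one invokes the codimension-two results of \cite{cheng2,har-sim} and the a.e. well-posedness of $F_{u_{\infty}}(t)$ via Proposition \ref{0005}. Everything after that — the identification of the limits of $b^{R_i^{-2}g_M}$ and of the annuli $K_i$ and the book-keeping with infinitesimal (constant) convergence — is a routine combination of Proposition \ref{002}, Theorem \ref{10105}, Proposition \ref{10101} and Proposition \ref{10103}.
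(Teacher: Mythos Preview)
Your proposal is correct and follows essentially the same route as the paper: rewrite $\int_r^s F$ via the co-area formula for $b^{R_i^{-2}g_M}$ as the annulus integral $\int_{\{r\le b\le s\}} b^{3-n}\langle\nabla(u_i)_{R_i},\nabla b\rangle^2\,d\mathrm{vol}$, then pass to the limit using $(b^{R_i^{-2}g_M},db^{R_i^{-2}g_M})\to(r_p,dr_p)$ and $((u_i)_{R_i},d(u_i)_{R_i})\to(u_\infty,du_\infty)$ on the annulus, and finally identify the limit via the cone co-area formula (Proposition~\ref{0005}). The paper's proof is terser---it simply writes the chain of equalities and invokes Corollary~\ref{har} and the established convergence of $(b,db)$---whereas you spell out the precise Section~4 ingredients (Theorem~\ref{10105}, Propositions~\ref{10101} and~\ref{10103}) and worry more carefully about the critical set of $b^{g_M}$; but the argument is the same.
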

\begin{proof}
Since $(b^{R_i^{-2}g_M}, db^{R_i^{-2}g_M}) \rightarrow (r_p, dr_p)$ on $A_p(r, s)$, by Corollary \ref{har}, we have
\begin{align*}
&\int_r^sF_{(u_i)_{R_i}}^{R_i^{-2}g_M}(t)dt \\
&=\int_r^s t^{3-n}\int_{b^{R_i^{-2}g_M}=t}(R^{-2}g_M)\left(\nabla^{R_i^{-2}g_M}(u_i)_{R_i}, \frac{\nabla^{R_i^{-2}g_M}b^{R_i^{-2}g_M}}{|\nabla^{R_i^{-2}g_M}b^{R_i^{-2}g_M}|}\right)^2|\nabla^{R_i^{-2}g_M}b^{R_i^{-2}g_M}|d\mathrm{vol}^{R_i^{-2}g_M}dt \\
&=\int_{r \le b^{R_i^{-2}g_M} \le s}(R^{-2}g_M)\left(\nabla^{R_i^{-2}g_M}(u_i)_{R_i}, \frac{\nabla^{R_i^{-2}g_M}b^{R_i^{-2}g_M}}{|\nabla^{R_i^{-2}g_M}b^{R_i^{-2}g_M}|}\right)^2|\nabla^{R_i^{-2}g_M}b^{R_i^{-2}g_M}|^2(b^{R_i^{-2}g_M})^{3-n}d\mathrm{vol}^{R_i^{-2}g_M} \\
&=\int_{r \le b^{R_i^{-2}g_M} \le s}(R^{-2}g_M)(\nabla^{R_i^{-2}g_M}(u_i)_{R_i},\nabla^{R_i^{-2}g_M}b^{R_i^{-2}g_M})^2(b^{R_i^{-2}g_M})^{3-n}d\mathrm{vol}^{R_i^{-2}g_M} \\
& \stackrel{i \rightarrow \infty}{\rightarrow} \int_{A_p(r, s)}r_p^{3-n}\langle du_{\infty}, dr_p\rangle ^2d\upsilon = \int_r^sF_{u_{\infty}}(t)dt.
\end{align*}
\end{proof}
\begin{proposition}\label{8890}
For every $0 < r < s <R$ and harmonic function $u_{\infty}$ on $B_R(p)$, we have 
\[D_{u_{\infty}}(s)-D_{u_{\infty}}(r)= \int_r^s\frac{2F_{u_{\infty}}(t)}{t}dt.\]
\end{proposition}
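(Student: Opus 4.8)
The statement to prove is the Rellich--Pohozaev type identity
\[
D_{u_\infty}(s) - D_{u_\infty}(r) = \int_r^s \frac{2F_{u_\infty}(t)}{t}\,dt
\]
for a harmonic function $u_\infty$ on $B_R(p)$ and $0<r<s<R$. The plan is to obtain it as a limit of the corresponding identity on the rescaled manifolds. On a manifold one has the classical formula (see $(2.14)$ and the surrounding computations in \cite{co-mi3})
\[
D_{u_i}^{R_i^{-2}g_M}(s) - D_{u_i}^{R_i^{-2}g_M}(r) = \int_r^s \frac{2F_{u_i}^{R_i^{-2}g_M}(t)}{t}\,dt,
\]
valid because the critical set of $b^{R_i^{-2}g_M}$ has codimension two, so that integrating Bochner / the Rellich identity against the level sets of $b$ is legitimate. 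So the first step is: invoke Theorem \ref{005} (Ding) to find, after passing to a subsequence, harmonic functions $u_i$ on $B_{6RR_i}^{g_M}(m)$ (or on any slightly smaller ball containing $\overline{B}_s(p)$ in the rescaled metric) with $\sup_i \mathbf{Lip}\, u_i < \infty$ and $(u_i)_{R_i} \to u_\infty$ on $B_s(p)$ — more precisely on $B_{s'}(p)$ for any $r<s'<R$, using Remark \ref{549760} to get the uniform Lipschitz bound from the uniform $L^\infty$ bound.

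The second step is to take the limit $i\to\infty$ of each side of the manifold identity. For the left-hand side I would apply Proposition \ref{002}, which gives
$\lim_i D_{(u_i)_{R_i}}^{R_i^{-2}g_M}(t) = D_{u_\infty}(t)$ uniformly on $[r,s]$; in particular this holds pointwise at $t=r$ and $t=s$. For the right-hand side I would apply Proposition \ref{020}, which gives exactly
$\lim_i \int_r^s F_{(u_i)_{R_i}}^{R_i^{-2}g_M}(t)\,dt = \int_r^s F_{u_\infty}(t)\,dt$. Since the pre-limit identity holds for every $i$ and the integrand factor $2/t$ is bounded and bounded away from zero on $[r,s]$, passing to the limit on both sides yields the claimed equality directly. (One could also observe that $F_{u_\infty}$ is nonnegative and the integral $\int_r^s F_{u_\infty}(t)/t\,dt$ is finite because $D_{u_\infty}(s)<\infty$, so the integral on the right is a genuine Lebesgue integral and no extra care about integrability is needed.)

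A subtlety worth noting: strictly speaking the above argument proves the identity for the particular $u_\infty$ produced by Theorem \ref{005} as a limit along some subsequence, and Proposition \ref{002}, \ref{020} require the hypotheses of Proposition \ref{002} to hold. But Theorem \ref{005} asserts precisely that \emph{every} harmonic function on $B_R(p)$ arises this way (as a Gromov--Hausdorff limit of harmonic functions on the rescaled manifolds), so the argument applies to an arbitrary harmonic $u_\infty$ on $B_R(p)$ after restricting to $B_{s'}(p)$ for $s<s'<R$; the identity then holds on the fixed interval $[r,s]$ regardless of which $s'$ we chose. There is no real dependence on the subsequence because both sides of the final identity are intrinsic quantities attached to $u_\infty$.

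\textbf{Main obstacle.} There is essentially no deep obstacle left: the entire analytic content — convergence of the Dirichlet energies $D$, convergence of the flux integrals $\int F\,dt$, and the fact that the manifold-level Pohozaev identity survives despite the codimension-two critical set of $b^{g_M}$ — has already been packaged into Theorem \ref{005}, Proposition \ref{002}, Proposition \ref{020}, and the cited results of \cite{co-mi3}. The only point requiring a little care is bookkeeping: making sure that the harmonic functions $u_i$ supplied by Theorem \ref{005} are defined on a large enough domain (e.g. $B_{7R}$-type balls as used elsewhere in this section, or simply $B_{s'}$ with $s<s'<R$) so that both $D$ and $F$ of $(u_i)_{R_i}$ make sense on all of $[r,s]$ and the hypotheses of Propositions \ref{002} and \ref{020} are literally satisfied; this is routine and is handled exactly as in the proofs of Proposition \ref{006} and Proposition \ref{008}.
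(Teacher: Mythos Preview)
Your overall strategy---prove the identity on the manifolds and pass to the limit using Propositions \ref{002} and \ref{020}---is the same as the paper's, but the manifold identity you invoke is not actually true. On a general manifold (with $b$ built from the Green's function) one does \emph{not} have the exact Pohozaev identity
\[
D_{u_i}^{R_i^{-2}g_M}(s) - D_{u_i}^{R_i^{-2}g_M}(r) = \int_r^s \frac{2F_{u_i}^{R_i^{-2}g_M}(t)}{t}\,dt.
\]
The references you cite, $(2.12)$--$(2.14)$ in \cite{co-mi3}, give $I' = 2D/r$ (which follows from harmonicity alone) and its integrated form; they do \emph{not} assert $D' = 2F/r$. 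The correct manifold computation is $(4.3)$ in \cite{co-mi3}, and it is stated for $E_u(r)=r^{2-n}\int_{b\le r}|\nabla u|^2|\nabla b|^2$, not for $D_u$; even then it carries an error term of the shape
\[
r^{1-n}\int_{b\le r}\bigl(\mathrm{Hess}_{b^2}(\nabla u,\nabla u)-2|\nabla u|^2\bigr).
\]
The clean identity $D'=2F/r$ holds only when $\mathrm{Hess}_{b^2}=2g$ exactly (e.g.\ on a Euclidean cone), which is precisely the limit statement you are trying to prove, so you cannot assume it on $M_i$.

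The paper fixes this by working with $E$ instead of $D$: it writes the integrated form of $(4.3)$ for $E_{(u_i)_{R_i}}$, then shows that in the limit $E\to D$ (Proposition \ref{004}, since $|\nabla b|\to 1$), that the term $\int_r^s 2E/t\,dt - \int_r^s t^{1-n}\int_{b\le t}2|\nabla u_i|^2$ cancels in the limit, and that the Hessian error term tends to zero because $\int_{b\le R}|\mathrm{Hess}_{b^2}-2g|\to 0$ along the blow-down sequence. Your proof can be repaired by making exactly these substitutions; the remaining steps (approximation via Theorem \ref{005}, convergence of the $F$-integral via Proposition \ref{020}) are fine.
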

\begin{proof}
We can assume that the assumption of Proposition \ref{002} holds. By $(4.3)$ in \cite{co-mi3}, we have 
\begin{align*}
&E_{(u_i)_{R_i}}^{R_i^{-2}g_M}(s)-E_{(u_i)_{R_i}}^{R_i^{-2}g_M}(r) \\
&= \int_r^s\frac{2F_{(u_i)_{R_i}}^{R_i^{-2}g_M}(t)}{t}dt +\int_r^s\frac{2E_{(u_i)_{R_i}}^{R_i^{-2}g_M}(t)}{t}dt -\int_r^st^{1-n}\int_{b^{R_i^{-2}g_M}\le t}2|\nabla^{R_i^{-2}g_M}(u_i)_{R_i}|^2d\mathrm{vol}^{R_i^{-2}g_M}dt\\
& \pm \int_r^s t^{1-n}\int_{b^{R_i^{-2}g_M} \le t}\biggl|\mathrm{Hess}_{(b^{R_i^{-2}g_M})^2}^{R_i^{-2}g_M}\left(\nabla^{R_i^{-2}g_M}(u_i)_{R_i}, 
\nabla^{R_i^{-2}g_M}(u_i)_{R_i}\right) \\
& \ \ \ \ \ \ \ \ -2(R_i^{-2}g_M)\left(\nabla^{R_i^{-2}g_M}(u_i)_{R_i}, \nabla^{R_i^{-2}g_M}(u_i)_{R_i}\right)\biggl|d\mathrm{vol}^{R_i^{-2}g_M}dt.
\end{align*}
By Corollary \ref{har} and Theorem \ref{green}, we have 
\[\lim_{i \rightarrow \infty}\int_{b^{R_i^{-2}g_M} \le t}|d(u_i)_{R_i}|^2d\mathrm{vol}^{R_i^{-2}g_M}= \int_{B_t(p)}|du_{\infty}|^2dH^n.\]
By dominated convergence theorem, we have
\begin{align*}
\lim_{i \rightarrow \infty}\int_r^st^{1-n}\int_{b^{R_i^{-2}g_M}\le t}2|\nabla^{R_i^{-2}g_M}(u_i)_{R_i}|^2d\mathrm{vol}^{R_i^{-2}g_M}dt
&=\int_r^st^{1-n}\int_{B_t(p)}2|du_{\infty}|^2dH^2dt\\
&=\int_r^s\frac{2E_{u_{\infty}}(t)}{t}dt.
\end{align*}
On the other hand, we recall
\[\lim_{R \rightarrow \infty}\frac{1}{\mathrm{vol}^{g_M}
(\{b^{g_M} \le R\})}\int_{b^{g_M} \le R}|\mathrm{Hess}_{(b^{g_M})^2}-2g_M|d\mathrm{vol}^{g_M} = 0.\]
Thus we have 
\begin{align*}
& \lim_{i \rightarrow \infty}\int_{b^{R_i^{-2}g_M} \le t}\biggl|\mathrm{Hess}_{(b^{R_i^{-2}g_M})^2}^{R_i^{-2}g_M}\left(\nabla^{R_i^{-2}g_M}(u_i)_{R_i}, 
\nabla^{R_i^{-2}g_M}(u_i)_{R_i}\right) \\
& \ \ \ \ \ \ \ \ \ \ \ -2(R_i^{-2}g_M)\left(\nabla^{R_i^{-2}g_M}(u_i)_{R_i}, \nabla^{R_i^{-2}g_M}(u_i)_{R_i}\right)\biggl|d\mathrm{vol}^{R_i^{-2}g_M}dt=0.
\end{align*}
Therefore we have the assertion.
\end{proof}
We shall give a short review of important works by Ding in \cite{di1} and \cite{di2}.
By Corollary \ref{25}, $X$ is $H^{n-1}$-rectifiable. By  \cite[Lemma $4. 3$]{di2}, $(X, H^{n-1})$ satisfies weak Poincar\'e inequality of type $(1, 2)$ locally.
Thus, by section $4$ in \cite{ch2} (or section $6$ in \cite{ch-co3}) and Proposition \ref{9009}, we can define the cotangent bundle $T^*X$ of $X$.
We denote the differential section of a Lipschitz function $f$ on $X$ by $d_Xf: X \rightarrow T^*X$.
By \cite[Theorem $6. 25$]{ch-co3}, there exists a unique self-adjoint operator $\Delta_X$ on $L^2(X)$ such that 
\[\int_X \langle d_Xf, d_Xg\rangle dH=\int_Xf \Delta_X gdH^n \]
for every $f \in H_{1,2}(X)$ and $g \in \mathrm{Domain}(\Delta)$.
For every $i$, we take a $i$-th eigenfunction $\phi_i$ on $X$ and the $i$-th eigenvalue $\lambda_i \ge 0$, i.e. $\Delta_X\phi_i = \lambda_i\phi_i$ $(0 = \lambda_0 < \lambda_1 \le \lambda_2 \le \cdots)$.
We define the  nonnegative number $\alpha_i$ by satisfying $\lambda_i=\alpha_i(\alpha_i + n-2)$.
According to \cite{di1}, the function $v_i(r, x)= r^{\alpha_i}\phi_i(x)$ on $C(X)$ is a harmonic function on $C(X)$.
Actually, by  \cite[Theorem $4. 15$]{di2}, for every Lipschitz function $f \in \mathcal{K}(C(X) \setminus \{p\})$, we have 
\begin{align}
\int_{C(X)}\langle df, dv_i\rangle dH^n &= \int_0^{\infty}\int_{\partial B_r(p)}\biggl(-\alpha_i(\alpha_i-1)r^{\alpha_i-2}f\phi_i  \\
& \ \ \ \ \ \ \ \ \ \ \ \ \ \ - \frac{n-1}{r} \alpha_i r^{\alpha_i-1} + \frac{1}{r^2}\langle d_Xf, d_X\phi_i\rangle \biggl)dH^{n-1}dr \\
&= \int_0^{\infty}\int_{\partial B_r(p)}\biggl(-\alpha_i(\alpha_i-1)r^{\alpha_i-2}f\phi_i \\
& \ \ \ \ \ \ \ \ - (n-1) \alpha_i r^{\alpha_i-2}f\phi_i +\lambda_i r^{\alpha_i-2}f\phi_i\biggl)dH^{n-1}dr \\
&=0.
\end{align}
Thus, $v_i$ is a harmonic function on $C(X) \setminus \{p\}$.
Moreover, by \cite[Corollary $4. 25$]{di2}, $v_i$ is a harmonic function on $C(X)$.
By Theorem \ref{005}, $v_i$ is locally Lipschitz.
Especially, $\phi_i$ is Lipschitz.
Therefore, we have $\lambda_1 \ge n-1$ (see \cite[Corollary $2. 4$]{di1} and \cite[Corollary $2.5$]{di1}).
On the other hand, it is easy to check
\[U_{v_i}(s)=\alpha_i\]
for every $s > 0$.
We say that the function $v_i$\textit{ is a homogeneous harmonic function with growth $\alpha_i$}.
We shall prove that we can apply \cite[Theorem $4. 15$]{di2} for every $d \ge 0$ and $u_{\infty} \in \mathcal{H}^d(M_{\infty})$ below.
As a corollary, we will give the classification of harmonic functions with polynomial growth on asymptotic cones (see Theorem \ref{023}).
\

We put
\[\mathrm{ord}_{\infty}u_{\infty}=\lim_{r \rightarrow \infty}U_{u_{\infty}}(r), \ \mathrm{ord}_{0}u_{\infty}=\lim_{r \rightarrow 0}U_{u_{\infty}}(r)\]
for every harmonic function $u_{\infty}$ on $C(X)$.
By an argument similar to the proof of \cite[Lemma $1. 36$]{co-mi3}, we can prove the following proposition:
\begin{proposition}\label{0987654321}
For harmonic functions $u_{\infty}, v_{\infty}$ on $C(X)$, we have 
\[\mathrm{ord}_{\infty}(u_{\infty}+v_{\infty}) \le \max \{\mathrm{ord}_{\infty}u_{\infty}, \mathrm{ord}_{\infty}v_{\infty} \}.\]
\end{proposition}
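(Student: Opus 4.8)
The plan is to mimic the proof of \cite[Lemma $1.36$]{co-mi3} (the manifold version), replacing each use of monotonicity/asymptotic-behavior of frequency functions on manifolds by the corresponding statement already established in this section for asymptotic cones. Set $d_1=\mathrm{ord}_{\infty}u_{\infty}$, $d_2=\mathrm{ord}_{\infty}v_{\infty}$, and $d=\max\{d_1,d_2\}$. If $u_{\infty}+v_{\infty}$ is a constant function on $C(X)$ there is nothing to prove (its order is $0\le d$ since $d_i\ge 0$ by Proposition \ref{009} applied after noting $u_{\infty}\in\mathcal H^{d_i}$, or more directly since $U\ge 0$), so assume it is non-constant. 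Then $\mathrm{ord}_{\infty}(u_{\infty}+v_{\infty})=\lim_{r\to\infty}U_{u_{\infty}+v_{\infty}}(r)$ exists by the monotonicity in Proposition \ref{008} (recall $u_{\infty}+v_{\infty}$ is harmonic by Corollary \ref{hahahaha}), so it suffices to bound $U_{u_{\infty}+v_{\infty}}(r)$ for large $r$.

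First I would reduce everything to the growth of $I_{u_{\infty}+v_{\infty}}(r)$ as $r\to\infty$. By Proposition \ref{006} (the integrated identity $I(s)=\exp(2\int_r^s U(t)/t\,dt)I(r)$ when $I(r)>0$), combined with Proposition \ref{008} (monotonicity of $U$), a non-constant harmonic function $w_{\infty}$ satisfies, for each $\epsilon>0$ and all sufficiently large $r$,
\[
I_{w_{\infty}}(r)\ge r^{2(\mathrm{ord}_{\infty}w_{\infty}-\epsilon)}\cdot C
\]
for some positive constant $C$, and conversely, if $U_{w_{\infty}}(t)\le D$ for all $t\ge t_0$ then $I_{w_{\infty}}(r)\le r^{2D}I_{w_{\infty}}(t_0)t_0^{-2D}$ for $r\ge t_0$ (this last inequality is exactly the content of Proposition \ref{1234567890} in the limit, or can be read off from Proposition \ref{006}). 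Using $U_{u_{\infty}}(t)\le d_1$ and $U_{v_{\infty}}(t)\le d_2$ for large $t$ (again Proposition \ref{008}), I get $I_{u_{\infty}}(r)\le C_1 r^{2d_1}$ and $I_{v_{\infty}}(r)\le C_2 r^{2d_2}$. By the Cauchy–Schwarz inequality on $\partial B_r(p)$ with measure $H^{n-1}$,
\[
I_{u_{\infty}+v_{\infty}}(r)=r^{1-n}\int_{\partial B_r(p)}(u_{\infty}+v_{\infty})^2\,dH^{n-1}
\le 2\bigl(I_{u_{\infty}}(r)+I_{v_{\infty}}(r)\bigr)\le C_3 r^{2d}.
\]

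Next I would combine the lower and upper bounds. Suppose for contradiction that $\mathrm{ord}_{\infty}(u_{\infty}+v_{\infty})=d+\tau$ for some $\tau>0$. Then by the lower bound above, $I_{u_{\infty}+v_{\infty}}(r)\ge C\, r^{2(d+\tau/2)}$ for all large $r$, which contradicts $I_{u_{\infty}+v_{\infty}}(r)\le C_3 r^{2d}$ once $r$ is large enough. Hence $\mathrm{ord}_{\infty}(u_{\infty}+v_{\infty})\le d=\max\{\mathrm{ord}_{\infty}u_{\infty},\mathrm{ord}_{\infty}v_{\infty}\}$, as desired. The main obstacle I anticipate is the bookkeeping around the case $I_{w_{\infty}}(r)=0$ for some $r$ (i.e. $w_{\infty}$ vanishing on a ball): one must first translate by the value at $p$ and invoke Corollary \ref{178907} / Corollary \ref{12453} to see that a harmonic function vanishing on some $B_r(p)$ with $U=0$ there is constant on a larger ball, so the only genuinely relevant situation is when all the $I$'s are eventually positive; handling this carefully — together with making the "$\epsilon$" in the exponent estimates uniform as $r\to\infty$ via Proposition \ref{008} — is where the argument needs attention, but it is entirely parallel to \cite[Lemma $1.36$]{co-mi3}.
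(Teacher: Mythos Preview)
Your proposal is correct and is exactly the approach the paper intends: the paper does not give a proof but simply refers to \cite[Lemma $1.36$]{co-mi3}, and your argument is the natural transplantation of that proof to asymptotic cones, using Propositions \ref{006}, \ref{008}, and \ref{1234567890} in place of their manifold analogues. The only minor comment is that your concern about the case $I_{w_{\infty}}(r)=0$ is easily dispatched: since $I$ is nondecreasing (Proposition \ref{006}) and $u_{\infty}+v_{\infty}$ is assumed non-constant (hence not identically zero), one has $I_{u_{\infty}+v_{\infty}}(r)>0$ for all sufficiently large $r$, which is all the exponential identity needs.
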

\begin{definition}\label{014}
For harmonic functions $u_{\infty}, v_{\infty}$ on $C(X)$, we say that \textit{$u_{\infty}$ and $v_{\infty}$ are orthogonal} if 
\[\int_{\partial B_1(p)}u_{\infty}v_{\infty}d\upsilon=0.\]
\end{definition}
\begin{proposition}\label{021}
Let $u_{\infty}$ be a harmonic function on $C(X)$.
We assume that $\mathrm{ord}_{\infty}u_{\infty}=d < \infty$ and that 
$v$ and $u_{\infty}$ are orthogonal for every homogeneous harmonic function $v$ with growth $\alpha$ satisfying $\alpha <d$.
Then,  we have 
\[D_{u_{\infty}}(s) \ge \left(\frac{s}{r}\right)^{2d}D_{u_{\infty}}(r)\]
for every $0 < r < s < \infty$.
\end{proposition}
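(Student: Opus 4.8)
The plan is to decompose $u_{\infty}$ into homogeneous harmonic pieces via the spectral resolution of $\Delta_X$ and to annihilate the low‑order pieces using the orthogonality hypothesis. First I would record the two elementary facts needed: by Proposition \ref{008} the frequency $U_{u_{\infty}}$ is non‑decreasing, so $\mathrm{ord}_{\infty}u_{\infty}=d$ forces $U_{u_{\infty}}(t)\le d$ for all $t>0$; and since $dI_{u_{\infty}}/dt=2U_{u_{\infty}}(t)I_{u_{\infty}}(t)/t$ (Corollary \ref{007} together with $D_{u_{\infty}}=U_{u_{\infty}}I_{u_{\infty}}$), integrating as in Proposition \ref{006} gives $I_{u_{\infty}}(s)\le s^{2d}I_{u_{\infty}}(1)$ for $s\ge 1$, i.e. $u_{\infty}$ has $L^{2}$‑growth of order at most $d$ on the spheres $\partial B_{s}(p)$.

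Next, the expansion. Since $X$ is compact, $H^{n-1}$‑rectifiable (Corollary \ref{25}) and $(X,H^{n-1})$ satisfies a local weak $(1,2)$‑Poincar\'e inequality (Ding, \cite[Lemma $4.3$]{di2}), $\Delta_{X}$ has discrete spectrum $0=\lambda_{0}<\lambda_{1}\le\cdots$, $\lambda_{i}=\alpha_{i}(\alpha_{i}+n-2)$, with $L^{2}(X,H^{n-1})$‑orthonormal eigenfunctions $\phi_{i}$, which are Lipschitz by Theorem \ref{005}; the functions $v_{i}(r,x)=r^{\alpha_{i}}\phi_{i}(x)$ are precisely the homogeneous harmonic functions of growth $\alpha_{i}$ on $C(X)$ (\cite[Theorem $4.15$]{di2}, \cite[Corollary $4.25$]{di2}). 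For each $s>0$ set $a_{i}(s)=s^{1-n}\int_{\partial B_{s}(p)}u_{\infty}\phi_{i}\,dH^{n-1}$, so that $u_{\infty}(s,\cdot)=\sum_{i}a_{i}(s)\phi_{i}$ in $L^{2}(X,H^{n-1})$ and $I_{u_{\infty}}(s)=\sum_{i}a_{i}(s)^{2}$ by Parseval (the slice of $C(X)$ at radius $s$ carries the measure $s^{n-1}H^{n-1}$). Testing the weak formulation $\int_{C(X)}\langle du_{\infty},d(\chi\phi_{i})\rangle\,dH^{n}=0$ against cut‑off functions $\chi$ supported away from $p$ — here Ding's co‑area/integration‑by‑parts identity \cite[Theorem $4.15$]{di2} and the structure of the Laplacian on $C(X)$ enter — yields the Euler equation $a_{i}''+\tfrac{n-1}{r}a_{i}'-\tfrac{\lambda_{i}}{r^{2}}a_{i}=0$, hence $a_{i}(r)=c_{i}r^{\alpha_{i}}+c_{i}'r^{2-n-\alpha_{i}}$; since $u_{\infty}$ is harmonic across $p$ (so bounded near $p$) the singular branch is absent, and $a_{i}(r)=c_{i}r^{\alpha_{i}}$.

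Now use the hypotheses. For each $i$ with $\alpha_{i}<d$ the function $v_{i}$ is homogeneous harmonic of growth $\alpha_{i}<d$, so by assumption it is orthogonal to $u_{\infty}$: $0=\int_{\partial B_{1}(p)}u_{\infty}v_{i}\,dH^{n-1}=a_{i}(1)=c_{i}$, whence $a_{i}\equiv0$. Thus $I_{u_{\infty}}(s)=\sum_{\alpha_{i}\ge d}c_{i}^{2}s^{2\alpha_{i}}$, and differentiating ($D_{u_{\infty}}(s)=\tfrac{s}{2}I_{u_{\infty}}'(s)$ by Corollary \ref{007}) gives $D_{u_{\infty}}(s)=\sum_{\alpha_{i}\ge d}\alpha_{i}c_{i}^{2}s^{2\alpha_{i}}$. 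Since $\alpha_{i}\ge d$ and $s>r$ we have $(s/r)^{2\alpha_{i}}\ge(s/r)^{2d}$, so $\alpha_{i}c_{i}^{2}s^{2\alpha_{i}}=\alpha_{i}c_{i}^{2}r^{2\alpha_{i}}(s/r)^{2\alpha_{i}}\ge(s/r)^{2d}\alpha_{i}c_{i}^{2}r^{2\alpha_{i}}$, and summation gives $D_{u_{\infty}}(s)\ge(s/r)^{2d}D_{u_{\infty}}(r)$. (In fact the order‑at‑most‑$d$ bound from the first step also forces $c_{i}=0$ unless $\alpha_{i}=d$, so equality holds; only $\ge$ is needed here.)

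The step with real content is justifying the separation of variables on the possibly singular cone $C(X)$: the convergence of the eigenfunction expansion of $u_{\infty}$ on each sphere, the derivation of the coefficient ODE from weak harmonicity, and the legitimacy of differentiating $I_{u_{\infty}}(s)=\sum c_{i}^{2}s^{2\alpha_{i}}$ termwise to identify $D_{u_{\infty}}$. All of this rests on Ding's Sobolev‑space and co‑area machinery on $X$ (\cite{di1,di2}) together with this paper's rectifiability results. An alternative that avoids a full series expansion is to project $u_{\infty}(s,\cdot)$ onto the finite‑dimensional span of $\{\phi_{i}:\alpha_{i}\le d\}$, run the same ODE argument there, and pass to the limit; the analytic input is the same.
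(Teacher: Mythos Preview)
Your approach is correct but genuinely different from the paper's. You carry out a \emph{full spectral decomposition}: expand $u_{\infty}(s,\cdot)$ in the eigenbasis $\{\phi_{i}\}$, derive and solve the radial ODE for each coefficient $a_{i}$, kill the low modes via the orthogonality hypothesis, and read off $D_{u_{\infty}}(s)=\sum_{\alpha_{i}\ge d}\alpha_{i}c_{i}^{2}s^{2\alpha_{i}}$. The paper instead runs a \emph{differential-inequality argument}: it uses Corollary~\ref{013} to propagate orthogonality from $\partial B_{1}(p)$ to every sphere, invokes the min--max principle to bound the tangential Rayleigh quotient on $\partial B_{t}(p)$ below by $\lambda/t^{2}$ with $\lambda=d(d+n-2)$, combines this with the radial/tangential splitting (Theorem~\ref{9}) and $D_{u_{\infty}}'=2F_{u_{\infty}}/t$ (Proposition~\ref{8890}) to obtain $\tfrac{t}{2}D_{u_{\infty}}'(t)-(2-n)D_{u_{\infty}}(t)\ge\lambda I_{u_{\infty}}(t)$, and then uses $U_{u_{\infty}}\le d$ to conclude $D_{u_{\infty}}'/D_{u_{\infty}}\ge 2d/t$.

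What each buys: your route is more explicit and in fact proves the stronger statement that $u_{\infty}$ is homogeneous of degree exactly $d$ (your parenthetical remark), but it puts real weight on justifying the coefficient ODE and the termwise differentiation on a singular cone---you flag this honestly, and it is doable via Ding's Theorem~4.15 together with the local Lipschitz regularity of $u_{\infty}$ (the series is effectively finite once you observe $c_{i}=0$ for $\alpha_{i}>d$, which makes the termwise step trivial). The paper's route sidesteps the full expansion entirely: it needs only orthogonality on each sphere (a finite-dimensional statement) and the variational characterization of $\lambda_{i_{d}+1}$, so it stays closer to the frequency-function machinery already built in this section and requires less analytic bookkeeping on $C(X)$.
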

\begin{proof}
For every $i$, we take the $i$-th eigenvalue $\lambda_i$ of $\Delta_X$, a $i$-th eigenfunction $\phi_i$ of $\Delta_X$, the nonnegative number $\alpha_i$ satisfying $\lambda_i=\alpha_i(\alpha_i + n-2)$ and a 
homogeneous harmonic function $v_i(t, x)=r^{\alpha_i}\phi_i(x)$ with growth $\alpha_i$.
By Corollary \ref{013} anf the assumption, we have 
\[\int_{\partial B_t(p)}v_iu_{\infty}dH^{n-1}=0\]
for every $t > 0$ and $\alpha_i < d$.
We put $\lambda = d(d+n-2)$.
We remark that $\alpha_i < d$ holds if and only if $\lambda_i < \lambda$ holds.
We put $i_d = \max \{ i \in \mathbf{N}|\alpha_i < d\}$.
Thus, we have $\lambda_{i_d} < \lambda \le \lambda_{i_d+1}$.
We also remark 
\[\lambda_{i_d+1}=\inf \left\{ \frac{\int_X|d_Xu|^2dH^{n-1}}{\int_Xu^2dH^{n-1}}\Biggl|u \in H_{1,2}(X), \ u \neq 0, \ \int_X u\phi_jdH^{n-1}=0
\mathrm{\ for\ every}\ 1 \le j \le i_d \right\}. \]
Since the $k$-th eigenvalue $\lambda_k^t$ of $\Delta_{\partial B_t(p)}$ is equal to $t^{-2}\lambda_k$,
we have 
\[\frac{\int_{\partial B_t(p)}|d_{\partial B_t(p)}u_{\infty}|^2dH^{n-1}}{\int_{\partial B_t(p)}(u_{\infty})^2dH^{n-1}} \ge \frac{\lambda}{t^2}.\]
Here $d_{\partial B_t(p)}f$ is differential section: $d_{\partial B_t(p)}f: \partial B_t(p) \rightarrow T^* \partial B_t(p)$ of a Lipschitz function $f$ on $\partial B_t(p)$. 
On the other hand, by Theorem \ref{9} and Proposition \ref{0005}, for a.e. $t>0$, we have
$|du_{\infty}|^2(w)=(\langle dr_p, du_{\infty}\rangle (w))^2 + |d_{\partial B_r(p)}u_{\infty}|^2(w)$ for a.e. $w  \in \partial B_t(p)$.
Therefore, we have
\[\int_{\partial B_t(p)}(|du_{\infty}|^2-\langle dr_p, du_{\infty}\rangle ^2)dH^{n-1}\ge \frac{\lambda}{t^2}\int_{\partial B_t(p)}u_{\infty}^2dH^{n-1}\]
i.e.
\[t^{3-n}\int_{\partial B_t(p)}|du_{\infty}|^2dH^{n-1}-F_{u_{\infty}}(t)
\ge \lambda I_{u_{\infty}}(t)\]
for a.e. $t > 0$.
We shall use the notation: $f'=df/dt$ for locally Lipschitz functions $f$ on $\mathbf{R}$ below.
By Proposition \ref{8890},  $D_{u_{\infty}}$ is locally Lipschitz function on $(0, \infty)$.
By the definition of $D_{u_{\infty}}$, Proposition \ref{0005} and Rademacher's theorem for Lipschitz functions on $\mathbf{R}$, we have
\[D_{u_{\infty}}'(t)=(2-n)t^{1-n}\int_{B_t(p)}(\mathrm{Lip}u_{\infty})^2dH^n + t^{2-n}\int_{\partial B_t(p)}(\mathrm{Lip}u_{\infty})^2dH^{n-1}\]
for a.e. $t > 0$.
Therefore, we have 
\[tD_{u_{\infty}}'(t)-(2-n)D_{u_{\infty}}(t)-F_{u_{\infty}}(t) \ge \lambda I_{u_{\infty}}(t)\]
for a.e. $t > 0$.
On the other hand, by Proposition \ref{8890}, we have $D_{u_{\infty}}'(t)=2F_{u_{\infty}}(t)/t$ for every $t > 0$.
Therefore, we have 
\[\frac{t}{2}D_{u_{\infty}}'(t)-(2-n)D_{u_{\infty}}(t) \ge \lambda I_{u_{\infty}}(t)\]
for a.e. $t>0$.
Thus we have 
\[\frac{D_{u_{\infty}}'(t)}{D_{u_{\infty}}(t)}-\frac{2(2-n)}{t}\ge \frac{2\lambda I_{u_{\infty}}(t)}{tD_{u_{\infty}}(t)}\ge \frac{2\lambda}{dt}\]
for a.e. $t >0$.
Therefore, we have
\begin{align}
\frac{D_{u_{\infty}}'(t)}{D_{u_{\infty}}(t)}&\ge \frac{1}{t}\left(\frac{2\lambda}{d}+2(2-n)\right)\\
&= \frac{1}{t}\frac{2\lambda +4d-2nd}{d} \\
&=\frac{1}{t}\frac{2d(d+n-2)+4d-2nd}{d} \\
&=\frac{2d}{t}.
\end{align}
for a.e $t >0$.
By integrating the inequality above, we have the assertion.
\end{proof}
\begin{proposition}\label{eigen}
Let $g$ be a Lipschitz function on $X$ and $f$ a $C^2$-function on $\mathbf{R}_{>0}$. 
We assume that $f(1)=1$, $\lim_{r \rightarrow 0}f(r)=0$, $g \neq 0$ and that function $u(r, x) =f(r)g(x)$ on $C(X) \setminus \{p\}$ is locally Lipschitz and harmonic.
Then, there exists $\lambda \ge n-1$ such that $\Delta_Xg=\lambda g$ and that $f(r)=r^p$.
Here $p$ is the nonnegative number satisfying $\lambda=p(p+n-2)$. 
\end{proposition}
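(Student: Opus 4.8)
The plan is to separate variables in the weak (first-variation) form of harmonicity on the cone $C(X)$, read off an Euler-type ordinary differential equation for $f$ and the eigenvalue equation for $g$, and then invoke the lower bound $\lambda_1(X)\ge n-1$ recorded above.

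First I would record the weak formulation. The energy-minimizing property in Cheeger's definition of a harmonic function is equivalent, by a first-variation computation, to $\int_V\langle dk,du\rangle\,dH^n=0$ for every bounded open $V\subset C(X)\setminus\{p\}$ and every $k\in K(V)$; this is the identity produced for limit harmonic functions in Corollary \ref{har}. If $\eta$ is a Lipschitz function with $\mathrm{supp}\,\eta$ a compact subset of $(0,\infty)$ and $\zeta$ is a Lipschitz function on the compact space $X$, then $\psi(r,x)=\eta(r)\zeta(x)$ is such an admissible test function. To evaluate $\int_{C(X)}\langle d\psi,du\rangle\,dH^n$ I would combine: (i) Theorem \ref{14} and Theorem \ref{9}, which give $\langle dr_p,du\rangle(r,x)=\tfrac{du}{dr_p}(r,x)=f'(r)g(x)$ along the ray $t\mapsto(t,x)$, and (by polarization) the orthogonal splitting $\langle d\psi,du\rangle=\langle dr_p,d\psi\rangle\langle dr_p,du\rangle+\langle d_{\partial B_r(p)}\psi,d_{\partial B_r(p)}u\rangle$ at a.e.\ point; (ii) the identification of $\partial B_r(p)$, with its induced length metric, with $(X,r\,d_X)$, under which $u|_{\partial B_r(p)}=f(r)g$ and the cotangent norm is that of $(X,d_X)$ rescaled by $r^{-1}$, so $\langle d_{\partial B_r(p)}\psi,d_{\partial B_r(p)}u\rangle=f(r)\eta(r)r^{-2}\langle d_Xg,d_X\zeta\rangle$; (iii) the co-area formula for the distance from the pole (Proposition \ref{0005}) together with $H^{n-1}(\partial B_r(p)\cap E)=r^{n-1}H^{n-1}(E)$ for $E\subset X$. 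This yields, for all such $\eta,\zeta$,
\[\Big(\int_X g\zeta\,dH^{n-1}\Big)\int_0^\infty f'(r)\eta'(r)r^{n-1}\,dr+\Big(\int_X\langle d_Xg,d_X\zeta\rangle\,dH^{n-1}\Big)\int_0^\infty f(r)\eta(r)r^{n-3}\,dr=0.\]
This is precisely the analogue for $u$ of Ding's computation \cite[Theorem $4. 15$]{di2}, which can be used as a template.

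Next I would extract the ODE. Taking $\zeta=g$ and writing $A=\int_X g^2\,dH^{n-1}>0$ (as $g\ne0$ and $X$ has full-support finite $H^{n-1}$ measure by Remark \ref{26}) and $B=\int_X|d_Xg|^2\,dH^{n-1}\in[0,\infty)$, the displayed identity says that $f$ solves $A\,(r^{n-1}f')'=B\,r^{n-3}f$ weakly; since $f$ is $C^2$ this is the Euler equation $f''+\tfrac{n-1}{r}f'-\tfrac{\lambda}{r^2}f=0$ on $(0,\infty)$ with $\lambda:=B/A\ge0$, whose solutions are $c_1r^{p_+}+c_2r^{p_-}$ where $p_\pm$ are the roots of $p(p+n-2)=\lambda$. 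Because $n\ge3$, if $\lambda=0$ the conditions $\lim_{r\to0}f(r)=0$ and $f(1)=1$ are incompatible; hence $\lambda>0$, so $p_-<0<p_+$, and $\lim_{r\to0}f=0$ forces the coefficient of $r^{p_-}$ to vanish, giving $f(r)=r^{p}$ with $p:=p_+>0$ and $\lambda=p(p+n-2)$. Substituting $f(r)=r^p$ back into the displayed identity and integrating by parts in $r$ gives, for every Lipschitz $\zeta$ on $X$, $\int_X\langle d_Xg,d_X\zeta\rangle\,dH^{n-1}=\lambda\int_X g\zeta\,dH^{n-1}$; since $g$ is Lipschitz (hence in $H_{1,2}(X)$), this says exactly that $g\in\mathrm{Domain}(\Delta_X)$ with $\Delta_Xg=\lambda g$.

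Finally, $\lambda\ge n-1$. Since $g\ne0$ and $\lambda=p(p+n-2)>0$, $g$ cannot be constant (a constant eigenfunction has eigenvalue $0$), so $\lambda$ is a nonzero eigenvalue of $\Delta_X$ and hence $\lambda\ge\lambda_1(X)\ge n-1$ by the eigenvalue estimate recorded before this proposition; equivalently, $u=r^pg$ is bounded near $p$, so it extends to a harmonic function on all of $C(X)$ by the removability result \cite[Corollary $4. 25$]{di2}, and since $u(p)=0$ and $u$ is non-constant, Proposition \ref{015} gives $\mathrm{ord}_0u\ge1$, while a direct computation of $I_u$ and $D_u$ gives $U_u\equiv p$, so $p\ge1$ and $\lambda\ge n-1$. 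I expect the main obstacle to be the first paragraph: making the separation of variables fully rigorous — the orthogonal decomposition of $\langle d\psi,du\rangle$ into a radial term and a term supported on $\partial B_r(p)$, the identification of the latter with a rescaling of the differential calculus of $(X,d_X)$, and the use of Proposition \ref{0005} to interchange the integrations — all of which draw on the Cheeger--Colding differential calculus on the cone and on its spheres. Once this identity is in place, the ODE analysis and the eigenvalue bookkeeping are routine.
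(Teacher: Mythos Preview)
Your proposal is correct and follows essentially the same route as the paper: separate variables in the weak (first-variation) form of harmonicity on the cone, using product test functions $\eta(r)\zeta(x)$ together with the radial/tangential decomposition and the co-area formula, then read off the Euler ODE for $f$ and the weak eigenvalue equation for $g$. The only notable difference in execution is that the paper first expands $g=\sum_i a_i\phi_i$ in the eigenbasis of $\Delta_X$ (so that the tangential term $\langle d_{\partial B_r(p)}\phi,d_{\partial B_r(p)}g^r\rangle$ becomes $\phi\sum_i a_i\lambda_i^r\phi_i^r$ after integration over the sphere), derives the pointwise equation on each sphere, evaluates at $r=1$ to define $\lambda=f''(1)+(n-1)f'(1)$, and only then obtains the ODE for $f$; you instead specialize $\zeta=g$ first to extract $\lambda=B/A$ and the ODE, solve it, and substitute $f(r)=r^p$ back to get the eigenvalue equation for general $\zeta$. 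Your order of operations avoids the eigenfunction expansion and is a bit more streamlined, but both arguments rest on the same identity (the one you attribute to Theorems \ref{9}, \ref{14} and Proposition \ref{0005}, and the paper attributes to \cite[Theorem $4.15$]{di2}), and both finish with the same appeal to $\lambda_1(X)\ge n-1$.
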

\begin{proof}
For every $i$, we take the $i$-th eigenvalue $\lambda_i$ of $\Delta_X$ and a $i$-th eigenfunction $\phi_i$ of $\Delta_X$.
We put $g= \sum_{i=1}^{\infty}a_i\phi_i$ in $H_{1,2}(X)$.
For every function $h$ on $X$, we shall define a function $h^r$ on $\partial B_r(p)$ by
$h^r(r, x)=h(x)$.
It is easy to check that $g^r=\sum_{i=1}^{\infty}a_i\phi_i^r$ in $H_{1,2}(\partial B_r(p))$.
We remark that $\Delta_{\partial B_r(p)}\phi_i^r = \lambda_i^r \phi_i^r$ and $\lambda_i^r=r^{-2} \lambda_i$.
By \cite[Theorem $4. 15$]{di2} and Corollary \ref{har}, for every Lipschitz function $\phi \in \mathcal{K}(C(X) \setminus \{p\})$, we have
\begin{align*}
0&=\int_{C(X)}\langle du,d\phi \rangle dH^n \\
&=\int_0^{\infty}\int_{\partial B_r(p)}\left(\phi \left(-\frac{d^2f}{dr^2}(r)g(x)-\frac{n-1}{r}\frac{df}{dr}(r)g(x)\right)
+\langle d_{\partial B_r(p)}\phi, d_{\partial B_r(p)}g^r\rangle f(r)\right)dH^{n-1}dr\\
&=\int_0^{\infty}\int_{\partial B_r(p)}\phi \left(-\frac{d^2f}{dr^2}(r)g(x)-\frac{n-1}{r}\frac{df}{dr}(r)g(x)
+f(r)\sum_{i=1}^{\infty}a_i\lambda_i^r\phi_i^r \right)dH^{n-1}dr.
\end{align*}
Especially, for every Lipschitz function $a \in \mathcal{K}(\mathbf{R}_{>0})$ and Lipschitz function $b$ on $X$, we have  
\[\int_0^{\infty}a(r)\int_{\partial B_r(p)}b(x)\left(-\frac{d^2f}{dr^2}(r)g(x)-\frac{n-1}{r}\frac{df}{dr}(r)g(x)
+f(r)\sum_{i=1}^{\infty}a_i\lambda_i^r\phi_i^r\right)dH^{n-1}dr = 0.\]
Since
\[\sum_{i=1}^{\infty} (\lambda_i^r)^2 a_i^2\int_{\partial B_r(p)}(\phi_i^r)^2dH^{n-1}=\int_{\partial B_r(p)}|d_{\partial B_r(p)}g^r|^2dH^{n-1} < \infty,\]
the function 
\[-\frac{d^2f}{dr^2}(r)g(x)-\frac{n-1}{r}\frac{df}{dr}(r)g(x)
+f(r)\sum_{i=1}^{\infty}a_i\lambda_i^r\phi_i^r\]
on $\partial B_r(p)$ is in $L^2(\partial B_r(p))$.
Since the space which consist of Lipschitz functions on $\partial B_r(p)$ is dence in $L^2(\partial B_r(p))$, we have
\begin{align*}
0 &= \int_0^{\infty}a(r)\int_{\partial B_r(p)}\left|-\frac{d^2f}{dr^2}(r)g(x)-\frac{n-1}{r}\frac{df}{dr}(r)g(x)
+f(r)\sum_{i=1}^{\infty}a_i\lambda_i^r\phi_i^r\right|^2dH^{n-1}dr \\
&= \int_0^{\infty}a(r)\int_{\partial B_r(p)}\left|-\frac{d^2f}{dr^2}(r)g(x)-\frac{n-1}{r}\frac{df}{dr}(r)g(x)
+\frac{f(r)}{r^2}\sum_{i=1}^{\infty}a_i\lambda_i\phi_i(x)\right|^2dH^{n-1}dr.
\end{align*}
On the other hand, it is easy to check that the function ( of $r$)
\[\int_{\partial B_r(p)}\left|-\frac{d^2f}{dr^2}(r)g(x)-\frac{n-1}{r}\frac{df}{dr}(r)g(x)
+\frac{f(r)}{r^2}\sum_{i=1}^{\infty}a_i\lambda_i\phi_i(x)\right|^2dH^{n-1}\]
is continuous.
Therefore for every $r > 0$, there exists $A(r) \subset X$ such that $H^{n-1}(X \setminus A(r))=0$ and
\[-\frac{d^2f}{dr^2}(r)g(x)-\frac{n-1}{r}\frac{df}{dr}(r)g(x)
+\frac{f(r)}{r^2}\sum_{i=1}^{\infty}a_i\lambda_i\phi_i(x)=0\]
for every $ x \in A(r)$.
We put 
\[\lambda = \frac{d^2f}{dr^2}(1)+(n-1)\frac{df}{dr}(1).\]
Then, for every Lipschitz function  $\phi$ on $X$, we have
\[\int_X \lambda g\phi dH^{n-1}=\int_X \phi\sum_{i=1}^{\infty} a_i\lambda_i\phi_idH^{n-1}=\int_X\langle d_X\phi, d_Xg\rangle dH^{n-1}.\]
Thus, $g$ is a $\lambda$-eigenfunction.
Therefore, by \cite[Corollary $2. 5$]{di1}, we have $\lambda \ge n-1$.
For every $r > 0$, we have
\begin{align*}
0&=-\frac{d^2f}{dr^2}(r)\int_Xg^2dH^{n-1}-\frac{n-1}{r}\frac{df}{dr}(r)\int_Xg^2dH^{n-1}
+\frac{f(r)}{r^2}\int_Xg\sum_{i=1}^{\infty}a_i\lambda_i\phi_i(x)dH^{n-1}\\
&=-\frac{d^2f}{dr^2}(r)\int_Xg^2dH^{n-1}-\frac{n-1}{r}\frac{df}{dr}(r)\int_Xg^2dH^{n-1}
+\frac{f(r)}{r^2}\int_X|d_Xg|^2dH^{n-1}\\
&=-\frac{d^2f}{dr^2}(r)\int_Xg^2dH^{n-1}-\frac{n-1}{r}\frac{df}{dr}(r)\int_Xg^2dH^{n-1}
+\frac{f(r)}{r^2}\lambda \int_Xg^2dH^{n-1}.
\end{align*} 
Thus, we have 
\[-\frac{d^2f}{dr^2}(r)-\frac{n-1}{r}\frac{df}{dr}(r)
+\frac{f(r)}{r^2}\lambda =0.\]
Therefore, we have the assertion.
\end{proof}
Next corollary follows from Proposition \ref{020} and Proposition \ref{eigen} directly:
\begin{corollary}\label{022}
Let $u_{\infty}$ be a nonconstant harmonic function on $C(X)$ with $u_{\infty}(p)=0$.
We assume that $\mathrm{ord}_0u_{\infty}=\mathrm{ord}_{\infty}u_{\infty}=d < \infty$.
Then, the function $g(x)=u_{\infty}(1,x)$ on $X$ is a $d(d+n-2)$-eigenfunction of $\Delta_X$.
Moreover, we have 
$u_{\infty}(r, x)=r^dg(x)$.
\end{corollary}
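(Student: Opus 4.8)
The plan is to deduce, from the coincidence of the orders at $0$ and at $\infty$, that the frequency function of $u_{\infty}$ is constant equal to $d$; then to upgrade this to the homogeneity $u_{\infty}(r,x)=r^{d}g(x)$; and finally to feed this homogeneity into Proposition~\ref{eigen} in order to identify $g$ as an eigenfunction of $\Delta_X$ with eigenvalue $d(d+n-2)$.

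First I would observe that $t\mapsto U_{u_{\infty}}(t)$ is nondecreasing on $(0,\infty)$ by Proposition~\ref{008}, while by hypothesis its limits at $0$ and at $\infty$ both equal $d$; hence $U_{u_{\infty}}(t)=d$ for every $t>0$. In particular $U_{u_{\infty}}(r)=U_{u_{\infty}}(s)$ for all $0<r<s$, and since $u_{\infty}(p)=0$ the hypotheses of Corollary~\ref{010}, and of the homogeneity statement proved immediately before Proposition~\ref{012}, are met on every ball $B_{7R}(p)$. Applying that homogeneity statement with $C=U_{u_{\infty}}(r)=d$ and letting $R\to\infty$, I would obtain $u_{\infty}(\hat t,x)=(\hat t/t)^{d}\,u_{\infty}(t,x)$ for all $0<t\le\hat t$ and all $x\in X$; taking $t=1$ (and, when $\hat t<1$, exchanging the roles of $t$ and $\hat t$) yields $u_{\infty}(r,x)=r^{d}g(x)$ for every $r>0$, where $g(x):=u_{\infty}(1,x)$. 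This is the ``Moreover'' part. I also record that $g\not\equiv 0$ (otherwise $u_{\infty}$ would vanish on $C(X)\setminus\{p\}$ and, since $u_{\infty}(p)=0$, be constant, contradicting the hypothesis), and that $d=\mathrm{ord}_0u_{\infty}\ge 1$ by Proposition~\ref{015}, so in particular $d>0$.

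The last step is to apply Proposition~\ref{eigen} with $f(r)=r^{d}$ and this $g$. Its hypotheses hold: $f$ is $C^{2}$ on $\mathbf{R}_{>0}$ with $f(1)=1$ and $\lim_{r\to 0}f(r)=0$ (here one uses $d>0$); the product $f(r)g(x)=u_{\infty}(r,x)$ is locally Lipschitz and harmonic on $C(X)\setminus\{p\}$ by Theorem~\ref{005}; and $g$ is Lipschitz, again by Theorem~\ref{005}, since $u_{\infty}$ is Lipschitz near $\partial B_1(p)$ and the metric induced by $C(X)$ on $\partial B_1(p)$ is bi-Lipschitz to $d_X$ (because $\mathrm{diam}\,X\le\pi$ by Theorem~\ref{metric}, so this induced metric equals $2\sin(d_X/2)$). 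Proposition~\ref{eigen} then produces $\lambda\ge n-1$ with $\Delta_X g=\lambda g$ and $f(r)=r^{p}$, where $\lambda=p(p+n-2)$; comparing with $f(r)=r^{d}$ forces $p=d$, hence $\lambda=d(d+n-2)$, which is exactly the assertion.

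Since every step is a direct invocation of a result already in hand, I do not anticipate a genuine obstacle; the only point that deserves some care is the homogeneity step, where the input statement is phrased for a harmonic function on the bounded ball $B_{7R}(p)$, so one must use that $u_{\infty}$ is defined on all of $C(X)$ and pass to the limit $R\to\infty$ — and, as a minor bookkeeping matter, one must keep track of the difference between the metric $d_X$ on the cross-section and the metric it induces on $\partial B_1(p)$ when transferring Lipschitz regularity between $u_{\infty}$ and $g$. (One could alternatively route the homogeneity step through Proposition~\ref{006} and the rigidity in Corollary~\ref{010}, since $U_{u_{\infty}}\equiv d$ already forces $D_{u_{\infty}}(s)=(s/r)^{2d}D_{u_{\infty}}(r)$.)
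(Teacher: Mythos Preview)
Your proof is correct and follows essentially the same route the paper indicates: the paper states only that the corollary ``follows from Proposition~\ref{020} and Proposition~\ref{eigen} directly,'' and your argument fills in exactly these steps---use monotonicity (Proposition~\ref{008}) to get $U_{u_\infty}\equiv d$, apply the homogeneity proposition appearing just after Corollary~\ref{010} to obtain $u_\infty(r,x)=r^d g(x)$, and then invoke Proposition~\ref{eigen}. Your extra bookkeeping (checking $d>0$ via Proposition~\ref{015}, verifying $g$ is Lipschitz, and the bi-Lipschitz equivalence of $d_X$ with the induced metric on $\partial B_1(p)$) makes the application of Proposition~\ref{eigen} airtight.
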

\begin{corollary}
Let $u_{\infty}$ be a nonconstant harmonic function on $C(X)$. 
We assume that $u_{\infty}(p)=0$, $\mathrm{ord}_{\infty}u=d < \infty$ and that $v$ and $u_{\infty}$ are orthogonal for every homogeneous harmonic function $v$ with growth $\alpha$ satisfying $\alpha < d$.
Then, the function $g(x)=u_{\infty}(1,x)$ on $X$ is a $d(d+n-2)$-eigenfunction of $\Delta_X$.
Moreover we have 
$u_{\infty}(r, x)=r^dg(x)$.
\end{corollary}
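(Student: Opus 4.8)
The plan is to reduce the statement to Corollary \ref{022} by showing that the hypotheses force $\mathrm{ord}_0 u_{\infty}=d$ as well, so that $u_{\infty}$ has ``pure order'' $d$ both at the vertex and at infinity. Since $u_{\infty}$ is harmonic on all of $C(X)$, Proposition \ref{008} applies on every interval $(r,s)\subset(0,\infty)$, so $U_{u_{\infty}}$ is nondecreasing on $(0,\infty)$; hence $\mathrm{ord}_0 u_{\infty}=\lim_{r\to 0}U_{u_{\infty}}(r)\le \lim_{r\to\infty}U_{u_{\infty}}(r)=d$, and in particular $U_{u_{\infty}}(s)\le d$ for every $s>0$. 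Note also that $d=\mathrm{ord}_{\infty}u_{\infty}\ge \mathrm{ord}_0 u_{\infty}\ge 1$ by Proposition \ref{015}, so $d>0$; and $I_{u_{\infty}}(s)>0$ for every $s>0$, since otherwise $U_{u_{\infty}}(s)=0$ and Corollary \ref{178907} would force $u_{\infty}$ to be constant.

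The heart of the matter is the reverse inequality $\mathrm{ord}_0 u_{\infty}\ge d$. For this I would invoke Proposition \ref{021}: because $v$ and $u_{\infty}$ are orthogonal for every homogeneous harmonic function $v$ of growth $\alpha<d$, that proposition gives $D_{u_{\infty}}(t)\le (t/s)^{2d}D_{u_{\infty}}(s)$ for all $0<t\le s$. Combining this with the identity $I_{u_{\infty}}(s)-I_{u_{\infty}}(r)=\int_r^s 2D_{u_{\infty}}(t)t^{-1}\,dt$ from Corollary \ref{007} (equivalently Proposition \ref{006}) and integrating $t^{2d-1}$ yields
\[
I_{u_{\infty}}(s)-I_{u_{\infty}}(r)\le \frac{D_{u_{\infty}}(s)}{s^{2d}}\cdot\frac{s^{2d}-r^{2d}}{d}\le \frac{D_{u_{\infty}}(s)}{d}
\]
for all $0<r<s$. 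Since $u_{\infty}$ is continuous with $u_{\infty}(p)=0$ and $H^{n-1}(\partial B_r(p))\le C(n)r^{n-1}$ by Bishop--Gromov volume comparison on $C(X)$ (cf.\ Remark \ref{26}), one has $I_{u_{\infty}}(r)\to 0$ as $r\to 0$; letting $r\to 0$ above gives $I_{u_{\infty}}(s)\le D_{u_{\infty}}(s)/d$, i.e.\ $U_{u_{\infty}}(s)\ge d$ for every $s>0$. Together with the upper bound $U_{u_{\infty}}(s)\le d$ this forces $U_{u_{\infty}}\equiv d$ on $(0,\infty)$, and in particular $\mathrm{ord}_0 u_{\infty}=\mathrm{ord}_{\infty}u_{\infty}=d$.

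With this equality established, Corollary \ref{022} applies verbatim: the function $g(x)=u_{\infty}(1,x)$ on $X$ is a $d(d+n-2)$-eigenfunction of $\Delta_X$ and $u_{\infty}(r,x)=r^d g(x)$, which is the assertion. I expect the only point requiring care to be the bookkeeping that justifies the passage $r\to 0$ --- namely the elementary vanishing $I_{u_{\infty}}(r)\to 0$ and the positivity $I_{u_{\infty}}(s)>0$ --- both of which depend on $u_{\infty}$ being a genuinely nonconstant harmonic function on the whole cone, so that the order-monotonicity statements of \S5.1 (phrased for harmonic functions on $B_{7R}(p)$) may be used with $R$ arbitrarily large. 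Everything else is a formal consequence of Proposition \ref{021} and the frequency-function machinery already in place.
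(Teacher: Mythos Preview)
Your proof is correct and reduces to Corollary~\ref{022} exactly as the paper does; the difference is in how you show $U_{u_\infty}\equiv d$. The paper argues by sandwiching: Proposition~\ref{1234567890} (with $\alpha\to\infty$) gives $I(s)/I(r)\le (s/r)^{2d}$ and Proposition~\ref{021} gives $D(s)/D(r)\ge (s/r)^{2d}$, from which it concludes $U(s)=U(r)$ for all $0<r<s$; the line invoking Proposition~\ref{008} as ``$D(s)/D(r)\le I(s)/I(r)$'' appears to have the inequality reversed, so the logical closure of that sandwich is not transparent as written. Your route avoids this: you feed the Proposition~\ref{021} bound $D(t)\le (t/s)^{2d}D(s)$ into $I'=2D/t$, integrate, and pass to the limit $r\to 0$ (using $I(r)\to 0$, which follows from continuity of $u_\infty$ at $p$ and the cone scaling $H^{n-1}(\partial B_r(p))=r^{n-1}H^{n-1}(X)$) to obtain $I(s)\le D(s)/d$, i.e.\ $U(s)\ge d$. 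This is a cleaner and entirely self-contained way to force $U\equiv d$. The only cosmetic point is that your citation of Bishop--Gromov/Remark~\ref{26} for the bound on $H^{n-1}(\partial B_r(p))$ is more than you need; the exact identity $H^{n-1}(\partial B_r(p))=r^{n-1}H^{n-1}(X)$ from the cone structure already suffices.
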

\begin{proof}
We fix $0 < r < s < \infty$. 
By Proposition \ref{008} we have $D_{u_{\infty}}(s)/D_{u_{\infty}}(r)
\le I_{u_{\infty}}(s)/I_{u_{\infty}}(r)$.
By Proposition \ref{1234567890}, we have
\[\frac{I_{u_{\infty}}(s)}{I_{u_{\infty}}(r)}\le \left(\frac{s}{r}\right)^{2d}.\]
On the other hand, by the assumption and Proposition \ref{021}, we have
\[\frac{D_{u_{\infty}}(s)}{D_{u_{\infty}}(r)} \ge \left(\frac{s}{r}\right)^{2d}.\]
Therefore, we have $U_{u_{\infty}}(s)=U_{u_{\infty}}(r)$.
By Corollary \ref{022}, we have the assertion.
\end{proof}
For every $i$, we denote the $i$-th eigenvalue of $\Delta_X$ by $\lambda_i(X)$, $(0=\lambda_0(X) < \lambda_1(X) \le \lambda_2(X) \le \cdots)$.
For $\lambda \ge 0$, we put $E_{\lambda}(X) = \mathrm{span}\{\phi_i; \Delta_X\phi_i = \lambda_i(X) \phi_i, \ \lambda_i \le \lambda\}$.
Then, by an argument similar to the proof of \cite[Theorem $1. 67$]{co-mi3}, we have the following main theorem in this subsection.
\begin{theorem}[Harmonic functions with polynomial growth on asymptotic cones]\label{023}
For every $d \ge 0$, we have 
\[\mathrm{dim}H^d(C(X))=\mathrm{dim}E_{d(d+n-2)}(X).\]
Especially, we have $\mathrm{dim}H^d(C(X)) < \infty$.
\end{theorem}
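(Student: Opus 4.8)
The plan is to mimic the proof of Colding-Minicozzi's finite dimensionality theorem (\cite[Theorem $1.67$]{co-mi3}), transplanting each ingredient to the asymptotic cone $C(X)$ using the results already assembled in this subsection. The statement has two parts: the lower bound $\mathrm{dim}H^d(C(X)) \ge \mathrm{dim}E_{d(d+n-2)}(X)$ and the upper bound $\mathrm{dim}H^d(C(X)) \le \mathrm{dim}E_{d(d+n-2)}(X)$; the latter automatically gives $\mathrm{dim}H^d(C(X)) < \infty$ since $(1+\Delta_X)^{-1}$ is a compact operator on $L^2(X)$ (because $X$ is compact and $H^{n-1}$-rectifiable, so $\mathrm{dim}E_{\lambda}(X) < \infty$ for every $\lambda$).

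For the lower bound, I would note that each eigenfunction $\phi_i$ with $\lambda_i(X) \le d(d+n-2)$ produces, via $v_i(r,x) = r^{\alpha_i}\phi_i(x)$ with $\alpha_i$ the nonnegative root of $\lambda_i = \alpha_i(\alpha_i+n-2)$, a homogeneous harmonic function on $C(X)$ of order $\alpha_i \le d$, hence an element of $H^d(C(X))$; these are linearly independent because the $\phi_i$ are. So $\mathrm{dim}H^d(C(X)) \ge \mathrm{dim}E_{d(d+n-2)}(X)$. For the upper bound, the key is to show that any $u_\infty \in H^d(C(X))$ can be expanded in terms of homogeneous harmonic functions of order $\le d$. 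Concretely, suppose $u_1, \dots, u_N \in H^d(C(X))$ are linearly independent; by Proposition \ref{0987654321} any linear combination still lies in $\mathcal{H}^d(M_\infty)$, so $\mathrm{ord}_\infty$ of every element of $\mathrm{span}\{u_j\}$ is $\le d$ (Proposition \ref{009}). The strategy is Gram-Schmidt with respect to the bilinear forms $I_{\cdot}(s)$: iteratively replace the $u_j$ by functions that are orthogonal (in the sense of Definition \ref{014}) to all homogeneous harmonic functions of strictly smaller growth; by Proposition \ref{021} combined with the eigenfunction characterization (Corollary \ref{022} and the corollary following it), such an orthogonalized nonconstant $u_\infty$ with $u_\infty(p)=0$ must itself be homogeneous: $u_\infty(r,x) = r^d g(x)$ with $g$ a $d(d+n-2)$-eigenfunction of $\Delta_X$. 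Counting dimensions through this reduction — as in \cite[Theorem $1.67$]{co-mi3} — bounds $N$ by $1 + \mathrm{dim}E_{d(d+n-2)}(X)$ accounting for constants, and a careful bookkeeping (subtracting the constant functions appropriately, and matching growth exponents $\alpha_i < d$ with $\lambda_i(X) < d(d+n-2)$) yields exactly $\mathrm{dim}H^d(C(X)) = \mathrm{dim}E_{d(d+n-2)}(X)$.

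The main obstacle I anticipate is the orthogonalization/dimension-counting step: one needs the monotonicity of the frequency function (Proposition \ref{008}), the doubling-type growth estimate (Proposition \ref{1234567890}), and the reverse growth estimate for orthogonalized functions (Proposition \ref{021}) to fit together so that equality $U_{u_\infty}(s) = U_{u_\infty}(r)$ is forced for all $0 < r < s$, which then triggers homogeneity via Corollary \ref{022}. The delicate point is ensuring the orthogonalization can be carried out at all — i.e., that finitely many steps suffice and that at each stage the space of available homogeneous harmonic functions of smaller growth is finite-dimensional, which relies precisely on the compactness of $(1+\Delta_X)^{-1}$. The rest (handling the pole $p$ via Corollary \ref{25} and Remark \ref{26}, translating all manifold estimates via Theorem \ref{005} and the convergence results of Section $4$) is routine given the machinery already developed, so I would only sketch it and refer to the parallel argument in \cite[Theorem $1.67$]{co-mi3}.
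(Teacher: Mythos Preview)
Your proposal is correct and matches the paper's approach exactly: the paper gives no detailed proof but simply refers to \cite[Theorem~$1.67$]{co-mi3}, stating that the result follows by an argument similar to that one, which is precisely what you outline. The ingredients you identify --- the homogeneous harmonic functions $v_i(r,x)=r^{\alpha_i}\phi_i(x)$ for the lower bound, and the combination of Propositions~\ref{008}, \ref{009}, \ref{1234567890}, \ref{021} together with Corollary~\ref{022} and its successor to force homogeneity after orthogonalization for the upper bound --- are exactly the cone-side analogues of the manifold ingredients in \cite{co-mi3} that the paper has spent the subsection establishing.
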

\subsection{Gromov-Hausdorff topology on moduli space of asymptotic cones.}
In this subsection, we will study the moduli space of asymptotic cones of a fixed nonnegatively Ricci curved manifold $M$ with Euclidean volume growth.
In general, asymptotic cones of $M$ are \textit{not} unique.
See \cite{ch-co1} and \cite{Per1} for such examples.
Therefore, we shall consider the moduli space of them: $\mathcal{M}_{M} = \{X$: compact geodesic space ; $(C(X), p)$ is an asymptotic cone of $M$ $\}$.
We define a topology on $\mathcal{M}_M$ by Gromov-Hausdorff distance $d_{GH}$.
On the other hand, if we put $\hat{\mathcal{M}}_M=\{(M_{\infty}, m_{\infty}): $ an asymptotic cone of $M \}$ and define a topology on $\hat{\mathcal{M}}_{M}$ by pointed Gromov-Hausdorff topology, then the canonical map $\pi(X)=(C(X), p)$ from $\mathcal{M}_M$ to
$\hat{\mathcal{M}}_M$ give a homeomorphism.
We remark that if a sequence of asymptotic cones $(M_{\infty}^i, m_{\infty}^i)$ of $M$ converges to some proper geodesic space $(M_{\infty}^{\infty}, m_{\infty}^{\infty})$, then $(M_{\infty}^{\infty}, m_{\infty}^{\infty})$ is also an asymptotic cone of $M$.
Therefore, by Proposition \ref{090},  $\hat{\mathcal{M}}_M$ is compact, especially, $\mathcal{M}_M$ is compact.
The main result in this subsection is the following theorem.
We can regard it as ``$\mathcal{M}_M$-version'' of \cite[$(0.4)$ Theorem]{fu} by Fukaya or \cite[Theorem $7.9$]{ch-co3} by Cheeger-Colding. 
\begin{theorem}\label{10001}
If $X_i$ converges to $X_{\infty}$ in $\mathcal{M}_M$, then 
$(X_i, H^{n-1})$ converges to $(X_{\infty}, H^{n-1})$.
Moreover, we have 
\[\lim_{i \rightarrow \infty} \lambda_k(X_i)= \lambda_k(X_{\infty})\]
foe every $k \ge 1$.
Here, $\lambda_k(X)$ is the $k$-th eigenvalue of the Laplacian $\Delta_X$ on $X \in \mathcal{M}_M$.
\end{theorem}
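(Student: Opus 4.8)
\emph{Reduction, and the measured convergence of the cones.} The plan is to reduce the whole statement to the setting of Section $5$ by a diagonal argument over $M$, and then combine a measure–convergence step with a spectral–stability argument built on Theorem \ref{005}, Corollary \ref{har} and Theorem \ref{10105}. Since the cone distance $\overline{(t_1,x_1),(t_2,x_2)}=\sqrt{t_1^2+t_2^2-2t_1t_2\cos\min\{\overline{x_1,x_2},\pi\}}$ depends continuously on $\overline{x_1,x_2}$, the convergence $X_i\to X_\infty$ in $\mathcal{M}_M$ gives $(C(X_i),p_i)\to(C(X_\infty),p_\infty)$ in pointed Gromov–Hausdorff topology. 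For each $i$, since $(C(X_i),p_i)$ is an asymptotic cone of $M$, choose a scale $R^{(i)}\to\infty$ so large that $(M,m,(R^{(i)})^{-1}d_M,\mathrm{vol}^{(R^{(i)})^{-2}g_M})$ agrees with $(C(X_i),p_i,H^n)$ up to $1/i$ on the ball of radius $i$; then $(M,m,(R^{(i)})^{-1}d_M)\to(C(X_\infty),p_\infty)$ in pointed Gromov–Hausdorff topology, and since $H^n(B_1(p_\infty))=V_M>0$ the limit is non-collapsed, so \cite[Theorem $5.9$]{ch-co1} gives $(M,m,(R^{(i)})^{-1}d_M,\mathrm{vol}^{(R^{(i)})^{-2}g_M})\to(C(X_\infty),p_\infty,H^n)$ in the measured Gromov–Hausdorff topology. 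Combining, $(C(X_i),p_i,H^n)\to(C(X_\infty),p_\infty,H^n)$; from now on I treat both $(M,(R^{(i)})^{-2}g_M)\to C(X_\infty)$ and, for each fixed $i$, the defining sequence $(M,(R^{(i)}_j)^{-2}g_M)\to C(X_i)$ as manifold sequences to which Sections $4$ and $5$ apply.

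\emph{Convergence of the cross-section measures.} For $x\in X$ and $0<a<b$, write $C_a^b(E)=\{(t,y)\in C(X):a\le t\le b,\ y\in E\}$; the co-area formula for the distance from the pole on $C(X)$ (established in the proof of Corollary \ref{25}) gives $H^n(C_a^b(B_r^X(x)))=\frac{b^n-a^n}{n}H^{n-1}(B_r^X(x))$. By the explicit cone distance formula, for $a,b$ near $1$ the region $C_a^b(B_r^{X_i}(x_i))$ is squeezed between $\{a\le r_{p_i}\le b\}\cap B_{\rho_-}(q_i)$ and $\{a\le r_{p_i}\le b\}\cap B_{\rho_+}(q_i)$, where $q_i=(1,x_i)$ and $\rho_\pm(a,b,r)$ both tend to $2\sin(r/2)$ as $a,b\to1$; the $H^n$-measures of these two regions converge under the measured Gromov–Hausdorff convergence of the cones (using $r_{p_i}\to r_{p_\infty}$, Proposition \ref{sup}, Proposition \ref{10103}, and choosing $a,b$ so that $\partial B_a(p_\infty)\cup\partial B_b(p_\infty)$ is $H^n$-null), and letting $a,b\to1$ pinches the two bounds. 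Hence $H^{n-1}_{X_i}(B_r^{X_i}(x_i))\to H^{n-1}_{X_\infty}(B_r^{X_\infty}(x_\infty))$ for a.e. $r$ whenever $x_i\to x_\infty$, which suffices to conclude $(X_i,H^{n-1})\to(X_\infty,H^{n-1})$ in the measured Gromov–Hausdorff topology. (Incidentally $H^{n-1}(X_i)=nV_M$ for all $i$, though this is not needed.) One also records that the Poincaré and Ahlfors-regularity constants of $(X_i,H^{n-1})$ are uniform in $i$, depending only on $n$ and $V_M$ (via \cite{ch-co1}, \cite{ch-co3}, \cite{di2} and the co-area formula).

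\emph{Lower semicontinuity of the eigenvalues.} Pass to a subsequence realizing $\liminf_i\lambda_k(X_i)=:\Lambda$; if $\Lambda=\infty$ there is nothing to prove, so assume $\Lambda<\infty$, whence $\lambda_j(X_i)$ is bounded for $0\le j\le k$. Take an $L^2(X_i,H^{n-1})$-orthonormal system $\phi_i^{(0)},\dots,\phi_i^{(k)}$ of eigenfunctions of $\Delta_{X_i}$ and form on $C(X_i)$ the homogeneous harmonic functions $v_i^{(j)}(t,x)=t^{\alpha_{i,j}}\phi_i^{(j)}(x)$, with $\alpha_{i,j}(\alpha_{i,j}+n-2)=\lambda_j(X_i)$, so $\alpha_{i,j}$ is bounded. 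Standard elliptic estimates on uniformly Poincaré–Ahlfors-regular spaces bound $|\phi_i^{(j)}|_{L^\infty(X_i)}$ uniformly in $i$, and then Theorem \ref{005} together with Cheng–Yau's estimate (applied along the diagonal over $M$) bounds $\mathbf{Lip}(v_i^{(j)}|_{B_R(p_i)})$ uniformly in $i$ on fixed balls. Passing to a further subsequence, $\alpha_{i,j}\to\alpha_\infty^{(j)}$, $v_i^{(j)}\to v_\infty^{(j)}$ locally uniformly, and $\phi_i^{(j)}\to\phi_\infty^{(j)}$ uniformly on $X_\infty$ (Proposition \ref{Lips}). By Corollary \ref{har} (via the diagonal reduction to manifold sequences over $M$) $v_\infty^{(j)}$ is harmonic on $C(X_\infty)$; passing to the limit in $v_i^{(j)}(t,x)=t^{\alpha_{i,j}}\phi_i^{(j)}(x)$ gives $v_\infty^{(j)}(t,x)=t^{\alpha_\infty^{(j)}}\phi_\infty^{(j)}(x)$, so by Proposition \ref{eigen} $\phi_\infty^{(j)}$ is an eigenfunction of $\Delta_{X_\infty}$ with eigenvalue $\alpha_\infty^{(j)}(\alpha_\infty^{(j)}+n-2)=\lim_i\lambda_j(X_i)\le\Lambda$. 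By the measure convergence of the previous step, $\int_{X_\infty}\phi_\infty^{(j)}\phi_\infty^{(l)}\,dH^{n-1}=\delta_{jl}$, so $X_\infty$ carries $k+1$ orthonormal eigenfunctions with eigenvalue $\le\Lambda$, whence $\lambda_k(X_\infty)\le\Lambda$ by the min–max principle.

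\emph{Upper semicontinuity, and conclusion.} Let $\phi_\infty^{(0)},\dots,\phi_\infty^{(k)}$ be the first $k+1$ eigenfunctions of $X_\infty$, chosen $L^2$-orthonormal and (using multiplicities if needed) with $\int_{X_\infty}\langle d\phi_\infty^{(j)},d\phi_\infty^{(l)}\rangle dH^{n-1}=\lambda_j(X_\infty)\delta_{jl}$. By the approximation theorem (Theorem \ref{app}, in the version for a sequence of Ricci limit spaces, or else by lifting to the cones and invoking Theorem \ref{005}) together with Theorem \ref{10105} and the measure convergence, approximate each $\phi_\infty^{(j)}$ by a Lipschitz function $\psi_i^{(j)}$ on $X_i$ with $\psi_i^{(j)}\to\phi_\infty^{(j)}$, $\int_{X_i}\psi_i^{(j)}\psi_i^{(l)}dH^{n-1}\to\delta_{jl}$, $\limsup_i\int_{X_i}|d_{X_i}\psi_i^{(j)}|^2dH^{n-1}\le\lambda_j(X_\infty)$ and $\int_{X_i}\langle d_{X_i}\psi_i^{(j)},d_{X_i}\psi_i^{(l)}\rangle dH^{n-1}\to\lambda_j(X_\infty)\delta_{jl}$. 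For large $i$ the span $V_i=\mathrm{span}\{\psi_i^{(0)},\dots,\psi_i^{(k)}\}\subset H_{1,2}(X_i)$ has dimension $k+1$, its $L^2$-Gram matrix and Dirichlet-energy matrix converge to $I$ and $\mathrm{diag}(\lambda_0(X_\infty),\dots,\lambda_k(X_\infty))$, so $\sup_{0\ne f\in V_i}\int_{X_i}|d_{X_i}f|^2dH^{n-1}/\int_{X_i}f^2dH^{n-1}\to\lambda_k(X_\infty)$; by the min–max characterization of $\lambda_k(X_i)$, $\limsup_i\lambda_k(X_i)\le\lambda_k(X_\infty)$. Together with the previous paragraph this gives $\lambda_k(X_i)\to\lambda_k(X_\infty)$. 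The main obstacle is the measure step — identifying the measured limit of $(C(X_i),p_i,H^n)$ as $H^n$ rather than some other Radon measure (handled by the diagonal argument and \cite[Theorem $5.9$]{ch-co1}), descending it to the $X_i$, and ensuring that the convergence machinery of Section $4$ (especially Theorem \ref{app} and Theorem \ref{10105}) is legitimately available for the non-manifold cross-sections $X_i$; this is precisely why it is safest to perform every harmonic-function and energy-convergence argument on the genuine Ricci limit spaces $C(X_i)$ and on $M$ via the diagonal.
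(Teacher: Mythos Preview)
Your proof is correct and broadly parallel to the paper's, but the two diverge in a few instructive places.

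\textbf{Measure convergence.} Your slicing argument is essentially the paper's: the paper sets $A_\epsilon^r(x_i)=\{(t,x):x\in B_r(x_i),\ 1-\epsilon\le t\le 1+\epsilon\}$, applies Proposition~\ref{10103} to get $H^n(A_\epsilon^r(x_i))\to H^n(A_\epsilon^r(x_\infty))$, and then reads off $H^{n-1}(B_r^{X_i}(x_i))$ via the co-area formula (Proposition~\ref{0005}). You are more explicit about first establishing $(C(X_i),p_i,H^n)\to(C(X_\infty),p_\infty,H^n)$ by a diagonal over $M$ and \cite[Theorem~5.9]{ch-co1}; the paper uses this without comment.

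\textbf{Lower semicontinuity.} Here you take a genuinely shorter route. The paper proceeds by induction on $k$: it bounds $\mathbf{Lip}(u_k^{i}|_{B_2(p_i)})$ via Li--Schoen (Theorem~\ref{gradient2}) and the a~priori eigenvalue bound of Claim~\ref{eig}, extracts a limit $u_k^\infty=r^{\alpha_k^\infty}f_k^\infty$, and then shows $\int_{X_i}|d_{X_i}f_k^i|^2\to\int_{X_\infty}|d_{X_\infty}f_k^\infty|^2$ by expressing the tangential Dirichlet energy on slices as $\int_{1-\epsilon}^1 tD_{u}(t)\,dt-\int_{1-\epsilon}^1 F_u(t)\,dt$ and invoking Propositions~\ref{002} and~\ref{020}. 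You instead identify the limit eigenvalue directly from the growth exponent via Proposition~\ref{eigen}: once $v_\infty^{(j)}=t^{\alpha_\infty^{(j)}}\phi_\infty^{(j)}$ is harmonic and $\phi_\infty^{(j)}\ne 0$ (from the $L^2$-orthonormality), Proposition~\ref{eigen} gives $\Delta_{X_\infty}\phi_\infty^{(j)}=\alpha_\infty^{(j)}(\alpha_\infty^{(j)}+n-2)\phi_\infty^{(j)}$, and min--max yields $\lambda_k(X_\infty)\le\liminf\lambda_k(X_i)$. This bypasses the $D$--$F$ computation entirely and handles all $j\le k$ at once rather than inductively.

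\textbf{Upper semicontinuity.} The paper simply cites \cite[Theorem~7.1]{ch-co3}. Your min--max construction via Theorem~\ref{app} is in the spirit of how such results are proved, but as you note, Theorem~\ref{app} is stated only for manifold sequences, so your appeal to ``the version for a sequence of Ricci limit spaces'' is a forward reference to an extension the paper asserts in a remark but does not write out. Citing \cite[Theorem~7.1]{ch-co3} is the cleaner way to close this step.
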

\begin{proof}
Let $x_i$ be a point in $X_i$ and $x_{\infty}$ a point in $X_{\infty}$ satisfying that $x_i \rightarrow x_{\infty}$.
We take $r > 0$ and $\epsilon >0$. We put
$A_{\epsilon}^r(x_i)=\{(t, x) \in C(X_i); x \in B_r(x_i), 1-\epsilon \le t \le 1+\epsilon.\}$.
By Proposition \ref{10103}, we have 
\[\lim_{i \rightarrow \infty}H^n(A_{\epsilon}^r(x_i))=H^n(A_{\epsilon}^r(x_{\infty})).\]
By Proposition \ref{0005}, we have
\[H^n(A_{\epsilon}^r(x_i))=\int_{1-\epsilon}^{1+\epsilon}H^{n-1}(\partial B_t(p_i) \cap A_{\epsilon}^r(x_i))dt=C(n)\epsilon 
H^{n-1}(B_r^{X_i}(x_i))\]
for every $1 \le i \le \infty$.
Here, $p_i$ is the pole of $C(X_i)$.
Thus, we have $(X_i, H^{n-1}) \rightarrow  (X_{\infty}, H^{n-1})$.
We shall give a proof of second assertion by induction for $k$.
We fix a subsequence $\{n(i)\}_i$ of $\mathbf{N}$.
We take a  Lipschitz function on $X_{n(i)}$ satisfying $f_1^{n(i)} \in E_{\lambda _1(X_{n(i)})}(X_{n(i)})$ and
\[\frac{1}{H^{n-1}(X_{n(i)})}\int_{X_{n(i)}}(f_1^{n(i)})^2dH^{n-1}=1.\]
By the definition, we have 
\[\frac{1}{H^{n-1}(X_{n(i)})}\int_{X_{n(i)}}|df_1^{n(i)}|^2dH^{n-1}=\lambda_1(X_{n(i)}).\]
We define a harmonic function $u_1^{n(i)}$ on $C(X_{n(i)})$ by 
$u_1^{n(i)}(r, x) = r^{\alpha_1^{n(i)}}f_1^{n(i)}(x)$. 
Here $\alpha_1^{n(i)}$ is the positive number satisfying $\lambda_1(X_{n(i)}) = \alpha_1^{n(i)}(\alpha_1^{n(i)}+n-2)$.
Since $\lambda_1(X_{n(i)}) \ge n-1$, we have $\alpha_1^{n(i)} \ge 1$.
Then, by Proposition \ref{9}, we have
\begin{align*}
&\int_{B_{7}(p_{n(i)})}(\mathrm{Lip}u_1^{n(i)})^2dH^n \\
&=\int_0^{7}\int_{\partial B_r(p_{n(i)})}(\alpha_1^{n(i)})^2(r^{\alpha_1^{n(i)}-1})^2
(f_1^{n(i)})^2 dH^{n-1}dr +\int_0^{7}\int_{\partial B_r(p_{n(i)})}r^{2\alpha_1^{n(i)}-2}|d_Xf_1^{n(i)}|^2dH^{n-1}dr\\
&=\int_0^{7}(\alpha_1^{n(i)})^2r^{2\alpha_1^{n(i)}-2}r^{n-1}H^{n-1}(X_{n(i)})dr +\int_0^{7}r^{2\alpha_1^{n(i)}+n-1-2}\lambda_1(X_{n(i)})H^{n-1}(X_{n(i)})dr \\
&=H^{n-1}(X_{n(i)})\left(\frac{7^{2\alpha_1^{n(i)}+n-2}(\alpha_1^{n(i)})^2}{2\alpha_1^{n(i)}+n-2} +\frac{7^{\alpha_1^{n(i)}+n}\lambda_1(X_{n(i)})}{2\alpha_1^{n(i)}+n-2}\right).
\end{align*}
By Li-Schoen's gradient estimate (Theorem  \ref{gradient2}) and Theorem \ref{005}, we have 
\[\mathbf{Lip}(u_1^{n(i)}|_{B_2(p_{n(i)})}) \le \frac{C(n)}{H^n(B_{7}(p_{n(i)}))}\int_{B_{7}(p_{n(i)})}(\mathrm{Lip}u_1^{n(i)})^2dH^n.\]
On the other hand, by Claim \ref{eig}, we have 
\[\lambda_j(X_{n(i)}) \le C(n) \left(\frac{j}{H^{n-1}(X)}\right)^{\frac{2}{n-1}}\]
for every $j$.
Thus, we have
\[\mathbf{Lip}(u_1^{n(i)}|_{B_{2}(p_{n(i)})})\le C(n, V_M).\]
By Proposition \ref{Lips}, there exist a subsequence of $\{m(i)\}_i$ of $\{n(i)\}_i$, a Lipschitz harmonic function $u_1^{\infty}$ on $B_{2}(p_{\infty})$ 
, a Lipschitz function $f_1^{\infty}$ on $X_{\infty}$ and a nonnegative real number $\alpha_1^{\infty}$ such that
$u_1^{m(i)} \rightarrow u_1^{\infty}$ on $B_2(p_{\infty})$, $f_1^{m(i)} \rightarrow f_1^{\infty}$ on $X_{\infty}$ and that $\alpha_1^{m(i)} \rightarrow \alpha_1^{\infty}$.
Thus, we have $u_1^{\infty}(r, x)=r^{\alpha_1^{\infty}}f_1^{\infty}(x)$ on $B_2(p_{\infty})$,
\[\lim_{i \rightarrow \infty} \int_{X_{m(i)}}(f_1^{m(i)})^2dH^{n-1}=\int_{X_{\infty}}(f_1^{\infty})^2dH^{n-1}.\]
On the other hand, by Proposition \ref{002} and Theorem \ref{005}, we have 
\begin{align*}
\lim_{i \rightarrow \infty}\int_{1-\epsilon}^1t^{3-n}\int_{\partial B_t(p_{m(i)})}|d_{\partial B_t(p_{m(i)})}u_1^{m(i)}|^2dH^{n-1}dt &=\lim_{i \rightarrow \infty}\left(\int_{1-\epsilon}^1tD_{u_1^{m(i)}}(t)dt-\int_{1-\epsilon}^1F_{u_1^{m(i)}}(t)dt\right)\\
&=\int_{1-\epsilon}^1tD_{u_1^{\infty}}(t)dt-\int_{1-\epsilon}^1F_{u_1^{\infty}}(t)dt \\
&=\int_{1-\epsilon}^1t^{3-n}\int_{\partial B_t(p_{\infty})}|d_{\partial B_t(p_{\infty})}u_1^{\infty}|^2dH^{n-1}dt
\end{align*}
for every $0 < \epsilon <1$.
Since $|d_{\partial B_t(p_{m(i)})}u_1^{m(i)}|^2=t^{2\alpha_1^{m(i)}-2}|d_{X(m(i))}u_1^{m(i)}|^2$, we have
\begin{align*}
\int_{1-\epsilon}^1t^{3-n}\int_{\partial B_t(p_{m(i)})}|d_{\partial B_t(p_{m(i)})}u_1^{m(i)}|^2dH^{n-1}dt 
&=\int_{1-\epsilon}^1t^{3-n}t^{2\alpha_1^{m(i)}-2}t^{n-1}\int_{X_{m(i)}}|d_{X_{m(i)}}u_1^{m(i)}|^2dH^{n-1}dt \\
&=\int_{1-\epsilon}^1t^{2\alpha_1^{m(i)}}\lambda_1(X_{m(i)})H^{n-1}(X_{m(i)})dt \\
&=\frac{1-(1-\epsilon)^{2\alpha_1^{m(i)}+1}}{2\alpha_1^{m(i)}+1}\lambda_1(X_{m(i)})H^{n-1}(X_{m(i)}).
\end{align*}
Similarly, we have 
\[\int_{1-\epsilon}^1t^{3-n}\int_{\partial B_t(p_{\infty})}|d_{\partial B_t(p_{\infty})}u_1^{\infty}|^2dH^{n-1}dt=
\frac{1-(1-\epsilon)^{2\alpha_1^{\infty}+1}}{2\alpha_1^{\infty}+1}\int_{X_{\infty}}|df_1^{\infty}|^2dH^{n-1}.\]
Therefore, we have 
\[\lim_{i \rightarrow \infty}\frac{1}{H^{n-1}(X_{m(i)})}\int_{X_{m(i)}}|df_1^{m(i)}|^2dH^{n-1}=\lim_{i \rightarrow \infty}\lambda_1(X_{m(i)})=\frac{1}{H^{n-1}(X_{\infty})}\int_{X_{\infty}}|df_1^{\infty}|^2dH^{n-1}.\]
Therefore, since $\{n(i)\}_i$ is arbitrary, we have 
\[\liminf_{i \rightarrow \infty}\lambda_1(X_i)\ge \lambda_1(X_{\infty}).\]
On the other hand, by \cite[Theorem $7. 1$]{ch-co3}, we have 
\[\limsup_{i \rightarrow \infty}\lambda_1(X_i)\le \lambda_1(X_{\infty}).\]
Therefore we have
\[\lim_{i \rightarrow \infty}\lambda_1(X_i) = \lambda_1(X_{\infty}),\]
$f_1^{\infty}$ is $\lambda_1(X_{\infty})$-eigenfunction.

\

Next, we fix an integer $k \ge 2$.
We assume that 
\[\lim_{i \rightarrow \infty}\lambda_j(X_i)=\lambda_j(X_{\infty})\]
for every $1 \le j \le k-1$
and that for every subsequence $\{n(i)\}_i$ of $\mathbf{N}$, there exist a subsequence $\{m(i)\}_i$ of $\{n(i)\}_i$, $\lambda_j(X_{m(i)})$-eigenfunction $f_j^{m(i)}$ on $X_{m(i)}$ and
$\lambda_j(X_{\infty})$-eigenfunction $f_j^{\infty}$ on $X_{\infty}$ such that 
$f_j^{m(i)} \rightarrow f_j^{\infty}$ on $X_{\infty}$, $\mathbf{Lip}(f_j^{m(i)}|_{B_2(p_{m(i)})}) \le C(n, j,  V_M)$ for every $1 \le j \le k-1$ and that 
\[\frac{1}{H^{n-1}(X_{m(i)})}\int_{X_{m(i)}}f_l^{m(i)}f_j^{m(i)}dH^{n-1}=\delta_{jl}\]
for every $1 \le j \le l \le k-1$.
Especially, $\{f_j^{\infty}\}_{1 \le j \le k-1}$ are linearly independent in $L^2(X_{\infty})$. 
We fix a subsequence $\{n(i)\}_i$ of $\mathbf{N}$ and take a subsequece $\{m(i)\}_i$ of $\{n(i)\}_i$ as above.
We also take a $\lambda_k(X_{m(i)})$-eigenfunction $f_k^{m(i)}$ such that 
\[\frac{1}{H^{n-1}(X_{m(i)})}\int_{X_{m(i)}}(f_k^{m(i)})^2dH^{n-1}=1.\]
We define
a harmonic function $u_k^{m(i)}$ on $C(X_{m(i)})$ by 
$u_k^{m(i)}(r, x) = r^{\alpha_k^{m(i)}}f_k^{m(i)}(x)$. 
Here $\alpha_k^{m(i)}$ is the positive number  satisfying $\alpha_k^{m(i)}(\alpha_k^{m(i)}+n-2)=\lambda_k(X_{m(i)})$.

By Proposition \ref{Lips} and an argument similar to one of the case $k=1$, we can assume that 
there exist a locally Lipschitz harmonic function $u_k^{\infty}$ on $C(X_{\infty})$, a Lipschitz function $f_k^{\infty}$ on $X_{\infty}$ and a nonnegative number $\alpha_k^{\infty}$
such that
$\mathbf{Lip}(u_k^{m(i)}|_{B_2(p_{m(i)})})\le C(n, k, V_M)$,
$\mathbf{Lip}f_k^{m(i)} \le C(n, k, V_M)$, $u_k^{m(i)} \rightarrow u_k^{\infty}$ on $C(X_{\infty})$, 
$f_k^{m(i)} \rightarrow f_k^{\infty}$ on $X_{\infty}$ and $\alpha_k^{m(i)} \rightarrow \alpha_k^{\infty}$.
Thus, we have $u_k^{\infty}(r, x) = r^{\alpha_k^{\infty}}f_k^{\infty}(x)$.
By an argument similar to one of the case $k=1$, we have 
\[\lim_{i \rightarrow \infty} \int_{X_{m(i)}}|df_k^{m(i)}|^2dH^{n-1}=\int_{X_{\infty}}|df_k^{\infty}|^2dH^{n-1}.\]
On the other hand, by Proposition \ref{10103}, 
\[\lim_{i \rightarrow \infty}\int_{X_{m(i)}}f_j^{m(i)}f_l^{m(i)}dH^{n-1}=\int_{X_{\infty}}f_j^{\infty}f_l^{\infty}dH^{n-1}.\]
for every $1 \le j \le l \le k$.
Thus, we have $f_k^{\infty} \in (\mathrm{span}\{f_1^{\infty}, _{\cdots}, f_{k-1}^{\infty}\})^{\perp}$ and $f_k^{\infty} \neq 0$.
Therefore, by min-max principle, we have 
\[\lambda_k(X_{\infty}) \le \frac{\int_{X_{\infty}}|df_k^{\infty}|^2dH^{n-1}}{\int_{X_{\infty}}(f_k^{\infty})^2dH^{n-1}}.\]
Since $\{n(i)\}_i$ is arbitrary, we have 
\[\lambda_k(X_{\infty}) \le \liminf_{i \rightarrow \infty}\lambda_k(X_i).\]
On the other hand, by  \cite[Theorem $7. 1$]{ch-co3}, we have 
\[\limsup_{i \rightarrow \infty}\lambda_k(X_i) \le \lambda_k(X_{\infty}).\]
Therefore, we have
\[\lim_{i \rightarrow \infty}\lambda_k(X_i) = \lambda_k(X_{\infty}),\]
$f_k^{\infty}$ is a $\lambda_k(X_{\infty})$-eigenfunction.
Thus, by induction, we have the assertion.
\end{proof}
\begin{remark}
By the proof of Theorem \ref{10001}, with same assumption as in Theorem \ref{10001},  
if a sequence of $\lambda_k(X_i)$-eigenfunction $f^i_k$ on $X_i$ converges to some Lipschitz function $f_k^{\infty}$ on $X_{\infty}$,
then $f^{\infty}_k$ is also a $\lambda_k(X_{\infty})$-eigenfunction.
\end{remark}

\subsection{Asymptotic behavior of spaces of harmonic functions on asymptotic cones}
In this subsection, we shall give a \textit{Weyl type asymptotic formula} for harmonic functions on asymptotic cones of a fixed nonnegatively 
Ricci curved manifold $M$ with Euclidean volume growth, as in \cite{co-mi6} by Colding-Minicozzi. See \cite[Theorem $0. 26$]{co-mi6}, \cite[Proposition $6. 1$]{co-mi6} and Corollary \ref{coweyl}.
On asymptotic cones of such manifolds, we can give a Weyl type \textit{two-sided bound} asymptotic formula.
See Theorem \ref{weyl}.
\begin{proposition}\label{20001}
For every $n$-dimensional complete Riemannian manifold $M$ with $\mathrm{Ric}_M \ge 0$ and $V_M >0$, $(M_{\infty}, m_{\infty}) \in \hat{\mathcal{M}}_M$ and $d >0$,
we have $\mathrm{dim}H^d(M_{\infty}) \le C(n)d^{n-1}$.
Moreover, for every $V>0$, there exists $d(V, n) > 1$ such that for every 
$n$-dimensional complete Riemannian manifold $M$ with $\mathrm{Ric}_M \ge 0$ and $V_M \ge V$, $d > d(V, n)$ and $(M_{\infty}, m_{\infty}) \in \hat{\mathcal{M}}_M$, we have
\[\mathrm{dim}H^d(M_{\infty})\le C(n)V_Md^{n-1}.\] 
\end{proposition}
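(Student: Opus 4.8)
The plan is to reduce, via Theorem \ref{023}, the dimension of $H^d(M_\infty)$ to the eigenvalue counting function of the cross-section and then to prove a Weyl-type upper bound for the latter. First I would apply Theorem \ref{metric} to write $M_\infty=(C(X),p)$ for a compact geodesic space $X$ with $\mathrm{diam}\,X\le\pi$, so that by Theorem \ref{023}
\[\mathrm{dim}\,H^d(M_\infty)=\mathrm{dim}\,E_{d(d+n-2)}(X)=\#\{\,j\ge 0:\lambda_j(X)\le d(d+n-2)\,\}.\]
Next I would record the two elementary facts about $X$ responsible, respectively, for the dimension-only and the $V_M$-refined bounds. Using the volume convergence $(M,m,R_i^{-2}g_M,\mathrm{vol})\to(C(X),p,H^n)$ together with the co-area formula for the distance from the pole (Proposition \ref{0005}) and the scaling $H^{n-1}(\partial B_t(p))=t^{n-1}H^{n-1}(X)$ one gets $H^n(B_r(p))=V_Mr^n$ and hence $H^{n-1}(X)=nV_M$; and Bishop--Gromov (Theorem \ref{BG}) for $\mathrm{Ric}_M\ge 0$ gives $V_M\le \mathrm{vol}\,B_1(0_n)$, i.e. $H^{n-1}(X)\le C(n)$.

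The analytic core is the estimate
\[\#\{\,j:\lambda_j(X)\le\lambda\,\}\le C(n)\bigl(1+H^{n-1}(X)\,\lambda^{(n-1)/2}\bigr)\qquad(\lambda\ge 1).\]
To obtain this I would first verify that $(X,H^{n-1})$ is a compact metric measure space satisfying volume doubling and a weak $(1,2)$-Poincar\'e inequality with constants depending only on $n$: this is inherited from the fact that $C(X)$, being a Ricci limit space with nonnegative lower Ricci bound, is such a PI space with $n$-dependent constants (Theorem \ref{rectifiability}, Theorem \ref{BG}), via the product-type local structure of the cone near a point $(1,x)$ and the co-area formula (compare \cite[Lemma 4.3]{di2}). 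On such a space the eigenfunctions of $\Delta_X$ satisfy a scale-invariant mean-value (reverse Poincar\'e, $L^2$-to-$L^\infty$) inequality by Moser iteration, and the counting bound then follows from Li's dimension-counting lemma applied on $X$ with balls of radius $\sim\lambda^{-1/2}$ (the $X$-analogue of the reverse of Claim \ref{eig}; equivalently one may deduce it from an on-diagonal heat kernel upper bound and integration of the trace). Combining with $d(d+n-2)\le(n-1)d^2$ for $d\ge 1$ gives $\mathrm{dim}\,H^d(M_\infty)\le C(n)(1+V_Md^{n-1})$. The first assertion then follows from $V_M\le C(n)$ (the range $0<d<1$, where $\lambda_1(X)\ge n-1$ forces $H^d(M_\infty)$ to consist of constants, is trivial and absorbed into $C(n)$), and the second by choosing $d(V,n)$ so large that $nVd^{n-1}\ge 1$, whence $1+V_Md^{n-1}\le 2V_Md^{n-1}$.

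An alternative consistent with the theme of this section is to bypass $X$ and count harmonic functions on $C(X)$ directly: a $k$-dimensional subspace of $H^d(M_\infty)$ is approximated, via Ding's Theorem \ref{005} and diagonalization, by $k$-dimensional spaces of harmonic functions on the large balls $B_{sR_i}^{g_M}(m)$ of frequency $\le d+o(1)$ (Proposition \ref{002}, Proposition \ref{009}), linear independence being preserved for large $i$ by convergence of the $L^2$ Gram matrices (Corollary \ref{har}, Proposition \ref{10103}) and unique continuation for harmonic functions on $C(X)$ (Proposition \ref{1234567890}); one then applies the Colding--Minicozzi finite-dimensionality bound for harmonic functions of bounded frequency on a ball in a manifold with $\mathrm{Ric}\ge 0$ (resp. with Euclidean volume growth) and lets $i\to\infty$, then $\epsilon\to 0$. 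The main obstacle in either route is the Weyl/counting upper bound itself: the paper has so far needed only the easy direction (disjoint bumps, Claim \ref{eig}), whereas here one must produce the reverse estimate — a uniform mean-value inequality for $\Delta_X$-eigenfunctions (resp. for harmonic functions on $C(X)$) with the correct scaling on a genuinely singular space — and must track that all constants depend only on $n$ in order to separate cleanly the $C(n)d^{n-1}$ and $C(n)V_Md^{n-1}$ forms of the conclusion.
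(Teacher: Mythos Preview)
Your reduction via Theorem \ref{023} to counting eigenvalues of $\Delta_X$ below $d(d+n-2)$, and the identification $H^{n-1}(X)=nV_M$, agree with the paper. The difference is in the analytic core. You propose to obtain the Weyl upper bound through Moser iteration/mean-value for eigenfunctions (or heat-kernel bounds) on the singular cross-section $X$, and you correctly flag this as the main obstacle. The paper bypasses it entirely: it runs the elementary Colding--Minicozzi averaging-map argument \emph{directly on $X$}. One takes a maximal $1/(Cd)$-separated net $\{x_j\}_{j\le l}$ in $X$, defines the linear map $\mathcal{M}:\mathrm{span}\{u_i:i\le N_d\}\to\mathbf{R}^l$ by $v\mapsto(\int_{B_{2/(Cd)}(x_j)}v)_j$, and uses only the \emph{forward} Poincar\'e inequality on balls in $X$ (from \cite[Lemma~4.3]{di2}, Proposition~\ref{9009}) together with $\int_X|dw_j|^2\le 2d^2$ to show $\dim\ker\mathcal{M}\le N_d/2$ once $C=C(n)$ is chosen suitably. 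Hence $N_d\le 2l$, and the two bounds on $l$ --- $l\le C(n)d^{n-1}$ always, and $l\le C(n)V_Md^{n-1}$ for $d\ge d(V,n)$ --- come from packing estimates in $X$ (cf.\ \cite[Propositions~3.1 and~6.1]{co-mi6}).

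So the paper needs neither a reverse Poincar\'e nor an $L^2\to L^\infty$ bound; the only analytic input on $X$ is the scale-invariant $(1,2)$-Poincar\'e with $n$-dependent constant, plus doubling for the packing counts. Your route would work if you can establish the uniform mean-value inequality on $X$, but that is substantially more than what is required. Your alternative (transport the manifold bound through Ding's approximation and Gram-matrix convergence) is viable as well, but again heavier than the paper's direct argument on $X$.
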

\begin{proof}
This follows from proofs of \cite[Proposition $3.1$]{co-mi6}, \cite[Proposition $6. 1$]{co-mi6} and Theorem \ref{023}.
We shall introduce important ideas used in proofs of their propositions and give an outline of a proof of our assertion only.
We fix $V>0$,
an $n$-dimensional complete Riemannian manifold $M$ with $\mathrm{Ric}_M \ge 0$ and $V_M \ge V$
and $(M_{\infty}, m_{\infty}) \in \hat{\mathcal{M}}_{M}$. 
There exists a compact geodesic space $X$ such that $(M_{\infty}, m_{\infty})=(C(X), p)$. 
We take $d_1=d_1(n) \ge 1 $ satisfying that $d(d+n-2)\le 2d^2$ for every $d\ge d_1$.
We take an $i$-th eigenfunction $u_i$ of $\Delta_X$ and the $i$-th eigenvalue $\lambda_i(X)$ of $\Delta_X$ satisfying 
\[\int_Xu_iu_jdH^{n-1}=\delta_{ij}.\] 
We put $N_d= \max \{l \in \mathbf{N}; \lambda_l(X) \le d(d+n-2)\}$.
Then, we have 
\[\int_{X}|du_i|^2dH^{n-1}=\lambda_i(X)\le d(d+n-2)\]
for every $1 \le i \le N_d$.
On the other hand, by the proof of  \cite[Proposition $6.1$]{co-mi6} (and Proposition \ref{9009}), there exists $d_2=d_2(n, V_M)\ge d_1$ such that 
for every $d \ge d_2$ and $\{x_i\}_{1 \le i \le l}$ which is a maximal $1/d$-separated subset of $X$, we have
\[l\le C(n)V_Md^{n-1}.\]
We fix $C>1$ and $d\ge d_2$. 
(We will decide $C$ depending only on $n$ later.) 
Let $\{x_j\}_{1\le j \le l}$ be a maximal $1/(Cd)$-separated subset of $X$.
We put $\mathcal{V}= \mathrm{span} \{u_i; 1 \le i \le N_d\}$.
We define a linear map $\mathcal{M}$ from $\mathcal{V}$ to $\mathbf{R}^{l}$ by
\[\mathcal{M}(v)=\left(\int_{B_{2/Cd}(x_1)}vdH^{n-1}, _{\cdots}, \int_{B_{2/Cd}(x_l)}vdH^{n-1}\right).\]
We put $\mathcal{K}=\mathrm{Ker}\mathcal{M}$.
Let $w_1, _{\cdots}, w_k$ be an $L^2(X)$-orthonormal basis of $\mathcal{K}$.
We take $w_{k+1}, _{\cdots}, w_{N_d} \in \mathcal{V}$ satisfying that $\{w_i\}_{1 \le i \le N_d}$ are an $L^2(X)$-orthonormal basis of $\mathcal{V}$.
By Poincar\' e inequality on $X$ (see  \cite[Lemma $4. 3$]{di2}), we have
\[\int_{B_{2/Cd}(x_i)}w_j^2dH^{n-1}\le \frac{C(n)}{(Cd)^2}\int_{B_{2/Cd}(x_i)}|dw_j|^2dH^{n-1}\]
for every $1 \le j \le k$ and $1 \le i \le l$.
Therefore, we have
\[1 \le \sum_{i=1}^l\int_{B_{2/Cd}(x_i)}w_j^2dH^{n-1}\le \frac{C(n)}{(Cd)^2}\sum_{i=1}^l\int_{B_{2/Cd}(x_i)}|dw_j|^2dH^{n-1}
\le \frac{C(n)}{(Cd)^2}\int_{X}|dw_j|^2dH^{n-1}\]
for $1 \le j \le k$.
Thus we have 
\[k \le \frac{C(n)}{(Cd)^2}\sum_{j=1}^k\int_{X}|dw_j|^2dH^{n-1} \le \frac{C(n)}{(Cd)^2}\sum_{j=1}^{N_d}\int_{X}|dw_j|^2dH^{n-1}
\le \frac{C(n)}{(Cd)^2}2d^2N_d\le \frac{C(n)}{C^2}N_d.\]
We put $C=\sqrt{2C(n)}$ for $C(n)$ as above.
Then we have $k \le N_d/2$.
Since $N_d= k + \mathrm{dim}(\mathrm{Image}\,\mathcal{M})$, we have $N_d \le 2l \le C(n)V_Md^{n-1}$.
On the other hand, by Theorem \ref{023}, we have $\mathrm{dim}H^{d}(M_{\infty})\le N_d$.
Therefore, we have the assertion.
\end{proof}
\begin{proposition}\label{20002}
For every $V>0$, there exists $d(V, n) > 1$ such that 
\[\mathrm{dim}H^d(M_{\infty})\ge C(n)V_Md^{n-1}\]
holds for every $n$-dimensional complete Riemannian manifold $M$ with $\mathrm{Ric}_M \ge 0$ and $V_M \ge V$, $d > d(V, n)$ and $(M_{\infty}, m_{\infty}) \in \hat{\mathcal{M}}_M$.
\end{proposition}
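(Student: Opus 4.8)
The plan is to reduce the assertion, via Theorem \ref{023}, to a lower bound on the Weyl counting function of the cross section, and then to produce that bound by the disjoint-bump test function construction, in the spirit of \cite[Proposition $6.1$]{co-mi6}. Fix $V>0$, an $n$-dimensional complete Riemannian manifold $M$ with $\mathrm{Ric}_M \ge 0$ and $V_M \ge V$, and $(M_\infty, m_\infty) \in \hat{\mathcal{M}}_M$; write $M_\infty = (C(X), p)$ with $X$ compact and $\mathrm{diam}\,X \le \pi$ as in Theorem \ref{metric}. By Theorem \ref{023}, $\dim H^d(M_\infty) = \dim E_{d(d+n-2)}(X) = N_d$, where $N_d := \#\{i \ge 0 : \lambda_i(X) \le d(d+n-2)\}$, so everything reduces to showing $N_d \ge C(n)\,V_M\,d^{n-1}$ once $d$ is past some threshold $d(V,n)$.

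First I would record the metric--measure facts about $X$ that I need. By Corollary \ref{25} and Remark \ref{26}, $X$ is $H^{n-1}$-rectifiable with $0 < H^{n-1}(B_r(x)) < \infty$; by the co-area formula for the distance from the pole on $C(X)$ (Proposition \ref{0005}) one has $H^{n-1}(X) = nV_M$ and, converting $H^n$-measures of thin annular regions in $C(X)$ into $H^{n-1}$-measures of balls in $X$, the non-collapsing lower bound $H^n(B_r(q)) \ge c(n,V_M)r^n$ used in the proof of Theorem \ref{005} together with Bishop--Gromov volume comparison (passed to the limit via Proposition \ref{sup}) yields uniform two-sided Ahlfors $(n-1)$-regularity $c(n,V_M)\,r^{n-1} \le H^{n-1}(B_r(x)) \le C(n)\,r^{n-1}$ for $x \in X$ and $0 < r < r_0(n,V_M)$, and hence dimensional volume doubling on $X$. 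As in the proof of Proposition \ref{20001}, this is where the hypothesis $V_M \ge V$ enters, controlling $r_0$ and $c$. A standard covering argument then shows that for $d \ge r_0^{-1}$ any maximal $1/d$-separated subset $\{x_j\}_{1\le j \le l}$ of $X$ has $l \ge c(n)\,V_M\,d^{n-1}$: the balls $B_{1/(4d)}(x_j)$ are disjoint, the balls $B_{1/d}(x_j)$ cover $X$, and $nV_M = H^{n-1}(X) \le \sum_j H^{n-1}(B_{1/d}(x_j)) \le l\,C(n)\,d^{-(n-1)}$.

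Next I would build, for such a maximal $1/d$-separated set, the cutoffs $\psi_j(y) := \min\{1,\,\max\{0,\,2 - 4d\,\overline{x_j,y}\}\}$, which are $4d$-Lipschitz, equal $1$ on $B_{1/(4d)}(x_j)$, and have pairwise disjoint supports contained in $\overline{B}_{1/(2d)}(x_j)$. Using $\mathrm{Lip}\,\psi_j \le 4d$, the upper Ahlfors bound and doubling, each $\psi_j$ has Rayleigh quotient $\int_X|d\psi_j|^2\,dH^{n-1}\big/\int_X\psi_j^2\,dH^{n-1} \le C(n)\,d^2$ with a \emph{dimensional} constant; and since the supports are disjoint, every nonzero $v \in \mathrm{span}\{\psi_j\}_{1\le j\le l}$ has Rayleigh quotient $\le C(n)\,d^2$ as well. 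Because $(1+\Delta_X)^{-1}$ is a compact operator ($X$ compact, Cheeger--Colding), the spectrum of $\Delta_X$ is discrete and the min--max principle applies to this $l$-dimensional subspace of $H_{1,2}(X)$, giving $\lambda_{l-1}(X) \le C(n)\,d^2$. Finally, given a target $D$, take $d := \lfloor D/\lceil\sqrt{C(n)}\,\rceil\rfloor$, so that $C(n)\,d^2 \le D(D+n-2)$; then $N_D \ge l \ge c(n)\,V_M\,d^{n-1} \ge C(n)\,V_M\,D^{n-1}$ provided $d \ge r_0^{-1}$, i.e. provided $D > d(V,n)$ for a suitable $d(V,n)$. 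Combined with Theorem \ref{023}, this is the claim.

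The hard part is the first step: extracting from Cheeger--Colding theory and \cite[Proposition $6.1$]{co-mi6} the uniform, scale-quantified two-sided Ahlfors regularity of $(X, H^{n-1})$ and the ensuing lower bound on the cardinality of maximal $1/d$-separated subsets of $X$, with all constants and thresholds depending only on $n$ and the lower bound $V$ for the volume. Once that is in place, the test-function estimate and the min--max argument are routine.
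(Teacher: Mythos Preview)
Your proposal is correct and follows essentially the same route as the paper: reduce via Theorem \ref{023} to counting eigenvalues of $\Delta_X$, then use a maximal $\epsilon$-separated net in $X$ together with disjointly supported bump functions and the min--max principle to produce many eigenvalues below $C(n)d^2$. The paper organizes this into separate claims (the packing bound, the Dirichlet-eigenvalue bound $\le C(n)/r^2$ on metric balls via Proposition \ref{9009}, the upper density bound $H^{n-1}(B_r(x))\le C(n)r^{n-1}$, and the resulting estimate $\lambda_d(X)\le C(n)(d/H^{n-1}(X))^{2/(n-1)}$), but the content is the same; note in particular that only the \emph{upper} Ahlfors bound and doubling are actually used, so the lower Ahlfors bound you mention is not needed.
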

\begin{proof}
We fix $V>0$, an $n$-dimensional complete Riemannian manifold $M$ with $\mathrm{Ric}_M \ge 0$ and $V_M \ge V$, and $X \in \mathcal{M}_M$.
First, we remark the following.
This follows from Proposition \ref{9009}, directly.
\begin{claim}\label{3003}
Let $\epsilon$ be a positive number, $k$ a positive integer and $\{x_i\}_{1 \le i \le k}$ points in $X$.
We assume that $\{x_i\}_{1 \le i \le k}$ are an $\epsilon$-separated subset of $X$.
Then we have $k \le C(n)/\epsilon^{n-1}$.
\end{claim}
We shall give an upper bound of the first eigenvalue for Dirichlet problem on each balls:
\begin{claim}\label{4004}
We have 
\[\inf_{k \in \mathcal{K}(B_r(x)), k \neq 0}\frac{\int_{B_r(x)}|d_Xk|^2dH^{n-1}}{\int_{B_r(x)}k^2dH^{n-1}}\le \frac{C(n)}{r^2}\]
for every $x \in X$ and $0 < r \le \pi$.
\end{claim}
The proof is as follows.
We define a Lipschitz function $k$ on $X$ by $k(w)=\max \{r/2 - \overline{x,w}, 0\}$.
By the definition, we have $k \in \mathcal{K}(B_r(x))$,
\[\int_{B_r(x)}|d_Xk|^2dH^{n-1}=H^{n-1}(B_{\frac{r}{2}}(x))\]
and
\[\int_{B_r(x)}k^2dH^{n-1} \ge \int_{B_{\frac{r}{4}(x)}(x)}k^2dH^{n-1}\ge \int_{B_{\frac{r}{4}}(x)}\frac{r^2}{16}dH^{n-1} \ge \frac{r^2}{16}H^{n-1}(B_{\frac{r}{4}}(x)).\]
By Proposition \ref{9009}, we have
\[\frac{\int_{B_r(x)}|dk|^2dH^{n-1}}{\int_{B_r(x)}k^2dH^{n-1}}
\le \frac{16}{r^2}\frac{H^{n-1}(B_r(x))}{H^{n-1}(B_{\frac{r}{4}}(x))} \le \frac{C(n)}{r^2}.\]
Thus, we have Claim \ref{4004}.
\begin{claim}\label{3300}
We have
\[\limsup_{r \rightarrow 0}\frac{H^{n-1}(B_r(x))}{r^{n-1}}\le C(n)\]
for every $x \in X$.
\end{claim}
The proof is as follows.
For every sufficiently small $r > 0$, we put $A=\{(s, w) \in C(X); 1-r \le s \le 1+r, w \in B_r(x)\}$.
By Proposition \ref{0005}, we have
\begin{align}
H^n(B_{5r}(1, x)) &= \int_{1-r}^{1+r}H^{n-1}(\partial B_t(p) \cap B_{5r}(1, x))dt \\
& \ge \int_{1-r}^{1+r}H^{n-1}(\partial B_t(p) \cap A)dt \\
& \ge C(n) r H^{n-1}(B_r(x)).
\end{align}
Therefore, by Bishop-Gromov volume comparison theorem on limit spaces, we have Claim \ref{3300}.
\begin{claim}\label{eig}
We have 
\[\lambda_d(X) \le C(n) \left(\frac{d}{H^{n-1}(X)}\right)^{\frac{2}{n-1}}\]
for every $d \ge 1$.
\end{claim}
The proof is as follows.
We fix $0 < C < 1$.
(We will decide $C$ depending only on $n$ later.)
We put 
\[\epsilon=C \left(\frac{H^{n-1}(X)}{d}\right)^{\frac{1}{n-1}} \]
and take maximum $\epsilon$-separated subset $\{x_i\}_{1 \le i \le k}$ of $X$. 
By Claim \ref{3003}, we have $k \le C(n)/\epsilon^{n-1} \le C(n)d^{n-1}/(C^{n-1}H^{n-1}(X))$.
On the other hand, we have
\[\sum_{i=1}^kH^{n-1}(B_{2\epsilon}(x_i)) \ge H^{n-1}(X).\]
By Claim \ref{3300} and Proposition \ref{9009}, we have $H^{n-1}(B_{5\epsilon}(x_i))\le C(n) \epsilon^{n-1}$.
Thus, we have
\[H^{n-1}(X) \le \sum_{i=1}^kH^{n-1}(B_{2\epsilon}(x_i)) \le k C(n) \epsilon^{n-1}.\]
Therefore, we have
\[k \ge \frac{C_1(n)H^{n-1}(X)}{\epsilon^{n-1}}=\frac{C_1(n)}{C^{n-1}}\frac{H^{n-1}(X)d}{H^{n-1}(X)}\ge \frac{C_1(n)}{C^{n-1}}d.\]
Here $C_1(n)$ is a sufficiently small positive constant depending only on $n$.
We define $C$ by $C = C_1(n)^{1/(n-1)}$. 
Then, we have $k \ge d$.
By Claim \ref{4004}, for every $1 \le i \le k$, there exists $\phi_i \in \mathcal{K}(B_{\epsilon/10}(x_i))$ such that $\phi_i \neq 0$ and 
\[\frac{\int_{B_{\epsilon/10}(x_i)}|d\phi_i|^2dH^{n-1}}{\int_{B_{\epsilon/10}(x_i)}(\phi_i)^2dH^{n-1}}\le \frac{C(n)}{\epsilon^2}.\]
Since $\{B_{\epsilon/10}(x_i)\}_i$ are pairwise disjoint,  $\{\phi_i\}_i$ are linearly independent in $L^2(X)$.
Then, for every $a_1, _{\cdots}, a_k \in \mathbf{R}$ satisfying $\sum_{i=1}^k (a_i)^2 \neq 0$, we have  
\begin{align}
\int_X \left|d\left(\sum_{i=1}^k a_i \phi_i\right)\right|^2dH^{n-1} &= \sum_{i=1}^k \int_X|d(a_i\phi_i)|^2dH^{n-1} \\
&\le \sum_{i=1}^k \frac{C(n)}{\epsilon^2}\int_X (a_i\phi_i)^2dH^{n-1} \\
&=\frac{C(n)}{\epsilon^2}\int_X\left|\sum_{i=1}^k a_i \phi_i\right|^2dH^{n-1}.
\end{align}
Thus, by min-max principle, we have $\lambda_k(X) \le C(n)/\epsilon^2$.
Therefore, we have 
\[\lambda_d(X) \le \lambda_k(X) \le \frac{C(n)}{\epsilon^2} \le C(n) \left(\frac{d}{H^{n-1}(X)}\right)^{\frac{2}{n-1}}.\] 
Thus, we have Claim \ref{eig}.
\

The assertion follows from Claim \ref{eig} and Theorem \ref{023}.
\end{proof}
The main result in this subsection is the following:
\begin{theorem}[Weyl type asymptotic formula on asymptotic cones]\label{weyl}
For every $V > 0$, there exists $d(n, V) \ge 1$ such that 
\[C(n)^{-1}V_Md^{n-1}\le \mathrm{dim}H^d(M_{\infty})\le C(n)V_Md^{n-1}\]
holds for every $n$-dimensional complete Riemannian manifold $M$ with $\mathrm{Ric}_M \ge 0$ and $V_M \ge V$, $d \ge d(n, V)$ and $(M_{\infty}, m_{\infty}) \in \hat{\mathcal{M}}_M$.
\end{theorem}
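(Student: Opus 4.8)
The plan is to combine the two one-sided estimates established in Propositions \ref{20001} and \ref{20002}, which together already contain all the analytic content; the present statement is essentially a bookkeeping corollary of them.

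First I would record that, by Theorem \ref{metric}, every $(M_{\infty}, m_{\infty}) \in \hat{\mathcal{M}}_M$ is isometric to a metric cone $(C(X), p)$ over a compact geodesic space $X$ with $\mathrm{diam}\,X \le \pi$; consequently $\mathrm{dim}\,H^d(M_{\infty}) = \mathrm{dim}\,H^d(C(X))$, and the Euclidean volume growth hypothesis $V_M \ge V > 0$ guarantees, via \cite[Theorem $5.9$]{ch-co1} and Corollary \ref{25}, that $X$ is a genuine $(n-1)$-dimensional space with $0 < H^{n-1}(X) < \infty$, so that all the quantities appearing below (eigenvalues of $\Delta_X$, the spaces $E_{d(d+n-2)}(X)$, etc.) are meaningful and finite by Theorem \ref{023}.

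Next I would invoke Proposition \ref{20001}: there is a threshold $d_1 = d_1(n,V) > 1$ and a constant $C_1 = C_1(n)$ such that $\mathrm{dim}\,H^d(M_{\infty}) \le C_1 V_M d^{n-1}$ for every $d > d_1$, every $n$-dimensional complete Riemannian manifold $M$ with $\mathrm{Ric}_M \ge 0$ and $V_M \ge V$, and every $(M_{\infty}, m_{\infty}) \in \hat{\mathcal{M}}_M$. Symmetrically, Proposition \ref{20002} supplies a threshold $d_2 = d_2(n,V) > 1$ and a constant $C_2 = C_2(n)$ with $\mathrm{dim}\,H^d(M_{\infty}) \ge C_2 V_M d^{n-1}$ for every $d > d_2$ under the same hypotheses. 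Setting $d(n,V) = \max\{d_1(n,V), d_2(n,V)\}$ and $C(n) = \max\{C_1(n), C_2(n)^{-1}\}$, both inequalities hold simultaneously for every $d \ge d(n,V)$, which is exactly the claimed two-sided bound $C(n)^{-1} V_M d^{n-1} \le \mathrm{dim}\,H^d(M_{\infty}) \le C(n) V_M d^{n-1}$.

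Since the heavy lifting is already packaged into Propositions \ref{20001} and \ref{20002} — namely the eigenvalue counting on $X$ through Theorem \ref{023}, the separated-set and volume-comparison arguments of \cite{co-mi6} adapted to limit spaces, and the gradient and Poincar\'e estimates on $C(X)$ — the only work remaining here is matching the two thresholds and constants, which is routine. The one genuinely delicate ingredient, were one to reprove everything from scratch, would be the lower bound: producing on the order of $V_M d^{n-1}$ linearly independent Dirichlet eigenfunctions supported on the balls of a maximal $\varepsilon$-net of $X$ (with $\varepsilon \sim d^{-1}$) and controlling their Rayleigh quotients through the Poincar\'e inequality and Bishop--Gromov on the limit space, as in the proof of Proposition \ref{20002} via Claim \ref{eig}. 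For the present statement, however, this is available for free.
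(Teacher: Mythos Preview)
Your proposal is correct and takes exactly the same approach as the paper: the paper's proof is a single sentence stating that the result follows directly from Propositions \ref{20001} and \ref{20002}, and you have simply spelled out the bookkeeping (taking the maximum of the two thresholds and choosing $C(n)$ to cover both constants). The additional contextual remarks about Theorem \ref{metric}, Theorem \ref{023}, and the content of the two propositions are accurate but not needed for the argument itself.
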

\begin{proof}
It follows from Proposition \ref{20001} and \ref{20002} directly.
\end{proof}
\subsection{A dimension comparison theorem and Liouville type theorem}
In this subsection, we shall give a comparison theorem for dimensions between a space of harmonic functions on 
a fixed nonnegatively Ricci curved manifold with Euclidean volume growth,
and one on an asymptotic cone of the manifold (Theorem \ref{com} below).
Essential tools to prove it are \cite[Lemma $3. 1$]{co-mi2} (or \cite[Lemma $7. 1$]{co-mi3}) and several properties of frequency functions on asymptotic cones given in section $5$. 
As a corollary, we will give a Liouville type theorem on the manifold. See Corollary \ref{Liouville}.
First, we shall introduce an important mean value inequality for subharmonic functions
on nonnegatively Ricci curved manifolds by Li-Schoen:
\begin{theorem}[Li-Schoen, \cite{li-sh}]\label{gradient2}
Let $M$ be a complete $n$-dimensional Riemannian manifold with $\mathrm{Ric}_M \ge 0$, $m$ a point in $M$ and $R$ a positive number.
Then for every nonnegative subharmonic function $f$ on $B_{3R/2}(m)$, we have 
\[\sup_{B_R(m)}f \le \frac{C(n)}{\mathrm{vol}\,B_{\frac{3R}{2}}(m)}\int_{B_{\frac{3R}{2}}(m)}fd\mathrm{vol}.\]
\end{theorem}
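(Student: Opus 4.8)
The plan is to prove the estimate by De~Giorgi--Nash--Moser iteration, whose three ingredients---volume doubling, a scale-invariant $L^2$ Poincar\'e inequality, and a scale-invariant Sobolev inequality---are all available on manifolds with $\mathrm{Ric}_M\ge 0$ with constants depending only on $n$. By scaling the metric by $R^{-2}$ (which preserves $\mathrm{Ric}_M\ge 0$, the subharmonicity of $f$, the quantity $\sup f$, and the ratio $(\mathrm{vol}\,B_{3R/2})^{-1}\int_{B_{3R/2}}f$) we may assume $R=1$; and by a routine smoothing we may assume $f$ is smooth, using that $f^p$ is subharmonic for every $p\ge 1$ when $f\ge 0$ is subharmonic, since $\Delta f^p=pf^{p-1}\Delta f+p(p-1)f^{p-2}|\nabla f|^2\ge 0$.

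First I would record the consequences of Theorem~\ref{BG}: comparing with Euclidean space, $\mathrm{Ric}_M\ge 0$ yields $\mathrm{vol}\,B_{2s}(x)\le 2^n\,\mathrm{vol}\,B_s(x)$ for all $x\in B_{3/2}(m)$ and $0<s\le 3$, i.e.\ volume doubling with a dimensional constant. Next I would invoke the Neumann Poincar\'e inequality on balls in manifolds with nonnegative Ricci curvature: there is $C(n)$ such that
\[
\frac{1}{\mathrm{vol}\,B_s(x)}\int_{B_s(x)}\left|g-\frac{1}{\mathrm{vol}\,B_s(x)}\int_{B_s(x)}g\,d\mathrm{vol}\right|^2 d\mathrm{vol}\le C(n)s^2\,\frac{1}{\mathrm{vol}\,B_s(x)}\int_{B_s(x)}|\nabla g|^2 d\mathrm{vol}
\]
for every Lipschitz $g$; this is the smooth instance of the segment inequality of Cheeger--Colding underlying Theorem~\ref{rectifiability} (alternatively Buser's inequality). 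Feeding doubling and this Poincar\'e inequality into the iteration scheme of Saloff-Coste and Grigor'yan produces, for some $\kappa=\kappa(n)>1$, a scale-invariant $L^{2}$-to-$L^{2\kappa}$ Sobolev inequality for functions supported in a ball $B_s(x)\subset B_{3/2}(m)$, with constant $C(n)$.

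With these in hand the Moser iteration is standard: testing $\Delta f\ge 0$ against $\varphi^{2}f^{\beta}$ for $\beta\ge 1$ and a cutoff $\varphi$ between concentric balls gives a Caccioppoli inequality for $f^{(\beta+1)/2}$, which combined with the Sobolev inequality is a reverse H\"older estimate at each scale; iterating over $\beta=\kappa^{k}$ and radii produces, for $1\le\rho<\rho'\le 3/2$,
\[
\sup_{B_\rho(m)}f\le \frac{C(n)}{(\rho'-\rho)^{\nu}}\left(\frac{1}{\mathrm{vol}\,B_{\rho'}(m)}\int_{B_{\rho'}(m)}f^{2}\,d\mathrm{vol}\right)^{1/2}
\]
for a dimensional $\nu>0$. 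Finally I would upgrade the exponent from $2$ to $1$ by the Li--Schoen bootstrap: bounding $\frac{1}{\mathrm{vol}\,B_{\rho'}}\int_{B_{\rho'}}f^{2}\le(\sup_{B_{\rho'}}f)\cdot\frac{1}{\mathrm{vol}\,B_{\rho'}}\int_{B_{\rho'}}f$ and using doubling to replace $B_{\rho'}$ by $B_{3/2}$ in the last average, Young's inequality gives $\sup_{B_\rho(m)}f\le \tfrac12\sup_{B_{\rho'}(m)}f+C(n)(\rho'-\rho)^{-2\nu}\,(\mathrm{vol}\,B_{3/2}(m))^{-1}\int_{B_{3/2}(m)}f\,d\mathrm{vol}$; a standard iteration lemma over a geometric sequence of radii filling $[1,3/2]$ then absorbs the supremum term and yields the claimed bound.

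I expect the main difficulty to be bookkeeping rather than conceptual: one must check that every constant---in doubling, Poincar\'e, Sobolev, Moser iteration, and the final absorption---depends only on $n$ and not on $R$ or on $M$, which is precisely where nonnegativity of Ricci curvature (rather than a mere lower bound) is used, and one must run the $L^{2}\to L^{1}$ step carefully so that the resulting constant remains dimensional. A conceptually different alternative would be to follow Li--Schoen's original route via the Gaussian upper bound for the heat kernel on manifolds with $\mathrm{Ric}_M\ge 0$, trading the iteration bookkeeping for heat-kernel estimates that again rest on doubling and the Poincar\'e inequality.
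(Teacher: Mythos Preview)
The paper does not prove this theorem at all: it is stated with the attribution ``Li--Schoen, \cite{li-sh}'' and then used as a black box. So there is no ``paper's own proof'' to compare against; your task here was to supply a proof of a classical cited result.

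Your sketch is a correct and standard route to the Li--Schoen mean value inequality. The reduction to $R=1$ by scaling, volume doubling from Bishop--Gromov, the Neumann--Poincar\'e inequality under $\mathrm{Ric}\ge 0$, the derivation of a scale-invariant Sobolev inequality, the Caccioppoli-to-reverse-H\"older step, Moser iteration to an $L^{2}$ mean value inequality, and finally the $L^{2}\to L^{1}$ absorption via $\int f^{2}\le(\sup f)\int f$ and the iteration lemma---each of these is valid with dimensional constants in the nonnegative Ricci setting, and together they give exactly the stated estimate. Two small caveats: the smoothing step (``by a routine smoothing we may assume $f$ is smooth'') deserves a word, since one must preserve subharmonicity---this is handled by the standard $C^{2}$ definition of subharmonic in \S7.3 of the paper or by mollification in harmonic coordinates; and be mindful that the paper uses the nonnegative Laplacian convention (so subharmonic means $\Delta f\le 0$ in its sign), which flips the sign in your Bochner-type computation $\Delta f^{p}=pf^{p-1}\Delta f+p(p-1)f^{p-2}|\nabla f|^{2}$ but of course does not affect the argument.
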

We remark that if $\mathrm{Ric}_M \ge 0$, then, by Bochner's formula, for every harmonic function $h$ on $B_R(m)$, we have, $|\nabla h|^2$ is a subharmonic function.
We fix an $n$-dimensional complete Riemannian manifold $M$ with $\mathrm{Ric}_M \ge 0$ and $V_M > 0$ below.
\begin{theorem}\label{com}
For every $d \ge 0$, $\epsilon > 0$ and nonnegative integers $k \le \mathrm{dim}H^d(M)-1$ and $0 \le l \le k$, 
, there exists $(M_{\infty}, m_{\infty}) \in \hat{\mathcal{M}}_M$ such that 
\[l \le \mathrm{dim}H^{\frac{k}{k-l+1}\left(d-1 + \frac{n}{2}\right)+1-\frac{n}{2}+\epsilon}(M_{\infty}) -1.\]
\end{theorem}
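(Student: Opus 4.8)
The plan is to run the Colding--Minicozzi blow-down machinery, with the frequency estimates of Section 5 (Propositions \ref{002}, \ref{004}, \ref{008}, \ref{1234567890}, Corollary \ref{har}) playing the role of their frequency monotonicity on manifolds, and with \cite[Lemma $3.1$]{co-mi2} (equivalently \cite[Lemma $7.1$]{co-mi3}) supplying the dimension counting. We may assume $\mathrm{dim}H^d(M)\ge k+1$; fix a $(k+1)$-dimensional subspace $K\subset H^d(M)$ containing the constants, and let $W\subset K$ be a $k$-dimensional complement consisting of nonconstant functions. The manifold versions of the facts recalled in Section 5 give, for every $u\in W$, that $U^{g_M}_u(t)\le d$ for all $t$, that $\mathrm{ord}_\infty u=\lim_{t\to\infty}U^{g_M}_u(t)$ exists, and (from the manifold analogue of Proposition \ref{110}) that $\mathrm{ord}_\infty u\ge 1$. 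Writing $\tilde d=d-1+\tfrac n2=d+\tfrac{n-2}{2}$, so that $d(d+n-2)=\tilde d^{\,2}-(\tfrac{n-2}{2})^2$, the target exponent is $d'=\tfrac{k}{k-l+1}\tilde d+1-\tfrac n2+\epsilon$, i.e. $\tilde d'-\epsilon=\tfrac{k}{k-l+1}\tilde d$.

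The first step is to extract the right scales and subspace. I would apply \cite[Lemma $3.1$]{co-mi2} iteratively over an arbitrarily long chain of scales: peeling off, one at a time, the directions of $W$ whose frequency is large over the current scale interval, and distributing the total ``frequency budget'' of $W$ (each of the $k$ directions contributing at most $\tilde d$) among the $k-l$ peeling steps. This should produce scales $R_i\to\infty$, factors $\Lambda_i\to\infty$, and subspaces $K_i\subset K$ with $\dim K_i\ge l+1$ (the constants plus an $l$-dimensional family of nonconstant functions) on which the averaged frequency over $[R_i,\Lambda_i R_i]$ is at most $d_0:=d'-\tfrac\epsilon2$; the arithmetic is exactly that peeling $k-l$ directions costs $\sim(k-l+1)\tilde d_0$ of budget, leaving $\tilde d_0=\tfrac{k}{k-l+1}\tilde d$. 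Passing to a subsequence of the $R_i$, Bishop--Gromov, measured Gromov--Hausdorff precompactness (Proposition \ref{measure}) and Cheeger--Colding (Theorem \ref{metric}, \cite[Theorem $5.9$]{ch-co1}) give $(M,m,R_i^{-2}d_M,\mathrm{vol}^{R_i^{-2}g_M})\to(C(X),p,H^n)$ for a compact geodesic space $X$, so $(C(X),p)\in\hat{\mathcal{M}}_M$.

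Next I would normalize and take limits. Choose an $L^2(\partial B^{g_M}_{R_i}(m))$-orthonormalized basis $u^i_0\equiv\mathrm{const},u^i_1,\dots,u^i_l$ of $K_i$, each normalized to vanish at $m$, and rescale by the natural constants (using a matrix-valued renormalization on the nonconstant part, as in Colding--Minicozzi, to cope with differing orders) to obtain $v^i_j$ with $\sup_i\big(|v^i_j|_{L^\infty(B^{R_i^{-2}g_M}_1(m))}+\mathbf{Lip}\big(v^i_j|_{B^{R_i^{-2}g_M}_r(m)}\big)\big)<\infty$ for every $r$; the Lipschitz bounds come from Cheng--Yau (Theorem \ref{gradient1}), Li--Schoen (Theorem \ref{gradient2}) and the cutoff/Hessian device of Remark \ref{cc}, Remark \ref{549760}. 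By Proposition \ref{Lips} and Corollary \ref{har}, after a further subsequence $v^i_j\to v^\infty_j$ on $C(X)$ with $v^\infty_j$ harmonic and $v^\infty_j(p)=0$, and Proposition \ref{10103} forces the Gram matrices $\big(\int_{\partial B_1(p)}v^\infty_jv^\infty_{j'}\,dH^{n-1}\big)$ to converge to the identity, so $v^\infty_0,\dots,v^\infty_l$ are linearly independent (in particular each nonconstant $v^\infty_j$ is genuinely nonconstant since it vanishes at $p$ but not identically). For the order control, Proposition \ref{002}, Proposition \ref{004} and Corollary \ref{178907} give $U_{v^\infty_j}(t)=\lim_iU^{g_M}_{u^i_j}(\text{scale}\sim tR_i)$ for all $t>0$, which together with the averaged-frequency bound on $[R_i,\Lambda_i R_i]$ and monotonicity (Proposition \ref{008}) yields $U_{v^\infty_j}(t)\le d_0$ for all $t$; hence $\mathrm{ord}_\infty v^\infty_j\le d_0$, and by Proposition \ref{1234567890} combined with the mean-value bound (Theorem \ref{gradient2}), $v^\infty_j\in\mathcal{H}^{d_0+\epsilon/2}(C(X))\subset H^{d'}(C(X))$. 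Thus $H^{d'}(C(X))$ contains the $(l+1)$-dimensional space $\mathrm{span}\{v^\infty_0,\dots,v^\infty_l\}$, i.e. $l\le\mathrm{dim}H^{d'}(C(X))-1$.

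The main obstacle is the first step: carrying out the iterative scale-and-subspace selection so that it outputs a surviving subspace of dimension exactly $l+1$ with averaged frequency bounded by precisely $d_0$. This is a genuinely quantitative pigeonhole across the $k$-step chain of scales, and it is entangled with the nondegeneracy of the blow-down: because functions of $K$ can have different orders at infinity, a single scalar renormalization would annihilate all but one order, so one must run Colding--Minicozzi's matrix renormalization and simultaneously verify the hypotheses of Proposition \ref{002} (uniform $L^\infty$ and Lipschitz bounds on all of $C(X)$ after that renormalization) and that none of the $l+1$ limits collapses to a constant. Everything else — passing to measured Gromov--Hausdorff limits, harmonicity of the limit, convergence of $D_u$, $I_u$, $F_u$, $U_u$, and the linear-independence bookkeeping via convergence of Gram matrices — is provided directly by the results of Sections 2--5.
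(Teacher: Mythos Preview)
Your outline is essentially the paper's strategy --- \cite[Lemma 3.1]{co-mi2} for scale selection, blow-down, Corollary \ref{har} for passing harmonicity and energies to the limit, frequency convergence (Proposition \ref{002}) for growth control --- but the paper's implementation differs from your sketch in two technical respects worth noting. First, the paper does not orthonormalize in $L^2(\partial B_r)$; it uses the Dirichlet inner product $J_r(u_i,u_j)=\int_{b^{g_M}\le r}\langle du_i,du_j\rangle\,d\mathrm{vol}$, Gram--Schmidt orthogonalizes to obtain $w_{j,r}$, and sets $f_j(r)=\int_{b^{g_M}\le r}|dw_{j,r}|^2\,d\mathrm{vol}$. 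The crucial point is that each $f_j$ is a \emph{barrier} for $t^{n-2}D^{g_M}_{w_{j,s}}(t)$ at every $s$ (Claim \ref{100002}(4)), so Lemma 3.1 applied once to the $f_j$ (not iteratively --- your ``peeling'' is only a heuristic for the exponent) yields, for each $N$, infinitely many $m$ and $l$ indices with $f_j(N^{m+1})\le 2N^{\lambda(2d-2+n)}f_j(N^m)$, and the barrier property converts this directly into a bound on $D^{g_M}_{w_{j,s}}(Ns)/D^{g_M}_{w_{j,s}}(s)$. Linear independence of the limits then comes from $\int_{B_1(p_N)}\langle dw^{N,\infty}_i,dw^{N,\infty}_j\rangle\,dH^n=\delta_{ij}$ via Corollary \ref{har}, not from a boundary Gram matrix. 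Second, the paper uses a \emph{double} limit: for fixed $N$ one blows down along $N^{m(N,i)}$ to get harmonic functions on $C(X_N)$, but the resulting frequency bound is only $U_{w^{N,\infty}_j}(\hat l)\le\tfrac{\log N}{\log N-\log(100\hat l)}\big(\lambda(d-1+\tfrac n2)+1-\tfrac n2\big)$, which overshoots the target unless $N\to\infty$; a second limit $X_N\to X_\infty$ (compactness of $\mathcal{M}_M$, Li--Schoen, Corollary \ref{har} again) then lands the functions in $H^{d'}(C(X_\infty))$. Your single diagonal limit with $\Lambda_i\to\infty$ can be made equivalent to this. One small correction: $U^{g_M}_u(t)\le d$ is \emph{not} known for all $t$ on the manifold (frequency is only almost-monotone at large scales, \cite[Proposition 4.11]{co-mi3}); the paper avoids this entirely by working with the growth of $D$ through the barrier property.
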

\begin{proof}
Without loss of generality, we can assume that $k \ge 1$ and $l \ge 1$.
We take linearly independent harmonic functions $u_1, u_2, _{\cdots}, u_k \in H^d(M)$ satisfying $u_i(m)=0$.
We put
\[J_r(u_i, u_j)=\int_{b^{g_M} \le r}\langle du_i, du_j\rangle d\mathrm{vol}^{g_M}\]
for every $r > 0$.
We define $u_i=\sum_{j=1}^{i-1}\lambda_{ji}(r)u_j +w_{i, r}$ by satisfying $J_r(w_{i, r}, w_{j, r})=0$ for $i \neq j$ and
put 
\[f_i(r)=\int_{b^{g_M} \le r}|dw_{i,r}|^2d\mathrm{vol}^{g_M}.\]
\begin{claim}\label{100002}
We have the following:
\begin{enumerate}
\item There exists $K >0$ such that  $f_i(r) \le K(r^{2d-2+n}+1)$ for every $i =1, _{\cdots}, k$ and $r>0$.
\item $f_i > 0$.
\item $f_i(r) \le f_i(s)$ for $r \le s$.
\item $f_i$ is a barrier for $t^{n-2}D_{w_{i,s}}^{g_M}(t)$ at every $s > 0$. 
Here, for functions $g, h$ on $\mathbf{R}$ and a real number $r \in \mathbf{R}$, we say that $f$ is a barrier for $g$ at $r$ if $f(r)=g(r)$ and $f(s)\le g(s)$ for $s<r$.
(see also \cite[Definition $4. 6$]{co-mi2}).
\end{enumerate}
\end{claim}
By the trivial monotonicity of $t^{n-2}D_u^{g_M}(t)$ and an argument similar to the proof of \cite[Proposition $8. 6$]{co-mi3} (or \cite[Proposition $4.7$]{co-mi2}), we have Claim \ref{100002}.
\

We put $\lambda = \frac{k}{k-l+1}$.
By \cite[Lemma $3. 1$]{co-mi2}, for every $N \in \mathbf{N}_{\ge 2}$, there exist a subsequence $\{m(N, i)\}_{i \in \mathbf{N}}$ of $\mathbf{N}$ and a pairwise distinct integers
$\alpha_1^N, _{\cdots}, \alpha_{l}^N \in \{1, _{\cdots}, k\}$ such that $f_j(N^{m(N, i)+1}) \le 2N^{\lambda(2d-2+n)}f_j(N^{m(N, i)})$
for every $j \in \{\alpha_1^N, _{\cdots}, \alpha_{k}^N\}$
and $i \in \mathbf{N}$.
By Claim \ref{100002}, we have 
\[\frac{f_j(N^{m(N, i)+1})}{f_j(N^{m(N, i)})}\ge \frac{(N^{m(N, i)+1})^{n-2}D_{w_{j, N^{m(N, i)+1}}}^{g_M}(N^{m(N, i)+1})}{(N^{m(N, i)})^{n-2}D_{w_{j, N^{m(N, i)+1}}}^{g_M}(N^{m(N, i)})}.\]
Thus, we have
\[\frac{D_{w_{j, N^{m(N, i)+1}}}^{g_M}(N^{m(N, i)+1})}{D_{w_{j, N^{m(N, i)+1}}}^{g_M}(N^{m(N, i)})}\le 2N^{\lambda (2d-2+n)+2-n}.\]
We define a harmonic function $w_j^{N, i}$ on $B_{N/10}^{(N^{m(N, i)})^{-2}g_M}(m)$  by
\begin{align}
w_j^{N, i}(w)&=w_{j, N^{m(N, i)+1}} \\
& \times \left(N^{m(N, i)} \sqrt{\frac{1}{\mathrm{vol}^{g_M}(\{b^{g_M} \le N^{m(N, i)}\})}
\int_{b^{g_M} \le N^{m(N, i)}}|dw_{j, N^{m(N, i)+1}}|^2d\mathrm{vol}^{g_M}}\right)^{-1}.
\end{align}
We assume that $N$ is sufficiently large below.
Then, for $x_1, x_2 \in B_{N/10}^{(N^{m(N, i)})^{-2}g_M}(m)$, by Li-Schoen's gradient estimate, we have
\begin{align}
&|w_j^{N, i}(x_1)-w_j^{N,i}(x_2)| \\
&\le \sup_{B_{N^{m(N, i)}\frac{N}{5}}(m)}|\nabla w_{j, N^{m(n, i)+1}}|\overline{x_1, x_2}^{g_M}\\
& \times \left(N^{m(N, i)} \sqrt{\frac{1}{\mathrm{vol}^{g_M}(\{b^{g_M} \le N^{m(N, i)}\})}
\int_{b^{g_M} \le N^{m(N, i)}}|dw_{j, N^{m(N, i)+1}}|^2d\mathrm{vol}^{g_M}}\right)^{-1}\\
&\le C(n)\sqrt{\frac{1}{\mathrm{vol}^{g_M}(\{b^{g_M} \le N^{m(N, i)}\frac{2N}{3}\})}
\int_{b^{g_M} \le N^{m(N, i)}\frac{2N}{3}}|dw_{j, N^{m(N, i)+1}}|^2d\mathrm{vol}^{g_M}}\\
& \times \left(\sqrt{\frac{1}{\mathrm{vol}^{g_M}(\{b^{g_M} \le N^{m(N, i)}\})}
\int_{b^{g_M} \le N^{m(N, i)}}|dw_{j, N^{m(N, i)+1}}|^2d\mathrm{vol}^{g_M}}\right)^{-1} \times \overline{x_1, x_2}^{(N^{m(N, i)})^{-2}g_M}\\
&\le C(n) N^{\lambda (d-1 + n/2)+1-n/2}\overline{x_1, x_2}^{(N^{m(N, i)})^{-2}g_M}.
\end{align}
By Proposition \ref{Lips} and compactness of $\mathcal{M}_M$, without loss of generality, we can assume that there exist $X_N \in \mathcal{M}_M$ and Lipschitz functions $w_j^{N, \infty}$ on $B_{N/10}(p_N)$
such that $(M, m, (N^{m(N, i)})^{-1}d_M, w_j^{N, i}) \rightarrow (C(X_N), p_N, w_j^{N, \infty})$
$(j \in \{\alpha_1, \cdots, \alpha_l\})$. 
On the other hand, 
\begin{align}
&\frac{1}{\mathrm{vol}^{(N^{m(N, i)})^{-2}g_M}\, B^{(N^{m(N, i)})^{-2}g_M}_1(m) } \\
& \times \int_{B^{(N^{m(N, i)})^{-2}g_M}_1(m)}|d^{(N^{m(N, i)})^{-2}g_M}w_j^{N, i}|^2d\mathrm{vol}^{(N^{m(N, i)})^{-2}g_M} \\
&= \frac{1}{\mathrm{vol}^{g_M}\,B_{N^{m(N, i)}}(m)}\int_{B_{N^{m(N, i)}}(m)}|dw_{j, N^{m(N, i)+1}}|^2(N^{m(N, i)})^2d\mathrm{vol}^{g_M}\\
& \times \left(N^{2m(N, i)} \frac{1}{\mathrm{vol}^{g_M}(\{b^{g_M} \le N^{m(N, i)}\})}
\int_{b^{g_M} \le N^{m(N, i)}}|dw_{j, N^{m(N, i)+1}}|^2d\mathrm{vol}^{g_M}\right)^{-1} \\
&= 1 \pm \Psi(i^{-1}; n, N).
\end{align}
By Corollary \ref{har} and Theorem \ref{green}, we have 
\[\frac{1}{H^n(B_1(p_N))}\int_{B_1(p_N)}|dw_j^{N, \infty}|^2dH^n=1.\]
Similarly, we have 
\[\int_{B_1(p_N)}\langle dw_i^{N, \infty}, dw_j^{N, \infty}\rangle dH^n=0 \]
for $i \neq j$.
Therefore, $\{w_j^{N, \infty}\}_j$ are linearly independent harmonic functions.
For convenience, we  shall change the notation: $\{\alpha_1^N, _{\cdots}, \alpha_{l}^N\}=\{1, _{\cdots}, l\}$.
By Proposition \ref{008}, we have 
\[ \frac{I_{w_j^{N, \infty}}(\frac{N}{100})}{I_{w_j^{N, \infty}}(1)}=\frac{U_{w_j^{N, \infty}}(1)}{U_{w_j^{N, \infty}}(\frac{N}{100})}
\frac{D_{w_j^{N, \infty}}(\frac{N}{100})}{D_{w_j^{N, \infty}}(1)}
 \le \frac{D_{w_j^{N, \infty}}(\frac{N}{100})}{D_{w_j^{N, \infty}}(1)} \le 2N^{\lambda (2d-2+n)+2-n}.\]
Thus, by Proposition \ref{006}, we have 
\[\exp \int_1^{\frac{N}{100}}2\frac{U_{w_j^{N, \infty}}(t)}{t}dt \le 2N^{\lambda (2d-2+n)+2-n}.\]
We take $1 \le \hat{l} < N/100$. Since
\[\exp \int_{\hat{l}}^{\frac{N}{100}}2\frac{U_{w_j^{N, \infty}}(t)}{t}dt \le 2N^{\lambda (2d-2+n) +2-n},\]
by Proposition \ref{008}, we have 
\[\left(\frac{N}{100\hat{l}}\right)^{2U_{w_j^{N, \infty}}(\hat{l})}\le 2N ^{\lambda (2d-2+n) +2-n}\]
i.e.
\[2U_{w_j^{N, \infty}}(\hat{l}) \le \frac{\log N}{\log N - \log (100\hat{l})}+ \frac{\log N}{\log N - \log (100\hat{l})}(\lambda (2d-2+n)+2-n).\]
Therefore, for every $\hat{l} \ge 1$, there exists $N_{\hat{l}}$ such that $U_{w_j^{N, \infty}}(a)\le \lambda (d-1+n/2)+1 -n/2+ \epsilon$
for every $N \ge N_{\hat{l}}$ and $1 \le a \le \hat{l}$.
We take $x_1 \in B_{\frac{\hat{l}}{10}}(p_N)$.
By Li-Schoen's gradient estimate and Theorem \ref{005}, we have
\begin{align}
\mathrm{Lip}w_j^{N, \infty}(x_1) &\le C(n) \sqrt{\frac{1}{H^n(B_{\hat{l}}(p_N))}\int_{B_{\hat{l}}(p_N)}(\mathrm{Lip}w_j^{N, \infty})^2
dH^n} \\
&\le C(n, V_M) \sqrt{\hat{l}^{-n}\int_{B_{\hat{l}}(p_N)}|dw_j^{N, \infty}|^2dH^n} \\
&\le C(n, V_M, \lambda, d) \sqrt{\hat{l}^{-1-n}\int_{\partial B_{\hat{l}}(p_N)}|w_j^{N, \infty}|^2dH^n} \\
&\le C(n, V_M, \lambda, d) \hat{l}^{-1} \sqrt{\frac{1}{H^{n-1}(\partial B_{\hat{l}}(p_N))}\int_{\partial B_{\hat{l}}(p_N)}|w_j^{N, \infty}|^2dH^n}.
\end{align}
On the other hand, by Proposition \ref{008}, we have
\begin{align}
I_{w_j^{N, \infty}}(\hat{l})&=\exp \left(\int_1^{\hat{l}}\frac{2U_{w_j^{N, \infty}}(t)}{t}dt\right)I_{w_j^{N, \infty}}(1) \\
& \le \exp \left(\int_1^{\hat{l}}\frac{\lambda (2d-2+n) + 2-n + 2\epsilon}{t}dt\right)I_{w_j^{N, \infty}}(1) \\
& \le \hat{l}^{\lambda (2d-2+n) + 2-n + 2\epsilon}I_{w_j^{N, \infty}}(1)
\end{align}
for $N \ge N_l$.
By Proposition \ref{110}, we have
\[0 \le I_{w_j^{N, \infty}}(1) \le I_{w_j^{N, \infty}}(1) U_{w_j^{N, \infty}}(1) \le D_{w_j^{N, \infty}}(1) =1.\]
Thus, we have $I_{w_j^{N, \infty}}(\hat{l}) \le \hat{l}^{\lambda (2d-2+n)+2 -n + 2\epsilon}$.
Therefore, we have 
\[\mathbf{Lip}\left(w_j^{N, \infty}|_{B_{\frac{\hat{l}}{10}}(p_N)}\right) \le C(n, V_M, \lambda, d) \hat{l}^{\lambda (d-1+n/2) -n/2+\epsilon}.\]
By Proposition \ref{Lips} and compactness of $\mathcal{M}_M$, we can assume that there exist $X_{\infty} \in \mathcal{M}_M$ and locally Lipschitz harmonic functions $w_j^{\infty} \in H^{\lambda (d-1+n/2) + 1 -n/2+ \epsilon}(C(X_{\infty}))$ such that $X_N \rightarrow X_{\infty}$ and that $w_j^{N, \infty} \rightarrow  w_j^{\infty}$ on $B_R(p_{\infty})$ for every $R>0$.
By Corollary \ref{har}, we have 
\[\frac{1}{H^n(B_1(p_{\infty}))}\int_{B_1(p_{\infty})}\langle dw_j^{\infty}, dw_i^{\infty}\rangle dH^n=\delta_{ij}.\]
Especially, $\{w_j^{\infty}\}_j$ are linearly independent nonconstant harmonic functions.
Therefore we have the assertion.
\end{proof}
As a corollary of Theorem \ref{com}, we have the following result by Colding-Minicozzi:
\begin{corollary}[Colding-Minicozzi, \cite{co-mi6}]\label{coweyl}
For every $V>0$, there exists $d(V, n) > 1$ such that 
\[\mathrm{dim}H^d(M)\le C(n)V_Md^{n-1}\] 
for every $d > d(V, n)$ and 
$n$-dimensional complete Riemannian manifold $M$ with $\mathrm{Ric}_M \ge 0$ and $V_M \ge V$.
\end{corollary}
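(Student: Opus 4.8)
The plan is to combine Theorem \ref{com} with the cone bound of Proposition \ref{20001} by a short counting argument. I would fix $V>0$; it then suffices to treat a single manifold $M$ with $\mathrm{Ric}_M\ge 0$ and $V_M\ge V$ and produce a threshold $d(V,n)$ and a dimensional constant working uniformly for all such $M$. Write $N(d)=\dim H^d(M)$, which I do not assume finite at the outset. If $N(d)\le 2$ there is nothing to prove once $d$ is large, since the right-hand side $C(n)V_Md^{n-1}\ge C(n)Vd^{n-1}$ tends to $\infty$; so assume $N(d)\ge 3$. I would also record, and use implicitly, that every asymptotic cone of $M$ has volume–growth constant equal to $V_M$: from $(M,m,R_i^{-1}d_M,\mathrm{vol}^{R_i^{-2}g_M})\to(M_\infty,m_\infty,H^n)$ and Bishop–Gromov one gets $H^n(B_r(m_\infty))=V_Mr^n$, so Proposition \ref{20001} applies with the same $V$ to each element of $\hat{\mathcal M}_M$.

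For the core step, let $k$ be any integer with $2\le k\le N(d)-1$, put $l=\lfloor k/2\rfloor$ (so $1\le l\le k$) and $\epsilon=1$. By Theorem \ref{com} there is $(M_\infty,m_\infty)\in\hat{\mathcal M}_M$ with
\[
l+1\le \dim H^{d'}(M_\infty),\qquad d'=\lambda\Big(d-1+\tfrac n2\Big)+1-\tfrac n2+1,\quad \lambda=\frac{k}{k-l+1}.
\]
Since $1\le l\le k/2$ one has $1\le\lambda\le 2$, hence $d+1\le d'\le 2d+\tfrac n2\le 3d$ as soon as $d\ge n$. Because $M_\infty\in\hat{\mathcal M}_M$ and $d'>d(V,n)$ whenever $d>d(V,n)$, Proposition \ref{20001} yields
\[
\dim H^{d'}(M_\infty)\le C(n)V_M(d')^{n-1}\le 3^{\,n-1}C(n)V_Md^{\,n-1}.
\]
Combining the two displays and using $l+1=\lfloor k/2\rfloor+1\ge k/2$ gives $k\le 2\cdot 3^{\,n-1}C(n)V_Md^{\,n-1}$.

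Since this holds for every integer $k$ with $2\le k\le N(d)-1$, it simultaneously shows that $N(d)<\infty$ and that $N(d)-1\le 2\cdot 3^{\,n-1}C(n)V_Md^{\,n-1}$. Enlarging $d(V,n)$ so that in addition $d\ge n$ and $Vd^{\,n-1}\ge 1$, the trailing $+1$ in $\dim H^d(M)=N(d)\le 2\cdot 3^{\,n-1}C(n)V_Md^{\,n-1}+1$ is absorbed into the main term (at the cost of replacing $C(n)$ by, say, $3^nC(n)$), which is exactly the asserted bound for all $d>d(V,n)$. I do not expect a genuine obstacle here, as Theorem \ref{com} and Proposition \ref{20001} do all the work; the only points needing care are tracking the affine shift $d\mapsto\lambda(d-1+\tfrac n2)+1-\tfrac n2+\epsilon$ in the growth exponent produced by Theorem \ref{com}, and ensuring that the rescaled exponent $d'$ still exceeds the $V$-dependent threshold of Proposition \ref{20001} — which is precisely what forces the lower bound $d>d(V,n)$ in the statement.
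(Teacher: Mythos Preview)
Your proposal is correct and follows the same route as the paper: apply Theorem \ref{com} to pass to an asymptotic cone, then invoke the upper Weyl bound there (the paper cites Theorem \ref{weyl}, you cite Proposition \ref{20001}; only the upper half is used). Your choice $l=\lfloor k/2\rfloor$ is exactly what keeps $\lambda=k/(k-l+1)\le 2$, which is the point behind the paper's terse ``take $k=[(\dim H^d(M)-1)/2]$''; you have simply made the bookkeeping explicit, and your extra care in not assuming $\dim H^d(M)<\infty$ a priori, together with the remark that every $M_\infty\in\hat{\mathcal M}_M$ inherits the same volume growth constant $V_M$, fills in details the paper leaves implicit.
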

\begin{proof}
By taking $k=[(\mathrm{dim}H^d(M)-1)/2]$ as in Theorem \ref{com}, the assertion follows from Theorem \ref{weyl} and Theorem \ref{com} directly.  
Here $[a]= \inf \{l \in \mathbf{Z}; a \le l\}$ for every $a \in \mathbf{R}$.
\end{proof}
We put $\lambda_1 = \inf \{\lambda_1(X); X \in \mathcal{M}_M\}$ and 
 define $d_1 \ge 1$ by
\[d_1= \frac{-(n-1)+\sqrt{(n-2)^2+4\lambda_1}}{2}.\]
By Theorem \ref{10001}, we have the following:
\begin{enumerate}
\item $H^d(M_{\infty})=\{$Constant functions$\}$ for every $(M_{\infty}, m_{\infty}) \in \hat{\mathcal{M}}_M$ and $0< d < d_1$.
\item $H^{d_1}(\hat{M}_{\infty}) \neq \{$Constant functions$\}$ for some $(\hat{M}_{\infty}, \hat{m}_{\infty}) \in \hat{\mathcal{M}}_M$.
\end{enumerate}
\begin{corollary}[Liouville type theorem]\label{Liouville}
We have $H^d(M)= \{$Constant functions$\}$ for every $0< d < d_1$.
\end{corollary}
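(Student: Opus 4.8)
The plan is to argue by contradiction, reducing the statement entirely to the dimension comparison theorem (Theorem \ref{com}) specialized to $k = l = 1$, together with the vanishing of $H^d(M_\infty)$ for $d < d_1$ recorded just above the corollary. No new analysis is needed: all the hard work — monotonicity and compactness of frequency functions on asymptotic cones, the Colding--Minicozzi dimension estimates, and the spectral stability of Theorem \ref{10001} — has already been done, so this is a short deduction.

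First I would suppose that $H^d(M) \neq \{\mathrm{Constant\ functions}\}$ for some $0 < d < d_1$ and aim for a contradiction. Since $d < d_1$, I would fix $\epsilon > 0$ so small that $d + \epsilon < d_1$. Because $H^d(M)$ contains the constants together with at least one non-constant harmonic function, $\mathrm{dim}\,H^d(M) \ge 2$, so the integers $k = 1$ and $l = 1$ satisfy $k \le \mathrm{dim}\,H^d(M) - 1$ and $0 \le l \le k$, which is exactly what Theorem \ref{com} requires.

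Applying Theorem \ref{com} with these $d, \epsilon, k, l$ yields an asymptotic cone $(M_\infty, m_\infty) \in \hat{\mathcal{M}}_M$ with
\[
l \le \mathrm{dim}\,H^{\frac{k}{k-l+1}\left(d - 1 + \frac{n}{2}\right) + 1 - \frac{n}{2} + \epsilon}(M_\infty) - 1 .
\]
With $k = l = 1$ the exponent collapses to $\left(d - 1 + \frac{n}{2}\right) + 1 - \frac{n}{2} + \epsilon = d + \epsilon$, so the inequality reads $1 \le \mathrm{dim}\,H^{d+\epsilon}(M_\infty) - 1$, i.e. $\mathrm{dim}\,H^{d+\epsilon}(M_\infty) \ge 2$; in particular $H^{d+\epsilon}(M_\infty)$ contains a non-constant harmonic function.

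On the other hand $d + \epsilon < d_1$, so by the first of the two properties recorded before the statement (a consequence of Theorem \ref{10001} and the definition of $d_1$ through $\lambda_1 = \inf_{X \in \mathcal{M}_M}\lambda_1(X)$) one has $H^{d+\epsilon}(M_\infty) = \{\mathrm{Constant\ functions}\}$, hence $\mathrm{dim}\,H^{d+\epsilon}(M_\infty) = 1$. This contradicts the previous step, completing the proof. The only points deserving a moment's care — and they are not really obstacles — are checking that the choice $k = l = 1$ is admissible (it needs only $\mathrm{dim}\,H^d(M) \ge 2$) and that the growth exponent produced by Theorem \ref{com} is exactly $d + \epsilon$, which can be forced below $d_1$ by shrinking $\epsilon$; the second property recorded before the statement is not needed for this corollary.
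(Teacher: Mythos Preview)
Your proof is correct and follows essentially the same approach as the paper: both argue by contradiction, apply Theorem \ref{com} with $k = l = 1$ to obtain an asymptotic cone with $\mathrm{dim}\,H^{d+\epsilon}(M_\infty) \ge 2$, and then contradict property (1) recorded before the statement. You have simply made explicit the verification that $k = l = 1$ is admissible and the computation of the growth exponent, which the paper leaves implicit.
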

\begin{proof}
We assume that the assertion is false.
We take $\epsilon > 0$ satisfying $\epsilon < d_1-d$.
By taking $k=l=1$ as in Theorem \ref{com}, there exists $(M_{\infty}, m_{\infty}) \in \hat{\mathcal{M}}_M$ such that $2 \le \mathrm{dim}H^{d + \epsilon}(M_{\infty})$.
This is a contradiction.
\end{proof}
Finally, we end this subsection by showing the following.
See also \cite[Conjecture $0.9$]{co-mi4}.
\begin{corollary}\label{freqen}
Let $d$ be a positive number and $u \in H^d(M)$.
Then we have 
\[\liminf_{t \rightarrow \infty}\left(\sup_{s \in K}U_u^{g_M}(ts)\right) \le d\]
for every compact set $K \subset (0, \infty)$.
\end{corollary}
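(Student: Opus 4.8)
Throughout I may assume $u$ is nonconstant, since otherwise $U_u^{g_M}\equiv 0$ and there is nothing to prove. Also, because $U_u^{g_M}\ge 0$ and $K\subset[\min K,\max K]$, it suffices to treat $K=[a,b]$ with $0<a\le 1\le b$. Set $L:=\liminf_{t\to\infty}\sup_{s\in[a,b]}U_u^{g_M}(ts)$; by Colding--Minicozzi theory (\cite{co-mi3}) $U_u^{g_M}$ is bounded, so $L<\infty$, and the goal is $L\le d$. Fix $t_i\to\infty$ with $\sup_{s\in[a,b]}U_u^{g_M}(t_is)\to L$. After passing to a subsequence, $(M,m,t_i^{-1}d_M)$ converges to an asymptotic cone $(M_\infty,m_\infty)$ by Proposition \ref{090}, which has the form $(C(X),p)$ by Theorem \ref{metric}, and $(M,m,t_i^{-1}d_M,\mathrm{vol}^{t_i^{-2}g_M})\to(C(X),p,H^n)$.

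The plan is to dilate $u$ to the cone, obtaining a nonconstant harmonic function $v_\infty$ of polynomial growth at most $d$, and then to extract $L=U_{v_\infty}(b)\le d$ from the frequency machinery of Section $5$. For $i$ large (so that $t_ia$ exceeds a scale past which $I_u^{g_M}>0$, which exists because $u$ is nonconstant) put $c_i:=\sqrt{I_u^{g_M}(t_i)}>0$ and $v_i:=u/c_i$, a harmonic function on $M$; since $U$ is homogeneous of degree $0$ in its function argument, $U_{v_i}^{t_i^{-2}g_M}(s)=U_u^{g_M}(t_is)$ for $s\in[a,b]$. Using Lemma \ref{001} and $|\nabla^{g_M}b^{g_M}|\to1$ in $L^2$ at infinity, the $L^2$-average of $v_i^2$ over $B_r^{t_i^{-2}g_M}(m)$ is controlled, uniformly in $i$ for each fixed $r\le b$, by $I_{v_i}^{t_i^{-2}g_M}(r)=I_u^{g_M}(t_ir)/I_u^{g_M}(t_i)=\exp\!\big(2\int_1^rU_u^{g_M}(t_i\sigma)\sigma^{-1}\,d\sigma\big)$, and because $\sup_{s\in[a,b]}U_u^{g_M}(t_is)\to L<\infty$ this is bounded above and bounded below away from $0$ on $[a,b]$ uniformly in $i$. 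Li--Schoen's mean value inequality (Theorem \ref{gradient2}) applied to the subharmonic functions $v_i^2$ then gives a uniform interior $L^\infty$ bound for $v_i$ on the rescaled manifolds, and Cheng--Yau's gradient estimate gives uniform interior Lipschitz bounds; hence by Proposition \ref{Lips}, after a further subsequence $v_i\to v_\infty$ on each $B_r(p)$ with $r$ below a fixed radius, with $v_\infty$ locally Lipschitz and harmonic there by Ding's Theorem \ref{005} and Corollary \ref{har}. Now Proposition \ref{002} yields $D_{v_i}^{t_i^{-2}g_M}\to D_{v_\infty}$ and $I_{v_i}^{t_i^{-2}g_M}\to I_{v_\infty}$ uniformly on $[a,b]$, and the uniform lower bound on $I_{v_i}^{t_i^{-2}g_M}$ forces $I_{v_\infty}>0$ on $[a,b]$; therefore $U_u^{g_M}(t_is)=D_{v_i}^{t_i^{-2}g_M}(s)/I_{v_i}^{t_i^{-2}g_M}(s)\to U_{v_\infty}(s)$ uniformly on $[a,b]$, so $L=\sup_{s\in[a,b]}U_{v_\infty}(s)=U_{v_\infty}(b)$, the last step by monotonicity of $U_{v_\infty}$ (Proposition \ref{008}).

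Finally I would bound $v_\infty$: since $u\in H^d(M)$ we have $|u(x)|\le C(1+\overline{m,x}^{d})$ and $b^{g_M}$ is comparable to $\overline{m,\cdot}$, so in the rescaled metric $|v_i|\le C'(b^{\,t_i^{-2}g_M})^{d}/c_i$ on $\{b^{\,t_i^{-2}g_M}\ge t_i^{-1}\}$; letting $i\to\infty$ (after, if the genuine growth order $d_0$ of $u$ is smaller than $d$, first rerunning the whole argument with $d_0+\eta$ in place of $d$, which is legitimate since $u\in H^{d_0+\eta}(M)$ and it only strengthens the conclusion) gives $v_\infty\in\mathcal H^{d}(C(X))$ after extending $v_\infty$ to a global harmonic function on $C(X)$ by diagonalizing the extraction over an exhaustion. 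The estimate $U_{u_\infty}(t)\le d$ valid for every $u_\infty\in\mathcal H^{d}(C(X))$ (proved earlier in this section) then gives $U_{v_\infty}(b)\le d$, i.e. $L\le d$. The main obstacle is exactly this last normalization/non-degeneracy bookkeeping — arranging simultaneously that $v_\infty\not\equiv 0$ and that $v_\infty$ retains growth order $\le d$ when the true order of $u$ is strictly less than $d$; the device of normalizing by $c_i^2=I_u^{g_M}(t_i)$, comparing $I$-ratios with $\exp(2\int U_u^{g_M}/t)$, and reducing to the true order is what makes it go through. Alternatively one can avoid the cone entirely and argue on $M$ directly from the almost-monotonicity of $U_u^{g_M}$ of \cite{co-mi3} together with the bound $I_u^{g_M}(r)\le Cr^{2d}$ for $u\in H^d(M)$, which is the ``straightened'' form of the same argument.
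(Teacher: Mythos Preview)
Your overall strategy---blow up to an asymptotic cone, pass the frequency through Proposition~\ref{002}, and invoke the cone bound $U_{u_\infty}\le d$---is exactly the paper's. The difference is in \emph{which} scales you blow up along, and that difference creates a genuine gap.

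The paper does not start from a liminf-realizing sequence. It invokes the proof of Theorem~\ref{com} (with $k=l=1$), which via Colding--Minicozzi's doubling lemma \cite[Lemma~3.1]{co-mi2} produces a sequence $R_i\to\infty$ along which the renormalised $u$ converges to some $u_\infty\in H^{d+\epsilon}(M_\infty)$. Then $\sup_{s\in K}U_u^{g_M}(R_is)\to\sup_{s\in K}U_{u_\infty}(s)\le d+\epsilon$, and since the liminf over all $t$ is at most the limit along any particular sequence, the result follows. The growth bound on $u_\infty$ is baked in by the choice of scales.

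You instead fix $t_i$ realizing the liminf and try to show the limit $v_\infty$ lies in $H^d(C(X))$. Your pointwise bound $|v_i|\le C' (b^{\,t_i^{-2}g_M})^d\cdot t_i^{d}/c_i$ only yields $v_\infty\in H^d$ if $t_i^{d}/c_i$ stays bounded, i.e.\ if $I_u^{g_M}(t_i)\ge c\,t_i^{2d}$. Nothing about the liminf-realizing property of $t_i$ gives this lower bound. Your proposed fix of ``rerunning with the true order $d_0$'' does not close the gap: whatever reasonable meaning you assign to $d_0$, you need simultaneously an upper bound $|u|\le C r^{d_0+\eta}$ (so $u\in H^{d_0+\eta}$) and a lower bound $I_u^{g_M}(t_i)\ge c\,t_i^{2(d_0+\eta)}$ \emph{along the same sequence $t_i$}. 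Defining $d_0$ via $\liminf$ of $\log I_u/(2\log r)$ gives $I_u^{g_M}(r)\ge r^{2(d_0-\eta)}$ eventually, but then the ratio $I_u^{g_M}(t_ir)/I_u^{g_M}(t_i)$ still carries a divergent factor $t_i^{O(\eta)}$, and there is no room to send $\eta\to0$ inside a single extraction. The ``diagonalizing over an exhaustion'' step therefore produces at best $v_\infty\in H^\Lambda$ for some $\Lambda$ you do not control, which is useless for bounding $U_{v_\infty}(b)$ by $d$.

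A secondary point: you assert that $U_u^{g_M}$ is globally bounded by Colding--Minicozzi theory. On the cone this is Propositions~\ref{008}--\ref{009}, but on $M$ only almost-monotonicity is available \cite[Proposition~4.11]{co-mi3}, and that holds precisely over scale ranges with controlled doubling of $D_u$---again the scales Theorem~\ref{com} singles out, not arbitrary ones. So even the finiteness of $L$ is not as free as you claim. The clean route is the paper's: let the doubling argument choose the scales, and the liminf bound falls out for free.
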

\begin{proof}
Assume that $u$ is not a constant.
By the proof of Theorem \ref{com}, for every $\epsilon > 0$, there exist sequences of positive numbers $\{R_i\}_i$, $\{\hat{R}_i\}_i$, an asymptotic cone $(M_{\infty}, m_{\infty}) \in \hat{\mathcal{M}}_M$ and a nonconstant harmonic function $u_{\infty} \in H^{d+\epsilon}(M_{\infty})$
such that $R_i \rightarrow \infty$, $\hat{R}_i \rightarrow \infty$, $(M, m, R_i^{-1}d_{M}) \rightarrow (M_{\infty}, m_{\infty})$, $\sup_{i}\mathbf{Lip}^{R_i^{-1}d_{M}}\left((u)_{\hat{R}_i}|_{B_R^{R_i^{-1}d_{M}}(m_i)}\right)<\infty$ for every $R>0$ and that $(u)_{\hat{R}_{i}}(x_i) \rightarrow u_{\infty}(x_{\infty})$ for 
every sequence $x_i \rightarrow x_{\infty}$ with respect to the convergence $(M, m, R_i^{-1}d_{M}) \rightarrow (M_{\infty}, m_{\infty})$.
By the definition of $U_u^{g_M}(t)$, we have $U_{(u)_{\hat{R}_i}}^{R_i^{-2}g_M}(s)=U_{u}^{R_i^{-2}g_M}(s)=U_u^{g_M}(R_is)$ for every $s > 0$.
Thus, since $\lim_{i \rightarrow \infty}\left(\sup_{s \in K}|U_{(u)_{\hat{R}_i}}^{R_i^{-2}g_M}(s)-U_{u_{\infty}}(s)|\right)=0$ and $U_{u_{\infty}}\le d+\epsilon$, we have $\liminf_{t \rightarrow \infty}\left(\sup_{s \in K}U_u^{g_M}(ts)\right) \le d+\epsilon$.
Therefore, we have the assertion.

\end{proof}
\section{Stability of lower bounds on Ricci curvature via Laplacian comparison theorem}
In this section, as an application of Theorem \ref{app}, we shall establish Laplacian comparison theorem on Ricci limit spaces.
For $H \in \mathbf{R}$, we define a smooth function $\underline{k}_H$ on $\mathbf{R}$ by 
\[\underline{k}''_H(r) + H\underline{k}''_H(r) = 0, \ \ \underline{k}(0)=0, \ \ \underline{k}'_H(0)=1.\]
Here $f' = df/dr$ for every differentiable function $f$ on $\mathbf{R}$.
We remark the following:
\begin{enumerate}
\item (Laplacian comparison theorem on manifolds). For every $n$-dimensional complete Riemannian manifold $M$ with $\mathrm{Ric}_M \ge H(n-1)$ and point $p \in M$, we have
\[\Delta r_p(x) \ge -(n-1)\frac{\underline{k}'_H(\overline{p,x})}{\underline{k}_H(\overline{p,x})}\] 
for every $x \in M \setminus (C_p \cup \{p\})$.
\item For the $n$-dimensional space form $\underline{M}_H^n$ whose sectional curvature is equal to $H$ and every point $\underline{p} \in \underline{M}_H$,   
we have 
\[\Delta r_{\underline{p}}(x)=-(n-1)\frac{\underline{k}'_H(\overline{p,x})}{\underline{k}_H(\overline{p,x})}\]
for every $x \in \underline{M}_H \setminus (C_{\underline{p}} \cup \{\underline{p}\})$.
\item If an $n$-dimensional complete Riemannian manifold $M$ satisfies that 
\[\Delta r_p(x) \ge -(n-1)\frac{\underline{k}'_H(\overline{p,x})}{\underline{k}_H(\overline{p,x})}\]
for every $p \in M$ and $x \in M \setminus (C_p \cup \{p\})$, then we have $\mathrm{Ric}_{M} \ge H(n-1)$.
\end{enumerate}
See for instance \cite{ch1}, \cite{ch-co1}, \cite{KS1}, \cite{oh} and \cite{Vi2}.
The following theorem is the main result in this subsection.
This formulation is given in \cite{KS1} by Kuwae-Shioya on weighted Alexandrov spaces.
\begin{theorem}[Laplacian comparison theorem]\label{26}
Let $H$ be a real number, $(Y, y, \upsilon)$ a $(n, H)$-Ricci limit space $(n \ge 2)$,
$x$ a point in $Y$ and $R$ a positive number and $f$ a nonnegative valued Lipschitz function on $B_R(x)$.
Then, we have 
\[\int_{B_R(x)}\langle df, dr_x\rangle d \upsilon \ge -(n-1)\int_{B_R(x)}\frac{\underline{k}'_{H}(\overline{x,w})}{\underline{k}_H(\overline{x, w})}f(w) d \upsilon.\]
\end{theorem}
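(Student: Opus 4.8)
The plan is to prove the inequality first on each approximating manifold, where it is the classical Laplacian comparison theorem, and then to pass to the limit via the approximation theorem (Theorem~\ref{app}) together with the convergence machinery of Section~4. After a harmless rescaling of the metrics we may assume the standing hypotheses of Section~4 apply to the approximating sequence: $(M_i, m_i, \underline{\mathrm{vol}})$ with $\mathrm{Ric}_{M_i} \ge K_i(n-1)$, $K_i \to H$, and $(M_i, m_i, \underline{\mathrm{vol}}) \to (Y, y, \upsilon)$; we fix $x_i \in M_i$ with $x_i \to x$ and write $L$ for a uniform bound on $\mathbf{Lip}f$.

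The manifold-level statement I would record is: for every nonnegative Lipschitz function $g$ on $B_R(x_i)$,
\[\int_{B_R(x_i)}\langle dg, dr_{x_i}\rangle\, d\underline{\mathrm{vol}} \ \ge\ -(n-1)\int_{B_R(x_i)}\frac{\underline{k}'_{K_i}(\overline{x_i,w})}{\underline{k}_{K_i}(\overline{x_i,w})}\, g(w)\, d\underline{\mathrm{vol}}.\]
This follows from the pointwise comparison $\Delta r_{x_i} \ge -(n-1)\,\underline{k}'_{K_i}(r_{x_i})/\underline{k}_{K_i}(r_{x_i})$ on $M_i \setminus (C_{x_i}\cup\{x_i\})$ by integrating by parts on $B_R(x_i)$: the boundary contribution $\int_{\partial B_R(x_i)} g\,\langle\nabla r_{x_i},\nu\rangle$ is nonnegative since $g \ge 0$ and $\langle\nabla r_{x_i},\nu\rangle = 1$ a.e., the singular part of $\Delta r_{x_i}$ along the cut locus has the favourable sign, and the $O(1/\overline{x_i,w})$ singularity at $x_i$ is integrable because $n \ge 2$, with a bound uniform in $i$ by Bishop--Gromov (Theorem~\ref{BG}), so one excises $B_\delta(x_i)$ and lets $\delta \to 0$.

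Now fix $\epsilon > 0$, extend $f$ to a Lipschitz function on $\overline{B}_R(x)$, and apply Theorem~\ref{app} with $A_i = A_\infty = \emptyset$: this gives an open set $\Omega_\epsilon \subset B_R(x)$, a $C(n,L)$-Lipschitz function $f^\epsilon$ on $B_R(x)$ and $C(n,L)$-Lipschitz $f^\epsilon_i$ on $B_R(x_i)$ with $(f^\epsilon_i, df^\epsilon_i) \to (f^\epsilon, df^\epsilon)$ on $\Omega_\epsilon$, $\upsilon(B_R(x)\setminus\Omega_\epsilon) < \epsilon$, $|f - f^\epsilon|_{L^\infty(B_R(x))} < \epsilon$ and $\upsilon(B_R(x))^{-1}\int_{B_R(x)}|df^\epsilon - df|^2\, d\upsilon < \epsilon$. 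Since $f \ge 0$ one has $f^\epsilon \ge -\epsilon$; set $g^\epsilon := f^\epsilon + \epsilon \ge 0$ and $g^\epsilon_i := \max\{f^\epsilon_i + \epsilon, 0\} \ge 0$ (still $C(n,L)$-Lipschitz), and note $(g^\epsilon_i, dg^\epsilon_i) \to (g^\epsilon, dg^\epsilon)$ still holds on $\Omega_\epsilon$ up to a $\upsilon$-null set (Corollary~\ref{10106}, since $g^\epsilon_i = f^\epsilon_i + \epsilon$ near any point where $f^\epsilon > -\epsilon$, for $i$ large). Applying the manifold inequality to $g^\epsilon_i$, I would let $i \to \infty$. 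On the left, Proposition~\ref{10104} gives $(r_{x_i}, dr_{x_i}) \to (r_x, dr_x)$, so Theorem~\ref{10105} makes $\{\langle dg^\epsilon_i, dr_{x_i}\rangle\}$ infinitesimally constantly convergent to $\langle dg^\epsilon, dr_x\rangle$ at $\upsilon$-a.e.\ point of $\Omega_\epsilon$; combining Proposition~\ref{10103} on a compact exhaustion of $\Omega_\epsilon$ with the uniform bound $|\langle dg^\epsilon_i, dr_{x_i}\rangle| \le C(n,L)$, the estimate $\upsilon(B_R(x)\setminus\Omega_\epsilon) < \epsilon$, and Proposition~\ref{sup} on the exceptional set, one gets $\limsup_{i\to\infty}\bigl|\int_{B_R(x_i)}\langle dg^\epsilon_i, dr_{x_i}\rangle\, d\underline{\mathrm{vol}} - \int_{B_R(x)}\langle dg^\epsilon, dr_x\rangle\, d\upsilon\bigr| \le \Psi(\epsilon;n,L)$. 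On the right, $g^\epsilon_i \to g^\epsilon$ uniformly, $\underline{k}'_{K_i}/\underline{k}_{K_i} \to \underline{k}'_H/\underline{k}_H$ uniformly on compacts of $(0,\infty)$, and away from the center the integrand converges, while $\int_{B_\delta(x_i)}(\underline{k}'_{K_i}/\underline{k}_{K_i})(r_{x_i})\,g^\epsilon_i \le C(n,L)\delta^{n-1}$ and likewise for $\upsilon$ on $B_\delta(x)$ by Bishop--Gromov (which persists in the limit); so Proposition~\ref{lap} together with Proposition~\ref{101020} and absolute continuity of $(\underline{k}'_H/\underline{k}_H)(r_x)\,d\upsilon$ with respect to $\upsilon$ give $\int_{B_R(x_i)}(\underline{k}'_{K_i}/\underline{k}_{K_i})(r_{x_i})\,g^\epsilon_i\, d\underline{\mathrm{vol}} \to \int_{B_R(x)}(\underline{k}'_H/\underline{k}_H)(r_x)\,g^\epsilon\, d\upsilon$. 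Passing to the limit,
\[\int_{B_R(x)}\langle dg^\epsilon, dr_x\rangle\, d\upsilon \ \ge\ -(n-1)\int_{B_R(x)}\frac{\underline{k}'_H(r_x)}{\underline{k}_H(r_x)}\,g^\epsilon\, d\upsilon - \Psi(\epsilon;n,L).\]

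Finally, letting $\epsilon \to 0$, the bound $|g^\epsilon - f|_{L^\infty} \le 2\epsilon$ controls the right-hand integral (using $\int_{B_R(x)}(\underline{k}'_H/\underline{k}_H)(r_x)\,d\upsilon < \infty$, again from the $n \ge 2$ Bishop--Gromov estimate), while $\int_{B_R(x)}|dg^\epsilon - df|^2\, d\upsilon \to 0$ and Cauchy--Schwarz give $\int_{B_R(x)}\langle dg^\epsilon, dr_x\rangle\, d\upsilon \to \int_{B_R(x)}\langle df, dr_x\rangle\, d\upsilon$; the asserted inequality follows. The step I expect to be the main obstacle is the limit of the left-hand side: Theorem~\ref{app} produces the convergence $(g^\epsilon_i, dg^\epsilon_i) \to (g^\epsilon, dg^\epsilon)$ only on $\Omega_\epsilon$, not on all of $B_R(x)$, so one must carefully balance the genuine infinitesimal convergence there (Theorem~\ref{10105}, Proposition~\ref{10103}) against the uniform Lipschitz control on the small exceptional set $B_R(x)\setminus\Omega_\epsilon$ (Proposition~\ref{sup}), and simultaneously handle the integrable $1/r_x$ singularity of $\underline{k}'_H/\underline{k}_H$ at the center $x$ uniformly in $i$.
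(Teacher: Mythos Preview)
Your overall strategy matches the paper's: apply the classical comparison on each $M_i$ (this is Corollary~\ref{mmmmm}), approximate $f$ via Theorem~\ref{app}, and pass to the limit using Theorem~\ref{10105} and Proposition~\ref{10103}. The paper also handles nonnegativity by adding a small cut-off rather than taking a pointwise maximum, but that is a minor variation.

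The real gap is your repeated assertion that $\int_{B_R(x)}(\underline{k}'_H/\underline{k}_H)(r_x)\,d\upsilon<\infty$, which you justify by ``the $n\ge2$ Bishop--Gromov estimate.'' Bishop--Gromov on the limit space gives only a \emph{lower} bound $\upsilon(B_r(x))\ge c\,r^n$; it gives no upper bound of the form $\upsilon(B_r(x))\le C\,r^n$. In a collapsed limit one can have $\upsilon(B_r(x))\asymp r$ (e.g.\ $Y$ a line arising from $T^{n-1}(r_i)\times\mathbf{R}$), and then $\int_{B_\delta(x)}(1/r_x)\,d\upsilon=\infty$. Correspondingly, your claimed convergence of the right-hand side fails: on the approximating manifolds the renormalised integrals $\int_{B_R(x_i)}(1/r_{x_i})\,d\underline{\mathrm{vol}}$ need not be uniformly bounded, so Proposition~\ref{lap}/\ref{101020} do not apply. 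The paper deals with exactly this by first proving the inequality when $\mathrm{supp}\,f$ avoids $\{x\}\cup\partial B_{\pi/\sqrt{H}}(x)$ (so the weight is bounded), and then treating the general case by a careful dichotomy: if $\liminf_{r\to 0}\upsilon(B_r(x))/r=0$ or $f(x)=0$, a cut-off at scale $r_i\to 0$ removes the singularity with vanishing error; if $\liminf_{r\to 0}\upsilon(B_r(x))/r>0$ and $f(x)>0$, one shows via \cite{ho} (Claim~\ref{coco}) that the right-hand integral is actually $+\infty$, so the inequality holds trivially.

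There is a second issue you do not address: for $H>0$ the weight $\underline{k}'_H/\underline{k}_H$ also blows up at $r=\pi/\sqrt{H}$, and changes sign. If $\partial B_{\pi/\sqrt{H}}(x)\neq\emptyset$ one cannot simply truncate; the paper invokes the maximal-diameter rigidity $Y=\mathbf{S}^0*\partial B_{\pi/2}(x)$ to get exact volume information near the antipode, which yields $\upsilon(B_r(z))\asymp r^n$ there and hence finiteness of the weighted integral, after which dominated convergence and a two-sided cut-off finish the job. Your argument would need a replacement for this step.
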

\begin{proof}
Let $(M_i, m_i, \underline{\mathrm{vol}}) \rightarrow (Y, y, \upsilon)$ with $\mathrm{Ric}_{M_i}\ge H_i(n-1)$ satisfying $H_i \rightarrow H$.
We take $L \ge 1$ and $x(j) \in M_j$ satisfying $|f|_{L^{\infty}(B_R(x))}+\mathbf{Lip}f + \upsilon (B_R(x))\le L$
and $x(j) \rightarrow x$.
First, we assume that $\mathrm{supp}f \cap (\{x \} \cup \partial B_{\pi/\sqrt{H}}(x)) = \emptyset$. 
Here, if $H \le 0$, then $\partial B_{\pi/\sqrt{H}}(x)) = \emptyset$.
Then there exists $\tau > 0$ such that
$\mathrm{supp}f \cap \overline{B}_{\tau}\left(\{x \} \cup \partial B_{\pi/\sqrt{H}}(x)\right) = \emptyset$.
By Theorem \ref{app}, for every $\epsilon > 0$, there exist an open set $\Omega^{\epsilon} \subset B_R(x) \setminus \overline{B}_{\tau}\left(\{x \} \cup \partial B_{\pi/\sqrt{H}}(x)\right)$, $2L$-Lipschitz function $f^{\epsilon}$ on $B_R(x)$ and  
a sequence of $2L$-Lipschitz function $f_i^{\epsilon}$ on $B_R(x_i)$ such that 
$\mathrm{supp}f^{\epsilon} \cap \overline{B}_{\tau}\left(\{x \} \cup \partial B_{\pi/\sqrt{H}}(x)\right) = \emptyset$,
$\mathrm{supp}f^{\epsilon}_i \cap \overline{B}_{\tau}\left(\{x(i) \} \cup \partial B_{\pi/\sqrt{H}}(x(i))\right) = \emptyset$, $(f_i^{\epsilon}, df_i^{\epsilon}) \rightarrow (f^{\epsilon}, df^{\epsilon})$ on $\Omega^{\epsilon}$ and 
\[\frac{\upsilon\left(\Omega^{\epsilon} \cup \overline{B}_{\tau}\left(\{x \} \cup \partial B_{\pi/\sqrt{H}}(x)\right)\right)}{\upsilon(B_R(x))}+
|f-f^{\epsilon}|_{L^{\infty}(B_R(x))} + \frac{1}{\upsilon(B_R(x))}\int_{B_R(x)}|df-df^{\epsilon}|^2d\upsilon < \epsilon.\]
By Proposition \ref{cov}, we can assume that there exists a finite pairwise disjoint collection $\{\overline{B}_{r_i}(x_i)\}_{1 \le i \le N}$ 
such that $\Omega^{\epsilon}= \bigcup_{i=1}^NB_{r_i}(x_i)$.
We take $x_i(j) \in M_j$ satisfying $x_i(j) \rightarrow x_i$.
Then, by Proposition \ref{10103}, we have 
\begin{align}
\int_{B_R(x(j))}\langle df_j^{\epsilon}, dr_{x(j)}\rangle d\underline{\mathrm{vol}}&=\int_{B_R(x(j)) \setminus \overline{B}_{\tau}\left(\{x(j) \} \cup \partial B_{\pi/\sqrt{H}}(x(j))\right)}\langle df_j^{\epsilon}, dr_{x(j)}\rangle d\underline{\mathrm{vol}}\\
&=\sum_{i=1}^N\int_{B_{r_i}(x_i(j))}\langle df_j^{\epsilon}, dr_{x(j)}\rangle d\underline{\mathrm{vol}} \pm \Psi(\epsilon;n,L, H) \\
&=\sum_{i=1}^N\int_{B_{r_i}(x_i(j))}\langle df^{\epsilon}, dr_x \rangle d\upsilon \pm \Psi(\epsilon;n, L, H)\\
&=\int_{B_R(x) \setminus \overline{B}_{\tau}\left(\{x \} \cup \partial B_{\pi/\sqrt{H}}(x)\right)}\langle df^{\epsilon}, dr_x\rangle d\upsilon \pm \Psi(\epsilon;n, L, H)\\
&=\int_{B_R(x)}\langle df^{\epsilon}, dr_x \rangle d\upsilon \pm \Psi(\epsilon;n, L, H)\\
&=\int_{B_R(x)}\langle df, dr_s\rangle d\upsilon \pm \Psi(\epsilon;n, L, H)
\end{align}
for every sufficiently large $j$.
On the other hand, for every $i$, there exists a Lipschitz function $\psi_i$ on $M_i$ such that $0 \le \psi_i \le 1$, 
$\psi_i|_{\overline{B}_{\tau/2}\left(\{x \} \cup \partial B_{\pi/\sqrt{H}}(x)\right)}=0$, $\psi_i|_{M_i \setminus \overline{B}_{\tau}\left(\{x \} \cup \partial B_{\pi/\sqrt{H}}(x)\right)}=1$ and $\mathbf{Lip}\psi_i \le C(n, \tau)$.
Since $f_i^{\epsilon}+\Psi(\epsilon;n, L, H) \ge 0$ on $B_R(x(i))$ for every sufficiently large $i$, we have 
$f_i^{\epsilon}+\Psi(\epsilon;n, L, H)\psi_i \ge 0$ on $B_R(x(i))$.
Therefore by Proposition \ref{10103} and Corollary \ref{mmmmm}, we have 
\begin{align}
&\int_{B_R(x(i))}\langle d\left(f_i^{\epsilon}+ \Psi(\epsilon;n, L, H)\psi_i\right), dr_{x(i)}\rangle d\underline{\mathrm{vol}} \\
&\ge -(n-1)\int_{B_R(x(i))}
\frac{\underline{k}'_{H_i}(\overline{x(i),w})}{\underline{k}_{H_i}(\overline{x(i), w})}(f^{\epsilon}_i+\Psi(\epsilon;n,L, H)\psi_i)d\underline{\mathrm{vol}} \\
&\ge -(n-1)\int_{B_R(x(i))}
\frac{\underline{k}'_{H_i}(\overline{x(i),w})}{\underline{k}_{H_i}(\overline{x(i), w})}f^{\epsilon}_id\underline{\mathrm{vol}} -\Psi(\epsilon;n,L, H)\int_{B_R(x(i))}
\left|\frac{\underline{k}'_{H_i}(\overline{x(i),w})}{\underline{k}_{H_i}(\overline{x(i), w})}\psi_i\right|d\underline{\mathrm{vol}}\\
&\ge -(n-1)\int_{B_R(x(i))}
\frac{\underline{k}'_{H_i}(\overline{x(i),w})}{\underline{k}_{H_i}(\overline{x(i), w})}f^{\epsilon}_id\underline{\mathrm{vol}} -\Psi(\epsilon;n, L, H, \tau, R)\\
&= -(n-1)\int_{B_R(x)}\frac{\underline{k}'_{H}(\overline{x,w})}{\underline{k}_{H}(\overline{x, w})}f^{\epsilon}d\upsilon -\Psi(\epsilon;n, L, H, \tau, R)\\
&= -(n-1)\int_{B_R(x)}\frac{\underline{k}'_{H}(\overline{x,w})}{\underline{k}_{H}(\overline{x, w})}fd\upsilon -\Psi(\epsilon;n, L, H, \tau, R)
\end{align}
for every sufficiently large $i$.
Since
\[\int_{B_R(x(i))}|df^{\epsilon}_i-d\left(f^{\epsilon}_i+\Psi(\epsilon; n, L, H)\psi_i\right)|d\underline{\mathrm{vol}} \le \Psi(\epsilon;n, L, H, \tau),\]
we have
\[\int_{B_R(x)}\langle df, dr_x\rangle d \upsilon \ge -(n-1)\int_{B_R(x)}\frac{\underline{k}'_{H}(\overline{x,w})}{\underline{k}_H(\overline{x, w})}f(w) d \upsilon - \Psi(\epsilon;n, L, H, \tau, R).\]
By letting $\epsilon \rightarrow 0$, we have the assertion of the case $\mathrm{supp}f \cap (\{x \} \cup \partial B_{\pi/\sqrt{H}}(x)) = \emptyset$.
\

Next, we shall discuss the assertion of the case $\mathrm{supp}f \cap (\{x \} \cup \partial B_{\pi/\sqrt{H}}(x)) \neq \emptyset$.
We assume that  $H \le 0$ and $\liminf_{r \rightarrow 0}\upsilon(B_r(x))/r=0$.
We take a sequence of positive numbers $\{r_i\}_i$ satisfying $r_i \rightarrow 0$ and 
$\lim_{i \rightarrow \infty}\upsilon(B_{r_i}(x))/r_i=0.$
We also take a Lipschitz function $\phi_i$ on $Y$ satisfying $\phi_i|_{B_{r_i/2}(x)}=1$, $0 \le \phi_i \le 1$, 
$\mathrm{supp}\phi_i \subset B_{r_i}(x)$ and $\mathbf{Lip}\phi_i \le C(n)/r_i$.
We fix $\epsilon > 0$.
Then we have 
\begin{align}
\left|\int_Y \langle df, dr_x\rangle d\upsilon-\int_Y \langle d(1-\phi_i)f, dr_x\rangle d\upsilon \right |&\le \int_Y|d(\phi_if)|d\upsilon  \\
&=\int_{B_{r_i}(x)}|d(\phi_if)|d\upsilon \\
&\le \frac{C(n, L)}{r_i}\upsilon(B_{r_i}(x)).
\end{align}
On the other hand, since $\underline{k}'_H/\underline{k}_H \ge 0$, we have 
\begin{align}
\int_Y \langle d(1-\phi_i)f, dr_x\rangle d\upsilon & \ge -(n-1)\int_Y\frac{\underline{k}_H'(\overline{x,w})}{\underline{k}_H(\overline{x,w})}(1-\phi_i)fd\upsilon \\
&\ge -(n-1)\int_Y\frac{\underline{k}_H'(\overline{x,w})}{\underline{k}_H(\overline{x,w})}f(w)d\upsilon.
\end{align}
Thus, by letting $i \rightarrow \infty$, we have the assertion of the case 
$H \le 0$ and $\liminf_{r \rightarrow 0}\upsilon(B_r(x))/r=0$.
\

Next, we shall discuss the assertion the case $H \le 0$, $\liminf_{r \rightarrow 0}\upsilon(B_r(x))/r > 0$ and $f(x)=0$.
We take a sequence of positive numbers $\{r_i\}_i$ satisfying $r_i \rightarrow 0$.
We also take $\phi_i$ as above.
Then we have 
\begin{align}
\left|\int_Y \langle df, dr_x\rangle d\upsilon-\int_Y \langle d(1-\phi_i)f, dr_x\rangle d\upsilon \right|&\le \int_Y|d(\phi_if)|d\upsilon \\
&=\int_{B_{r_i}(x)}|d(\phi_if)|d\upsilon \\
&=\int_{B_{r_i}(x)}|fd\phi_i + \phi_i df|d\upsilon \\
&\le \int_{B_{r_i}(x)}|f||d\phi_i|d\upsilon + \mathbf{Lip}f\upsilon(B_{r_i}(x)) \\
&\le r_i \mathbf{Lip}f\frac{\upsilon(B_{r_i}(x))}{r_i} + L\upsilon(B_{r_i}(x)) \\
&=2L\upsilon(B_{r_i}(x)).
\end{align} 
Therefore, we have the assertion of the case 
$H \le 0$, $\liminf_{r \rightarrow 0}\upsilon(B_r(x))/r > 0$ and $f(x)=0$.
\

We shall discuss the case $H \le 0$, $\liminf_{r \rightarrow 0}\upsilon(B_r(x))/r > 0$ and $f(x) > 0$.
Then we remark the following:
\begin{claim}\label{coco}
We have
\[\liminf_{r \rightarrow 0}\upsilon_{-1}(\partial B_r(x) \setminus C_x) > 0.\]
\end{claim}
The proof is as follows.
For every sufficiently small $r >0$, there exists an isometric embedding $\gamma$ from $[0, 3r]$ to $Y$ satisfying 
$\gamma(0)=x$.
We put $x_r = \gamma (5r/2)$.
Then we have
\[\upsilon(B_{3r}(x) \setminus B_{2r}(x))\ge \upsilon(B_{\frac{r}{100}}(x_r)) \ge C(n, H)\upsilon(B_r(x)).\]
By \cite[Theorem $4.6$]{ho}, we have 
\begin{align}
\upsilon_{-1}(\partial B_r(x)\setminus C_x) &\ge \upsilon_{-1}\left(\partial B_r(x) \cap C_x(B_{3r}(x) \setminus B_{2r}(x))\right) \\
&\ge \frac{C(n, H)\upsilon (B_{3r}(x)\setminus B_{2r}(x))}{\mathrm{vol}\,B_{3r}(\underline{p})-\mathrm{vol}\,B_{2r}(\underline{p})}\mathrm{vol}_{n-1}\,\partial B_r(\underline{p}) \\
&\ge C(n, H)\frac{\upsilon(B_r(x))}{r^n}r^{n-1}\ge C(n)\frac{\upsilon(B_r(x))}{r}.
\end{align}
Therefore, we have Claim \ref{coco}.

\
By the assumption, there exist $r_0 > 0$ and $\tau_0 > 0$ such that $f(w) > \tau_0$ for every $w \in B_{r_0}(x)$.
Thus, by  \cite[Theorem $5. 2$]{ho}, we have 
\begin{align}
\int_{Y}\frac{k'_H(\overline{x, w})}{k_H(\overline{x, w})}f(w)d\upsilon &\ge C(n, r_0, H,  \tau_0) \int_{B_{r_0}(x)}\frac{1}{r_x(w)}d\upsilon \\
&\ge C(n, r_0, H, \tau_0)\int_0^{r_0}\int_{\partial B_r(x) \setminus C_x}\frac{1}{r}d\upsilon_{-1}dr \\
&= C(n, r_0, H, \tau_0) \int_0^{r_0}\frac{\upsilon_{-1}(\partial B_r(x) \setminus C_x)}{r}dr = \infty
\end{align}
Therefore, we have the assertion of the case $H \le 0$, $\liminf_{r \rightarrow 0}\upsilon(B_r(x))/r > 0$ and $f(x) > 0$.
\

Finally, we shall discuss the assertion of the case $H>0$.
By rescaling, without loss of generality, we can assume that $H=1$.
If $R < \pi$, then we can prove the assertion by an argument similar to one above.
Therefore, we assume that $R=\pi$ and $\partial B_{\pi}(x)\neq \emptyset$ below.
Then, by \cite{ch-co} (or \cite{ho2}), we have $Y = \mathbf{S}^0 * \partial B_{\pi/2}(x)$.
Here, for every metric space $X$, we define a distance on $[0, \pi] \times X/\{0, \pi\} \times X$ by
\[\overline{(t_1, x_1), (t_2, x_2)}= \arccos(\cos t_1 \cos t_2 + \sin t_1 \sin t_2 \cos \min \{ \overline{x_1, x_2},\pi \}),\]
$\mathbf{S}^0*X$ denote this metric space. 
We take $z \in \partial B_{\pi}(x)$.
By Bishop-Gromov volume comparison theorem for $\upsilon$,  we have 
\[\frac{\upsilon(B_r(x))}{\upsilon(Y)} = \frac{\upsilon(Y \setminus B_{\pi-r}(z))}{\upsilon(Y)}=1-\frac{\upsilon(B_{\pi-r}(x))}{\upsilon(Y)} \le 1-\frac{\mathrm{vol}\,B_{\pi-r}(\underline{p})}{\mathrm{vol}\,B_{\pi}(\underline{p})} =\frac{\mathrm{vol}\,B_r(\underline{p})}{\mathrm{vol}\,\mathbf{S}^n}\]
for every $0 < r \le \pi/2$.
On the other hand, by Bishop-Gromov volume comparison theorem, since $\upsilon(B_r(x))/\upsilon(Y) \ge \mathrm{vol}\,B_r(\underline{p})/\mathrm{vol}\,\mathbf{S}^n$,
we have 
\[\frac{\upsilon(B_r(x))}{\upsilon(Y)} = \frac{\mathrm{vol}\,B_r(\underline{p})}{\mathrm{vol}\,\mathbf{S}^n}.\]
Similarly, we have $\upsilon(B_r(z))/\upsilon(Y) = \mathrm{vol}\,B_r(\underline{p})/\mathrm{vol}\,\mathbf{S}^n$.
Especially, we have 
\[\lim_{r \rightarrow 0}\frac{\upsilon(B_r(x))}{\omega_nr^n}=\lim_{r \rightarrow 0}\frac{\upsilon(B_r(z))}{\omega_nr^n}=
\frac{\upsilon(Y)}{\mathrm{vol}\,\mathbf{S}^n}.\]
Since $\underline{k}_1'(r)/\underline{k}_1(r) \ge 0$ for every $ 0 < r \le \pi/2$, by \cite[Theorem $4. 2$]{ho} and \cite[Theorem $5. 2$]{ho}, we have
\begin{align}
\int_{B_{\frac{\pi}{2}}(x)}\frac{\underline{k}_1'(\overline{x, w})}{\underline{k}_1(\overline{x, w})}d\upsilon 
&\le \int_0^{\frac{\pi}{2}}\int_{\partial B_t(x) \setminus C_x}\frac{C(n)}{r_x}d\upsilon_{-1}dt \\
&=C(n)\int_0^{\frac{\pi}{2}}\frac{\upsilon_{-1}(\partial B_t(x) \setminus C_x)}{t}dt \\
&\le C(n)\int_0^{\frac{\pi}{2}}\frac{\upsilon(B_t(x))}{t}\frac{\mathrm{vol}_{n-1}\,\partial B_t(\underline{p})}{\mathrm{vol}\,B_t(\underline{p})}dt\\
& \le C(n)\int_0^{\frac{\pi}{2}}\frac{\upsilon(B_t(x))}{t^2}dt \le C(n).
\end{align}
We remark that $C_z=\{x\}$ and $C_x=\{z\}$. 
Similarly, we have 
\begin{align}
\int_{M \setminus B_{\frac{\pi}{2}}(x)}\left|\frac{\underline{k}_1'(\overline{x, w})}{\underline{k}_1(\overline{x, w})}\right|d\upsilon &= \int_{B_{\frac{\pi}{2}}(z)}\left|\frac{\underline{k}_1'(\overline{x, w})}{\underline{k}_1(\overline{x, w})}\right|d\upsilon \\
&\le C(n) \int_{0}^{\frac{\pi}{2}}\int_{\partial B_t(z)}\frac{1}{t}d\upsilon_{-1}dt \\
&\le C(n) \int_0^{\frac{\pi}{2}}\frac{\upsilon_{-1}(\partial B_t(z) \setminus C_z)}{t}dt \le C(n).
\end{align}
We take $r_i > 0$ satisfying $r_i \rightarrow 0$ and $\phi_i$ as above.
We also take Lipschitz functions $\hat{\phi}_i$ on $Y$ satisfying $0 \le \hat{\phi}_i \le 1$, $\hat{\phi}_i|_{B_{r_i/2}(z)}=1$, 
$\mathrm{supp}\hat{\phi}_i \subset B_{r_i}(z)$ and $\mathbf{Lip}\hat{\phi}_i \le C(n)/r_i$.
Then we have 
\begin{align}
\left|\int_Y \langle df, dr_x\rangle d\upsilon-\int_Y \langle d(1-\phi_i)(1-\hat{\phi}_i)f, dr_x\rangle d\upsilon \right|
&\le \int_Y|d(f-(1-\phi_i)(1-\hat{\phi}_i)f)|d\upsilon \\
&=\int_{B_{r_i}(x)}|d(\phi_if)|d\upsilon + \int_{B_{r_i}(z)}|d(\hat{\phi}_if)|d\upsilon \\
&\le \mathbf{Lip}f\frac{\upsilon(B_{r_i}(x))}{r_i}+ \mathbf{Lip}f\frac{\upsilon(B_{r_i}(z))}{r_i} \\
&\stackrel{i \rightarrow \infty}{\rightarrow} 0.
\end{align}
On the other hand, by dominated convergence theorem, we have 
\begin{align}
&\int_Y \langle d(1-\phi_i)(1-\hat{\phi}_i)f, dr_x \rangle d\upsilon \\
&\ge -(n-1)\int_Y\frac{\underline{k}_1'(\overline{x, w})}{\underline{k}_1(\overline{x, w})}(1-\phi_i)(1-\hat{\phi}_i)f(w)d\upsilon \\
&\ge -(n-1)\int_Y\frac{\underline{k}_1'(\overline{x, w})}{\underline{k}_1(\overline{x, w})}f(w)d\upsilon -(n-1)\int_Y\left|\frac{\underline{k}_1'(\overline{x, w})}{\underline{k}_1(\overline{x, w})}\right||(1-\phi_i)(1-\hat{\phi}_i)f(w)-f(w)|d\upsilon \\
&\stackrel{i \rightarrow \infty}{\rightarrow} -(n-1)\int_Y\frac{\underline{k}_1'(r_x(w))}{\underline{k}_1(r_x(w))}f(w)d\upsilon.
\end{align}
Therefore we have the assertion.
\end{proof}
We end this section by giving a corollary of Theorem \ref{26}.
The corollary is well known in the setting of metric measure spaces.
See for instance \cite{oh, St1, St2, lo, lo-vi, Vi1, Vi2}.
We will give a new proof via Laplacian comparison theorem on Ricci limit spaces:
\begin{corollary}\label{87593}
Let $\{H_i\}_{i=1, 2, _{\cdots}, \infty}$ be a sequence of real numbers, $\{(M_i, m_i)\}_{i \in \mathbf{N}}$ a sequence of pointed $n$-dimensional complete Riemannian manifolds with $\mathrm{Ric}_{M_i}\ge H_i(n-1)$ and $(M_{\infty}, m_{\infty})$ a pointed $n$-dimensional complete Riemannian manifold $(n \ge 2)$.
We assume that $H_i \rightarrow H_{\infty}$ and $(M_i, m_i) \rightarrow (M_{\infty}, m_{\infty})$.
Then we have $\mathrm{Ric}_{M_{\infty}} \ge H_{\infty}(n-1)$.
\end{corollary}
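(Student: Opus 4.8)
The plan is to derive the statement from Theorem \ref{26} (distributional Laplacian comparison on the limit) together with the third item in the list preceding Theorem \ref{26} (namely, that a complete Riemannian manifold on which the distance functions satisfy the pointwise Laplacian comparison inequality has $\mathrm{Ric}\ge H_\infty(n-1)$). First I would upgrade the pointed Gromov-Hausdorff convergence to a measured one. Since $H_i\to H_\infty$, the $H_i$ are bounded below by some fixed constant, so Bishop-Gromov (Theorem \ref{BG}) gives a uniform doubling bound for the renormalized measures $\underline{\mathrm{vol}}$, and Proposition \ref{measure} yields a subsequence along which $(M_i,m_i,\underline{\mathrm{vol}})$ converges in the measured Gromov-Hausdorff sense to $(M_\infty,m_\infty,\upsilon)$ for some Radon measure $\upsilon$; the metric limit is the given $M_\infty$ by uniqueness of Gromov-Hausdorff limits. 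Because $M_\infty$ is an $n$-dimensional manifold, Colding's volume convergence theorem (in the form used for \cite[Theorem $5.9$]{ch-co1}) identifies $\upsilon$ with the renormalized Riemannian volume $\mathrm{vol}^{g_{M_\infty}}/\mathrm{vol}^{g_{M_\infty}}(B_1(m_\infty))$, a constant multiple of the Riemannian measure. In particular the Sobolev space, the cotangent bundle, and the Laplacian of the metric measure space $(M_\infty,\upsilon)$ coincide with the classical ones on the smooth manifold $M_\infty$, and $\langle df, dr_x\rangle$ is the usual inner product of gradients. Since $H_i\to H_\infty$, the triple $(M_\infty,m_\infty,\upsilon)$ is an $(n,H_\infty)$-Ricci limit space, so Theorem \ref{26} is applicable.

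Next I would fix a point $p\in M_\infty$ and apply Theorem \ref{26}: for every $R>0$ and every nonnegative Lipschitz function $f$ on $B_R(p)$,
\[
\int_{B_R(p)}\langle df, dr_p\rangle\, d\upsilon \ge -(n-1)\int_{B_R(p)}\frac{\underline{k}'_{H_\infty}(\overline{p,w})}{\underline{k}_{H_\infty}(\overline{p,w})}\,f(w)\, d\upsilon.
\]
The distance function $r_p$ is smooth on the open set $U:=M_\infty\setminus(C_p\cup\{p\})$, which has full $\upsilon$-measure (recall $\upsilon(C_p)=0$). Given $\phi\in C_c^\infty(U)$ with $\phi\ge 0$, I would choose $R$ so that $\mathrm{supp}\,\phi\subset B_R(p)$, extend $\phi$ by zero to a nonnegative Lipschitz function on $B_R(p)$, and use integration by parts on the smooth manifold, $\int_U(\Delta r_p)\phi\,d\upsilon=\int_U\langle dr_p, d\phi\rangle\,d\upsilon$, to obtain $\int_U(\Delta r_p)\phi\,d\upsilon\ge -(n-1)\int_U\frac{\underline{k}'_{H_\infty}(\overline{p,w})}{\underline{k}_{H_\infty}(\overline{p,w})}\,\phi\,d\upsilon$. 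Since $\Delta r_p$ and $w\mapsto \underline{k}'_{H_\infty}(\overline{p,w})/\underline{k}_{H_\infty}(\overline{p,w})$ are continuous on $U$ and $\phi\ge 0$ is arbitrary, this forces the pointwise comparison $\Delta r_p(x)\ge -(n-1)\underline{k}'_{H_\infty}(\overline{p,x})/\underline{k}_{H_\infty}(\overline{p,x})$ for all $x\in U$. As $p$ was arbitrary, the third item in the list preceding Theorem \ref{26} gives $\mathrm{Ric}_{M_\infty}\ge H_\infty(n-1)$, which is the desired conclusion; passing to a subsequence above was harmless since $M_\infty$ is fixed.

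I expect the one genuinely delicate point to be the second paragraph's first step, i.e.\ ensuring that the limit measure $\upsilon$ is a constant multiple of the Riemannian volume (so that the weak inequality of Theorem \ref{26} actually translates into the classical Laplacian comparison via integration by parts) and that the distributional inequality can be localized to the smooth part of $M_\infty$. The fact that the cut locus has $\upsilon$-measure zero, together with the continuity of $\Delta r_p$ on $M_\infty\setminus(C_p\cup\{p\})$, is exactly what legitimizes this localization; the remaining steps are routine once Theorem \ref{26} and the volume convergence of Cheeger-Colding are in hand.
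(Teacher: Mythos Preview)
Your proof is correct and follows essentially the same approach as the paper: the paper also invokes Cheeger--Colding volume convergence to get measured convergence with the Riemannian volume, then applies Theorem~\ref{26} to obtain the pointwise Laplacian comparison on $M_\infty\setminus(C_x\cup\{x\})$, and finally concludes via item~3 in the list preceding Theorem~\ref{26}. The paper's proof is terser---it cites \cite[Theorem~5.9]{ch-co} directly for the measured convergence and suppresses the distributional-to-pointwise step you spell out---but the logical structure is identical.
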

\begin{proof}
By \cite[Theorem $5. 9$]{ch-co}, we have $(M_i, m_i, H^n) \rightarrow (M_{\infty}, m_{\infty}, H^n)$.
Then, by Theorem \ref{26}, we have, $\Delta r_x(w) \ge -(n-1)\underline{k}'_{H_{\infty}}(\overline{x, w})/\underline{k}_{H_{\infty}}(\overline{x, w})$ for every $x \in M_{\infty}$ and $w \in M_{\infty} \setminus (C_x \cup \{x\})$.
Therefore, we have the assertion.
\end{proof}
\section{Appendix}
\subsection{Infinitesimal doubling condition and Lebesgue set}
In this subsection, we shall study metric measure spaces satisfying a good property (Definition \ref{infinitesimal}).
On such metric measure spaces, we can construct an outer measure associated to the measure and give several properties about it.
Especially, we will define Lebesgue set and give 
several properties of the set (see Corollary \ref{131} and Proposition \ref{133}).
\begin{definition}\label{infinitesimal}
Let $(Z, \upsilon)$ be a metric measure space, $A$ a Borel subset of $Z$ and $C \ge 1$.
We say that $(Z, \upsilon)$ \textit{satisfies infinitesimal doubling condition on $A$ with doubling constant $C$} if the following properties hold:
\begin{enumerate}
\item $\upsilon(K) < \infty$ for every bounded Borel subset $K$ of $A$.
\item For every $z \in A$, there exists $r > 0$ such that  
\[\upsilon(\overline{B}_{2s}(z)) \le C\upsilon(\overline{B}_s(z))\]
for every $0 < s < r$.
\end{enumerate}
\end{definition}
We shall give an example:
\begin{example}
Let $(Y, y, \upsilon)$ be a Ricci limit space, $x$ a point in $Y$, $R$ a positive number satisfying 
$\partial B_R(x) \setminus C_x \neq \emptyset$.
Then, the metric measure space $(\partial B_R(x), \upsilon_{-1})$ satisfies
infinitesimal doubling condition on $\partial B_R(x) \setminus C_x$.
In fact, we have 
\[\limsup_{r \rightarrow 0}\frac{\upsilon_{-1}(\partial B_R(x) \cap \overline{B}_{2r}(z))}{\upsilon_{-1}(\partial B_R(x) \cap B_r(z) \setminus C_x)} \le C(n)\]
for every $z \in \partial B_R(x) \setminus C_x $.
This follows from \cite[Corollary $4. 7$]{ho} and \cite[Theorem $5. 2$]{ho}.
\end{example}
We fix a metric measure space $(Z, \upsilon)$ and a Borel subset $A$ of $Z$ satisfying that 
$(Z, \upsilon)$ satisfies infinitesimal doubling condition on $A$ with doubling constant $C \ge 1$ below.
For every $\delta > 0$ and $\hat{A} \subset Z$, we put
\[\upsilon^*_{\delta}(\hat{A})= \inf \left\{\sum_{i=1}^{\infty}\upsilon (B_{r_{\lambda}}(x_{\lambda}));\ 0 \le r_{i}<\delta, \ \hat{A} \subset \bigcup_{i=1}^{\infty}B_{r_{i}}(x_{i}) \right\}\]
and define an outer measure $\upsilon^*$ on $Z$ by 
\[\upsilon^*(\hat{A})=\lim_{\delta \rightarrow 0}\upsilon^*_{\delta}(\hat{A}).\]
We also put $\mathcal{M}= \{\hat{A} \in 2^Z; \upsilon^*(B \cap \hat{A}) + \upsilon^*(B \setminus \hat{A}) \le \upsilon^*(B)$ for every $B \in 2^Z\}$.
We shall recall that $(Z, \mathcal{M}, \upsilon^*)$ is a complete measure space and that $ \mathcal{B}(Z) = \{B \in 2^Z; B$ is a Borel subset of $Z$ $\} \subset \mathcal{M}$.
See for instance chapter $1$ in \cite{Si}.
By the definition, we have $\upsilon(\hat{A}) \le \upsilon^*(\hat{A})$ for every Borel subset $\hat{A}$ of $Z$.
\begin{proposition}\label{borel}
We have $\upsilon^*(\hat{A})=\upsilon(\hat{A})$ for every Borel subset $\hat{A} \subset A$.
\end{proposition}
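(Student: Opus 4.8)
The plan is to prove $\upsilon^{*}(\hat{A})\le \upsilon(\hat{A})$, the reverse inequality $\upsilon(\hat{A})\le\upsilon^{*}(\hat{A})$ being already recorded. Since $\upsilon^{*}$ restricted to Borel sets is a measure (Borel sets lie in $\mathcal{M}$) and $\upsilon$ is countably additive, decomposing $\hat{A}$ into the Borel pieces $\hat{A}\cap(B_{k}(z_{0})\setminus B_{k-1}(z_{0}))$ reduces us to the case where $\hat{A}$ is bounded; if $\upsilon(\hat{A})=\infty$ the inequality is trivial, so we may assume $\upsilon(\hat{A})<\infty$.

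First I would establish a crude comparability bound: $\upsilon^{*}(D)\le C^{3}\upsilon(D)$ for every Borel $D\subset A$. Fixing $\gamma,\delta>0$ and a bounded open $O\supset D$ with $\upsilon(O)<\upsilon(D)+\gamma$, I apply the covering lemma (Proposition \ref{cov}) to the fine cover of $D$ by the closed balls $\overline{B}_{r}(x)$ with $x\in D$, $\overline{B}_{8r}(x)\subset O$ and $8r<\min\{\delta,r_{x}\}$, where $r_{x}$ is the doubling radius from Definition \ref{infinitesimal}. This yields pairwise disjoint $\{\overline{B}_{r_{i}}(x_{i})\}$ with $D\subset\bigcup_{i}\overline{B}_{5r_{i}}(x_{i})\subset\bigcup_{i}B_{6r_{i}}(x_{i})$; iterating the doubling inequality three times gives $\upsilon(B_{6r_{i}}(x_{i}))\le\upsilon(\overline{B}_{8r_{i}}(x_{i}))\le C^{3}\upsilon(\overline{B}_{r_{i}}(x_{i}))$, and since the $\overline{B}_{r_{i}}(x_{i})$ are disjoint in $O$ and $6r_{i}<\delta$ I get $\upsilon^{*}_{\delta}(D)\le\sum_{i}\upsilon(B_{6r_{i}}(x_{i}))\le C^{3}\upsilon(O)\le C^{3}(\upsilon(D)+\gamma)$. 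Letting $\delta,\gamma\to 0$ proves the bound; in particular $\upsilon^{*}(D)=0$ whenever $\upsilon(D)=0$.

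For the main estimate, fix $\epsilon,\delta>0$ and a bounded open $O\supset\hat{A}$ with $\upsilon(O)<\upsilon(\hat{A})+\epsilon$, and apply Proposition \ref{cov} to the analogous fine cover of $\hat{A}$ to get pairwise disjoint $\overline{B}_{r_{i}}(x_{i})$ with $x_{i}\in\hat{A}$, $\overline{B}_{8r_{i}}(x_{i})\subset O$, $8r_{i}<\min\{\delta,r_{x_{i}}\}$, and $\hat{A}\setminus\bigcup_{i\le N}\overline{B}_{r_{i}}(x_{i})\subset\bigcup_{i>N}\overline{B}_{5r_{i}}(x_{i})$ for every $N$. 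Disjointness gives $\sum_{i}\upsilon(\overline{B}_{r_{i}}(x_{i}))=\upsilon(\bigsqcup_{i}\overline{B}_{r_{i}}(x_{i}))\le\upsilon(O)<\infty$, so the series converges; together with $\upsilon(\overline{B}_{5r_{i}}(x_{i}))\le C^{3}\upsilon(\overline{B}_{r_{i}}(x_{i}))$ this forces $N_{0}:=\hat{A}\setminus\bigcup_{i}\overline{B}_{r_{i}}(x_{i})$, a Borel subset of $A$, to be $\upsilon$-null, hence $\upsilon^{*}$-null by the previous paragraph, hence $\upsilon^{*}_{\delta}(N_{0})=0$. Using that $\upsilon$ is finite on bounded sets and continuous from above for the decreasing sets $B_{r_{i}+\eta}(x_{i})$ with intersection $\overline{B}_{r_{i}}(x_{i})$, I choose $\eta_{i}>0$ with $r_{i}+\eta_{i}<\delta$ and $\upsilon(B_{r_{i}+\eta_{i}}(x_{i}))<\upsilon(\overline{B}_{r_{i}}(x_{i}))+\epsilon 2^{-i}$. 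Then $\hat{A}\subset\bigcup_{i}B_{r_{i}+\eta_{i}}(x_{i})\cup N_{0}$; adjoining to this family a cover of $N_{0}$ by balls of radius $<\delta$ of total measure $<\epsilon$ (possible since $\upsilon^{*}_{\delta}(N_{0})=0$) produces a countable cover of $\hat{A}$ by balls of radius $<\delta$ whose measures sum to at most $\sum_{i}\upsilon(\overline{B}_{r_{i}}(x_{i}))+\epsilon+\epsilon\le\upsilon(O)+2\epsilon<\upsilon(\hat{A})+3\epsilon$. Thus $\upsilon^{*}_{\delta}(\hat{A})<\upsilon(\hat{A})+3\epsilon$ for every $\delta$, so $\upsilon^{*}(\hat{A})\le\upsilon(\hat{A})+3\epsilon$, and $\epsilon\to 0$ finishes the argument.

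The one genuine obstacle is the loss of the constant $C^{3}$ that occurs whenever the Vitali balls $\overline{B}_{r_{i}}(x_{i})$ are replaced by the enlarged balls $\overline{B}_{5r_{i}}(x_{i})$ needed to actually cover $\hat{A}$ — this is precisely why the crude bound of the second paragraph is not already the conclusion. The resolution is that the enlargement is only needed on the leftover set $N_{0}$, which the Vitali property makes $\upsilon$-null, so there the $C^{3}$ factor is harmless; on the rest of $\hat{A}$ the disjoint balls themselves do the covering, up to an arbitrarily small dilation changing their measures by an arbitrarily small amount. Verifying that Proposition \ref{cov} genuinely applies to the chosen families (that they are fine covers with all relevant radii below $\delta$) is routine.
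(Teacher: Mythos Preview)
Your proof is correct and follows essentially the same Vitali-covering approach as the paper. The paper's version is slightly more direct: rather than first establishing the crude bound $\upsilon^{*}\le C^{3}\upsilon$ to kill the leftover null set $N_{0}$, it simply truncates the disjoint family at a finite $N$ with $\sum_{i>N}\upsilon(\overline{B}_{r_i}(a_i))<\epsilon$ and covers $\hat{A}$ by $\{\overline{B}_{r_i}(a_i)\}_{i\le N}\cup\{\overline{B}_{5r_i}(a_i)\}_{i>N}$, so the $C^{3}$ factor only hits an $\epsilon$-tail and the preliminary step is unnecessary.
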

\begin{proof}
Without loss of generality, we can assume that $\upsilon (\hat{A}) < \infty$.
We fix $\epsilon, \delta > 0$.
There exists an open set $O \subset Z$ such that $\hat{A} \subset O$ and $\upsilon(O \setminus \hat{A})<\epsilon$.
For every $a \in \hat{A}$, there exists $r_a > 0$ such that $\overline{B}_{r_a}(a) \subset O$ and that $\upsilon(B_{2r}(a)) \le C \upsilon(B_r(a))$ for every $0 < r < r_a$.
By Proposition \ref{cov}, there exists a pairwise disjoint collection $\{\overline{B}_{r_i}(a_i)\}$ such that
$a_i \in \hat{A}$, $r_i < \min \{\delta, r_{a_i} \}/100$ and 
$\hat{A} \setminus \bigcup_{i=1}^N\overline{B}_{r_i}(a_i) \subset \bigcup_{i=N+1}^{\infty}\overline{B}_{5r_i}(a_i)$
for every $N$.
Since $\upsilon(O) < \infty$, there exists $N$ such that 
$\sum_{i=N+1}^{\infty}\upsilon(\overline{B}_{r_i}(a_i))< \epsilon.$
Then we have
\begin{align}
\upsilon^*_{\delta}(\hat{A})&\le \sum_{i=1}^N\upsilon(\overline{B}_{r_i}(a_i)) + \sum_{i=N+1}^{\infty}\upsilon(\overline{B}_{5r_i}(a_i)) \\
&\le \sum_{i=1}^N\upsilon(\overline{B}_{r_i}(a_i)) + \sum_{i=N+1}^{\infty}C^3\upsilon(\overline{B}_{r_i}(a_i)) \\
&\le \upsilon(O) + C^3\epsilon \le \upsilon(\hat{A})+(1 + C^3)\epsilon.
\end{align}
By letting $\delta \rightarrow 0$ and $\epsilon \rightarrow 0$, we have the assertion.
\end{proof}
The following corollary is a fundamental property for a relation to Hausdorff measure on metric measure spaces satisfying infinitesimal doubling condition.
\begin{corollary}\label{132}
Assume that there exists $\alpha \ge 0$ such that
$\upsilon$ is Ahlfors $\alpha$-regular at every $z \in A$.
Then, $\upsilon$ and $H^{\alpha}$ are mutually absolutely continuous on $A$.
\end{corollary}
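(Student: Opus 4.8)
The plan is to decompose $A$ into countably many Borel pieces on which $\upsilon$ is uniformly comparable to $\alpha$-dimensional Hausdorff measure, and then to pass this comparison through the outer measure $\upsilon^{*}$ by means of Proposition~\ref{borel}. For $k \in \mathbf{N}$ I would put
\[A_k = \{z \in A; \ k^{-1}t^{\alpha} \le \upsilon(B_t(z)) \le kt^{\alpha} \ \text{for every} \ 0 < t < 1/k\}.\]
Since $z \mapsto \upsilon(B_t(z))$ is lower semicontinuous, one checks (after reducing the defining inequalities to rational $t$) that each $A_k$ is Borel; the Ahlfors $\alpha$-regularity hypothesis gives $A = \bigcup_k A_k$, and one sees immediately that $A_1 \subset A_2 \subset \cdots$. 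It then suffices to produce, for each $k$, a constant $c_k \ge 1$ with
\[c_k^{-1}\widetilde{H}^{\alpha}(E) \le \upsilon(E) \le c_k\widetilde{H}^{\alpha}(E)\]
for every Borel $E \subset A_k$, where $\widetilde{H}^{\alpha}(E') = \lim_{\delta \to 0}\inf\{\sum_i r_i^{\alpha}; \ E' \subset \bigcup_i \overline{B}_{r_i}(x_i), \ r_i < \delta\}$ is the Hausdorff-type measure built from closed balls weighted by $r^{\alpha}$; as $\widetilde{H}^{\alpha}$ and $H^{\alpha}$ are comparable up to a factor depending only on $\alpha$, the two-sided bound on each $A_k$ together with $E = \bigcup_k(E \cap A_k)$ and countable subadditivity shows that $\upsilon$ and $H^{\alpha}$ have exactly the same null subsets of $A$, which is the assertion.

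For the bound $\upsilon(E) \le c_k\widetilde{H}^{\alpha}(E)$ I would argue from the definition of $\upsilon^{*}$. Fix $\delta < 1/(3k)$ and an arbitrary cover of $E$ by closed balls $\{\overline{B}_{r_i}(x_i)\}_i$ with $r_i < \delta$; after discarding the balls disjoint from $E$, choose $y_i \in E \cap \overline{B}_{r_i}(x_i) \subset A_k$ and observe $\overline{B}_{r_i}(x_i) \subset \overline{B}_{2r_i}(y_i) \subset B_{3r_i}(y_i)$ with $\upsilon(B_{3r_i}(y_i)) \le k(3r_i)^{\alpha}$, since $y_i \in A_k$ and $3r_i < 1/k$. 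Thus $\{B_{3r_i}(y_i)\}_i$ is admissible for $\upsilon^{*}_{3\delta}(E)$ and gives $\upsilon^{*}_{3\delta}(E) \le k 3^{\alpha}\sum_i r_i^{\alpha}$; taking the infimum over covers and letting $\delta \to 0$ yields $\upsilon^{*}(E) \le k 3^{\alpha}\widetilde{H}^{\alpha}(E)$, and since $E$ is a Borel subset of $A$, Proposition~\ref{borel} identifies $\upsilon^{*}(E)$ with $\upsilon(E)$.

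The reverse bound $\widetilde{H}^{\alpha}(E) \le c_k\upsilon(E)$ is the step I expect to need the most care; here one runs a Vitali-type covering argument via Proposition~\ref{cov} (legitimate since $Z$ is proper, being a metric measure space). We may assume $\upsilon(E) < \infty$. Fix $\epsilon > 0$, a basepoint $z_0 \in Z$, $R > 0$, and a bounded open set $O$ with $E \cap B_R(z_0) \subset O$ and $\upsilon(O) < \upsilon(E) + \epsilon$. For each $z \in E \cap B_R(z_0)$ the infinitesimal doubling condition and the definition of $A_k$ furnish $r_z > 0$ with $\overline{B}_r(z) \subset O$ and $\upsilon(B_r(z)) \ge k^{-1}r^{\alpha}$ whenever $0 < r < r_z$; applying Proposition~\ref{cov} to the family of all closed balls $\overline{B}_r(z)$ with $z \in E \cap B_R(z_0)$ and $0 < r < \min\{\delta/5, r_z, 1/k\}$ produces a pairwise disjoint subcollection $\{\overline{B}_{r_i}(z_i)\}_i$ with $z_i \in E \cap B_R(z_0)$ and $E \cap B_R(z_0) \subset \bigcup_i \overline{B}_{5r_i}(z_i)$. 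The balls $\overline{B}_{r_i}(z_i)$ are disjoint and contained in $O$, so
\[\sum_i r_i^{\alpha} \le k\sum_i \upsilon(B_{r_i}(z_i)) \le k\,\upsilon(O) < k(\upsilon(E) + \epsilon),\]
whence $\widetilde{H}^{\alpha}_{\delta}(E \cap B_R(z_0)) \le \sum_i (5r_i)^{\alpha} \le 5^{\alpha}k(\upsilon(E) + \epsilon)$. Letting $\delta \to 0$, then $\epsilon \to 0$, then $R \to \infty$ (and using countable subadditivity to pass from $E \cap B_R(z_0)$ to $E$) gives $\widetilde{H}^{\alpha}(E) \le 5^{\alpha}k\,\upsilon(E)$. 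Taking $c_k = 5^{\alpha}k$ (enlarged if necessary to also cover the first bound) completes the argument; the two remaining technical points — Borel measurability of the $A_k$ and the comparability $\widetilde{H}^{\alpha} \simeq H^{\alpha}$ — are routine.
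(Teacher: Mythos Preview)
Your proof is correct and follows essentially the same route as the paper: both decompose $A$ into the pieces $A_k=\{z\in A:\ k^{-1}t^{\alpha}\le \upsilon(B_t(z))\le kt^{\alpha}\ \text{for all}\ 0<t<1/k\}$ and compare $\upsilon$ with $H^{\alpha}$ on each piece via covering arguments. The paper's version is shorter because it only checks preservation of null sets (which is all mutual absolute continuity requires), using Proposition~\ref{borel} for the direction $\upsilon(D)=0\Rightarrow H^{\alpha}(D)=0$ in place of your explicit Vitali argument; your proof actually yields the stronger quantitative comparison $c_k^{-1}\widetilde{H}^{\alpha}\le \upsilon\le c_k\widetilde{H}^{\alpha}$ on each $A_k$, which is more than needed here.
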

\begin{proof}
For every $i \in \mathbf{N}$, we put $A_i=\{a \in A; i^{-1}r^{\alpha}\le \upsilon(B_r(a))\le ir^{\alpha}$ for every $0<r < i^{-1}\}$.
Let $D$ be a Borel subset of $A$.
First, we assume that $H^{\alpha}(D)=0$.
Then,  we have $H^{\alpha}(D \cap A_i) \le H^{\alpha}(D)=0$ for every $i$.
We fix $i$.
Then, for every positive numbers $\epsilon, \delta$ satisfying $\epsilon, \delta << i^{-1}$, there exists a countable collection $\{B_{r_j}(x_j)\}_j$ such that $r_j < \delta$, $x_j \in D \cap A_i$ and $\sum_{j}r_j^{\alpha}<\epsilon$. 
Thus, we have $\sum_j\upsilon(B_{r_j}(x_j))<\Psi(\epsilon ;i)$.
Therefore, we have $\upsilon^*(D \cap A_i)=0$.
Since $(Z, \upsilon^*, \mathcal{M})$ is a complete measure space, we have $\upsilon^*(D)=0$.
Especially, we have $\upsilon(D)=0$.
Next, we assume that $\upsilon(D)=0$.
By Proposition \ref{borel}, we have $\upsilon^*(D \cap A_i) \le \upsilon^*(D) = \upsilon(D)=0$ for every $i$.
Then, by an argument similar to that above, we have $H^{\alpha}(D \cap A_i)=0$.
Thus, we have $H^{\alpha}(D)=0$.
\end{proof}
For subset $\hat{A} \subset Z$, let $\mathrm{Leb}\,\hat{A}$, denote the set of points, $a \in A$, such that for every $\epsilon > 0$, there 
exists $r > 0$ such that $\upsilon^*(\overline{B}_s(a) \cap \hat{A}) \ge (1-\epsilon) \upsilon(\overline{B}_s(a))$
for every $0 < s < r$.
We call $\mathrm{Leb}\,\hat{A}$ \textit{Lebesgue set of $\hat{A}$}.
\begin{proposition}\label{130}
We have
\[\upsilon^*(\hat{A} \setminus \mathrm{Leb}\,\hat{A}) = 0\]
for every Borel subset $\hat{A}$ of $A$.  
\end{proposition}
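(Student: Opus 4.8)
The plan is to run the classical Vitali-type density argument, using Proposition \ref{cov} as the covering lemma and Proposition \ref{borel} to pass freely between $\upsilon^*$ and $\upsilon$ on Borel subsets of $A$. First I would reduce to a quantitative statement. Since $\hat{A}\subseteq A$ and $\overline{B}_s(a)\cap\hat{A}$ is a Borel subset of $A$, Proposition \ref{borel} gives $\upsilon^*(\overline{B}_s(a)\cap\hat{A})=\upsilon(\overline{B}_s(a)\cap\hat{A})\le\upsilon(\overline{B}_s(a))$, so a point $a\in\hat{A}$ fails to lie in $\mathrm{Leb}\,\hat{A}$ exactly when $\liminf_{s\to0}\upsilon(\overline{B}_s(a)\cap\hat{A})/\upsilon(\overline{B}_s(a))<1$. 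Setting, for $\theta\in(0,1)$,
\[G_\theta=\Big\{a\in\hat{A}\ :\ \text{for every }r>0\text{ there is }s\in(0,r)\text{ with }\upsilon(\overline{B}_s(a)\cap\hat{A})<\theta\,\upsilon(\overline{B}_s(a))\Big\},\]
one has $\hat{A}\setminus\mathrm{Leb}\,\hat{A}=\bigcup_{n\ge1}G_{1-1/n}$, so by countable subadditivity of $\upsilon^*$ it suffices to prove $\upsilon^*(G_\theta)=0$ for each fixed $\theta$, and, exhausting $Z$ by balls and using subadditivity again, $\upsilon^*(G_\theta\cap K)=0$ for each bounded Borel $K$; note $\upsilon^*(G_\theta\cap K)\le\upsilon(\hat{A}\cap K)<\infty$.

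Next I would set up the covering. Fixing $\eta>0$, I would use the definition of $\upsilon^*_{\delta}$ to pick an open set $O\supseteq G_\theta\cap K$ which is a union of balls of small radius with $\upsilon(O)\le\sum(\text{ball masses})<\upsilon^*(G_\theta\cap K)+\eta$; since $O\cap\hat{A}$ is Borel in $A$ this forces $\upsilon(O\setminus\hat{A})<\eta$. For each $a\in G_\theta\cap K$ the infinitesimal doubling condition on $A$ supplies $r(a)>0$ with $\upsilon(\overline{B}_{8s}(a))\le C^3\upsilon(\overline{B}_s(a))$ for $s<r(a)/4$ (three doublings); together with the defining property of $G_\theta$ and the inclusion $\overline{B}_s(a)\subseteq O$ valid for $s$ small, the balls $\overline{B}_s(a)$ satisfying these three conditions form a fine cover of $G_\theta\cap K$ in the proper space $Z$. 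Proposition \ref{cov} then yields a countable pairwise disjoint subcollection $\{\overline{B}_{s_j}(a_j)\}_j$ with $a_j\in G_\theta\cap K$ such that $(G_\theta\cap K)\setminus\bigcup_{j\le N}\overline{B}_{s_j}(a_j)\subseteq\bigcup_{j>N}\overline{B}_{5s_j}(a_j)$ for every $N$.

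The estimate is then routine. Splitting $G_\theta\cap K$ along $\bigcup_{j\le N}\overline{B}_{s_j}(a_j)$: the inner part has $\upsilon^*$-mass at most $\sum_{j\le N}\upsilon(\overline{B}_{s_j}(a_j)\cap\hat{A})<\theta\sum_j\upsilon(\overline{B}_{s_j}(a_j))$, while the outer part is contained in the Borel-in-$A$ set $\hat{A}\cap\bigcup_{j>N}\overline{B}_{5s_j}(a_j)$, hence has $\upsilon^*$-mass at most $\sum_{j>N}\upsilon(\overline{B}_{5s_j}(a_j))\le C^3\sum_{j>N}\upsilon(\overline{B}_{s_j}(a_j))$. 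Because the $\overline{B}_{s_j}(a_j)$ are disjoint and contained in $O$, $\sum_j\upsilon(\overline{B}_{s_j}(a_j))\le\upsilon(O)<\infty$, so letting $N\to\infty$ gives $\upsilon^*(G_\theta\cap K)\le\theta\sum_j\upsilon(\overline{B}_{s_j}(a_j))$. Finally $\upsilon(\overline{B}_{s_j}(a_j)\cap\hat{A})<\theta\upsilon(\overline{B}_{s_j}(a_j))$ means $\upsilon(\overline{B}_{s_j}(a_j)\setminus\hat{A})>(1-\theta)\upsilon(\overline{B}_{s_j}(a_j))$, and summing over $j$ (disjointness, $\overline{B}_{s_j}(a_j)\subseteq O$) bounds $(1-\theta)\sum_j\upsilon(\overline{B}_{s_j}(a_j))\le\upsilon(O\setminus\hat{A})<\eta$, whence $\upsilon^*(G_\theta\cap K)\le\theta\eta/(1-\theta)\to0$ as $\eta\to0$.

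The genuinely delicate part will be the bookkeeping with $\upsilon^*$ rather than any single inequality: one must arrange every set whose $\upsilon^*$-measure is estimated — in particular the $5r$-tail $(G_\theta\cap K)\setminus\bigcup_{j\le N}\overline{B}_{s_j}(a_j)$ and the approximating open set $O$ — either as a Borel subset of $A$, so that Proposition \ref{borel} converts $\upsilon^*$ into $\upsilon$, or as an explicit union of small balls with summable masses; and one must propagate the doubling constant correctly through the chain of three doublings needed to absorb the factor $5$. Everything else is the standard Lebesgue density argument, and I expect no further surprises.
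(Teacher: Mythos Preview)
Your proposal is correct and follows essentially the same Vitali density argument as the paper: decompose the bad set by density threshold and bounded region, construct a near-optimal open cover via the definition of $\upsilon^*$, apply Proposition~\ref{cov} to extract a disjoint fine cover, and close up using three doublings on the $5r$-tail together with Proposition~\ref{borel}. The only cosmetic difference is in the final bookkeeping: the paper bounds $\sum_j\upsilon(\overline{B}_{r_j}(a_j))\le\upsilon(O)\le\upsilon^*(\hat{A}_{\tau,N})+\epsilon$ to obtain the self-referential inequality $\upsilon^*(\hat{A}_{\tau,N})\le(1-\tau)(\upsilon^*(\hat{A}_{\tau,N})+\epsilon)+C^3\epsilon$ and sends $\epsilon\to0$, whereas you instead extract the complement bound $\upsilon(O\setminus\hat{A})<\eta$ and sum $(1-\theta)\upsilon(\overline{B}_{s_j}(a_j))<\upsilon(\overline{B}_{s_j}(a_j)\setminus\hat{A})$ to reach $\upsilon^*(G_\theta\cap K)\le\theta\eta/(1-\theta)$ directly; the two closures are equivalent.
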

\begin{proof}
We fix $z \in Z$ and $\epsilon> 0$.
For $\tau > 0$ and $N \in \mathbf{N}$, let $\hat{A}_{\tau, N}$, denote the set of points, $a \in \hat{A} \cap B_N(z)$, such that there exists a
sequence of positive numbers $r_i > 0$ such that $r_i \to 0$ and that $\upsilon^*(\overline{B}_{r_i}(a) \cap \hat{A}) \le (1-\tau) \upsilon(\overline{B}_{r_i}(a))$ holds for every $i$.
We remark that $\upsilon^*(\hat{A}_{\tau, N}) \le \upsilon^*(\hat{A} \cap B_N(z)) = \upsilon(\hat{A} \cap B_N(z)) < \infty$.
Thus, by the definition of $\upsilon^*$, there exists a countable collection $\{B_{s_{i}}(x_{i})\}_i$ such that $\hat{A}_{\tau, N} \subset \bigcup_{i=1}^{\infty}B_{s_{i}}(x_{i})$ and $|\upsilon^*(A_{\tau, N})-\sum_{i=1}^{\infty}\upsilon(B_{s_{i}}(x_{i}))|<\epsilon$.
We put $O= B_N(z) \cap \bigcup_{i=1}^{\infty}B_{s_{i}}(x_{i})$.
By the definition of $\hat{A}_{\tau, N}$ and Proposition \ref{cov}, there exists a pairwise disjoint collection $\{\overline{B}_{r_i}(a_i)\}_i$ such that $a_i \in \hat{A}_{\tau, N}$,
$\upsilon(\overline{B}_{2r_i}(a_i))\le C\upsilon (\overline{B}_{r_i}(a_i))$,
$\overline{B}_{100r_i}(a_i) \subset O$,
$\upsilon(\overline{B}_{r_i}(a_i) \cap \hat{A})\le (1-\tau)\upsilon(\overline{B}_{r_i}(a_i))$ for every $i$, 
and 
$\hat{A}_{\tau, N} \setminus \bigcup_{i=1}^{\hat{N}}\overline{B}_{r_i}(a_i)\subset \bigcup_{i=\hat{N}+1}^{\infty}\overline{B}_{5r_i}(a_i)$
for every $\hat{N}$.
We take $\hat{N}$ satisfying 
$\sum_{i=\hat{N}+1}^{\infty}\upsilon(\overline{B}_{r_i}(a_i))< \epsilon.$
Then we have 
\begin{align}
\upsilon^*(\hat{A}_{\tau, N}) &\le \sum_{i=1}^{\hat{N}}\upsilon^*(\hat{A}_{\tau, N} \cap \overline{B}_{r_i}(a_i))+ \sum_{i=\hat{N}+1}^{\infty}\upsilon(\overline{B}_{5r_i}(a_i)) \\
& \le \sum_{i=1}^{\hat{N}}\upsilon(\hat{A} \cap \overline{B}_{r_i}(a_i)) +C^3\sum_{i=\hat{N}+1}^{\infty}\upsilon(\overline{B}_{r_i}(a_i)) \\
& \le (1-\tau)\sum_{i=1}^{\hat{N}}\upsilon(\overline{B}_{r_i}(a_i)) + \epsilon C^3 \\
& \le (1-\tau)\upsilon(O) + \epsilon C^3 \\
& \le (1-\tau)\sum_{i=1}^{\infty}\upsilon(B_{s_i}(x_i)) + \epsilon C^3 \\
& \le (1-\tau)(\upsilon^*(\hat{A}_{\tau, N})+\epsilon) + \epsilon C^3.
\end{align}
By letting $\epsilon \rightarrow 0$, we have $\upsilon^*(\hat{A}_{\tau, N})=0$.
Thus, we have $\hat{A}_{\tau, N} \in \mathcal{M}$ and $\upsilon^*(\hat{A} \setminus \mathrm{Leb}\,\hat{A})=\upsilon^*(\bigcup_{\tau > 0, N \in \mathbf{N}}\hat{A}_{\tau, N})=0$.
\end{proof}
By Proposition \ref{130}, we remark $\mathrm{Leb}(\mathrm{Leb}(\hat{A})) = \mathrm{Leb}(\hat{A})$ for every Borel subset $\hat{A} \subset A$.
\begin{corollary}[Lebesgue differentiation theorem for locally bounded functions]\label{131}
Let $f$ be a Borel function $f$ on $Z$ satisfying that $f$ is locally bounded at every $a \in A$.
Then, there exists a Borel subset $\hat{A}$ of $A$ such that $\upsilon (A \setminus \hat{A})=0$ and that for every $a \in \hat{A}$ and $\epsilon > 0$, there exists $r > 0$ such that 
\[-\epsilon \upsilon(\overline{B}_s(x))\le \int_{\overline{B}_s(x)}|f-f(x)|d\upsilon \le \epsilon \upsilon (\overline{B}_s(x))\]
for every $0 < s < r$.
\end{corollary}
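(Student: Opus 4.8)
The plan is to reduce the statement to the Lebesgue density theorem (Proposition \ref{130}) applied to the super- and sub-level sets of $f$, using the local boundedness hypothesis only to control the contribution of the small exceptional part of each ball.

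First I would fix, for each rational $q$, the Borel sets $P_q=\{x\in Z: f(x)\ge q\}$ and $N_q=\{x\in Z: f(x)<q\}$; then $P_q\cap A$ and $N_q\cap A$ are Borel subsets of $A$, so Proposition \ref{130} gives $\upsilon^*\big((P_q\cap A)\setminus \mathrm{Leb}(P_q\cap A)\big)=\upsilon^*\big((N_q\cap A)\setminus \mathrm{Leb}(N_q\cap A)\big)=0$, and also $\upsilon^*(A\setminus \mathrm{Leb}\,A)=0$. Let $E$ be the union of all these countably many $\upsilon^*$-null sets; by the very definition of $\upsilon^*$ there is a Borel $\upsilon$-null set $E'\supseteq E$ (for each $k$, a cover of $E$ by balls of total $\upsilon$-mass $<1/k$ has open union $O_k$ with $\upsilon(O_k)<1/k$, and $E'=\bigcap_k O_k$ works), and I would set $\hat A=A\setminus E'$. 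Then $\hat A$ is Borel, $\upsilon(A\setminus\hat A)=0$, and for every $a\in\hat A$ we know: $a\in\mathrm{Leb}\,A$, hence $\upsilon(\overline{B}_s(a))<\infty$ for all small $s$ (otherwise the density inequality defining $\mathrm{Leb}\,A$ would fail, since $\overline{B}_s(a)\cap A$ is a bounded Borel subset of $A$ and so has finite measure by Definition \ref{infinitesimal} and Proposition \ref{borel}); moreover $a\in\mathrm{Leb}(P_q\cap A)$ whenever $f(a)\ge q$, and $a\in\mathrm{Leb}(N_q\cap A)$ whenever $f(a)<q$.

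Given $a\in\hat A$ and $\epsilon>0$, I would then choose $\rho>0$ and $M$ with $|f|\le M$ on $\overline{B}_\rho(a)$, rationals $q_1\in(f(a)-\epsilon/2,f(a)]$ and $q_2\in(f(a),f(a)+\epsilon/2)$ (so that $|f-f(a)|<\epsilon/2$ on $P_{q_1}\cap N_{q_2}$), and $\delta>0$ with $2(M+|f(a)|)\delta\le\epsilon/2$; the degenerate case $M+|f(a)|=0$, i.e.\ $f\equiv f(a)$ near $a$, is trivial. Since $a\in\mathrm{Leb}(P_{q_1}\cap A)\cap\mathrm{Leb}(N_{q_2}\cap A)$, there is $r\in(0,\rho]$ such that for $0<s<r$ both $\upsilon^*(\overline{B}_s(a)\cap P_{q_1}\cap A)\ge(1-\delta)\upsilon(\overline{B}_s(a))$ and $\upsilon^*(\overline{B}_s(a)\cap N_{q_2}\cap A)\ge(1-\delta)\upsilon(\overline{B}_s(a))$; because the two sets appearing here are Borel subsets of $A$, Proposition \ref{borel} turns $\upsilon^*$ into $\upsilon$ on them, and since $\overline{B}_s(a)\cap P_{q_1}\supseteq\overline{B}_s(a)\cap P_{q_1}\cap A$ this forces $\upsilon(\overline{B}_s(a)\setminus P_{q_1})\le\delta\upsilon(\overline{B}_s(a))$ and likewise $\upsilon(\overline{B}_s(a)\setminus N_{q_2})\le\delta\upsilon(\overline{B}_s(a))$. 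Splitting $\overline{B}_s(a)$ into the good part $\overline{B}_s(a)\cap P_{q_1}\cap N_{q_2}$, where $|f-f(a)|<\epsilon/2$, and its complement in $\overline{B}_s(a)$, which has $\upsilon$-measure at most $2\delta\upsilon(\overline{B}_s(a))$ and on which $|f-f(a)|\le M+|f(a)|$, I would conclude
\[
\int_{\overline{B}_s(a)}|f-f(a)|\,d\upsilon\le\big(\epsilon/2+2(M+|f(a)|)\delta\big)\upsilon(\overline{B}_s(a))\le\epsilon\,\upsilon(\overline{B}_s(a)),
\]
and the lower bound $\ge-\epsilon\,\upsilon(\overline{B}_s(a))$ is automatic because the integrand is nonnegative.

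The only real obstacle is the bookkeeping between the outer measure $\upsilon^*$, in which all the density information is phrased, and the genuine measure $\upsilon$, in which the conclusion is stated; the device that resolves it is that every set on which a density estimate is applied can be taken to be a Borel subset of $A$, so Proposition \ref{borel} lets one pass between $\upsilon^*$ and $\upsilon$ there, and a $\upsilon^*$-density estimate for $\overline{B}_s(a)\cap P_{q_1}\cap A$ is automatically a $\upsilon$-density estimate for the larger Borel set $\overline{B}_s(a)\cap P_{q_1}$. A minor secondary point, namely the finiteness of $\upsilon(\overline{B}_s(a))$ for small $s$, is handled by the restriction to $\hat A\subseteq\mathrm{Leb}\,A$.
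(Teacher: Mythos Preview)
Your proof is correct and takes a genuinely different route from the paper. The paper's argument goes through Lusin's theorem: for each $\epsilon>0$ and $N\in\mathbf{N}$ it finds a compact $K_{\epsilon,N}\subset A\cap B_N(z)$ with $\upsilon(A\cap B_N(z)\setminus K_{\epsilon,N})<\epsilon$ on which $f$ is continuous, applies Proposition~\ref{130} to $K_{\epsilon,N}$, and asserts (without detail) that every point of $\mathrm{Leb}\,K_{\epsilon,N}$ is a Lebesgue point of $f$; the set $\hat A$ is then assembled from these $\hat K_{\epsilon,N}$. You instead apply Proposition~\ref{130} to the countable family of super- and sub-level sets $\{f\ge q\}\cap A$ and $\{f<q\}\cap A$ for $q\in\mathbf{Q}$, bypassing Lusin entirely. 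What your approach buys is self-containment and an explicit treatment of both the splitting estimate and the passage between $\upsilon^*$ and $\upsilon$, which the paper leaves implicit; what the paper's approach buys is brevity, and a single continuity set in place of your two-sided squeeze between level sets. Both arguments rest on the same engine, namely Proposition~\ref{130} plus local boundedness to control the small bad part of each ball.
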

\begin{proof}
We fix $\epsilon > 0$ and $z \in A$.
For every $N \in \mathbf{N}$, by Lusin's theorem, there exists a compact set $K_{\epsilon, N} \subset A \cap B_N(z)$
such that $\upsilon( A \cap B_N(z) \setminus K_{\epsilon, N})<\epsilon$ and that $f$ is continuous on $K_{\epsilon, N}$.
We put $\hat{K}_{\epsilon, N}= \mathrm{Leb}\,K_{\epsilon, N}$.
Then, it is easy to check that for every $x \in \hat{K}_{\epsilon ,N}$ and $\epsilon > 0$, there exists $r > 0$ such that 
\[-\epsilon \upsilon(\overline{B}_s(x))\le \int_{\overline{B}_s(x)}|f-f(x)|d\upsilon \le \epsilon \upsilon (\overline{B}_s(x))\]
for every $0 < s < r$.
Therefore, we have the assertion.
\end{proof}
We end this subsection by giving a fundamental property of Lebesgue sets for Lipschitz functions on metric measure spaces satisfying \textit{doubling condition}:
\begin{proposition}\label{133}
Assume that the following properties hold: 
\begin{enumerate}
\item $0 < \upsilon(B_r(z))$ for every $z \in Z$ and $r > 0$
\item There exist $r_0 > 0$ and $C > 1$ such that  
\[\upsilon(B_{2r}(z)) \le C \upsilon(B_{r}(z))\]
for every $z \in Z$ and $ 0 < r < r_0$.
\end{enumerate}
Then, for every Lipschitz function $f$ on $Z$ and Borel subset $A$ of $Z$, we have 
$\mathrm{Lip}f(a) = \mathrm{Lip}(f|_A)(a)$ and $lipf(a)=lip(f|_A)(a)$ for every $a \in \mathrm{Leb}(A)$.
\end{proposition}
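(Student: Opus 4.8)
The plan is to establish the two nontrivial inequalities $\mathrm{Lip}f(a) \le \mathrm{Lip}(f|_A)(a)$ and $lipf(a) \le lip(f|_A)(a)$; the reverse inequalities are automatic, since $f|_A$ is the restriction of $f$ and the defining suprema for $f|_A$ are taken over the smaller sets $B_r(a) \cap A \setminus \{a\}$. We may assume $a \in A$, since otherwise $f|_A$ is not defined at $a$. First I would dispose of the degenerate cases. The global doubling hypothesis makes $(Z,\upsilon)$ satisfy an infinitesimal doubling condition on $Z$ itself (with constant $C^2$ at scale $r_0/4$), so Proposition \ref{borel} applied with ``$A$'' $=Z$ gives $\upsilon^* = \upsilon$ on all Borel sets; in particular the Lebesgue condition at $a$ becomes $\upsilon(\overline{B}_s(a)\cap A) \ge (1-\epsilon)\upsilon(\overline{B}_s(a))$ for small $s$. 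If $a$ were isolated in $A$, pick $\rho_0$ with $\overline{B}_{\rho_0}(a)\cap A = \{a\}$; combining this with the Lebesgue condition and $\upsilon(B_s(a)) > 0$ forces $\upsilon(\{a\}) > 0$ and $\upsilon(B_s(a)) \to \upsilon(\{a\})$ as $s \to 0$. A short chaining with the doubling inequality then rules this out unless $a$ is isolated in $Z$: if $y_n \to a$ with $y_n \ne a$ and $d_n = \overline{y_n,a}$, then $\upsilon(B_{d_n/2}(y_n)) \ge C^{-2}\upsilon(B_{2d_n}(y_n)) \ge C^{-2}\upsilon(\{a\})$, and since $B_{d_n/2}(y_n)$ is disjoint from $\{a\}$ and contained in $B_{2d_n}(a)$ we get $\upsilon(B_{2d_n}(a)) \ge (1+C^{-2})\upsilon(\{a\})$, hence in the limit $\upsilon(\{a\}) \ge (1+C^{-2})\upsilon(\{a\})$, a contradiction. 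Therefore either $a$ is isolated in $Z$ and both sides of the asserted equalities are $0$ by definition, or $a$ is not isolated in $A$ (hence not in $Z$) and $\upsilon(\{a\}) = 0$.

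Now assume $a$ is not isolated. Fix $\epsilon \in (0,1)$ and set $m = m(\epsilon) = \lceil \log_2(3/\epsilon)\rceil$. Using $\upsilon^* = \upsilon$ and the Lebesgue property, put $\eta = \eta(\epsilon) := \frac{1}{2}C^{-m-2}$ and choose $\rho_\eta > 0$ with $\upsilon(B_s(a)\setminus A) \le \eta\,\upsilon(B_s(a))$ for $0 < s < \rho_\eta$. Let $x$ be any point with $0 < d := \overline{x,a}$ small enough (less than $\rho_\eta/2$ and than a fixed fraction of $r_0$). Since $B_{2d}(a) \subset B_{3d}(x) \subset B_{2^m\epsilon d}(x)$, iterating doubling gives $\upsilon(B_{\epsilon d}(x)) \ge C^{-m}\upsilon(B_{2d}(a)) \ge C^{-m}\upsilon(B_d(a))$, while $B_{\epsilon d}(x) \subset B_{(1+\epsilon)d}(a) \subset B_{2d}(a)$ yields $\upsilon(B_{\epsilon d}(x)\setminus A) \le \eta\,\upsilon(B_{2d}(a)) \le \eta C^2\upsilon(B_d(a)) = \frac{1}{2}C^{-m}\upsilon(B_d(a))$. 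Hence $\upsilon(B_{\epsilon d}(x)\cap A) > 0$, so there is $x' \in B_{\epsilon d}(x)\cap A$; moreover $x' \ne a$ because $\overline{x,a} = d > \epsilon d$ means $a \notin B_{\epsilon d}(x)$. With $L := \mathbf{Lip}f$ we then have $|f(x')-f(a)| \ge |f(x)-f(a)| - L\epsilon d$ and $(1-\epsilon)d < \overline{x',a} < (1+\epsilon)d$, so
\[
\frac{|f(x')-f(a)|}{\overline{x',a}} \ge \frac{1}{1+\epsilon}\left(\frac{|f(x)-f(a)|}{\overline{x,a}} - L\epsilon\right).
\]

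Finally I would pass to the limit. Write $g(r) = \sup_{x\in B_r(a)\setminus\{a\}}|f(x)-f(a)|/\overline{x,a}$ and let $g_A(r)$ be the analogous supremum over $B_r(a)\cap A\setminus\{a\}$; both are nondecreasing, $\mathrm{Lip}f(a) = \limsup_{r\to0}g(r)$, $\mathrm{Lip}(f|_A)(a) = \limsup_{r\to0}g_A(r)$, and likewise with $\liminf$ for $lipf(a)$, $lip(f|_A)(a)$. The displayed estimate shows that for every sufficiently small $r$ and every $x\in B_r(a)\setminus\{a\}$ the corresponding $x'$ lies in $B_{(1+\epsilon)r}(a)\cap A\setminus\{a\}$, whence $g_A((1+\epsilon)r) \ge \frac{1}{1+\epsilon}(g(r) - L\epsilon)$. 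Taking $\limsup$ (resp. $\liminf$) as $r\to0$ and using $\limsup_{r\to0}g_A((1+\epsilon)r) = \limsup_{r\to0}g_A(r)$ gives $\mathrm{Lip}(f|_A)(a) \ge \frac{1}{1+\epsilon}(\mathrm{Lip}f(a) - L\epsilon)$ and $lip(f|_A)(a) \ge \frac{1}{1+\epsilon}(lipf(a) - L\epsilon)$; letting $\epsilon \to 0$ completes the proof. The main obstacle is the quantitative density/doubling bookkeeping of the middle step — identifying $\upsilon^*$ with $\upsilon$ so the Lebesgue hypothesis becomes usable, controlling how small $d$ (and hence eventually $r$) must be for the doubling chain from $B_{\epsilon d}(x)$ up to $B_{2d}(a)$ to be valid, and the delicate isolated-point/atom analysis, which is precisely where the two hypotheses (nondegeneracy and doubling) must be used together.
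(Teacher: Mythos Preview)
Your proof is correct and follows essentially the same approach as the paper: both use the Lebesgue density together with doubling to show that any $x$ near $a$ has a point of $A$ at distance $o(\overline{x,a})$, then compare difference quotients. Your treatment is considerably more detailed---you make the doubling chain and the choice of $\eta$ explicit, and you actually justify the reduction to the non-isolated case (the paper simply writes ``without loss of generality''), whereas the paper compresses the core step into the single line ``there exists $\hat a_i\in A$ with $\overline{a_i,\hat a_i}\le\Psi(\overline{a,a_i};C)\,\overline{a,a_i}$.''
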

\begin{proof}
Without loss of generality, we can assume that $a$ is not isolated point.
There exists a sequence $a_i \in Z \setminus \{a\}$ such that $a_i \rightarrow a$ and that $|f(a_i)-f(a)|/\overline{a_i, a} \rightarrow \mathrm{Lip}f(a)$.
By the assumption, for every sufficiently large $i$, there exists $\hat{a_i} \in A$ such that $\overline{a_i \hat{a}_i} \le \Psi(\overline{a, a_i}; C)\overline{a, a_i}$.
Especially we have $\hat{a}_i \neq a$, i.e. $a$ is not an isolated point in $A$.
It is easy to check 
\[\lim_{i \rightarrow \infty}\frac{|f(a)-f(a_i)|}{\overline{a, a_i}}=\lim_{i \rightarrow \infty}\frac{|f(a)-f(\hat{a}_i)|}{\overline{a, \hat{a}_i}}.\]
Therefore, we have $\mathrm{Lip}f(a) \le \mathrm{Lip}(f|_A)(a)$.
Thus we have the first assertion.
Similarly, we have the second assertion.
\end{proof}
\subsection{A proof of Claim \ref{111}}
In this subsection, we shall give a proof of Claim \ref{111}.
We define a function $\pi_1$ on $\mathbf{R}^k$ by $\pi_1((x_1, _{\cdots}, x_k)) = x_1$.
Then, by the definition,  we have
\[sl_1-\mathrm{Leb}A = \left\{ a = (a_1, _{\cdots}, a_k) \in A; \liminf _{r \rightarrow 0} \frac{H^{k-1}(\overline{B}_r(a) \cap A \cap 
\pi_1^{-1}(\pi_1(a)))}{\omega_{k-1}r^{k-1}} =1\right\}.\]
We define a function $f_r^A$ on $\mathbf{R}^k$ by $f_r^A(x) = H^{k-1}\left(\overline{B}_r(x) \cap A \cap \pi_1^{-1}(\pi_1(x))\right)1_A(x).$
First, we assume that $A$ is compact.
\begin{claim}\label{401}
The function $f_r^A$ is upper semi-continuous.
Especially, $f_r^A$ is $H^k$-measurable function.
\end{claim}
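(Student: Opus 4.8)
The plan is to establish that $f_r^A$ is upper semicontinuous; granting that, the asserted $H^k$-measurability is immediate, since an upper semicontinuous real function on $\mathbf{R}^k$ has open sublevel sets and is therefore Borel. So fix $x_0 \in \mathbf{R}^k$ and a sequence $x_j \to x_0$; the goal is $\limsup_{j \to \infty} f_r^A(x_j) \le f_r^A(x_0)$. Passing to a subsequence I would assume $f_r^A(x_j)$ converges to $L := \limsup_j f_r^A(x_j)$ and that either $x_j \in A$ for all $j$ or $x_j \notin A$ for all $j$. In the latter case $f_r^A(x_j) = 0$, so $L = 0 \le f_r^A(x_0)$; in particular, since $A$ is compact hence closed, this is the only case that can occur when $x_0 \notin A$. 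Thus it remains to handle $x_0 \in A$ with $x_j \in A$ for all $j$.

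Write $t_j = \pi_1(x_j) \to t_0 = \pi_1(x_0)$, and identify the hyperplane $\{t\} \times \mathbf{R}^{k-1}$ isometrically with $\mathbf{R}^{k-1}$, which carries $H^{k-1}$ on the hyperplane to $H^{k-1}$ on $\mathbf{R}^{k-1}$. Under this identification $\overline{B}_r(x_j) \cap A \cap \pi_1^{-1}(t_j)$ becomes
\[
S_j = \{ y \in \mathbf{R}^{k-1} : (t_j, y) \in A, \ |(t_j, y) - x_j| \le r \},
\]
a compact subset of $\mathbf{R}^{k-1}$, with $f_r^A(x_j) = H^{k-1}(S_j)$ and $f_r^A(x_0) = H^{k-1}(S_0)$; since $A$ is bounded, all $S_j$ and $S_0$ lie in one fixed compact ball $\overline{B} \subset \mathbf{R}^{k-1}$. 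The crux is to show that the Kuratowski upper limit $\limsup_j S_j$ is contained in $S_0$: if $y_{j_l} \in S_{j_l}$ and $y_{j_l} \to y$, then $(t_{j_l}, y_{j_l}) \to (t_0, y) \in A$ because $A$ is closed, and $|(t_0, y) - x_0| \le r$ by continuity, so $y \in S_0$. From this I would deduce that for every open $U \supseteq S_0$ one has $S_j \subseteq U$ for all large $j$: otherwise a subsequence satisfies $y_{j_l} \in S_{j_l} \setminus U \subseteq \overline{B} \setminus U$, which is compact, so $y_{j_l}$ subconverges to a point of $\overline{B} \setminus U$ that nonetheless lies in $\limsup_j S_j \subseteq S_0 \subseteq U$ — a contradiction. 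Hence $H^{k-1}(S_j) \le H^{k-1}(U)$ for large $j$, and taking the infimum over open $U \supseteq S_0$ and invoking outer regularity of $H^{k-1}$ (Lebesgue measure) on $\mathbf{R}^{k-1}$ yields $L = \lim_j H^{k-1}(S_j) \le H^{k-1}(S_0) = f_r^A(x_0)$.

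The routine ingredients — the isometric identification of the slices, compactness of $S_0$, and outer regularity — I would only mention. The step I expect to be the real obstacle is the passage from ``$\limsup_j S_j \subseteq S_0$'' to ``$S_j$ is eventually contained in every open neighborhood of $S_0$''; this is precisely where uniform boundedness of the $S_j$ (coming from compactness of $A$) enters, and it is the reason the claim is stated for compact $A$ at this stage. A minor point also needing care is the bookkeeping of the subsequence along which some $x_j$ may fail to lie in $A$, which is handled by the case split above.
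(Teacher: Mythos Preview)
Your proof is correct and follows essentially the same approach as the paper: both arguments show that the slice sets $S_j$ are eventually contained in any neighborhood of $S_0$ (the paper via Hausdorff convergence of a subsequence to some $K_\infty \subset S_0$, you via the Kuratowski upper limit) and then bound the measure (the paper by an explicit finite cover of $S_0$ by small balls, you by outer regularity of $H^{k-1}$ on $\mathbf{R}^{k-1}$). Your packaging is slightly more streamlined, but the underlying idea is identical.
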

\begin{proof}
Let $x_{\infty}$ be a point in $A$ and $\{x_i\}_i$ a sequence of points in $\mathbf{R}^k$ satisfying $x_i \rightarrow x_{\infty}$.
It suffices to check that $\limsup_{i \to \infty}f_r^A(x_i) \le f_r^A(x_{\infty})$.
Without loss of generality, we can assume 
that $x_j \in A$ for every sufficiently large $j$.
We fix $\delta > 0$.
We take a subsequence $\{n(i)\}_{i \in \mathbf{N}}$ of $\mathbf{N}$ such that 
\[\lim_{j \rightarrow \infty}H^{k-1}\left(\overline{B}_r(x_{n(j)}) \cap A \cap \pi_1^{-1}(\pi_1(x_{n(j)}))\right)
= \limsup_{i \rightarrow \infty}H^{k-1}\left(\overline{B}_r(x_i) \cap A \cap \pi_1^{-1}(\pi_1(x_i))\right).\] 
On the other hand, since a sequence of compact set $\{\overline{B}_r(x_{n(j)}) \cap A \cap \pi_1^{-1}(\pi_1(x_{n(j)}))\}$ is precompact with
respect to the Hausdroff distance on $\mathbf{R}^k$.
Thus, without loss of generality, we can assume that there exists a compact subset $K_{\infty}$ of $\mathbf{R}^k$ such that 
$\overline{B}_r(x_{n(j)}) \cap A \cap \pi_1^{-1}(\pi_1(x_{n(j)}))$ converges to $K_{\infty}$ in the sense of Hausdorff distance on $\mathbf{R}^k$.
Then, it is easy to check $K_{\infty} \subset \overline{B}_r(x_{\infty}) \cap A \cap \pi_1^{-1}(\pi_1(x_{\infty}))$.
There exists a finite collection $\{B_{r_i}(y_i)\}_{i = 1, _{\cdots}, N}$ such that 
$r_i << \delta$,  
$\overline{B}_{r}(x_{\infty}) \cap A \cap \pi_1^{-1}(\pi_1(x_{\infty})) \subset \bigcup_{i=1}^N B_{r_i}(y_i)$
and 
\[\left|H^{k-1}(\overline{B}_{r}(x_{\infty}) \cap A \cap \pi_1^{-1}(\pi_1(x_{\infty}))-\sum_{i=1}^N\omega_{k-1}r_i^{k-1}\right| < \delta.\]
Since $\overline{B}_{r}(x_{\infty}) \cap A \cap \pi_1^{-1}(\pi_1(x_{\infty}))$ is compact, there exists $\tau_0 > 0$ such that
$B_{\tau_0}(\overline{B}_{r}(x_{\infty}) \cap A \cap \pi_1^{-1}(\pi_1(x_{\infty}))) \subset \bigcup_{i=1}^N B_{r_i}(y_i)$.
Since  
$\overline{B}_{r}(x_{n(j)}) \cap A \cap \pi_1^{-1}(\pi_1(x_{n(j)})) \subset B_{\tau_0}(K_{\infty})$
for every sufficiently large $j$,
we have $\overline{B}_{r}(x_{n(j)}) \cap A \cap \pi_1^{-1}(\pi_1(x_{n(j)})) \subset \bigcup_{i=1}^N B_{r_i}(y_i)$.
Thus, we have 
\begin{align}
H^{k-1}(\overline{B}_r(x_{n(j)}) \cap A \cap \pi_1^{-1}(\pi_1(x_{n(j)})) &\le 
\sum_{i = 1}^N H^{k-1}(\overline{B}_r(y_i) \cap \pi_1^{-1}(\pi_1(x_{n(j)})) \\
& \le \sum_{i=1}^N\omega_{k-1}r^{k-1} \\
& \le H^{k-1}(\overline{B}_{r}(x_{\infty}) \cap A \cap \pi_1^{-1}(\pi_1(x_{\infty})) ) + \delta
\end{align}
for every sufficiently large $j$.
Therefore, we have Claim \ref{401}.
\end{proof}
By Claim \ref{401}, we have the statement $1$ in Claim \ref{111}.
The statement $2$ follows from Lebesgue differentiation theorem on Euclidean spaces.
Finally,
by Fubini's theorem, we have 
\[H^k(A \setminus sl_1-\mathrm{Leb}A) = \int_{\mathbf{R}}H^{k-1}(A \cap \{t\} \times \mathbf{R}^{k-1} \setminus sl_1-\mathrm{Leb}A)dt
=0.\]
Thus, we have the statement $3$.
Therefore, we have Claim \ref{111} if $A$ is compact.
\

We shall give a proof of Claim \ref{111} in general case.
We fix $R >0$.
There exists a sequence of compact sets $K_i \subset B_R(0_k) \cap A$ such that $H^k(B_R(0_k) \cap A \setminus K_i) \rightarrow 0 (i \rightarrow \infty)$.
By the definition, we have $sl_1-\mathrm{Leb}K_i \subset  sl_1-\mathrm{Leb}(B_R(0_k) \cap A)$.
As an outer measure, we have 
\begin{align}
H^k(B_R(0_k) \cap A \setminus sl_1-\mathrm{Leb}(B_R(0_k) \cap A))
&\le H^k(B_R(0_k) \cap A \setminus sl_1-\mathrm{Leb}K_i) \\
&\le H^k(B_R(0_k) \cap A \setminus K_i)+ H^k(K_i \setminus sl_1-\mathrm{leb}K_i) \\
&\stackrel{i \to \infty}{\rightarrow} 0
\end{align}
Thus, $sl_1-\mathrm{leb}(B_R(0) \cap A)$ is a $H^k$-measurable set.
Since $sl_1-\mathrm{Leb}A = \bigcup_{R >0}sl_1-\mathrm{Leb}(A \cap B_R(0))$,
we have the statement $1$ in Claim \ref{111}.
By Lebesgue differentiation theorem and Fubini's theorem, we have statements $2$ and $3$.
Thus, we have Claim \ref{111}.
\subsection{Distributional Laplacian comparison theorem on manifolds}
Our aim in this subsection is to state distributional Laplacian comparison theorem on manifolds we want to use in section $6$.
It is Corollary \ref{mmmmm}.
Throughout this subsection, we fix a positive number $R >0$ and $(M, m)$ be a pointed $n$-dimensional complete Riemannian manifold $(n \ge 2)$.
We put 
$C^{\infty}(\overline{B}_R(m)) = \{ f \in C^0(\overline{B}_R(m));$ there exist an open subset $U$ of $M$ and 
a smooth function $g$ on $U$ such that $\overline{B}_R(m) \subset U$ and  $g|_{\overline{B}_R(m)}= f \}.$ 
We define a linear functional $\Delta_R^{\mathrm{dist}}r_m$ on $C^{\infty}(\overline{B}_R(m))$ by
\[\Delta_R^{\mathrm{dist}}r_m(f)=\int_{B_R(m)}\langle dr_m, df \rangle d\mathrm{vol}.\]
\begin{proposition}\label{501}
There exists unique Radon measure $\upsilon_{R,m}^{\mathrm{sing}}$ on $\overline{B}_R(m)$ satisfying the following properties:
\begin{enumerate}
\item A smooth function $\Delta r_m$ on $B_R(m) \setminus (C_m \cup \{m\})$ is in $L^1(B_R(m))$.
\item $\mathrm{supp}(\upsilon_{R,m}^{\mathrm{sing}}) \subset C_m \cap \overline{B}_R(m)$
\item For every $f \in C^{\infty}(\overline{B}_R(m))$, we have
\[\Delta_R^{\mathrm{dist}}r_m(f)= \int_{B_R(m)}f\Delta r_m d\mathrm{vol} +\int_{\partial B_R(m) \setminus C_m}fd\mathrm{vol}_{n-1}
+ \int_{\overline{B}_R(m)} f d\upsilon_{R,x}^{\mathrm{sing}}.\]
\item  We have
\[\int_{B_R(x)}|\Delta r_x|d\mathrm{vol} + \upsilon_{R,m}^{\mathrm{sing}}(B_R(x)) + \mathrm{vol}_{n-1}(\partial B_R(x) \setminus C_x)
= -2\int_{B_R(x) \cap \{\Delta r_x < 0\}}\Delta r_x.\]
\end{enumerate}
\end{proposition}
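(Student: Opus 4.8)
The plan is to compute $\Delta^{\mathrm{dist}}_R r_m$ explicitly in geodesic polar coordinates centred at $m$. Let $S_m\subset T_mM$ be the unit sphere, let $c(\theta)\in(0,\infty]$ be the distance from $m$ to the cut point along $t\mapsto\exp_m(t\theta)$, and recall the standard facts: $c$ is continuous, $C_m$ is closed with $\mathrm{vol}(C_m)=0$, and $\exp_m$ restricts to a diffeomorphism of $U:=\{t\theta:\theta\in S_m,\ 0<t<c(\theta)\}$ onto $M\setminus(C_m\cup\{m\})$. Writing $\exp_m^*(d\mathrm{vol})=\mathcal{A}(t,\theta)\,dt\,d\theta$ on $U$, the density $\mathcal{A}$ is smooth on $U$, extends continuously to $\overline{U}=\{(t,\theta):0\le t\le c(\theta)\}$, satisfies $\mathcal{A}(t,\theta)=t^{n-1}+O(t^{n+1})$ near $t=0$, and, with the paper's convention $\Delta=-\mathrm{div}\,\mathrm{grad}$, obeys $\Delta r_m=-\partial_t\log\mathcal{A}$ on $M\setminus(C_m\cup\{m\})$. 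Since $\langle\nabla r_m,\nabla f\rangle=\partial_t(f\circ\exp_m)$ in these coordinates, for $f\in C^\infty(\overline{B}_R(m))$ and each $\theta$ the fundamental theorem of calculus on $[0,b]$ with $b=c(\theta)\wedge R$ gives $\int_0^{b}\partial_t(f\circ\exp_m)\,\mathcal{A}\,dt=f(\exp_m(b\theta))\,\mathcal{A}(b,\theta)-\int_0^{b}f\,(\partial_t\mathcal{A})\,dt$, the boundary term at $t=0$ vanishing because $f$ is bounded and $\mathcal{A}(0^+,\theta)=0$.

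First I would prove (1). Since $\overline{B}_R(m)$ is compact there is $K>0$ with $\mathrm{Ric}_M\ge -K(n-1)$ there, so the classical Laplacian comparison theorem for manifolds (recalled at the start of section $6$) gives $-\Delta r_m\le(n-1)\underline{k}'_{-K}(r_m)/\underline{k}_{-K}(r_m)$ on $B_R(m)\setminus(C_m\cup\{m\})$; the right-hand side is $O(1/r_m)$ near the origin and bounded away from it, hence lies in $L^1(B_R(m))$, so $(\Delta r_m)_-\in L^1(B_R(m))$. On the other hand, integrating $(\Delta r_m)\mathcal{A}=-\partial_t\mathcal{A}$ over $U\cap\{r_m<R\}$ yields $\int_{B_R(m)\setminus C_m}\Delta r_m\,d\mathrm{vol}=-\int_{S_m}\mathcal{A}(b,\theta)\,d\theta$, which is finite (indeed $\le C(n,R,M)$, $\mathcal{A}$ being continuous on the compact set $\overline{U}\cap\{t\le R\}$) and $\le 0$; therefore $\int(\Delta r_m)_+\le\int(\Delta r_m)_-<\infty$ and $\Delta r_m\in L^1(B_R(m))$.

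Next I would construct $\upsilon^{\mathrm{sing}}_{R,m}$ and establish (2)--(3). Set $\upsilon^{\mathrm{sing}}_{R,m}$ to be the push-forward, under the continuous map $\theta\mapsto\exp_m(c(\theta)\theta)$ on $\{c(\theta)\le R\}$, of the finite Borel measure $\mathcal{A}(c(\theta),\theta)\,1_{\{c(\theta)\le R\}}\,d\theta$ on $S_m$; it is a finite, hence Radon, measure carried by $C_m\cap\overline{B}_R(m)$, which is (2). Integrating the displayed identity over $S_m$ is legitimate: by (1) all integrands are absolutely integrable ($f$ bounded, $B_R(m)$ of finite volume, $|\partial_t\mathcal{A}|$ matching $|\Delta r_m|\mathcal{A}$ with $\Delta r_m\in L^1$, and $\mathcal{A}$ bounded on $\overline{U}\cap\{t\le R\}$). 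The left-hand side becomes $\int_{B_R(m)\setminus C_m}\langle dr_m,df\rangle\,d\mathrm{vol}=\int_{B_R(m)}\langle dr_m,df\rangle\,d\mathrm{vol}=\Delta^{\mathrm{dist}}_R r_m(f)$ since $\mathrm{vol}(C_m)=0$; the term $-\int_{S_m}\int_0^{b}f\,\partial_t\mathcal{A}\,dt\,d\theta$ becomes, via $-\partial_t\mathcal{A}=(\Delta r_m)\mathcal{A}$ and the co-area formula, $\int_{B_R(m)}f\,\Delta r_m\,d\mathrm{vol}$; and the boundary term $\int_{S_m}f(\exp_m(b\theta))\,\mathcal{A}(b,\theta)\,d\theta$ splits according to $c(\theta)>R$ or $c(\theta)\le R$ into $\int_{\partial B_R(m)\setminus C_m}f\,d\mathrm{vol}_{n-1}$ and $\int_{\overline{B}_R(m)}f\,d\upsilon^{\mathrm{sing}}_{R,m}$ respectively. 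This is precisely (3). Uniqueness is then immediate, since $C^\infty(\overline{B}_R(m))$ is dense in $C^0(\overline{B}_R(m))$ and a Radon measure on the compact space $\overline{B}_R(m)$ is determined by its integrals against continuous functions; finiteness of $\upsilon^{\mathrm{sing}}_{R,m}$ re-follows from (3) with $f\equiv1$ and (1).

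Finally, (4) drops out of (3) with $f\equiv1$: that choice gives $0=\int_{B_R(m)}\Delta r_m\,d\mathrm{vol}+\mathrm{vol}_{n-1}(\partial B_R(m)\setminus C_m)+\upsilon^{\mathrm{sing}}_{R,m}(\overline{B}_R(m))$, and combining this with the elementary identity $|a|=a-2\min\{a,0\}$ integrated over $B_R(m)$ (so $\int_{B_R(m)}|\Delta r_m|\,d\mathrm{vol}=\int_{B_R(m)}\Delta r_m\,d\mathrm{vol}-2\int_{B_R(m)\cap\{\Delta r_m<0\}}\Delta r_m\,d\mathrm{vol}$) gives the stated equality. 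I expect the only genuinely fussy point to be the bookkeeping on the boundary sphere: cut points at distance exactly $R$ lie in both $\partial B_R(m)$ and $C_m$, are omitted from the $\mathrm{vol}_{n-1}$-term and charged by $\upsilon^{\mathrm{sing}}_{R,m}$, so to read (4) with the open ball $B_R(m)$ verbatim one must know $\upsilon^{\mathrm{sing}}_{R,m}(\partial B_R(m))=0$ --- true for all but at most countably many $R$, the sets $c^{-1}(R)\subset S_m$ being pairwise disjoint --- or else interpret the three formulas with $\overline{B}_R(m)$ throughout, as in part (3) of the statement. Apart from this, the argument is routine polar-coordinate calculus resting on the single geometric input, the pointwise Laplacian comparison inequality.
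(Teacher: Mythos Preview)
Your proof is correct and takes a genuinely different, more direct route than the paper's. The paper approximates the region $\{(t,\theta):0\le t<c(\theta)\wedge R\}$ from inside by domains $V_R^j$ with \emph{smooth} boundary (built from smooth approximants $\phi_R^j\nearrow c(\cdot)\wedge R$), applies the divergence theorem on each $V_R^j\setminus B_\tau(m)$, and then lets $j\to\infty$, $\tau\to 0$; the singular measure is produced abstractly via Riesz representation from the limiting boundary functional, and its support is pinned down afterwards by testing against bump functions. You instead work ray-by-ray, integrate by parts on each $[0,c(\theta)\wedge R]$ using that $\mathcal{A}$ extends smoothly across the tangential cut locus, and then integrate over $S_m$; this lets you \emph{define} $\upsilon^{\mathrm{sing}}_{R,m}$ explicitly as the push-forward of $\mathcal{A}(c(\theta),\theta)\,1_{\{c(\theta)\le R\}}\,d\theta$ under $\theta\mapsto\exp_m(c(\theta)\theta)$, from which the support condition is immediate. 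Both arguments use the pointwise Laplacian comparison in the same way to get $\Delta r_m\in L^1$, and both obtain (4) by setting $f\equiv 1$. Your approach is shorter and yields an explicit formula for the singular measure; the paper's smooth-exhaustion argument trades this for not having to invoke the smooth extension of $\mathcal{A}$ to the closure. Your observation about $B_R$ versus $\overline{B}_R$ in (4) is well taken: the paper's own proof in fact derives the identity with $\upsilon^{\mathrm{sing}}_{R,m}(\overline{B}_R(m))$, so the open ball in the statement should be read as the closed one (or one restricts to the co-countably many $R$ with $\upsilon^{\mathrm{sing}}_{R,m}(\partial B_R(m))=0$, as you note).
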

\begin{proof}
We put $S_mM=\{ u \in T_mM; |u|=1\}$ and define $t(u) > 0$ as the supremum of $t \in (0, \infty)$ such that $\exp _msu|_{[0, t]}$ is a minimal geodesic segment from $m$ to 
$\exp_mtu$ 
for $u \in S_mM$. 
We also define a continuous function $\phi_R$ on $S_mM$ by $\phi_R(u)=\min \{t(u), R\}$.
We take a sequence of $C^{\infty}$-functions $\{\phi_R^j\}_j$ on $S_mM$ and a sequence of open sets $O_i \subset S_mM$ satisfying the following properties:
\begin{enumerate}
\item $\overline{O}_i \subset O_{i+1}$, $\bigcup_{i=1}^{\infty} O_i = \{u \in S_mM; t(u) > R\}$, $\phi_R -j^{-1}\le \phi_R^j \le \phi_R$
and $\phi_R^j(u) < t(u)$.
\item For every $i$, there exists $l$ such that $\phi_R^j|_{O_i}=R$ for every $j \ge l$.
\end{enumerate}
\begin{remark}
There exists $\{\phi_R^j\}_j$ and $\{O_i\}_i$ as above.
We shall explain it below.
We take a sequence of $C^{\infty}$-functions $\psi_R^j$ on $S_mM$ satisfying $|\psi_R^j-\phi_R|_{L^{\infty}(S_mM)} \rightarrow 0$.
Without loss of generality, we can assume that $\psi_R^j<\phi_R$.
We take a sequence of open subsets $O_i$ of $S_mM$ satisfying $\overline{O}_i \subset O_{i+1}$ and $\bigcup_{i=1}^{\infty} O_i = \{u \in S_mM; t(u) > R\}$.
We put $O= \{u \in S_mM; t(u) > R\}$.
We take a $C^{\infty}$-function $\phi_i$ on $S_mM$ satisfying $0 \le \phi_i \le 1$,
$\phi_i|_{O_i}=1$ and $ \mathrm{supp}\phi_i \subset O_{i+1}$.
We define a $C^{\infty}$-function $\phi_R^{i,j}$ on $S_xM$ by $\phi_R^{i,j}(u)=(1-\phi_i(u))\psi_R^j(u) + \phi_i(u)R$.
Then, we have 
$\phi_R^{i, j}(u) = \psi_R^j(u) < \phi_R(u) = t(u)$  for every $u \in S_mM \setminus O_{i+1}$.
and $\phi_R^{i,j}(u)=R$ for every $u \in O_i$.
For every $i$, there exists $j_0(i)$ such that $|\psi_R^j - \phi_R|_{L^{\infty}(S_mM)}<i^{-1}$ for every $j \ge j_0(i)$.
We put $\phi_R^i=\phi_R^{i, j_0(i)}$.
Then, we have $\phi_R^i|_{O_i}=R$ and
$\phi_R^i(u) = (1-\phi_i(u))\psi_R^{j_0(i)}(u) + \phi_i(u)R \le (1-\phi_i(u))\phi_R(u) + \phi_i(u)R \le (1-\phi_i(u))\phi_R(u) + \phi_i(u)\phi_R(u) =\phi_R(u) < t(u)$
for every $u \in O$ and 
$\phi_R^i(u)  = (1-\phi_i(u))\psi_R^{j_0(i)}(u) + \phi_i(u)R =(1-\phi_i(u))\psi_R^{j_0(i)}(u) \le \psi_R^{j_0(i)}(u) <\phi_R(u) =t(u)$
for every $u \in S_xM \setminus O$.
Therefore, we have $\phi_R^i(u) \le \phi_R(u)$ and 
$\phi_R^i(u) < t(u)$ for every $u \in S_mM$.
Since $\phi_R|_{O_{i+1}}=R$, we have 
$|\phi_R^i(u)-\phi_R(u)| =|(1-\phi_i(u))\psi_R^{j_0(i)}(u)+\phi_i(u)R-\phi_R(u)|\le (1-\phi_i(u))|\phi_R^{j_0(i)}(u)-\phi_R(u)|+\phi_i(u)|R-\phi_R(u)| \le i^{-1}$ 
for every $u \in O_{i+1}$.
On the other hand, since $\phi_i|_{S_mM \setminus O_{i+1}}=0$, by an argument similar to one above, we have 
$|\phi_R^i(u)-\phi_R(u)|\le i^{-1}$ for every $u \in S_mM \setminus O_{i+1}$.
Thus, we get an existence of sequences $\{\phi_R^j\}_j$ and $\{O_i\}_i$ as above.
\end{remark}

We define an open subset $V_R^j$ of $M$ by $V_R^j = \{\exp_m tu \in M; u \in S_mM, 0\le t < \phi_R^j(u)\}$.
\begin{claim}\label{11112}
We have 
$\partial V_R^j =\{\exp_mtu \in M; u \in S_mM, t=\phi_R^j(u)\}$
for every $j$ satisfying $j^{-1} < \overline{m, C_m}$.
\end{claim} 
The proof is as follows.
We take $w \in \partial V_R^j$.
By the definition, there exist $u_i \in S_xM$ and $0 \le t_i < \phi_R^j$ such that $w_i= \exp_xt_iu_i \rightarrow w$.
By the compactness of $S_mM$, we can assume that there exist $t \in [0,R]$ and $u \in S_mM$ such that 
$t_i \rightarrow t$ and $u_i \rightarrow u$.
Thus, we have $w= \lim_{i \rightarrow \infty}\exp_mt_iu_i=\exp_mtu$.
Since $t_i < \phi_R^j(u_i)$, we have $t \le \phi_R^j(u) < t(u)$.
Thus, we have $w \in M \setminus C_x$.
If $t<\phi_R^j(u)$, then by the continuity of $\phi_R^j$, there exists $\tau>0$ such that $\hat{t} < \phi_R^j(\hat{u})$
for every $\hat{t}$ and $\hat{u} \in S_mM$
 satisfying $|\hat{t}-t|<\tau$ and $\overline{\hat{u}, u} < \tau$.
Thus, we have $\exp_xtu \in M \setminus \partial V_R^j$.
This is a contradiction.
Therefore, we have $\phi_R^j(u) \ge t$.
Similarly,   
if $t>\phi_R^j(u)$, by the continuity of $\phi_R^j$, there exists $\tau>0$ such that $\phi_R^j(\hat{u}) < \hat{t} < t(\hat{u})$
for every $\hat{t}$ and $\hat{u} \in S_mM$
 satisfying $|\hat{t}-t|<\tau$ and $\overline{\hat{u}, u} < \tau$.
Thus, we have $\exp_mtu \in M \setminus \partial V_R^j$.
This is a contradiction.
Therefore, we have 
$\partial V_R^j \subset \{\exp_mtu \in M; u \in S_mM, t=\phi_R^j(u)\}.$
On the other hand, for every $u \in S_mM$, we take a increasing sequence $0 < t_i < \phi_R^j(u)$ such that $t_i \rightarrow \phi_R^j(u)$.
Since $\exp_m\phi_R^j(u)u = \lim_{i \rightarrow \infty}\exp_mt_i u \in \overline{V}_R^j$ and $\exp_m^{-1}|_{M \setminus C_m}$ gives diffeomorphism to the image, we have $\exp_m\phi_R^j(u)u \in M \setminus V_R^j$.
Especially, we have $\exp_m\phi_R^j(u)u \in \partial V_R^j$.
Therefore, we have Claim \ref{11112}.
\

It is easy to check that $\partial V_R^j$ 
is a compact $(n-1)$-dimensional $C^{\infty}$-Riemannian submanifold of $M$ and is diffeomorphic to $\mathbf{S}^{n-1}$ for every $j$ satisfying $j^{-1}<\overline{m, C_m}$.
Especially, $\overline{V}_R^j$ is a compact $n$-dimensional $C^{\infty}$-Riemannian submanifold with $C^{\infty}$-boundary.
\begin{claim}\label{098901}
We have $\langle \nabla r_m, N_w\rangle (w) \ge 0$
for every $j$ satisfying $j^{-1} < \overline{m, C_m}$, and $w \in \partial V_R^j$.
Here $N_w$ is the unit outer normal vector of $\overline{V}_R^j$ at $w$., 
\end{claim}
Because, since $N_w$ is outer vector,  we have 
$\langle N_w, \gamma'(0)\rangle \le 0$ for the minimal geodesic $\gamma$ from $w$ to $m$.
Thus, we have Claim \ref{098901}.
\

For every $j$ satisfying $j^{-1}<\overline{m,  C_m}$, we define open subsets $A_+^{R, j}, A_-^{R, j}$ of $V_R^j$ by 
$A_+^{R, j}=\{w \in V_R^j \setminus \{m\}; \Delta r_m(w) > 0\}$ and
$A_-^{R, j}=\{w \in V_R^j \setminus \{m\}; \Delta r_m(w) < 0\}$.
\begin{claim}\label{49287}
We have 
\[\int_{A_+^{R, j}}\Delta r_m d\mathrm{vol} \le -\int_{A_-^{R, j}}\Delta r_m d\mathrm{vol} < \infty.\]
\end{claim}
The proof is as follows.
We put $\theta (s, u) = s^{n-1}  \sqrt{\mathrm{det}(g_{ij}|_{\exp _{m}su}))}$ for $u \in S_mM$ and $0 < s < t(u)$.
Here, $g_{ij} = g (\partial / \partial x_i, \partial / \partial x_j)$ for a normal coordinate $(x_1, x_2, _{\cdots}, x_n)$ around $m$.
By rescaling, without loss of generality, we can assume that  $\mathrm{Ric}_M \ge -(n-1)$ on $B_{100R}(x)$.
Then, we have
\begin{align}
-\int_{A_-^{R, j}}\Delta r_x d\mathrm{vol} &\le \int_{B_R(x)}(n-1)\frac{\cosh \overline{x,w}}{\sinh \overline{x,w}}d\mathrm{vol} \\
&= \int_{S_mM}\int_0^{\min \{t(u), R\}}(n-1)\frac{\cosh t}{\sinh t}\theta (t, u)dtdu\\
&\le \int_{S_mM}\int_0^{R}(n-1)\frac{\cosh t}{\sinh t}\sinh^{n-1}tdtdu\\
&\le \int_{S_mM}\int_0^R(n-1)\cosh t\sinh^{n-2}tdtdu < \infty.
\end{align}
Since
\[\Delta r_m (w) = -\frac{n-1}{\overline{m,w}}+O(\overline{m,w})\]
for every $w$ satisfying that $\overline{m,w}$ is sufficiently small, 
we have 
\begin{align}
\int_{B_{\tau}(x)}|\Delta r_x|d\mathrm{vol} &\le \int_{B_{\tau}(x)}\left(\frac{n-1}{\overline{x,w}}+1\right)d\mathrm{vol} \\
&=\int_{S_xM}\int_0^{\tau}\frac{n-1}{t}\theta (t, u)dtdu+\mathrm{vol}\,B_{\tau}(x) \\
&\le C(n)\int_{S_xM}\int_0^{\tau}\frac{n-1}{t}t^{n-1}dtdu + \mathrm{vol}\,B_{\tau}(x)\\
& \stackrel{\tau \rightarrow 0}{\rightarrow} 0.
\end{align}
Therefore, we have 
\[\int_{V_R^j}\Delta r_x d\mathrm{vol} = \lim_{\tau \rightarrow 0}\int_{V_R^j \setminus B_{\tau}(x)}\Delta r_xd\mathrm{vol}.\]
Thus, by divergence formula and Claim \ref{098901}, we have 
\begin{align}
\int_{V_R^j}\Delta r_x d\mathrm{vol} &= -\int_{\partial V_R^j}\langle \nabla r_x, N_w \rangle d\mathrm{vol}_{n-1}
-\lim_{\tau \rightarrow 0}\int_{\partial B_{\tau}(x)}\langle -\nabla r_x, \nabla r_x\rangle d\mathrm{vol}_{n-1} \\
&= -\int_{\partial V_R^j}\langle \nabla r_x, N_w \rangle d\mathrm{vol}_{n-1} \le 0.
\end{align}
Thus, we have
\[\int_{A_+^{R, j}}\Delta r_x d\mathrm{vol} + \int_{A_-^{R, j}}\Delta r_x d\mathrm{vol} \le 0.\]
Therefore we have Claim \ref{49287}.
\

Next claim follows from Claim \ref{49287} directly: 
\begin{claim}\label{LL}
We have 
\[\int_{V_R^j}|\Delta r_x|d\mathrm{vol} \le -2 \int_{A_-^{R, j}}\Delta r_xd\mathrm{vol} < \infty.\]
Especially, $\Delta r_x \in L^1(B_R(x))$.
\end{claim}
Therefore, for $f \in C^{\infty}(\overline{B}_R(x))$, we have
\begin{align}
\Delta_R^{\mathrm{dist}}r_m(f) &=\lim_{j \rightarrow \infty, \tau \rightarrow 0}\int_{V_R^j \setminus B_{\tau}(m)}\langle df, dr_m\rangle d\mathrm{vol} \\
&=\lim_{j \rightarrow \infty, \tau \rightarrow 0}\biggl(\int_{V_R^j \setminus B_{\tau}(m)}f\Delta r_xd\mathrm{vol}+\int_{\partial V_R^j}\langle dr_m, N_w\rangle f
d\mathrm{vol}_{n-1} \\
&\ \ \ \ \ -\int_{\partial B_{\tau}(m)}\langle dr_m, dr_m\rangle d\mathrm{vol}_{n-1}\biggl) \\
&=\int_{B_R(m)}f\Delta r_md\mathrm{vol}+\lim_{j \rightarrow \infty}\int_{\partial V_R^j}\langle dr_m, N_w \rangle fd\mathrm{vol}_{n-1}.
\end{align}
\begin{claim}
For every $w \in M$, we have 
\[\lim_{j \rightarrow \infty}1_{\partial V_R^j \cap \partial B_R(x)}(w) = 1_{\partial B_R(x) \setminus C_x}(w).\]
\end{claim}
The proof is as follows. We take $w \in M$.
\begin{enumerate}
\item The case $w \in \partial B_R(m) \setminus C_x$. Then, there exists $u \in S_mM$ such that $R < t(u)$ and $w = \exp_m Ru$.
By the definition of $\phi_R^j$, we have $\phi_R^j(u)=R$ for every sufficiently large $j$.
Thus, by Claim \ref{11112}, we have $w = \exp_x \phi_R^j(u)u \in \partial V_R^j$.
Therefore, we have $\lim_{j \rightarrow \infty}1_{\partial V_R^j \cap \partial B_R(m)}(w) = 1_{\partial B_R(m) \setminus C_m}(w)$.
\item The case $w \in (M \setminus (\partial B_R(m) \setminus C_m)) \cap \partial B_R(m)$.
Then $w \in C_x$.
By Claim \ref{11112}, since $\overline{V}_R^j \cap C_x = \emptyset$, we have 
$\lim_{j \rightarrow \infty}1_{\partial V_R^j \cap \partial B_R(m)}(w)=0= 1_{\partial B_R(m) \setminus C_m}(w).$
\item The case $w \in (M \setminus (\partial B_R(m) \setminus C_m)) \setminus \partial B_R(m)$.
Then, we have $w \in M \setminus (\partial V_R^j \cap \partial B_R(m))$ for every $j$.
Especially, $\lim_{j \rightarrow \infty}1_{\partial V_R^j \cap \partial B_R(m)}(w)=0=1_{\partial B_R(m) \setminus C_m}(w)$.
\end{enumerate}
Thus, we have Claim \ref{LL}.
\

Then, since $\langle \nabla r_m, N_w\rangle f(w)1_{\partial V_R^j \cap \partial B_R(m)}(w) \rightarrow f(w)$ for every $w \in \partial B_R(m) \setminus C_m$, by dominated convergence theorem, we have 
\begin{align}
\int_{\partial V_R^j \cap \partial B_R(m)}\langle \nabla r_m, N_w\rangle fd\mathrm{vol}_{n-1}&=\int_{\partial B_R(m) \setminus C_m}
\langle \nabla r_m, N_w\rangle f(w)1_{\partial V_R^j \cap \partial B_R(m)}(w)d\mathrm{vol}_{n-1} \\
& \stackrel{j \rightarrow \infty}{\rightarrow} \int_{\partial B_R(m) \setminus C_m}fd\mathrm{vol}_{n-1}.
\end{align}
Therefore, we have 
\begin{align}
&\lim_{j \rightarrow \infty}\int_{\partial V_R^j}\langle \nabla r_m, N_w\rangle f(w)d\mathrm{vol}_{n-1}\\
&=\lim_{j \rightarrow \infty}
\left(\int_{\partial V_R^j \cap \partial B_R(m)}\langle \nabla r_m, N_w\rangle f(w)d\mathrm{vol}_{n-1}+\int_{\partial V_R^j \setminus \partial B_R(m)}\langle \nabla r_m, N_w\rangle f(w)d\mathrm{vol}_{n-1}\right) \\
&=\int_{\partial B_R(m) \setminus C_x}fd\mathrm{vol}_{n-1}+ \lim_{j \rightarrow \infty}\int_{\partial V_R^j \setminus \partial B_R(m)}\langle \nabla r_x, N_w\rangle f(w)d\mathrm{vol}_{n-1}.
\end{align}
We define a linear functional $\Phi$ on $C_c^{\infty}(B_{2R}(m))$ by 
\[\Phi (f) = \lim_{j \rightarrow \infty}\int_{\partial V_R^j \setminus \partial B_R(m)}\langle \nabla r_m, N_w\rangle f(w)d\mathrm{vol}_{n-1}.\]
By Claim \ref{098901}, if $f \ge 0$, then $\Phi(f) \ge 0$.
Therefore, by Riesz's theorem, there exists a Radon measure $\upsilon_{R,m}^{\mathrm{sing}}$ on $B_{2R}(m)$ such that 
\[\Phi(f)=\int_{B_{2R}(m)}fd\upsilon_{R,m}^{\mathrm{sing}}.\]
for every $f \in C_c^{\infty}(B_{2R}(m))$.
\begin{claim}\label{plokij}
We have $\mathrm{supp}(\upsilon_{R,m}^{\mathrm{sing}}) \subset \overline{B}_R(m)$, i.e.
for every Borel set $A \subset B_{2R}(m) \setminus \overline{B}_R(m)$, we have 
$\upsilon_{R,m}^{\mathrm{sing}}(A)=0.$
\end{claim}
The proof is as follows.
Since $\upsilon_{R,x}^{\mathrm{sing}}$ is a Radon measure, without loss of generality, we can assume that $A$ is compact.
We take $\tau > 0$ satisfying $\tau << \min \{ \overline{A, B_R(x)}, \overline{A, \partial B_{2R}(x)}\}$.
We also take $\phi \in C_c^{\infty}(B_{2R}(x))$ satisfying $\phi|_A=1$, $0 \le \phi \le 1$, $\mathrm{supp}\phi \subset B_{\tau}(A)$.
Since $\phi |_{\overline{B}_R(x)}=0$, by the definition of $\Phi$, we have $\Phi(\phi)=0$.
On the other hand,
\[\upsilon_{R,x}^{\mathrm{sing}}(A) \le \int_{B_{2R}(x)}\phi d\upsilon_{R,x}^{\mathrm{sing}} = \Phi(\phi) =0.\]
Thus, we have Claim \ref{plokij}.
\

Since $V_R^j \subset \overline{B}_R(m)$, if $f_1, f_2 \in C_c^{\infty}(B_{2R}(m))$ satisfies 
$f_1|_{\overline{B}_R(m)}=f_2|_{\overline{B}_R(m)}$, then we have 
\[\int_{\partial V_R^j \setminus \partial B_R(m)}\langle \nabla r_m, N_w\rangle f_1(w)d\mathrm{vol}_{n-1}
=\int_{\partial V_R^j \setminus \partial B_R(m)}\langle \nabla r_m, N_w\rangle f_2(w)d\mathrm{vol}_{n-1}\]
for every $j$.
Especially, we have $\Phi(f_1)=\Phi(f_2)$.
By the definition, for every $f \in C^{\infty}(\overline{B}_R(m))$, there exists $F \in C_c^{\infty}(B_{2R}(m))$ such that  $F|_{\overline{B}_R(m)}=f$.
If we put $\Phi(f)=\Phi(F)$,
then, $\Phi(f)$ does not depend on the choice of $F$. 
Thus for $f \in C^{\infty}(\overline{B}_R(m))$, $\Phi(f)$ is well defined, we have,
\begin{align}
\Phi(f) = \Phi(F) &= \lim_{j \rightarrow \infty}\int_{\partial V_R^j \setminus \partial B_R(m)}\langle \nabla r_m, N_w\rangle F(w)d\mathrm{vol}_{n-1}\\
&=\lim_{j \rightarrow \infty}\int_{\partial V_R^j \setminus \partial B_R(m)}\langle \nabla r_m, N_w\rangle f(w)d\mathrm{vol}_{n-1}
\end{align}
and
\[\Phi(f)=\Phi(F) =\int_{B_{2R}(m)}Fd\upsilon_{R,m}^{\mathrm{sing}} =\int_{\overline{B}_R(m)}fd\upsilon_{R,m}^{\mathrm{sing}}.\]
Therefore, we have 
\[\Delta_R^{\mathrm{dist}}r_m(f)=\int_{B_R(m)}f\Delta r_md\mathrm{vol}+ \int_{\partial B_R(m) \setminus C_m}fd\mathrm{vol}_{n-1}
+\int_{\overline{B}_R(m)}fd\upsilon_{R,m}^{\mathrm{sing}}\]
for every $f \in C^{\infty}(\overline{B}_R(x))$.
By taking $f=1$ and the definition of $\Delta_R^{\mathrm{dist}}r_m$, we have 
\[0=\Delta_R^{\mathrm{dist}}r_m(1)=\int_{B_R(m)}\Delta r_md\mathrm{vol} +\mathrm{vol}_{n-1}(\partial B_R(m) \setminus C_m)+
\upsilon_{R,m}^{\mathrm{sing}}(\overline{B}_R(m)).\]
Thus, we have 
\[\upsilon_{R,x}^{\mathrm{sing}}(\overline{B}_r(x))=-\int_{B_R(x)}\Delta r_xd\mathrm{vol}- \mathrm{vol}_{n-1}(\partial B_R(x) \setminus C_x).\]
Especially, we have 
\begin{align}
\int_{B_R(m)}|\Delta r_m|d\mathrm{vol}+\mathrm{vol}(\partial B_R(m) \setminus C_m)+\upsilon_{R,m}^{\mathrm{sing}}(\overline{B}_R(m))
&=\int_{B_R(m)}|\Delta r_m|d\mathrm{vol}-\int_{B_R(m)}\Delta r_md\mathrm{vol} \\
&=-2\int_{B_R(m) \cap \{\Delta r_m < 0\}}\Delta r_md\mathrm{vol}.
\end{align}
\begin{claim}\label{last}
We have $\mathrm{supp}(\upsilon_{R,m}^{\mathrm{sing}}) \subset C_m \cap \overline{B}_R(m)$.
\end{claim}
The proos is as follows.
First, we shall prove $\mathrm{supp}(\upsilon_{R,m}^{\mathrm{sing}}) \subset \partial B_R(m) \cup C_m$.
It suffices to check that $\upsilon_{r,m}^{\mathrm{sing}}(A)=0$ for every compact set $A \subset \overline{B}_R(m)$ satisfying 
$A \cap (\partial B_R(m) \cup C_m) = \emptyset$.
We take $\tau >0$ satisfying $\tau << \overline{A, C_m \cup \partial B_R(m)}$.
We also take $\phi \in C_c^{\infty}(B_{2R}(m))$ satisfying $0 \le \phi \le 1$, $\phi|_A=1$ and $\mathrm{supp}\phi \subset B_{\tau}(A)$.
Then, we have 
\[\upsilon_{R,m}^{\mathrm{sing}}(A) \le \int_{B_{2R}(m)}\phi d\upsilon_{R,m}^{\mathrm{sing}} = \lim_{j \rightarrow \infty}
\int_{\partial V_R^j \setminus \partial B_R(m)}\langle \nabla r_m, N_w\rangle \phi(w)d\mathrm{vol}_{n-1}.\]
We take $j$ satisfying $j^{-1}< \frac{\tau}{100}$, and $w \in \partial V_R^j \setminus \partial B_R(m)$.
By Claim \ref{11112}, there exists $u \in S_xM$ such that $w = \exp_x \phi_R^j(u)u$.
Since $\phi_R(u)-j^{-1}\le \phi_R^j(u)\le \phi_R(u)$, if $\phi_R(u) =t(u)$, then, 
since $\overline{w, C_m}\le j^{-1} < \frac{\tau}{100},$
we have $\phi(w)=0$.
On the other hand, if $\phi_R(u)=R$, then, since
$\overline{w, \partial B_R(m)} \le j^{-1} < \frac{\tau}{100}$, 
we have $\phi(w)=0$.
Therefore, we have $\phi |_{\partial V_R^j \setminus \partial B_R(m)}=0$.
Thus, we have $\upsilon_{r,m}^{\mathrm{sing}}(A)=0$.
Finally, we shall prove $\mathrm{supp}(\upsilon_{R,m}^{\mathrm{sing}}) \subset C_m \cap \overline{B}_R(m)$.
It suffices to check that $\upsilon_{r,m}^{\mathrm{sing}}(\partial B_R(m) \setminus C_m)=0$.
Since $\overline{O}_i$ is compact, there exists a sequence of nonincresing sequence $\tau_i > 0$
such that $\tau_i \rightarrow 0$ and $t(u) > R +\tau_i$ for every $u \in O_i$.
We put $U_i=\{\exp_mtu; u \in O_i, R-\tau_i < t < R + \tau_i\}$ and 
$V_i=\{\exp_mtu;; u \in O_i, R-\tau_{i+1}/2<t<R+\tau_{i+1}/2\}$.
Since $\overline{O}_i \subset O_{i+1}$, we have $\overline{V}_i \subset U_{i+1}$.
We take $\phi_i \in C_c^{\infty}(B_{2R}(m))$ satisfying $\phi_i|_{\overline{V}_i}=1$, $0 \le \phi_i \le 1$ and $\mathrm{supp}\phi_i \subset 
U_{i+1}$.
We fix $i$.
Then, since 
$U_i \cap \partial V_R^j \subset U_i \cap \partial B_R(m)$
for every sufficiently large $j$,
we have 
$\mathrm{supp}\phi_i \cap (\partial V_R^j \setminus \partial B_R(m)) \subset U_{i+1} \cap (\partial V_R^j \setminus \partial B_R(m))= \emptyset$
for every sufficiently large $j$.
Thus, we have
\begin{align}
\upsilon_{R,m}^{\mathrm{sing}}(\partial B_R(m) \cap V_i) &\le \int_{B_{2R}(m)}\phi_i d\upsilon_{R,m}^{\mathrm{sing}} \\
&= \lim_{j \rightarrow \infty}\int_{\partial V_R^j \setminus \partial B_R(m)}\langle \nabla r_m, N_w\rangle \phi_i(w)d\mathrm{vol}_{n-1} \\
&=0. 
\end{align}
By letting $i \rightarrow \infty$, we have $\upsilon_{R,x}^{\mathrm{sing}}(\partial B_R(x) \setminus C_x)=0$.
Therefore, we have Claim \ref{last}.
\

Thus, we have the assertion.
\end{proof}
The following corollary is used in the proof of Theorem \ref{26}.
See also \cite[Theorem $4. 1$]{ch1}.
\begin{corollary}\label{mmmmm}
Let $H$ be a real number, $(M, m)$ a pointed complete $n$-dimensional $(n \ge 2)$ Riemannian manifold with $\mathrm{Ric}_M \ge (n-1)H$, $R$ a positive number and 
$f$ a nonnegative valued Lipschitz function on $\overline{B}_R(m)$.
Then, we have 
\[\int_{B_R(m)}\langle df, dr_m\rangle d\mathrm{vol} \ge -(n-1)\int_{B_R(m)}\frac{\underline{k}'_H(\overline{m,w})}{\underline{k}_H(\overline{m, w})}f(w)d\mathrm{vol}.\]
\end{corollary}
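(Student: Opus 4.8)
The plan is to deduce the inequality from Proposition \ref{501}, the distributional Laplacian comparison identity for $r_m$, together with the classical pointwise Laplacian comparison theorem recalled at the beginning of this section, after first reducing to smooth test functions. So the first step is to extend $f$ to a nonnegative Lipschitz function on $M$ with the same Lipschitz constant, using $z \mapsto \inf_{a \in \overline{B}_R(m)}\left(f(a)+\mathbf{Lip}f\cdot \overline{z,a}\right)$ (nonnegativity is preserved since every term in the infimum is nonnegative), and then to mollify in local charts and patch with a partition of unity. This produces smooth functions $f_j \ge 0$ defined near $\overline{B}_R(m)$ with $f_j \to f$ uniformly on $\overline{B}_R(m)$, with $\sup_j \mathbf{Lip}(f_j|_{\overline{B}_R(m)}) < \infty$, and with $df_j \to df$ in $L^1(B_R(m))$.

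Next I would apply Proposition \ref{501} to each $f_j$ and pass to the limit $j \to \infty$. On the left, $\int_{B_R(m)}\langle df_j, dr_m\rangle d\mathrm{vol} \to \int_{B_R(m)}\langle df, dr_m\rangle d\mathrm{vol}$ because $|dr_m| \equiv 1$ and $df_j \to df$ in $L^1$. On the right, the term $\int_{B_R(m)} f_j \Delta r_m\, d\mathrm{vol}$ converges to $\int_{B_R(m)} f \Delta r_m\, d\mathrm{vol}$ since $\Delta r_m \in L^1(B_R(m))$ by Proposition \ref{501} and $f_j \to f$ uniformly; the term $\int_{\partial B_R(m) \setminus C_m} f_j\, d\mathrm{vol}_{n-1}$ converges because $\mathrm{vol}_{n-1}(\partial B_R(m) \setminus C_m) < \infty$; and $\int_{\overline{B}_R(m)} f_j\, d\upsilon_{R,m}^{\mathrm{sing}}$ converges because $\upsilon_{R,m}^{\mathrm{sing}}$ is a finite Radon measure on the compact set $\overline{B}_R(m)$ (finiteness from Proposition \ref{501}). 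Hence the identity of Proposition \ref{501} holds for every nonnegative Lipschitz $f$ on $\overline{B}_R(m)$. Now I use that $f \ge 0$: the singular term $\int_{\overline{B}_R(m)} f\, d\upsilon_{R,m}^{\mathrm{sing}} \ge 0$ and the boundary term $\int_{\partial B_R(m) \setminus C_m} f\, d\mathrm{vol}_{n-1} \ge 0$, while the pointwise Laplacian comparison theorem gives $\Delta r_m(w) \ge -(n-1)\underline{k}'_H(\overline{m,w})/\underline{k}_H(\overline{m,w})$ for all $w \in M \setminus (C_m \cup \{m\})$, hence a.e.\ on $B_R(m)$, so that $\int_{B_R(m)} f \Delta r_m\, d\mathrm{vol} \ge -(n-1)\int_{B_R(m)} \frac{\underline{k}'_H(\overline{m,w})}{\underline{k}_H(\overline{m,w})} f(w)\, d\mathrm{vol}$. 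Combining the three bounds with the identity yields the assertion.

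The bulk of the work has already been done in Proposition \ref{501}, so the remaining obstacles are minor: carrying out the smooth approximation so that nonnegativity is retained and the gradients converge in $L^1$ on the ball-with-boundary $\overline{B}_R(m)$, and checking that $\int_{B_R(m)} \frac{\underline{k}'_H(\overline{m,w})}{\underline{k}_H(\overline{m,w})} f(w)\, d\mathrm{vol}$ is finite and well defined. The latter holds because, in geodesic polar coordinates about $m$, the volume density is bounded by $\underline{k}_H^{n-1}$ by Bishop--Gromov, while $\underline{k}'_H$ is bounded and $\underline{k}_H > 0$ on the relevant range of radii (by Bonnet--Myers, $\overline{m,w} \le \pi/\sqrt{H}$ when $H>0$), so the integrand in polar coordinates is dominated by a constant multiple of $\underline{k}_H^{n-2}$, which is integrable for $n \ge 2$; hence the right-hand side is a genuine finite quantity.
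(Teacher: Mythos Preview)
Your proof is correct and follows exactly the route the paper intends: the corollary is stated immediately after Proposition~\ref{501} with no proof given beyond a reference to \cite[Theorem~4.1]{ch1}, so the intended argument is precisely to apply the distributional identity of Proposition~\ref{501}, drop the two nonnegative terms (the boundary integral and the singular measure) using $f\ge 0$, and invoke the pointwise Laplacian comparison on the absolutely continuous part. Your smooth approximation step to pass from $C^\infty(\overline{B}_R(m))$ to Lipschitz $f$, and your verification that the right-hand side is finite via the polar-coordinate bound $\theta\le \underline{k}_H^{\,n-1}$, are the details the paper leaves implicit.
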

\subsection{Co-area formula for distance functions}
In this subsection, we shall give several measure theoretical properties on non-collapsing Euclidean cones.
For example, we will prove co-area formula for distance functions (see Proposition \ref{0005}).
Throughout this subsection, we fix an $(n, -1)$-Ricci limit space $(n \ge 2)$ $(Y, y, \upsilon)$ and assume that the following properties hold:
\begin{enumerate}
\item There exists a compact geodesic space $X$ such that $\mathrm{diam}X \le \pi$ and $(Y, y)=(C(X), p)$.
\item $\mathrm{dim}_HX=n-1$. Here, $\mathrm{dim}_HX$ is the Hausdorff dimension of $X$.
\end{enumerate}
Then by \cite[Theorem $5. 9$]{ch-co1}, there exists $C>0$ such that $\upsilon=CH^n$.
First, we shall recall definitions of lower dimensional Hausdorff measures associated to $\upsilon$ and standard (spherical) Hausdorff measures (see section $2$ in \cite{ch-co2}).
For convenience, we will use the notaion below: $r^{-\alpha}\upsilon(B_r(x))=0$ for every $x \in Y$ and $\alpha \ge 0$ if $r=0$.
For $\alpha \in \mathbf{R}_{\ge0}$, $\delta > 0$ and a set $A \subset Y$, we put
\[(\upsilon_{-\alpha})_{\delta}(A) = \inf \left\{ \sum_{i =1}^{\infty}r_i^{-\alpha}\upsilon ( B_{r_i}(x_i)); \  x_i \in Y, \  0 \le r_i < \delta,  \ 
A \subset \bigcup_{i =1}^{\infty}B_{r_i}(x_i)\right\},\]
\[(H^{\alpha})_{\delta}(A) = \inf \left\{ \sum_{i=1}^{\infty}\omega_{\alpha}r_i^{\alpha}; \ x_i \in Y, \  0 \le r_i < \delta, \ 
A \subset \bigcup_{i=1}^{\infty}B_{r_i}(x_i)\right\}\]
and
\[\upsilon_{-\alpha}(A) = \lim_{\delta \rightarrow 0}(\upsilon_{-\alpha})_{\delta}(A),\ \ 
H^{\alpha}(A) = \lim_{\delta \rightarrow 0}(H^{\alpha})_{\delta}(A).\]
For a subset $A \subset \{1\} \times X \subset C(X)$. we also put
\[(\upsilon_{-\alpha})_{X, \delta}(A) = \left\{ \sum_{i=1}^{\infty}r_i^{-\alpha}\upsilon ( B_{r_i}(x_i)); \ x_i \in \{1\} \times X, \ 0\le r_i < \delta, \ 
A \subset \bigcup_{i=1}^{\infty}B_{r_i}(x_i)\right\},\]
\[(H^{\alpha})_{X, \delta}(A) = \left\{ \sum_{i=1}^{\infty}\omega_{\alpha}r_i^{\alpha}; \  x_i \in \{1\} \times X, \ 0\le r_i < \delta, \   
A \subset \bigcup_{i=1}^{\infty}B_{r_i}(x_i)\right\}\]
and 
\[(\upsilon_{-\alpha})_X(A) = \lim_{\delta \rightarrow 0}(\upsilon_{-\alpha})_{\delta}(A), \ \ H^{\alpha}_X(A) = \lim_{\delta \rightarrow 0}(H^{\alpha})_{\delta}(A).\]
We remark that 
$\upsilon_{-\alpha}(A) \le (\upsilon_{-\alpha})_X(A)$, $H^{\alpha}(A) \le H^{\alpha}_X(A)$
for every subset $A \subset \{1\} \times X$
and that if we define a map $\phi$ from $(X, d_X) \rightarrow (\{1\} \times X, d_{C(X)})$ by $\phi (x)=(1, x)$, then 
$H^{n-1}(A)=H^{n-1}_X(\phi(A))$ for every $A \subset X$.
\begin{lemma}\label{0001}
We have $\upsilon_{-1}(A) = (\upsilon_{-1})_X(A)$ for every Borel set $A \subset X$.
\end{lemma}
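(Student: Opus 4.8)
The statement claims that for a Borel set $A \subset X$ (viewed inside $\{1\}\times X \subset C(X)$), the lower-dimensional Hausdorff-type measure $\upsilon_{-1}(A)$ associated to $\upsilon$ computed with balls of $C(X)$ centered anywhere equals the ``restricted'' version $(\upsilon_{-1})_X(A)$ computed with balls centered on $\{1\}\times X$. Since the inequality $\upsilon_{-1}(A)\le (\upsilon_{-1})_X(A)$ is immediate from the definitions (fewer admissible coverings), the whole content is the reverse inequality. The plan is to start from an efficient covering $\{B_{r_i}(x_i)\}$ of $A$ by balls of $C(X)$ with $x_i \in C(X)$ arbitrary and $r_i < \delta$, and to replace each such ball by a comparable ball centered on $\{1\}\times X$ while keeping control of $\sum r_i^{-1}\upsilon(B_{r_i}(x_i))$ up to a factor that tends to $1$ as $\delta \to 0$.

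First I would record the elementary geometric fact in $C(X)$: writing $x_i=(t_i,\xi_i)$, if a ball $B_{r_i}(x_i)$ meets $\{1\}\times X$ then necessarily $|t_i-1|<r_i<\delta$, so $x_i$ is within distance $<\delta$ of the slice and $B_{r_i}(x_i)\subset B_{2r_i}((1,\xi_i))$; moreover for $\delta$ small, $t_i\in(1-\delta,1+\delta)$, so $(1,\xi_i)\in\{1\}\times X$ is a legitimate center. Next, since the limit measure on a non-collapsing cone is a constant multiple of $H^n$ and is Ahlfors $n$-regular away from the pole in a quantitative way coming from Bishop--Gromov volume comparison for $\upsilon$ (available on $(n,-1)$-Ricci limit spaces), I would invoke the doubling/volume-comparison estimates already recorded earlier (the inequalities around Lemma~\ref{001}-type scaling on cones and the Bishop--Gromov comparison for $\upsilon$) to get $\upsilon(B_{2r_i}((1,\xi_i)))\le 2^{K}\upsilon(B_{r_i}((1,\xi_i)))$ with $K=K(n)$ uniform on the shell $\{t\in(1/2,3/2)\}$, provided $r_i$ is small. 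Combining, $r_i^{-1}\upsilon(B_{r_i}(x_i)) \le 2^{K} (r_i)^{-1}\upsilon(B_{r_i}((1,\xi_i)))$, so $(\upsilon_{-1})_{X,2\delta}(A)\le 2^{K}(\upsilon_{-1})_\delta(A)$, giving $(\upsilon_{-1})_X(A)\le 2^K \upsilon_{-1}(A)$ immediately but with a lossy constant; to remove $2^K$ I would instead argue more carefully that, after passing to centers on the slice and using that the radii shrink, one can re-cover $A$ by balls on the slice whose radii are only $(1+\Psi(\delta))$ times the original radii (not twice), using the cone metric's near-isometry between small balls centered just off the slice and small balls on the slice — this is the same mechanism as in Example~\ref{d2} and the retraction arguments used repeatedly in section~5. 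Then $(\upsilon_{-1})_X(A)\le (1+\Psi(\delta))\upsilon_{-1}(A)$, and letting $\delta\to 0$ yields the claim.

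The main obstacle I anticipate is precisely this last refinement: obtaining radius distortion $1+\Psi(\delta)$ rather than a fixed constant like $2$. The issue is that a ball $B_r(x_i)$ with $x_i$ at cone-distance $s<r$ from the slice is not contained in $B_r((1,\xi_i))$ but only in $B_{r+s}((1,\xi_i))\subset B_{2r}((1,\xi_i))$; to do better one must exploit that in the cone metric $\overline{(t_1,\xi_1),(t_2,\xi_2)}^2 = t_1^2+t_2^2-2t_1t_2\cos\min\{\overline{\xi_1,\xi_2},\pi\}$, so near the slice the radial and spherical displacements combine quadratically, meaning the ``wasted'' radial part $s$ contributes only $O(s^2/r)$ to the effective spherical radius when $s$ is small relative to $r$; when $s$ is comparable to $r$ one simply keeps the crude bound but then $r<\delta$ forces the contribution to the sum to be negligible by absolute continuity of $\upsilon$ with respect to itself on small sets, exactly as in the proof of Proposition~\ref{borel}. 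Organizing this dichotomy (small $s/r$ versus $s/r$ bounded below) cleanly, and checking measurability of the resulting restricted covering sums, is the only delicate point; everything else is a routine application of Bishop--Gromov comparison for $\upsilon$ on the cone together with Proposition~\ref{cov}.
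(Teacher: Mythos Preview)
Your plan correctly identifies the nontrivial direction and the relevant geometry, but the proposed dichotomy and the ``absolute continuity'' step for the case $s\sim r$ are both unnecessary and, as written, not convincing: those ``grazing'' balls need not cover a set of small $\upsilon_{-1}$-content, so there is no reason their total contribution to the covering sum should be negligible.

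The paper avoids this entirely by a single uniform estimate. Since each admissible ball $B_{r_i}(x_i)$ meets $\{1\}\times X$, one has $|t_i-1|<r_i<\delta$, and on such a ball the cone metric is $(1\pm\Psi(\delta))$-bi-Lipschitz to the product metric on $\mathbf{R}\times X$. Two consequences follow at once. First, the trace satisfies
\[
B_{r_i}(x_i)\cap(\{1\}\times X)\subset B_{(1+\Psi(\delta))\sqrt{r_i^2-\overline{x_i,y_i}^2}}(y_i)\subset B_{(1+\Psi(\delta))r_i}(y_i),
\]
where $y_i=(1,w_i)$; this holds for \emph{all} $s=\overline{x_i,y_i}<r_i$, so no case distinction is needed. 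Second, the radial translation $(t,w)\mapsto (t+t_i-1,w)$ is itself $(1\pm\Psi(\delta))$-bi-Lipschitz and carries $B_{(1+\Psi(\delta))r_i}(\hat y_i)$ into $B_{(1+\Psi(\delta))r_i}(x_i)$, giving directly
\[
\upsilon\bigl(B_{(1+\Psi(\delta))r_i}(y_i)\bigr)\le (1+\Psi(\delta))\,\upsilon\bigl(B_{r_i}(x_i)\bigr).
\]
This is the volume comparison you were trying to get from Bishop--Gromov; the point is that the near-translation-invariance of $\upsilon=CH^n$ along the radial direction gives the sharp factor $1+\Psi(\delta)$ rather than a doubling constant. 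Summing, $(\upsilon_{-1})_{X,(1+\Psi(\delta))\delta}(A)\le (1+\Psi(\delta))\bigl((\upsilon_{-1})_\delta(A)+\epsilon\bigr)$, and letting $\epsilon,\delta\to 0$ finishes. So drop the dichotomy; the two bi-Lipschitz observations above are the whole proof.
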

\begin{proof}
We fix sufficiently small positive numbers $\delta, \epsilon > 0$.
By definition, there exists $\{B_{r_i}(x_i)\}_i$ such that $0 \le r_i < \delta$, $x_i = (t_i, w_i) \in C(X)= \mathbf{R}_{\ge0} \times X /\{0\} \times X$ and 
$\left|(\upsilon_{1})_{\delta}(A)-\sum_{i=1}^{\infty}r_i^{-1}\upsilon(B_{r_i}(x_i))\right| < \epsilon.$
Without loss of generality, we can assume that $B_{r_i}(x_i) \cap A \neq \emptyset$ for every $i$.
We put $y_i = (1, w_i) \in  C(X)$ and $\hat{y}_i = (1, w_i) \in (\mathbf{R} \times X, \sqrt{d_{\mathbf{R}^2}+d_X^2})$.
It is easy to check that  the map $\Phi_i(s, z)=(s, z)$ from $B_{5r_i}(x_i)$ to $\mathbf{R} \times X$ gives $(1 \pm \Psi(\delta))$-bi-Lipschitz equivalent to the image.
Therefore, we have $B_{r_i}(x_i) \cap (\{1\} \times X) \subset B_{(1 + \Psi(\delta))\sqrt{r_i^2-\overline{x_i, y_i}^2}}(y_i)$.
On the other hand, since $|t_i-1| \le \delta$, a map $\hat{\Phi}_i(t, w)=(t+t_i-1, w)$ from $B_{(1+\Psi(\delta))r_i}(\hat{y}_i)$ to $C(X)$ gives $(1\pm \Psi(\delta))$-bi-Lipschitz equivalent to the image.
By $\hat{\Phi}_i(\hat{y}_i)=x_i$, we have $\mathrm{Image}\,\hat{\Phi} \subset B_{(1+\Psi(\delta))r_i}(x_i)$.
Therefore, we have 
$H^n(B_{(1 + \Psi(\delta))r_i}(y_i)) \le (1+\Psi(\delta))H^n(B_{(1+\Psi(\delta))r_i}(x_i))\le (1 + \Psi(\delta))H^n(B_{r_i}(x_i)).$
Thus, since $\upsilon = CH^n$, we have 
\begin{align}
(\upsilon_{-1})_{X, (1 + \Psi(\delta))\delta}(A) &\le \sum_{i=1}^{\infty} ((1 + \Psi(\delta))r_i)^{-1}CH^n(B_{(1+\Psi(\delta))r_i}(y_i))\\
&\le (1 + \Psi(\delta))\sum_{i=1}^{\infty} r_i^{-1}CH^n(B_{r_i}(x_i)) \\
&\le (1 + \Psi(\delta))((\upsilon_{-1})_{X, \delta}(A) + \epsilon).
\end{align}
By letting $\epsilon \rightarrow 0$ and $\delta \rightarrow 0$, we have the assertion.
\end{proof}
Similarly, we have the following lemma:
\begin{lemma}\label{0002}
We have $H_X^{n-1}(A) = H^{n-1}(A)$ for every Borel set $A \subset \{1\} \times X$.
\end{lemma}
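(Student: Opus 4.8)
The plan is to imitate the proof of Lemma \ref{0001}, with one simplification: since no measure enters the definition of $H^{n-1}$ and $H^{n-1}_X$, the volume comparison step of Lemma \ref{0001} (which invoked $\upsilon=CH^n$ and the auxiliary bi-Lipschitz map $\hat\Phi_i$) is replaced by the trivial scaling $\omega_{n-1}(\lambda r)^{n-1}=\lambda^{n-1}\omega_{n-1}r^{n-1}$. Restricting the centers of an admissible cover can only increase the resulting measure, so $H^{n-1}(A)\le H^{n-1}_X(A)$ always holds (this is already recorded in the excerpt), and it suffices to prove the reverse inequality $H^{n-1}_X(A)\le H^{n-1}(A)$.

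First I would fix small $\epsilon,\delta>0$ and choose a cover $\{B_{r_i}(x_i)\}_i$ of $A$ with $0\le r_i<\delta$, $x_i=(t_i,w_i)\in C(X)$, and $\sum_i\omega_{n-1}r_i^{n-1}\le (H^{n-1})_\delta(A)+\epsilon$. Discarding the balls disjoint from $A$, I may assume $B_{r_i}(x_i)\cap A\neq\emptyset$. Since $A\subset\{1\}\times X$ and the cone distance satisfies $\overline{(t_i,w_i),(1,z)}\ge |t_i-1|$ for every $z\in X$, this forces $|t_i-1|<r_i<\delta$; in particular $t_i>1-\delta>0$ for $\delta$ small.

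Next, exactly as in the proof of Lemma \ref{0001}, the map $(s,z)\mapsto(s,z)$ from $B_{5r_i}(x_i)$ to $\mathbf{R}\times X$ (with the product metric) gives $(1\pm\Psi(\delta))$-bi-Lipschitz equivalence onto its image; combined with $|t_i-1|<r_i$ and the cone formula this yields
\[
B_{r_i}(x_i)\cap(\{1\}\times X)\subset B_{(1+\Psi(\delta))\sqrt{r_i^2-\overline{x_i,y_i}^2}}(y_i)\subset B_{(1+\Psi(\delta))r_i}(y_i),
\]
where $y_i=(1,w_i)\in\{1\}\times X$. Hence $\{B_{(1+\Psi(\delta))r_i}(y_i)\}_i$ is a cover of $A$ by $C(X)$-balls centered on $\{1\}\times X$, with radii $<(1+\Psi(\delta))\delta$, so that
\[
(H^{n-1})_{X,(1+\Psi(\delta))\delta}(A)\le\sum_i\omega_{n-1}\big((1+\Psi(\delta))r_i\big)^{n-1}\le(1+\Psi(\delta))\big((H^{n-1})_\delta(A)+\epsilon\big).
\]
Letting $\epsilon\to0$ and then $\delta\to0$ gives $H^{n-1}_X(A)\le H^{n-1}(A)$, which together with the trivial inequality proves the lemma.

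The only step that is not purely formal is the inclusion $B_{r_i}(x_i)\cap(\{1\}\times X)\subset B_{(1+\Psi(\delta))r_i}(y_i)$, i.e. that the slice of a small ball whose center is within $\delta$ of $\{1\}\times X$ is contained in a ball of essentially the same radius centered on $\{1\}\times X$. This is precisely the estimate already extracted from the cone metric in the proof of Lemma \ref{0001} (via the bi-Lipschitz map $\Phi_i$), so there is no new obstacle here — this is exactly why the statement follows "similarly."
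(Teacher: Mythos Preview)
Your proof is correct and follows exactly the approach the paper intends by ``Similarly'': you mimic the proof of Lemma~\ref{0001}, replacing the volume comparison step (via $\hat\Phi_i$ and $\upsilon=CH^n$) by the trivial radius scaling $\omega_{n-1}((1+\Psi(\delta))r_i)^{n-1}=(1+\Psi(\delta;n))\,\omega_{n-1}r_i^{n-1}$. The key inclusion $B_{r_i}(x_i)\cap(\{1\}\times X)\subset B_{(1+\Psi(\delta))r_i}(y_i)$ is taken verbatim from that proof, so nothing new is needed.
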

We shall remark the following:
By Bishop-Gromov volume comparison theorem for $\upsilon$, there exists $V>1$ such that 
$V^{-1} \le \lim_{r \rightarrow 0}\upsilon(B_r(x))/\omega_n r^n \le V$
for every $x \in B_2(p)$.
On the other hand, since $\upsilon=CH^n$, we have $\lim_{r \rightarrow 0}\upsilon(B_r((t,w)))/\omega_n r^n = \lim_{r \rightarrow 0}\upsilon(B_r((s,w)))/\omega_n r^n$ for every $0 < s < t < \infty$ and $w \in X$.
By these facts and Corollary \ref{132}, it is easy to check that there exists $C_1>1$ such that $C_1^{-1}\upsilon_{-1}(A) \le H^{n-1}(A) \le C_1 \upsilon_{-1}(A)$ for every Borel subset $A$ of $C(X)$.
\begin{lemma}\label{0004}
The product measure $H^1 \times H^{n-1}$ on $\mathbf{R} \times X$ is equal to $H^n$.
\end{lemma}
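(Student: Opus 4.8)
The plan is to reduce the statement to the classical Euclidean identity $\mathcal L^n=\mathcal L^1\times\mathcal L^{n-1}=H^n$ on $\mathbf R^n$ by exploiting the rectifiable structure of $X$. By Corollary \ref{25}, $X$ is $H^{n-1}$-rectifiable, and since $\dim_H X=n-1$ only the $(n-1)$-dimensional pieces carry positive $H^{n-1}$-measure (by the standard density theorems for Hausdorff measures, in the same spirit as the use of Corollary \ref{132}); moreover $0<H^{n-1}(B_r(x))<\infty$ for all $x\in X$, $r>0$, so $H^{n-1}$ is a finite Radon measure on $X$ and $H^1\times H^{n-1}$ is a well-defined $\sigma$-finite Borel measure on $\mathbf R\times X$. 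Fixing $\delta\in(0,1)$, property (3) in the definition of rectifiability lets me choose a disjoint Borel decomposition $X=N\sqcup\bigsqcup_i X_i$ with $H^{n-1}(N)=0$ and with $(1+\delta)$-bi-Lipschitz maps $\phi_i\colon X_i\to\mathbf R^{n-1}$.

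Next I would record two elementary facts. First, for each $i$ the map $\mathrm{id}_{\mathbf R}\times\phi_i\colon\mathbf R\times X_i\to\mathbf R\times\phi_i(X_i)\subset\mathbf R^n$ is $(1+\delta)$-bi-Lipschitz when $\mathbf R\times X_i$ carries the product metric $\sqrt{d_{\mathbf R}^2+d_X^2}$: this is the Pythagorean estimate $|s-s'|^2+(1+\delta)^{\pm2}d_X(z,z')^2$ being sandwiched between $(1+\delta)^{\mp2}$ and $(1+\delta)^{\pm2}$ times $|s-s'|^2+d_X(z,z')^2$. Since a $c$-bi-Lipschitz map distorts $k$-dimensional spherical Hausdorff measure by a factor in $[c^{-k},c^k]$, and since $H^1=\mathcal L^1$ on $\mathbf R$, $H^{n-1}=\mathcal L^{n-1}$ on $\mathbf R^{n-1}$ and $\mathcal L^n=\mathcal L^1\times\mathcal L^{n-1}$ on $\mathbf R^n$, it follows that $H^n$ on $\mathbf R\times X_i$ is $(1\pm\Psi(\delta))$-comparable to the pull-back of $\mathcal L^n$ under $\mathrm{id}\times\phi_i$, and $H^{n-1}$ on $X_i$ is $(1\pm\Psi(\delta))$-comparable to the pull-back of $\mathcal L^{n-1}$ under $\phi_i$. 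Second, $\mathbf R\times N$ is $H^n$-null: given $\epsilon,\delta>0$, cover $N$ by balls $B_{r_j}(w_j)$ of $X$ with $r_j<\delta$ and $\sum_j r_j^{n-1}<\epsilon$, split each unit subinterval of $\mathbf R$ into at most $2/r_j$ intervals of length $r_j$, and form the products; these boxes have diameter $\le\sqrt5\,r_j$, so on any bounded slab $H^n_{2\sqrt5\,\delta}([m,m+1]\times N)\le 2\cdot5^{n/2}\omega_n\sum_j r_j^{n-1}<2\cdot5^{n/2}\omega_n\epsilon$. The same covering bound shows $H^n$ is locally finite on $\mathbf R\times X$, hence a $\sigma$-finite Radon measure.

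Then, for a product box $Q=J\times B$ with $J\subset\mathbf R$ a bounded open interval and $B\subset X$ Borel, I would decompose $Q=(J\times(B\cap N))\sqcup\bigsqcup_i(J\times(B\cap X_i))$ and use countable additivity: the piece $J\times(B\cap N)$ is null for both measures (by the previous paragraph, and by $H^{n-1}(N)=0$), while transferring each remaining piece through $\mathrm{id}\times\phi_i$ gives
\[
H^n\big(J\times(B\cap X_i)\big)=(1\pm\Psi(\delta))\,(\mathcal L^1\times\mathcal L^{n-1})\big(J\times\phi_i(B\cap X_i)\big)=(1\pm\Psi(\delta))\,(H^1\times H^{n-1})\big(J\times(B\cap X_i)\big).
\]
Summing over $i$ yields $H^n(Q)=(1\pm\Psi(\delta))(H^1\times H^{n-1})(Q)$, and letting $\delta\to0$ gives $H^n(Q)=(H^1\times H^{n-1})(Q)$. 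Finally, the boxes $J\times B$ form a generating $\pi$-system for the Borel $\sigma$-algebra of $\mathbf R\times X$ on which the two $\sigma$-finite Radon measures agree, so they coincide on all Borel sets, which is the assertion.

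I expect the main obstacle to be the bookkeeping around the exceptional sets and the chart constants, rather than any single hard estimate: one must be sure that Corollary \ref{25} really supplies, for every prescribed $\delta$, a Borel (essentially disjoint) decomposition of $X$ into $(n-1)$-dimensional pieces with bi-Lipschitz constant at most $1+\delta$ (not merely bounded), that the non-$(n-1)$-dimensional pieces of the rectifiable decomposition are $H^{n-1}$-negligible, and that $\mathbf R\times N$ is genuinely $H^n$-null. Once these structural points are pinned down, the remaining ingredients are the routine bi-Lipschitz invariance of Hausdorff measure and the classical Euclidean product identity.
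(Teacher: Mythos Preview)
Your proposal is correct and rests on the same core idea as the paper---push the problem through $(1+\delta)$-bi-Lipschitz charts supplied by Corollary \ref{25} to the Euclidean identity $\mathcal L^n=\mathcal L^1\times\mathcal L^{n-1}$---but the organization is genuinely different. The paper works \emph{locally}: it fixes a good point $x\in\hat X$, shows via the chart that $H^n([0,a]\times\overline B_r(x))/\bigl(aH^{n-1}(\overline B_r(x))\bigr)\to1$ as $r\to0$, invokes an a~priori product bound $H^n([0,a]\times\hat A)\le C(n)\,aH^{n-1}(\hat A)$ from \cite[Lemma 5.2]{ho2} to control error terms, and finishes with a Vitali-type covering (Proposition \ref{cov}). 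You work \emph{globally}: you disjointify the charts once and for all, handle the null piece $\mathbf R\times N$ by an explicit box covering (thereby avoiding the external citation to \cite{ho2}), sum over charts using countable additivity, and pass from product boxes to all Borel sets via the $\pi$-system lemma. Your route is cleaner and self-contained; the paper's route is more hands-on but parallels how density arguments are used elsewhere in Section 3. One small point worth tightening in your write-up: the assertion that bi-Lipschitz maps distort $H^n$ by $(1+\delta)^{\pm n}$ is being applied to subsets $\mathbf R\times X_i$ carrying the \emph{restricted} ambient metric, and the comparison with $\mathcal L^n$ on the image goes through cleanly for the diameter-based Hausdorff measure (which agrees with the restricted intrinsic one and equals $\mathcal L^n$ on $\mathbf R^n$); the paper's own proof uses the same step without comment, so this is not a gap relative to the paper, but you may want to remark on it.
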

\begin{proof}
It suffices to check that $H^n([0, a] \times A) = a H^{n-1}(A)$ for every Borel subset $A$ os $X$ and $a > 0$.
By Corollary \ref{25}, there exists a Borel subset $\hat{X}$ of $X$ such that the following properties hold:
\begin{enumerate}
\item $H^{n-1}(X \setminus \hat{X})=0$.
\item For every $x \in \hat{X}$ and $\epsilon >0$, there exist $r_x^{\epsilon} > 0$ such that for every $0 < r < r_x^{\epsilon}$,  there exist a compact set $C_r^x \subset \overline{B}_r(x)$ and a Lipschitz $\phi_r^x$ from $C_r^x$ to $\mathbf{R}^{n-1}$
 such that 
\[\frac{H^{n-1}(\overline{B}_r(x) \setminus C_r^x)}{H^{n-1}(\overline{B}_r(x))}\le \epsilon\]
and that $\phi_r^x$ gives $(1 \pm \epsilon)$-bi-Lipschitz equivalent to the image.
\end{enumerate}
For every $x \in \hat{X}$ and $\epsilon > 0$, by Fubini's theorem, we have 
\begin{align}
H^n([0, a] \times C_r^x) &=(1 \pm \epsilon)H^n([0,a] \times \phi_r^x(C_r^x)) \\
&=(1 \pm \epsilon)aH^{n-1}(\phi_r^x(C_r^x))\\
&=(1 \pm \epsilon)aH^{n-1}(C_r^x)\\
&=(1 \pm \epsilon)aH^{n-1}(\overline{B}_r(x))
\end{align}
for every sufficiently small $r >0$.
On the other hand, by the proof of \cite[Lemma $5.2$]{ho2}, we have
$H^n([0, a] \times \hat{A}) \le C(n) a H^{n-1}(\hat{A})$  for every $\hat{A} \subset X$.
Thus, we have 
\[\lim_{r \rightarrow 0}\frac{H^n([0,a] \times \overline{B}_r(x))}{a H^{n-1}(\overline{B}_r(x))}=1\]
for every $x \in \hat{X}$.
Therefore, there exists a Borel set $\hat{A} \subset A$ such that $H^{n-1}(A \setminus \hat{A})=0$
\[\lim_{r \rightarrow 0}\frac{H^n([0,a] \times \overline{B}_r(x))}{a H^{n-1}(\overline{B}_r(x))}=1\]
and 
\[\lim_{r \rightarrow 0}\frac{H^{n-1}(A \cap \overline{B}_r(x))}{H^{n-1}(\overline{B}_r(x))}=1\]
for every $x \in \hat{A}$.
We remark that $H^n([0, a] \times (A \setminus \hat{A}))\le C(n)a H^{n-1}(A \setminus \hat{A})=0$.
We fix a sufficiently small $\epsilon >0$.
By Proposition \ref{cov}, there exists a pairwise disjoint collection $\{\overline{B}_{r_i}(x_i)\}_{i \in \mathbf{N}}$ such that 
$x_i \in \hat{A}$, $r_i < \epsilon$,
$\hat{A} \setminus \bigcup_{i=1}^N\overline{B}_{r_i}(x_i) \subset \bigcup_{i=N+1}^{\infty}\overline{B}_{5r_i}(x_i)$
for every $N \in \mathbf{N}$ and 
\[\left|\frac{H^n([0,a] \times \overline{B}_r(x_i))}{a H^{n-1}(\overline{B}_r(x_i))}-1\right|+\left|\frac{H^{n-1}(A \cap \overline{B}_r(x_i))}{H^{n-1}(\overline{B}_r(x_i))}-1\right|<\epsilon \]
for every $0 < r < r_i$.
We take $N$ satisfying
$\sum_{i=N+1}^{\infty}H^{n-1}(\overline{B}_{r_i}(x_i)) < \epsilon.$
Then, we have 
\begin{align}
H^n([0,a] \times \hat{A})& \le \sum_{i=1}^NH^n([0,a] \times \overline{B}_{r_i}(x_i))+
\sum_{i=N+1}^{\infty}H^n([0,a] \times \overline{B}_{5r_i}(x_i)) \\
&\le \sum_{i=1}^NH^n([0,a] \times \overline{B}_{r_i}(x_i))+ a C(n)
\sum_{i=N+1}^{\infty}H^{n-1}(\overline{B}_{5r_i}(x_i)) \\
&\le \sum_{i=1}^NH^n([0,a] \times \overline{B}_{r_i}(x_i))+ \Psi(\epsilon;n,a, C_1)\\
&\le a \sum_{i=1}^NH^{n-1}(\overline{B}_{r_i}(x_i)) + \Psi(\epsilon;n, a, C_1)\\
& \le a(1+\epsilon)(H^{n-1}(\hat{A}) + \epsilon)+\Psi(\epsilon;n, a, C_1).
\end{align}
Therefore, we have 
\[H^n([0,a] \times A) \le a H^{n-1}(A).\]
On the other hand, we have
\begin{align*}
aH^{n-1}(A) = a\left(\sum_{i=1}^NH^{n-1}(\overline{B}_{r_i}(x_i) ) + \Psi(\epsilon; n, C_1)\right)
&\le(1 +\epsilon) \sum_{i=1}^NH^n([0,a] \times \overline{B}_{r_i}(x_i)) + \Psi(\epsilon;n, a, C_1)
\end{align*}
and
\begin{align*}
\frac{H^n([0,a] \times (\overline{B}_{r_i}(x_i) \setminus A))}{H^n([0, a] \times \overline{B}_{r_i}(x_i))}\le C(n)(1 + \epsilon )\frac{aH^{n-1}(\overline{B}_{r_i}(x_i) \setminus A)}{a H^{n-1}(\overline{B}_{r_i}(x_i))}\le \Psi(\epsilon;n).
\end{align*}
Therefore, we have 
\begin{align}
aH^{n-1}(A) &\le (1+\epsilon)\sum_{i=1}^NH^n([0,a] \times \overline{B}_{r_i}(x_i))+ \Psi(\epsilon;n, a, C_1) \\
&\le ( 1 + \Psi(\epsilon; n))\sum_{i=1}^NH^n\left(([0,a] \times \overline{B}_{r_i}(x_i)) \cap A\right)+ \Psi(\epsilon;n, a, C_1) \\
&\le ( 1 + \Psi(\epsilon; n))H^n([0,a] \times A) + \Psi(\epsilon;n, a, C_1).
\end{align}
Therefore, we have 
\[a H^{n-1}(A) \le H^n([0,a] \times A).\]
Thus, we have the assertion.
\end{proof}
\begin{proposition}[Co-area formula for distance functions on non-collapsing Euclidean cones]\label{0005}
We have
\[\int_{C(X)}fdH^n=\int_0^{\infty}\int_{\partial B_t(p)}fdH^{n-1}dt\]
for every $f \in L^1(C(X))$.
\end{proposition}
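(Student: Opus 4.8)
The plan is to prove the identity first as an equality of Borel measures on $C(X)\setminus\{p\}$ and then deduce the stated $L^1$ version by routine approximation. Since $(C(X),p,\upsilon)$ is an $(n,-1)$-Ricci limit space, Bishop--Gromov volume comparison for $\upsilon$ together with $\upsilon=CH^n$ gives $H^n(B_r(p))\le C(n)r^n\to 0$, so $H^n(\{p\})=0$. Hence, writing any $f\in L^1(C(X))$ as $f^+-f^-$ and using monotone convergence over the annuli $A_p(a,b)$ (and then over simple functions), it suffices to prove the set version
\[ H^n(A\cap A_p(a,b))=\int_a^b H^{n-1}(\partial B_t(p)\cap A)\,dt \]
for all $0<a<b$ and all Borel $A\subset C(X)$, together with measurability of $t\mapsto H^{n-1}(\partial B_t(p)\cap A)$. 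Equivalently, defining a Borel measure $\nu$ on $C(X)\setminus\{p\}$ by $\nu(E)=\int_0^\infty t^{n-1}H^{n-1}\big(\pi_X((\{t\}\times X)\cap E)\big)\,dt$ with $\pi_X(t,x)=x$, I must show that $\nu$ equals the restriction of $H^n$ to $C(X)\setminus\{p\}$.

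The geometric input is the scaling/product structure of the cone. The elementary fact is the expansion
\[ \overline{(s_1,x_1),(s_2,x_2)}^{\,2}=(s_1-s_2)^2+s_1 s_2\,\overline{x_1,x_2}^{\,2}\,(1+O(\overline{x_1,x_2}^{\,2})) \]
valid whenever $\overline{x_1,x_2}<\pi$. Two consequences: first, for fixed $t>0$ the map $x\mapsto(t,x)$ from $(X,d_X)$ into $(\partial B_t(p),d_{C(X)})$ has local dilation exactly $t$ at every point and every sufficiently small scale, so by the additivity of Hausdorff measure under fine Borel partitions into small pieces --- exactly the mechanism used in Lemmas \ref{0001}, \ref{0002} and Proposition \ref{133} --- one gets $H^{n-1}(\partial B_t(p)\cap A)=t^{n-1}H^{n-1}\big(\pi_X((\{t\}\times X)\cap A)\big)$ for every Borel $A$; in particular $\nu(E)=\int_0^\infty H^{n-1}(\partial B_t(p)\cap E)\,dt$, and the integrand is Lebesgue measurable in $t$ by Fubini applied to the product measure $H^1\times H^{n-1}$ on $(0,\infty)\times X$, which by Lemma \ref{0004} is $H^n$ for the product metric and is $\sigma$-finite because $H^{n-1}(X)<\infty$ (this last finiteness coming from the remark following Corollary \ref{25}, which gives $0<H^{n-1}(B_r(x))<\infty$, plus compactness of $X$). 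Second, for $0<\alpha<\beta$ and a Borel $V\subset X$ with $\mathrm{diam}_{C(X)}\big((\alpha,\beta)\times V\big)$ small, the identity map onto $\big((\alpha,\beta)\times V,\sqrt{ds^2+\alpha^2 d_X^2}\,\big)$ is $(1\pm\Psi)$-bi-Lipschitz, and Lemma \ref{0004} (applied with $X$ rescaled by $\alpha$, so $H^{n-1}$ rescales by $\alpha^{n-1}$) yields $H^n\big((\alpha,\beta)\times V\big)=(1\pm\Psi)\,\alpha^{n-1}(\beta-\alpha)\,H^{n-1}(V)$.

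Next I would patch the local estimate to compute $H^n$ on arbitrary boxes and then invoke a uniqueness theorem. Given a box $(\alpha,\beta)\times V$ with $V$ Borel, subdivide $[\alpha,\beta]$ into many short intervals and $V$ into countably many Borel pieces of small diameter (possible since $X$ is compact); summing the local estimate over the resulting small boxes --- using countable additivity of $H^{n-1}$ on $X$, countable additivity of $H^n$ on $C(X)$, and the fact that the slices $\{t=t_j\}\subset\partial B_{t_j}(p)$ are $H^n$-null since they have finite $H^{n-1}$-measure --- and letting the mesh tend to $0$, one obtains $H^n\big((\alpha,\beta)\times V\big)=\tfrac{\beta^n-\alpha^n}{n}H^{n-1}(V)=\nu\big((\alpha,\beta)\times V\big)$. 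The boxes $(\alpha,\beta)\times V$ form a $\pi$-system generating the Borel $\sigma$-algebra of $C(X)\setminus\{p\}$, and both $H^n$ and $\nu$ are $\sigma$-finite there (exhaust by $A_p(1/k,k)$, on which both are finite); hence by Dynkin's uniqueness theorem $\nu$ equals the restriction of $H^n$ to $C(X)\setminus\{p\}$, which gives the set version and therefore the proposition.

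I expect the main work to be organizational rather than a hard estimate: the only genuinely geometric ingredients are Lemma \ref{0004} and the elementary expansion of the cone distance, and the statement is essentially that a non-collapsed metric cone carries the measure $t^{n-1}\,dt\otimes dH^{n-1}_X$. The points that need care are (a) the locality of Hausdorff measure used to pass from local bi-Lipschitz comparisons to exact global identities, (b) the finiteness $H^{n-1}(X)<\infty$, needed both for $\sigma$-finiteness in the Fubini and uniqueness steps and for summability of the $\Psi$-errors, and (c) the measurability of $t\mapsto H^{n-1}(\partial B_t(p)\cap A)$, which I would obtain for free from Fubini for the product measure rather than by hand.
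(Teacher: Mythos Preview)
Your approach is correct and takes a genuinely different route from the paper's. Both arguments rest on the same geometric inputs --- the local bi-Lipschitz comparison between the cone metric and the product metric $\sqrt{ds^2+s_0^2\,d_X^2}$, together with Lemma~\ref{0004} --- but they are organized differently. The paper invokes an external co-area formula for the measure $\upsilon_{-1}$ (Theorem~5.2 of \cite{ho}) together with the mutual bound $C_1^{-1}\upsilon_{-1}\le H^{n-1}\le C_1\upsilon_{-1}$, which reduces the statement to verifying the density identity
\[
\lim_{r\to 0}\frac{1}{H^n(B_r(x))}\int_0^{\infty}H^{n-1}(\partial B_t(p)\cap\overline{B}_r(x))\,dt=1
\]
at each $x\neq p$; this density is then checked via the bi-Lipschitz comparison and Fubini on the product. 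You instead compute $H^n$ directly on boxes $(\alpha,\beta)\times V$ by subdividing into small pieces, applying Lemma~\ref{0004} after rescaling (your observation that the map $(s,x)\mapsto(s/\alpha,x)$ is a global $1/\alpha$-dilation from $\sqrt{ds^2+\alpha^2 d_X^2}$ to the standard product metric makes this clean), and then conclude by Dynkin's $\pi$-system uniqueness. Your route is more self-contained --- it never touches $\upsilon_{-1}$ or the external reference --- at the cost of a slightly longer patching argument; the paper's is shorter but relies on machinery developed elsewhere. The care points you flag (locality of Hausdorff measure, finiteness of $H^{n-1}(X)$, measurability via Fubini) are exactly the ones that need attention, and your treatment of each is sound.
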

\begin{proof}
By \cite[Theorem $5.2$]{ho} and $C_1\upsilon_{-1} \le H^{n-1}\le C_1\upsilon_{-1}$, it suffices to check that
\[\lim_{r \rightarrow 0}\frac{1}{H^n(B_r(x))}\int_0^{\infty}H^{n-1}(\partial B_t(p) \cap \overline{B}_r(x))dt=1\]
for every $x \in C(X) \setminus \{p\}$.
We put $R = \overline{p, x} > 0$ and fix sufficiently small $r >0$.
Then, since a map $\Phi(t, w)=(t, w)$ from $B_r(x)$ to $\mathbf{R} \times X$ gives $(1 \pm \Psi(r))$-bi-Lipschitz equivalent to the image, we
have
\[B_{(1-\Psi(r))r}(\Phi(x)) \subset \Phi(B_r(x)) \subset B_{(1 + \Psi(r))r}(\Phi(x)).\]
On the other hand, by Lemma \ref{0004} and Fubini's Theorem, we have
\[H^n\left(B_{(1+\Psi(r))r}(\Phi(x))\right)=\int_{R-(1+\Psi(r))r}^{R+(1+\Psi(r))r}H^{n-1}\left((\{t\} \times X) \cap B_{(1+\Psi(r))r}(\Phi(x))\right)dt.\]
Since $\Phi(\partial B_t(p) \cap B_r(x)) \subset  (\{t\} \times X) \cap B_{(1+\Psi(r))r}(\Phi(x))$, we have
\[H^n\left(B_{(1+\Psi(r))r}(\Phi(x))\right) \ge (1-\Psi(r;n))\int_{R-(1+\Psi(r))r}^{R+(1+\Psi(r))r}H^{n-1}(\partial B_t(p) \cap B_r(x))dt.\]
Therefore, we have 
\[1 \ge \limsup_{r \rightarrow 0}\frac{1}{H^n(B_r(x))}\int_0^{\infty}H^{n-1}(\partial B_t(p) \cap B_r(x))dt.\]
Similarly, we have 
\[1 \le \liminf_{r \rightarrow 0}\frac{1}{H^n(B_r(x))}\int_0^{\infty}H^{n-1}(\partial B_t(p) \cap B_r(x))dt.\]
Therefore, we have the assertion.
\end{proof}
\begin{proposition}\label{0003}
We have $\upsilon_{-1}(A) = C(n) CH^{n-1}(A)$ for every Borel set $A \subset \{1\} \times X$.
\end{proposition}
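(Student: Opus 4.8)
Since $\upsilon=CH^n$ on $C(X)$ and the functional $\sum_i r_i^{-1}\upsilon(B_{r_i}(x_i))$ is just $C$ times $\sum_i r_i^{-1}H^n(B_{r_i}(x_i))$, one has $\upsilon_{-1}=C\,(H^n)_{-1}$ as outer measures, so it suffices to prove $(H^n)_{-1}(A)=C(n)H^{n-1}(A)$ for every Borel $A\subset\{1\}\times X$, with $C(n)=\omega_n/\omega_{n-1}$. By the remark following Lemma \ref{0002} (applied to $\upsilon=CH^n$) the measures $(H^n)_{-1}$ and $H^{n-1}$, restricted to the Borel subsets of $\{1\}\times X$, are finite and mutually comparable, hence mutually absolutely continuous; and $(\{1\}\times X,H^{n-1})$ is doubling (combine the Remark after Corollary \ref{25}, the co-area formula Proposition \ref{0005}, and Bishop--Gromov, Theorem \ref{BG}, for $H^n$). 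By the differentiation theory of the Appendix (Proposition \ref{130}) it therefore suffices to show
\[\lim_{r\to0}\frac{(H^n)_{-1}\bigl(\overline{B}_r(x)\cap(\{1\}\times X)\bigr)}{H^{n-1}\bigl(\overline{B}_r(x)\cap(\{1\}\times X)\bigr)}=\frac{\omega_n}{\omega_{n-1}}\qquad\text{for }H^{n-1}\text{-a.e. }x\in\{1\}\times X.\]

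The key is a Euclidean model at a good point. Since $X$ is $H^{n-1}$-rectifiable (Corollary \ref{25}) and $\mathrm{dim}_HX=n-1$, for $H^{n-1}$-a.e. $w\in X$ there is, for every $\delta>0$, a $(1\pm\delta)$-bi-Lipschitz chart from a neighbourhood of $w$ in $X$ onto an open ball in $\mathbf{R}^{n-1}$, with $w$ a Lebesgue density point of its domain (in $X$) and of its image (in $\mathbf{R}^{n-1}$), and with $H^{n-1}(\overline{B}_r(w)\cap X)/(\omega_{n-1}r^{n-1})\to1$ as $r\to0$. Composing such a chart (on the $X$-factor) with the $(1\pm\Psi(\rho))$-bi-Lipschitz identification $B_\rho((1,w))\approx B_\rho$ in $\mathbf{R}\times X$ with the product metric, as used in Lemma \ref{0001} and Proposition \ref{0005}, one obtains for such $w$ and every $\delta>0$ a $(1\pm\delta)$-bi-Lipschitz homeomorphism $\Phi_\delta$ from a neighbourhood of $(1,w)$ in $C(X)$ onto an open set in $\mathbf{R}^n$ which carries the slice $\{1\}\times X$ onto a subset of $\mathbf{R}^{n-1}\times\{0\}$ of density $1$ at $\Phi_\delta(1,w)$. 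Under $\Phi_\delta$, $H^n=\mathcal{S}^n$ on $C(X)$ transports to $(1\pm\Psi(\delta))\mathcal{L}^n$ (spherical Hausdorff and Lebesgue measures agreeing in top dimension), hence $(H^n)_{-1}$ on the slice transports to $(1\pm\Psi(\delta))(\mathcal{L}^n)_{-1}$ on its image; and by Lemma \ref{0001} (and its evident analogue for $(H^n)_{-1}$) the covering balls in the definition of $(H^n)_{-1}$ may be taken centred on the slice, so that $\Phi_\delta$ carries the relevant infima to the corresponding ones with balls centred on $\mathbf{R}^{n-1}\times\{0\}$.

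It then remains to compute in $\mathbf{R}^n$: for a Borel set $E\subset\mathbf{R}^{n-1}\times\{0\}$,
\[(\mathcal{L}^n)_{-1}(E)=\omega_n\lim_{\delta\to0}\inf\Bigl\{\textstyle\sum_i r_i^{n-1}\Bigr\}=\frac{\omega_n}{\omega_{n-1}}\,\mathcal{L}^{n-1}(E),\]
where the inner infimum is over coverings of $E$ by balls of radius $<\delta$ centred on $\mathbf{R}^{n-1}\times\{0\}$, and the last equality uses that the $(n-1)$-dimensional spherical Hausdorff measure on $\mathbf{R}^{n-1}$ equals $\mathcal{L}^{n-1}$ (Vitali covering). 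Transporting this back through $\Phi_\delta$ — the upper bound on $(H^n)_{-1}(\overline{B}_r((1,w))\cap(\{1\}\times X))$ from an efficient Vitali covering, the lower bound from comparison with the Hausdorff measure of the density-$1$ part of the slice, and the density statement for $H^{n-1}$ on $X$ at $w$ — gives, for all small $r$,
\[\frac{(H^n)_{-1}\bigl(\overline{B}_r((1,w))\cap(\{1\}\times X)\bigr)}{H^{n-1}\bigl(\overline{B}_r((1,w))\cap(\{1\}\times X)\bigr)}=\frac{\omega_n}{\omega_{n-1}}\bigl(1\pm\Psi(\delta)\bigr);\]
letting $\delta\to0$ yields the displayed a.e.\ limit, and hence $\upsilon_{-1}=C(n)\,C\,H^{n-1}$ on Borel subsets of $\{1\}\times X$ with $C(n)=\omega_n/\omega_{n-1}$. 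The main obstacle will be the middle step: upgrading, compatibly, a $(1\pm\delta)$-bi-Lipschitz Euclidean chart on $X$ near $w$ to one on $C(X)$ near $(1,w)$ that sends $\{1\}\times X$ into a flat $\mathbf{R}^{n-1}$ up to a density-$1$ error, and then running the Vitali-type two-sided estimate so that the covering-number infima defining $(H^n)_{-1}$ near such a point are squeezed between the corresponding Euclidean/Hausdorff quantities up to $\Psi(\delta)$; the bi-Lipschitz transport of $H^n$ and of the functional $\sum_i r_i^{-1}H^n(B_{r_i}(x_i))$ is routine, so it is this density bookkeeping that needs care.
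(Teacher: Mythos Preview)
Your approach is valid and arrives at the correct constant $C(n)=\omega_n/\omega_{n-1}$, but the route differs from the paper's.

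The paper does not pass through a differentiation-of-measures reduction or bi-Lipschitz transport to a Euclidean model. Instead it proves the two inequalities $\upsilon_{-1}(A)\ge\frac{C\omega_n}{\omega_{n-1}}H^{n-1}(A)$ and $\upsilon_{-1}(A)\le\frac{C\omega_n}{\omega_{n-1}}H^{n-1}(A)$ separately, by direct covering arguments carried out in $C(X)$ itself. Each inequality rests on a pointwise density fact at regular points: Colding's result $H^n(B_r(z))/(\omega_nr^n)\to1$ for $z\in\mathcal{R}_n(Y)$ (cited from \cite{co3}), and the slice density $H^{n-1}(\overline B_r(z)\cap(\{1\}\times X))/(\omega_{n-1}r^{n-1})\to1$ on a full-measure set, which the paper isolates as a separate claim and proves via tangent-cone convergence together with the co-area formula (Proposition \ref{0005}) applied to the rescaled slabs $[-1,1]\times\overline B_1^{r_i^{-1}d_X}(x)$. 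With these two densities in hand, one inequality is a bound on the spherical $(n-1)$-pre-measure by the $\upsilon_{-1}$-pre-measure over any cover, and the other is a Vitali/$5r$-type argument over a pairwise-disjoint cover of $\mathrm{Leb}(A\cap Z)$. No Euclidean chart appears.

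Your route through the rectifiability charts of Corollary \ref{25}/Theorem \ref{24} and the Euclidean model computation reaches the same endpoint; the ``density bookkeeping'' you correctly flag as the main obstacle is exactly what the paper's intrinsic argument sidesteps by using the two density limits directly rather than transporting covers through a chart defined only on a density-one subset. One genuine technical gap: your reduction ``by the differentiation theory of the Appendix (Proposition \ref{130})'' is not what that proposition provides---Proposition \ref{130} concerns Lebesgue points of a \emph{set}, not the Radon--Nikodym derivative of one Radon measure with respect to another. You need an actual differentiation-of-measures theorem on a doubling space (standard, but not stated in the paper). Alternatively, once the two pointwise densities above are in hand, the paper's direct covering argument makes the differentiation-theorem framework unnecessary and is shorter.
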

\begin{proof}
By \cite{co3}, we have
\[\lim _{r \rightarrow 0}\frac{H^n(B_r(z))}{\omega_n r^n} = 1\]
for every $z \in \mathcal{R}_n(Y)$.
Since $\mathcal{R}_n(Y) \cap (\{1\} \times X)= \{1\} \times \mathcal{R}_{n-1}(X)$,
by Proposition \ref{0005}, we have 
$H^{n-1}(X \setminus \mathcal{R}_{n-1}(X))=0$.
We fix $\epsilon, \delta, \tau > 0$.
We put 
\[A_{\tau}=\left\{a \in A \cap \mathcal{R}_{n-1}(X);\ \left|\frac{H^n(B_r(a))}{\omega_nr^n}-1\right|<\epsilon \ \mathrm{for\ every}\ 0<r\le \tau \right\}.\]
By the definition of $\upsilon_{-1}$, there exists $\{B_{r_i}(x_i)\}_i$ such that $x_i \in A_{\tau}$, $r_i < \min\{\delta, \tau\}$ and 
$|\upsilon_{-1}(A_{\tau})-\sum_{i=1}^{\infty}r_i^{-1}\upsilon(B_{r_i}(x_i))|<\epsilon$.
Thus, we have 
\begin{align}
(H^{n-1})_{\delta}(A_{\tau})&\le \sum_{i=1}^{\infty}\omega_{n-1}r_i^{n-1}\\
&\le \sum_{i=1}^{\infty}\frac{\omega_{n-1}}{\omega_n}r_i^{-1}(1+\epsilon)H^n(B_{r_i}(x_i)) \\
&= \sum_{i=1}^{\infty}\frac{\omega_{n-1}}{\omega_n}(1+\epsilon)r_i^{-1}C^{-1}\upsilon(B_{r_i}(x_i))\\
&\le \sum_{i=1}^{\infty}\frac{\omega_{n-1}}{\omega_n}(1+\epsilon)C^{-1}(\upsilon_{-1}(A_{\tau})+\epsilon).
\end{align}
By letting $\delta \rightarrow 0$, $\tau \rightarrow 0$ and $\epsilon \rightarrow 0$, we have  
\[CH^{n-1}(A) \le \frac{\omega_{n-1}}{\omega_{n}}\upsilon_{-1}(A).\]
\begin{claim}\label{0006}
There exists a Borel subset $Z$ of $\{1\} \times X$ such that $H^{n-1}((\{1\} \times X) \setminus Z) = 0$,
\[\lim_{r \rightarrow 0}\frac{H^{n-1}(\overline{B}_r(z) \cap (\{1\} \times X))}{\omega_{n-1}r^{n-1}}=1\]
for every $z \in Z$.
\end{claim}
The proof is as follows.
Let $x$ be a point in $X$ and $\{r_i\}_i$ a sequence of positive numbers satisfying $r_i \rightarrow 0$.
We assume that there exists a tangent cone $(T_xX, 0_x)$ of $X$ at $x$ such that 
$(X, x, r_i^{-1}d_X) \rightarrow (T_xX, 0_x)$. 
By \cite[Claim $4. 5$]{ho2} and \cite[Theorem $5. 9$]{ch-co1}, we have $(C(X), r_i^{-1}d_{C(X)}, (1, x), H^n) \rightarrow (\mathbf{R} \times T_xX, (0, 0_x), H^n)$.
Moreover, 
By the $H^{n-1}$-rectifiability of $T_xX$ (Corollary \ref{25}) and an argument similar to the proof of Lemma \ref{0004}, we have $H^1 \times H^{n-1}=H^n$ on $\mathbf{R} \times T_xX$.
Since a sequence of compact sets $[-1, 1] \times \overline{B}_1^{r_i^{-1}d_X}(x) \subset C(X)$ converges to $[-1, 1] \times \overline{B}_1(0_x)$, by Proposition \ref{sup} and Proposition \ref{10103}, we have 
\[\lim_{i \rightarrow \infty}H^n([-1, 1] \times \overline{B}_1^{r_i^{-1}d_X}(x)) = H^n([-1, 1] \times \overline{B}_1(0_x)).\]
By Proposition \ref{0005}, we have $H^n([-1, 1] \times \overline{B}_1^{r_i^{-1}d_X}(x))=2H^{n-1}(\overline{B}_1^{r_i^{-1}d_X}(x))$.
Especially, we have 
\[\lim_{i \rightarrow \infty}H^{n-1}(\overline{B}_1^{r_i^{-1}d_X}(x))=H^{n-1}(\overline{B}_1(0_x)).\]
Therefore, if we put $Z=\mathcal{R}_{n}(Y)\cap (\{1\} \times X)$, then we have Claim \ref{0006}.
\

We put $W = \mathrm{Leb}(A \cap Z)$ with respect to the measure $H^{n-1}$.
By Proposition \ref{cov}, there exists a pairwise disjoint collection $\{\overline{B}_{r_i}(a_i)\}_i$ such that 
$a_i \in W$, $r_i < \delta/100$, 
$W \setminus \bigcup_{i=1}^N\overline{B}_{r_i}(a_i) \subset \bigcup_{i=N+1}^{\infty}\overline{B}_{5r_i}(a_i)$
for every $N$
and 
\[\left|\frac{H^n(B_{r_i}(a_i))}{\omega_nr_i^n}-1\right|+\left|\frac{H^{n-1}(\overline{B}_{r_i}(a_i) \cap W)}{\omega_{n-1}r_i^{n-1}}-1\right|<\epsilon\]
for every $i$.
We take $N$ satisfying
$\sum_{i=N+1}^{\infty}H^{n-1}(\overline{B}_{r_i}(a_i) \cap W) < \epsilon.$
Therefore,  we have 
$\sum_{N+1}^{\infty}H^{n-1}(\overline{B}_{5r_i}(a_i) \cap W) < \Psi(\epsilon; n, C_1).$
Then, by the assumption, we have 
$\sum_{i=N+1}^{\infty} \omega_{n-1}r_i^{n-1} \le \Psi(\epsilon; n, C_1).$
Therefore, we have
\begin{align}
(\upsilon_{-1})_{\delta}(W)&\le \sum_{i=1}^Nr_i^{-1}\upsilon(\overline{B}_{r_i}(a_i)) + \sum_{i=N+1}^{\infty}(5r_i)^{-1}
\upsilon(\overline{B}_{5r_i}(a_i)) \\
&\le \sum_{i=1}^Nr_i^{-1}CH^n(\overline{B}_{r_i}(a_i)) + \sum_{i=N+1}^{\infty}C(n)Cr_i^{n-1} \\
&\le \sum_{i=1}^Nr_i^{-1}CH^n(\overline{B}_{r_i}(a_i)) + \Psi(\epsilon;n, C, C_1) \\
&\le \sum_{i=1}^NC\omega_nr_i^{n-1}(1+\epsilon)+\Psi(\epsilon;n, C, C_1) \\
&\le \frac{C\omega_n}{\omega_{n-1}}(1+\epsilon)\sum_{i=1}^NH^{n-1}(\overline{B}_{r_i}(a_i) \cap W) + \Psi(\epsilon;n, C, C_1)\\
&\le \frac{C\omega_n}{\omega_{n-1}}(1 + \epsilon)H^{n-1}(W)+\Psi(\epsilon;n, C, C_1).
\end{align}
By letting $\delta \rightarrow 0$ and $\epsilon \rightarrow 0$, we have 
\[\upsilon_{-1}(A) \le \frac{C\omega_n}{\omega_{n-1}}H^{n-1}(A).\]
Thus, we have the assertion.
\end{proof}
We end this subsection by giving a proof of the following proposition:
\begin{proposition}\label{9009}
We have 
\[H^{n-1}(B_t(x))\le C(n) \frac{t^{n-1}}{s^{n-1}}H^{n-1}(B_s(x))\]
for every $0 < s < t \le \pi$ and $x \in X$.
\end{proposition}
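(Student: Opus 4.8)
The plan is to transfer everything to the cone $C(X)=(Y,y,\upsilon)$ and exploit Bishop--Gromov comparison (Theorem \ref{BG}, which passes to the limit $\upsilon$) together with two bookkeeping identities. First, by the co-area formula (Proposition \ref{0005}), the fact that the radial homothety $(1,w)\mapsto(u,w)$ of $C(X)$ scales $H^{n-1}$ by $u^{n-1}$, and the fact (Lemma \ref{0002} and the remark preceding Lemma \ref{0001}) that $\phi\colon(X,d_X)\to(\{1\}\times X,d_{C(X)})$, $\phi(w)=(1,w)$, preserves $H^{n-1}$, one obtains, for Borel $A\subset X$ and $\rho>0$ (writing $\mathrm{Cone}(A)=(0,\infty)\times A\cup\{p\}$),
\[\upsilon\bigl(B_\rho(p)\cap\mathrm{Cone}(A)\bigr)=\frac{C\rho^n}{n}\,H^{n-1}(A),\qquad\text{so}\qquad \frac{H^{n-1}(B_r^X(x))}{H^{n-1}(X)}=\frac{\upsilon\bigl(B_1(p)\cap\mathrm{Cone}(B_r^X(x))\bigr)}{\upsilon(B_1(p))},\]
using $\upsilon=CH^n$ and $\mathrm{Cone}(X)=C(X)$; here $H^{n-1}(X)\in(0,\infty)$ by Remark \ref{26} applied to a ball of radius $>\mathrm{diam}X$. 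Second, writing $z_0=(1,x)$ and $\rho_r:=2\sin(r/2)$, the cone distance formula gives $\phi(B_r^X(x))=\partial B_1(p)\cap B_{\rho_r}(z_0)$ for $r\le\pi$, hence by Proposition \ref{0003} (and $\phi$ preserving $H^{n-1}$)
\[H^{n-1}(B_r^X(x))=\frac{1}{C(n)C}\,\upsilon_{-1}\bigl(\partial B_1(p)\cap B_{\rho_r}(z_0)\bigr),\qquad \frac{2r}{\pi}\le\rho_r\le r,\quad\text{so}\quad \frac{\rho_t}{\rho_s}\le\frac{\pi}{2}\,\frac{t}{s}.\]

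I would then split into the regime $t\ge 1/200$ and the regime $t<1/200$. For $t\ge 1/200$ I would prove the reverse bound $H^{n-1}(B_s^X(x))\ge c(n)s^{n-1}H^{n-1}(X)$ for all $0<s\le\pi$ and all $x$: put $r=s/(10\pi)$ and check (elementary, via $\overline{(a,x),(u,w)}^2=(u-a)^2+2au(1-\cos\overline{x,w})$ and $1-\cos\theta\ge\tfrac{2}{\pi^2}\theta^2$) that $B_r^{C(X)}((a,x))\subset B_1(p)\cap\mathrm{Cone}(B_s^X(x))$ for every $a\in[\tfrac14,\tfrac34]$; packing $N\ge 1/(8r)$ pairwise disjoint such balls with centres $a_j\in[\tfrac14,\tfrac34]$ at spacing $2r$, and noting $B_2((a_j,x))\supset B_1(p)$, Bishop--Gromov gives $\upsilon(B_r((a_j,x)))\ge c(n)r^n\upsilon(B_1(p))$, whence $\upsilon(B_1(p)\cap\mathrm{Cone}(B_s^X(x)))\ge Nc(n)r^n\upsilon(B_1(p))\ge c(n)s^{n-1}\upsilon(B_1(p))$. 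Combining this with $H^{n-1}(B_t^X(x))\le H^{n-1}(X)$ and $t^{n-1}\ge(1/200)^{n-1}$ yields $H^{n-1}(B_t^X(x))\le C(n)(t/s)^{n-1}H^{n-1}(B_s^X(x))$ for such $t$.

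For $t<1/200$ all radii are small: $\rho_s<\rho_t<1/200$. Here I would chain three inequalities. Claim \ref{105}, applied with $p$ in the role of its $x$ and $z_0$ in the role of its $z$ (so that its sphere $\partial B_{\overline{x,z}}(x)$ is exactly $\partial B_1(p)$, and $\rho_t<1/100=\overline{p,z_0}/100$), gives $\upsilon_{-1}(\partial B_1(p)\cap\overline B_{\rho_t}(z_0))\le C(n)\upsilon(B_{\rho_t}(z_0))/\rho_t$. Bishop--Gromov for $\upsilon$ on $C(X)$, together with the fact that the model-space volume $\mathrm{vol}\,B_\rho(\underline p)$ is comparable to $\rho^n$ for $\rho\le 1$, gives $\upsilon(B_{\rho_t}(z_0))\le C(n)(\rho_t/\rho_s)^n\upsilon(B_{\rho_s}(z_0))$. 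Finally Claim \ref{106}, applied with $\tau=\tfrac12$, $R=2$ (one checks $z_0\in\mathcal D_p^{1/2}\cap B_2(p)\setminus B_{1/2}(p)$ using the radial ray $u\mapsto(u,x)$, that $C_p=\emptyset$ for a cone, and that $\rho_s<\tau/100$), gives $\upsilon(B_{\rho_s}(z_0))/\rho_s\le C(n)\,\upsilon_{-1}(\partial B_1(p)\cap B_{\rho_s}(z_0))$. Multiplying these and using the two displayed identities produces $H^{n-1}(B_t^X(x))\le C(n)(\rho_t/\rho_s)^{n-1}H^{n-1}(B_s^X(x))\le C(n)(\pi/2)^{n-1}(t/s)^{n-1}H^{n-1}(B_s^X(x))$. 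Taking $C(n)$ to be the larger of the two constants produced in the two regimes finishes the proof.

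I expect the main obstacle to be the small-scale regime: keeping straight that the ambient point is the pole $p$ and the relevant distance sphere is $\partial B_1(p)$, verifying that $z_0$ meets the hypotheses of Claims \ref{105}--\ref{106} (membership in $\mathcal D_p^{1/2}$, vanishing of $C_p$) and that the two smallness thresholds are simultaneously satisfied, and carefully tracking the homothety and infinitesimal-isometry normalizations behind the two displayed identities. The large-scale ball-packing is routine, but one must take care that the packed balls sit inside the solid sector and that each has $B_1(p)$ inside its double. If a global Bishop--Gromov inequality for $\upsilon_{-1}$ on distance spheres in Ricci limit spaces is available from \cite{ho} in the form $\upsilon_{-1}(\partial B_1(p)\cap B_{\rho'}(z_0))\le C(n)(\rho'/\rho)^{n-1}\upsilon_{-1}(\partial B_1(p)\cap B_{\rho}(z_0))$, then the small-scale regime collapses to a direct quotation after the radius translation $\rho_r=2\sin(r/2)$, and only the large-scale reverse bound needs the hands-on argument.
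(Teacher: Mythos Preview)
Your proposal is correct. The argument in both regimes goes through as written: the ball-packing for $t\ge 1/200$ gives the reverse bound $H^{n-1}(B_s^X(x))\ge c(n)s^{n-1}H^{n-1}(X)$, and for $t<1/200$ the chain Claim~\ref{105} $\to$ Bishop--Gromov $\to$ Claim~\ref{106} is valid (your verification that $z_0\in\mathcal D_p^{1/2}$, $C_p=\emptyset$, and the smallness thresholds is accurate, and the inequality $\rho_t/\rho_s\le(\pi/2)(t/s)$ follows from $\sin u/u\in[2/\pi,1]$ on $(0,\pi/2]$).

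The paper's route is different and more unified: it runs a single chain valid for all $0<s<t\le\pi$, with no case split. Using the universal bi-Lipschitz constant $C_2$ between $(X,d_X)$ and $(\{1\}\times X,d_{C(X)})$, it passes to $\upsilon_{-1}$ on $\{1\}\times X$ via Proposition~\ref{0003}, bounds $\upsilon_{-1}$ above by $\upsilon$ using \cite[Theorem~5.7]{ho} (an external Bishop--Gromov type inequality on spheres), applies Bishop--Gromov for $\upsilon$ to go from scale $t$ to scale $s$, and then recovers the sphere measure at scale $s$ by the co-area formula (Proposition~\ref{0005}) together with the radial homothety $H^{n-1}(\partial B_r(p)\cap\cdot)=r^{n-1}H^{n-1}(\partial B_1(p)\cap\cdot)$. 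In effect the paper does exactly the ``if available'' shortcut you flag at the end: the external result from \cite{ho} plays the role of your Claims~\ref{105}/\ref{106}, and co-area replaces your use of Claim~\ref{106} for the reverse direction, so the small-scale restriction and the separate large-scale packing argument become unnecessary. Your approach has the merit of being self-contained within the paper (citing only internal claims), at the cost of the extra regime split; the paper's is shorter but leans on \cite{ho}.
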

\begin{proof}
We remark that there exists $C_2 >1$ such that for every metric space $\hat{X}$,
a bi-Lipschitz map $f_{\hat{X}}(\hat{x})=(1, \hat{x})$ from $\hat{X}$ to $\{1\} \times \hat{X} \subset C(\hat{X})$ satisfies $\mathbf{Lip}f_{\hat{X}}
+ \mathbf{Lip}f_{\hat{X}}^{-1} \le C_2$.
Therefore, by \cite[Theorem $5.7$]{ho} and Proposition \ref{0005}, we have
\begin{align}
H^{n-1}(B_t(x))&\le C(n)H^{n-1}(B_{C_2t}(1, x) \cap (\{1\} \times X))\\
&=C(n)C^{-1}\upsilon_{-1}(B_{C_2t}(1, x) \cap (\{1\} \times X)) \\
&\le \frac{C(n)\upsilon \left(C_p(B_{C_2t}(1, x) \cap (\{1\} \times X)) \cap A_p(\max \{0, 1-C_2t\}, 1)\right)}{C \mathrm{vol}\,A_p(\max \{0, 1-C_2t\}, 1)} \\
&\le \frac{C(n)}{Ct} \upsilon (B_{5C_2t}(1, x))\\
&\le \frac{C(n)}{Ct} \frac{t^n}{s^n}\upsilon (B_{C_2^{-1}s}(1, x))\\
&\le C(n)\frac{t^{n-1}}{s^n}\int_{\max\{0, 1-C_2^{-1}s\}}^{1+C_2^{-1}s}H^{n-1}(\partial B_r(p) \cap B_{C_2^{-1}s}(1, x))dr\\
&\le C(n)\frac{t^{n-1}}{s^n}\int_{\max\{0, 1-C_2^{-1}s\}}^{1 + C_2^{-1}s}r^{n-1}H^{n-1}( \partial B_1(p) \cap B_{C_2^{-1}s}(1, x))dr\\
&\le C(n)\frac{t^{n-1}}{s^n}sH^{n-1}( \partial B_1(p) \cap B_{C_2^{-1}s}(1, x))\\
&\le C(n)\frac{t^{n-1}}{s^{n-1}}H^{n-1}( \partial B_1(p) \cap B_{C_2^{-1}s}(1, x))\\
&\le C(n)\frac{t^{n-1}}{s^{n-1}}H^{n-1}(B_s(x)).
\end{align}
\end{proof}

%%%%%%%%%%%% 参考文献 %%%%%%%%%%%%%
%%
% <著者名>
%%% スモールキャピタルでGiven Name頭文字，Family Nameの順に記述
% <タイトル>
%%% 書籍、論文集の場合もローマン体で記述 
% <雑誌名>
%%% 雑誌名等の略記はMathematical Reviews誌
%%% (http://www.ams.org/mr-database)の方法に従うこと
%%% cf. Abbreviations of Names of Serials [PDF]
% <巻>
%%% 基本的に号(Number)は省き、巻(Volume)のみを記述
% <ページ数の表記>
%%% 「--」ハイフン２ヶを使用
%%

\bigskip
%%%%%%%%%%%% 著者所属 %%%%%%%%%%%%%
% 第一著者
\address{
Research Institute for 
Mathematical Sciences, \\
Kyoto University, \\
Kyoto 606-8502 \\
Japan
}
{honda@kurims.kyoto-u.ac.jp}

\end{document}